\numberwithin{equation}{section}
\newtheorem{thm}{Theorem}[section]
\newtheorem{defn}[thm]{Definition}
\newtheorem{prop}[thm]{Proposition}
\newtheorem{lemma}[thm]{Lemma}
\newtheorem{cor}[thm]{Corollary}
\newtheorem{example}[thm]{Example}
\newtheorem{problem}[thm]{Problem}
\newtheorem{rem}[thm]{Remark}
\newcommand{\PGL}{{\rm PGL}}
\newcommand{\Gr}{{\rm Gr}}
\newcommand{\lt}{{\rm lt}}
\newcommand{\CD}{\xymatrix@R=1pc@C=1pc}
\newcommand{\CDR}{\xymatrix@R=1pc}
\newcommand{\CDC}{\xymatrix@C=1pc}
 \DeclareMathOperator{\Spec}{Spec}
\def\cB{{\mathcal B}}
\def\cD{{\mathcal D}}
\def\cE{{\mathcal E}}
\def\cL{{\mathcal L}}
\def\cO{{\mathcal O}}
\def\cZ{{\mathcal Z}}
\def\sF{{\mathscr F}}
\def\vs{{\mathscr I}}
\def\sO{{\mathscr O}}
\def\sR{\mathscr{R}}
\def\sV{\mathscr{V}}
\def\tsV{\widetilde{\mathscr{V}}}
\def\fG{\mathfrak{G}}
\def\2M{M}
\def\fV{\mathfrak{V}}
\def\fd{\mathfrak{d}}
\def\fe{\mathfrak{e}}
\def\fm{\mathfrak{m}}
\def\fn{\mathfrak{n}}
\def\fr{\mathfrak{r}}
\def\fs{\mathfrak{s}}
\def\ft{\mathfrak{t}}
\def\FF{{\mathbb F}}
\def\NN{{\mathbb N}}
\def\PP{{\mathbb P}}
\def\GG{{\mathbb G}}
\def\GGm{{{\mathbb G}_{\rm m}}}
\def\ZZ{{\mathbb Z}}
\def\ZZ{{\mathbb Z}}
\def\II{{\mathbb I}}
\def\AA{{\mathbb A}}
\def\QQ{{\mathbb Q}}
\def\rU{{\rm U}}
\def\Ga{{\Gamma}}
\def\tGa{{\widetilde{\Gamma}}}
\def\vt{{\vartheta}}
\def\si{{\sigma}}
\def\vs{{\varsigma}}
\def\vk{{\varkappa}}
\def\bH{{\bar H}}
\def\bL{{\bar L}}
\def\bG{{\bar G}}
\def\var{{\rm Var}}
\def\Index{{\rm Index}}
\def\invlex{{\rm invlex}}
\def\lex{{\rm lex}}
\def\zero{{=0}}
\def\one{{=1}}
\def\tW{{\widetilde W}}
\def\tX{{\widetilde X}}
\def\tY{{\widetilde Y}}
\def\tZ{{\widetilde Z}}
\def\lra{\longrightarrow}
\def\kk{{\bf k}}
\def\bx{{\bf x}}
\def\bz{{\bf z}}
\def\bm{{\bf m}}
\def\bn{{\bf n}}
\def\ua{{\underbar {\it a}}}
\def\ub{{\underbar {\it b}}}
\def\ui{{\underbar {\it i}}}
\def\uh{{\underbar {\it h}}}
\def\uk{{\underbar {\it k}}}
\def\um{{\underbar {{\mu}}}}
\def\um{{\underbar {\it m}}}
\def\uu{{\underbar {\it u}}}
\def\uv{{\underbar {\it v}}}
\def\uw{{\underbar {\it w}}}
\def\pl{{\hbox{Pl\"ucker}}}
 \def\2{{\rm I\!I}}
\def\bF{{\bar F}}
\def\-{{\setminus}}
\def\ve{{\varepsilon}}
\def\vp{{\varpi}}
\def\vr{{\varrho}}
\def\hs{{\hslash}}
\def\vi{{\varphi}}
\def\vsgn{{\rm sgn}}
\def\rk{{\rm rank \;}}
\def\ori{{\rm ori}}
\def\inc{{\rm inc}}
\def\res{{\rm res}}
\def\mn{{\rm mn}}
\def\pq{{\hbox{\scriptsize ${\rm pre}$-${\rm q}$}}}
\def\q{{\rm q}}
\def\mh{{\hbox{\scriptsize ${\rm m}$-${\rm h}$}}}
\def\sfm{{{\sF}_\um}}
\def\sfmgr{{{\sF}^{\rm rel}_{\um, \Ga}}}
\def\sfmgir{{{\sF}^{\rm irr}_{\um, \Ga}}}
\def\de{\delta}
\def\tsR{{\widetilde{\sR}}}
\def\La{\Lambda}
\def\up{\Upsilon}
\def\tV{{\widetilde V}}
\def\bW{{\underline W}}
\def\bGr{{\underline \Gr}}
\def\tGr{{\widetilde \Gr}}
\def\ud{{\underbar  d}}
\def\tt{{\tilde t}}
\begin{document}

\title{Grassmannians and Singularities}

\date{}
\author{Yi Hu}
\address{Department of Mathematics, University of Arizona, USA.}

\maketitle

\begin{abstract} 
Let $X$  be 
 an integral scheme of finite presentation over a perfect field. 
Let $q$ be a singular closed  point of $X$.
We prove that there exists an open subset $V$ of $X$ containing $q$
such that $V$ admits a resolution, that is, there exists a smooth
scheme $\tV$ and a  proper birational morphism from $\tV$ onto $V$.   
 \end{abstract}

\maketitle

\bigskip\medskip

\hskip 8.7  cm {\footnotesize \it Regularities are all alike; every}

\hskip 8.7cm {\footnotesize \it   singularity is singular in its own way.}

\bigskip

\tableofcontents

\section{Introduction}


Let $V$  be an integral scheme 
of finite presentation over a perfect  field $\kk$. 
 We say $V$ admits a resolution 
if there exists  a smooth scheme $\tV$ over $\kk$
and a proper birational morphism from $\tV$ onto $V$.

\begin{thm}\label{main:intro}
{\rm (Local Resolution, Theorem \ref{main-resolusion-0})}
Let $X$ be a scheme 
of finite presentation over a  perfect field $\kk$. 
Assume further that $X$ is integral and singular.
Take any singular closed point $q \in X$. Then, there exists an open subset
$V$ of $X$ containing the point $q$ such that $V$ admits a resolution.
\end{thm}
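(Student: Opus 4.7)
The plan is to reduce the statement to an explicit embedded situation, build an initial resolution candidate as the closure of the graph of a Gauss-type morphism into a Grassmannian, and then refine by induction on a local invariant. First, shrink to an affine open $V = \Spec A$ of $q$, and choose a presentation $A = \kk[x_1,\ldots,x_n]/(f_1,\ldots,f_m)$ so that $V \hookrightarrow \AA^n_\kk$ is a closed embedding. Let $d = \dim V$. Because $\kk$ is perfect and $V$ is integral, the smooth locus $V^{\rm sm}$ is open and dense, and coincides with the locus where the Jacobian $(\partial f_i/\partial x_j)$ has rank exactly $n-d$. It will suffice to produce a smooth $\tV$ equipped with a proper morphism $\tV \to V$ which is an isomorphism over $V^{\rm sm}$, since any such morphism is automatically birational.

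The natural first construction is of Nash type: the tangent space yields a morphism $\tau : V^{\rm sm} \to \Gr(d,n)$, $p \mapsto T_p V$, and the closure $\tV^{(1)}$ of the graph of $\tau$ inside $V \times \Gr(d,n)$ provides a proper birational morphism $\tV^{(1)} \to V$. The advantage is that $\Gr(d,n)$ is smooth and homogeneous, so $\tV^{(1)}$ sits inside a smooth ambient space and inherits strong regularity. The disadvantage is that $\tV^{(1)}$ itself need not be smooth: the termination of iterated Nash blowups is Nash's conjecture and is open in general, even in characteristic zero.

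To promote this to an actual resolution, I would enhance the Grassmannian target so that the Gauss-type map records finer information than first-order tangents — for instance, Grassmannians of subspaces of higher jets, or Grassmannians parametrizing refined data extracted from the defining equations $f_i$ — and couple this with a local invariant $\mu(V,q)$, such as the multiplicity, the Hilbert--Samuel function, or a Grassmannian-theoretic refinement thereof. The goal is to show that one such Grassmannian modification strictly decreases $\mu$ at every point of the exceptional fibre over $q$, so that Noetherian induction terminates and produces a smooth $\tV$ above some open neighborhood of $q$ (which can then be shrunk to obtain the required $V$). The main obstacle, and the place I expect the paper's Grassmannian formalism to carry its weight, is the simultaneous construction of this enhanced modification together with the proof that $\mu$ drops under it: in positive characteristic this is precisely where classical strategies like maximal contact fail, so the Grassmannian framework must do uniformly the work that in characteristic zero is done by hypersurfaces of maximal contact.
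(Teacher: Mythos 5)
There is a genuine gap, and the strategy you outline is not the one the paper follows. You have read the word ``Grassmannian'' in the title and inferred a Gauss-map / Nash-blowup type construction, but the Grassmannian plays an entirely different role in the paper: it is not the \emph{target} of a modification recording tangent or jet data, it is the \emph{source} of all singularities. By Mn\"ev's universality theorem (in Lafforgue's scheme-theoretic form, Theorem~\ref{Mn-La-Gr}), every singularity of finite type over $\ZZ$ already occurs, up to an open subset of a product with an affine space and a free torus quotient, inside a thin Schubert cell of some $\Gr^{3,E}$. The paper therefore only needs to resolve one explicit, combinatorially controlled family of schemes --- the $\Ga$-schemes $Z_\Ga \subset \rU_\um \cap \Gr^{3,E}$ cut out by Pl\"ucker relations and coordinate hyperplanes --- and it does so by first passing to a birational model $\sV_\um \subset \sR_\sF=\rU_\um\times\prod_{\bF}\PP_F$ that separates the terms of the primary Pl\"ucker relations into binomials, and then performing a fully explicit finite sequence of smooth blowups ($\vt$-, $\wp$-, and $\eth$-blowups along pairs of divisors extracted from the main binomials). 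Smoothness of the end result is verified by a direct block-triangular Jacobian computation (Theorem~\ref{main-thm}), with no appeal to a decreasing numerical invariant. The local resolution for an arbitrary $X$ is then obtained from the $\Ga$-scheme case by the slicing and spreading-out argument of Theorem~\ref{main-resolusion-0}, using that the quotient map $\Gr^{3,E}_\ud\to \Gr^{3,E}_\ud/(\GG_m^n/\GG_m)$ is Zariski-locally trivial.

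As for your own proposal: the step you flag as ``where the Grassmannian formalism must carry its weight'' is precisely the step you have not supplied, and it is not a gap that can be papered over. Termination of iterated Nash blowups is an open conjecture even in characteristic zero, and no published enhancement of the Nash construction by higher jets, together with a local invariant that provably drops, is known to resolve singularities --- certainly not uniformly in positive characteristic, where even one-step normalized Nash blowups can fail to improve the singularity. So what you have written is a description of a research program with a well-known open problem sitting at its core, not a proof. Concretely missing: (i) the definition of the enhanced Grassmannian modification, (ii) the definition of the invariant $\mu$, (iii) the proof that $\mu$ strictly decreases, and (iv) the proof that the modification is birational and proper and that the end result is smooth. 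The paper sidesteps all of this by never iterating a canonical construction at all; instead it exploits that the universal singularity (the $\Ga$-scheme) has explicit binomial-plus-linear defining equations whose zero factors can be removed by a predetermined, finite sequence of codimension-two smooth blowups, and verifies smoothness at the end by a calculation rather than by an inductive invariant.
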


Mn\"ev showed  (\cite{Mnev88}) that every integral singularity type of finite type
defined over $\ZZ$ appears in some
configuration space of points on the projective plane.
 This result is called Mn\"ev's universality theorem in literature.
Lafforgue (\cite{La99} and  \cite{La03}) strengthened
 and proved the same statement scheme-theoretically. 
 Also, Lee and Vakil (\cite{LV12}) proved the similar  scheme-theoretic statement
 on incidence schemes of  points and lines on the projective plane.
Using Gelfand-MacPerson correspondence, Lafforgue's version of
Mn\"ev's universality theorem is equivalent to the statement that
every integral singularity type of finite type
defined over $\ZZ$ appears in some {\it thin Schubert cell} on the Grassmannian 
$\Gr^{3, E}$ of three-dimensional  linear subspaces
of a fixed vector space $E$ of dimension greater than 3.
Every thin Schubert cell is an open subset
of a unique closed subscheme of an affine chart of the Grassmannian.
This unique closed subscheme of that affine chart of $\Gr^{3,E}$ is called a $\Ga$-scheme in this article.

We approach Theorem \ref{main:intro} 
via a detour through Mn\"ev's universality theorem by first resolving 
all the aforementioned $\Ga$-schemes that are integral,
hence also, all the thin Schubert cells of $\Gr^{3,E}$ that are integral.

Following Lafforgue's presentation of \cite{La03}, 
suppose we have a set of vector spaces, $E_1, \cdots, E_n$ such that 
every $E_\alpha$, $1 \le \alpha \le n$,  is of dimension 1 over a field $\kk$
 (or, a free module of rank 1 over $\ZZ$),
 for some positive integer $n>1$.
We let  $$E=E_1 \oplus \ldots \oplus E_n.$$ 

 Then, the Grassmannian $\Gr^{d,E}$, defined by
$$\Gr^{d,E}=\{ \hbox{linear subspaces} \;F \hookrightarrow E \mid \dim F=d\}, $$
is a projective variety defined over $\ZZ$, for  any fixed  integer $1 \le d <n$.
(For local resolution of singularities, it suffices to focus on $\Gr^{3,E}$;
in this article,  we still consider the general Grassmannian $\Gr^{d,E}$:
see the third paragraph of \S \ref{localization}.)

We have a canonical decomposition
$$\wedge^d E=\bigoplus_{\ui =(i_1<\cdots< i_d) \in \II_{d,n}} E_{i_1}\otimes \cdots \otimes E_{i_d},$$
where $\II_{d,n}$ is the set of all sequences of $d$ distinct integers between 1 and $n$.

This gives rise to the $\pl$ embedding of the Grassmannian by
$$\Gr^{d,E} \hookrightarrow \PP(\wedge^d E)=\{(p_\ui)_{\ui \in \II_{d,n}} \in \GG_m 
\backslash (\wedge^d E \- \{0\} )\},$$
$$F \lra [\wedge^d F],$$
where $\GG_m$ is the multiplicative group.

As a closed subscheme of $\PP(\wedge^d E)$, the Grassmanian  $\Gr^{d,E}$ is defined by 
the $\pl$ ideal $I_\wp$, generated by all $\pl$ relations, 
whose typical member is expressed succinctly, in this article, as
\begin{equation}\label{eq1-intro}
F: \; \sum_{s \in S_F} \vsgn (s) p_{\uu_s} p_{\uv_s}
\end{equation}
where $S_F$ is an index set, $\uu_s, \uv_s \in \II_{d,n}$
for any $s \in S_F$, and $\vsgn (s)$ is the $\pm$ sign associated with the term $p_{\uu_s} p_{\uv_s}$
(see \eqref{pluckerEq} and \eqref{succinct-pl} for details).

Given the above $\pl$ equation, we introduce
  the projective space $\PP_F$ which comes equipped with the homogeneous coordinates
$[x_{(\uu_s,\uv_s)}]_{s\in S_F}$. 
Then, corresponding to each $\pl$ relation \eqref{eq1-intro}, there is
a linear  homogeneous equation in $\PP_F$, 
called the induced {\it  linearized $\pl$ relation}, 
\begin{equation}\label{eq2-intro}
L_F: \; \sum_{s \in S_F} \vsgn (s) x_{(\uu_s,\uv_s)}
\end{equation}
 (see Definition \ref{defn:linear-pl}). We set
 $ \La_F:=\{(\uu_s,\uv_s) \mid s \in S_F\}.$
 
As the question about resolution of singularity type is local, we can focus on an affine chart
$\rU_\um =(p_\um \ne 0)$ of the $\pl$ projective space $\PP(\wedge^d E)$ for some fixed $\um \in \II_{d,n}$. 
We can identify the coordinate ring of $\rU_\um$ with the polynomial ring 
$\kk [x_\uu]_{\uu \in \II_{d,n} \- \{\um\}}$. For any $\pl$ relation $F$, we let
$\bF$ be the de-homogenization of $F$ on the chart $\rU_\um$.
Given this chart, we then explicitly describe a set of $\pl$ relations, called
{\it $\um$-primary $\pl$ relations}, listed under a carefully chosen total order $``<_\wp"$,
$$\sfm =\{\bF_1 <_\wp \cdots <_\wp \bF_\Upsilon\},$$
with $\Upsilon= {n \choose d}-1-d(n-d)$, such that 
they together generate 
the de-homogenized ideal $I_{\wp, \um}$ of the  $\pl$ ideal $I_\wp$ on  the chart. 
Further, on the chart $\rU_\um$,  if we set $p_\um =1$
 and set $x_\uu=p_\uu$ for any $\uu \in \II_{d,n}\-\{\um\}$,
 then any  $\um$-primary relation $\bF\in \sfm$ admits the following de-homogenized expression
\begin{equation}\nonumber
\bF: \; \vsgn (s_F) x_{\uu_F} +\sum_{s \in S_F \- \{s_F\}} \vsgn (s) x_{\uu_s}x_{\uv_s},
\end{equation}
where $x_{\uu_F}$ is called the leading variable of $\bF$ whose term is called the leading term of $\bF$
and $s_F \in S_F$ is the index
for the leading term.  (See 
\eqref{equ:localized-uu} and \eqref{the-form-LF} for details.)

Next, we can introduce the natural rational map
\begin{equation}\label{this-theta-intro}
 \xymatrix{
\bar \Theta_{[\up],\Gr}: \rU_\um \cap \Gr^{d,E} \ar @{^{(}->}[r]  & \rU_\um \ar @{-->}[r]  & \prod_{\bF \in \sF_\um} \PP_F   }
\end{equation}
$$
 [x_\uu]_{\uu \in \II_{d,n}} \lra  
\prod_{\bF \in \sF_\um}   [x_\uu x_\uv]_{(\uu,\uv) \in \La_F}
$$   
where $[x_\uu]_{\uu \in \II_{d,n}} $ is the de-homogenized $\pl$ coordinates of a point 
of  $\rU_\um \cap \Gr^{d,E}$.   

We let $\sV_\um$ be the closure of the graph of the rational map $\bar\Theta_{[\up],\Gr}$. Then, 
 we obtain the following diagram 
$$ \xymatrix{
\sV_{\um} \ar[d] \ar @{^{(}->}[r]   \ar[d] \ar @{^{(}->}[r]  &
\sR_\sF:= \rU_\um  \times \prod_{\bF \in \sfm} \PP_F \ar[d] \\
\rU_\um \cap \Gr^{d,E}  \ar @{^{(}->}[r]    & \rU_\um.}
$$

The scheme $\sV_{\um}$ is singular, in general, and is birational to $\rU_\um \cap \Gr^{d,E}$.
(The reader is recommended to read \S \ref{tour} to see the purpose of introducing
the model $\sV_{\um}$.)

As the necessary and crucial steps to achieve our ultimate goal, we are to perform some specific 
sequential embedded blowups for $(\sV_\um \subset \sR_\sF)$.

For  the purpose of applying induction, employed mainly for  the purpose of proofs, 
we also introduce the following rational map. 

For any positive integer $m$, we set  $[m]:=\{1,\cdots,m\}.$

Then,  for any $k \in [\up]$, we have the rational map
\begin{equation}\label{this-theta[k]-intro}
 \xymatrix{
 \bar\Theta_{[k],\Gr}: \rU_\um \cap \Gr^{d,E} \ar @{^{(}->}[r]  & \rU_\um \ar @{-->}[r]  & 
 \prod_{i \in [k]} \PP_{F_i}   }
\end{equation}
$$
 [x_\uu]_{\uu \in \II_{d,n}} \lra  
\prod_{i \in [k]}   [x_\uu x_\uv]_{(\uu,\uv) \in \La_{F_i}}
$$ 
We let $\sV_{\sF_{[k]}}$ be the closure of the graph of the rational map $\bar\Theta_{[k],\Gr}$. Then, 
 we obtain the following diagram 
$$ \xymatrix{
\sV_{\sF_{[k]}} \ar[d] \ar @{^{(}->}[r]   \ar[d] \ar @{^{(}->}[r]  &
\sR_{\sF_{[k]}}:= \rU_\um  \times  \prod_{i \in [k]} \PP_{F_i}\ar[d] \\
\rU_\um \cap \Gr^{d,E}  \ar @{^{(}->}[r]    & \rU_\um.
}
$$
The scheme $\sV_{\sF_{[k]}}$ is birational to  $\rU_\um \cap \Gr^{d,E}$. 

Set $\sR_{\sF_{[0]}}:=\rU_\um$.  There exists a forgetful map 
$$\sR_{\sF_{[j]}} \lra \sR_{\sF_{[j-1]}},\;\;\; \hbox{ for any $j \in [\up]$}.$$

In the above notations, we have
$$\sV_\um=\sV_{\sF_{[\up]}}, \;\; \sR_{\sF}=\sR_{\sF_{[\up]}}.$$

\begin{prop}  {\rm (Proposition \ref{eq-tA-for-sV})}  The scheme $\sV_\um$, as a closed subscheme of
$\sR_\sF= \rU_\um \times  \prod_{\bF \in \sF_\um} \PP_F$,
is defined by the following relations, for all $\bF \in \sfm$,
\begin{eqnarray} 
B_{F,(s,t)}: \;\; x_{(\uu_s, \uv_s)}x_{\uu_t}x_{ \uv_t}-x_{(\uu_t, \uv_t)}x_{\uu_s}x_{ \uv_s}, \;\; \forall \;\; 
s, t \in S_F \- \{s_F\}, \label{eq-Bres-intro}\\
B_{F, (s_F,s)}: \;\; x_{(\uu_s, \uv_s)}x_{\uu_F} - x_{(\um,\uu_F)}   x_{\uu_s} x_{\uv_s}, \;\;
\forall \;\; s \in S_F \- \{s_F\},  \label{eq-B-intro} \\ 
\cB^\pq,   \;\; \;\; \;\; \;\; \;\; \;\; \;\; \;\; \;\; \;\; \;\; \;\; \;\; \label{eq-hq-intro}\\
L_F: \;\; \sum_{s \in S_F} \vsgn (s) x_{(\uu_s,\uv_s)}, 
\;\;\; \bF: \;\; \sum_{s \in S_F} \vsgn (s) x_{\uu_s}x_{\uv_s}\label{linear-pl-intro}
\end{eqnarray}
with $\bF$  expressed as 
$\vsgn (s_F) x_{\uu_F} +\sum_{s \in S_F \- \{s_F\}} \vsgn (s) x_{\uu_s}x_{\uv_s}$,
 where $\cB^\pq$ is the set of binomial equations of pre-quotient type
(see Definition \ref{defn:pre-q}).  
\end{prop}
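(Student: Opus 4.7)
My plan is to prove the claimed scheme-theoretic equality by establishing both inclusions. For the easy direction, I would use that $\sV_\um$ is by construction the closure in $\sR_\sF$ of the graph $\Gamma$ of the rational map $\bar\Theta_{[\up],\Gr}$, so it suffices to verify that each listed relation vanishes on $\Gamma$. At a point of $\Gamma$, the $\PP_F$-coordinate is $[x_{\uu_s} x_{\uv_s}]_{s \in S_F}$, hence there is a common scalar $\lambda_F$ with $x_{(\uu_s,\uv_s)} = \lambda_F\, x_{\uu_s} x_{\uv_s}$ for every $s \in S_F$; this immediately annihilates both $B_{F,(s,t)}$ and $B_{F,(s_F,s)}$, and multiplying the Pl\"ucker identity $\bF = 0$ by $\lambda_F$ yields $L_F = 0$. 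The pre-quotient binomials $\cB^\pq$ vanish on $\Gamma$ by their very construction, since they encode compatibilities among the quotients $x_\uu/x_\uv$ across different factors $\PP_{F_i}$.

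For the reverse inclusion I would induct on $k \in [\up]$ using the tower of forgetful morphisms
\[ \sR_{\sF_{[\up]}} \longrightarrow \sR_{\sF_{[\up-1]}} \longrightarrow \cdots \longrightarrow \sR_{\sF_{[1]}} \longrightarrow \sR_{\sF_{[0]}} = \rU_\um, \]
with the inductive claim that $\sV_{\sF_{[k]}}$ is cut out of $\sR_{\sF_{[k]}}$ by exactly those relations among \eqref{eq-Bres-intro}--\eqref{linear-pl-intro} whose index lies in $\{F_1,\ldots,F_k\}$. The base case $k = 0$ is the statement that $\rU_\um \cap \Gr^{d,E}$ is the vanishing locus of $\{\bF\}_{\bF \in \sfm}$ in $\rU_\um$, which is built into the definition of $\sfm$.

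For the inductive step I would let $Z_k \subset \sR_{\sF_{[k]}}$ denote the subscheme cut out by the pullback of the ideal defining $\sV_{\sF_{[k-1]}}$ together with the new relations $B_{F_k,(s,t)}$, $B_{F_k,(s_F,s)}$, $L_{F_k}$, $\bF_k$, and those $\cB^\pq$ relations that involve $F_k$. Over the open locus $W \subset \sV_{\sF_{[k-1]}}$ on which some product $x_{\uu_s} x_{\uv_s}$ with $s \in S_{F_k}$ is nonzero, the binomials $B_{F_k,\cdot}$ pin down the $\PP_{F_k}$-coordinate uniquely as $[x_{\uu_s} x_{\uv_s}]_{s}$, so $Z_k$ agrees with the graph there. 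Off $W$, the $B_{F_k,\cdot}$ become trivial; then $L_{F_k}$ confines the $\PP_{F_k}$-factor to a hyperplane, while the pre-quotient binomials $\cB^\pq$ transfer already-recorded ratios from factors $\PP_{F_j}$ with $j < k$ into the new factor, so that each fiber coincides with the one dictated by the closure $\sV_{\sF_{[k]}}$.

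The main obstacle I expect is precisely this last compatibility analysis over the indeterminacy locus of $\bar\Theta$: one must show that the combination of $L_{F_k}$ with the $\cB^\pq$ is tight enough to eliminate every potential spurious component of $Z_k$ arising where all $x_{\uu_s} x_{\uv_s}$ with $s \in S_{F_k}$ vanish simultaneously. Settling this will require a careful unpacking of Definition \ref{defn:pre-q} to identify which quotient data is transported from earlier projective factors to $\PP_{F_k}$, together with a dimension or flatness argument to conclude that $Z_k$ is irreducible of the expected dimension and hence coincides with $\sV_{\sF_{[k]}}$.
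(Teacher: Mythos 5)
Your sketch departs substantially from the paper's argument, and it leaves the crucial step genuinely open — you say so yourself at the end.

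The paper does not proceed by the open/closed induction along the forgetful tower that you propose. Instead it observes (Lemma \ref{defined-by-ker}) that the closed subscheme $\rU_{\um,\sF_{[k]}}$ is exactly cut out by the multi-homogeneous kernel $\ker^\mh \bar\vi_{[k]}$ of the ring map $x_{(\uu_s,\uv_s)} \mapsto x_{\uu_s} x_{\uv_s}$, and then proves a structure theorem for that kernel. The workhorse is Lemma \ref{ker-phi-k}: an induction on the $\vr$-degree of a binomial $\bm-\bm'\in\ker^\mh\vi_{[k]}$ showing that, after pulling out the common factor, it factors into elementary pieces of the form $\vi(X_1)X_2-\vi(X_1')X_2'$ satisfying conditions (1)--(3); from this Corollary \ref{cB-generate} concludes that the relations you list (together with $\cB^\pq_{[k]}$, by definition the non-trivial pieces) generate the ideal. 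Passing from $\rU_{\um,\sF_{[k]}}$ to $\sV_{\sF_{[k]}}$ then only requires showing, using \eqref{Fi=Li-1st}, that the equations $\bF_i$ and $L_{F_i}$ imply one another modulo the binomials $\cB_i$ on the integral scheme $\sV_{\sF_{[k]}}$ — that is the entire content of the proof of Lemma \ref{equas-for-sVk}, of which your Proposition is the case $k=\up$.

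The gap in your plan is at the point you flag: you set up $Z_k$ as the scheme cut out by your candidate relations, check agreement with the graph over the open locus $W$ where some $x_{\uu_s}x_{\uv_s}$ with $s\in S_{F_k}$ is invertible, and then need to exclude spurious components supported over the complement of $W$. You say this "will require a careful unpacking of Definition \ref{defn:pre-q}\ldots together with a dimension or flatness argument," but you do not supply either. This is not a minor bookkeeping matter: the paper itself emphasizes (in \S\ref{tour}) that the main binomials $\cB^\mn$ together with $L_{\sfm}$ alone cut out a reducible scheme in general, so the tightness of $\cB^\pq$ and $\cB^\res$ is precisely where the substance lies. Without an analogue of Lemma \ref{ker-phi-k} — or at minimum an explicit proof that $Z_k$ is irreducible of the expected dimension — the inclusion $Z_k\subseteq\sV_{\sF_{[k]}}$ is unproven, and hence so is the Proposition. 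In short: the "easy direction" of your argument is fine, but the reverse inclusion is asserted rather than established, and the technique you would need for it (the kernel-factorization lemma) is exactly the piece of the paper's machinery that your outline omits.
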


Our construction of the desired embedded blowups on  $\sV_{\um} \subset \sR_\sF$
 is based upon the set of all binomial relations of \eqref{eq-B-intro}:
$$\cB^\mn=\{B_{F,(s_F,s)} \mid \bF \in \sfm, \; s \in S_F \- \{s_F\}\}.$$
An element $B_{F,(s_F,s)}$ of  $\cB^\mn$ is called a main binomial relation.
We  also let
$$\cB^\res=\{ B_{F,(s,t)} \mid \bF \in \sfm, \; s, t \in S_F \- \{s_F\}\}.$$
An element $B_{F,(s,t)}$ of  $\cB^\res$ is called a residual binomial relation.
The residual binomial relations or binomial relations of pre-quotient type in $\cB^\pq$
play no roles in the {\it construction} of the aforesaid embedded blowups.

To apply induction, we provide a total order on the set $S_F \- \{s_F\}$ and list it as
$$S_F \- \{s_F\}=\{s_1 < \cdots < s_{\ft_F}\}$$
where $(\ft_F+1)$ is the number of terms in the relation $F$.
This renders us  to write $B_{F,(s_F,s)}$ as $B_{(k\tau)}$ where
$F=F_k$ for some $k \in [\up]$ and $s = s_\tau$ for some $ \tau \in [\ft_{F_k}]$.

We can now synopsize  the process of the embedded blowups for  $(\sV_{\um} \subset \sR_\sF)$.

It is divided into three sequential blowups, 
named as,  $\vt$-, $\wp$-, $\eth$-blowups, listed in the order of occurrence. 
(For the specific purpose of each of the three sequential blowups, 
the reader is referred to \S \ref{tour}.)

{\it $\bullet$ On $\vt$-sets, $\vt$-centers, and $\vt$-blowups. }

For any primary $\pl$ relation $\bF_k \in \sfm$, we introduce the corresponding $\vt$-set
$\vt_{[k]}=\{x_{\uu_{F_k}}, x_{(\um,\uu_{F_k})} \}$ and the corresponding $\vt$-center 
$Z_{\vt_{[k]}} = X_{\uu_{F_k}} \cap X_{(\um,\uu_{F_k})}$ where
$X_{\uu_{F_k}} = (x_{\uu_{F_k}}=0)$ and
$ X_{(\um,\uu_{F_k})} =(x_{(\um,\uu_{F_k})} =0)$.
 We set $\tsR_{\vt_{[0]}}:=\sR_\sF$. Then, inductively,
 we let  $\tsR_{\vt_{[k]}} \to \tsR_{\vt_{[k-1]}}$ be the blowup of $\tsR_{\vt_{[k-1]}}$
 along (the proper transform of) the $\vt$-center
 $Z_{\vt_{[k]}}$ for all $k \in [\up]$. 
 This gives rise to the sequential $\vt$-blowups
\begin{equation}\label{vt-sequence-intro}
\tsR_{\vt}:=\tsR_{\vt_{[\up]}}  \lra \cdots \lra \tsR_{\vt_{[k]}} \lra \tsR_{\vt_{[k-1]}} \lra \cdots \lra \tsR_{\vt_{[0]}}.
\end{equation}
 Each morphism $\tsR_{\vt_{[k]}} \to \tsR_{\vt_{[k-1]}}$ is a smooth blowup, meaning, 
 the blowup of a smooth scheme along a smooth closed center. For any $k$, we let
 $\tsV_{\vt_{[k]}} \subset \tsR_{\vt_{[k]}} $ be the proper transform of $\sV$ in $\tsR_{\vt_{[k]}}$.
 We set $\tsV_{\vt}:=\tsV_{\vt_{[\up]}}$.
 
{\it $\bullet$ On $\wp$-sets, $\wp$-centers, and $\wp$-blowups. }

For any main binomial $B_{(k\tau)} \in \cB^\mn$, there exist a finite 
integer $\rho_{(k\tau)}$ depending on $(k\tau)$ and 
a finite integer $\si_{(k\tau)\mu}$ depending on $(k\tau)\mu$ for any $ \mu \in [\rho_{(k\tau)}]$.
We set $\tsR_{(\wp_{(11)}\fr_0)}=\tsR_\vt$.
For each $(k,\tau, \mu, h)$ as above, there exists 
a $\wp$-set $\phi_{(k\tau)\mu h}$ consisting of  two special divisors  on an inductively 
defined scheme $\tsR_{(\wp_{(k\tau)}\fr_{\mu -1})}$; its corresponding $\wp$-center
$Z_{\phi_{(k\tau)\mu h}}$ is the scheme-theoretic intersection of the two divisors.
We let $\cZ_\wp=\{Z_{\phi_{(k\tau)\mu h}} \mid k \in [\up], \tau \in [\ft_{F_k}], \mu \in [\rho_{(k\tau)}],
h \in [\si_{(k\tau)\mu}]\}$, 
totally ordered lexicographically on the indexes $(k,\tau, \mu, h)$.
We set $\tsR_{\wp_{(11)}\fr_1\fs_{0}}:=\sR_\vt$. 
Then, inductively,
 we let we $\tsR_{(\wp_{(k\tau)}\fr_\mu\fs_{h})} \to \tsR_{(\wp_{(k\tau)}\fr_\mu\fs_{h-1})}$ 
 be the blowup of $\tsR_{(\wp_{(k\tau)}\fr_\mu\fs_{h-1})}$ 
 along (the proper transform of) the $\wp$-center
 $Z_{\phi_{(k\tau)\mu h}}$. 
 This gives rise to the sequential  $\wp$-blowups
\begin{equation}\label{wp-sequence-intro}
\tsR_{\wp}  \to \cdots \to
\tsR_{(\wp_{(k\tau)}\fr_\mu\fs_{h})} \to \tsR_{(\wp_{(k\tau)}\fr_\mu\fs_{h-1})} \to \cdots \to \tsR_{\vt},
\end{equation}
where $\tsR_{\wp}:=\tsR_{(\wp_{(\up \ft_{F_{\up}})}\fr_{\rho_{(\up \ft_{F_{\up}})}}
\fs_{\si_{(\up \ft_{F_{\up}})}\rho_{(\up \ft_{F_{\up}})} })} $.
 Each morphism $\tsR_{(\wp_{(k\tau)}\fr_\mu\fs_{h})} \to \tsR_{(\wp_{(k\tau)}\fr_\mu\fs_{h-1})} $
  is a smooth blowup. For any $(k\tau)\mu h$, we let
 $\tsV_{(\wp_{(k\tau)}\fr_\mu\fs_{h})} \subset \tsR_{(\wp_{(k\tau)}\fr_\mu\fs_{h})} $
  be the proper transform of $\sV$ in $\tsR_{(\wp_{(k\tau)}\fr_\mu\fs_{h})} $.
 We set $\tsV_{\wp}:=\tsV_{(\wp_{(\up \ft_{F_{\up}})}\fr_{\rho_{(\up \ft_{F_{\up}})}}
\fs_{\si_{(\up \ft_{F_{\up}})}\rho_{(\up \ft_{F_{\up}})} })} $.

{ \it $\bullet$ On $\eth$-sets, $\eth$-centers, and $\eth$-blowups. }

For any main binomial $B_{(k\tau)} \in \cB^\mn$, there exist  a finite 
integer $\vk_{(k\tau)}$ depending on $(k\tau)$ and 
a finite integer $\vs_{(k\tau)\mu}$ depending on $(k\tau)\mu$ for any $ \mu \in [\vk_{(k\tau)}]$.
We set $\tsR_{(\eth_{(11)}\fr_0)}=\tsR_\wp$.
For each $(k,\tau, \mu, h)$ as above, there exists 
an $\eth$-set $\psi_{(k\tau)\mu h}$ consisting of  two special divisors  on an inductively 
defined scheme $\tsR_{(\eth_{(k\tau)}\fr_{\mu -1})}$; its corresponding $\eth$-center
$Z_{\psi_{(k\tau)\mu h}}$ is the scheme-theoretic intersection of the two divisors.
We let $\cZ_\eth=\{Z_{\psi_{(k\tau)\mu h}} \mid k \in [\up], \tau \in [\ft_{F_k}], \mu \in [\vk_{(k\tau)}],
h \in [\vs_{(k\tau)\mu}]\}$, 
totally ordered lexicographically on the indexes $(k,\tau, \mu, h)$.
We set $\tsR_{\eth_{(11)}\fr_1\fs_{0}}:=\sR_\wp$. Then, inductively,
 we let we $\tsR_{(\eth_{(k\tau)}\fr_\mu\fs_{h})} \to \tsR_{(\eth_{(k\tau)}\fr_\mu\fs_{h-1})}$ 
 be the blowup of $\tsR_{(\wp_{(k\tau)}\fr_\mu\fs_{h-1})}$ 
 along (the proper transform of) the $\eth$-center
 $Z_{\psi_{(k\tau)\mu h}}$. 
 This gives rise to the sequential $\eth$-blowups
\begin{equation}\label{hs-sequence-intro}
\tsR_{\eth}  \to \cdots \to
\tsR_{(\eth_{(k\tau)}\fr_\mu\fs_{h})} \to \tsR_{(\eth_{(k\tau)}\fr_\mu\fs_{h-1})} \to \cdots \to \tsR_{\eth},
\end{equation}
where $\tsR_{\eth}:=\tsR_{(\eth_{(\up \ft_{F_{\up}})}\fr_{\vk_{(\up \ft_{F_{\up}})}}
\fs_{\vs_{(\up \ft_{F_{\up}})}\rho_{(\up \ft_{F_{\up}})} })} $.
 Each morphism $\tsR_{(\eth_{(k\tau)}\fr_\mu\fs_{h})} \to \tsR_{(\eth_{(k\tau)}\fr_\mu\fs_{h-1})} $
  is a smooth blowup. For any $(k\tau)\mu h$, we let
 $\tsV_{(\eth_{(k\tau)}\fr_\mu\fs_{h})} \subset \tsR_{(\eth_{(k\tau)}\fr_\mu\fs_{h})} $
  be the proper transform of $\sV$ in $\tsR_{(\eth_{(k\tau)}\fr_\mu\fs_{h})} $.
 We set $\tsV_{\eth}:=\tsV_{(\eth_{(\up \ft_{F_{\up}})}\fr_{\vk_{(\up \ft_{F_{\up}})}}
\fs_{\vs_{(\up \ft_{F_{\up}})}\rho_{(\up \ft_{F_{\up}})}})} $.

 The schemes $\tsV_\eth \subset \tsR_\eth$ are our final destination.


To study the local structure of $\tsV_\eth \subset \tsR_\eth$, we approach 
it by induction via the sequential blowups \eqref{vt-sequence-intro},
\eqref{wp-sequence-intro}, and \eqref{hs-sequence-intro}.

 Definition \ref{general-standard-chart} introduces the covering
  standard affine charts $\fV$ for any  of the smooth schemes  $\tsR_{\vt_{[k]}}$,
  $\tsR_{(\wp_{(k\tau)}\fr_\mu\fs_h)}$,
 $\tsR_{(\eth_{(k\tau)}\fr_\mu\fs_h)}$, as above. 
 
 $(\star)$ Proposition \ref{meaning-of-var-vtk} introduces coordinate variables
 for  any standard affine chart $\fV$ of $\tsR_{\vt_{[k]}}$ and provides
 explicit geometric meaning for every coordinate variable.
 
 Proposition  \ref{eq-for-sV-vtk} provides explicit description and properties of the local defining equations of 
the scheme $\tsV_{\vt_{[k]}} \cap \fV$ on any standard affine chart $\fV$ of $\tsR_{\vt_{[k]}}$.
 
 $(\star)$ Proposition \ref{meaning-of-var-vskmuh} introduces coordinate variables
 for  any standard affine chart $\fV$ of $\tsR_{(\wp_{(k\tau)}\fr_\mu\fs_h)}$
 and provides explicit geometric meaning for every coordinate variable.
 
 Proposition  \ref{equas-vskmuh}  provides explicit description and properties
  of the local defining equations of the scheme  
$\tsV_{\wp_{(k\tau)}\fr_\mu\fs_h}  \cap \fV$ on any standard affine chart $\fV$ of 
 $\tsR_{(\wp_{(k\tau)}\fr_\mu\fs_h)}$.

 $(\star)$ Proposition \ref{meaning-of-var-p-k} introduces coordinate variables
 for  any standard affine chart $\fV$ of 
  $\tsR_{(\eth_{(k\tau)}\fr_\mu\fs_h)}$ and provides
 explicit geometric meaning for every coordinate variable.

  Proposition  \ref{equas-p-k}  provides explicit description
  and properties of the local defining equations of 
the scheme 
$\tsV_{\eth_{(k\tau)}\fr_\mu\fs_h}  \cap \fV$ on any standard affine chart $\fV$ of 
$\tsR_{(\eth_{(k\tau)}\fr_\mu\fs_h)}$. 


To summarize the progress, we depict the diagram \eqref{theDiagram} below.

Thus far, we have obtained the first two rows of the  diagram:  

 $(\star)$ In the first row: each morphism  $\tsR_{\hs} \to  \tsR_{\hs'}$ is 
$\tsR_{\vt_{[k]}} \to \tsR_{\vt_{[k-1]}}$, or
$\tsR_{(\eth_{(k\tau)}\fr_\mu\fs_{h})} \to  \tsR_{(\eth_{(k\tau)}\fr_\mu\fs_{h-1})}$,
or $\tsR_{(\wp_{(k\tau)}\fr_\mu\fs_{h})} \to  \tsR_{(\wp_{(k\tau)}\fr_\mu\fs_{h-1})}$,
and  is a smooth blowup;
each  $\sR_{\sF_{[j]}}  \to  \sR_{\sF_{[j-1]}}$  is a projection, a forgetful map.

 $(\star)$ In the second row: each morphism $\tsV_{\hs} \to  \tsV_{\hs'}$ is 
$\tsV_{\vt_{[k]}} \to \tsV_{\vt_{[k-1]}}$,
or $\tsV_{(\eth_{(k\tau)}\fr_\mu\fs_{h})} \to  \tsV_{(\eth_{(k\tau)}\fr_\mu\fs_{h-1})}$, 
or $\tsV_{(\wp_{(k\tau)}\fr_\mu\fs_{h})} \to  \tsV_{(\wp_{(k\tau)}\fr_\mu\fs_{h-1})}$;
 this morphism as well as each $\sV_{\sF_{[j]}}  \to  \sV_{\sF_{[j-1]}}$   is surjective, projective, and birational.

To explain the third and fourth rows of the  diagram,
we go back to the fixed chart $\rU_\um$. This is the affine space which comes equipped with
the coordinate variables $\var_{\rU_\um}:=\{x_\uu \mid \uu \in \II_{d,n} \- \{\um\}\}$.
Let $\Ga$ be any subset of $\var_{\rU_\um}$
 and let $Z_\Ga$ be the subscheme of $\rU_\um$ defined by the ideal generated by
 all the elements of $\Ga$ together with all the de-homogenized $\um$-primary $\pl$ relations $\bF$
 with $\bF \in \sfm$.   
This is a $\Ga$-scheme mentioned in the beginning of this introduction.
The precise relation between a given thin Schubert cell and its corresponding $\Ga$-scheme
is given in \eqref{ud=Ga}.

 Our gaol is to resolve the $\Ga$-scheme $Z_\Ga$ when it is integral and singular.

\begin{equation}\label{theDiagram}
 \xymatrix@C-=0.4cm{
  \tsR_{\eth} \ar[r] & \cdots  \ar[r] &  \tsR_{\hs} \ar[r] &  \tsR_{\hs'} \ar[r] &  \cdots  \ar[r] &   \sR_{\sF_{[j]}}  \ar[r] &  \sR_{\sF_{[j-1]}} \cdots \ar[r] &  \rU_\um \\
    \tsV_{\eth} \ar @{^{(}->} [u]  \ar[r] & \cdots  \ar[r] &  \tsV_{\hs}\ar @{^{(}->} [u]   \ar[r] &  \tsV_{\hs'} \ar @{^{(}->} [u]  \ar[r] &  \cdots    \ar[r] &   \sV_{\sF_{[j]}} \ar @{^{(}->} [u]\ar[r] &  \sV_{\sF_{[j-1]}} \cdots \ar @{^{(}->} [u]  \ar[r] &  \rU_\um \cap \Gr^{ d,E}   \ar @{^{(}->} [u]  \\
   \tZ_{\eth, \Ga} \ar @{^{(}->} [u]  \ar[r] & \cdots  \ar[r] &  \tZ_{\hs,\Ga}\ar @{^{(}->} [u]   \ar[r] &  \tZ_{\hs',\Ga} \ar @{^{(}->} [u]  \ar[r] &  \cdots    \ar[r] &   Z_{\sF_{[j]},\Ga} \ar @{^{(}->} [u]\ar[r] &  Z_{\sF_{[j-1])},\Ga} \cdots \ar @{^{(}->} [u]  \ar[r] &  Z_\Ga  \ar @{^{(}->} [u]  \\
    \tZ^\dagger_{\eth, \Ga} \ar @{^{(}->} [u]  \ar[r] & \cdots  \ar[r] &  \tZ^\dagger_{\hs,\Ga}\ar @{^{(}->} [u]   \ar[r] &  \tZ^\dagger_{\hs',\Ga} \ar @{^{(}->} [u]  \ar[r] &  \cdots    \ar[r] &   Z^\dagger_{\sF_{[j]},\Ga} \ar @{^{(}->} [u]\ar[r] &  Z^\dagger_{\sF_{[j-1])},\Ga} \cdots \ar @{^{(}->} [u]  \ar[r] &  Z_\Ga, \ar[u]_{=}       }
\end{equation}
 where all vertical uparrows are closed embeddings. 

 Let $\Ga$ be a subset $\var_{\rU_\um}$. Assume that 
 $Z_\Ga$ is integral. Then, starting from $Z_\Ga$, step by step,
  via induction within every of the sequential $\vt$-, $\wp$-, and
  $\eth$-blowups, we are able to construct
 the third and fourth rows in the  diagram  \eqref{theDiagram} such that 
 
$(\star)$ every closed subscheme in the third row, $Z_{\sF_{[j]},\Ga}$, respectively,
$\tZ_{\hs}$, admits explicit local defining equations
in any standard chart of its corresponding smooth scheme in the first row;
 
$(\star)$ every closed subscheme in the fourth row $Z^\dagger_{\sF_{[j]},\Ga}$, respectively,
$\tZ^\dagger_{\hs}$, is an irreducible component of its
corresponding subscheme $Z_{\sF_{[j]},\Ga}$, respectively, $\tZ_{\hs}$, such that  the induced 
morphism $\hbox{$Z^\dagger_{\sF_{[j]},\Ga} \lra Z_\Ga$, respectively,
$\tZ^\dagger_{\hs}\lra Z_\Ga$}$
is surjective, projective, and birational.
 
$(\star)$   the left-most   $\tZ_{\eth, \Ga} $ is smooth; so is $\tZ^\dagger_{\eth, \Ga}$,
now a connected component of $\tZ_{\eth, \Ga} $.

{\it
In this article, a scheme $X$ is smooth if it is a disjoint union of connected smooth schemes of
possibly various dimensions.}

 The closed subscheme $Z_{\sF_{[j]},\Ga}$, called an $\sF$-transform of $Z_\Ga$,
  is constructed in Lemma \ref{wp-transform-sVk-Ga};
  the closed subscheme $Z_{\vt_{[j]},\Ga}$, called a $\vt$-transform of $Z_\Ga$,
  is constructed in Lemma \ref{vt-transform-k};
 the closed subscheme $\tZ_{(\wp_{(k\tau)}\fr_\mu\fs_{h}),\Ga}$, called a $\wp$-transform of $Z_\Ga$,
 is constructed in Lemma \ref{wp-transform-ktauh};
 the closed subscheme $\tZ_{(\eth_{(k\tau)}\fr_\mu\fs_{h}),\Ga}$, called an $\eth$-transform of $Z_\Ga$,
  is constructed in Lemma \ref{vr-transform-ktauh}.

Our main theorem on the Grassmannian is

\begin{thm}\label{main2:intro} 
{\rm (Theorems \ref{main-thm} and \ref{cor:main})} 
Let $\FF$ be either $\QQ$ or a finite field with $p$ elements where
$p$ is a prime number.
Let $\Ga$ be any subset of $\var_{\rU_\um}$.
Assume that $Z_\Ga$ is integral. 
Let $\tZ_{\eth,\Ga}$ be  the $\eth$-transform of $Z_\Ga$ in $\tsV_{\eth}$.
Then,   $\tZ_{\eth,\Ga}$ is smooth over $\FF$.
In particular, the induced morphism $\tZ^\dagger_{\eth,\Ga} \to Z_\Ga$ is a resolution over $\FF$,
if $Z_\Ga$ is singular.
\end{thm}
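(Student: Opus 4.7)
The plan is to verify smoothness of $\tZ_{\eth,\Ga}$ locally, on each standard affine chart $\fV$ of $\tsR_\eth$, by leveraging the explicit local descriptions of coordinate variables and local defining equations furnished by Propositions \ref{meaning-of-var-p-k} and \ref{equas-p-k}, and then chasing through the inductive construction outlined in the Lemmas that build the third and fourth rows of the diagram \eqref{theDiagram}. Since the $\eth$-transform $\tZ_{\eth,\Ga}$ is closed in $\tsV_\eth$, its defining ideal on $\fV$ is generated by the local equations of $\tsV_\eth \cap \fV$ together with the proper transforms (under all three sequential blowups) of the elements of $\Ga$. So the task reduces to showing that, after incorporating these $\Ga$-variables, the local equations define a smooth scheme over $\FF$.

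First I would establish the base case by analyzing $Z_\Ga$ on the unblown-up chart $\rU_\um$: the defining ideal is generated by the elements of $\Ga$ together with the de-homogenized $\um$-primary $\pl$ relations $\bF$. Then, proceeding through the $\vt$-blowups inductively via Lemma \ref{vt-transform-k}, the $\wp$-blowups via Lemma \ref{wp-transform-ktauh}, and finally the $\eth$-blowups via Lemma \ref{vr-transform-ktauh}, I would track at each step how the local equations of $\tZ_{\hs,\Ga}$ transform. The key point is that, on each standard chart, the exceptional divisors from previous blowups become coordinate variables with explicit geometric meaning (the starred Propositions), so the proper transforms of the $\Ga$-equations and the $\pl$-relations $\bF$ acquire a controlled, simplified form after dividing out the appropriate exceptional factors.

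The heart of the argument, and the main obstacle, is to verify that at the final $\eth$-stage the local defining equations of $\tZ_{\eth,\Ga} \cap \fV$ can be partitioned into two groups: a group of coordinate-variable equations (variables forced to vanish on the transform, including proper transforms of $\Ga$-elements and any leading variables decoupled by the $\wp$- and $\eth$-blowups), and a group of equations whose leading terms form a regular sequence in the remaining free coordinates. The $\wp$- and $\eth$-blowups were engineered precisely to achieve this decoupling: the $\wp$-blowups separate the two factors of each main binomial $B_{(k\tau)}$ into distinct Cartier divisors, while the $\eth$-blowups resolve the interaction between different main binomials and between main binomials and the linearized $\pl$ relations $L_F$. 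Once this regular-sequence structure is exhibited on every standard chart, smoothness of $\tZ_{\eth,\Ga}$ follows by the Jacobian criterion, and the promoted integrality hypothesis on $Z_\Ga$ is then used to identify $\tZ^\dagger_{\eth,\Ga}$ as a connected component that maps birationally and projectively onto $Z_\Ga$.

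Finally, the last assertion, that $\tZ^\dagger_{\eth,\Ga} \to Z_\Ga$ is a resolution when $Z_\Ga$ is singular, is immediate from the fourth-row properties recorded in the paragraph following \eqref{theDiagram}: the morphism is surjective, projective, and birational, and the source has just been shown to be smooth. The restriction to $\FF = \QQ$ or a finite prime field enters only to ensure that the field of definition of $Z_\Ga$ is perfect and that smoothness on geometric fibers can be read off from the Jacobian rank over $\FF$; the rest of the argument is characteristic-free and relies only on the combinatorial structure of the $\pl$ relations encoded in the sequential embedded blowups.
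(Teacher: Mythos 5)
Your outline heads in the right direction — verify smoothness chart by chart via a Jacobian computation on the standard charts of $\tsR_\eth$ — but it misstates what $\tZ_{\eth,\Ga}$ is and then leaves the real work as an unsupported assertion. First, the defining ideal of $\tZ_{\eth,\Ga}\cap\fV$ is not ``the equations of $\tsV_\eth\cap\fV$ together with the proper transforms of the elements of $\Ga$.'' According to Corollary~\ref{eth-transform-up}, built up through Lemmas~\ref{wp-transform-sVk-Ga}, \ref{vt-transform-k}, \ref{wp-transform-ktauh} and \ref{vr-transform-ktauh}, the equations are $\{y=0: y\in\tGa^\zero_\fV\}$, $\{y=1: y\in\tGa^\one_\fV\}$, $\cB^\mn_\fV$, $\cB^\q_\fV$, and only the subset $L_{\fV,\sF^\star_{\um,\Ga}}$ of the linearized $\pl$ relations. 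The sets $\tGa^\zero_\fV$ and $\tGa^\one_\fV$ arise from a delicate inductive choice at each blowup step (when the previous transform lies in the center, one takes a rational slice of the total transform determined by the rank of a linear system — producing $y=1$ constraints in case $(\star b)$ — rather than a proper transform), and $\sF^\star_{\um,\Ga}$ is pruned according to $\Ga$-irrelevant $\pl$ relations. The proper-transform picture you describe does not produce these equations, and it is precisely this finer construction that makes the transform small enough to be smooth.

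Second, the assertion that ``the local defining equations can be partitioned into coordinate-variable equations and equations whose leading terms form a regular sequence'' is the entire content of the theorem and you give no mechanism for it. The paper's actual proof proceeds quite differently: at a fixed closed point $\bz$, the main binomials of each block $\cB^\mn_{F_k}$ are split into ``original'' ones (terminating already at $\pi(\bz)\in\sR_\sF$) and ``$\hs$-intrinsic'' ones (Definition~\ref{defn:termination}). For the $\hs$-intrinsic part, Lemma~\ref{pre-tri} shows the Jacobian in the terminating central variables is invertible lower-triangular at $\bz$; for the original part, one performs explicit column operations (Cases $\alpha$, $\beta$, $\gamma$) to show the partial Jacobian in $(x_{\uu_F}, x_{(\um,\uu_F)}, x_{(\uu_{s_i},\uv_{s_i})})$ is invertible, crucially using $x_{\uu_F}(\bz)\ne 0$ and Corollary~\ref{no-(um,uu)}. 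The ``pleasant variable'' notion (Definition~\ref{pleasant-v}) then assembles these block Jacobians into a block-lower-triangular matrix of full rank. Finally, the binomials $\cB^\q_\fV$ are not shown directly to be redundant: one first shows $\tsV_\eth$ (the case $\Ga=\emptyset$) is smooth using only $\cB^\mn_\fV$ and $L_{\fV,\sfm}$, deduces that the $\cB^\q_\fV$ lie in the ideal they generate, and only then treats general $\Ga$. Your sketch names none of these steps, and without them there is no reason to believe the needed Jacobian rank holds.
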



The proof of Theorem \ref{main2:intro} 
 (Theorems \ref{main-thm} and \ref{cor:main}) is based upon the explicit  description of
 the main binomials and linearized $\pl$ defining equations of $\tZ_{\eth,\Ga}$
 (Corollary \ref{eth-transform-up})
 and detailed calculation and careful analysis on the Jacobian of these equations (\S \ref{main-statement}).


Theorem \ref{main:intro} is  obtained by applying Theorem \ref{main2:intro},
combining with Lafforgue's version of Mn\"ev's unversality theorem
(Theorems \ref{Mn-La} and \ref{Mn-La-Gr}).
 
In general, consider any fixed singular integral scheme  $X$. 
By Theorem \ref{main:intro}, $X$ can be covered by finitely many affine open subsets such that
every of these affine open subsets of  $X$ admits a resolution. 
It remains to glue finitely many  such local resolutions 
to obtain  a global one. This is being pursued.


We learned that Hironaka posted a preprint on resolution of singularities 
in positive characteristics \cite{Hironaka17}.

In spite of the current article, the author is not in a position to survey the topics of
resolution of singularities, not even very briefly.
We refer to Koll\'ar's book \cite{Kollar} for an extensive list of references on resolution of singularities.
 There have been some recent progresses since the book \cite{Kollar}:
 risking inadvertently omitting some other's works, let us just mention a few recent ones
\cite{ATW},  \cite{McG}, and \cite{Temkin}.


\medskip
The approach presented in this paper was inspired by  \cite{Hu15}.
The two articles, however, are mathematically independent. 

The author is grateful to the anonymous reviewers for their very helpful questions
and constructive suggestions, especially for pointing out
the insufficiency of the previous version. 
He thanks J\'anos Koll\'ar and Chenyang Xu for the suggestion to write 
a summary  section, \S \ref{tour}, to lead the reader a quick tour through the paper.

He thanks Laurent Lafforgue for a suggestion and sharing a general question. 
He thanks Caucher Birkar, James McKernan, and Ravi Vakil for the invitation to speak in 
seminars and for useful correspondences. He thanks
Bingyi Chen for spotting an error and  inaccuracy in the proof of Theorem 10.5.

\bigskip 

\centerline {A List of Fixed Notations Used Throughout}
\medskip

\smallskip\noindent $[n]$: the set of all  integers from 1 to $n$, $\{1, 2 \cdots ,n \}.$

\noindent 
$\II_{d,n}$: the set of all sequences of integers $\{(1 \le u_1 < \cdots < u_d \le n) \}.$

\noindent
$\PP(\wedge^d E)$: the projective space with $\pl$ coordinates 
$p_\ui, \ui \in \II_{d,n}$. 



\noindent
$I_\wp$: the ideal of $\kk[p_\ui]_{\ui \in \II_{d,n}}$ generated by all $\pl$ relations.

\noindent
$\rU_\um$: the affine chart of $\PP(\wedge^d E)$ defined by $p_\um \ne 0$ for some fixed $\um \in \II_{d,n}$.

\noindent
$\sfm$: the set of $\um$-primary $\pl$ equations.

\noindent
$L_{\sfm}$: the set of all linearized $\um$-primary $\pl$ equations.

 \noindent
$\up:= {n \choose d} -1 - d(n-d)$: the cardinality of $\sfm$;

\noindent
$\fV$:  a standard affine chart of an ambient smooth scheme; 
 
 \noindent 
$\cB^\mn$:  the set of all main binomial relations; 

 \noindent 
$\cB^\res$:  the set of all  residual binomial relations; 

\noindent 
$\cB^\pq$:  the set of all  binomial relations of pre-quotient type;

 \noindent 
$\cB^\q$:  the set of all  binomial relations of quotient type; 

 \noindent 
$\cB$:   $\cB^\mn \sqcup \cB^\res \sqcup \cB^\q$;


\noindent $\Ga$: a subset of $\rU_\um$.

\noindent $A \- a$: $A\-\{a\}$ where $A$ is a finite set and $a \in A$.

\noindent $|A|$: the cardinality of a finite set $A$.

\noindent $\kk$: a fixed perfect field.

\section{A Quick Tour: the main idea and approach}\label{tour}

{\it This section may be skipped entirely if the reader  prefers to dive into
the main text immediately. However, reading this section first is recommended.}

\medskip
$\bullet$ {\it A detour to $\Gr^{3,E}$ via Mn\"ev's universality.} 

By Mn\"ev's universality, any singularity over $\ZZ$ appears in a thin Schubert cell of 
the Grassmannian $\Gr^{3,E}$ of three dimensional linear subspaces in a vector space $E$.

Consider the $\pl$ embedding $\Gr^{3,E} \subset \PP(\wedge^3 E)$ 
with $\pl$ coordinates $p_{ijk}$.
A thin Schubert cell of $\Gr^{3,E}$ is a nonempty intersection of codimension one Schubert cells
of  $\Gr^{3,E}$; it corresponds to  a matroid $\ud$ of rank 3 on the set $[n]$.
Any Schubert divisor is defined by $p_{ijk} =0$ for some $(ijk)$. Thus, 
a thin Schubert cell  $\Gr^{3,E}_\ud$ of the matroid $\ud$
 is an open subset of the closed subscheme $\overline{Z}_\Ga$ of
$\Gr^{3,E}$ defined by $\{p_{ijk}=0 \mid p_{ijk} \in \Ga\}$ for some subset $\Ga$ of all $\pl$ variables.
The thin Schubert cell  must lie in an affine chart $(p_\um \ne 0)$ for some $\um \in \II_{3,n}$.
Thus,  $\Gr^{3,E}_\ud$  is an open subset of a closed subscheme 
of  $\Gr^{3,E} \cap (p_\um \ne 0)$ of
the following form 
$$Z_\Ga=\{ p_{ijk} =0 \mid p_{ijk} \in \Ga\} \cap \Gr^{3,E} \cap (p_\um \ne 0).$$ 
This is a closed affine subscheme of the affine chart $(p_\um \ne 0)$.
We aim to resolve $Z_\Ga$, hence also the thin Schubert cell  $\Gr^{3,E}_\ud$, when both
are integral and singular.

\medskip
 $\bullet$ {\it Minimal set of $\pl$ relations for the chart $(p_\um \ne 0)$.} 
 
 Up to permutation, we may assume that $\um=(123)$ and the chart is
$$\rU_\um:=(p_{123} \ne 0).$$
We write the de-homogenized coordinates of $\rU_\um$ as
$$\{x_{abc} \mid (abc) \in \II_{3,n} \- \{(123) \}.$$

As a closed subscheme of the affine space $\rU_\um,$
$Z_\Ga$ is defined by
$$\{ x_{ijk} =0, \;\; \bF=0 \; \mid \; x_{ijk} \in \Ga\},$$
where $\bF$ rans over all de-homogenized $\pl$ relations.
We need to pin down some explicit $\pl$ relations to form 
a minimal set of generators of $\Gr^{3,E} \cap \rU_\um$.

They are of the following forms:
\begin{eqnarray}
\bF_{(123),1uv}=x_{1uv}-x_{12u}x_{13v} + x_{13u}x_{12v}, \label{rk0-1uv}\\
\bF_{(123),2uv}=x_{2uv}-x_{12u}x_{23v} + x_{23u}x_{12v}, \label{rk0-2uv} \\
\bF_{(123),3uv}=x_{3uv}-x_{13u}x_{23v} + x_{23u}x_{13v} ,\label{rk0-3uv}\\
\bF_{(123),abc}=x_{abc}-x_{12a}x_{3bc} + x_{13a}x_{2bc} -x_{23a}x_{1bc}, \label{rk1-abc}
\end{eqnarray}
where $u < v \in [n]\-\{1,2,3\}$ and $a<b<c \in [n]\-\{1,2,3\}$.
Here, $[n]=\{1,\cdots,n\}$.

In a nutshell, we have the set
\begin{eqnarray}\label{rk0-and-1}
\sfm=\{\bF_{(123),iuv},  \; 1 \le i \le 3; \;\; \bF_{(123),abc}\}
\end{eqnarray}
Every relation of $\sfm$ is called $\um$-primary. Here, $\um=(123)$.



\medskip
$\bullet$ {\sl Nicely described equations of $\Ga$-schemes and arbitrary intersections.}

Hence, as a closed subscheme of the affine space $\rU_\um,$
$Z_\Ga$ is defined by
\begin{eqnarray}\label{normal-form}
Z_\Ga=\{ x_\uu =0, \;\; \bF_{(123),iuv},  \; 1 \le i \le 3, \;\; \bF_{(123),abc}  \mid x_\uu \in \Ga\},
\end{eqnarray}
for all $u < v \in [n]\-\{1,2,3\}$ and $a<b<c \in [n]\-\{1,2,3\}$.
The thin Schubert cell $Z_\Ga^\circ$ in $Z_\Ga$ is characterized by
$x_\uv \ne 0$ for any $x_\uv \notin \Ga$ (see Proposition \ref{to-Ga} and \eqref{ud=Ga}.)

Upon setting $x_\uu =0$ with $\uu \in \Ga$, we obtain 
the affine coordinate subspace $$\rU_{\um,\Ga} \subset \rU_\um$$ such that $Z_\Ga$,
 as a closed subscheme of
 the affine subspace $\rU_{\um,\Ga}$, is defined by 
 \begin{eqnarray}\label{reduced-normal-form}
\{\bF_{(123),iuv}|_\Ga, \; 1 \le i \le 3;  \;\;  \bF_{(123),abc}|_\Ga\},
\end{eqnarray}
where $\bF|_\Ga$ denotes the restriction of $\bF$ to the affine subspace $\rU_{\um,\Ga}$.
These are in general truncated  $\pl$ equations, some of which  may be identically zero.

 {\it One may view \eqref{normal-form} as the normal form of singularities,
and \eqref{reduced-normal-form} as the reduced normal form of singularities.}

We  {\it do not} analyze singularities of $Z_\Ga$.

But, we make some remarks.
 By the normal form of sinhularities \eqref{normal-form},
the $\Ga$-scheme $Z_\Ga$ is cut out from the affine chart $\rU_\um$ of the Grassmannian
$\Gr^{3,E}$ by the coordinate hyperplanes $x_\uu =0$
for all $x_\uu \in \Ga$. Although $Z_\Ga$ as well as the thin Schubert cell $Z_\Ga^\circ$
(see the sentence below \eqref{normal-form})
are nicely described by
$\pl$ variables and $\pl$ relations,  the intersections of these
coordinate hyperplanes with the chart $\rU_\um$ of the Grassmannian 
$\Gr^{3,E}$ are arbitrary, according to
Mn\"ev's universality. We may view $\rU_\um$ (allowing 
$\Gr^{3,E}$ to vary) as a universe that contains arbitrary singularities.
Hence, intuitively, we need to birationally change the universe $``$along these intersections" 
 so that eventually in the new universe, $``$they" re-intersect properly. 




 To achieve this, it is more workable if we can
put all the singularities in a different universe $\sV_\um$, 
birationally modified from the chart $\rU_\um \cap \Gr^{3,E}$,
  so that in the new model $\sV_\um$,  all the terms of 
the above $\pl$ relations can be separated. 
(Years had been passed before we {\it came  back} to the right approach.)

\medskip
$\bullet$ {\sl Separating the terms of $\pl$ relations.}

Motivated by \cite{Hu15}, we establish 
a local model  $\sV_\um$, birational  to the chart $\rU_\um \cap \Gr^{3,E}$,
such  that in a {\it specific} set of defining binomial equations of $\sV_\um$, {\it all the terms} of 
the above $\pl$ relations are separated.

To explain, we introduce the projective space $\PP_F$ for each and
 every $\pl$ relation $F=\sum_{s \in S_F} \vsgn (s) p_{\uu_s}p_{\uv_s}$ with
$[x_{(\uu_s,\uv_s)}]_{s \in S_F}$ as its homogeneous coordinates. 

We then let $\sV_\um$ be the closure of the graph of the rational map $\bar\Theta_{[\up],\Gr}$ of
 \eqref{this-theta-intro} in the case of $\Gr^{3,E}$.
 (This is  motivated by an analogous construction in \cite{Hu15}.)
By calculating the multi-homogeneous kernel of the homomorphism
\begin{equation}\label{vi-hom} \bar\vi: \kk[(x_\uw);(x_{(\uu,\uv)})] \lra \kk[x_\uw]
\end{equation}
$$x_{(\uu,\uv)} \to x_\uu x_\uv,$$
we determine a set of defining relations of $\sV_\um$ as a closed subscheme
of the smooth ambient scheme $$\sR_\sF:=\rU_\um \times \prod_{\bF \in \sfm} \PP_F.$$ 
These defining relations, among many others, 
include the following binomials
\begin{eqnarray}\label{mainB-tour}
x_{1uv}x_{(12u,13v)} - x_{12u}x_{13v} x_{(123,1uv)}, \; x_{1uv}x_{(13u,12v)}- x_{13u}x_{12v}x_{(123,1uv)}, \\
x_{2uv}x_{(12u,23v)} -x_{12u}x_{23v} x_{(123,2uv)}, \;  x_{2uv}x_{(23u,12v)}-x_{23u}x_{12v} x_{(123,2uv)}, \nonumber \\
x_{3uv}x_{(13u,23v)} -x_{13u}x_{23v}x_{(123,3uv)}, \;  x_{3uv}x_{(23u,13v)} -x_{23u}x_{12v}x_{(123,3uv)},\nonumber\\
 x_{abc}x_{(12a,3bc)}-x_{12a}x_{3bc} x_{(123,abc)},\;
 x_{abc}x_{(13a,2bc)}-x_{13a}x_{2bc} x_{(123,abc)},\nonumber \\
x_{abc}x_{(23a,1bc)} -x_{23a}x_{1bc}x_{(123,abc)}. \nonumber
\end{eqnarray}
We see that  the terms of all the $\um$-primary $\pl$ relations 
of \eqref{rk0-and-1} are separated into the two terms of the above binomials.

 To distinguish, we call $x_\uu$ (e.g., $x_{12u}$)
a $\vp$-variable and $X_\uu=(x_\uu=0)$  a $\vp$-divisor;
 we call $x_{(\uu,\uv)}$ (e.g., $x_{(12u,13v)}$) a $\vr$-variable and
$X_{(\uu,\uv)}=(x_{(\uu,\uv)}=0)$ a $\vr$-divisor.

The defining relations 
also include the linearized $\pl$ relations as in \eqref{eq2-intro}:
$$L_F=\sum_{s \in S_F} \vsgn (s) x_{(\uu_s,\uv_s)}, \; \; \forall \; \bF \in \sfm.$$
The set of all  linearized $\pl$ relations is denoted by $L_{\sfm}$.

There are many other {\it extra} defining relations.

All the $\Ga$-schemes $Z_\Ga$ admit  birational transforms in the singular model
$\sV_\um$. We still {\it do not} analyze the singularities of these transforms. But, we 
make a quick observation: 
when all the terms of  some of the binomials in \eqref{mainB-tour}
vanish at a point of the transform of a $\Ga$-scheme, then a singularity is likely to occur.

We immediately point out here that in the  the singular model
$\sV_\um$, for the birational transform of the thin Schubert cell $Z_\Ga^\circ$,
only for $\vp$-variables $x_\uu$, the thin cell is characterized by 
$``$either $x_\uu =0$ or $x_\uu \ne 0$ for any $x_\uu$ . But, there are numerous $\vr$-variables $x_{(\uu,\uv)}$ on the birational transform of the thin Schubert cell $Z_\Ga^\circ$ that 
are not subject to any such conditions (i.e., they can assume zero or nonzero values on the birational transform of the thin Schubert cell). 


Thus, first, we would like to $``$remove$"$ all the zero factors of all the terms of 
the binomials in \eqref{mainB-tour}.

As it turns out, through years of $``$trial and error$"$, 
$``$removing$"$ all the zero factors of all the  binomial relations of \eqref{mainB-tour}
suffices for our ultimate purpose. 

{\it The geometric intuition behind the above sufficiency is as follows.
 The equations of \eqref{mainB-tour} alone together with $L_{\sfm}$ only define
a reducible closed scheme, in general. The roles of other extra relations (to be discussed soon)
are to  pin down
its main component $\sV_\um$. 
As the process of $``$removing$"$ zero factors goes, a process of some specific blowups, 
all the boundary components are eventually blown out of existence, making  
the proper transforms of \eqref{mainB-tour} together with the linearized
$\pl$ relations  generate the ideal of the final blowup scheme $\tsV_\eth$ of $\sV_\um$,
on all charts.}

We thus call the binomial equations of  \eqref{mainB-tour} the {\it main} binomials.
The set of  main binomials is denoted $\cB^\mn$. 
The set $\cB^\mn$ is equipped with a somewhat carefully chosen total ordering 
(see \eqref{indexing-Bmn}).

The defining relations of $\sV_\um$ in $\sR_\sF$ also include many other binomials: we classify them
as {\it residual} binomials  (see Definition \ref{defn:main-res}) 
and binomials {\it of pre-quotient type} (see Definition \ref{defn:pre-q}).
The set of  residual binomials is denoted $\cB^\res$;
the set of  binomials of pre-quotient type is denoted $\cB^\pq$.

Together, the equations in the following sets
$$\cB^\mn, \; \cB^\res, \; \cB^\pq, \; L_{\sfm},\; \sfm$$
define the scheme $\sV_\um$ in the smooth ambient scheme $\sR_\sF$.
See Corollary \ref{eq-tA-for-sV}. 

When we focus on an arbitrarily fixed chart $\fV$ of $\sR_\sF$,
binomials  of pre-quotient type of $\cB^\pq$
can be further reduced to binomials {\it of quotient type} whose set
is denoted by $\cB^q_\fV$. See Definition \ref{defn:q} and Proposition \ref{equas-p-k=0}. 
(For the  reason to use the term {\it $``$of quotient type$"$}, see \cite{Hu15}.)

As mentioned in the introduction, for the purpose of inductive proofs,
we also need the rational map $\bar\Theta_{[k],\Gr}$ 
of  \eqref{this-theta[k]-intro}, 
and we let $\sV_{\sF_{[k]}}$ be the closure of the rational map of $\bar\Theta_{[k],\Gr}$, 
for all $k \in [\up]$.
In this notation, $\sV_\um=\sV_{\sF_{[\up]}}$. We let 
$$\sR_{\sF_{[k]}}:=\rU \times \prod_{i \in [k]} \PP_{F_i}.$$ 
This is a smooth scheme and contains $\sV_{\sF_{[k]}}$ as a closed subscheme.
Further, we have the natural forgetful map
\begin{equation} \label{forgetful-guide}
\sR_{\sF_{[k]}} \lra \sR_{\sF_{[k-1]}}. 
\end{equation}

\medskip
$\bullet$ {\sl  The process of $``$removing$"$ zero factors of main binomials.}

 To remove zero factors of main binomials, we either work on each   primary 
 $\pl$ relation individually (in the case of a $\vt$-blowup), or work on each 
main binomial individually (in the case of a $\wp$- or an $\eth$-blowup). 
To this end, we need to provide a total order on the set $\sfm$.

We let
$\{\bF_{(123),iuv},  \; 1 \le i \le 3\}$ go first, then followed by $\{\bF_{(123),abc}\}$.
Within $\{\bF_{(123),iuv},  \; 1 \le i \le 3\}$, we say $\bF_{(123),iuv} < \bF_{(123),ju'v'}$
if $(uv)<(u'v')$  lexicographically, or when  $(uv)=(u'v')$ , $i<j$.
Within $\{\bF_{(123),abc}\}$,  we say $\bF_{(123),abc} < \bF_{(123),a'b'c'}$
if $(abc)<(a'b'c')$  lexicographically. This ordering is compatible with that of
$\cB^\mn$.


The purpose of $``$removing$"$ zero factors is achieved through sequential blowups based
upon factors of main binomials and their proper transforms. We break
 the sequential blowups into three types, named as $\vt$-, $\wp$-, and $\eth$-blowups;
 besides $``$removing$"$ zero factors,  each serves its own corresponding purpose.

\smallskip
{\it $\star$ On $\vt$-blowups.}  

 From the main binomial equations of \eqref{mainB-tour}, we select the following closed centers
 $$\cZ_\vt: (x_{iuv}=0) \cap (x_{(123, iuv)}=0),  i \in [3]; \;
  (x_{abc}=0) \cap (x_{(123, abc)}=0), a \ne b \ne c \in [n] \- [3].$$
  We order the sets $\{(uv)\}$ and  $\{(abc)\}$ lexicographically;
  we  order  $\{(iuv)\}$, written as  $\{(i, (uv)) \mid i \in [3]\}$, reverse-lexicographically.
  We then let $\{(iuv)\}$ go before  $\{(abc)\}$.
  This way,  the set $\cZ_\vt$ is equipped with a total order induced from the above orders on the indexes.

We then blow up $\sR_\sF$ along (the proper transforms of) the centers in $\cZ_\vt$, in the above order.
This gives rise to the sequence \eqref{vt-sequence-intro} in the introduction
$$\tsR_{\vt}:=\tsR_{\vt_{[\up]}}  \lra \cdots \lra \tsR_{\vt_{[k]}} \lra \tsR_{\vt_{[k-1]}} \lra \cdots \lra \tsR_{\vt_{[0]}}.$$

For any $k \in [\up]$, we let $\tsV_{\vt_{[k]}} \subset  \tsR_{\vt_{[k]}}$ 
be the proper transform of $\sV_\um$ in $\tsR_{\vt_{[k]}}$. We then set
$\tsV_{\vt}=\tsV_{\vt_{[\up]}}$ and $ \tsR_{\vt}= \tsR_{\vt_{[\up]}}$.


Besides removing the zero factors as displayed  in  the centers of $\cZ_\vt$,
 $\vt$-blowups also make the proper transforms of the residual binomial equations become
 dependent on the proper transforms of the main binomial equations on any standard chart.
 In particular, it also leads to 
 the conclusion $\tsV_{\vt} \cap X_{\vt, (\um, \uu_k)} = \emptyset$ for all $k \in [\up]$
 where $X_{\vt, (\um, \uu_k)}$ is the proper transform of the $\vr$-divisor 
$X_{(\um, \uu_k)}=(x_{(\um, \uu_k)}=0)$  
 
 Thus, upon completing $\vt$-blowups, we can discard all the residual binomials $\cB^\res$ from
 consideration.

\smallskip
{\it $\star$ On $\wp$-blowups.}

Here, we continue the process of $``$removing$"$ zero factors
of the proper transforms of the main binomials. From now on, we focus on each main binomial individually,
starting from the first one.

The first main binomial equation of \eqref{mainB-tour} is 
$$B_{145}:  x_{145}x_{(124,135)} - x_{124}x_{135} x_{(123,145)}.$$
The proper transforms of all the variables of $B_{145}$ may assume zero value on 
$\tsV_{\vt}$ except $x_{(123,145)}$ since 
$\tsV_{\vt}\cap X_{\vt, (123,145)} = \emptyset$, where  $X_{\vt, (123,145)}$ 
is the proper transform of  $X_{(123,145)}=( x_{(123,145)}=0)$ in $\tsR_\vt$.
For each and every term of $B_{145}$, we pick a $``$zero$"$ 
 factor to form a pair, but, {\it we do not pick any 
 $\vr$-variable.} We do not pick $x_{(123,145)}$ because 
$\tsV_{\vt}\cap X_{\vt, (123,145)} = \emptyset$; we do not pick 
$x_{(124,135)}$ for a good reason.
 Such a pair is called a $\wp$-set with respect to $B_{145}$. 
 Then, there are two such pairs.
\begin{equation} \label{1st-p-sets}
\phi_1=(x_{145}, x_{124}),  \;\;
\phi_2=(x_{145}, x_{135}). \end{equation} 
(The fact that there are only two $\wp$-sets for the first equation is an accident;
for general main binomial, the number of its corresponding $\wp$-sets may be huge.)
 The common vanishing loci of the variables in $\wp$-sets give rise to  the $\wp$-centers
 \begin{equation} \label{1st-p-centers}
Z_{\phi_1}=X_{\vt, 145} \cap X_{\vt, 124} ,\;
Z_{\phi_2}=X_{\vt, 145} \cap X_{\vt, 135}, \end{equation}
where $X_{\vt, \uu}$ is the proper transform of $X_\uu$ in $\tsR_\vt$.
We can then blow up $\tsR_\vt$ along (the proper transforms) of $Z_{\phi_1}$ and $Z_{\phi_2}$.

We then move on to the next main binomial equation
$$B_{245}:  x_{245}x_{(124,235)} - x_{124}x_{235} x_{(123,245)}.$$
Note that $x_{124}$ appears in $\phi_1$, hence
the minus term of the proper transform of $B_{245}$  acquires the exceptional parameter 
$\zeta$ created by the blowup along $Z_{\phi_1}$ through the variable $x_{124}$
($x_{124}$ either turns into the exceptional parameter $\zeta$ or acquires it).   
This is an additional $``$zero$"$ factor in
the minus term of the proper transform of $B_{245}$.
Then, for each and every  term of the proper transform of $B_{245}$, we pick a 
 factor, including exceptional parameters, to form a pair, again, {\it we do not pick any 
 $\vr$-variables.} Such a pair is called a $\wp$-set with respect to $B_{245}$.  
 The common vanishing loci of the variables in  $\wp$-sets 
 give rise to  $\wp$-centers in the previously obtained blowup scheme. 
 We can blow up  that scheme along (the proper transforms) of these centers.
 
 We then move on to $B_{345}$, repeat the above, and so on. 
 
 This gives rise to the sequential blowups \eqref{wp-sequence-intro} in the introduction
 $$\tsR_{\wp}  \to \cdots \to
\tsR_{(\wp_{(k\tau)}\fr_\mu\fs_{h})} \to \tsR_{(\wp_{(k\tau)}\fr_\mu\fs_{h-1})} \to \cdots \to \tsR_{\vt}.$$
An intermediate blowup scheme in the above 
is denoted by $\tsR_{(\wp_{(k\tau)}\fr_\mu\fs_h)}$. Here $(k\tau)$ is the index of a main binomial.
As the process of $\wp$-blowups goes on, more and more exceptional 
parameters may be acquired and appear in the proper transform of the later main binomial $B_{(k\tau)}$, 
resulting more pairs of zero factors, hence more corresponding $\wp$-sets
and $\wp$-centers. 
The existence of the index $\fr_\mu$, called {\it round $\mu$}, 
is due to the need to deal with the situation when 
an  exceptional parameter with exponent greater than one is accumulated
in the proper transform of the main binomial $B_{(k\tau)}$ (such a situation does not occur for the
first few main binomials).
The index $\fs_h$, called {\it step $h$}, simply indicates the corresponding step of the blowup.

Besides removing the zero factors,  
the reason that we exclude $\vr$-variables from $\wp$-sets is to help to control 
the binomial equations of quotient type.

 \smallskip

 {\it $\star$ On $\eth$-blowups.}
 
 Here, we finalize the process of $``$removing$"$ zero factors
of the proper transforms of all the main binomials.   
Like in the $\wp$-blowups, we focus on each main binomial relation individually,
starting from the first one.  The construction is analogous to that of $\wp$-blowup.

By induction, suppose we are now considering a main binomial $B_{(k\tau)}$.
 For each and every term of the proper transform of
  the main binomial $B_{(k\tau)}$ in the previously obtained blowup scheme, we pick a possible $``$zero$"$ 
 factor to form a pair. Here, we do not exclude any variable any more. 
 Such a pair is called $\eth$-sets with respect to $B_{(k\tau)}$.  They give rise to
 $\eth$-centers with respect to $B_{(k\tau)}$.
 The set of all $\eth$-center comes equipped with a total order.
 We then  blow up the previously obtained scheme
  along (the proper transforms) of these $\eth$-centers.

 This gives rise to the final sequential blowups  \eqref{hs-sequence-intro} in the introduction
$$\tsR_{\eth}  \to \cdots \to
\tsR_{(\eth_{(k\tau)}\fr_\mu\fs_{h})} \to \tsR_{(\eth_{(k\tau)}\fr_\mu\fs_{h-1})} \to \cdots \to \tsR_{\wp}.$$
Here again, the index $(k\tau)$ indicates the main binomial $B_{(k\tau)}$;
 the existence of the index $\fr_\mu$ is due to the need to deal with
the excessive accumulation of exceptional parameters;  $\fs_h$ simply indicates 
the corresponding step of the blowup.

In the above, the constructions of $\wp$- and $\eth$-blowups are discussed in terms of coordinate variables
 of the proper transforms of the main binomials on local charts.
 In the main text, the constructions of all these  $\wp$-  and $\eth$-blowups, like $\vt$-blowups, are done globally via induction.

 \smallskip
 (From the previous discussions, one sees that the process of $\wp$- and $\eth$-blowups is highly inefficient.
To provide a concrete example for the whole process,  $\Gr(2,n)$ would miss some main points;
$\Gr(3,6)$ would be too long to include, and also, perhaps not too helpful as far as 
showing (a resolution of) a singularity is concerned.)

\medskip
$\bullet$ {\sl  $\Ga$-schemes and their $\vt$-, $\wp$-, $\eth$-transforms.}

Fix any integral $\Ga$-scheme $Z_\Ga$, 
considered as a closed subscheme of $\rU_\um \cap \Gr^{3,E}$.
Our goal is to resolve $Z_\Ga$ when it is singular.

 As in the introduction,
  we have the following instrumental diagram \eqref{theDiagram2}.

The first two rows follow from the above discussion;
we  only need to explain the third and fourth rows.

  Here,  when  $Z_{\sF_{[j-1])}}$  (resp.  $\tZ_{\hs',\Ga}$) is not contained in the corresponding
  blowup center, $Z_{\sF_{[j])}}$  (resp.  $\tZ_{\hs,\Ga}$) is, roughly, obtained
  from  the proper transform
  of $Z_{\sF_{[j-1])}}$  (resp.  $\tZ_{\hs',\Ga}$). 
  When  $Z_{\sF_{[j-1])}}$  (resp.  $\tZ_{\hs',\Ga}$) is contained in the corresponding
  blowup center, then $Z_{\sF_{[j])}}$  (resp.  $\tZ_{\hs,\Ga}$) is, roughly,
  obtained from  a canonical rational slice
  of  the total  transform of $Z_{\sF_{[j-1])}}$  (resp.  $\tZ_{\hs',\Ga}$) under the morphism 
   $ \sV_{\sF_{[j]}} \to  \sV_{\sF_{[j-1]}}$ (resp. $\tsV_\hs \to \tsV_{\hs'}$) in the second row.
   Moreover, every $Z_{\sF_{[j])}}$  (resp.  $\tZ_{\hs,\Ga}$) admits explicit defining equations 
   over any standard affine chart of the corresponding smooth scheme in the first row.
Furthermore, in every case, $Z_{\sF_{[j])}}$  (resp.  $\tZ_{\hs,\Ga}$)  contains an irreducible
component  $Z^\dagger_{\sF_{[j]},\Ga}$  (resp. 
   $\tZ^\dagger_{\hs,\Ga}$) such that it maps onto $Z_\Ga$ birationally.

\begin{equation}\label{theDiagram2}
 \xymatrix@C-=0.4cm{
  \tsR_{\eth} \ar[r] & \cdots  \ar[r] &  \tsR_{\hs} \ar[r] &  \tsR_{\hs'} \ar[r] &  \cdots  \ar[r] &   \sR_{\sF_{[j]}}  \ar[r] &  \sR_{\sF_{[j-1]}} \cdots \ar[r] &  \rU_\um \\
    \tsV_{\eth} \ar @{^{(}->} [u]  \ar[r] & \cdots  \ar[r] &  \tsV_{\hs}\ar @{^{(}->} [u]   \ar[r] &  \tsV_{\hs'} \ar @{^{(}->} [u]  \ar[r] &  \cdots    \ar[r] &   \sV_{\sF_{[j]}} \ar @{^{(}->} [u]\ar[r] &  \sV_{\sF_{[j-1]}} \cdots \ar @{^{(}->} [u]  \ar[r] &  \rU_\um \cap \Gr^{3,E}   \ar @{^{(}->} [u]  \\
   \tZ_{\eth, \Ga} \ar @{^{(}->} [u]  \ar[r] & \cdots  \ar[r] &  \tZ_{\hs,\Ga}\ar @{^{(}->} [u]   \ar[r] &  \tZ_{\hs',\Ga} \ar @{^{(}->} [u]  \ar[r] &  \cdots    \ar[r] &   Z_{\sF_{[j]},\Ga} \ar @{^{(}->} [u]\ar[r] &  Z_{\sF_{[j-1])},\Ga} \cdots \ar @{^{(}->} [u]  \ar[r] &  Z_\Ga  \ar @{^{(}->} [u]  \\
    \tZ^\dagger_{\eth, \Ga} \ar @{^{(}->} [u]  \ar[r] & \cdots  \ar[r] &  \tZ^\dagger_{\hs,\Ga}\ar @{^{(}->} [u]   \ar[r] &  \tZ^\dagger_{\hs',\Ga} \ar @{^{(}->} [u]  \ar[r] &  \cdots    \ar[r] &   Z^\dagger_{\sF_{[j]},\Ga} \ar @{^{(}->} [u]\ar[r] &  Z^\dagger_{\sF_{[j-1])},\Ga} \cdots \ar @{^{(}->} [u]  \ar[r] &  Z_\Ga.  \ar[u]_{=}       }
\end{equation}
$\bullet$ {\sl Smoothness by Jacobian of 
main binomials  and linearized $\pl$ relations.}

We are now ready to explain the smoothness of 
 $\tZ_{\eth, \Ga}$ when $Z_\Ga$ is integral.
  We first investigate the smoothness of $\tsV_\eth$ which is a special case 
 of  $\tZ_{\eth, \Ga}$ when 
 $\Ga=\emptyset$.

The question is local. So we focus on an affine chart of $\fV$ of $\tsR_\eth$.
Proposition \ref{equas-p-k} provides key properties for 
the defining equations 
$\cB^\mn_\fV, L_{\fV, \sfm},  \cB^\q_\fV$ of $\tsV_\eth \cap \fV \subset \fV$.

As envisioned, we confirm that the scheme $\tsV_\eth$
 is smooth on the chart $\fV$ by some explicit calculations
and careful analysis on the Jacobian of
{\it the main binomial relations of $\cB^\mn_\fV$ and linearized $\pl$ relations of $L_{\fV, \sfm}$.}
This implies that on the chart $\fV$, 
 the main binomial relations of $\cB^\mn_\fV$ and the linearized $\pl$ relations 
 of $L_{\fV,\sfm}$
together generate the ideal of $\tsV_\eth \cap \fV$. Thus,  as a consequence, 
the binomials of quotient type  $\cB^q_\fV$ can be discarded from consideration.

Then, the similar calculations and analysis on the Jacobian of the induced main binomial relations of
$\cB^\mn_\fV$ and the induced linearized $\pl$ relations of $L_{\fV,\sfm}$ for 
$\tZ_{\eth,\Ga}$ implies that $\tZ_{\eth,\Ga}$  is smooth as well, on all charts $\fV$.
In particular, $\tZ^\dagger_{\eth,\Ga}$, now a connected component of $\tZ_{\eth,\Ga}$,  is  smooth, too.

This implies that $\tZ^\dagger_{\eth,\Ga} \lra Z_\Ga$ is a resolution, if $Z_\Ga$ is singular.

The above are done in \S \ref{main-statement}.

\smallskip
$\bullet$ {\sl Local resolution via Mn\"ev universality.} 

Upon reviewing Lafforgue's version of Mn\"ev universality,
we can apply the resolution $\tZ^\dagger_{\eth,\Ga} \lra Z_\Ga$ to obtain a local resolution for
any singular algebraic variety $X$ defined over a prime field. 
For a singular algebraic variety $X$ over a general perfect field $\kk$, 
we spread it out and deduce that $X/\kk$ admits local resolution as well.
This is done in \S \ref{local-resolution}.

\section{Primary $\pl$ Relations and De-homogenized  $\pl$-Ideal}\label{localization}

{\it The purpose of this section is to describe a minimal set of $\pl$ relations so that they
generate the $\pl$
ideal  for a given chart.  
The approach of this article depends on these explicit relations. 
The entire section is elementary.} 

Fix a pair of positive integers $n>1$ and $1 \le d <n$.
 In this section, we focus on Grassmannians $\Gr^{d,E}$ where
 $E=E_1 \oplus \cdots \oplus E_n$ is as introduced in the introduction.
 
 For application to resolution of singularity, it suffices to consider $\Gr^{3,E}$. However, 
 we choose to work on the general case of $\Gr^{d,E}$ 
 for the following two reasons.  (1) Working on $\Gr^{3,E}$ instead of  $\Gr^{d,E}$
 saves us little space or time: if we focus on 
 \eqref{rk0-and-1} but not the general form $\sum_{s \in S_F} x_{\uu_s}x_{\uv_s}$
 in the construction of $\vt$-, $\wp$-, and $\eth$-blowups,
then  the  proofs of some key propositions would have to
 be somewhat case by case,  less conceptual, and hence may be lengthier.
 However, it is  always good to frequently use the equations 
 of \eqref{rk0-and-1} and \eqref{mainB-tour}  as examples to help to
  understand the notations and the process.
  We caution here that replying only on $\pl$ equations 
  of the form $\bF_{(123),iuv},  \; 1 \le i \le 3$ from \eqref{rk0-and-1} 
 (they correspond to $\pl$ equations of $\Gr^{2,E}$)
  might miss some crucial points. 
 (2) As a convenient benefit, the results obtained and proofs provided
 for $\Gr^{d,E}$ here  can be directly cited in \cite{Hu20}.

 All the results of this section are elementary and some might have already been known. 
 Nonetheless,  the development in the current section is  instrumental for our approach.
 Hence, some good details are necessary.

We make a convention. Let $A$ be a finite set and $a \in A$. Then, we write
$$A \- a := A\-\{a\}.$$
Also, we use $|A|$ to denote the cardinality of the set A.

\subsection{$\pl$ relations} $\ $

Fix a pair of positive integers $(n,d)$ with $n >1$ and $1 \le d <n$. 
We denote the set $\{1,\cdots, n\}$ by $[n]$.
We let $\II_{d,n}$ be the set of all sequences of distinct integers $\{1 \le u_1 < \cdots < u_d \le n \}$.
An element of $\II_{d,n}$ is frequently written as $\uu=(u_1\cdots u_d)$.
We also regard an element of $\II_{d,n}$ as a subset of $d$ distinct integers in $[n]$.
For instance, for any $\uu, \um \in \II_{d,n}$, $\uu \- \um$ takes its set-theoretic meaning.
Also, $u \in [n] \- \uu$  if and only if $u \ne u_i$ for all $1 \le i \le d$.

As in the introduction, suppose we have a set of vector spaces, $E_1, \cdots, E_n$ such that 
every $E_\alpha$, $1 \le \alpha \le n$,  is of dimension 1 over $\kk$ (or, a free module of rank 1 over $\ZZ$), 
and, we let 
$$E:=E_1 \oplus \ldots \oplus E_n.$$ 

For any fixed  integer $1 \le d <n$, the Grassmannian, defined by
$$\Gr^{d,E}=\{ F \hookrightarrow E \mid \dim F=d\}, $$
is a projective variety defined over $\ZZ$.

We have the canonical decomposition
$$\wedge^d E=\bigoplus_{\ui =(i_1,\cdots, i_d)\in \II_{d,n}} E_{i_1}\otimes \cdots \otimes E_{i_d}.$$
This gives rise to the $\pl$ embedding of the Grassmannian:
$$\Gr^{d,E} \hookrightarrow \PP(\wedge^d E)=\{(p_\ui)_{\ui \in \II_{d,n}} \in \GG_m 
\backslash (\wedge^d E \- \{0\} )\},$$
$$F \lra [\wedge^d F],$$
 where $\GGm$ is the multiplicative group.

The group $(\GGm)^n/\GG_m$, where $\GGm$
 is embedded in $(\GGm)^n$ as the diagonal, acts on $\PP(\wedge^d E)$ by
 $${\bf t} \cdot p_{\ui} = t_{i_1} \cdots t_{i_d} p_{\ui}$$
where ${\bf t} = (t_1, \cdots, t_n)$ is (a representative of) an element of $(\GGm)^n/\GG_m$
and $\ui=(i_1, \cdots, i_d)$. This action leaves $\Gr^{d,E}$ invariant.
The $(\GGm)^n/\GG_m$-action on $\Gr^{d,E}$ will only be used in \S \ref{local-resolution}.

The Grassmannian $\Gr^{d,E}$ as a closed subscheme of $\PP(\wedge^d E)$ is
defined by a set of specific quadratic relations, called $\pl$ relations. We describe them below.

For narrative convenience, we will assume that
$p_{u_1\cdots u_d}$ is defined for any sequence of
$d$ distinct integers between 1 and $n$,  not necessarily listed in 
the sequential order of natural numbers,
subject to the relation
\begin{equation}\label{signConvention}
p_{\si(u_1)\cdots \si (u_d)}=\vsgn(\si) p_{u_1\cdots u_d}
\end{equation}
for any permutation $\si$ on the set $[n]$, 
where $\vsgn(\si)$ denotes the sign of the permutation.
Furthermore, also for convenience, we set 
\begin{equation}\label{zeroConvention}
 p_{\uu} := 0,
\end{equation}
for any $\uu=(u_1\cdots u_d)$ of a set of $d$  integers  in $[n]$ if
$u_i=u_j$ for some $1 \le i \ne j \le d$.

Now, for any pair $(\uh, \uk) \in \II_{d-1,n} \times \II_{d+1,n}$ with
$$\uh=\{h_1, \cdots, h_{d-1}\}  \;\;
 \hbox{and} \;\; \uk=\{k_1, \cdots, k_{d+1}\} ,$$
we have the Pl\"ucker relation:
\begin{equation} \label{pluckerEq}
F_{\uh,\uk}= \sum_{\lambda=1}^{d+1} (-1)^{\lambda-1} p_{h_1\cdots h_{d-1} k_\lambda } p_{k_1 \cdots  \overline{k_\lambda} \cdots k_{d+1}},
\end{equation}
where  $``\overline{k_\lambda}"$ means that $k_\lambda$ is deleted from the list.

{\it To make the presentation concise,
we frequently succinctly express a  general $\pl$ relation as
\begin{equation}\label{succinct-pl}
F= \sum_{s \in S_F} \vsgn(s) p_{\uu_s}p_{\uv_s},
\end{equation}
 for some index set $S_F$, with $\uu_s, \uv_s \in \II_{d,n}$, where
$ \vsgn(s) $ is the $\pm$ sign associated with the quadratic  monomial term $p_{\uu_s}p_{\uv_s}$.
We note here that $\vsgn(s)$ depends on 
how every of ${\uu_s}$ and ${\uv_s}$ is presented, per the convention \eqref{signConvention}.
}

\begin{defn}\label{ftF}
Consider any $\pl$ relation $F=F_{\uh,\uk}$ for some pair
 $(\uh, \uk) \in \II_{d-1,n} \times \II_{d+1,n}$.
We let $\ft_{F}+1$ be the number of terms in $F$. We then define the rank of
$F$ to be $\ft_{F}-2$. We denote this number by $\rk (F)$.
\end{defn}
The integer $\ft_{F}$, as defined above, will be frequently used throughout.

\begin{example}\label{exam:(3,6)}
Consider the Grassmannian $\Gr(3,6)$. Then, the $\pl$ relation
$$F_{(16), (3456)}:  p_{163}p_{456} - p_{164}p_{356} + p_{165}p_{346}$$
is of rank zero; the $\pl$ relation
$$F_{(12), (3456)}: p_{123}p_{456} - p_{124}p_{356} + p_{125}p_{346}- p_{126}p_{345}$$
is of rank one. 
\end{example}

Let $\ZZ[p_\ui]_{\ui \in \II_{d,n}}$ be the homogeneous coordinate ring
 of the $\pl$ projective space $\PP(\wedge^d E)$ and $I_\wp \subset \ZZ[p_\ui]_{\ui \in \II_{d,n}}$ 
be the homogeneous ideal generated by all the $\pl$ relations  \eqref{pluckerEq} or
\eqref{succinct-pl}.
We call $I_\wp$ the $\pl$ ideal for the Grassmannian $\Gr^{d,E}$.
Then, the graded quotient ring $\ZZ[p_\uu]_{\uu \in \II_{d,n}}/I_\wp$
is the homogeneous coordinate ring of $\Gr^{d,E}$, called the Grassmannian algebra.

\subsection{Primary $\pl$ equations with respect to a fixed affine chart} $\ $

In this subsection, we focus on a fixed affine chart of the $\pl$ projective space
$\PP(\wedge^d E)$.

Fix any $\um \in \II_{d,n}$. In $\PP(\wedge^d E)$,
we let $$\rU_\um:=(p_\um \equiv 1)$$  stand for the open chart
 defined by $p_\um \ne 0$.  Then,
the affine space $\rU_\um$  comes equipped with the 
coordinate variables $x_\uu=p_\uu/p_\um$ for all $\uu \in \II_{d,n} \- \um$.
In practical calculations, we will simply set $p_\um =1$, whence the notation
$(p_\um \equiv 1)$ for the chart. We let
$$\rU_\um (\Gr) = \rU_\um \cap \Gr^{d, E}$$
be the corresponding induced open chart  of $\Gr^{d, E}$.

The chart $\rU_\um (\Gr)$ is canonically an affine space.
Below, we explicitly describe $$\up:={n \choose d} -1- d(n-d) $$
many specific  $\pl$ relations with respect to the chart $\rU_\um$, called the $\um$-primary 
$\pl$ relations, such that their restrictions to the chart $\rU_\um$ define
$\rU_\um (\Gr)$ as a closed subscheme of the affine space $\rU_\um$.

To this end, we write  $\um=(m_1 \cdots m_d)$. We set 
$$\II^\um_{d,n}=\{\uu \in \II_{d,n} \mid |\uu \- \um| \ge 2\} \subset \II_{d,n}$$
where $\uu$ and  $\um$ are also regarded as subsets of integers, and $|\uu \- \um|$ denotes the cardinality of $\uu \- \um$. 
In  words,  $\uu \in \II^\um_{d,n}$ if and only if
$\uu=(u_1,\cdots, u_d)$ contains at least two elements distinct from elements in  
$\um=(m_1, \cdots, m_d)$. 
It is helpful to write explicitly the set $\II_{d,n} \- \II_{d,n}^\um$:
$$\II_{d,n} \- \II_{d,n}^\um =\{\um\} \cup \{ \{u\} \cup (\um\setminus m_i) \mid \; 
\hbox{for all $ u \in [n]\- \um$ and all $1 \le i \le d$} \}, $$
where $u \notin \um$ if and only if $u \ne m_i$ for any $1 \le i \le d$.
Then, one calculates and finds
$$|\II^\um_{d,n}|=\up={n \choose d} -1- d(n-d) ,$$
where $|\II^\um_{d,n}|$ denotes the cardinality of $\II^\um_{d,n}$.

Further, let $\ua=(a_1\cdots a_k)$ be a list of some elements of $[n]$, not necessarily mutually distinct,
 for some $k<n$.
We will write $$v \ua=v(a_1\cdots a_k)=(v a_1\cdots a_k)
\;\; \hbox{and} \;\;  \ua  v=(a_1\cdots a_k)v=(a_1\cdots a_k v),$$
each is considered as a list of some elements of $[n]$, 
for any $v \in [n] \- \ua$.

Now, take any element $\uu =(u_1,\cdots, u_d) \in \II_{d,n}^\um$.
We let $u_0$ denote the smallest integer in $\uu \- \um$. 
We then set 
$$ \uh=\uu \setminus u_0 \;\; \hbox{and} \;\;  \uk=(u_0 m_1 \cdots m_d),$$
where $\uu \setminus u_0 =\uu \setminus \{u_0\}$ and $\uu$ is regarded as a set of integers.

 This gives rise
to the $\pl$ relation $F_{\uh,\uk}$, taking of the following form
\begin{equation} \label{keyTrick}
F_{\uh,\uk}=p_{(\uu \setminus u_0)u_0} p_\um -  p_{(\uu \setminus u_0) m_1} p_{u_0 (\um \setminus m_1 )}
+\cdots + (-1)^d p_{(\uu \setminus u_0) m_d} p_{u_0 (\um \setminus m_d)},
\end{equation}
where $\um \setminus m_i = \um \setminus \{m_i\}$ and $\um$ is regarded as a set of integers,
for all $i \in [d]$.

We give a new notation for this particular equation: we denote it by
\begin{equation} \label{keyTrick2}
F_{\um, \uu}=p_{(\uu \setminus u_0)u_0} p_\um + \sum_{i=1}^d (-1)^i  
p_{(\uu \setminus u_0)m_i} p_{u_0 (\um \setminus m_i )},
\end{equation} because it only depends on $\um$ and $\uu \in \II_{d,n}^\um$. 
To simplify the notation, we introduce
$$\uu^r= \uu \setminus u_0, \;\; \widehat{\um_i} = \um\setminus m_i, \;\; \hbox{for all $i \in [d]$.}$$
Then, \eqref{keyTrick2} becomes
\begin{equation} \label{keyTrick4}
F_{\um, \uu}=p_{\um}p_{\uu^r u_0} +
\sum_{i=1}^d (-1)^i p_{ \uu^rm_i } p_{u_0 \widehat{\um_i} }.
\end{equation}

We point out here that $\uu$ and $\uu^r u_0$ may differ by a permutation.

\begin{defn}\label{lt-ltvar}
We call the $\pl$ equation $F_{\um, \uu}$ of \eqref{keyTrick4} a primary $\pl$ equation
for the chart $\rU_\um=(p_\um \equiv 1)$. We also say $F_{\um,\uu}$ is $\um$-primary.
The term $p_\um p_{\uu}$ is called the leading term of $F_{\um, \uu}$.
\end{defn}

(One should not confuse  $F_{\um, \uu}$ with the expression of a general $\pl$ equation
$F_{\uh,\uk}$: we have $(\um, \uu)\in \II_{d,n}^2$ for the former and 
$(\uh,\uk) \in \II_{d-1,n} \times \II_{d+1,n}$ for the latter.)

One sees that the correspondence between $\II_{d,n}^\um$ and 
the set of $\um$-primary $\pl$ equations is a bijection.

\subsection{De-homogenized $\pl$ ideal  with respect to a fixed affine chart} $\ $

Following the previous subsection, we continue to fix an element $\um \in \II_{d,n}$ 
and  focus on the chart $\rU_\um$ of $\PP(\wedge^d E)$.

We will write  $\II_{d,n} \- \um$ for  $\II_{d,n} \- \{\um\}$.

Given any $\uu \in \II^\um_{d,n}$, by \eqref{keyTrick4}, 
it gives rise to the $\um$-primary equation 
$$ F_{\um, \uu}=p_{\um}p_{\uu^r u_0} +
\sum_{i=1}^d (-1)^i p_{\uu^r m_i } p_{ u_0 \widehat{\um_i} }.$$
 If we set $p_\um =1$ and let $x_\uw=p_\uw$, for all $\uw \in \II_{d,n} \- \um$,
 then it  becomes 
\begin{equation}\label{equ:localized-uu}
\bF_{\um, \uu}=x_{\uu^r u_0} +
\sum_{i=1}^d (-1)^i  x_{\uu^r m_i} x_{ u_0 \widehat{\um_i} }.
\end{equation}

\begin{defn}\label{localized-primary} We call the relation \eqref{equ:localized-uu} 
 the de-homogenized (or the localized)
$\um$-primary $\pl$ relation corresponding to $\uu \in \II_{d,n}^\um$.
We call the unique distinguished variable, $x_{\uu}$ 
(which may differ $x_{\uu^r u_0}$ by a sign), the leading variable
of the  de-homogenized $\pl$ relation $\bF_{\um, \uu}$. 
\end{defn}

Throughout  this paper, we often express an $\um$-primary $\pl$ equation $F$ as
\begin{equation}\label{the-form-F}
F=\sum_{s \in S_F} \vsgn (s) p_{\uu_s} p_{\uv_s}= \vsgn (s_F) p_\um p_{\uu_{s_F}}+
\sum_{s \in S_F\- s_F} \vsgn (s) p_{\uu_s} p_{\uv_s}
\end{equation}
where $s_F$ is the index for the leading term of $F$, and $S_F\- s_F:=S_F\-\{s_F\}$.
Then, upon setting $p_\um=1$ and letting $x_\uw=p_\uw$ for all $\uw \in \II_{d,n}\- \um$,
 we can write the corresponding de-homogenized  $\um$-primary $\pl$ equation $\bF$ as
\begin{equation}\label{the-form-LF}
\bF=\sum_{s \in S_F} \vsgn (s) x_{\uu_s} x_{\uv_s}= \vsgn (s_F) x_{\uu_F}+
\sum_{s \in S_F\- s_F} \vsgn (s) x_{\uu_s} x_{\uv_s}
\end{equation}
where  $x_{\uu_F}:=x_{\uu_{s_F}}$ is the leading variable of $\bF$.

\begin{defn}\label{ft-bF}
Let $F$ be an $\um$-primary $\pl$ relation and $\bF$ its 
de-homogenization with respect to the chart $\rU_\um$.
We set $\ft_{\bF}=\ft_F$  and $\rk (\bF) =\rk (F)$.
\end{defn}

For any $\uu \in \II_{d,n} \- \um$,  we let $x_\uu=p_\uu/p_\um$ for all $\uu \in \II_{d,n} \- \um$.
Then, we can identify the coordinate ring of $\rU_\um$ 
with $\kk [x_\uu]_{\uu \in \II_{d,n} \- \um}$.
We let $I_{\wp,\um}$ be the ideal of $\kk [x_\uu]_{\uu \in \II_{d,n} \- \um}$
obtained from the $\pl$ ideal $I_\wp$ be setting $p_\um=1$ and letting 
$x_\uu=p_\uu$ for all $\uu \in \II_{d,n} \- \um$.
We call  $I_{\wp,\um}$ the de-homogenized $\pl$ ideal on the chart $\rU_\um$.

\begin{defn} For any  $\uu \in \II_{d,n}^\um$, we define 
the $\um$-rank of $\uu$ (resp.  $x_\uu$) to be the rank of its corresponding
primary $\pl$ equation $F_{\um,\uu}$. 
\end{defn}

\begin{prop}\label{primary-generate} The de-homogenized $\pl$ ideal $I_{\wp,\um}$ is generated by
$$\sfm:=\{ \bF_{\um, \uu} \mid \uu \in \II_{d,n}^\um \}.$$
 Consequently, the chart $\rU_\um (\Gr)=\rU_\um \cap \Gr^{d,E}$ comes equipped with
 the set of free variables
 $$\var_{\rU_\um}:=\{x_\uu \mid \uu \in \II_{d,n} \-\{ \um\} \- \II_{d,n}^\um\}$$
 and is canonically isomorphic to the affine space with  the above variables as its coordinate variables.
\end{prop}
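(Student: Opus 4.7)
The plan is to prove simultaneously that $(\sfm) = I_{\wp,\um}$ and that the quotient $\kk[x_\uu]_{\uu \in \II_{d,n} \- \um}/I_{\wp,\um}$ is canonically the polynomial ring on the variables $\var_{\rU_\um}$. The containment $(\sfm) \subseteq I_{\wp,\um}$ is immediate, since every $\bF_{\um,\uu} \in \sfm$ is by construction the de-homogenization of the genuine $\pl$ relation $F_{\um,\uu}$ displayed in \eqref{keyTrick4}. The nontrivial content is the reverse containment together with the affine-space identification, and both fall out of a single elimination argument.

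Define the $\um$-rank of $\uu \in \II^\um_{d,n}$ to be $|\uu \setminus \um| - 2 \geq 0$. The key claim is that every $x_\uu$ with $\uu \in \II^\um_{d,n}$ is congruent modulo $(\sfm)$ to a polynomial in the free variables $\var_{\rU_\um}$, and this is proved by induction on the $\um$-rank. By \eqref{equ:localized-uu} and \eqref{signConvention}, the relation $\bF_{\um,\uu}$ exhibits $x_{\uu^r u_0} = \pm x_\uu$ as a $\kk$-linear combination of the quadratic monomials $x_{\uu^r m_i}\, x_{u_0 \widehat{\um_i}}$, $i \in [d]$. For each $i$ one checks directly that $u_0 \widehat{\um_i}$ meets $\um$ in exactly $d-1$ elements, so $x_{u_0 \widehat{\um_i}} \in \var_{\rU_\um}$. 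On the other hand, either $m_i \in \uu^r$, in which case $x_{\uu^r m_i} = 0$ by \eqref{zeroConvention}, or else $|\uu^r m_i \setminus \um| = |\uu \setminus \um| - 1$ (using $m_i \in \um$), in which case $\uu^r m_i$ has $\um$-rank strictly smaller than that of $\uu$, or has negative rank and hence already yields a free variable. This lets the induction close, with the base case (rank zero) producing $x_\uu$ as a quadratic polynomial in $\var_{\rU_\um}$ directly.

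The inclusion $\var_{\rU_\um} \hookrightarrow \{x_\uu\}_{\uu \in \II_{d,n} \- \um}$ therefore induces a surjective $\kk$-algebra homomorphism
$$\phi:\; \kk[\var_{\rU_\um}] \;\twoheadrightarrow\; \kk[x_\uu]_{\uu \in \II_{d,n} \- \um}/(\sfm).$$
Composing with the further surjection onto $\kk[x_\uu]_{\uu \in \II_{d,n} \- \um}/I_{\wp,\um}$, we obtain a surjection $\phi'$ from a polynomial ring in $|\var_{\rU_\um}| = d(n-d)$ variables onto the coordinate ring of the open chart $\rU_\um \cap \Gr^{d,E}$. The latter is an integral domain of Krull dimension $d(n-d)$, by the classical irreducibility and dimension count for $\Gr^{d,E}$. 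Since $\phi'$ is a surjection between integral domains of the same finite Krull dimension, its kernel is a prime of height zero in a polynomial ring, hence must vanish. This forces both $\phi$ and the surjection $\kk[x_\uu]_{\uu \in \II_{d,n} \- \um}/(\sfm) \twoheadrightarrow \kk[x_\uu]_{\uu \in \II_{d,n} \- \um}/I_{\wp,\um}$ to be isomorphisms, proving at once that $(\sfm) = I_{\wp,\um}$ and that $\rU_\um \cap \Gr^{d,E}$ is canonically the affine space with coordinate variables $\var_{\rU_\um}$.

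The only genuine subtlety is the bookkeeping that confirms the $\um$-rank strictly decreases at each inductive step and that terms with a repeated index drop out cleanly via \eqref{zeroConvention}; beyond that, the argument is a routine combination of the elimination step with the well-known dimension and integrality of the Grassmannian.
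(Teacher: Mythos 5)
Your proof follows the same core strategy as the paper: prove by induction on $\um$-rank that each primary relation $\bF_{\um,\uu}$ eliminates the leading variable $x_\uu$ in terms of the basic variables $\var_{\rU_\um}$, and then close the argument with a dimension count against the known geometry of the Grassmannian chart. The inductive step is identical in substance (your verification that $x_{u_0\widehat{\um_i}}$ is always basic, that $x_{\uu^r m_i}$ vanishes by \eqref{zeroConvention} when $m_i \in \uu^r$, and that otherwise $|\uu^r m_i \setminus \um|$ drops by one, matches the paper's observation $\rk(\uu^r m_i) = \rk(\uu) - 1$, together with the computation $\rk(F_{\um,\uu}) = |\uu\setminus\um| - 2$ from counting the nonzero terms in \eqref{keyTrick2}).

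The one place you diverge, and only cosmetically, is the endgame. The paper phrases it scheme-theoretically: $V(J)$ with $J = (\sfm)$ is the affine space with coordinates $\var_{\rU_\um}$, and $\rU_\um(\Gr) \subset V(J)$ forces equality. You phrase it algebraically: the elimination gives a surjection $\phi': \kk[\var_{\rU_\um}] \twoheadrightarrow \kk[x_\uu]/I_{\wp,\um}$ between domains of the same Krull dimension $d(n-d)$, so $\ker\phi'$ has height zero and must vanish, and then both $\phi$ and $\kk[x_\uu]/(\sfm) \twoheadrightarrow \kk[x_\uu]/I_{\wp,\um}$ are forced to be isomorphisms. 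These are equivalent; your version has the mild advantage of not requiring a separate verification that the substitution map $\kk[\var_{\rU_\um}] \to \kk[x_\uu]/(\sfm)$ is injective before bringing in the dimension of $\Gr^{d,E}$, which the paper leaves implicit in the sentence ``$V(J)$ is canonically isomorphic to the affine space.'' Both routes use the irreducibility and dimension of the Grassmannian chart at the same moment. No gap.
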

\begin{proof}
(This proposition is elementary; 
it serves as the initial check of an induction for  some later proposition;
we provide sufficient details for completeness.)

It suffices to observe that for any  $\uu \in \II_{d,n}^\um$, 
its corresponding de-homogenized $\pl$ 
primary $\pl$ equation $\bF_{\um, \uu}$ is equivalent to an expression of
the leading variable $x_\uu$ as a polynomial in  the free variables of
$\var_{\rU_\um}$.  For instance, one can check this by induction on the $\um$-rank, $\rk (\uu)$, 
of $\uu$, as follows.

Suppose $\rk (\uu)= 0$. Then, up to a permutation, we may write
$$\uu= (\um \setminus \{m_i m_j \}) vu$$
where $m_i, m_j \in \um$ for some $1 \le i, j, \le d$, and  $ u< v  \notin \um$. Then, we have
\begin{equation}\label{form-of-rk0}
\bF_{\uu, u}:  x_{\uu}  + (-1)^i x_{ \uu^r m_i} x_{u \widehat{m_i}}
+(-1)^j x_{ \uu^r m_j} x_{u \widehat{m_j}} ,
\end{equation}
where $\uu^r =(\um \setminus \{m_i m_j \})v$.
One sees that $x_{ \uu^r m_i}, \;  x_{u \widehat{m_i}} , \;  x_{ \uu^r m_j}$
and $x_{u \widehat{m_j}}$ belong to $\var_{\rU_\um}$. Hence, the statement holds.

Now suppose that $\rk (\uu)>0$.
Using (\ref{equ:localized-uu}),  we have
$$\bF_{\um, \uu}: \; x_{\uu^r u_0}+
\sum_{i=1}^d (-1)^i x_{\uu^r m_i } x_{u_0 \widehat{m_i} }.$$
Note that all variables $x_{u_0 \widehat{m_i} }, i \in [d]$, belong to $\var_{\rU_\um}$.
Note also that $$\rk ({\uu^r m_i }) = \rk (\uu) -1,$$ provided that
 $p_{\uu^r m_i }$ is not identically zero, that is,  it is a well-defined $\pl$ variable
 (see \eqref{zeroConvention}).
 Thus,  applying the inductive assumption, any such $x_{\uu^r m_i }$ is a polynomial in 
 the variables of $\var_{\rU_\um}$.
 Therefore,  $\bF_{\um, \uu}$, 
   is equivalent to an expression of $x_\uu$ as a polynomial in 
 the variables of $\var_{\rU_\um}$.

Let $J$ be the ideal of $\kk [x_\uu]_{\uu \in \II_{d,n}\- \um}$ generated by
 $\{ \bF_{\um, \uu} \mid \uu \in \II_{d,n}^\um \}$ and let
 $V(J)$ the subscheme of $\rU_\um$ defined by $J$.
By the above discussion,  $V(J)$ is canonically isomorphic to
 the affine space of dimension $d(n-d)$ with  
  the variables of $\var_{\rU_\um}$ as its coordinate variables.
Since $\rU_\um(\Gr) \subset V(J)$, we conclude $\rU_\um (\Gr)=V(J)$.
\end{proof}

\begin{defn}\label{basic} We call the variables in
$$\var_{\rU_\um}:=\{x_\uu \mid \uu \in \II_{d,n} \- \um \- \II_{d,n}^\um\}$$
the $\um$-basic $\pl$ variables. When $\um$ is fixed and clear from the context,
we just call them basic variables.
\end{defn}
Only non-basic $\pl$ variables correspond to $\um$-primary $\pl$ equations.

Observe that for all $\pl$ relations $F$, we have $0 \le {\rm rank} ( F) \le d-2$.
Hence, for any $0 \le r \le d-2$, we let
$$\sF^r_\um = \{\bF_{\um,\uu}  \mid {\rm rank} (F_{\um,\uu}) =r,\; \uu \in \II_{d,n}^\um\}.$$
Then, we have
$$\sfm=\bigcup_{0 \le r \le d-2} \sF^r_\um.$$

Then, one observes the following easy but useful fact.

\begin{prop}\label{leadingTerm} Fix  any $0 \le r \le d-2$
 any $\uu \in \II_{d,n}^\um$ with ${\rm rank}_\um (F_{\um, \uu})=r$.
 Then,  the leading variable 
$x_\uu$ of $\bF_{\um, \uu}$ does not appear in any relation in
$$\sF^0_\um \cup \cdots \cup \sF^{r-1}_\um \cup (\sF^r_\um \setminus \bF_{\um, \uu}).$$
\end{prop}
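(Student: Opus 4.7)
The plan is to reduce the statement to a simple bookkeeping of the number of elements of $\um$ contained in each Pl\"ucker index, by exploiting the explicit form \eqref{equ:localized-uu} of the de-homogenized $\um$-primary equation.

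First, I would make the following rank computation. For any $\uv \in \II_{d,n}^\um$, the relation $\bF_{\um,\uv}$ has the shape
$$\bF_{\um,\uv}:\; x_{\uv^r v_0} + \sum_{i=1}^d (-1)^i x_{\uv^r m_i}\, x_{v_0\widehat{\um_i}},$$
and the term indexed by $i$ is identically zero (by \eqref{zeroConvention}) precisely when $m_i\in\uv^r$. Since $v_0\notin\um$, this happens iff $m_i\in\uv$. Setting $k_\uv:=|\uv\cap\um|$, the number of surviving non-leading terms is therefore $d-k_\uv$, so by Definition \ref{ftF} we have $\rk(F_{\um,\uv})=d-2-k_\uv$. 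In particular, $\rk(F_{\um,\uu})=r$ is equivalent to $k_\uu=d-2-r$.

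Next, I would analyze how the leading variable $x_\uu$ of $\bF_{\um,\uu}$ could possibly occur in some other primary relation $\bF_{\um,\uv}$ (with $\uv\ne\uu$). The only possibilities are (a) $x_\uu=x_{\uv^r m_i}$ for some $i$ with $m_i\notin\uv$, or (b) $x_\uu=x_{v_0\widehat{\um_i}}$ for some $i$. In case (b), the index $v_0\widehat{\um_i}=\{v_0\}\cup(\um\setminus m_i)$ intersects $\um$ in exactly $d-1$ elements, so $x_{v_0\widehat{\um_i}}$ is a basic variable belonging to $\var_{\rU_\um}$; but $x_\uu$ is by hypothesis non-basic since $\uu\in\II_{d,n}^\um$, so case (b) is impossible. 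In case (a), we read off the set-theoretic equality $\uu=(\uv\setminus v_0)\cup\{m_i\}$ with $v_0\notin\um$ and $m_i\in\um\setminus\uv$, which forces $k_\uu=k_\uv+1$.

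Combining the two steps: if $x_\uu$ appears in some $\bF_{\um,\uv}$ with $\uv\ne\uu$, then
$$\rk(F_{\um,\uv})=d-2-k_\uv=d-2-(k_\uu-1)=r+1,$$
hence $\bF_{\um,\uv}\in\sF^{r+1}_\um$. Equivalently, $\bF_{\um,\uv}$ lies in none of $\sF^0_\um,\ldots,\sF^{r-1}_\um$, nor in $\sF^r_\um\setminus\bF_{\um,\uu}$, which is exactly the conclusion.

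I do not expect any serious obstacle: everything reduces to counting intersections with $\um$ and to the two-line case analysis above. The only care required is in verifying that the vanishing convention \eqref{zeroConvention} gives exactly the stated count of surviving terms, and that the matching in case (a) is read off correctly modulo the sign convention \eqref{signConvention} (which does not affect set-theoretic equality of the underlying index sets). This justifies the rank formula and then the argument is purely combinatorial.
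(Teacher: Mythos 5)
The paper merely asserts this as an ``easy but useful fact'' and supplies no proof, so there is no argument of the paper's to compare yours against; you have simply filled the gap, and your proof is correct. Your rank formula $\rk (F_{\um,\uv}) = d-2-|\uv\cap\um|$ follows cleanly from counting which terms of \eqref{equ:localized-uu} vanish under the convention \eqref{zeroConvention}; the elimination of case (b) is right since $v_0\widehat{\um_i}$ meets $\um$ in $d-1$ elements and hence indexes a basic variable of $\var_{\rU_\um}$, whereas $x_\uu$ with $\uu\in\II_{d,n}^\um$ is non-basic; and the case (a) computation $|\uu\cap\um|=|\uv\cap\um|+1$ gives $\rk (F_{\um,\uv})=r+1$, which indeed excludes $\bF_{\um,\uv}$ from $\sF^0_\um\cup\cdots\cup\sF^{r-1}_\um\cup(\sF^r_\um\setminus\bF_{\um,\uu})$. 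The only minor point worth making explicit is that equality of monomials $x_\uu=\pm x_{\uv^r m_i}$ is equivalent to equality of the underlying $d$-element index sets (by \eqref{signConvention}), which you note in passing and is exactly what justifies reading off $\uu=\uv^r\cup\{m_i\}$.
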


To close this subsection, we raise  a concrete question.
 Fix the chart $(p_\um \equiv 1)$.
 In  $\kk[x_\uu]_{\uu \in \II_{d,n}\- \um}$,
according to  Proposition \ref{primary-generate}, 
 every de-homogenized $\pl$ equation
$\bF_{\uh,\uk}$ on the chart $\rU_\um$ can be expressed
as a polynomial in the  de-homogenized primary $\pl$ relations $\bF_{\um, \uu}$ 
with $\uu \in \II_{d,n}^\um$. 
 It may be useful  in practice to find such an expression explicitly for an arbitrary $F_{\uh,\uk}$.
For example, for the case of $\Gr(2,5)$, this can be done as follows.

\begin{example}\label{pl2-5} For $\Gr(2,5)$, we have five $\pl$ relations:
$$F_1= p_{12}p_{34}-p_{13}p_{24} + p_{14}p_{23},\;
F_2= p_{12}p_{35}-p_{13}p_{25} + p_{15}p_{23}, \; $$
$$F_3= p_{12}p_{45}-p_{14}p_{25} + p_{15}p_{24},\;
F_4= p_{13}p_{45}-p_{14}p_{35} + p_{15}p_{34}, \;$$
$$F_5= p_{23}p_{45}-p_{24}p_{35} + p_{25}p_{34}. $$
On the chart $(p_{45} \equiv 1)$,
$F_3, F_4,$ and $ F_5$ are primary. 
One calculates and finds
$$p_{45} F_1 = p_{34} F_3 -p_{24} F_4 + p_{14} F_5, $$ 
$$p_{45} F_2 = p_{35} F_3 -p_{25} F_4 + p_{15} F_5.$$
In addition,  the  Jacobian of the de-homogenized $\pl$ equations of $\bF_3, \bF_4, \bF_5$ with respect to 
all the variables,
$x_{12}, x_{14}, x_{15} ,x_{13}, x_{35} , x_{34},x_{23} ,x_{24} ,x_{25}, $
is given by
$$
\left(
\begin{array}{cccccccccc}
1 & x_{25} & x_{24} & 0  & 0 & 0 & 0 & 0 & 0 \\
0 & 0 & 0 & 1  & x_{14} & x_{15}& 0 & 0 & 0 \\
0& 0 & 0 & 0  & 0 & 0 & 1 & x_{35} & x_{34} \\
\end{array}
\right).
$$
There, one sees visibly  a $(3 \times 3)$  identity minor. 
\end{example}

 



\subsection{Ordering the set of all primary $\pl$ equations}\label{order-eq} $\ $

Fix $\um \in \II_{d,n}$. We consider the set
$$\sfm =\{ \bF_{\um, \uu} \mid \uu \in \II_{d,n}^\um  \}$$
as in Proposition \ref{primary-generate}. As in the paragraph above
Proposition \ref{leadingTerm}, we have
$$\sF_\um=\bigcup_{1 \le r \le d-2} \sF_\um^r,$$ 
where $\sF^r_\um = \{\bF_{\um,\uu}  \mid {\rm rank} (\bF_{\um,\uu}) =r,\; \uu \in \II_{d,n}^\um\}$
for all $0 \le r \le d-2$.

We will provide a total order on the set $\sfm$. This ordering will be fixed and used throughout.

We first provide a partial order $\prec_\wp$ on the set $\sfm$.  

\begin{defn}\label{key-partial} Let $\bF \in \sF^i_\um$ and $\bG \in \sF^j_\um$.  Then, we say
$$\bF \prec_\wp \bG \;\; \hbox{if} \;\; i < j.$$ 
This gives rise to the partially ordered  set $(\sF_\um, \prec_\wp)$. 
\end{defn}

In what follows, we extend $\prec_\wp$ to make $\sfm$ a totally ordered set.

\begin{defn}\label{gen-order} Let $K$ be any fixed totally ordered finite set, 
with its order denoted by $<$.
Consider any two subsets $\eta \subset K$ and ${ \zeta} \subset K$ with the cardinality $n$
for some positive integer $n$.
We write $\eta=(\eta_1,\cdots,\eta_n)$ 
(respectively, of ${\zeta}=(\zeta_1,\cdots,\zeta_n)$) as an array according to the ordering of $K$.
We say $\eta <_{\lex} { \zeta}$ if the left most nonzero number in the vector $\eta-{\zeta}$ is negative,
or more explicitly,  if we can express
$$\eta=\{t_{1}< \cdots <t_{r-1} <s_r< \cdots \}$$
$${\zeta}=\{t_{1}< \cdots <t_{r-1}<t_r  < \cdots \}$$
such that $s_r< t_r$ for some integer $r \ge 1$.  We call $<_\lex$ the lexicographic order
induced by $(K, <)$.

Likewise, we say $\eta <_{\invlex} { \zeta}$
 if the right most nonzero number in the vector $\eta-{ \zeta}$ is negative,
or more explicitly, if we can express
$$\eta=\{\cdots <s_r< t_{r+1}< \cdots <t_n\}$$
$${\zeta}=\{\cdots <t_r  < t_{r+1}< \cdots <t_n\}$$
such that $s_r< t_r$ for some integer $r \ge 1$. 
We call $<_\invlex$ the reverselexicographic order
induced by $(K, <)$. 
\end{defn}

This definition can be applied to the set
$$\II_{d,n} =\{ (i_1 < i_2 < \cdots <i_d) \; \mid \; 1 \le  i_\mu \le n,\; \forall \; 1 \le \mu \le d\}$$
for all $d$ and $n$.
Thus, we have equipped the set $\II_{d,n}$ 
with both the  lexicographic ordering $``<_\lex "$ and
the reverse lexicographic ordering $``<_\invlex "$. 

\begin{defn}\label{invlex}
Consider any $\uu, \uv \in \II_{d,n} \- \um$. 

Suppose $\uu=\widehat{m_i} u$ and $\uv=\widehat{m_j} v$ are two elements of 
$\II_{d,n}\- \um \- \II_{d,n}^\um$,
for some $m_i, m_j \in \um$ and $u, v \in [n] \- \um$. We say 
$$\uu <_\wp \uv$$ if $u < v$ or when $u=v$, 
$\widehat{m_i} <_\lex \widehat{m_j}$.

Suppose $\uu$ and $\uv$ are two  elements of 
$\II_{d,n}^\um$. We say 
$$\uu <_\wp \uv$$ if one of the following three holds:
\begin{itemize}
\item ${\rm rank}_\um \; \uu < {\rm rank}_\um \; \uv$;
\item ${\rm rank}_\um \; \uu = {\rm rank}_\um \; \uv$, $\uu \- \um<_\lex \uv \- \um$;
\item ${\rm rank}_\um \; \uu ={\rm rank}_\um \; \uv$, $\uu \- \um = \uv\- \um$, and
$\um \cap \uu  <_\lex \um \cap \uv$.
\end{itemize}
\end{defn}

\begin{defn}\label{cFi-partial-order} Consider any two $\pl$ variables $x_\uu$ and $x_\uv$.
We say 
$$\hbox{$x_\uu <_\wp x_\uv$ if $\uu<_\wp \uv$.}$$

Consider any two distinct primary equations, 
$\bF_{\um,\uu}, \bF_{\um,\uv} \in \sF^i_\um$ 
of the same rank $i$ for some $0 \le i \le d-2$, with $\uu \ne \uv$. We say 
$$\bF_{\um,\uu} <_\wp \bF_{\um,\uv}\; \; \hbox{if} \;\;  \uu <_\wp \uv.$$
\end{defn} 

The above together with Definition \ref{key-partial} provides an induced total order
on the set $\sfm$.  We denote  the totally ordered set  by $(\sfm, <_\wp)$.
Hence, we can write
$$\sfm=\{\bF_1 <_\wp \cdots <_\wp \bF_\up\}.$$

In what follows, when comparing two $\pl$ variables $x_\uu$ and $x_\uv$
or  two $\um$-primary $\pl$ equations, we exclusively use $<_\wp$.
Thus, throughout, for simplicity, we will simply write $<$ for $<_\wp$.
A confusion is unlikely.





\section{A Singular Local Birational Model $\sV$ for $\Gr^{d,E}$}\label{singular-model}

{\it The purpose of this section is to establish a local model $\sV$, birational to
$\Gr^{d,E}$, such that  all
terms of all $\um$-primary $\pl$ equations can be separated in  the
defining main binomial relations of $\sV$ in a smooth ambient scheme $\sR_\sF$.}

\subsection{The construction  of $\sV \subset \sR_\sF$} $\ $

Consider the fixed affine chart $\rU_\um$ of
the $\pl$ projective space $\PP(\wedge^d E)$.
For any $\bF \in \sfm$, 
written as $F=\sum_{s \in S_F} \vsgn (s) p_{\uu_s}p_{\uv_s}$, 
we let $\PP_F$ be the projective space with 
homogeneous coordinates written as $[x_{(\uu_s, \uv_s)}]_{s \in S_F}$. For convenience, we let
\begin{equation}\label{LaF} \La_F=\{(\uu_s, \uv_s) \mid s \in S_F\}.
\end{equation}
This is an index set for  the homogeneous coordinates of the projective
space $\PP_F$.  To avoid duplication, we make a convention: 
\begin{equation}\label{uv=vu}
x_{(\uu_s, \uv_s)}=x_{(\uv_s, \uu_s)}, \; \forall \; s \in S_{F}, \; \forall \; \bF \in \sfm.
\end{equation}
If we write $(\uu_s, \uv_s)$ in the lexicographical order, i.e., we insist $\uu_s <_\lex \uv_s$,
then the ambiguity is automatically avoided. Howerer, the convention is still  useful.

\begin{defn}
We call $x_{(\uu_s, \uv_s)}$ a $\vr$-variable of $\PP_F$, or simply a $\vr$-variable.
To distinguish, we call a $\pl$ variable, $x_\uu$ with $\uu \in \II_{d,n} \- \um$, a $\vp$-variable.
\end{defn}

Fix $k \in [\up]$. We introduce the natural rational map
\begin{equation}\label{theta-k}
 \xymatrix{
\Theta_{[k]}: 
\PP(\wedge^d E) \ar @{-->}[r]  & \prod_{i \in [k]} \PP_{F_i}  } 
\end{equation}
$$
 [p_\uu]_{\uu \in \II_{d,n}} \lra  
\prod_{i \in [k]}  [p_\uu p_\uv]_{(\uu,\uv) \in \La_{F_i}}
$$   
where $[p_\uu]_{\uu \in \II_{d,n}} $ is the homogeneous $\pl$ coordinates of a point in $
\PP(\wedge^d E)$. When restricting $\Theta_{[k]}$ to $\rU_\um$,
it gives rise to 
\begin{equation}\label{bar-theta-k}
 \xymatrix{
\bar\Theta_{[k]}: \rU_\um  \ar @{-->}[r]  & \prod_{i \in [k]} \PP_{F_i}  } 
\end{equation}

We let \begin{equation}\label{tA}
 \xymatrix{
\PP_{\sF_{[k]}}  \ar @{^{(}->}[r]  & \PP(\wedge^d E) \times  \prod_{i \in [k]} \PP_{F_i}  
 }
\end{equation}
be the closure of the graph of the rational map $\Theta_{[k]}$, and
 \begin{equation}\label{bar-tA}
 \xymatrix{
\rU_{\um,\sF_{[k]}}  \ar @{^{(}->}[r]  & \sR_{\sF_{[k]}}:= \rU_\um \times  \prod_{i \in [k]} \PP_{F_i}  
 }
\end{equation}
be the closure of the graph of the rational map $\bar\Theta_{[k]}$.

\begin{defn}
Fix any $ k \in [\up]$.
We let $$R_{[k]}=\kk[p_\uu; x_{(\uv_s, \uu_s)}]_{\uu \in \II_{d,n} , s \in S_{F_i},  i \in [k]}.$$
A  polynomial $f \in R_{[k]}$ is called multi-homogeneous if it is homogenous in 
$[p_\uu]_{\uu \in \II_{d,n}}$ and is homogenous 
in $[x_{(\uv_s, \uu_s)}]_{s \in S_{F_i}}$, for every $i \in [k]$.
A multi-homogeneous polynomial $f \in R_{[k]}$
 is $\vr$-linear if it is linear in $[x_{(\uv_s, \uu_s)}]_{s \in S_{F_i}}$, 
 whenever it contains some $\vr$-variables of $\PP_{F_i}$,
 for  any $i \in [k]$.
 \end{defn}

We set $R_0:=\kk[p_\uu]_{\uu \in \II_{d,n}}$. 
Then, corresponding to 
the embedding \eqref{tA}, 
there exists a degree two homomorphism
\begin{equation}\label{vik}
\vi_{[k]}: \; R_{[k]} \lra R_0 , \;\;\; x_{(\uu_s,\uv_s)} \to p_{\uu_s} p_{\uv_s}
\end{equation} 
for all  $s \in S_{F_i}, \; i \in [k]$.    

We then let
\begin{equation}\label{bar-vik}
\bar\vi_{[k]}: \; R_{[k]} \lra R_0, \;\;\; x_{(\uu_s,\uv_s)} \to x_{\uu_s} x_{\uv_s} 
\end{equation} 
$$$$ 
for all  $s \in S_{F_i}, i \in [k]$,
be the de-homogenization of $\vi_{[k]}$ with respect to the chart $\rU_\um=(p_\um \equiv 1)$.
This corresponds to the embedding \eqref{bar-tA}.

We are mainly interested in the case when $k=\up$.
 Hence, we set 
$$R:=R_{[\up]},\;\; \vi:=\vi_{[\up]}, \;\; \bar\vi:=\bar\vi_{[\up]}.$$

We let $\ker^\mh \vi_{[k]}$ (resp. $\ker^\mh \bar\vi_{[k]}$)
denote the set of all  multi-homogeneous polynomials
 in $\ker  \vi_{[k]}$ (resp. $\ker \bar\vi_{[k]}$).


\begin{lemma}\label{defined-by-ker} 
The scheme
$\PP_{\sF_{[k]}}$, as a closed subscheme of 
 $\PP(\wedge^d E) \times  \prod_{i \in [k]} \PP_{F_i}$,
 is defined by $\ker^\mh \vi_{[k]}$.
In particular, the scheme
$\rU_{\um,\sF_{[k]}}$, as a closed subscheme of 
 $\sR_{\sF_{[k]}}= \rU_\um \times  \prod_{i \in [k]} \PP_{F_i}$,
 is defined by $\ker^\mh \bar\vi_{[k]}$. 
\end{lemma}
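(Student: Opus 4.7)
The plan is to prove the lemma by a standard graph-closure/primality argument. The key observations are: (i) the target $R_0 = \kk[p_\uu]_{\uu \in \II_{d,n}}$ is an integral domain, so $\ker \vi_{[k]}$ is a prime ideal; and (ii) for each $i \in [k]$ and each pair $s,t \in S_{F_i}$, the binomial
\[
B^{(i)}_{st} := p_{\uu_t}p_{\uv_t}\, x_{(\uu_s, \uv_s)} - p_{\uu_s}p_{\uv_s}\, x_{(\uu_t, \uv_t)}
\]
is manifestly multi-homogeneous and lies in $\ker \vi_{[k]}$, hence in $\ker^\mh \vi_{[k]}$. These two points will supply, respectively, irreducibility of $V(\ker^\mh \vi_{[k]})$ and uniqueness of fibers over the open locus of definition of $\Theta_{[k]}$.

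First I would establish $\PP_{\sF_{[k]}} \subseteq V(\ker^\mh \vi_{[k]})$. Let $U \subseteq \PP(\wedge^d E)$ be the dense open set on which $\Theta_{[k]}$ is defined, i.e., the locus where for each $i \in [k]$ at least one of the products $p_{\uu_s}p_{\uv_s}$ with $s \in S_{F_i}$ is nonzero. For any multi-homogeneous $f \in \ker \vi_{[k]}$ and any $q \in U$, evaluating $f$ at the graph point $(q, \Theta_{[k]}(q))$ recovers, up to the irrelevant projective scalings in each $\PP_{F_i}$, the value $\vi_{[k]}(f)(q) = 0$. Hence $f$ vanishes on the graph, and by continuity on its closure $\PP_{\sF_{[k]}}$.

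For the reverse inclusion, I would first show that $\ker^\mh \vi_{[k]}$ is itself prime in the multi-graded sense: if $fg \in \ker \vi_{[k]}$ with $f, g$ multi-homogeneous, primality of $\ker \vi_{[k]}$ places $f$ or $g$ in $\ker \vi_{[k]}$, and multi-homogeneity then places it in $\ker^\mh \vi_{[k]}$. Consequently $V(\ker^\mh \vi_{[k]}) \subset \PP(\wedge^d E) \times \prod_{i \in [k]} \PP_{F_i}$ is irreducible. Next, I would observe that over any $q \in U$, the binomials $B^{(i)}_{st}$ force $[x_{(\uu_s,\uv_s)}]_{s \in S_{F_i}} = [p_{\uu_s}(q)p_{\uv_s}(q)]_{s \in S_{F_i}}$ in $\PP_{F_i}$ for every $i \in [k]$ (using that some $p_{\uu_t}(q)p_{\uv_t}(q) \ne 0$ for each $i$), so the fiber of the projection $V(\ker^\mh \vi_{[k]}) \to \PP(\wedge^d E)$ over $q$ is exactly the single point $\Theta_{[k]}(q)$. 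Thus the graph $\Gamma_{\Theta_{[k]}}$ is open and dense in $V(\ker^\mh \vi_{[k]})$, which together with irreducibility yields $V(\ker^\mh \vi_{[k]}) = \overline{\Gamma}_{\Theta_{[k]}} = \PP_{\sF_{[k]}}$.

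The affine ``in particular'' statement follows by the identical argument applied to $\bar\vi_{[k]}$: its target $\kk[x_\uu]_{\uu \in \II_{d,n} \- \um}$ is again a polynomial domain, so $\ker \bar\vi_{[k]}$ is prime; the de-homogenized binomials $x_{\uu_t}x_{\uv_t}\, x_{(\uu_s,\uv_s)} - x_{\uu_s}x_{\uv_s}\, x_{(\uu_t,\uv_t)}$ again pin down the $\vr$-coordinates generically over $\rU_\um$; and since $\rU_\um$ is affine, multi-homogeneity now refers only to the $\PP_{F_i}$ factors. I expect the main (though modest) care will be in the bookkeeping of multi-homogeneity — verifying that $\ker^\mh \vi_{[k]}$ inherits primality from $\ker \vi_{[k]}$, and that the binomials $B^{(i)}_{st}$ indeed suffice to cut fibers down to a single point over $U$ — but both are routine once the multi-graded setup is in place.
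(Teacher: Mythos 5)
Your proposal is correct and takes the expected route: since $\vi_{[k]}$ surjects onto the domain $R_0$ its kernel is prime, so $\ker^\mh \vi_{[k]}$ generates a multi-homogeneous prime (hence irreducible vanishing locus), and the binomials $B^{(i)}_{st}$ force the $\PP_{F_i}$-fibers to be singletons over the domain of definition $U$ of $\Theta_{[k]}$, identifying that vanishing locus with the graph closure. The paper's own proof is just ``This is immediate,'' so you have simply supplied the standard graph-closure argument the author took for granted.
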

\begin{proof} This is immediate.
\end{proof}

We need to investigate $\ker^\mh \vi_{[k]}$.

Consider any $f \in \ker^\mh \vi_{[k]}$. We express it as the sum of its monic monomials
(monomials with constant coefficients 1)
$$f= \sum \bm_i.$$
We have $\vi_{[k]} (f)=\sum \vi_{[k]} (\bm_i)=0$ in $R_0$. Thus, the set of 
the monic monomials $\{\bm_i\}$ can be grouped into minimal groups to form partial sums
of $f$ so that {\it the images of elements of each group are 
 identical} and the image of the partial sum of each minimal group equals 0 in $R_0$.
When ch.$\kk=0$, this means each minimal group consists of a pair $(\bm_i, \bm_j)$
and its partial sum
is the difference $\bm_i -\bm_j$.
When ch.$\kk=p>0$ for some prime number $p$, this means each minimal group 
 consists of either  (1): a pair $(\bm_i, \bm_j)$ and $\bm_i -\bm_j$ is a partial sum of $f$;
or (2):   exactly $p$ elements $\bm_{i_1}, \cdots,\bm_{i_p}$
and $\bm_{i_1}+ \cdots + \bm_{i_p}$ is a partial sum of $f$.
But, the relation $\bm_{i_1}+ \cdots + \bm_{i_p}$ is always generated by 
the relations $\bm_{i_a} -\bm_{i_b}$ for all $1 \le a, b \le p$.

Thus, regardless of the characteristic of the field $\kk$, it suffices to consider binomials
$\bm -\bm' \in \ker^\mh \vi_{[k]}$.

\begin{example}\label{exam:Bq} Consider $\Gr^{3,E}$. 
Then, the following binomials belong to $\ker^\mh \vi_{[k]}$
for any fixed $k \in [\up]$.

Fix $a,b,c \in [k]$, all being distinct.
\begin{eqnarray}
x_{(12a,13b)}x_{(13a,12c)}x_{(12b,13c)} \nonumber \\
-x_{(13a,12b)}x_{(12a,13c)}x_{(13b,12c)} \label{rk0-0} 
\end{eqnarray}
Fix $a,b,c, \bar a, \bar b, \bar c \in [k]$, all being distinct.
\begin{eqnarray}
\;\; \;x_{(12a,3bc)}x_{(13a,2\bar b \bar c)}x_{(13 \bar a,2bc)} x_{(12 \bar a,3 \bar b \bar c)} \nonumber \\
-x_{(13a,2bc)}x_{(12a,3\bar b \bar c)}x_{(12 \bar a,3bc)} x_{(13 \bar a,2 \bar b \bar c)},  \label{rk1-1}
\end{eqnarray}
Fix $a,b,c, a', \bar a, \bar b, \bar c \in [k]$, all being distinct.
\begin{eqnarray}
\;\; \; x_{(12a,13a')}x_{(13a,2bc)}x_{(12a',3\bar b \bar c)}x_{(12 \bar a,3bc)} x_{(13 \bar a,2 \bar b \bar c)} 
\nonumber \\
- x_{(13a,12a')} x_{(12a,3bc)}x_{(13a',2\bar b \bar c)}x_{(13 \bar a,2bc)} x_{(12 \bar a, 3 \bar b \bar c)}.\label{rk0-1}
\end{eqnarray} 
These binomials are arranged so that one sees visibly the matching for multi-homogeneity.
\end{example}

\begin{lemma} \label{trivialB}
Fix any $i \in [k]$. We have
\begin{equation}\label{tildeBk}
p_{\uu'}p_{\uv'}x_{(\uu,\uv)} - p_\uu p_\uv x_{(\uu',\uv')} \in \ker^\mh \vi_{[k]}.
\end{equation}
where $x_{(\uu,\uv)}, x_{(\uu',\uv')}$ are any two distinct $\vr$-variables of $\PP_{F_i}$.
Likewise, we have
\begin{equation}\label{trivialBk}
x_{\uu'}x_{\uv'}x_{(\uu,\uv)} - x_\uu x_\uv x_{(\uu',\uv')} \in \ker^\mh \bar\vi_{[k]}.
\end{equation}
\end{lemma}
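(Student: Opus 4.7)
The plan is to verify both inclusions by direct computation, using only the definition of the homomorphism $\vi_{[k]}$ from \eqref{vik} (respectively $\bar\vi_{[k]}$ from \eqref{bar-vik}) and the definition of multi-homogeneity. Since $\ker^\mh \vi_{[k]}$ consists of polynomials that (i) lie in $\ker \vi_{[k]}$ and (ii) are multi-homogeneous in the specified sense, my proof will consist of two short checks for each of the two stated binomials.

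First, for the inclusion \eqref{tildeBk}, I would compute the images of the two monic monomials $p_{\uu'}p_{\uv'}x_{(\uu,\uv)}$ and $p_\uu p_\uv x_{(\uu',\uv')}$ under $\vi_{[k]}$. The homomorphism $\vi_{[k]}$ is the identity on each $\vp$-variable $p_\uw$ and sends $x_{(\ua,\ub)}\mapsto p_\ua p_\ub$, so both monomials map to the same element $p_\uu p_\uv p_{\uu'} p_{\uv'} \in R_0$, whence their difference lies in $\ker \vi_{[k]}$. Next I would check multi-homogeneity: both monomials have total degree $2$ in the $\pl$ variables $[p_\uw]_{\uw\in\II_{d,n}}$, total degree $1$ in the $\vr$-variables $[x_{(\ua,\ub)}]_{(\ua,\ub)\in\La_{F_i}}$ of $\PP_{F_i}$, and total degree $0$ in the $\vr$-variables of $\PP_{F_j}$ for every $j\in[k]$ with $j\neq i$. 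Hence the difference is multi-homogeneous of multi-degree $(2;0,\ldots,0,1,0,\ldots,0)$, placing it in $\ker^\mh \vi_{[k]}$.

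The inclusion \eqref{trivialBk} follows from \eqref{tildeBk} by de-homogenization with respect to the chart $\rU_\um=(p_\um\equiv 1)$; alternatively one runs the identical argument with $\bar\vi_{[k]}$ in place of $\vi_{[k]}$, the only change being that the resulting monomial equality holds in $R_0$ after replacing $p_\um$ by $1$ and $p_\uw$ by $x_\uw$ for $\uw\in\II_{d,n}\setminus\um$. The only point requiring any care is that the convention \eqref{uv=vu} must be respected so that the two $\vr$-variables $x_{(\uu,\uv)}$ and $x_{(\uu',\uv')}$ are genuinely distinct coordinates on $\PP_{F_i}$, but this is built into the hypothesis of the lemma. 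There is no substantive obstacle here; the lemma is a bookkeeping verification that isolates the simplest (and archetypal) family of binomials in $\ker^\mh \vi_{[k]}$, to be exploited by the subsequent propositions that actually describe the defining ideal of $\sV \subset \sR_\sF$.
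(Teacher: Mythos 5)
Your proof is correct, and it simply spells out the routine verification that the paper compresses into the single line \emph{``This is trivial.''} The approach is the same: check directly that both monomials have equal image under $\vi_{[k]}$ (resp.\ $\bar\vi_{[k]}$) and that the binomial is multi-homogeneous.
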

\begin{proof} This is trivial.
\end{proof}

\smallskip

Let $\AA^l$ (resp. $\PP^l$) be the affine (resp. projective)
space of dimension $l$ for some positive integer $l$ with
coordinate variables $(x_1,\cdots, x_l)$ (resp. with
homogeneous coordinates $[x_1,\cdots, x_l]$).
A monomial $\bf m$ is {\it square-free} if 
$x^2$ does not divide $\bf m$ for every coordinate variable $x$ in the affine space.  
A polynomial is square-free if all of its monomials are
square-free.

\smallskip

For any $\bm -\bm' \in \ker^\mh \vi_{[k]}$, we define 
$ \deg_{\vr} (\bm -\bm')$ to be the total degree of $\bm$ (equivalently, $\bm'$)
in $\vr$-variables of $R_{[k]}$.

For any $F=\sum_{s \in S_F} p_{\uu_s} p_{\uv_s}$ with $\bF \in \sfm$ 
and $s \in S_F$, we write $X_s=x_{(\uu_s,\uv_s)}$.

Recall  that we have set $\vi=\vi_{[\up]}: R=R_{[\up]} \to R_0$.

 Observe here that  for any nonzero binomial $\bm -\bm' \in \ker^\mh \vi_{[k]}$,
we automatically have $ \deg_\vr (\bm -\bm') > 0$, since $\vi_{[k]}$ restricts
to the identity on $R_0$.

\begin{lemma}\label{ker-phi-k}
Consider a binomial $ \bm -\bm' \in \ker^\mh \vi_{[k]}$ with $ \deg_\vr (\bm -\bm') > 0$. 
 We let $h$ be the maximal common factor of the two monomials $\bm$ and $\bm'$
 in $R_{[k]}$. Then, we have 
$$\hbox{ $\bm=h \prod_{i=1}^\ell \bm_i$ and $\bm'=h \prod_{i=1}^\ell \bm'_i$}$$ 
for some positive integer $\ell$ such that
 for every $i \in [\ell]$,
$\bm_i -\bm'_i \in \ker^\mh \vi_{[k]}$ and is of  the following form:
\begin{equation}\label{1st-Hq}
\vi (X_1)X_2- \vi(X'_1 )X'_2 ,
 \end{equation}
 where  every of $X_1,X_2,  X'_1,$ and $X'_2$ is a monomial of $R$ in $\vr$-variables only
 (i.e., without $\vp$-variables; here we allow $X_1=X_1'=1$)  such that 
\begin{enumerate}
 \item $X_1X_2-  X'_1 X'_2 \in \ker^\mh \vi$ and  is $\vr$-linear;
 \item $\vi(X_1X_2)$ (equivalently, $\vi(X'_1 X'_2)$) is a square-free monomial;
\item for any $\bF \in \sfm$ and $s \in S_F$,
suppose $x_{\uu_s} x_{\uv_s} $ divides $\bm$ (resp. $\bm'$), then 
 $X_s =x_{(\uu_s,\uv_s)}$ divides   $X_1$ (resp. $X_1'$) in one of the relations of \eqref{1st-Hq}. 
 \end{enumerate}
\end{lemma}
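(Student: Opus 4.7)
The strategy is to strip off the common monomial factor $h$, reduce to the coprime case, and then decompose the remaining binomial by induction on the total $\vr$-degree, peeling off one simple relation of the form \eqref{1st-Hq} at each step by matching $\vr$-variables to their $\vi_{[k]}$-images.

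First, since $h$ is a nonzero monomial with $\vi_{[k]}(h)\neq 0$, the hypothesis $\vi_{[k]}(\bm)=\vi_{[k]}(\bm')$ reduces to $\vi_{[k]}(\tilde\bm)=\vi_{[k]}(\tilde\bm')$ for $\tilde\bm:=\bm/h$, $\tilde\bm':=\bm'/h$, which are coprime in $R_{[k]}$. I would factor each uniquely as a $\vp$-monomial times a $\vr$-monomial, $\tilde\bm=MY$ and $\tilde\bm'=M'Y'$; coprimality forces $M,M'$ to share no $\vp$-variable and $Y,Y'$ to share no $\vr$-variable, while $M\vi_{[k]}(Y)=M'\vi_{[k]}(Y')$ in $R_0$ ties the two sides together. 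I would also mark the \emph{distinguished pairs} inside $M$, namely the factors of the form $x_{\uu_s}x_{\uv_s}$ with $(\uu_s,\uv_s)\in\La_F$ for some $\bF\in\sfm$, since condition (3) forces each such pair to be realized as $\vi_{[k]}(X_{1,i})$ for some peel with $X_{1,i}\ni x_{(\uu_s,\uv_s)}$; analogously for $M'$ and the $X_{1,i}'$.

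Next I would construct the decomposition by induction on $\deg_\vr(\tilde\bm)+\deg_\vr(\tilde\bm')$. At each step, first priority goes to any remaining distinguished pair of $M$: peel off a factor $\bm_i=x_{\uu_s}x_{\uv_s}\cdot X_{2,i}$ with $X_{1,i}=x_{(\uu_s,\uv_s)}$, and match it minimally on the other side to produce $\bm'_i=\vi_{[k]}(X_{1,i}')X_{2,i}'$ with $\vi_{[k]}(X_{1,i}X_{2,i})=\vi_{[k]}(X_{1,i}'X_{2,i}')$; once no such distinguished pair is left, pick the smallest $\vr$-variable of the residual coprime pair $Y_0, Y_0'$ under the order $<_\wp$ and peel minimally against it (so $X_{1,i}=X_{1,i}'=1$). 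Each peel strictly reduces $\deg_\vr(\tilde\bm)+\deg_\vr(\tilde\bm')$, the residuals remain in $\ker^\mh\vi_{[k]}$ and coprime, and after finitely many steps one obtains $\bm=h\prod_{i=1}^\ell\bm_i$ and $\bm'=h\prod_{i=1}^\ell\bm'_i$ as required.

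Verification of properties (1)--(3) follows from the minimality of each peel: two $\vr$-variables of the same $\PP_{F_j}$ cannot coexist in $X_{1,i}X_{2,i}$ of a minimal match (otherwise a smaller match could be split off using the trivial relation \eqref{trivialBk}), giving the $\vr$-linearity in (1); the same minimality rules out any repeated $\vp$-variable in $\vi_{[k]}(X_{1,i}X_{2,i})$, giving the square-free property (2); and (3) is built into the priority rule. The chief obstacle I anticipate is proving the compatibility of this priority rule with minimality: when a distinguished pair of $M$ is matched, one must verify that the minimal matching block of $\tilde\bm'$ carries no repeated $\PP_{F_j}$-variable and no repeated $\vp$-variable, so that the resulting peel is genuinely $\vr$-linear and square-free. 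The key combinatorial input is that $\vi_{[k]}$ sends every $\vr$-variable to a product of exactly two $\vp$-variables, so any unwanted repetition can be undone by an exchange of a factor between $\bm_i$ and $\bm'_i$; making this exchange argument rigorous will be the technical heart of the proof.
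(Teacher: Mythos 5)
Your proposal takes a genuinely different route from the paper's, and it contains a gap that you yourself flag but do not close.

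The paper inducts directly on $\deg_\vr(\bm-\bm')$ and exploits multi-homogeneity at the outset of the inductive step: one writes $\bm-\bm'=\bn X_s-\bn' X_t$ with $X_s,X_t$ $\vr$-variables of the \emph{same} factor $\PP_{F_i}$. If $s=t$, the variable factors out and one inducts. If $s\ne t$, one forms the auxiliary binomial $\bar f=\bn\,\vi(X_s)-\bn'\,\vi(X_t)$, which has $\vr$-degree one less; one applies the inductive hypothesis to $\bar f$, observes by multi-homogeneity and property (1) that the factor of $\bar f$ containing $\vi(X_s)$ must be matched to a factor of the form $x_{\uu_{t'}}x_{\uv_{t'}}\bn_{t'}$, replaces $t$ by $t'$ if necessary, and then reassembles $\bm,\bm'$ by re-inserting $X_s,X_t$ in place of their images. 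The multi-degree constraint in each $\PP_{F_i}$ is what forces the pair $X_s,X_t$ to exist and is what drives the reassembly; this is what your argument never uses.

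Your greedy peeling strategy sidesteps this by proposing a priority rule (distinguished pairs first, then smallest remaining $\vr$-variable under $<_\wp$) together with a notion of "minimal matching" on the other side. Neither is given a precise definition, and all three conclusions (1)--(3) are made to rest on that undefined minimality. In particular, the claim that minimality rules out two $\vr$-variables of the same $\PP_{F_j}$ in a peel (giving (1)), and rules out a repeated $\vp$-variable in $\vi_{[k]}(X_{1,i}X_{2,i})$ (giving (2)), is asserted but not proved; you acknowledge this when you say the exchange argument is "the technical heart of the proof" still to be made rigorous. This is precisely the step where the paper's multi-homogeneity argument does the work: after stripping $X_s,X_t$ from the same $\PP_{F_i}$ and descending, the inductive hypothesis hands back a factorization already satisfying (1)--(3), so no ad hoc exchange is needed. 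There is also a smaller issue with your detection of "distinguished pairs": you scan only the $\vp$-part $M$ of $\tilde\bm=\bm/h$, whereas condition (3) quantifies over all $s$ with $x_{\uu_s}x_{\uv_s}\mid\bm$; a pair can divide $\bm$ with one of its two $\vp$-variables supplied by $h$, and your priority rule would not flag it. So the plan as written does not establish the lemma; to complete it you would at minimum have to define minimal matching precisely, prove the exchange lemma, and reconcile the detection of distinguished pairs with the quantifier in condition (3) — at which point you would effectively be reconstructing the role that multi-homogeneity plays in the paper's inductive step.
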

\begin{proof} 
We prove by induction on $\deg_{\vr} (\bm -\bm')$.

Suppose $\deg_{\vr}(\bm-\bm') =1$. 

Then, we can write $$\bm -\bm' = f x_{(\uu,\uv)} -g  x_{(\uu',\uv')} $$ 
for some $f, g \in R_0$,
and two $\vr$-variables of $\PP_{F_i}$, $x_{(\uu,\uv)}$ and $x_{(\uu',\uv')}$ 
for some $i \in [k]$.  
If $x_{(\uu,\uv)} =x_{(\uu',\uv')}$, then one sees that $f=g$ and $\bm -\bm'=0$.
Hence, we assume that  $x_{(\uu,\uv)} \ne x_{(\uu',\uv')}$. Then, we have
$$f p_{\uu} p_{\uv}=g  p_{\uu'} p_{\uv'}.$$
Because  $x_{(\uu,\uv)}$ and $ x_{(\uu',\uv')}$
are two distinct $\vr$-variables of $\PP_{F_i}$,  one checks from the definition that the two sets
$$\{ p_{\uu}, p_{\uv} \}, \; \{p_{\uu'}, p_{\uv'} \}$$ are disjoint. 
Consequently, 
$$  p_{\uu}p_{\uv} \mid g, \;\; p_{\uu'} p_{\uv'}  \mid f.$$
Write $$ g=g_1 p_{\uu}p_{\uv} , \;\;  f=f_1 p_{\uu'} p_{\uv'} .$$
Then we have 
$$ p_{\uu}p_{\uv} p_{\uu'} p_{\uv'}  (f_1-g_1)=0 \in R_0.$$
Hence, $f_1=g_1$. Then, we have
$$\bm -\bm' =h ( p_{\uu'} p_{\uv'}   x_{(\uu,\uv)} -  p_{\uu}p_{\uv}  x_{(\uu',\uv')}) $$ 
where $h:=f_1=g_1$.
This implies the statement, in this case.
Observe that in such a case, we have that
 $\bm -\bm'$ is generated by the relations of \eqref{tildeBk}. 

Suppose Lemma \ref{ker-phi-k} holds  for  $\deg_{\vr} < e$ for some positive integer $e >1$.

Consider $\deg_{\vr}(\bm-\bm')=e$.

By the multi-homogeneity of $(\bm -\bm')$, we can write 
\begin{equation}\label{forXs} 
\bm -\bm' = \bn X_s - \bn' X_t 
\end{equation}
such that $X_s$ and  $X_t$ 
are the $\vr$-variables of $\PP_{F_i}$  corresponding to 
some $s, t \in S_{F_i}$ for some $i \in [k]$, and $\bn, \bn' \in R_{[k]}$.

If $s=t$, then $\bm -\bm' = X_s (\bn - \bn') $. Hence, the statement follows from
the inductive assumption applied to $(\bn -\bn') \in  \ker^\mh \vi_{[k]}$
since $\deg_\vr (\bn -\bn')=e-1$.


We suppose now $s \ne t$.
Let $\bar f=\bn x_{\uu_s} x_{\uv_s} - \bn' x_{\uu_t} x_{\uv_t}$.  Then,
 $\bar f \in \ker^\mh \vi_{[k]}$. 
 
First, we suppose $\bar f=0$. 

Then, $x_{\uu_s} x_{\uv_s} \mid \bn'$
 and $x_{\uu_t} x_{\uv_t} \mid \bn$. Hence, we can write
 $$\bn'=x_{\uu_s} x_{\uv_s} \bn'_0, \; \bn=x_{\uu_t} x_{\uv_t} \bn_0.$$
And we have,
$$\bar f= \bn x_{\uu_s} x_{\uv_s} - \bn' x_{\uu_t} x_{\uv_t}
=x_{\uu_s} x_{\uv_s} x_{\uu_t} x_{\uv_t}  (\bn_0 - \bn'_0).$$
 Hence, one sees that $ \bn_0 - \bn'_0 \in \ker^\mh \vi_{[k]}$.
 If $\bn_0 - \bn'_0=0$, then $h=\bn_0=\bn'_0$ is
 the maximal common factor of $\bm$ and $\bm'$, and further,
 $$\bm -\bm' = h( x_{\uu_t} x_{\uv_t}  X_s -  x_{\uu_s} x_{\uv_s} X_t).$$
  In such a case, the statement of the lemma holds.
 
  Hence, we can assume that $0 \ne \bn_0 - \bn'_0 \in \ker^\mh \vi_{[k]}$,
  in particular, this implies that $ \deg_\vr (\bn_0 -\bn'_0) > 0$.
  Then,   we have 
  $$\bm -\bm' = \bn_0 x_{\uu_t} x_{\uv_t}  X_s - \bn'_0 x_{\uu_s} x_{\uv_s} X_t.$$
   Observe here that 
$x_{\uu_t} x_{\uv_t}  X_s -  x_{\uu_s} x_{\uv_s} X_t$ is in the form of
\eqref{1st-Hq}, verifying the conditions (1) - (3).
 Thus, in such a case, the statement of the lemma follows by applying
the inductive assumption  to $(\bn_0 -\bn'_0) \in  \ker^\mh \vi_{[k]}$
since $\deg_\vr (\bn_0 -\bn'_0)=e-1$.

 Next, we suppose $\bar f \ne 0$.  
 
 As $\deg_{\vr} \bar f <e$, by the inductive assumption,  
 we can write
$$\bn x_{\uu_s} x_{\uv_s}= h (x_{\uu_s} x_{\uv_s} \bn_s) \prod_{i=1}^\ell \bn_i,\;\
\bn' x_{\uu_t} x_{\uv_t}=h  (x_{\uu_t} x_{\uv_t}\bn_t) \prod_{j=1}^\ell \bn'_{j}$$
for some integer $\ell \ge 1$, with $\bn_0=x_{\uu_s} x_{\uv_s}\bn_s$
and $\bn_0'=(x_{\uu_t} x_{\uv_t}\bn_t)$
such that  for each $0 \le i \le \ell$, it determines (matches) a unique $0 \le i' \le \ell$
such that $(\bn_i - \bn_{i'}')$ is of the form of \eqref{1st-Hq}  and
verifies all the properties (1) - (3)  of  Lemma \ref{ker-phi-k}.
Consider $\bn_0=x_{\uu_s} x_{\uv_s} \bn_s$. It matches $\bn_{0'}'$. By
the multi-homogeneity of $\bn_0 - \bn_{0'}'$, 
\eqref{1st-Hq} and  (1) of  Lemma \ref{ker-phi-k},
 we can write $\bn_{0'}' = x_{\uu_{t'}} x_{\uv_{t'}}\bn_{t'}$
for some $t' \in S_{F_i}$ and $\bn_{t'} \in R_{[k]}$.
 Therefore, by switching $t$ with $t'$ if $t \ne t'$,
and re-run the above arguments, without loss of generality, we can assume $t'=t$
and $\bn_0=x_{\uu_s} x_{\uv_s}\bn_s$ matches
 $\bn_0'=(x_{\uu_t} x_{\uv_t}\bn_t)$. Further, by re-indexing $\{\bn_j' \mid j \in [\ell]\}$ if necessary,
we can assume that  $\bn_i$ matches $\bn_i'$ for all $1 \le i \le l$.

Now, note that we have
$\bn = h (\bn_s) \prod_{i=1}^\ell \bn_i, \; \bn' =h  (\bn_t') \prod_{i=1}^\ell \bn_i$. Hence
$$\bm = h (\bn_s X_s) \prod_{i=1}^\ell \bn_i, \;\; \bm' =h  (\bn_t' X_t) \prod_{i=1}^\ell \bn_i.$$
We let $\bm_0=\bn_s X_s$ and  $\bm_i = \bn_i$ for all $i \in [\ell]$;
 $\bm_0'=(n_t X_t)$ and  $\bm_i= \bn_i'$ for all $i \in [\ell]$.
Then, one checks directly that
  Lemma \ref{ker-phi-k} holds for $\bm -\bm'$.

This proves the lemma.
\end{proof}

\begin{defn}\label{hatB}
Let $\widehat{\cB}_{[k]}$
be the set of  all binomial relations of \eqref{1st-Hq} that verify Lemma \ref{ker-phi-k} (1) - (3);
 let $\widetilde{\cB}_{[k]}$ 
the de-homogenizations 
with respect to $(p_\um \equiv 1)$ of all binomial relations of $\widehat{\cB}_{[k]}$.
\end{defn}

\begin{cor}\label{cB-generate}
The ideal  $\ker^\mh \vi_{[k]}$ is generated by $\widehat{\cB}_{[k]}$.
Consequently, the ideal  $\ker^\mh \bar\vi_{[k]}$ is generated by $\widetilde{\cB}_{[k]}$.
\end{cor}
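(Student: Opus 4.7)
The plan is to deduce the corollary directly from Lemma \ref{ker-phi-k} via a telescoping argument. First I would recall the observation made just before Lemma \ref{ker-phi-k}: any element of $\ker^\mh \vi_{[k]}$ is generated, regardless of the characteristic of $\kk$, by multi-homogeneous binomials $\bm-\bm'$ lying in $\ker^\mh \vi_{[k]}$. Hence it suffices to show every such binomial lies in the ideal $\langle \widehat{\cB}_{[k]}\rangle$.

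Next I would apply Lemma \ref{ker-phi-k} to an arbitrary binomial $\bm-\bm' \in \ker^\mh \vi_{[k]}$. If $\deg_\vr(\bm-\bm')=0$ then $\bm-\bm'=0$ in $R_0$, so we may assume $\deg_\vr(\bm-\bm')>0$. The lemma then yields a common factor $h$ and factorizations $\bm=h\prod_{i=1}^\ell \bm_i$ and $\bm'=h\prod_{i=1}^\ell \bm'_i$ with each $\bm_i-\bm'_i \in \widehat{\cB}_{[k]}$ by construction (recall $\widehat{\cB}_{[k]}$ is defined as the set of binomials of the form \eqref{1st-Hq} satisfying conditions (1)--(3) of the lemma).

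The key remaining step is the elementary telescoping identity
\begin{equation}\nonumber
\prod_{i=1}^\ell \bm_i - \prod_{i=1}^\ell \bm'_i \;=\; \sum_{j=1}^\ell \left(\prod_{i<j} \bm'_i\right)(\bm_j-\bm'_j)\left(\prod_{i>j} \bm_i\right),
\end{equation}
which immediately gives $\bm-\bm' = h\sum_j (\prod_{i<j}\bm'_i)(\bm_j-\bm'_j)(\prod_{i>j}\bm_i) \in \langle \widehat{\cB}_{[k]}\rangle$. This proves the first assertion. For the second assertion, I would simply apply the de-homogenization homomorphism $p_\um \mapsto 1$, $p_\uw \mapsto x_\uw$ to the generators $\widehat{\cB}_{[k]}$: this sends $\vi_{[k]}$ to $\bar\vi_{[k]}$ and $\widehat{\cB}_{[k]}$ to $\widetilde{\cB}_{[k]}$, and by Lemma \ref{defined-by-ker} the de-homogenized ideal $\ker^\mh \bar\vi_{[k]}$ is obtained from $\ker^\mh \vi_{[k]}$ by the same specialization, yielding the claim.

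I do not expect any real obstacle here; the entire content is packed into Lemma \ref{ker-phi-k}, and the corollary is a formal telescoping consequence. The only mild subtlety is confirming that the multi-homogeneity of $\bm-\bm'$ is compatible with the factorization produced by Lemma \ref{ker-phi-k}, so that each $\bm_i-\bm'_i$ is itself multi-homogeneous and hence genuinely belongs to $\widehat{\cB}_{[k]}$; this is automatic from the structure \eqref{1st-Hq} and condition (1) in the statement of the lemma.
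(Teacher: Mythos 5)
Your proof is correct and is essentially the paper's argument: the paper proves the corollary by peeling off one factor at a time (the identity $\prod_{i=1}^{\ell}\bm_i - \prod_{i=1}^{\ell}\bm_i' = (\bm_\ell - \bm_\ell')\prod_{i=1}^{\ell-1}\bm_i + \bm_\ell'(\prod_{i=1}^{\ell-1}\bm_i - \prod_{i=1}^{\ell-1}\bm_i')$) and then inducting on $\ell$, and your telescoping sum is simply the closed-form expansion of that same induction. The only cosmetic point is your citation of Lemma \ref{defined-by-ker} for the de-homogenization step; that lemma identifies the defining ideals of the two graph closures rather than directly asserting that $\ker^\mh\bar\vi_{[k]}$ is the specialization of $\ker^\mh\vi_{[k]}$, so it would be cleaner to appeal directly to the definition of $\bar\vi_{[k]}$ as the $p_\um\mapsto 1$ specialization of $\vi_{[k]}$, which is all you actually use.
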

\begin{proof} Take any binomial
$(\bm -\bm') \in \ker^\mh \vi_{[k]}$
 with $\deg_\vr (\bm -\bm') > 0$. We express, by Lemma \ref{ker-phi-k},
$$\bm -\bm'=h (\prod_{i=1}^\ell \bm_i -  \prod_{i=1}^\ell \bm_i')$$ 
such that $\bm_i -\bm'_i \in \widehat{\cB}_{[k]}$  for all $i \in [\ell]$.
Then, we have
$$\bm -\bm'=h (\prod_{i=1}^{\ell} \bm_i - \bm_\ell'  \prod_{i=1}^{\ell-1} \bm_i
+ \bm_\ell'  \prod_{i=1}^{\ell-1} \bm_i -\prod_{i=1}^{\ell} \bm_i')$$ 
$$=h ((\bm_\ell - \bm_\ell')  \prod_{i=1}^{\ell-1} \bm_i
+ \bm_\ell'  (\prod_{i=1}^{\ell-1} \bm_i -\prod_{i=1}^{\ell-1} \bm_i')).$$ 
Thus, by a simple induction on the integer $\ell$, the corollary follows.
\end{proof}

We now let  $\bar\Theta_{[k],\Gr}$ be the restriction of  $\bar\Theta_{[k]}$
to  $\rU_\um(\Gr)=\rU_\um\cap \Gr^{d,E}$: 
\begin{equation}\label{theta-k-2}
 \xymatrix{
\bar\Theta_{[k], \Gr}: \rU_\um(\Gr)\ar @{-->}[r]  & \prod_{i \in [k]} \PP_{F_i}  } 
\end{equation}
$$
 [x_\uu]_{\uu \in \II_{d,n}} \lra  
\prod_{i \in [k]}  [x_\uu x_\uv]_{(\uu,\uv) \in \La_{F_i}}.
$$   

We let \begin{equation}\label{tA'}
 \xymatrix{
\sV_{\um, \sF_{[k]}}  \ar @{^{(}->}[r]  & \rU_\um(\Gr) \times  \prod_{i \in [k]} \PP_{F_i}  
\ar @{^{(}->}[r]  & \sR_{\sF_{[k]}}
 }
\end{equation}
be the closure of the graph of the rational map $\bar\Theta_{[k],\Gr}$.

Then, one sees that $\sV_{\um, \sF_{[k]}}$ is the proper transform 
of $\rU_\um(\Gr)$ in $\rU_{\um,\sF_{[k]}}$ under the birational morphism 
$\rU_{\um,\sF_{[k]}} \lra \rU_\um.$

Since we focus on the chart $\rU_\um$, below we write 
$\sV_{\sF_{[k]}}=\sV_{\um, \sF_{[k]}}$.

By construction, there exists the natural forgetful map
\begin{equation}\label{forgetfulMap}
\sR_{\sF_{[k]}}  \lra \sR_{\sF_{[k-1]}} \end{equation}  
 and it  induces a birational morphsim 
  \begin{equation}\label{rho-sFk} \rho_{\sF_{[k]}}: \sV_{\sF_{[k]}} \lra \sV_{\sF_{[k-1]}}.
  \end{equation}

\begin{defn}\label{defn:pre-q} 
We let $\cB_{i}$  (resp.  $\cB_{[k]}$) be the set of all binomial relations in
 \eqref{trivialBk} for any fixed $i \in [k]$ (resp. for all $i \in [k]$).
We set $\cB^\pq_{[k]}= \widetilde{\cB}_{[k]} \- \cB_{[k]}$.
An element of $\cB^\pq_{[k]}$ is called a binomial  of pre-quotient type.
\end{defn}

\begin{lemma}\label{equas-for-sVk}
 The scheme $\sV_{\sF_{[k]}}$, as a closed subscheme of
$\sR_{\sF_{[k]}}= \rU_\um \times  \prod_{i=1}^k \PP_{F_i} $,
is defined by the following relations
\begin{eqnarray}
\;\;\;\;\;\;\; B_{F_i,(s,t)}: \;\;\;  x_{(\uu_s, \uv_s)}x_{\uu_t} x_{\uv_t}- x_{(\uu_t,\uv_t)}   x_{\uu_s} x_{\uv_s},
\; \forall \;\; s, t \in S_{F_i} \- s_{F_i},    \; i \in [k], \label{eq-Bres-lek'}\\
B_{F_i,(s_{F_i},s)}: \;\; x_{(\uu_s, \uv_s)}x_{\uu_{F_i}} - x_{(\um,\uu_{F_i})}   x_{\uu_s} x_{\uv_s}, \;\;
\forall \;\; s \in S_{F_i} \- s_{F_i},  \; i \in [k], \label{eq-B-lek'}  \\
\cB^\pq_{[k]}, \;\; \;\; \;\; \;\; \;\; \;\; \;\; \;\; \;\;\; \;\; \;\; \;\; \;\; \;\; \;\; \;\; \;\; \;\; \;\; \;\; \label{eq-hq-lek}\\
L_{F_i}: \;\; \sum_{s \in S_{F_i}} \vsgn (s) x_{(\uu_s,\uv_s)},   \; i \in [k],  \label{linear-pl-lek'} \\
\bF_j: \;\; \sum_{s \in S_{F_j}} \vsgn (s) x_{\uu_s}x_{\uv_s}, \; \;   
j \in [\up] \label{linear-pl-gek}
\end{eqnarray}
where $\bF_i$ is 
expressed as $\vsgn (s_{F_i}) x_{\uu_{F_i}} +\sum_{s \in S_{F_i} \- s_{F_i}} \vsgn (s) x_{\uu_s}x_{\uv_s}$
for every $i \in [k]$.
\end{lemma}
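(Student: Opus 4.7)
The plan is to establish the two ideal inclusions $J \subseteq I(\sV_{\sF_{[k]}})$ and $V(J) \subseteq \sV_{\sF_{[k]}}$, where $J$ denotes the ideal generated by \eqref{eq-Bres-lek'}, \eqref{eq-B-lek'}, \eqref{eq-hq-lek}, \eqref{linear-pl-lek'}, and \eqref{linear-pl-gek}. For the forward inclusion, each family will be checked to vanish on $\sV_{\sF_{[k]}}$. The $\um$-primary Pl\"ucker relations $\bF_j$ for $j \in [\up]$ vanish since $\sV_{\sF_{[k]}}$ projects onto $\rU_\um(\Gr)$, which by Proposition \ref{primary-generate} is cut out in $\rU_\um$ precisely by the $\bF_j$. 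The binomial relations \eqref{eq-Bres-lek'} and \eqref{eq-B-lek'} will be identified as the de-homogenizations (under $p_\um \equiv 1$, which turns $x_{(\um,\uu_{F_i})}$ into a one-factor monomial with implicit factor $x_\um = 1$) of the elementary binomials \eqref{tildeBk} of Lemma \ref{trivialB}, one for each unordered pair of distinct $\vr$-variables in a single $\PP_{F_i}$; combined with $\cB^\pq_{[k]}$ from Definition \ref{defn:pre-q}, they exhaust $\widetilde{\cB}_{[k]}$, which lies in $\ker^\mh \bar\vi_{[k]}$ and hence by Lemma \ref{defined-by-ker} vanishes on $\rU_{\um,\sF_{[k]}} \supseteq \sV_{\sF_{[k]}}$. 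For the linearized Pl\"ucker relation $L_{F_i}$ a density argument will be used: on the dense open subset $U_0 \subseteq \sV_{\sF_{[k]}}$ mapping isomorphically onto the open locus $U \subseteq \rU_\um(\Gr)$ where $\bar\Theta_{[k],\Gr}$ extends to a morphism, the $\vr$-coordinates in $\PP_{F_i}$ equal $[x_{\uu_s} x_{\uv_s}]_{s \in S_{F_i}}$ up to a nowhere-zero scalar $\lambda_i$, so $L_{F_i}|_{U_0}$ equals a scalar multiple of $\bar{F}_i|_{U_0} = 0$, and Zariski closure extends the vanishing to all of $\sV_{\sF_{[k]}}$.

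For the reverse inclusion, the key input is Corollary \ref{cB-generate}, which asserts that $\widetilde{\cB}_{[k]} = \cB_{[k]} \sqcup \cB^\pq_{[k]}$ generates $\ker^\mh \bar\vi_{[k]}$, combined with the identification above that $\cB_{[k]}$ is exactly the union of \eqref{eq-Bres-lek'} (when $s, t \in S_{F_i} \setminus s_{F_i}$) and \eqref{eq-B-lek'} (when one of $s, t$ equals $s_{F_i}$, so the distinguished $\vr$-variable $x_{(\um,\uu_{F_i})}$ is involved). Consequently $J$ contains the defining ideal of $\rU_{\um,\sF_{[k]}}$, so $V(J) \subseteq \rU_{\um,\sF_{[k]}} \cap \pi^{-1}(\rU_\um(\Gr))$ where $\pi : \sR_{\sF_{[k]}} \to \rU_\um$. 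Over $\pi^{-1}(U)$ the binomials \eqref{eq-Bres-lek'}, \eqref{eq-B-lek'} force, at each point, the $\vr$-coordinates in each $\PP_{F_i}$ to equal $[x_{\uu_s} x_{\uv_s}]_{s \in S_{F_i}}$, giving $V(J) \cap \pi^{-1}(U) = \sV_{\sF_{[k]}} \cap \pi^{-1}(U)$; the irreducibility of $\rU_\um(\Gr)$, and hence of $\sV_{\sF_{[k]}}$ (which dominates it birationally), then forces any component of $V(J)$ dominating $\rU_\um(\Gr)$ to coincide with $\sV_{\sF_{[k]}}$.

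The main obstacle will be ruling out extraneous components of $V(J)$ supported over the degenerate locus $\rU_\um(\Gr) \setminus U$, where the binomial relations \eqref{eq-Bres-lek'}, \eqref{eq-B-lek'} reduce to $0 = 0$ and no longer constrain the $\PP_{F_i}$-factors; it is precisely here that the linearized relations $L_{F_i}$ become indispensable, cutting each affected $\PP_{F_i}$ down to a hyperplane that must coincide scheme-theoretically with the limit fiber of the graph closure. The plan to resolve this is to pass to standard affine charts of $\sR_{\sF_{[k]}}$ in which one $\vr$-variable of each $\PP_{F_i}$ is normalized to $1$, so that each $L_{F_i}$ becomes affine-linear and permits eliminating one $\vr$-coordinate per block; a chart-by-chart comparison of the resulting polynomial ideal with the local defining ideal of $\sV_{\sF_{[k]}}$, together with the dimension count $\dim V(J) = \dim \sV_{\sF_{[k]}} = d(n-d)$ and the agreement on the generic locus, will then finalize the ideal-theoretic equality.
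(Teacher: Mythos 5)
Your forward inclusion and the generic part of the reverse inclusion are correct, but the reverse inclusion stalls precisely where you say it will. You correctly identify the danger of extraneous components of $V(J)$ supported over the indeterminacy locus of $\bar\Theta_{[k],\Gr}$, where the binomials \eqref{eq-Bres-lek'}, \eqref{eq-B-lek'} trivialize; however, your proposed resolution (standard-chart comparison plus the dimension count $\dim V(J) = d(n-d)$) is only announced, not carried out. The dimension count itself is asserted without argument, and even granted, agreement on the dense open locus plus ``$\dim V(J) = \dim \sV_{\sF_{[k]}}$'' does not by itself rule out extraneous components of equal or smaller dimension living over the degenerate locus. As written, the reverse containment is unproved.

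The paper handles this quite differently and much more economically. It first uses Lemma \ref{defined-by-ker} and Corollary \ref{cB-generate} to cut out $\rU_{\um,\sF_{[k]}}$ by \eqref{eq-Bres-lek'}, \eqref{eq-B-lek'}, \eqref{eq-hq-lek}, and views $\sV_{\sF_{[k]}}$ as the proper transform of $\rU_\um(\Gr)$ inside it. The crux is then a short algebraic identity: for any fixed $s\in S_{F_i}$, summing the relations $x_{(\uu_t,\uv_t)}x_{\uu_s}x_{\uv_s}-x_{\uu_t}x_{\uv_t}x_{(\uu_s,\uv_s)}\in\cB_i$ over $t\in S_{F_i}$ with weight $\vsgn(t)$ gives
\begin{equation*}
x_{\uu_s}x_{\uv_s}\,L_{F_i}\;-\;x_{(\uu_s,\uv_s)}\,\bF_i\;\in\;\langle \cB_i\rangle .
\end{equation*}
Because $\sV_{\sF_{[k]}}$ is integral and neither $x_{\uu_s}x_{\uv_s}$ nor $x_{(\uu_s,\uv_s)}$ lies in its prime ideal, this exhibits $L_{F_i}$ and $\bF_i$ as mutually implied modulo the binomials for each $i\in[k]$. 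That single polynomial identity is the ingredient your proposal never discovers; it is what replaces (and renders unnecessary) your unfinished chart-by-chart and dimension argument. Your density argument for the forward inclusion secretly uses the same phenomenon pointwise on the dense open set, but you stop short of extracting the explicit ideal-theoretic relation that makes the proof go through globally.
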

\begin{proof} 
By  Lemma \ref{defined-by-ker} and Corollary \ref{cB-generate},
the scheme  $\rU_{\um,\sF_{[k]}}$, 
as a closed subscheme of $\sR_{\sF_{[k]}}$ 
is defined by the  relations in \eqref{eq-Bres-lek'},  \eqref{eq-B-lek'},
and \eqref{eq-hq-lek}. Thus, $\sV_{\sF_{[k]}}$, being the proper transform of $\rU_\um(\Gr)$, 
as a closed subscheme of $\sR_{\sF_{[k]}}$, 
is defined by the  relations in \eqref{eq-Bres-lek'},  \eqref{eq-B-lek'},
and \eqref{eq-hq-lek} together with the proper transforms of the 
de-homogenized $\pl$ equations $\bF =\sum_{s \in S_F} \vsgn (s) x_{\uu_s}x_{\uv_s}$ 
for all $\bF \in \sF_\um$.

It suffices to show that under the presence of \eqref{eq-Bres-lek'} and  \eqref{eq-B-lek'}, 
$L_{F_i}$ and $\bF_i$  imply each other for all $i \in [k]$.

Fix any $i \in [k]$. Take any $s \in S_{F_i}$. 
Consider the binomial relations of $\cB_i$
  \begin{equation}\label{Buv-s-1st}
  x_{(\uu, \uv)}x_{\uu_s} x_{\uv_s} - x_{\uu} x_{\uv} x_{(\uu_s,\uv_s)}, 
  \end{equation}
for all $(\uu, \uv) \in \La_{F_i}$ (cf. \eqref{LaF}). By multiplying $\vsgn (s)$ to  \eqref{Buv-s-1st}
and adding together all the resulted binomials,  
we obtain, \begin{equation}\label{Fi=Li-1st}
  x_{\uu_s} x_{\uv_s} L_{F_i} =x_{(\uu_s,\uv_s)} \bF_i \;, \; \mod (\langle \cB_i \rangle),
 \end{equation}
 where $\langle \cB_i \rangle$ is the ideal generated by the relations in $\cB_i$.
 As neither of $x_{\uu_s} x_{\uv_s}$  and $x_{(\uu_s,\uv_s)}$ belong to the ideal of $\sV_{\sF_{[k]}}$,
 and  $\sV_{\sF_{[k]}}$ is integral, we see that $L_{F_i}$ and $\bF_i$  imply each other for all $i \in [k]$.
\end{proof}

For conciseness, we set the following
$$\sV_{\um}:=\sV_{\um,\sF_{[\up]}}, \; \rU_{\um, \sF}:=\rU_{\um, \sF_{[\up]}},
\; \sR_\sF:=\sR_{\sF_{[\up]}}.$$
Then, we have the following 
diagram 
$$ \xymatrix{
\sV_{\um} \ar[d] \ar @{^{(}->}[r]  & \rU_{\um,\sF} \ar[d] \ar @{^{(}->}[r]  &
\sR_\sF= \rU_\um  \times \prod_{\bF \in \sfm} \PP_F \ar[d] \\
\rU_\um(\Gr)  \ar @{^{(}->}[r]  & \rU_\um  \ar @{=}[r]  & \rU_\um.
}
$$

 In what follows, we will sometimes write
$\sV$ for $\sV_\um$, as we will exclusively focus on the chart $\rU_\um$, throughout, unless otherwise stated.

We also set
$$\cB^\pq=\cB^\pq_{[\up]}.$$

By the case of Lemma \ref{equas-for-sVk} when $k=\up$, we have

\begin{cor}\label{eq-tA-for-sV}  The scheme $\sV_\um$, as a closed subscheme of
$\sR_\sF= \rU_\um \times  \prod_{\bF \in \sF_\um} \PP_F$,
is defined by the following relations
\begin{eqnarray} 
B_{F,(s,t)}: \;\; x_{(\uu_s, \uv_s)}x_{\uu_t}x_{ \uv_t}-x_{(\uu_t, \uv_t)}x_{\uu_s}x_{ \uv_s}, \;\; \forall \;\; 
s, t \in S_F \- s_F  \label{eq-Bres}\\
B_{F,(s_F,s)}: \;\;\;\;\; x_{(\uu_s, \uv_s)}x_{\uu_F} - x_{(\um,\uu_F)}   x_{\uu_s} x_{\uv_s}, \;\;
\forall \;\; s \in S_F \- s_F,  \label{eq-B} \\ 
\cB^\pq,   \;\; \;\; \;\; \;\; \;\; \;\; \;\; \;\; \;\; \;\; \;\; \;\; \;\; \label{eq-hq}\\
L_F: \;\; \sum_{s \in S_F} \vsgn (s) x_{(\uu_s,\uv_s)}, \label{linear-pl}\\
\bF: \;\; \sum_{s \in S_{F}} \vsgn (s) x_{\uu_s}x_{\uv_s}, \; \;   
\end{eqnarray}
for all $\bF \in \sfm$
with $\bF$ being expressed as $\vsgn (s_F) x_{\uu_F} +\sum_{s \in S_F \- s_F} \vsgn (s) x_{\uu_s}x_{\uv_s}$.
\end{cor}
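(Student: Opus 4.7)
My plan is to obtain this corollary as an immediate specialization of Lemma \ref{equas-for-sVk} to the value $k = \up$, using the notational conventions $\sV_\um := \sV_{\sF_{[\up]}}$, $\sR_\sF := \sR_{\sF_{[\up]}}$, and $\cB^\pq := \cB^\pq_{[\up]}$ that are set immediately before the statement. When $k = \up$, the index set $[k]$ in \eqref{eq-Bres-lek'}, \eqref{eq-B-lek'}, and \eqref{linear-pl-lek'} becomes all of $\sfm$, and the additional relations \eqref{linear-pl-gek} (which run over $j \in [\up]$ for the range $j > k$) merge with \eqref{linear-pl-lek'} into the single collection $\{\bF : \bF \in \sfm\}$ displayed in the corollary. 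So the list of generators in the lemma telescopes precisely onto the five families stated.

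The forward-looking verification then amounts to reading off the correspondence: the relations \eqref{eq-Bres-lek'} become \eqref{eq-Bres}, the relations \eqref{eq-B-lek'} become \eqref{eq-B}, the pre-quotient family \eqref{eq-hq-lek} becomes \eqref{eq-hq}, and the linearized Plücker and Plücker relations coincide with \eqref{linear-pl} and the last family in the corollary. No new argument is needed at the corollary level, and the reduction is purely notational.

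The substantive work was already carried out upstream, and it is worth flagging the chain of results the proof invokes. The identification of the defining ideal of $\rU_{\um,\sF_{[k]}}$ with the multi-homogeneous kernel $\ker^{\mh} \bar\vi_{[k]}$ is Lemma \ref{defined-by-ker}; the fact that this kernel is generated by binomials of the shape \eqref{1st-Hq} is the content of Lemma \ref{ker-phi-k} and is packaged as Corollary \ref{cB-generate}; and the observation that $L_{F_i}$ and $\bF_i$ are equivalent modulo $\cB_i$ on the integral scheme $\sV_{\sF_{[k]}}$ follows from the key identity $x_{\uu_s}x_{\uv_s} L_{F_i} \equiv x_{(\uu_s,\uv_s)}\bF_i \pmod{\langle \cB_i \rangle}$, which is precisely \eqref{Fi=Li-1st}. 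Combining these with passage to the proper transform of $\rU_\um(\Gr)$ is what delivers the full statement at the level $k = \up$.

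Since the corollary itself is essentially a rewriting, there is no real obstacle to overcome; the only point requiring a moment's care is to verify that the decomposition $\widetilde{\cB}_{[\up]} = \cB_{[\up]} \sqcup \cB^\pq$ (from Definition \ref{defn:pre-q}) partitions the binomials of $\ker^{\mh} \bar\vi$ into exactly the residual/main pairs of \eqref{eq-Bres}--\eqref{eq-B} (contributing $\cB_{[\up]}$, i.e., the binomials of \eqref{trivialBk}) together with the remaining pre-quotient binomials \eqref{eq-hq}. Once this bookkeeping is confirmed, the corollary follows.
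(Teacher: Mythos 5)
Your proposal is correct and matches the paper exactly: the paper derives this corollary solely by invoking Lemma \ref{equas-for-sVk} at $k=\up$, together with the conventions $\sV_\um=\sV_{\sF_{[\up]}}$, $\sR_\sF=\sR_{\sF_{[\up]}}$, $\cB^\pq=\cB^\pq_{[\up]}$. Your additional tracing of the upstream dependencies (Lemmas \ref{defined-by-ker}, \ref{ker-phi-k}, Corollary \ref{cB-generate}, and identity \eqref{Fi=Li-1st}) is accurate but not needed at the corollary level.
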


\begin{defn}\label{defn:main-res} A binomial equation $B_{F,(s_F,s)}$ of \eqref{eq-B}
with $s \in S_F \- s_F$ is called a main binomial equation. We let
$$\cB^\mn_F=\{B_{F,(s_F,s)} \mid s \in S_F \- s_F  \}
\;\; \and \;\; \cB^\mn=\sqcup_{\bF \in \sfm} \cB^\mn_F.$$
A binomial equation $B_{F,(s,t)}$ of \eqref{eq-Bres}
with $s, t \in S_F \- s_F$ and $s \ne t$ is called a residual binomial equation. We let
$$\cB^\res_F=\{B_{F,(s,t )} \mid s, t \in S_F \- s_F  \}
\;\; \and \;\;  \cB^\res=\sqcup_{\bF \in \sfm} \cB^\res_F.$$
Recall that an element of  $\cB^\pq$ is called a binomial relation of pre-quotient type.
\end{defn}

\begin{defn} \label{defn:linear-pl} Given any $\bF \in \sfm$, written as 
$\bF =\sum_{s \in S_F} \vsgn (s) x_{\uu_s}x_{\uv_s}$,
the linear equation $L_F$ as in \eqref{linear-pl}
$$L_F: \;\; \sum_{s \in S_F} \vsgn (s) x_{(\uu_s,\uv_s)} $$ 
is called the linearized $\pl$ equation with respect to $\bF$ (or $F$).
\end{defn}

We observe here that
\begin{equation}\label{dim}
\dim (\prod_{\bF \in \sfm} \PP_F) = \sum_{\bF \in \sfm} |S_F\- s_F| 
= \sum_{\bF \in \sfm} |\cB_F^\mn|=|\cB^\mn|,
\end{equation}
where $|K|$ denotes the cardinality of a finite set $K$.

\subsection{$\vp$-divisors, $\vr$-divisors, and the standard charts of $\sR_\sF$} $\ $

From earlier, we have the set
$\La_F=\{(\uu_s, \uv_s) \mid s \in S_F\}.$
This is an index set for  the homogeneous coordinates of the projective
space $\PP_F$, and  is also an index set for all the variables that appear in 
the linearized $\pl$ equation $\bL_F$ of \eqref{linear-pl}. To be used later, 
 we also set  $$\La_{\sF_\um} =\sqcup_{\bF \in \sF_\um} \La_F.$$

\begin{defn} \label{vp-divisor}
Consider the scheme $\sR_\sF =\rU_\um \times  \prod_{\bF \in \sfm} \PP_F$.
Recall that the affine chart $\rU_\um$ comes equipped with the coordinate variables 
$\{x_\uu\}_{\uu \in \II_{d,n}\- \um}$.
For any $\uu \in \II_{d,n}\- \um$, we set
$$X_\uu:=(x_\uu =0) \subset \sR_\sF.$$
We call $X_\uu$ the $\pl$ divisor, in short, the $\vp$-divisor,  of $\sR_\sF$ associated with $\uu$.
We let $\cD_\vp$ be the set of all $\vp$-divisors on the scheme $\sR_\sF$.
\end{defn}

\begin{defn} \label{vr-divisor}
In addition to the $\vp$-divisors,  the scheme 
$\sR_\sF=\rU_\um \times  \prod_{\bF \in \sfm} \PP_F$ also 
comes equipped with the divisors
$$X_{(\uu, \uv)}:=(x_{(\uu, \uv)}=0)$$
for all $(\uu,\uv) \in \Lambda_{\sF_\um}$.
We call $X_{(\uu, \uv)}$ the $\vr$-divisor corresponding to $(\uu, \uv)$.  
We let $\cD_{\vr}$ be the set  of all $\vr$-divisors of $\sR_\sF$. 
\end{defn}

\begin{defn}\label{fv-k=0} Fix $k \in [\up]$.
For every $\bF_i \in \sF_\um$ with $i \in [k]$, choose and fix an arbitrary element 
$s_{F_i, o}\in S_{F_i}$.
Then, the scheme $\sR_{\sF_{[k]}}$ is covered by the affine open charts
of the form  $$\rU_\um \times \prod_{i \in [k]}
(x_{(\uu_{s_{F_i, o}}, \uv_{s_{F_i, o}})} \equiv 1)
\subset \sR_{\sF_{[k]}}= \rU_\um \times  \prod_{i \in [k]} \PP_{F_i} .$$ 
We call such an affine open subset a standard chart of
$\sR_{\sF_{[k]}}$, often denoted by $\fV$.
\end{defn}
Fix any standard chart $\fV$ as above. We let 
$$\fV'=\rU_\um \times \prod_{i \in [k-1]}
(x_{(\uu_{s_{F_i, o}}, \uv_{s_{F_i, o}})} \equiv 1)
\subset \sR_{\sF_{[k-1]}}= \rU_\um \times  \prod_{i \in [k-1]} \PP_{F_i} .$$ 
Then, this is a standard chart of $\sR_{\sF_{[k-1]}}$, uniquely determined by $\fV$.
We say $\fV$ lies over $\fV'$. In general, 
suppose $\fV''$ is a standard chart of $\sR_{\sF_{[j]}}$ with $j <k-1$. Via induction,
we say $\fV$ lies over $\fV''$ if $\fV'$ lies over $\fV''$.

Note that the standard chart $\fV$ of
$\sR_{\sF_{[k]}}$ in the above definition is uniquely indexed 
by the set 
\begin{equation}\label{index-sR}
 \La_{\sF_{[k]}}^o=\{(\uu_{s_{F_i,o}},\uv_{s_{F_i,o}}) \in \La_{F_i} \mid i \in [k] \}.
 \end{equation}
Given $ \La_{\sF_{[k]}}^o$, we let 
$$ \La_{\sF_{[k]}}^\star=(\bigcup_{i \in [k]}\La_{F_i}) \- \La_{\sF_{[k]}}^o.$$
We set $\La_\sfm^o:=\La_{\sF_{[\up]}}^o$ and $\La_\sfm^\star:=\La_{\sF_{[\up]}}^\star$.

To be cited as the initial cases of certain inductions later on,
we need the following two propositions.

\begin{prop}\label{meaning-of-var-p-k=0} Consider any standard
 chart $$\fV=\rU_\um \times \prod_{i \in [k]}
(x_{(\uu_{s_{F_i, o}}, \uv_{s_{F_i, o}})} \equiv 1)$$ of $\sR_{\sF_{[k]}}$, 
indexed by  $\La_{\sF_{[k]}}^o$ as above.
It  comes equipped with the set of free variables
$$\var_\fV=\{x_{\fV, \uw}, \; x_{\fV, (\uu,\uv)} \mid \uw \in \II_{d,n} \- \um, \; 
(\uu,\uv) \in \La_{\sF_{[k]}}^\star
\}$$ and  is isomorphic to the affine space with the variables in $\var_\fV$
as its coordinate variables.
Moreover, on the standard chart $\fV$, we have
\begin{enumerate}
\item the divisor  $X_{ \uw}\cap \fV$ is defined by $(x_{\fV,\uw}=0)$ for every 
$\uw \in \II_{d,n} \setminus \um$;
\item the divisor  $X_{(\uu,\uv)}\cap \fV$ is defined by $(x_{\fV,(\uu,\uv)}=0)$ for every 
$(\uu,\uv) \in \La_{\sF_{[k]}}^\star.$     
\end{enumerate}
\end{prop}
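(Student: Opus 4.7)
The plan is to verify the proposition by a direct unpacking of the product structure of $\sR_{\sF_{[k]}}$. First, I would recall the definition of $\fV$: it is the Cartesian product of the affine chart $\rU_\um$ of $\PP(\wedge^d E)$ with the standard affine charts $(x_{(\uu_{s_{F_i, o}}, \uv_{s_{F_i, o}})} \equiv 1)$ of $\PP_{F_i}$ for each $i \in [k]$. Each factor is visibly an affine space: $\rU_\um$ carries affine coordinates $x_\uw = p_\uw/p_\um$ for $\uw \in \II_{d,n} \setminus \um$ from the de-homogenization convention adopted in \S \ref{localization}, while the standard chart of $\PP_{F_i}$ carries affine coordinates $x_{(\uu,\uv)}/x_{(\uu_{s_{F_i, o}}, \uv_{s_{F_i, o}})}$ for $(\uu,\uv) \in \La_{F_i} \setminus \{(\uu_{s_{F_i, o}}, \uv_{s_{F_i, o}})\}$.

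Taking the product, I would conclude that $\fV$ is affine space whose coordinate ring is freely generated by the union of these affine coordinates, which I rename $x_{\fV, \uw}$ and $x_{\fV, (\uu,\uv)}$ respectively. The resulting index set for the $\vr$-coordinates is precisely $\bigcup_{i \in [k]} \La_{F_i} \setminus \La_{\sF_{[k]}}^o = \La_{\sF_{[k]}}^\star$, matching the claimed set $\var_\fV$.

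For the two divisor statements, I would use that both $X_\uw$ and $X_{(\uu,\uv)}$ are defined globally on $\sR_{\sF_{[k]}}$ by pullback from a single factor. The $\vp$-divisor $X_\uw = (x_\uw = 0)$ is pulled back from $\rU_\um$, and since $x_\uw$ restricts to the affine coordinate $x_{\fV, \uw}$ on $\fV$, assertion (1) follows at once. For $(\uu,\uv) \in \La_{\sF_{[k]}}^\star$, lying in some $\La_{F_i}$, the $\vr$-divisor $X_{(\uu,\uv)} = (x_{(\uu,\uv)} = 0)$ is pulled back from $\PP_{F_i}$; under the de-homogenization $x_{(\uu,\uv)} = x_{\fV,(\uu,\uv)} \cdot x_{(\uu_{s_{F_i, o}}, \uv_{s_{F_i, o}})}$, with the second factor identically equal to $1$ on $\fV$, one obtains $X_{(\uu,\uv)} \cap \fV = (x_{\fV, (\uu,\uv)} = 0)$, giving assertion (2).

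There is no substantial obstacle: the statement is essentially bookkeeping, once the product structure of $\sR_{\sF_{[k]}}$ and the de-homogenization conventions on each factor are made explicit. The value of recording it formally is that it provides the base case for the inductive descriptions of coordinate variables on the charts of the sequential $\vt$-, $\wp$-, and $\eth$-blowups established later in Propositions \ref{meaning-of-var-vtk}, \ref{meaning-of-var-vskmuh}, and \ref{meaning-of-var-p-k}, where the nontrivial content will be tracking how proper transforms of $\vp$- and $\vr$-divisors interact with exceptional divisors.
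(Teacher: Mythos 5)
Your proof is correct and follows the same approach as the paper's: both set up the affine coordinates on $\fV$ by de-homogenizing on each factor of the product, and then observe that the divisor claims follow directly. The paper's own proof is terse (it simply names the coordinate variables via de-homogenization and says the rest is "straightforward to check"); your version writes out the product structure and the pullback argument for the divisors more explicitly, but there is no conceptual difference.
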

\begin{proof} 
Recall  that $\rU_\um=(p_\um \equiv 1)$.
Then, we let $x_{\fV, \uw}=x_\uw$ for all  $\uw \in \II_{d,n} \- \um$.
Now consider every 
$i \in [k]$.
Upon setting $x_{(\uu_{s_{F_i, 0}}, \uv_{s_{F_i, 0}})} \equiv 1$, we let
 $x_{\fV, (\uu_s,\uv_s)} = x_{(\uu_s,\uv_s)}$ be the de-homogenization of $x_{(\uu_s,\uv_s)}$
 for all  $s  \in S_{F_i} \- s_{F_i,o}$.
From here,  the statement is straightforward to check.
\end{proof}

\begin{prop}\label{equas-fV[k]}  Let the notation be as in
Propsotion \ref{meaning-of-var-p-k=0}. Then,
the scheme $\sV_{\sF_{[k]}} \cap \fV$, as a closed subscheme of $\fV$
is defined by the following relations
\begin{eqnarray}
\;\;\;\;\; x_{\fV,(\uu_s, \uv_s)}x_{\fV,\uu_t} x_{\fV,\uv_t}- x_{\fV, (\uu_t,\uv_t)}   x_{\fV,\uu_s} x_{\fV, \uv_s},
\; \forall \;\; s, t \in S_{F_i} \- s_{F_i},    \; i \in [k], \label{eq-Bres-lek'-2}\\
\;\; \;\; x_{\fV, (\uu_s, \uv_s)}x_{\fV,\uu_{F_i}} - x_{\fV,(\um,\uu_{F_i})}   
x_{\fV,\uu_s} x_{\fV,\uv_s}, \;\;
\forall \;\; s \in S_{F_i} \- s_{F_i},  \; i \in [k], \label{eq-B-lek'-2}  \\
\cB^\pq_{\fV, [k]},  \;\; \;\; \;\; \;\; \;\; \;\;\; \;\; \;\; \;\; \;\; \;\; \;\; \;\; \;\; \;\; \;\; \;\; \;\; \;\; 
\label{eq-hq-lek-2}\\
L_{\fV, F_i}: \;\; \sum_{s \in S_{F_i}} \vsgn (s) x_{\fV, (\uu_s,\uv_s)},   \; i \in [k],  \label{linear-pl-lek'-2} \\
\bF_{\fV, j}: \;\; \sum_{s \in S_{F_j}} 
\vsgn (s) x_{\fV,\uu_s}x_{\fV,\uv_s}, \; \; k < j \le \up.  \label{linear-pl-gek-2}
\end{eqnarray}
where  the equations of $\cB^\pq_{\fV, [k]}$ are the de-homogenizations of the equations
of $\cB^\pq_{[k]}$.
\end{prop}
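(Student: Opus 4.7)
The plan is to deduce the local defining equations on the standard chart $\fV$ from the global defining equations of $\sV_{\sF_{[k]}} \subset \sR_{\sF_{[k]}}$ provided by Lemma \ref{equas-for-sVk}, via the coordinate description of $\fV$ given by Proposition \ref{meaning-of-var-p-k=0}. The starting point is the five families of global relations \eqref{eq-Bres-lek'}, \eqref{eq-B-lek'}, \eqref{eq-hq-lek}, \eqref{linear-pl-lek'}, and \eqref{linear-pl-gek}. Since $\fV = \rU_\um \times \prod_{i \in [k]} (x_{(\uu_{s_{F_i,o}},\uv_{s_{F_i,o}})} \equiv 1)$ is obtained by de-homogenizing each projective factor $\PP_{F_i}$ with respect to the chosen coordinate $x_{(\uu_{s_{F_i,o}},\uv_{s_{F_i,o}})}$, the ideal of $\sV_{\sF_{[k]}} \cap \fV$ inside $\fV$ is obtained by substituting $x_{(\uu_{s_{F_i,o}},\uv_{s_{F_i,o}})} = 1$ for all $i \in [k]$ into each global relation, and re-interpreting the remaining symbols as the free variables of $\var_\fV$ listed in Proposition \ref{meaning-of-var-p-k=0}.

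The key point that makes the substitution clean is multi-homogeneity. Each relation in Lemma \ref{equas-for-sVk} is separately homogeneous in the $\vr$-variables of each $\PP_{F_i}$: the residual and main binomials in \eqref{eq-Bres-lek'}-\eqref{eq-B-lek'} are $\vr$-linear in $\PP_{F_i}$ by inspection, the pre-quotient binomials in $\cB^\pq_{[k]} = \widetilde{\cB}_{[k]} \setminus \cB_{[k]}$ are $\vr$-linear in each factor by Lemma \ref{ker-phi-k}(1) together with Definition \ref{defn:pre-q}, the linearized relations $L_{F_i}$ are linear in the $\vr$-variables of $\PP_{F_i}$ by construction, and the relations $\bF_j$ for $j > k$ involve no $\vr$-variable of any $\PP_{F_i}$ with $i \in [k]$. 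Thus setting $x_{(\uu_{s_{F_i,o}},\uv_{s_{F_i,o}})} = 1$ introduces no new zero or infinity and preserves the shape of each relation, yielding respectively \eqref{eq-Bres-lek'-2}, \eqref{eq-B-lek'-2}, \eqref{eq-hq-lek-2} (by the very definition $\cB^\pq_{\fV,[k]}$), \eqref{linear-pl-lek'-2}, and \eqref{linear-pl-gek-2}.

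To finish, I would verify that the restricted system does cut out $\sV_{\sF_{[k]}} \cap \fV$ scheme-theoretically, not merely set-theoretically. This amounts to noting that the restriction functor from the multi-homogeneous coordinate ring of $\sR_{\sF_{[k]}}$ to the affine coordinate ring of $\fV$ is exact, and, since the global ideal is generated by the multi-homogeneous relations listed in Lemma \ref{equas-for-sVk}, their images generate the ideal of $\sV_{\sF_{[k]}} \cap \fV$ in the coordinate ring of $\fV$. The proposition then follows.

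The only step that could be mildly delicate is the bookkeeping for the pre-quotient binomials: one has to confirm that the de-homogenization of $\cB^\pq_{[k]}$ on the chart $\fV$ coincides with the set $\cB^\pq_{\fV,[k]}$ as declared. This is essentially the definitional content of $\cB^\pq_{\fV,[k]}$ in the statement of the proposition, but to be rigorous I would observe that each element of $\cB^\pq_{[k]}$, being multi-homogeneous and $\vr$-linear in each $\PP_{F_i}$ by Lemma \ref{ker-phi-k}, de-homogenizes to a well-defined polynomial on $\fV$, and conversely every relation needed on $\fV$ arises in this way. No further calculation is required beyond the mechanical substitution.
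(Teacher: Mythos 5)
There is a genuine gap. The global defining equations in Lemma \ref{equas-for-sVk} include the de-homogenized primary $\pl$ relations $\bF_j$ for \emph{all} $j \in [\up]$ (see \eqref{linear-pl-gek}), whereas the Proposition's list of local defining equations on $\fV$ retains only $\bF_{\fV,j}$ for $k < j \le \up$ (see \eqref{linear-pl-gek-2}). Your ``mechanical substitution'' plan matches the five families of Lemma \ref{equas-for-sVk} one-for-one against the five displayed families of the Proposition; but substituting $x_{(\uu_{s_{F_i,o}},\uv_{s_{F_i,o}})} = 1$ does not change the index range of the last family from $j \in [\up]$ to $k < j \le \up$. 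It still produces a nontrivial relation $\bF_{\fV,i}$ for each $i \in [k]$, and you have not explained why those relations can be omitted. Your remark that ``the relations $\bF_j$ for $j > k$ involve no $\vr$-variable of any $\PP_{F_i}$'' only addresses the $j > k$ part, silently skipping the part that actually requires an argument.

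The point the paper's proof makes, and yours must make too, is that the $\bF_{\fV,i}$ with $i \in [k]$ are \emph{redundant} on the chart $\fV$: they lie in the ideal generated by the other displayed relations. Concretely, one invokes the identity \eqref{Fi=Li-1st},
$x_{\uu_s} x_{\uv_s} L_{F_i} = x_{(\uu_s,\uv_s)} \bF_i \pmod{\langle\cB_i\rangle}$,
with $s = s_{F_i,o}$; since $x_{(\uu_{s_{F_i,o}},\uv_{s_{F_i,o}})} \equiv 1$ on $\fV$, this specializes to
$x_{\fV,\uu_{s_{F_i,o}}} x_{\fV,\uv_{s_{F_i,o}}} L_{\fV,F_i} = \bF_{\fV,i} \pmod{\langle\cB_{\fV,i}\rangle}$,
showing $\bF_{\fV,i}$ is generated by $L_{\fV,F_i}$ and the binomials \eqref{eq-Bres-lek'-2}--\eqref{eq-B-lek'-2} for index $i$. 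Without this step the claim that the proposition's list suffices scheme-theoretically is not established. The rest of your outline (the chart coordinates from Proposition \ref{meaning-of-var-p-k=0}, the $\vr$-linearity ensuring de-homogenization is clean) is correct and aligned with the paper, but this redundancy argument is the actual content of the proof and it is missing from yours.
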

\begin{proof} 
By applying Lemma \ref{equas-for-sVk}, 
it suffices to show that $\bF_{\fV,i}$ is redundant on the chart $\fV$ for all $i \in [k]$.
Fix any $i \in [k]$.
By \eqref{Fi=Li-1st} 
and take $s=s_{F_i,o}$ on the chart $\fV$ (cf. Definition \ref{fv-k=0}),
  we obtain, \begin{equation}\label{discard-Fk}
 x_{\fV,\uu_{s_{F_i,o}}}x_{\fV, \uv_{s_{F_i,o}}}L_{\fV, F_i} =  \bF_{\fV,i}
 \end{equation}
  because $x_{(\uu_{s_{F_k,o}}, \uv_{s_{F_k,o}})} \equiv 1$ on the chart $\fV$.
  This implies the statement.
\end{proof}

For any $f \in R$, we let $\deg_\vp f$ be the degree of $f$ considered as
a polynomial in $\vp$-variables only.

\begin{defn}\label{defn:q} Let $f \in \cB^\pq$ be a binomial relation of pre-quotient type. 
We say $f$ is a binomial relation of quotient type
if  $\deg_\vp f =0$, that is, it does not contain any $\vp$-variable.
We let $\cB^\q$ be the set of all binomial relations of quotient type.
 Fix a standard chart $\fV$ as in Definition \ref{fv-k=0}, we let $\cB^\q_{\fV, [k]}
 \subset \cB^\pq_{\fV, [k]}$ be the subset of all
the de-homogenized  binomial relations of quotient type.
\end{defn}

We write 
$$\cB^\q_{\fV}:= \cB^\q_{\fV,[\up]}, \;\; \cB^\pq_{\fV}:= \cB^\pq_{\fV,[\up]}.$$

We let $R_\vr$ be the subring of $R$ consisting of polynomials with  $\vr$-variables only.
Then, binomial relations of quotient type belong to $R_\vr$.

By Lemma \ref{ker-phi-k}, all binomials of $\cB^\q$ and $\cB^\q_\fV$ 
 are $\vr$-linear,  in particular,  
 they are square-free.

\begin{prop}\label{equas-p-k=0} 
Let the notation be as in Proposition \ref{equas-fV[k]} for $k=\up$.  
Then, the scheme $\sV\cap \fV$, as a closed subscheme of
the chart $\fV$ of $\sR_\sF$,  is defined by 
\begin{eqnarray} 
\;\;\;\;\;\;\;\;\;\;\;\;\;\; B_{ \fV,(s,t)}: \;\; x_{\fV,(\uu_s, \uv_s)}x_{\fV,\uu_t}x_{\fV, \uv_t}-x_{\fV,(\uu_t, \uv_t)}x_{\fV,\uu_s}x_{ \fV,\uv_s}, \;\; \forall \;\; s, t \in S_F \- s_F,  \label{eq-Bres-pk=0}\\
B_{ \fV,(s_F,s)}: \;\;\;\;\;\; x_{\fV, (\uu_s, \uv_s)}x_{\fV, \uu_F} - x_{\fV, (\um,\uu_F)}   x_{\fV, \uu_s} x_{\fV, \uv_s}, \;\;
\forall \;\; s \in S_F \- s_F,  \label{eq-B-k=0} \\ 
\cB^\q_\fV,  \;\;\; \;\;\; \;\; \;\; \;\; \;\; \;\; \;\; \;\; \;\; \;\; \;\; \;\; \;\; \label{eq-hq-k=0}\\
L_{\fV, F}: \;\; \sum_{s \in S_F} \vsgn (s) x_{\fV,(\uu_s,\uv_s)} \label{linear-pl-k=0}
\end{eqnarray}
for all $\bF \in \sfm$
with $\bF$ being expressed as $\vsgn (s_F) x_{\uu_F} +\sum_{s \in S_F \- s_F} \vsgn (s) x_{\uu_s}x_{\uv_s}$.
Here, we set 
$$x_{\fV,\um} \equiv 1; \;\; x_{\fV, (\uu_{s_{F, o}}, \uv_{s_{F, o}})} \equiv 1, \;\; \forall \;\; 
\bF \in \sF_\um,$$
Moreover, every binomial $B_\fV \in \cB^\q_\fV$ is linear in $\vr$-variables, in particular, square-free.
\end{prop}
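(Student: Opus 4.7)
The plan is to derive Proposition~\ref{equas-p-k=0} by restricting the global defining equations of $\sV \subset \sR_\sF$ supplied by Corollary~\ref{eq-tA-for-sV} to the standard chart $\fV$ and then refining the resulting list. On $\fV$, after imposing $(p_\um \equiv 1)$ and $(x_{(\uu_{s_{F,o}},\uv_{s_{F,o}})} \equiv 1)$ for every $\bF \in \sfm$, two things remain: (i) show that each de-homogenized Pl\"ucker relation $\bF_\fV$ is redundant; (ii) show that the full set of pre-quotient binomials $\cB^\pq_\fV$ may be replaced by its quotient-type subset $\cB^\q_\fV$ without changing the generated ideal.

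For (i), the argument is essentially the one used in Proposition~\ref{equas-fV[k]}: specializing the identity~\eqref{discard-Fk} to $\fV$ gives $x_{\fV,\uu_{s_{F,o}}} x_{\fV,\uv_{s_{F,o}}} L_{\fV,F} \equiv \bF_\fV$ modulo the residual and main binomials, so that $\bF_\fV$ is already a consequence of $L_{\fV,F}$ together with the binomials already on the list. For (ii), I would induct on $\deg_\vp b$ for $b \in \cB^\pq_\fV$. By Lemma~\ref{ker-phi-k}, every such $b$ has the form $\bar\vi(X_1) X_2 - \bar\vi(X_1') X_2'$ with the underlying $\vr$-relation $X_1 X_2 - X_1' X_2'$ being $\vr$-linear and square-free; when $\deg_\vp b = 0$ one has $b \in \cB^\q_\fV$ by definition, so nothing is to prove. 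For $\deg_\vp b > 0$, property~(3) of Lemma~\ref{ker-phi-k} guarantees a $\vp$-pair $x_{\fV,\uu_s} x_{\fV,\uv_s}$ in $\bar\vi(X_1)$ arising from a $\vr$-factor $X_s$ of $X_1$ with $s \in S_F$ for some $\bF \in \sfm$. Using the main binomial $B_{\fV,(s_F,s)}$ together with the matching substitution on the other monomial forced by multi-homogeneity, one rewrites $b$ modulo the main and residual binomials and the linearized Pl\"ucker relations as a generically non-vanishing $\vr$-variable prefactor (such as $x_{\fV,(\um,\uu_F)}$) times a new multi-homogeneous binomial $b'$ in $\ker^\mh \bar\vi$ of strictly smaller $\vp$-degree. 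The integrality of $\sV$ permits cancellation of the prefactor, and the inductive hypothesis applied to $b'$ finishes the reduction.

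The hard part is exactly the inductive step in (ii): (a) the bookkeeping required to match the substitution on the two monomials so that the residue $b'$ remains multi-homogeneous and still lies in $\ker^\mh \bar\vi$; and (b) justifying the prefactor cancellation, which demands verifying that the relevant $\vr$-variable does not vanish identically on $\sV$. A secondary delicate point is handling the case when the chart's reference index $s_{F,o}$ coincides with the leading index $s_F$, in which case $x_{\fV,(\um,\uu_F)} \equiv 1$ on $\fV$ and the main binomial already provides a direct substitution with no prefactor to cancel. The closing assertion that every $b \in \cB^\q_\fV$ is $\vr$-linear and square-free is immediate from properties~(1) and~(2) of Lemma~\ref{ker-phi-k}.
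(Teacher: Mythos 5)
Your part~(i) follows the paper's route via \eqref{discard-Fk} and is fine. The gap is in part~(ii), exactly at the spot you flagged as delicate. The main binomial $B_{\fV,(s_F,s)}$ reads $x_{\fV,(\uu_s,\uv_s)}x_{\fV,\uu_F} - x_{\fV,(\um,\uu_F)}x_{\fV,\uu_s}x_{\fV,\uv_s}$, so it trades the product $x_{\fV,(\um,\uu_F)}x_{\fV,\uu_s}x_{\fV,\uv_s}$ for $x_{\fV,(\uu_s,\uv_s)}x_{\fV,\uu_F}$, not the pair $x_{\fV,\uu_s}x_{\fV,\uv_s}$ by itself. A monomial of $b$ containing $x_{\fV,\uu_s}x_{\fV,\uv_s}$ need not contain $x_{\fV,(\um,\uu_F)}$, so before the main binomial applies you must multiply $b$ by $x_{\fV,(\um,\uu_F)}$, and the honest congruence produced by your substitution is $x_{\fV,(\um,\uu_F)}\,b \equiv x_{\fV,\uu_F}\,b' \pmod{J}$ (where $J$ is the ideal generated by the listed main and residual binomials), with the extraneous $\vr$-variable attached to $b$, not to $b'$. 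The inductive hypothesis $b' \in J + \langle\cB^\q_\fV\rangle$ then only gives $x_{\fV,(\um,\uu_F)}\,b \in J + \langle\cB^\q_\fV\rangle$, and there is no ideal-theoretic warrant to divide out $x_{\fV,(\um,\uu_F)}$: you do not yet know that $J + \langle\cB^\q_\fV\rangle$ is the prime ideal of $\sV\cap\fV$ --- that is precisely what the proposition asserts --- so treating $x_{\fV,(\um,\uu_F)}$ as a non-zero-divisor modulo that ideal by appeal to the integrality of $\sV$ is circular. (Notice also the internal tension in your sketch: had the congruence truly taken the form $b \equiv (\mathrm{prefactor})\,b' \pmod{J}$, no cancellation would have been needed at all.)

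The paper avoids the stray factor by substituting through the chart's reference index $s_{F,o}$ rather than the leading index $s_F$. On $\fV$ one has $x_{\fV,(\uu_{s_{F,o}},\uv_{s_{F,o}})}\equiv 1$, so the relation $x_{\uu_{s_{F,o}}}x_{\uv_{s_{F,o}}}x_{(\uu_s,\uv_s)} - x_{\uu_s}x_{\uv_s}x_{(\uu_{s_{F,o}},\uv_{s_{F,o}})}$ of $\cB_i$ specializes on the chart to $x_{\uu_{s_{F,o}}}x_{\uv_{s_{F,o}}}x_{(\uu_s,\uv_s)} - x_{\uu_s}x_{\uv_s}$, i.e.\ to $\pm B_{\fV,(s_{F,o},s)}$ (a main binomial if $s_{F,o}=s_F$, a residual one otherwise, so already among the allowed generators). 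Writing $b = x_{\uu_s}x_{\uv_s}\bn_s - x_{\uu_t}x_{\uv_t}\bn_t$ as in Lemma \ref{ker-phi-k} and substituting in both terms gives the genuine polynomial identity $b = x_{\uu_{s_{F,o}}}x_{\uv_{s_{F,o}}}\,b' + (\text{a combination of } B_{\fV,(s_{F,o},s)}\text{ and }B_{\fV,(s_{F,o},t)})$ with $b' = x_{(\uu_s,\uv_s)}\bn_s - x_{(\uu_t,\uv_t)}\bn_t$ and $\deg_\vp b' = \deg_\vp b - 2$. Here the extra factor $x_{\uu_{s_{F,o}}}x_{\uv_{s_{F,o}}}$ sits on the harmless side of the identity, so the induction closes with no cancellation needed. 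Your observation that the case $s_{F,o}=s_F$ is clean was the right germ: the fix is to use the $s_{F,o}$ substitution uniformly, and the chart normalization $x_{\fV,(\uu_{s_{F,o}},\uv_{s_{F,o}})}\equiv 1$ is what makes that possible.
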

\begin{proof} By Proposition \ref{equas-fV[k]} for $k=\up$,
 the scheme $\sV\cap \fV$, as a closed subscheme of
the chart $\fV$ of $\sR_\sF$,  is defined by relations in \eqref{eq-Bres-pk=0},
\eqref{eq-B-k=0} , \eqref{linear-pl-k=0},  and $\cB^\pq_\fV$.

It remains to reduce $\cB^\pq_\fV$ to $\cB^\q_\fV$.

We claim that any relation $f$ of $\cB^\pq_\fV$ is generated by relations of $\cB^\q_\fV$.

We prove it by induction on $\deg_\vp (f)$.

When  $\deg_\vp (f) =0$, the statement holds trivially.

Assume that statement holds for $\deg_\vp< e$ for some $e>0$.

Consider $\deg_\vp (f) = e$. 

By Lemma \ref{ker-phi-k}, we can write 
$$f= x_{\uu_s} x_{\uv_s} \bn_s - x_{\uu_t} x_{\uv_t} \bn_t$$
for some $s, t \in S_F$ and some $\bF \in \sfm$.
Because on the chart $\fV$, we have
$$x_{\uu_s} x_{\uv_s} - x_{\uu_{s_{F,o}}}x_{\uv_{s_{F,o}}} x_{(\uu_s, \uv_s)}, \;
x_{\uu_t} x_{\uv_t} - x_{\uu_{s_{F,o}}}x_{\uv_{s_{F,o}}} x_{(\uu_t, \uv_t)},$$
where $s_{F,o}$ is as in Definition \ref{fv-k=0} with
$x_{(\uu_{s_{F,o}},\uv_{s_{F,o}})}\equiv 1$,
we get
$$f= x_{\uu_{s_{F,o}}}x_{\uv_{s_{F,o}}}(x_{(\uu_s, \uv_s)} \bn_s - x_{(\uu_t, \uv_t)} \bn_t).$$
Observe that $(x_{(\uu_s, \uv_s)} \bn_s - x_{(\uu_t, \uv_t)} \bn_t) \in \cB^\pq_\fV$.
Since  $\deg_\vp (x_{(\uu_s, \uv_s)} \bn_s - x_{(\uu_t, \uv_t)} \bn_t) < e$, 
the statement then follows from the inductive assumption.
\end{proof}

\begin{defn}\label{all-binomials-10} We let $\cB^\mn_\fV$ (respectively, $\cB^\res_\fV$, $\cB^\q_\fV$)
be the set of all binomial relations of
\eqref{eq-B-k=0} (respectively,
\eqref{eq-Bres-pk=0}, \eqref{eq-hq-k=0}).
We call relations of $\cB^\mn_\fV$ (respectively, $\cB^\res_\fV$, $\cB^\q_\fV$)
main (respectively, residual, of quotient type)  binomial on the chart $\fV$.
We let
$$\cB=\cB^\mn \sqcup \cB^\res \sqcup \cB^\q \;\; \; \and \;\;\;
 \cB_\fV=\cB^\mn_{ \fV} \cup \cB^\res_{ \fV} \cup \cB^\q_{ \fV}.$$
We let $L_{\fV, \sfm}$ be the set of all linear equations of
 \eqref{linear-pl-k=0}. We call relations of $L_{\fV, \sfm}$ linearized 
 $\pl$ relations on the chart $\fV$.
\end{defn}


\section{$\vt$-Blowups}\label{vt-blowups} 

{\it  We begin now the process of $``$removing$"$ zero factors of  main binomials
by  sequential smooth blowups. It is divided into three subsequences.
The first are $\vt$-blowups.}
 

To start, it is useful to fix some terminology, used throughout.

\subsection{Some conventions on blowups} \label{blowupConventions} $\ $

Let $X$ be a 
scheme over the base field $\kk$.
When we blow up the scheme $X$ along the ideal (the homogeneous ideal, respectively)
$I=\langle f_0, \cdots, f_m \rangle$,  generated by some elements $f_0, \cdots, f_m$,
we will realize the blowup scheme $\widetilde X$ as the graph of the closure
of the rational map $$f: X \dashrightarrow \PP^m,$$
$$ x \to [f_0(x), \cdots, f_m(x)].$$
Then, upon fixing the generators  $f_0, \cdots, f_m$, we have a natural embedding
\begin{equation}\label{general-blowup} \xymatrix{
\widetilde{X}  \ar @{^{(}->}[r]  & X \times \PP^m.
 }
\end{equation}
We let
\begin{equation}
\pi: \widetilde{X} \lra X 
\end{equation}
be the induced blowup morphism.

We will refer to the projective space $\PP^m$ as the  {\it factor
projective space}  of the blowup corresponding to the generators  $f_0, \cdots, f_m$.
We let $[\xi_0, \cdots, \xi_m]$ be the homogeneous coordinates of
the factor projective space $\PP^m$,  corresponding  to $(f_0, \cdots, f_m)$. 

When $X$ is smooth and the center of the blowup is also smooth, then, the scheme 
$\widetilde X$, as a closed subscheme of $X \times \PP^m$, is defined by the relations
\begin{equation}
f_i \xi_j - f_j \xi_i, \;\; \hbox{for all $0 \le i \ne j \le m$}.
\end{equation}

\begin{defn}\label{general-standard-chart}
Suppose that the scheme $X$ is covered by a set $\{\fV' \}$ of open subsets, called
(standard) charts. 

Fix any $0 \le i \le m$. We let
\begin{equation} 
\fV=  (\fV' \times (\xi_i \ne 0)) \cap \widetilde{X} .
\end{equation}
We also often express this chart as
 $$\fV= (\fV' \times (\xi_i \equiv 1)) \cap \widetilde{X}.$$
It is an open subset of  $\widetilde{X}$, and will be called a standard chart of $\widetilde{X}$
lying over the (standard) chart $\fV'$ of $X$. Note that every standard chart of $\widetilde{X}$
lies over a unique (standard) chart $\fV'$ of $X$.
Clearly, $\widetilde{X}$ is covered by the finitely many  standard charts.

 In general, we let
 $$\widetilde{X}_k \lra \widetilde{X}_{k-1} \lra  \cdots  \lra \widetilde X_0:=X$$
 be a sequence of blowups such that every blowup $\widetilde{X_j} \to \widetilde{X}_{j-1}$ is
 as in \eqref{general-blowup}, $j \in [k]$.  
 
 Consider any $0 \le j < k$.  Let $\fV$ (resp. $\fV''$) be a standard chart of $\widetilde{X}_k$
 (resp. of $\widetilde{X}_j$). Let $\fV'$ be the unique standard chart $\fV'$ of $\widetilde X_{k-1}$
 such that $\fV$ lies over $\fV'$.
 Via induction, we say $\fV$ lies over $\fV''$ if $\fV'$ equals to (when $j=k-1$) or lies over $\fV''$
 (when $j < k-1$).
\end{defn}

 We keep the notation as above. Let $\widetilde{X}  \to X$ be a blowup as
in \eqref{general-blowup}; we let $\fV$ be a standard chart of $\widetilde{X}$, lying over
a unique (standard) chart $\fV'$ of $X$; let
$\pi_{\fV, \fV'}: \fV \lra \fV'$ be the induced projection.

\begin{defn}\label{general-proper-transform-of-variable}  Assume that
$\fV$ (resp. $\fV'$) is isomorphic to an affine space and comes equipped with
a set of coordinate variables $\var_\fV$ (resp. $\var_{\fV'}$).
Let $y \in \var_\fV$ (resp. $y' \in \var_{\fV'}$) be a coordinate variable of $\fV$ (resp. $\fV'$).
We say the coordinate variable $y$ is a proper transform of
the coordinate variable $y'$ if the divisor  $(y=0)$ on the chart $\fV$ is the proper transform of  
the divisor $(y'=0)$ on the chart $\fV'$.
\end{defn}

Keep the notation and assumption as in Definition \ref{general-proper-transform-of-variable}.

We assume in addition that the induced blowup morphism 
$$\pi^{-1} (\fV') \lra \fV'$$
corresponds to  the blowup of $\fV'$ along
the coordinate subspace of $\fV'$ defined by
 $$Z=\{y'_0= \cdots =y'_m =0\}$$
 with $\{y'_0, \cdots, y'_m\} \subset \var_{\fV'}$.
As earlier, we let $\PP^m$ be the corresponding factor projective space
with homogeneous coordinates $[\xi_0, \cdots, \xi_m]$, corresponding to $(y'_0, \cdots, y'_m)$.
 
 Without loss of generality, we assume that 
  the standard chart $\fV$ corresponds to $(\xi_0 \equiv 1)$, that is,
 $$\fV = (\fV' \times (\xi_0 \equiv 1)) \cap \widetilde{X}.$$
 Then, we have that $\fV$, as a closed subscheme of $\fV' \times (\xi_0 \equiv 1)$,
 is defined 
 \begin{equation}\label{general-blowup-formulas}
y'_i  - y'_0 \xi_i, \;\; \hbox{for all $i \in [m]$}.
\end{equation}

The following proposition is standard and will be applied throughout. 

\begin{prop}\label{generalmeaning-of-variables} Keep the notation and assumption as above.
In addition, we let $E$ be the exceptional divisor of the blowup $\widetilde{X}  \to X$.

Then, the standard chart $\fV$ comes equipped with a set of free variables 
$$\var_\fV=\{ \zeta, y_1, \cdots, y_m;  y:=y'  \mid y' \in \var_{\fV'} \- \{y_0' , \cdots, y_m'\} \}$$
where $\zeta:=y_0', y_i :=\xi_i, i \in [m]$, and
 is isomorphic to the affine space with the variables in $\var_\fV$
as its coordinate variables such that
\begin{enumerate}
\item $E \cap \fV =(\zeta =0)$; we call $\zeta$ the exceptional variable/parameter of $E$ on $\fV$;
\item $y_i \in \var_\fV$ is a proper transform of  $y'_i \in \var_{\fV'}$ for all  $i \in [m]$;
\item $y \in \var_\fV$ is a proper transform of  
$y' \in \var_\fV$ for all  $y' \in \var_{\fV'} \- \{y_0' , \cdots, y_m'\}$.
\end{enumerate}
\end{prop}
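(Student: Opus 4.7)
The plan is to use the explicit equations \eqref{general-blowup-formulas} defining $\fV$ inside $\fV' \times (\xi_0 \equiv 1)$ to eliminate the variables $y_1',\ldots,y_m'$ and exhibit $\fV$ as an affine space with the stated set of coordinates. Since $\fV'$ is an affine space with coordinate ring $\kk[\var_{\fV'}]$, the product $\fV' \times (\xi_0 \equiv 1)$ is an affine space with coordinate ring $\kk[\var_{\fV'}][\xi_1,\ldots,\xi_m]$. The relations $y_i' = y_0'\xi_i$ for $i \in [m]$ allow us to eliminate $y_1',\ldots,y_m'$ from the coordinate ring. Thus the coordinate ring of $\fV$ is isomorphic to the polynomial ring in the variables $\{y_0'\} \cup \{\xi_1,\ldots,\xi_m\} \cup (\var_{\fV'}\setminus\{y_0',\ldots,y_m'\})$, which upon the identifications $\zeta := y_0'$, $y_i := \xi_i$, and $y := y'$ is exactly the polynomial ring in $\var_\fV$. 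This yields the first part of the statement.

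For the remaining claims, I would compute the loci of interest directly on $\fV$ using these identifications. The exceptional divisor $E$ is the preimage of the center $Z = (y_0' = \cdots = y_m' = 0) \subset \fV'$ under $\pi$; on $\fV$ we have $y_i' = y_0'\xi_i$, so $\pi^{-1}(Z) \cap \fV$ is defined by $y_0' = y_0'\xi_1 = \cdots = y_0'\xi_m = 0$, which reduces to $(y_0' = 0) = (\zeta = 0)$, establishing (1). For (2), the total transform $\pi^{-1}(y_i' = 0) \cap \fV$ is the divisor $(y_i' = 0) = (y_0'\xi_i = 0) = E \cup (\xi_i = 0)$ on $\fV$; removing the exceptional component $E$ (which is codimension one) leaves the proper transform as $(\xi_i = 0) = (y_i = 0)$. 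For (3), for any $y' \in \var_{\fV'}\setminus\{y_0',\ldots,y_m'\}$, the total transform $\pi^{-1}(y' = 0) \cap \fV$ is simply $(y = 0)$; since this divisor is not contained in $E$ (the coordinate $y$ is independent of $\zeta$ in $\var_\fV$), it coincides with the proper transform.

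I do not anticipate any substantial obstacle: this is essentially a routine unwinding of the standard local description of the blowup of an affine space along a coordinate subspace, using the fact that the blowup is smooth of the expected dimension so that the elimination of $y_1',\ldots,y_m'$ gives a free polynomial ring with no further relations. The only point requiring mild care is distinguishing total transforms from proper transforms when computing the vanishing loci of the $y_i'$, which is handled by noting that the exceptional divisor $E = (\zeta = 0)$ is an irreducible component of the total transform of each $(y_i' = 0)$ but not of $(y' = 0)$ for $y' \notin \{y_0',\ldots,y_m'\}$.
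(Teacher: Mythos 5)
Your argument is correct and is exactly what the paper's one-line proof ("It is straightforward from \eqref{general-blowup-formulas}.") leaves implicit: use the relations $y'_i = y'_0\xi_i$ to eliminate $y'_1,\dots,y'_m$, identify $\fV$ with the affine space on $\var_\fV$, and then read off the exceptional divisor and proper transforms from the coordinates. Same approach, just spelled out.
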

\begin{proof}
It is straightforward from \eqref{general-blowup-formulas}.
\end{proof}
 
 Let $\AA^l$ be the affine space of dimension $l$ for some positive integer $l$ with the set of 
coordinate variables $\var_{\AA^l}$. 
Let  $\bf m$ be a monomial in $\var_{\AA^l}$. Then, for every variable $x \in \var_{\AA^l}$,
we let $\deg_x {\bf m}$ be the degree of $x$ in $\bf m$. 
 
 \begin{defn}\label{general-proper-transforms} 
 Keep the notation and assumption as in Proposition \ref{generalmeaning-of-variables}.
In addition, we let 
$$\phi=\{y'_0, \cdots, y'_m\} \subset \var_{\fV'}.$$

 Let $B_{\fV'}=T^0_{\fV'}- T^1_{\fV'}$ be 
 a binomial with variables in $\var_{\fV'}$.
We let $$m_{\phi, T^i_{\fV'}} = \sum_{j=0}^m \deg_{y'_j} (T^i_{\fV'}), \;\; i =0, 1, $$ 
$$l_{\phi, B_{\fV'}} = \min \{m_{\phi, T^0_{\fV'}}, m_{\phi, T^1_{\fV'}}\}.$$ 

Applying \eqref{general-blowup-formulas}, we substitute $y'_i$ by $y'_0 \xi_i$, for all $i \in [m]$,
into $B_{\fV'}$ and switch $y_0'$ by $\zeta$ and $\xi_i$ by $y_i$ with $i \in [m]$ to obtain
the pullback $\pi_{\fV,\fV'}^* B_{\fV'}$
where $\pi_{\fV,\fV'}: \fV \lra \fV'$ is the induced projection.
We then let 
\begin{equation}\label{define-proper-t}
B_\fV = (\pi_{\fV,\fV'}^* B_{\fV'}) / \zeta^{l_{\phi, B_{\fV'}}}.
\end{equation}
We call $B_\fV$, a binomial in $\var_\fV$, the proper transform of $B_{\fV'}$.

In general, for any polynomial $f_{\fV'}$ in $\var_{\fV'}$ such that 
$f_{\fV'}$ does not vanish identically along 
$Z= (y'_0= \cdots= y'_m=0)$, we let
$f_\fV = \pi_{\fV,\fV'}^* f_{\fV'}$. This is the pullback, but for convenience, we also call
$f_\fV$ the proper transform of $f_{\fV'}$. (We will only apply this to linearized $\pl$ relations.)

Moreover, suppose $\zeta$ appears in $B_\fV =(\pi_{\fV,\fV'}^* B_{\fV'}) / \zeta^{l_{\psi, B_{\fV'}}}$ or 
in $f_\fV= \pi_{\fV,\fV'}^* f_{\fV'}$,  and is obtained through the substitution $y'_i$ by $y'_0 \xi_i$
(note here that $\zeta:=y'_0$ and $i$ needs not to be unique), 
then we say that the exceptional parameter $\zeta$
is acquired by $y_i'$. In general, for  sequential blowups, if $\zeta$ is acquired by $y'$ and $y'$ is 
acquired by $y''$, then we also say $\zeta$ is acquired by $y''$.
\end{defn}

\begin{lemma}\label{same-degree}
We keep the same assumption and notation as in Definition \ref{general-proper-transforms}.

We let $T_{\fV', B}$ (resp. $T_{\fV, B}$) be any fixed term of $B_{\fV'}$ (resp.
$B_\fV$).  Consider any $y \in \var_\fV \- \zeta$ and  let 
$y' \in \var_{\fV'}$ be such that $y$ is the proper transform of $y'$. 
Then,  $y^b \mid  T_{\fV, B}$ if and only if 
$y'^b \mid  T_{\fV', B}$ for all integers $b \ge 0$.
\end{lemma}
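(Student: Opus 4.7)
The plan is to simply unwind the substitution formula in Definition \ref{general-proper-transforms} and track the exponent of each non-exceptional variable. First I would fix one term and write it as a monomial in $\var_{\fV'}$:
\[
T_{\fV',B} \; = \; \prod_{j=0}^{m} (y'_j)^{a_j} \cdot \prod_{y' \in \var_{\fV'}\setminus \phi} (y')^{b_{y'}},
\]
where $a_j = \deg_{y'_j}(T_{\fV',B})$ and $b_{y'} = \deg_{y'}(T_{\fV',B})$, so that $m_{\phi,T_{\fV',B}} = \sum_{j=0}^{m} a_j$.

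Next I would apply the substitution prescribed by \eqref{general-blowup-formulas}: $y'_0 \mapsto \zeta$, $y'_i \mapsto \zeta\, y_i$ for $i \in [m]$, and $y' \mapsto y$ for $y' \in \var_{\fV'}\setminus \phi$ whose proper transform is $y$. This gives
\[
\pi_{\fV,\fV'}^{\ast} T_{\fV',B} \; = \; \zeta^{\sum_{j=0}^m a_j}\cdot \prod_{i=1}^{m} y_i^{a_i}\cdot \prod_{y' \in \var_{\fV'}\setminus \phi} y^{b_{y'}}.
\]
Then, by \eqref{define-proper-t}, one divides by $\zeta^{l_{\phi,B_{\fV'}}}$; since $l_{\phi,B_{\fV'}} \le m_{\phi, T_{\fV',B}}$, this operation only lowers the $\zeta$-exponent by a fixed integer and does not touch any $y_i$ or $y$:
\[
T_{\fV,B} \; = \; \zeta^{\,m_{\phi,T_{\fV',B}} - l_{\phi,B_{\fV'}}}\cdot \prod_{i=1}^{m} y_i^{a_i}\cdot \prod_{y' \in \var_{\fV'}\setminus \phi} y^{b_{y'}}.
\]

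Finally, I would read off the degrees of the non-exceptional variables from this expression. For $y = y_i$ with $i \in [m]$, the degree $\deg_{y_i}(T_{\fV,B})$ equals $a_i = \deg_{y'_i}(T_{\fV',B})$; and for $y$ the proper transform of some $y' \in \var_{\fV'}\setminus\phi$, the degree $\deg_{y}(T_{\fV,B})$ equals $b_{y'} = \deg_{y'}(T_{\fV',B})$. In either case $\deg_y(T_{\fV,B}) = \deg_{y'}(T_{\fV',B})$, so $y^b \mid T_{\fV,B}$ iff $y'^b \mid T_{\fV',B}$ for every integer $b \ge 0$. I do not expect any real obstacle here; the only point to be careful about is the trivial observation that the factor $\zeta^{-l_{\phi,B_{\fV'}}}$ used to define the proper transform is a pure power of $\zeta$ and hence is invisible to the $y$- and $y_i$-exponents.
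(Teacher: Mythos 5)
Your proposal is correct and takes essentially the same approach as the paper, which simply asserts that the claim is "clear from \eqref{define-proper-t}"; you have written out exactly the unwinding of that substitution that makes it clear, tracking each non-exceptional exponent through $\pi_{\fV,\fV'}^*$ and noting the $\zeta$-power division leaves those exponents untouched.
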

\begin{proof}
This is clear from \eqref{define-proper-t}.
\end{proof}

\begin{defn}\label{general-termination} 
We keep the same assumption and notation as in Definition \ref{general-proper-transforms}.

Consider an arbitrary binomial  $B_{\fV'}$
(resp. $B_\fV$) with variables in $\var_{\fV'}$ (resp. $\var_\fV$).
Let $\bz' \in \fV'$  (resp. $\bz \in \fV$)  be any fixed closed point of the chart. 
We say $B_{\fV'}$ (resp. $B_\fV$)  terminates at $\bz'$ (resp. $\bz$)
 if (at least) one of the monomial terms of $B_{\fV'}$
(resp. $B_\fV$), say, $T_{\fV', B}$
(resp. $T_{\fV,B}$),
does not vanish at  $\bz'$ (resp. $\bz$).  In such a case, we also say 
$T_{\fV', B}$ (resp. $T_{\fV,B}$) terminates  at $\bz'$ (resp. $\bz$).
 \end{defn}

\subsection{Main binomial equations: revisited} $\ $


Recall that we have  chosen and fix the total order $``<"$ on $\sfm$ and we have listed it as
$$\sfm=\{\bF_1 < \cdots < \bF_\Upsilon\}.$$
Now,  fix any $k \in [\up]$ and consider $F_k$.
We express $F_k=\sum_{s \in S_{F_k}} \vsgn (s) p_{\uu_s} p_{\uv_s}$.
 Its corresponding linearized $\pl$ equation can
be expressed as $\sum_{s \in S_{F_k}} \vsgn(s) x_{(\uu_s,\uv_s)}$, denoted by $L_{F_k}$. 
We let $s_{F_k} \in S_{F_k}$ be the index for the leading term of $F_k$, written as 
$ \vsgn (s_{F_k}) p_\um p_{\uu_{F_k}}$.
Correspondingly, the leading term of the de-homogenization $\bF_k$ of $F_k$
 (resp.  the linearized $\pl$ equation $L_{F_k}$)
 is defined to be $\vsgn (s_{F_k})x_{\uu_{F_k}}$ (resp. $\vsgn (s_{F_k}) x_{(\um,\uu_{F_k})}$).
We then choose and fix an arbitrary order on the index set $S_{F_k}$ such that $s_{F_k}$ is the smallest element.  For preciseness, we choose and use the following order:
for any two $s,  t \in S_{F_k} \- s_{F_k}$ with $s \ne t$,  we say $s <t$ if 
$(\uu_s,\uv_s) <_\lex (\uu_t,\uv_t)$ (see Definition \ref{gen-order}). As in Definition \ref{ftF},
we let $(\ft_{F_k}+1)$ be the number of terms in ${F_k}$. Then, we can  
list $S_{F_k}$ as
$$S_{F_k}=\{s_{F_k}< s_1 < \cdots < s_{\ft_{F_k}}\}.$$
By Corollary \ref{eq-tA-for-sV}, the scheme $\sV$ as a closed subscheme of
$\sR_\sF= \rU_\um \times  \prod_{\bF \in \sF_\um} \PP_F $
is defined by the following relations
\begin{eqnarray}
  \cB^\res, \;\;  \cB^\pq,   \\ 
\;\;\;\;\;\;\;\;\; B_{(k\tau)}: \; x_{(\uu_{s_\tau}, \uv_{s_\tau})}x_{\uu_{F_k}} - x_{(\um,\uu_{F_k})}   x_{\uu_{s_\tau}} x_{\uv_{s_\tau}}, \;\;
\forall \;\; {s_\tau} \in S_{F_k} \- s_{F_k},  \; 1 \le \tau \le \ft_{F_k}, \label{eq-B-ktau'} \\ 
L_{F_k}: \;\; \sum_{s \in S_{F_k}} \vsgn (s) x_{(\uu_s,\uv_s)}, \label{linear-pl-ktau'} \\
\bF_k=\sum_{s \in S_{F_k}} \vsgn (s) x_{\uu_s} x_{\uv_s}
\end{eqnarray}
for all $k \in [\up].$

\begin{defn}\label{pm-term} Given any binomial equation
$B_{(k\tau)}$ as in \eqref{eq-B-ktau'},
we let $T^+_{(k\tau)}= \; x_{(\uu_{s_\tau}, \uv_{s_\tau})}x_{\uu_{F_k}}$, called the plus-term of $B_{(k\tau)}$,
and $T^-_{(k\tau)}= x_{(\um,\uu_{F_k})}   x_{\uu_{s_\tau}} x_{\uv_{s_\tau}},$ 
called the minus-term of $B_{(k\tau)}$.
Then, we have $B_{(k\tau)}=T^+_{(k\tau)}-T^-_{(k\tau)}$.
\end{defn}

We do not name any term of a binomial of $\cB^\res \cup \cB^\pq$ a plus-term or a minus-term since
the two terms of such a  binomial  are indistinguishable.  

In addition, we let $\cB^\mn_{F_k}=\{B_{(k\tau)} \mid  \tau \in [\ft_{F_k}]  \}$ for any $k \in [\up]$.
Then, we have
 $$\cB^\mn=\bigsqcup_{k\in [\up]} \cB^\mn_{F_k}=\{B_{(k\tau)} \mid k \in [\up], \;  \tau \in [\ft_{F_k}]  \}.$$ 
We let 
\begin{equation}\label{indexing-Bmn}
\Index_{\cB^\mn}=\{ (k\tau) \mid k \in [\up], \; \tau \in [\ft_{F_k}] \}
\end{equation}
be the index set of $\cB^\mn$. 
Then, the set $\cB^\mn$ comes equipped with a total order $``<"$ induced by
the lexicographic order on $\Index_{\cB^\mn}$, that is, 
$$B_{(k\tau)} < B_{(k'\tau')} \iff (k,\tau) <_\lex (k',\tau').$$

\subsection{$\vt$-centers and $\vt$-blowups}\label{vr-centers} $\ $

{\it Besides serving as a part of   the process of $``removing"$   zero factors of
 the main binomial relations, the reason to perform $\vt$-blowups first 
 is  to eliminate all residual binomial relations
by making them dependent on the main binomial relations.  
}

Recall that the scheme $\tsR_{\vt_{[0]}}:=\sR_\sF$ comes equipped with two kinds of divisors:
$\vp$-divisors $X_\uw$ for all $\uw \in \II_{d,n}\- \um$
(Definition \ref{vp-divisor}) and
$\vr$-divisors $X_{(\uu,\uv)}$ for all $(\uu,\uv) \in \La_\sfm$  (Definition \ref{vr-divisor}).

\begin{defn}\label{defn:vr-centers}
Fix any $\uu \in \II_{d,n}^\um$. 
We let
$$\vt_\uu=(X_\uu, X_{(\um,\uu)}).$$
We call it the  $\vt$-set with respect to $\uu$.
We then call the scheme-theoretic intersection
 $$Z_{\vt_\uu}=X_\uu \cap X_{(\um,\uu)}$$
the $\vt$-center with respect to $\uu$.
\end{defn}

We let
$$\Theta=\{\vt_\uu \mid \uu \in \II_{d,n}^\um\},\;\;\;
Z_\Theta=\{Z_{\vt_\uu} \mid \uu \in \II_{d,n}^\um\}.$$
We let $\Theta$, respectively,  $Z_\Theta$,
inherit the total order from $\II_{d,n}^\um$. 
Thus,  if we write
$$\II_{d,n}^\um=\{\uu_1 < \cdots <\uu_{\up}\}$$
and also write $\vt_{\uu_k}=\vt_{[k]}$, $Z_{\vt_{\uu_k}}=Z_{\vt_{[k]}}$, then, we can express
$$Z_\Theta=\{Z_{\vt_{[1]}} < \cdots < Z_{\vt_{[\up]}} \}.$$

We then blow up $\sR_\sF$ along 
$Z_{\vt_{[k]}},\; k \in [\up]$, in the above order.  More precisely,
we start by setting $\tsR_{\vt_{[0]}}:=\sR_\sF$.  Suppose 
$\tsR_{\vt_{[k-1]}}$ has been constructed for some $k \in [\up]$. We then let
$$\tsR_{\vt_{[k]}} \lra \tsR_{\vt_{[k-1]}}$$
be the blowup of $\tsR_{\vt_{[k-1]}}$ along the proper transform of $Z_{\vt_{[k]}}$,
and we call it  the $\vt$-blowup in ($\vt_{[k]}$).

 The above gives rise to the following sequential $\vt$-blowups
\begin{equation}\label{vt-sequence}
\tsR_{\vt_{[\up]}} \to \cdots \to \tsR_{\vt_{[1]}} \to \tsR_{\vt_{[0]}}:=\sR_\sF,
\end{equation}

Every blowup $\tsR_{\vt_{[j]}} \lra \tsR_{\vt_{[j-1]}}$
comes equipped with an exceptional divisor, denoted by $E_{\vt_{[j]}}$.
Fix $k \in [\up]$. For any $j < k$, we let $E_{\vt_{[k]},j }$ be the proper transform
of $E_{\vt_{[j]}}$ in $\tsR_{\vt_{[k]}}$.  For notational consistency, we set
$E_{\vt_{[k]}}=E_{\vt_{[k]},k }$. We call the divisors $E_{\vt_{[k]},j }$, $j \le k$, the exceptional divisors
on $\tsR_{\vt_{[k]}}$.  For every 
$\uw \in \II_{d,n} \setminus \um$, we let
$X_{\vt_{[k]}, \uw}$ be the proper transform of $X_\uw$ in $\tsR_{\vt_{[k]}}$,
still called $\vp$-divisor;
for every $(\uu,\uv) \in \La_\sfm$,
we let  $X_{\vt_{[k]}, (\uu,\uv)}\cap \fV$ be 
 the proper transform of $X_{(\uu,\uv)}$ in $\tsR_{\vt_{[k]}}$,
still called $\vr$-divisor.

\subsection{Properties of $\vt$-blowups}\label{prop-vt-blowups} $\ $

By Definition \ref{general-standard-chart}, the scheme $\tsR_{\vt_{[k]}}$
is covered by a set of standard charts.

\begin{prop}\label{meaning-of-var-vtk}
Consider any standard chart $\fV$ of $\tsR_{\vt_{[k]}}$, 
 lying over a unique chart $ \fV_{[0]}$ of $\tsR_{\vt_{[0]}}=\sR_\sF$.
 We suppose that the chart $ \fV_{[0]}$ is indexed by
$\La_\sfm^o=\{(\uv_{s_{F,o}},\uv_{s_{F,o}}) \mid \bF \in \sfm \}$
(cf. \eqref{index-sR}).
As earlier, we have $\La_\sfm^\star=\La_\sfm \- \La_\sfm^o$.

Then, the standard chart $\fV$ comes equipped with 
$$\hbox{a subset}\;\; \fe_\fV  \subset \II_{d,n} \- \um \;\;
 \hbox{and a subset} \;\; \fd_\fV  \subset \La_{\sfm}^\star$$
such that every exceptional divisor 
of $\tsR_{\vt_{[k]}}$
with $E_{\vt_{[k]}} \cap \fV \ne \emptyset$ is 
either labeled by a unique element $\uw \in \fe_\fV$
or labeled by a unique element $(\uu,\uv) \in \fd_\fV$. 
We let $E_{\vt_{[k]}, \uw}$ be the unique exceptional divisor 
on the chart $\fV$ labeled by $\uw \in \fe_\fV$; we call it an $\vp$-exceptional divisor.
We let $E_{\vt_{[k]}, (\uu,\uv)}$ be the unique exceptional divisor 
on the chart $\fV$ labeled by $(\uu,\uv) \in \fd_\fV$;  we call it an $\vr$-exceptional divisor.
(We note here that being $\vp$-exceptional or $\vr$-exceptional is strictly relative to the given
standard chart.)

Further, the standard chart $\fV$  comes equipped with the set of free variables
\begin{equation}\label{variables-vtk} 
\var_{\fV}:=\left\{ \begin{array}{ccccccc}
\ve_{\fV, \uw} , \;\; \de_{\fV, (\uu,\uv) }\\
x_{\fV, \uw} , \;\; x_{\fV, (\uu,\uv)}
\end{array}
  \; \Bigg| \;
\begin{array}{ccccc}
 \uw \in  \fe_\fV,  \;\; (\uu,\uv)  \in \fd_\fV  \\ 
\uw \in  \II_{d,n} \- \um \- \fe_\fV,  \;\; (\uu, \uv) \in \La_\sfm^\star \-  \fd_\fV  \\
\end{array} \right \},
\end{equation}
such that it is canonically  isomorphic to the affine space with the variables in
\eqref{variables-vtk} as its coordinate variables. Moreover, on the standard chart $\fV$, we have
\begin{enumerate}
\item the divisor  $X_{\vt_{[k]}, \uw}\cap \fV$ 
 is defined by $(x_{\fV,\uw}=0)$ for every 
$\uw \in \II_{d,n} \setminus \um \- \fe_\fV$;
\item the divisor  $X_{\vt_{[k]}, (\uu,\uv)}\cap \fV$ is defined by $(x_{\fV,(\uu,\uv)}=0)$ for every 
$(\uu,\uv) \in \La^\star_\sfm\- \fd_\fV$;
\item the divisor  $X_{\vt_{[k]}, \uw}\cap \fV$ does not intersect the chart for all $\uw \in \fe_\fV$;
\item the divisor  $X_{\vt_{[k]}, (\uu, \uv)}$ does not intersect the chart for all $(\uu, \uv) \in \fd_\fV$;
\item the $\vp$-exceptional divisor 
$E_{\vt_{[k]}, \uw} \;\! \cap  \fV$  labeled by an element $\uw \in \fe_\fV$
is define by  $(\ve_{\fV,  \uw}=0)$ for all $ \uw \in \fe_\fV$;
\item the $\vr$-exceptional divisor 
$E_{\vt_{[k]},  (\uu, \uv)}\cap \fV$ labeled by  an element $(\uu, \uv) \in \fd_\fV$
is define by  $(\de_{\fV,  (\uu, \uv)}=0)$ for all $ (\uu, \uv) \in \fd_\fV$;
\item  any of the remaining exceptional divisor of $\tsR_{\vt_{[k]}}$
other than those that are labelled by some  some $\uw \in \fe_\fV$ or $(\uu,\uv) \in \fd_\fV$ 
 does not intersect the chart.
\end{enumerate}
\end{prop}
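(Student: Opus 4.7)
The plan is to argue by induction on $k$. For the base case $k=0$, there are no $\vt$-blowups yet, so $\fe_{\fV}=\emptyset$ and $\fd_{\fV}=\emptyset$, and the entire statement reduces to Proposition \ref{meaning-of-var-p-k=0}: properties (1) and (2) are exactly the content of that proposition, and (3)--(7) hold vacuously.

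For the inductive step, assume the proposition holds on $\tsR_{\vt_{[k-1]}}$, and let $\fV$ be a standard chart of $\tsR_{\vt_{[k]}}$ lying over a unique standard chart $\fV'$ of $\tsR_{\vt_{[k-1]}}$. The $\vt$-blowup $\tsR_{\vt_{[k]}} \to \tsR_{\vt_{[k-1]}}$ is performed along the proper transform $\widetilde{Z}_{\vt_{[k]}}$ of $Z_{\vt_{[k]}} = X_{\uu_k} \cap X_{(\um,\uu_k)}$. If $\widetilde{Z}_{\vt_{[k]}} \cap \fV' = \emptyset$, the blowup restricts to an isomorphism over $\fV'$ and the inductive data transfers unchanged. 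Otherwise, by the inductive hypothesis applied to properties (1)--(4), the divisors $X_{\vt_{[k-1]}, \uu_k}$ and $X_{\vt_{[k-1]}, (\um,\uu_k)}$ must both meet $\fV'$, which forces $\uu_k \notin \fe_{\fV'}$ and $(\um,\uu_k) \notin \fd_{\fV'}$, and further forces each of them to be cut out on $\fV'$ by the coordinate variables $x_{\fV', \uu_k}$ and $x_{\fV', (\um,\uu_k)}$ respectively. Hence $\widetilde{Z}_{\vt_{[k]}} \cap \fV'$ is the smooth codimension-two coordinate subspace $\{x_{\fV',\uu_k} = x_{\fV',(\um,\uu_k)} = 0\}$ of the affine space $\fV'$.

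We then apply Proposition \ref{generalmeaning-of-variables} with $m=1$ to the blowup of $\fV'$ along this coordinate subspace. It produces exactly two standard charts of $\tsR_{\vt_{[k]}}$ over $\fV'$: one in which $x_{\fV',\uu_k}$ plays the role of the exceptional parameter $\zeta$, and one in which $x_{\fV',(\um,\uu_k)}$ does. In the first chart, we rename this parameter $\ve_{\fV,\uu_k}$, update $\fe_\fV := \fe_{\fV'} \cup \{\uu_k\}$ and $\fd_\fV := \fd_{\fV'}$, and the new exceptional divisor $E_{\vt_{[k]},\uu_k} \cap \fV$ is $(\ve_{\fV,\uu_k}=0)$; by Proposition \ref{generalmeaning-of-variables}(1), $X_{\vt_{[k]}, \uu_k}$ does not meet $\fV$, yielding (3), while the proper transform of $x_{\fV',(\um,\uu_k)}$ gives the coordinate $x_{\fV,(\um,\uu_k)}$ defining $X_{\vt_{[k]},(\um,\uu_k)} \cap \fV$, yielding one case of (2). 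The second chart is handled symmetrically, producing $\de_{\fV,(\um,\uu_k)}$ and $(\um,\uu_k) \in \fd_\fV$. All coordinate variables of $\fV'$ other than $x_{\fV',\uu_k}, x_{\fV',(\um,\uu_k)}$ pull back to coordinates of $\fV$ as their proper transforms, by Proposition \ref{generalmeaning-of-variables}(2)--(3); so properties (1), (2), (5), (6) for divisors that already existed on $\fV'$, and (4), (7) for divisors that previously missed $\fV'$, all transfer intact from the inductive hypothesis to $\fV$.

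The main obstacle, and really the only one, is the bookkeeping needed to ensure that the labels $\fe_\fV$ and $\fd_\fV$ remain in bijection with the $\vp$- and $\vr$-exceptional divisors meeting $\fV$, and that condition (7) continues to hold; concretely one must verify that the unique divisor newly labeled in this step is the single exceptional divisor of the blowup $\tsR_{\vt_{[k]}} \to \tsR_{\vt_{[k-1]}}$ and that every other divisor is faithfully tracked via proper transforms. This follows cleanly from the explicit coordinate description supplied by Proposition \ref{generalmeaning-of-variables}, once one notes that the two possible standard charts over $\fV'$ correspond precisely to the two choices of which generator of the defining ideal of $\widetilde{Z}_{\vt_{[k]}} \cap \fV'$ is set to be a unit.
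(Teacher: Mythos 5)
Your proof is correct and follows essentially the same inductive argument as the paper: the base case is reduced to Proposition \ref{meaning-of-var-p-k=0}, and the inductive step splits on whether the proper transform of the $\vt$-center meets $\fV'$ (which, as the paper phrases it, is equivalent to whether $(\um,\uu_k)\in\La_{\sF_{[k]}}^o$), followed by the two-chart coordinate analysis of the codimension-two blowup via Proposition \ref{generalmeaning-of-variables}. One small correction to your final sentence: on each of the two standard charts the distinguished generator of the center's ideal becomes the exceptional parameter $\zeta$ (which is \emph{not} a unit, since it cuts out $E_{\vt_{[k]}}\cap\fV$); what is ``set equal to $1$'' is the homogeneous coordinate $\xi_i$ on the factor $\PP^1$.
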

\begin{proof}
When $k=0$, we have $\tsR_{\vt_{[0]}}=\sR_\sF$. 
In this case, we set $$ \fe_\fV = \fd_\fV =\emptyset.$$
Then, the statement follows from 
Proposition \ref{meaning-of-var-p-k=0} with $k=\up$.

We now suppose that the statement holds for $\tsR_{\vt_{[k-1]}}$
for some $k \in [\up]$. 

We consider $\tsR_{\vt_{[k]}}$.

As in the statement, we let $\fV$ be a standard chart 
of $\tsR_{\vt_{[k]}}$, lying over a (necessarily unique) 
standard chart  $\fV'$ of $\tsR_{\vt_{[k-1]}}$.

If $(\um,\uu_k) \in  \La_{\sF_{[k]}}^o$
(cf. \eqref{index-sR}), 
then $\fV'$
does not intersect the proper transform of the blowup center $Z_{\vt_k}$ and
$\fV \to \fV'$ is an isomorphism. In this case, we let $\var_\fV=\var_{\fV'}$,
$ \fe_{\fV'}=\fe_\fV, $ and $\fd_{\fV} =\fd_{\fV'}.$ Then, the statements on $\fV'$
carry over to $\fV$.

In what follows, we assume $(\um,\uu_k) \notin  \La_{\sF_{[k]}}^o$.

Consider the embedding $$\tsR_{\vt_{[k]}} \lra  \tsR_{\vt_{[k-1]}} \times \PP_{\vt_{[k]}}$$
where $\PP_{\vt_{[k]}}$ is the factor projective space with homogeneous coordinates $[\xi_0,\xi_1]$
corresponding to $(X_{\uu_k}, X_{(\um,\uu_k)})$.
We let $E_{\vt_{[k]}}$ be the exceptional divisor created by 
the blowup $\tsR_{\vt_{[k]}} \to  \tsR_{\vt_{[k-1]}}$.

First, we consider the case when 
$$\fV = \tsR_{\vt_{[k]}} \cap (\fV' \times (\xi_0 \equiv 1).$$
We let $Z'_{\vt_k}$ be the proper transform of the $\vt$-center $Z_{\vt_k}$
in  $\tsR_{\vt_{[k-1]}}$. Then,
in this case, on the chart $\fV'$, we have
 $$Z'_{\vt_k} \cap \fV' = \{ x_{\fV', \uu_k} = x_{\fV', (\um,\uu_k)}=0\}$$
 where   $x_{\fV', \uu_k}$ (resp. $x_{\fV', (\um,\uu_k)}$) is the proper transform of $x_\uu$ (resp.
 $x_{ (\um,\uu_k)}$)
 on the chart $\fV'$.
 Then, $\fV$ as a closed subset of  $\fV' \times (\xi_0 \equiv 1)$ is defined by
 $$x_{\fV', (\um,\uu_k)} = x_{\fV', \uu_k} \xi_1.$$
 We let 
 $$\fe_\fV= \uu_k \sqcup \fe_{\fV'},\; \fd_\fV=  \fd_{\fV'},\;\; \hbox{and}$$
 $$\ve_{\fV, \uu_k}=x_{\fV', \uu_k}, \;  x_{\fV, (\um,\uu_k)}=\xi_1; \;
 y_\fV = y_{\fV'}, \; \forall \; y_{\fV'} \in \var_{\fV'}\- \{x_{\fV', \uu_k}, x_{\fV', (\um,\uu_k)}\}.$$
 Observe that  
 $E_{\vt_{[k]}} \cap \fV = (\ve_{\fV, \uu_k}=0)$ and
 $x_{\fV, (\um,\uu_k)}=\xi_1$ is the proper transform of  $x_{\fV', (\um,\uu_k)}$.
 By the inductive assumption on the chart $\fV'$, one verifies directly that
 (1) - (7) of the proposition hold (cf. Proposition \ref{generalmeaning-of-variables}).
 
Next, we consider the case when 
$$\fV = \tsR_{\vt_{[k]}} \cap (\fV' \times (\xi_1 \equiv 1).$$
 Then, $\fV$ as a closed subset of  $\fV' \times (\xi_1 \equiv 1)$ is defined by
 $$ x_{\fV', \uu_k} =x_{\fV', (\um,\uu_k)} \xi_0.$$
 We let 
 $$\fe_\fV=  \fe_{\fV'},\; \fd_\fV=  \{(\um,\uu_k)\} \sqcup \fd_{\fV'},\;\; \hbox{and}$$
 $$\de_{\fV,  (\um,\uu_k)}=x_{\fV',  (\um,\uu_k)}, \;  x_{\fV, \uu_k}=\xi_0; \;
 y_\fV = y_{\fV'}, \; \forall \; y_{\fV'} \in \var_{\fV'}\- \{x_{\fV', \uu_k}, x_{\fV', (\um,\uu_k)}\}.$$
 Observe that  
 $E_{\vt_{[k]}} \cap \fV = (\de_{\fV,  (\um,\uu_k)}=0)$ and
 $x_{\fV, \uu_k}=\xi_0$ is the proper transform of  $x_{\fV', \uu_k}$.
 By the inductive assumption on the chart $\fV'$, like in the above case,
 one checks directly that
 (1) - (7) of the proposition hold.

 This proves the proposition.
 \end{proof}

Observe here that $x_{\fV, \uu}$ with $\uu \in \fe_{\fV}$ and
 $x_{\fV, (\uu,\uv)}$ with  $(\uu,\uv) \in \de_{\fV}$ are not variables in $\var_\fV$.
For notational convenience, to be used throughout, we make a convention:  
\begin{equation}\label{conv:=1}
\hbox{$\bullet$ $x_{\fV, \uu} = 1$ if  $\uu \in \fe_{\fV}$; 
\;\; $\bullet$ $x_{\fV, (\uu,\uv)} = 1$ if  $(\uu,\uv) \in \fd_{\fV}$.}
\end{equation}

\smallskip
For any $k \in [\up]$, the $\vt$-blowup in ($\vt_{[k]}$)
gives rise to \begin{equation}\label{tsV-vt-k} \xymatrix{
\tsV_{\vt_{[k]}} \ar[d] \ar @{^{(}->}[r]  &\tsR_{\vt_{[k]}} \ar[d] \\
\sV \ar @{^{(}->}[r]  & \sR_\sF,
}
\end{equation}
where $\tsV_{\vt_{[k]}}$ is the proper transform of $\sV$ 
in  $\tsR_{\vt_{[k]}}$. 

Alternatively, we can set $\tsV_{\vt_{[0]}}:=\sV_\sF$.  Suppose 
$\tsV_{\vt_{[k-1]}}$ has been constructed for some $k \in [\up]$. We then let
 $\tsV_{\vt_{[k]}} \subset \tsR_{\vt_{[k]}}$ be the proper transform of $\tsV_{\vt_{[k-1]}}$.

\begin{defn} Fix any standard chart $\fV$ of $\tsR_{\vt_{[k]}}$ lying over
a unique standard chart $\fV'$ of $\tsR_{\vt_{[k-1]}}$ for any $k \in [\up]$. 
When $k=0$, we let $B_\fV$ and $L_{\fV, F}$ be as in 
 Proposition \ref{equas-p-k=0} for any
$B \in \cB^\mn \cup \cB^\res  \cup \cB^\q$ and $\bF \in \sfm$. Consider any fixed general $k \in [\up]$.
Suppose $B_{\fV'}$ and $L_{\fV', F}$ have been constructed over $\fV'$.
Applying Definition \ref{general-proper-transforms}, we obtain the proper transforms on the chart $\fV$
$$B_{\fV}, \;\; \forall \; B \in \cB^\mn \cup \cB^\res  \cup \cB^\q; \;\; L_{\fV, F}, \;\;
\forall \; \bF \in \sfm.$$ 
\end{defn}

We need the following notations.

Fix any $k \in [\up]$.
We let $\cB^\mn_{< k}$ (resp. $\cB^\res_{< k}$ or $L_{ <k}$)
be the set of all main (resp. residual or linear $\pl$) relations corresponding to
$F<F_k$. Similarly, we let $\cB^\mn_{ > k}$ (resp. $\cB^\res_{ > k}$, $\cL_{ >k}$)
be the set of all main (resp. residual or linear $\pl$) relations corresponding to
$F>F_k$.  Likewise, replacing $<$ by $\le$ or $>$ by $\ge$,
we can introduce 
$\cB^\mn_{ \le k}$, $\cB^\res_{ \le k}$, and $L_{ \le k}$ or
$\cB^\mn_{ \ge k}$,  $\cB^\res_{ \ge k}$, and $\cL_{\ge k}$.
Then, upon restricting the above to a fixed standard chart $\fV$, we obtain
$\cB^\mn_{\fV, < k}$, $\cB^\res_{\fV, < k}$, $L_{\fV, <k}$, etc..

Recall from the above proof, we have 
$$\tsR_{\vt_{[k]}} \subset \tsR_{\vt_{[k-1]}} \times \PP_{\vt_k}$$
where $\PP_{\vt_k}$ be the factor projective space of the blowup
$\tsR_{\vt_{[k]}} \lra \tsR_{\vt_{[k-1]}}$. 
We write $\PP_{\vt_k}=\PP_{[\xi_0,\xi_1]}$ such that 
$[\xi_0,\xi_1]$ corresponds to $(X_{\uu_k}, X_{(\um,\uu_k)})$.

\begin{defn}  {\rm (cf. Definition \ref{general-standard-chart})}
Let $\fV'$ be any standard chart on $\tsR_{\vt_{[k-1]}}$. Then, 
we call $$\fV=\tsR_{\vt_{[k]}} \cap (\fV' \times (\xi_0 \equiv 1))$$
a $\vp$-standard chart of $\tsR_{\vt_{[k]}}$; we call $$\fV=\tsR_{\vt_{[k]}} \cap (\fV' \times (\xi_1 \equiv 1))$$
a $\vr$-standard chart of $\tsR_{\vt_{[k]}}$.
\end{defn}

\begin{prop}\label{eq-for-sV-vtk}
We keep the notation and assumptions in Proposition \ref{meaning-of-var-vtk}.

Suppose $(\um,\uu_k) \in \La_{\sF_{[k]}}^o$ or
 $\fV$ is a $\vr$-standard chart.  Then, we have
that the scheme $\tsV_{\vt_{[k]}}  \cap \fV$, as a closed subscheme of
the chart $\fV$ of $\tsR_{\vt_{[k]}} $,  is defined by 
\begin{eqnarray} 
\cB^\q_\fV,  \;\; \cB^\mn_{\fV, < k} ,\;\; \cL_{\fV, <k} , \;\;\;\;\;\;\;\;\;\;  \\
B_{ \fV,(s_{F_k},s)}: \;\;\;\;\;\; x_{\fV, (\uu_s, \uv_s)}  x_{\fV, \uu_k} 
  - \tilde{x}_{\fV, \uu_s} \tilde{x}_{\fV, \uv_s}, \;\;
\forall \;\; s \in S_{F_k} \- s_{F_k},  \label{eq-B-vt-lek=0} \\ 
L_{\fV, F_k}: \;\; \vsgn (s_F) \de_{\fV, (\um,\uu_k)} +
\sum_{s \in S_F \- s_F} \vsgn (s) x_{\fV,(\uu_s,\uv_s)},   \label{linear-pl-vtk=0} \\
\cB^\mn_{\fV, > k},\;\; \cB^\res_{\fV, > k}, \;\; \cL_{\fV, >k},\;\;\; \;\;\; \;\; \;\; \;\; \;\; \;\; \;\; \;\; \;\; \;\; \;\; \;\; \;\; \label{eq-hq-vtk=0}
\end{eqnarray}
where $\tilde{x}_{\fV, \uu_s}$ and $ \tilde{x}_{\fV, \uv_s}$ are some monomials in $\var_\fV$.

 Suppose  $(\um,\uu_k) \notin \La_{\sF_{[k]}}^o$ and $\fV$ is a $\vp$-standard chart.
Then, we have
that the scheme $\tsV_{\vt_{[k]}}  \cap \fV$, as a closed subscheme of
the chart $\fV$ of $\tsR_{\vt_{[k]}} $,  is defined by 
\begin{eqnarray} 
\cB^\q_\fV,  \;\; \cB^\mn_{\fV, < k} ,\;\; \cL_{\fV, <k}, \;\;\;\;\;\;\;\;\;\;  \\
B_{ \fV,(s_{F_k},s)}: \;\;\;\;\;\; x_{\fV, (\uu_s, \uv_s)} - x_{\fV, (\um,\uu_k)}  
 \tilde{x}_{\fV, \uu_s} \tilde{x}_{\fV, \uv_s}, \;\;
\forall \;\; s \in S_{F_k} \- s_{F_k},  \label{eq-B-vt-lek=0-00} \\ 
L_{\fV, F_k}: \;\; \vsgn (s_F) \ve_{\fV, \uu_k} x_{\fV,(\um,\uu_k)}+
\sum_{s \in S_F \- s_F} \vsgn (s) x_{\fV,(\uu_s,\uv_s)},   \label{linear-pl-vtk=0-00} \\
\cB^\mn_{\fV, > k},\;\; \cB^\res_{\fV, > k}, \;\; \cL_{\fV, >k},\;\;\; \;\;\; \;\; \;\; \;\; \;\; \;\; \;\; \;\; \;\; \;\; \;\; \;\; \;\; \label{eq-hq-vtk=0-00}
\end{eqnarray}
where $\tilde{x}_{\fV, \uu_s}$ and $ \tilde{x}_{\fV, \uv_s}$ are some monomials in $\var_\fV$.


Moreover, for any binomial 
$B \in \cB^\mn \sqcup \cB^\res_{>k}$, $B_\fV$ is $\vr$-linear and square-free.

Furthermore, consider  an arbitrary binomial $B \in \cB^\q$ and its proper transform $B_{\fV}$ on the chart 
$\fV$. Let $T_{\fV, B}$ be any fixed term of $B_\fV$.
Then, $T_{\fV, B}$  is $\vr$-linear and admits
at most one $\vt$-exceptional parameter in the form of 
$\delta_{(\um, \uu)}$ for some $(\um, \uu) \in \fd_\fV$ 
or $\ve_\uu x_{(\um, \uu)}$ for some $\uu \in \fe_\fV$.
In particular, it is  square-free.
\end{prop}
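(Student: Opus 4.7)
I would prove the proposition by induction on $k$. The base case $k=0$ is exactly Proposition \ref{equas-p-k=0}, since $\tsR_{\vt_{[0]}}=\sR_\sF$, $\tsV_{\vt_{[0]}}=\sV$, and at this stage every standard chart is of the form covered there; the residual equations $\cB^\res_{\fV}$ listed there will be absorbed into $\cB^\mn_\fV$ as the induction proceeds. For the inductive step, I would fix a standard chart $\fV$ of $\tsR_{\vt_{[k]}}$ lying over a unique standard chart $\fV'$ of $\tsR_{\vt_{[k-1]}}$, and split into cases depending on whether the proper transform of $Z_{\vt_{[k]}}=X_{\uu_k}\cap X_{(\um,\uu_k)}$ meets $\fV'$, and whether $\fV$ is a $\vp$-standard or $\vr$-standard chart.

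The key computation is the pullback of the main binomial $B_{(k,s)}=x_{(\uu_s,\uv_s)}x_{\uu_k}-x_{(\um,\uu_k)}x_{\uu_s}x_{\uv_s}$ to $\fV$, carried out via Definition \ref{general-proper-transforms}. On a $\vp$-standard chart, $x_{\fV',\uu_k}=\ve_{\fV,\uu_k}$ while $x_{\fV',(\um,\uu_k)}=\ve_{\fV,\uu_k}\cdot x_{\fV,(\um,\uu_k)}$, so both terms carry exactly one factor of $\ve_{\fV,\uu_k}$; dividing by this common factor yields the form \eqref{eq-B-vt-lek=0-00}. Dually, on a $\vr$-standard chart, both terms carry exactly one factor of $\de_{\fV,(\um,\uu_k)}$, and dividing gives \eqref{eq-B-vt-lek=0}. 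The pullback of $L_{F_k}$ is linear and not square-free in the naive sense, but its distinguished term $\vsgn(s_F)x_{(\um,\uu_k)}$ picks up the exceptional parameter, producing the stated forms \eqref{linear-pl-vtk=0}/\eqref{linear-pl-vtk=0-00}. Binomials indexed by $F_i$ with $i>k$ are untouched (the blowup does not involve their leading variables in any predictable way) and are simply pulled back; those with $i<k$ have already been transformed inductively.

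The main obstacle is proving that the residual binomials $\cB^\res_{\fV,\le k}$ can be discarded after the $k$-th $\vt$-blowup. The crucial algebraic identity is
\[
x_{\uu_{F_k}}\,B_{F_k,(s,t)}\;=\;x_{\uu_t}x_{\uv_t}\,B_{(k,s)}\;-\;x_{\uu_s}x_{\uv_s}\,B_{(k,t)},
\]
which already holds in $R$. On a $\vp$-standard chart of the $k$-th blowup, $x_{\fV',\uu_k}$ becomes the exceptional parameter $\ve_{\fV,\uu_k}$, and the pullback of both sides of this identity acquires the same power of $\ve_{\fV,\uu_k}$; after stripping this factor, the proper transform of $B_{F_k,(s,t)}$ lies in the ideal generated by the proper transforms of $B_{(k,s)}$ and $B_{(k,t)}$. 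The symmetric identity with $x_{(\um,\uu_{F_k})}$ in place of $x_{\uu_{F_k}}$ handles the $\vr$-standard chart via the exceptional $\de_{\fV,(\um,\uu_k)}$. Iterating inductively, one sees that $\cB^\res_{\fV,\le k}$ is contained in the ideal generated by $\cB^\mn_{\fV,\le k}$, so it may be removed from the generating set. Residual binomials for $F_i$ with $i>k$ are not yet reducible and must remain in \eqref{eq-hq-vtk=0}/\eqref{eq-hq-vtk=0-00}.

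Finally, for the ``moreover'' claims, I would track through the induction that (i) every binomial in $\cB^\mn_\fV$ and every $B\in\cB^\res_{\fV,>k}$ remains $\vr$-linear and square-free: $\vr$-linearity is preserved because $\vt$-blowups only introduce exceptional parameters $\ve_{\fV,\uw}$ (of $\vp$-type) or $\de_{\fV,(\uu,\uv)}$ (of $\vr$-type) at most once per term, and the division by the common factor $\ve_{\fV,\uu_k}$ or $\de_{\fV,(\um,\uu_k)}$ strips exactly the excess; square-freeness follows from Lemma \ref{same-degree} applied to each variable; (ii) for $B\in\cB^\q$, since original $\cB^\q$ terms involve only $\vr$-variables (Proposition \ref{equas-p-k=0}), pullback through a $\vr$-standard chart introduces at most a single $\de_{\fV,(\um,\uu_k)}$ factor, and through a $\vp$-standard chart at most a single factor of $\ve_{\fV,\uu_k}x_{\fV,(\um,\uu_k)}$ (the pullback of $x_{\uu_k}$ appears only via the relation $x_{(\um,\uu_k)}x_{\uu_k}=x_{(\uu_k,\um)}x_{\uu_k}$, etc.); the single-occurrence invariant then follows by straightforward induction combined with Lemma \ref{ker-phi-k}(2) controlling the square-freeness of the images under $\bar\vi$.
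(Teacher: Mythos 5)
Your proposal is correct and takes essentially the same route as the paper: induction on $k$ with base case Proposition \ref{equas-p-k=0}, chart-by-chart computation of proper transforms splitting on $\vp$- versus $\vr$-standard charts, and elimination of the $F_k$-residual via the syzygy among $B_{F_k,(s,t)}$, $B_{(k,s)}$, $B_{(k,t)}$. The one substantive presentational difference is in how the residual-elimination identity is handled. You write the identity $x_{\uu_{F_k}}B_{F_k,(s,t)}=x_{\uu_t}x_{\uv_t}B_{(k,s)}-x_{\uu_s}x_{\uv_s}B_{(k,t)}$ (and its $x_{(\um,\uu_{F_k})}$-counterpart) in the downstairs ring and then argue that pulling back and stripping the common exceptional factor $\ve_{\fV,\uu_k}$ (resp.\ $\de_{\fV,(\um,\uu_k)}$) leaves the residual in the ideal of the main proper transforms. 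The paper instead computes the proper transforms $B_{\fV,(s_{F_k},s)}$, $B_{\fV,(s_{F_k},t)}$, $B_{\fV,(s,t)}$ explicitly on the chart and exhibits the relation directly among them, with no division step. These are equivalent, and your version makes transparent \emph{why} the relation exists upstairs; the paper's is a touch shorter because it sidesteps having to verify that both sides of the pulled-back identity acquire exactly one $\ve_{\fV,\uu_k}$ (which, in your version, requires the observation that $B_{F_k,(s,t)}$ and the cofactors $x_{\uu_s}x_{\uv_s}$, $x_{\uu_t}x_{\uv_t}$ are disjoint from the $\vt$-center $(x_{\uu_k},x_{(\um,\uu_k)})$ --- you state this implicitly but it deserves to be said out loud). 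Two smaller remarks: the case $(\um,\uu_k)\in\La_{\sF_{[k]}}^o$, where the center misses the chart but the $F_k$-residual still has to be discarded, is handled in the paper by observing that the same identity specializes with $x_{(\um,\uu_k)}\equiv 1$; your division-by-exceptional framing does not directly apply there, though the specialization clearly works. And for the ``furthermore'' part on $\cB^\q$, your appeal to Lemma \ref{ker-phi-k}(2) is the right tool, but the paper pins down the relevant consequence --- each term contains at most one $\vr$-variable of the form $x_{(\um,\uu)}$, obtained by reading square-freeness of $\vi(X_1X_2)$ at the variable $p_\um$ --- and then tracks how the $\vt$-blowup at $\vt_{[k]}$ turns that unique factor into $\de_{(\um,\uu_k)}$ or $\ve_{\uu_k}x_{(\um,\uu_k)}$; you should make that application explicit rather than gesturing at the lemma.
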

\begin{proof} We follow the notation as in the proof of 
Proposition \ref{meaning-of-var-vtk}.

When $k=0$,   we have $(\tsV_{\vt_{[0]}} \subset \tsR_{\vt_{[0]}})=  (\sV_\sF \subset \sR_\sF)$. 
Then, the statement follows from
Proposition \ref{equas-p-k=0}.

Suppose that the statement holds for $(\tsV_{\vt_{[k-1]}} \subset \tsR_{\vt_{[k-1]}})$
for some $k \in [\up]$. 

We now consider $(\tsV_{\vt_{[k]}} \subset \tsR_{\vt_{[k]}})$.

As in  the proof of 
Proposition \ref{meaning-of-var-vtk}, we let $\fV$ be a standard chart 
of $\tsR_{\vt_{[k]}}$ lying over a (necessarily unique) 
standard chart  $\fV'$ of $\tsR_{\vt_{[k-1]}}$.
Also, $\fV$ lies over a unique standard chart $\fV_{[0]}$ of 
of $\tsR_{\vt_{[0]}}$. We let $\pi_{\fV, \fV_{[0]}}: \fV \to \fV_{[0]}$ be the induced projection.
 
 To prove the statement about the defining equations of $\tsV_{\vt_{[k]}} \cap \fV$ in $\fV$, 
 by applying the inductive assumption to  $\fV'$,
 it suffices to prove that the proper transform
 of any residual binomial of $F_k$ depends on the main binomials
 on the chart $\fV$.
 
 For that purpose,  we take any two $s, t \in S_{F_k} \- s_{F_k}$ and consider
 the residual binomial  $B_{F_k,(s,t)}$ (cf. \eqref{eq-Bres}).
 
Suppose $(\um,\uu_k) \in \La_{\sF_{[k]}}^o$, hence $x_{\fV, (\um,\uu_k)}\equiv 1$ 
 on the chart $\fV$. In this case, the blowup along (the proper transform of) 
 $Z_{\vt_{[k]}}$ does not affect the chart $\fV'$ of $\tsR_{\vt_{[k-1]}}$. 
 Likewise, suppose $\fV$ is a $\vr$-standard chart. Then, $(\um,\uu_k) \in \fd_\fV$, hence
  $x_{\fV, (\um,\uu_k)}= 1$ 
 on the chart $\fV$ by \eqref{conv:=1}.
In any case, one calculates and finds that we have the following two main binomials 
 $$B_{ \fV,(s_{F_k},s)}: \;\;\;\;\;\; x_{\fV, (\uu_s, \uv_s)}  x_{\fV, \uu_k}   - 
 \tilde x_{\fV, \uu_s} \tilde x_{\fV, \uv_s},$$
 $$B_{ \fV,(s_{F_k},t)}: \;\;\;\;\;\; x_{\fV, (\uu_t, \uv_t)}  x_{\fV, \uu_k}   - \tilde x_{\fV, \uu_t} \tilde x_{\fV, \uv_t},$$
 where  $\tilde x_{\fV, \uw}=\pi_{\fV, \fV_{[0]}}^* x_{\fV_{[0]},\uw}$ denoted the pullback
 for any $\uw \in \II_{d,n} \- \um$.
  Similarly, one calculates and finds  that 
 we have 
$$ B_{ \fV,(s,t)}: \;\; x_{\fV,(\uu_s, \uv_s)}\tilde x_{\fV,\uu_t} \tilde x_{\fV, \uv_t}-
x_{\fV,(\uu_t, \uv_t)} \tilde x_{\fV,\uu_s} \tilde x_{ \fV,\uv_s}.$$ 
Then, one verifies directly that we have
$$ B_{ \fV,(s,t)}=x_{\fV, (\uu_s, \uv_s)} B_{ \fV,(s_{F_k},t)} -x_{\fV, (\uu_t, \uv_t)} B_{ \fV,(s_{F_k},s)}.$$
This proves  the statement about the defining equations of $\tsV_{\vt_{[k]}} \cap \fV$ in $\fV$.

Moreover,  consider any $B \in \cB^\mn$ with respect to $F_j$.
Observe that  $x_{(\um, \uu_k)}$ uniquely appears in the main binomials 
with respect to $F_k$;  $x_{\uu_k}$ only appears in the main binomials 
with respect to $F_k$ and   the minus terms of certain main binomials 
of $F_{j}$ with $j>k$. It follows that $B_\fV$ is $\vr$-linear and square-free.
 
 Likewise, consider any $B \in \cB^\res$ with respect to $F_j$ with $j >k$.
It is of the form
$$ B_{(s,t)}: \;\; x_{(\uu_s, \uv_s)}x_{\uu_t}  x_{\uv_t}-
x_{(\uu_t, \uv_t)} x_{\uu_s}  x_{ \uv_s}$$ 
for some $s \ne t \in S_{F_j}$.
Observe here that $B$ does not contain any $\vr$-variable of the form $x_{(\um, \uu)}$ and
the $\vp$-variables in $B$ are identical to
those of the minus terms of the corresponding main binomials.
Hence, the same line of the proof above for  main binomials
 implies that $B_\fV$ is $\vr$-linear and square-free.

Further, consider any $B \in \cB^q$. If $B_{\fV'}$ does not contain $x_{\fV',(\um, \uu_k)}$
or $(\um,\uu_k) \in \La_{\sF_{[k]}}^o$,
then the form of $B_{\fV'}$ remains unchanged (except for the meanings of its variables).
Suppose next that $B_{\fV'}$  contains $x_{\fV',(\um, \uu_k)}$ and $(\um,\uu_k) \notin \La_{\sF_{[k]}}^o$.
Note that the proper transform of the $\vt$-center 
$\vt_{[k]}$ on the chart $\fV'$ equals to $(x_{\fV', \uu_k}, x_{\fV', (\um, \uu_k)})$. Thus,
from the chart $\fV'$ to the $\vr$-standard chart $\fV$, we have that  $x_{\fV',(\um, \uu_k)}$ becomes 
$\de_{\fV,(\um, \uu_k)}$ in $B_\fV$. 
 By Lemma \ref{ker-phi-k} (2), applied to the variable $p_\um$ (before 
 de-homogenization), we see that any fixed term $T_B$ of $B$ contains
 at most one $\vr$-variables of the form $x_{(\um, \uu)}$ with
 $\uu \in \II_{d,n}^\um$. Hence, one sees that 
the last statement on  $B_\fV$ holds, in this case. 

Thus,  this proves the  statement of the proposition when 
$(\um,\uu_k) \in  \La_{\sF_{[k]}}^o$ or when $\fV$ is a $\vr$-standard chart.

Next, we consider the case when $(\um,\uu_k) \notin \La_{\sF_{[k]}}^o$
and  $\fV$ is a $\vp$-standard chart.

 Again, to prove the statement about the defining equations of $\tsV_{\vt_{[k]}} \cap \fV$ in $\fV$, 
  it suffices to prove that the proper transform
 of any residual binomial of $F_k$ depends on the main binomials on the chart $\fV$.
 
 To show this, we again take any two $s, t \in S_{F_k} \- s_{F_k}$.
 
 On the chart $\fV$, we have the following two the main binomials 
 $$B_{ \fV,(s_{F_k},s)}: \;\;\;\;\;\; x_{\fV, (\uu_s, \uv_s)}    - x_{\fV, (\um,\uu_k)}
 \tilde x_{\fV, \uu_s} \tilde x_{\fV, \uv_s},$$
 $$B_{ \fV,(s_{F_k},t)}: \;\;\;\;\;\; x_{\fV, (\uu_t, \uv_t)}    - x_{\fV, (\um,\uu_k)}
 \tilde x_{\fV, \uu_t} \tilde x_{\fV, \uv_t}.$$
We also have  the following residual binomial
$$ B_{ \fV,(s,t)}: \;\; x_{\fV,(\uu_s, \uv_s)}\tilde x_{\fV,\uu_t} \tilde x_{\fV, \uv_t}-
x_{\fV,(\uu_t, \uv_t)} \tilde x_{\fV,\uu_s} \tilde x_{ \fV,\uv_s}.$$ 
 Then, we have
 $$B_{ \fV,(s,t)}=  \tilde x_{\fV, \uu_t} \tilde x_{\fV, \uv_t} B_{ \fV,(s_{F_k},s)}
 -\tilde x_{\fV, \uu_s} \tilde x_{\fV, \uv_s} B_{ \fV,(s_{F_k},t)}.$$
 Thus, the  statement of the proposition about the equations of  $\tsV_{\vt_{[k]}}  \cap \fV$ follows.
 
 Next, consider any $B \in \cB^\mn$. The fact
 that $B_\fV$ is $\vr$-linear and 
 square-free follows from the same line of proof in the previous case.
 
Finally, consider any $B \in \cB^q$. If $B_{\fV'}$ does not contain $x_{\fV',(\um, \uu_k)}$,
then the form of $B_{\fV'}$ remains unchanged.
Suppose next that $B_{\fV'}$  contains $x_{\fV',(\um, \uu_k)}$.
Again, the proper transform of the $\vt$-center 
$\vt_{[k]}$ on the chart $\fV'$ equals to $(x_{\fV', \uu_k}, x_{\fV', (\um, \uu_k)})$. Hence,
from the chart $\fV'$ to the $\vp$-standard chart $\fV$, we have that 
 $x_{\fV',(\um, \uu_k)}$ turns into $\ve_{\fV, \uu_k} x_{\fV,(\um, \uu_k)}$  in $B_\fV$.
 Then, again, by applying Lemma \ref{ker-phi-k} (2), applied to the variable $p_\um$ (before 
 de-homogenization), we have that
 any fixed term $T_B$ of $B$ contains
 at most one $\vr$-variables of the form $x_{(\um, \uu)}$ with
 $\uu \in \II_{d,n}^\um$.   Hence, one sees that 
the last statement on  $B_\fV$ holds. 

This completes the proof of the proposition.
 \end{proof}

We need the final case of $\vt$-blowups.

We set $\tsR_{\vt}:=\tsR_{\vt_{[\up]}}$. $\tsV_{\vt}:=\tsV_{\vt_{[\up]}}$.

\begin{cor}\label{eq-for-sV-vr} 
Let the notation be as in Proposition \ref{equas-fV[k]} for $k=\up$.  
Then, the scheme $\tsV_{\vt} \cap \fV$, as a closed subscheme of
the chart $\fV$ of $\tsR_{\vt}=\tsR_{\vt_{[\up]}}$,  is defined by 
\begin{eqnarray} 
\cB^\q_\fV,  \;\; \cB^\mn_{\fV} ,\;\; L_{\fV, \sfm}.
\end{eqnarray}
Further, for any binomial $B_\fV \in \cB^\mn_\fV \cup \cB^\q_\fV$, it is  $\vr$-linear and square-free.
\end{cor}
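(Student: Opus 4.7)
\medskip
\noindent\textbf{Proof proposal.} The plan is to derive this corollary as the terminal case $k=\up$ of Proposition \ref{eq-for-sV-vtk}, applied inductively through the sequence of $\vt$-blowups \eqref{vt-sequence}. Since $\tsR_\vt = \tsR_{\vt_{[\up]}}$ and $\tsV_\vt = \tsV_{\vt_{[\up]}}$ by definition, and $\sfm = \{F_1 < \cdots < F_\up\}$ exhausts all $\um$-primary $\pl$ relations, we have $\cB^\res_{>\up} = \emptyset$, $\cB^\mn_{>\up} = \emptyset$, and $L_{>\up} = \emptyset$ at this last stage.

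First, I would invoke Proposition \ref{eq-for-sV-vtk} at $k=\up$ on any standard chart $\fV$ of $\tsR_\vt$. In either of the two cases of that proposition (whether $(\um,\uu_\up) \in \La_{\sF_{[\up]}}^o$ or $\fV$ is a $\vr$-standard chart versus $(\um,\uu_\up) \notin \La_{\sF_{[\up]}}^o$ and $\fV$ is a $\vp$-standard chart), the defining equations of $\tsV_{\vt_{[\up]}} \cap \fV$ reduce to the collection $\cB^\q_\fV \cup \cB^\mn_{\fV,<\up} \cup L_{\fV,<\up}$ together with the main binomials and linearized $\pl$ equation arising from $F_\up$ itself. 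The residual binomials with respect to $F_\up$, which would have appeared in $\cB^\res_{\le \up}$, were already shown in the proof of Proposition \ref{eq-for-sV-vtk} to be generated by the main binomials $B_{\fV,(s_{F_\up},s)}$ via the explicit relation $B_{\fV,(s,t)} = x_{\fV,(\uu_s,\uv_s)} B_{\fV,(s_{F_\up},t)} - x_{\fV,(\uu_t,\uv_t)} B_{\fV,(s_{F_\up},s)}$ (or its $\vp$-chart analogue). Combining this with the fact that $\cB^\mn_{\fV,<\up} \cup \{B_{\fV,(s_{F_\up},s)}\}_s = \cB^\mn_\fV$ and $L_{\fV,<\up} \cup \{L_{\fV,F_\up}\} = L_{\fV,\sfm}$ yields the claimed defining set $\cB^\q_\fV \cup \cB^\mn_\fV \cup L_{\fV,\sfm}$.

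For the second assertion, I would again simply cite Proposition \ref{eq-for-sV-vtk}. For any $B \in \cB^\mn$, that proposition (applied at $k=\up$, noting $\cB^\res_{>\up} = \emptyset$) states directly that $B_\fV$ is $\vr$-linear and square-free, since $B$ belongs to $\cB^\mn \sqcup \cB^\res_{>\up} = \cB^\mn$. For any $B \in \cB^\q$, the final clause of Proposition \ref{eq-for-sV-vtk} asserts that every term $T_{\fV,B}$ of $B_\fV$ is $\vr$-linear, carries at most one $\vt$-exceptional factor of the form $\de_{\fV,(\um,\uu)}$ or $\ve_{\fV,\uu} x_{\fV,(\um,\uu)}$, and is square-free; hence $B_\fV$ itself is $\vr$-linear and square-free.

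Since the entire corollary is a direct specialization of the already-proved Proposition \ref{eq-for-sV-vtk}, there is no serious obstacle. The only mild subtlety is bookkeeping: verifying that at $k=\up$, the sets $\cB^\mn_{\fV,<\up}$, $L_{\fV,<\up}$, together with the $F_\up$-contributions, reassemble into the global sets $\cB^\mn_\fV$ and $L_{\fV,\sfm}$, and that the residual-elimination identity used inside the proof of Proposition \ref{eq-for-sV-vtk} at stage $\up$ indeed absorbs $\cB^\res_{F_\up}$ into $\cB^\mn_{F_\up}$ modulo the previously eliminated relations.
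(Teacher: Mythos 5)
Your proposal is correct and is essentially the argument the paper intends: since $\sfm=\{\bF_1<\cdots<\bF_\up\}$ is exhausted at $k=\up$, the sets $\cB^\mn_{\fV,>\up}$, $\cB^\res_{\fV,>\up}$, $\cL_{\fV,>\up}$ appearing in Proposition \ref{eq-for-sV-vtk} are all empty, and the remaining defining equations reassemble into $\cB^\q_\fV$, $\cB^\mn_\fV$, $L_{\fV,\sfm}$, while the ``moreover'' and ``furthermore'' clauses of that proposition at $k=\up$ give $\vr$-linearity and square-freeness term by term (hence for the binomials themselves by multi-homogeneity). The paper gives no explicit proof of the corollary, treating it as the direct terminal specialization of Proposition \ref{eq-for-sV-vtk}, which is exactly what you do.
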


\begin{cor}\label{no-(um,uu)} Let $X_{\vt, (\um,\uu_k)}$ be the proper transform of
$X_{(\um,\uu_k)}$ in $\tsR_{\vt}$. Then 
$$\tsV_{\vt} \cap X_{\vt, (\um,\uu_k)} =\emptyset,\;\; \forall \; k \in [\up].$$
Further, on any chart $\fV$ of $\tsR_\vt$, 
 either  $x_{(\um,\uu_k)} =1$ or
the variable $x_{\fV, (\um,\uu_k)}$ exists in $\var_\fV$ and is invertible
along $\tsV_\vt \cap \fV$ for all $k \in [\up]$.
\end{cor}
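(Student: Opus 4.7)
The plan is to locate, among the defining equations of $\tsV_\vt \cap \fV$, a single main binomial whose restriction to the chart exhibits $x_{\fV,(\um,\uu_k)}$ as a unit; both assertions of the corollary then follow immediately.

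First, I dispose of the trivial cases. If $(\um,\uu_k) \in \La^o_{\sF_{[\up]}}$, then Definition \ref{fv-k=0} forces $x_{\fV,(\um,\uu_k)} \equiv 1$ on every standard chart. If $(\um,\uu_k) \in \fd_\fV$, convention \eqref{conv:=1} again gives $x_{\fV,(\um,\uu_k)} \equiv 1$, and part~(4) of Proposition \ref{meaning-of-var-vtk} shows $X_{\vt,(\um,\uu_k)} \cap \fV = \emptyset$. So I may assume neither holds, making $x_{\fV,(\um,\uu_k)}$ a genuine coordinate variable of $\fV$. Since $\uu_k \ne \uu_{k'}$ whenever $k' \ne k$, Proposition \ref{meaning-of-var-vtk} forces $x_{\uu_k}$ to remain a free coordinate on every chart of $\tsR_{\vt_{[k-1]}}$; hence the proper transform of $Z_{\vt_{[k]}}$ meets the chart $\fV_{[k-1]}$ over which $\fV$ lies, so the $k$-th $\vt$-blowup genuinely affects it. The assumption $(\um,\uu_k) \notin \fd_\fV$ further forces the resulting chart $\fV_{[k]}$ of $\tsR_{\vt_{[k]}}$ to be $\vp$-standard (else we would have $(\um,\uu_k) \in \fd_{\fV_{[k]}} \subseteq \fd_\fV$).

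The key input is Proposition \ref{eq-for-sV-vtk} applied to the index $s = s_{F_k,o}$ on the $\vp$-standard chart $\fV_{[k]}$: the main binomial
$$B_{\fV_{[k]},(s_{F_k},s_{F_k,o})}: \;\; x_{\fV_{[k]},(\uu_{s_{F_k,o}},\uv_{s_{F_k,o}})} - x_{\fV_{[k]},(\um,\uu_k)}\, \tilde{x}_{\fV_{[k]},\uu_{s_{F_k,o}}} \tilde{x}_{\fV_{[k]},\uv_{s_{F_k,o}}}$$
is a defining relation of $\tsV_{\vt_{[k]}} \cap \fV_{[k]}$. Because the chart was constructed in Definition \ref{fv-k=0} by setting $x_{(\uu_{s_{F_k,o}},\uv_{s_{F_k,o}})} \equiv 1$, this reduces on $\tsV_{\vt_{[k]}} \cap \fV_{[k]}$ to
$$x_{\fV_{[k]},(\um,\uu_k)}\, \tilde{x}_{\fV_{[k]},\uu_{s_{F_k,o}}} \tilde{x}_{\fV_{[k]},\uv_{s_{F_k,o}}} = 1,$$
exhibiting $x_{\fV_{[k]},(\um,\uu_k)}$ as a unit in the coordinate ring.

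To lift this to $\tsV_\vt \cap \fV$, I plan to use that the later $\vt$-blowups at $(\vt_{[k']})$ with $k' > k$ have centers $X_{\uu_{k'}} \cap X_{(\um,\uu_{k'})}$ involving only $x_{\uu_{k'}}$ and $x_{(\um,\uu_{k'})}$; a direct inspection of the primary $\pl$ equation $F_{\um,\uu_k}$ in \eqref{keyTrick4} shows that neither $\uu_{s_{F_k,o}}$ nor $\uv_{s_{F_k,o}}$ equals $\um$, so the $\vr$-variable $x_{(\uu_{s_{F_k,o}},\uv_{s_{F_k,o}})}$ is untouched, as is $x_{(\um,\uu_k)}$. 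By Corollary \ref{eq-for-sV-vr} the main binomials $\cB^\mn_\fV$ still define $\tsV_\vt \cap \fV$, so $B_{\fV,(s_{F_k},s_{F_k,o})}$ persists with the same basic shape, the $\tilde{x}$-monomials possibly picking up additional exceptional factors from the intervening blowups. These factors modify only the right-hand monomial by nowhere-vanishing parameters, so $x_{\fV,(\um,\uu_k)}$ remains a unit along $\tsV_\vt \cap \fV$. The main technical step I anticipate is the careful bookkeeping of these exceptional factors, which is handled uniformly by the structural content of Proposition \ref{eq-for-sV-vtk} and Corollary \ref{eq-for-sV-vr}.
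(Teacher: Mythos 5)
Your proposal is correct and follows essentially the same route as the paper: trichotomize into $(\um,\uu_k)\in\La^o_{\sF_{[\up]}}$, the $\vr$-standard case where $(\um,\uu_k)\in\fd_\fV$, and the remaining $\vp$-standard case, then invoke the main binomial $B_{\fV,(s_{F_k},s_{F_k,o})}$ from Proposition \ref{eq-for-sV-vtk} with $x_{\fV,(\uu_{s_{F_k,o}},\uv_{s_{F_k,o}})}\equiv 1$ and $x_{\fV,\uu_k}=1$ to exhibit $x_{\fV,(\um,\uu_k)}$ as a unit. Your write-up is, if anything, a bit more careful about the bookkeeping of exceptional parameters in the pullback monomials than the paper's terse \eqref{invertible-umuu}; the only slightly loose phrase is ``by nowhere-vanishing parameters'' (those parameters are not a priori nowhere-vanishing — their invertibility is a consequence of the relation $x_{(\um,\uu_k)}\cdot M = 1$, not a hypothesis), but the conclusion is the same.
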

\begin{proof} Fix any standard chart $\fV$. 

If $\fV$ lies over the chart $(x_{(\um,\uu_k)} \equiv 1)$ of $\sR_\sF$, 
that is, $(\um,\uu_k) \in  \La_{\sF_{[k]}}^o$,
then the statement follows from the definition. 

If $\fV$ lies over a $\vr$-standard chart of $\tsR_{\vt_{[k]}}$, 
then the fact that $\tsV_{\vt} \cap X_{\vt, (\um,\uu_k)} =\emptyset$ follows from 
Proposition \ref{meaning-of-var-vtk}  (4);  $x_{(\um,\uu_k)} = 1$ by the convention \eqref{conv:=1}.

Suppose $\fV$ lies over a $\vp$-standard chart of $\tsR_{\vt_{[k]}}$. Then, in this case,
we have the following main binomial relation 
\begin{equation}\label{invertible-umuu} B_{ \fV,(s_{F_k}, s_{F, o})}: \;\;\;\;\;\; 1   - 
x_{\fV, (\um,\uu_k)}x_{\fV, \uu_{s_{F, o}}} x_{\fV, \uv_{s_{F, o}}}
\end{equation}
because $x_{\fV, (\uu_{s_{F,o}},\uu_{s_{F,o}})} \equiv 1$ 
with $(\uu_{s_{F,o}},\uu_{s_{F,o}}) \in  \La_{\sF_{[k]}}^o$
and $x_{\fV, \uu_k} = 1$ by \eqref{conv:=1}.  
This  implies that $x_{\fV, (\um,\uu_k)}$ is nowhere vanishing along
  $\tsV_\vt \cap \fV$.
\end{proof}

\section{${\wp}$-Blowups}\label{vs-blowups} 

{\it 
Besides serving as the necessary part of   the process of $``removing"$   zero factors of
 the main binomial relations, another purpose of ${\wp}$-blowups is  to help to control 
 the proper transforms of the binomial relations of quotient type.}

\subsection{The initial setup: $({\wp}_{(11)}\fr_0)$}    $\ $

Our initial scheme in $({\wp}_{(11)}\fr_0)$ is $\tsR_{({\wp}_{(11)}\fr_0)}:=\tsR_{\vt}$.

On the scheme $\tsR_{\vt}$, we have  three kinds of divisors:

 $\bullet$   the proper transforms $X_{\vt, \uw}$ of $\vp$-divisors $X_\uw$ for all $\uw \in \II_{d,n}\- \um$;

$\bullet$  the proper transforms $X_{\vt, (\uu,\uv)}$
of $\vr$-divisors $X_{(\uu,\uv)}$ for all $(\uu,\uv) \in \La_\sfm$; 

 $\bullet$    the proper transforms
 $E_{\vt, [k]} \subset \tsR_{\vt}$
 of the $\vt$-exceptional divisors $E_{\vt_{[k]}} \subset \tsR_{\vt_{[k]}}$ for all $k \in [\up]$.

The set of all  $\vt$-exceptional divisors of $\tsR_{({\wp}_{(11)}\fr_0)}:=\tsR_{\vt}$
can be expressed as
$$\cE_{(\wp_{(11)}\fr_0)}=\{E_{(\wp_{(11)}\fr_0), (11)0 h} 
=E_{\vt, [h]} \mid h \in [\si_{(11)0}]\}, \;\; \hbox{where $\si_{(11)0}:=\up$.} $$ 

As a set of the initial data, we need to introduce the instrumental notion: $``$association$"$
with multiplicity  as follows. 

\begin{defn} Consider any main binomial relation $B \in \cB^\mn$ written as
$$B=B_{ \fV,(s_{F_k},s)}=T^+_B-T^-_B: \;\;\;\;\;\; x_{\fV, (\uu_s, \uv_s)}  x_{\fV, \uu_k}   - 
x_{(\um, \uu_k)} x_{\fV, \uu_s} x_{\fV, \uv_s}$$
for some $k \in [\up]$ and $s \in S_{F_k}$, where $\uu_k=\uu_{F_k}$.
Consider any $\vp$-divisor, $\vr$-divisor, or  $\vt$-exceptional divisor $Y$ on $\tsR_{\vt}$.

Let  $Y=X_{\vt,\uu}$ be any $\vp$ divisor for some $\uu \in \II_{d,n}$. 
We set
$$m_{Y, T^+_B}=\left\{
\begin{array}{rcl}
1, & \;\; \hbox{if  $\uu=\uu_k$} \\
0 ,& \;\; \hbox{otherwise.}\\ 
\end{array} \right. $$ 
$$m_{Y, T^-_B}=\left\{
\begin{array}{rcl}
1, & \;\; \hbox{if  $\uu=\uu_s$ or $\uu=\uv_s$,} \\
0 ,& \;\; \hbox{otherwise.} \\ 
\end{array} \right. $$ 

Let  $Y=X_{\vt, (\uu,\uv)}$ be any $\vr$ divisor. We set
$$m_{Y, T^+_B}=\left\{
\begin{array}{rcl}
1, & \;\; \hbox{if  $(\uu,\uv)=(\uu_s,\uv_s)$} \\
0 ,& \;\; \hbox{otherwise.}\\ 
\end{array} \right. $$

Due to Corollary \ref{no-(um,uu)}, we do not  associate
$X_{(\um,\uu_k)}$ with $T^-_B$.
Hence, we set
$$m_{Y, T^-_B}=0.$$

Let  $Y=E_{\vt, [j]}$ be any $\vt$-exceptional divisor for some $j \in [\up]$. If $k=j$,
we set  $$m_{Y, T^+_B}=m_{Y, T^-_B}=0.$$
Suppose now $k \ne j$. We  set
$$m_{Y, T^+_B}=0, $$ 
$$m_{Y, T^-_B}=m_{X_{\vt, \uu_j}, T^-_B}. $$ 

We call the number $m_{Y, T^\pm_B}$ the multiplicity of $Y$ associated with the term $T^\pm_B$.
We say $Y$ is associated with $T^\pm_B$ if $m_{Y, T^\pm_B}$ is positive. 
We do not say $Y$ is associated with $T^\pm_B$
if the multiplicity $m_{Y, T^\pm_B}$ is zero.
\end{defn}

\begin{defn} Consider any linearized $\pl$ relation
$$L_F=\sum_{s \in S_F} \vsgn (s) x_{(\uu_s, \uv_s)}.$$
for some $F\in \sfm$.  Fix any $s \in S_F$.
Consider any $\vp$-divisor, $\vr$-divisor, or  $\vt$-exceptional divisor $Y$ on $\tsR_{\vt}$.

Let  $Y=X_{\vt,\uw}$ be any $\vp$ divisor for some $\uw \in \II_{d,n}$. We set
$m_{Y, s}=0.$

Let  $Y=X_{\vt, (\uu,\uv)}$ be any $\vr$ divisor. We set
$$m_{Y, s}=\left\{
\begin{array}{rcl}
1, & \;\; \hbox{if  $(\uu,\uv)=(\uu_s,\uv_s)$} \\
0 ,& \;\; \hbox{otherwise.}\\ 
\end{array} \right. $$

Let  $Y=E_{\vt, [k]}$ be any $\vt$-exceptional divisor for some $k \in [\up]$. We let
$$m_{Y, s}=m_{X_{\vt, (\um, \uu_k)}, s} \;. $$

We call the number $m_{Y, s}$ the multiplicity of $Y$ associated with $s \in S_F$.
We say $Y$ is associated with $s$ if $m_{Y,s}$ is positive. We do not say $Y$ is associated with $s$
if the multiplicity $m_{Y,s}$ is zero.
\end{defn}

\subsection{${\wp}$-centers and ${\wp}$-blowups in $({\wp}_{(k\tau)}\fr_\mu)$}\label{vs-centers} $\ $

We proceed by applying  induction on 
the set $$\{(k\tau)\mu \mid k \in [\up], \tau \in [\ft_{F_k}],
\mu \in [\rho_{(k\tau)}]\},$$ ordered lexicographically on $(k,\tau, \mu)$, 
where $\rho_{(k\tau)}$ is a  to-be-defined 
finite positive integer depending on $(k\tau) \in \Index_{\cB^\mn}$
(cf. \eqref{indexing-Bmn}).


The initial case is $\wp_{(11)}\fr_0$ and the initial scheme is $\sR_{({\wp}_{(11)}\fr_{0})}:=\tsR_{\vt}$.

We suppose that the following package in $({\wp}_{(k\tau)}\fr_{\mu-1})$ has been introduced
for some  integer $\mu \in [\rho_{(k\tau)}]$, where $1 \le \rho_{(k\tau)} \le \infty$ is 
 an integer depending on $(k\tau) \in \Index_{\cB^\mn}$. (It will be proved to be finite.)
 Here, to reconcile notations, we make the convention:
$$({\wp}_{(k\tau)}\fr_0):=({\wp}_{(k(\tau -1))}\fr_{\rho_{(k(\tau-1))}}), \; \forall \;\; 1 \le k \le \up, \;
2 \le \tau \le \ft_{F_k},$$
$$({\wp}_{(k1)}\fr_0):=({\wp}_{((k-1)\ft_{F_{k-1}})}\fr_{\rho_{((k-1)\ft_{F_{k-1}})}}), \; \forall \;\; 2 \le k \le \up, $$
provided that $\rho_{(k(\tau-1))}$ and $\rho_{((k-1)\ft_{F_{k-1}})}$
are (proved to be) finite.

\medskip\noindent
$\bullet$  {\sl The inductive assumption.} 
{\it The scheme $\tsR_{({\wp}_{(k\tau)}\fr_{\mu-1})}$ has been constructed;
it comes equipped with the set of $\vp$-divisors, 
$$\cD_{({\wp}_{(k\tau)}\fr_{\mu-1}),\vp}: \;\; X_{({\wp}_{(k\tau)}\fr_{\mu-1}), \uw}, \;\; \uw \in \II_{d,n} \- \um,$$
the set of $\vr$-divisors,
$$\cD_{({\wp}_{(k\tau)}\fr_{\mu-1}), \vr}: \;\; X_{({\wp}_{(k\tau)}\fr_{\mu-1}), (\uu,\uv)}, \;\; (\uu, \uv) \in \La_\sfm,$$
 and the set of the exceptional divisors, 
 $$\cE_{({\wp}_{(k\tau)}\fr_{\mu-1})}: \;\; E_{({\wp}_{(k\tau)}\fr_{\mu-1}), (k'\tau') \mu' h'},\;\; 
 \hbox{$(11)0 \le (k'\tau') \mu'  \le (k\tau)(\mu -1), \; h' \in [\si_{(k'\tau') \mu'}]$} $$
for some finite positive integer $\si_{(k'\tau')\mu'}$ depending on $(k'\tau')\mu'$.

We let $$\cD_{({\wp}_{(k\tau)}\fr_{\mu-1})}=\cD_{({\wp}_{(k\tau)}\fr_{\mu-1}),\vp}
 \sqcup \cD_{({\wp}_{(k\tau)}\fr_{\mu-1}),\vr}
\sqcup \cE_{({\wp}_{(k\tau)}\fr_{\mu-1})}$$ be the set of all  the aforelisted divisors. 

Fix  any $Y \in \cD_{({\wp}_{(k\tau)}\fr_{\mu-1})}$.
 Consider any $B \in \cB^\q \cup \cB^\mn$ and let $T_B$ be any fixed  term of $B$.
Then, we have that $Y$ is associated with $T_B$ with the multiplicity  $m_{Y,T_B}$, a nonnegative integer. 
In what follows, we say $Y$ is associated with $T_B$  if $m_{Y,T_B}>0$; 
we do not say $Y$ is associated with $T_B$  if $m_{Y,T_B}=0$.

Likewise, for any term of $T_s=\vsgn (s) x_{(\uu_s,\uv_s)}$ of $L_F
=\sum_{s \in S_F} \vsgn (s) x_{(\uu_s,\uv_s)}$, $Y$ 
is associated with $T_s$ with the multiplicity  $m_{Y,s}$, a nonnegative integer. 
We say $Y$ is associated with $T_s$  if $m_{Y,s}>0$; 
we do not say $Y$ is associated with $T_s$  if $m_{Y,s}=0$.}

\smallskip
We are now  to construct the scheme $\tsR_{({\wp}_{(k\tau)}\fr_\mu)}$.
 The process consists of a finite steps of blowups; the scheme $\tsR_{({\wp}_{(k\tau)}\fr_\mu)}$
is the one obtained in the final step. 




As before, fix any $k \in [\up]$,  we write $\cB^\mn_{F_k}=\{B_{(k\tau)} \mid \tau \in [\ft_{F_k}]\}.$
For every $B_{(k\tau)} \in \cB_{F_k}^\mn$, we have the expression  
$$B_{(k\tau)}=T_{(k\tau)}^+ -T_{(k\tau)}^- =x_{(\uu_s,\uv_s)}x_{\uu_k} -
x_{\uu_s}x_{\uv_s}x_{(\um,\uu_k)} $$  where $ s\in S_{F_k} \- s_{F_k}$ corresponds to $\tau$
and
$x_{\uu_k}$ is the leading variable of $\bF_k$ for some $\uu_k \in \II_{d,n}^\um$.

\begin{defn}\label{wp-sets-kmu} 
A pre-${\wp}$-set $\phi$ in $({\wp}_{(k\tau)}\fr_\mu)$, written as
$$\phi=\{Y^+, \; Y^- \},$$
 consists of exactly two divisors 
 of the scheme $\tsR_{({\wp}_{(k\tau)}\fr_{\mu-1})}$
such that  
$$Y^+ \ne X_{({\wp}_{(k\tau)}\fr_{\mu-1}), (\uu_s,\uv_s)}, \;
Y^- \ne X_{({\wp}_{(k\tau)}\fr_{\mu-1}), (\um,\uu_k)},$$
and $Y^\pm$ is  associated with $T_{(k\tau)}^\pm$. 
  
  Given the above pre-${\wp}$-set $\phi$, we let 
$$Z_\phi = Y^+ \cap Y^-$$
 be the scheme-theoretic intersection.
The pre-$\wp$-set $\phi$ (resp. $Z_\phi $) is called a $\wp$-set (resp. $\wp$-center) in $({\wp}_{(k\tau)}\fr_\mu)$ if
 $$Z_\phi \cap \tsV_{({\wp}_{(k\tau)}\fr_{\mu-1})} \ne \emptyset.$$
\end{defn}


As there are only finitely many $\vp$-, $\vr$-, and exceptional divisors
on the scheme $\tsR_{({\wp}_{(k\tau)}\fr_{\mu-1})}$, that is,
the set $\cD_{({\wp}_{(k\tau)}\fr_{\mu-1})}$ is finite,
one sees that there are only finitely many ${\wp}$-sets in $({\wp}_{(k\tau)}\fr_\mu)$. 
We let $\Phi_{{\wp}_{(k\tau)}\fr_\mu}$ be the finite set of all ${\wp}$-sets
in $({\wp}_{(k\tau)}\fr_\mu)$; we let $\cZ_{{\wp}_{(k\tau)}\fr_\mu}$ be the finite set of all 
corresponding ${\wp}$-centers in $({\wp}_{(k\tau)}\fr_\mu)$.
We need  a total ordering on the set
$\Phi_{{\wp}_{(k\tau)}\fr_\mu}$ to produce a canonical sequence of blowups.

\begin{defn}\label{order-Om-kmu} Consider the set 
$\cD_{({\wp}_{(k\tau)}\fr_{\mu-1})}=\cD_{({\wp}_{(k\tau)}\fr_{\mu-1}),\vp} \sqcup \cD_{({\wp}_{(k\tau)}\fr_{\mu-1}),\vr}
\sqcup \cE_{({\wp}_{(k\tau)}\fr_{\mu-1})}$. 
We introduce a total order $``<_\flat"$ on the set as follows. 
  Consider any two distinct elements, $Y_1, Y_2 \in  \cD_{({\wp}_{(k\tau)}\fr_{\mu-1})}$. 
 \begin{enumerate} 
  \item  Suppose $Y_1 \in \cE_{({\wp}_{(k\tau)}\fr_{\mu-1})}$ and $Y_2  \notin  \cE_{({\wp}_{(k\tau)}\fr_{\mu-1})}$. Then, $Y_1 <_\flat Y_2$.
  \item  Suppose $Y_1 \in \cD_{({\wp}_{(k\tau)}\fr_{\mu-1}),\vr}$ and $Y_2  \in \cD_{({\wp}_{(k\tau)}\fr_{\mu-1}),\vp}$.
   Then, $Y_1 <_\flat Y_2$.
   \item Suppose $Y_1, Y_2 \in \cE_{({\wp}_{(k\tau)}\fr_{\mu-1})}$. 
   Then, we can write $Y_i=E_{({\wp}_{(k\tau)}\fr_{\mu-1}), (k_i\tau_i)\mu_ih_i}$ for some
   $(k_i\tau_i)\mu_i\le (k\tau)(\mu-1)$ and some $1 \le h_i\le \si_{(k_i\tau_i)\mu_i}$, with $i=1,2$.
   Then, we say $Y_1 <_\flat Y_2$  if $(k_1,\tau_1, \mu_1, h_1)<_\invlex (k_2,\tau_2, \mu_2, h_2)$.
  \item  Suppose $Y_1, Y_2 \in \cD_{({\wp}_{(k\tau)}\fr_{\mu-1}),\vp}$. Write $Y_1= X_{({\wp}_{(k\tau)}\fr_{\mu-1}), \uu}$ and
   $Y_2=X_{({\wp}_{(k\tau)}\fr_{\mu-1}), \uv}$.
  Then, $Y_1 <_\flat Y_2$ if  $\uu <_\lex \uv$.
  \item Suppose $Y_1, Y_2 \in \cD_{({\wp}_{(k\tau)}\fr_{\mu-1}),\vr}$. Write $Y_i= X_{({\wp}_{(k\tau)}\fr_{\mu-1}),(\uu_i,\uv_i)}$ 
  with $i=1,2$. 
  Then, $Y_1 <_\flat Y_2$ if  $(\uu_1,\uv_1) <_\lex (\uu_2,\uv_2)$.
 \end{enumerate}
\end{defn}

Using the order  $``<_\flat"$ of Definition \ref{order-Om-kmu}, 
and applying Definition \ref{gen-order}, 
one sees that  the set $\Phi_{{\wp}_{(k\tau)}\fr_\mu}$ now
comes equipped with  the total order $``<:=<_{\flat, \lex}"$ which is the lexicographic order
induced by $``<=<_\flat"$. 
Thus, we can list $\Phi_{{\wp}_{(k\tau)}\fr_\mu}$ as
$$\Phi_{ {\wp}_{(k\tau)}\fr_\mu}=\{\phi_{(k\tau)\mu 1} < \cdots < \phi_{(k\tau)\mu \si_{(k\tau)\mu}}\}$$
for some finite positive integer $\si_{(k\tau)\mu}$ depending on $(k\tau)\mu$. We then let the set 
$\cZ_{\wp_{(k\tau)}\fr_\mu}$ of the corresponding $\wp$-centers
inherit the total order from that of $\Phi_{{\wp}_{(k\tau)}\fr_\mu}$. 
Then, we can express
$$\cZ_{\wp_{(k\tau)}\fr_\mu}=\{Z_{\phi_{(k\tau)\mu1}} < \cdots < Z_{\phi_{(k\tau)\mu\si_{(k\tau)\mu}}}\}.$$


We let $\tsR_{({\wp}_{(k\tau)}\fr_\mu \fs_1)} \lra \tsR_{({\wp}_{(k\tau)}\fr_{\mu-1})}$ 
be the blowup of $\tsR_{({\wp}_{(k\tau)}\fr_{\mu-1})}$
along the {$\wp$-}center $Z_{\phi_{(k\tau)\mu1}}$. 
Inductively, we assume that 
$\tsR_{({\wp}_{(k\tau)}\fr_\mu\fs_{(h-1)})}$ has been constructed for some 
$ h \in [\si_{(k\tau)\mu}]$.
We then let  $$\tsR_{({\wp}_{(k\tau)}\fr_\mu\fs_h)} \lra \tsR_{({\wp}_{(k\tau)}\fr_\mu\fs_{h-1})}$$ be the blowup of
$ \tsR_{({\wp}_{(k\tau)}\fr_\mu\fs_{h-1})}$ along (the proper transform of) the  $\wp$-center 
$Z_{\phi_{(k\tau)\mu h}}$.
Here, to reconcile notation, we set 
$$\tsR_{({\wp}_{(k\tau)}\fr_\mu\fs_{0})}:=\tsR_{({\wp}_{(k\tau)}\fr_{\mu-1})}
:=\tsR_{({\wp}_{(k\tau)}\fr_{\mu-1}\fs_{\si_{(k\tau)(\mu-1)}})}.$$

All of these can be summarized as the sequence 
$$\tsR_{({\wp}_{(k\tau)}\fr_\mu)}:= \tsR_{({\wp}_{(k\tau)}\fr_\mu\fs_{\si_{(k\tau)\mu}})}
 \lra \cdots \lra \tsR_{({\wp}_{(k\tau)}\fr_\mu\fs_1)} \lra \tsR_{({\wp}_{(k\tau)}\fr_{\mu-1})}.$$

 Given $h \in [\si_{(k\tau)\mu}]$,
consider the induced morphism
$\tsR_{({\wp}_{(k\tau)}\fr_\mu\fs_h)} \lra 
\tsR_{({\wp}_{(k\tau)}\fr_{\mu-1})}$. 

$\bullet$ We let $X_{({\wp}_{(k\tau)}\fr_\mu\fs_h), \uw}$ be the proper transform
of $X_{(\wp_{(k\tau)}\fr_{\mu-1}), \uw}$ in $\tsR_{({\wp}_{(k\tau)}\fr_\mu\fs_h)}$,
for all $\uw \in \II_{d,n} \- \um$. These are still called $\vp$-divisors.
We denote the set of all $\vp$-divisors
 on  $\tsR_{({\wp}_{(k\tau)}\fr_\mu\fs_h)}$ by  $\cD_{({\wp}_{(k\tau)}\fr_\mu\fs_h),\vp}$.
 
$\bullet$ We let $X_{({\wp}_{(k\tau)}\fr_\mu\fs_h), (\uu, \uv)}$ be the proper transform
  of the $\vr$-divisor $X_{({\wp}_{(k\tau)}\fr_{\mu-1}), (\uu, \uv)}$ in $\tsR_{({\wp}_{(k\tau)}\fr_\mu\fs_h)}$,
  for all $(\uu, \uv) \in \La_\sfm$. These are still called $\vr$-divisors.
We denote the set of all $\vr$-divisors
 on  $\tsR_{({\wp}_{(k\tau)}\fr_\mu\fs_h)}$ by  $\cD_{({\wp}_{(k\tau)}\fr_\mu\fs_h),\vr}$.
 
 $\bullet$ We let
 $ E_{({\wp}_{(k\tau)}\fr_\mu\fs_h),  (k'\tau')\mu' h'}$ be the proper transform 
 of $E_{({\wp}_{(k\tau)}\fr_{\mu-1}),  (k'\tau')\mu' h'}$ in $\tsR_{({\wp}_{(k\tau)}\fr_\mu \fs_h)}$,
 for all $k'\in [\up],  \tau' \in [\ft_{F_{k'}}]$, $0 \le \mu' \le \rho_{(k'\tau')}$ 
 and $h' \in [\si_{(k'\tau')\mu'}]$ with $(k'\tau')\mu' \le (k\tau)(\mu-1)$.
 (Here, we allow $(k'\tau')\mu'= (11)0$. 
 Recall that $E_{({\wp}_{(k\tau)}\fr_{\mu-1}), (11)0 h}$ with $h \in [\si_{(11)0}]=[\up]$ correspond
 to $\vt$-divisors.). These are named as  $\wp$-exceptional or simply
 exceptional divisors.  We denote the set of these exceptional divisors on  
  $\tsR_{({\wp}_{(k\tau)}\fr_\mu\fs_h)}$ by $\cE_{({\wp}_{(k\tau)}\fr_\mu \fs_h),{\rm old}}$.
 
  We let  $$\bar\cD_{({\wp}_{(k\tau)}\fr_\mu\fs_h)}= \cD_{({\wp}_{(k\tau)}\fr_\mu\fs_h),\vp}
  \sqcup \cD_{({\wp}_{(k\tau)}\fr_\mu\fs_h),\vr} \sqcup \cE_{({\wp}_{(k\tau)}\fr_\mu\fs_h),{\rm old}}$$ 
  be the set of  all of the aforementioned divisors on $\tsR_{({\wp}_{(k\tau)}\fr_\mu \fs_h)}$.
 
  
As each divisor in $\bar\cD_{({\wp}_{(k\tau)}\fr_\mu\fs_h)}$ is the proper transform of a unique 
divisor in $\cD_{({\wp}_{(k\tau)}\fr_{\mu-1})}$. The total order on $\cD_{({\wp}_{(k\tau)}\fr_{\mu-1})}$
carries over to provide an induced total order
on the set $\bar\cD_{({\wp}_{(k\tau)}\fr_\mu\fs_h)}$.

 In addition to the proper transforms of the divisors from $\tsR_{({\wp}_{(k\tau)}\fr_{\mu-1})}$, there are
the following {\it new} exceptional divisors.

For any $ h \in [\si_{(k\tau)\mu}]$, we let $E_{({\wp}_{(k\tau)}\fr_\mu\fs_h)}$ be the exceptional divisor of 
the blowup $\tsR_{({\wp}_{(k\tau)}\fr_\mu\fs_h)} \lra \tsR_{({\wp}_{(k\tau)}\fr_\mu\fs_{h-1})}$.
 For any $1 \le h'< h \le \si_{(k\tau)\mu}$,
we let $E_{({\wp}_{(k\tau)}\fr_\mu\fs_h), (k\tau)\mu h'}$ 
be the proper transform in $\tsR_{({\wp}_{(k\tau)}\fr_\mu \fs_h)} $
of the exceptional divisor $E_{({\wp}_{(k\tau)}\fr_\mu \fs_{h'})}$. 
To reconcile notation, we also set 
$E_{({\wp}_{(k\tau)}\fr_\mu\fs_h), (k\tau)\mu h}:=E_{({\wp}_{(k\tau)}\fr_\mu \fs_h)}$.
We  set $$\cE_{({\wp}_{(k\tau)}\fr_\mu \fs_h),{\rm new}}=\{ E_{({\wp}_{(k\tau)}\fr_\mu\fs_h), 
(k\tau)\mu h'} \mid 1 \le h' \le h \le \si_{(k\tau)\mu} \} .$$
We then order the exceptional divisors of $\cE_{({\wp}_{(k\tau)}\fr_\mu\fs_h),{\rm new}}$
in the reverse order of occurrence, that is, 
$E_{({\wp}_{(k\tau)}\fr_\mu\fs_h), (k\tau)\mu h''} \le E_{({\wp}_{(k\tau)}\fr_\mu\fs_h), (k\tau)\mu h'}$
 if $h'' \ge h'$.
 
 We then let 
 $$\cD_{({\wp}_{(k\tau)}\fr_\mu \fs_h)} =\bar\cD_{({\wp}_{(k\tau)}\fr_\mu \fs_h)} 
\sqcup  \cE_{({\wp}_{(k\tau)}\fr_\mu \fs_h),{\rm new}}.$$
For any $Y_1 \in \cE_{({\wp}_{(k\tau)}\fr_\mu \fs_h),{\rm new}}$ and any
 $Y_2 \in \bar\cD_{({\wp}_{(k\tau)})\fr_\mu\fs_h}$,
we say $Y_1 <_\flat Y_2$.  As each of $\bar\cD_{({\wp}_{(k\tau)}\fr_\mu \fs_h)}$ 
and $\cE_{({\wp}_{(k\tau)}\fr_\mu \fs_h),{\rm new}}$ is a totally ordered set, 
this endows $\cD_{({\wp}_{(k\tau)}\fr_\mu \fs_h)}$
with a total order $``<_\flat"$.

Finally, we set
 $\tsR_{({\wp}_{(k\tau)})\fr_\mu}:= \tsR_{({\wp}_{(k\tau)}\fr_\mu\fs_{\si_{(k\tau)\mu}})}$, and let
 $$\cD_{({\wp}_{(k\tau)}\fr_\mu),\vp}=\cD_{({\wp}_{(k\tau)}\fr_\mu\fs_{\si_{(k\tau)\mu}}),\vp}, \;
 \cD_{({\wp}_{(k\tau)}\fr_\mu),\vr}=\cD_{({\wp}_{(k\tau)}\fr_\mu\fs_{\si_{(k\tau)\mu}}),\vr},$$
 $$\cE_{{\wp}_{(k\tau)}\fr_\mu}=\cE_{({\wp}_{(k\tau)}\fr_\mu\fs_{\si_{(k\tau)\mu}}), {\rm old}}  
 \sqcup \cE_{({\wp}_{(k\tau)}\fr_\mu\fs_{\si_{(k\tau)\mu}}), {\rm new}} .$$
 This can be  summarized as 
$$\cD_{({\wp}_{(k\tau)}\fr_\mu)}:=\cD_{({\wp}_{(k\tau)}\fr_\mu),\vp} \sqcup \cD_{({\wp}_{(k\tau)}\fr_\mu),\vr}
\sqcup \cE_{({\wp}_{(k\tau)}\fr_\mu)}.$$
 This way,  we have equipped 
  the scheme  $\tsR_{({\wp}_{(k\tau)}\fr_\mu)}$ 
  with the set $\cD_{({\wp}_{(k\tau)}\fr_\mu),\vp}$ of $\vp$-divisors, 
 the set $\cD_{({\wp}_{(k\tau)}\fr_\mu),\vr}$ of $\vp$-divisors, and the set 
 $\cE_{{\wp}_{(k\tau)}\fr_\mu}$ of exceptional divisors. 


Now, we are ready to introduce the notion of $``$association$"$ in  $({\wp}_{(k\tau)}\fr_\mu)$,
as required to carry on the process of induction. 

We do it inductively  on  the set $ [ \si_{(k\tau)\mu}]$. 



\begin{defn} \label{association-vs} 
Fix any $B \in \cB^\q \cup \cB^\mn$. We let $T_B$ be any fixed term of the binomial $B$.
Meanwhile, we also consider any $\bF \in \sfm$ and let $T_s$ be the term of $L_F$ corresponding to
any fixed $s \in S_F$.

We assume that the notion of $``$association$"$ in  
$({\wp}_{(k\tau)}\fr_\mu\fs_{h-1})$ has been introduced. That is, for every divisor 
$Y' \in \cD_{({\wp}_{(k\tau)}\fr_\mu\fs_{h-1})}$, the multiplicities
$m_{Y', T_B}$ and $m_{Y',s}$ have been defined.

Consider an arbitrary divisor $Y \in \cD_{({\wp}_{(k\tau)}\fr_\mu\fs_h)}$.



First, suppose $Y \ne E_{({\wp}_{(k\tau)}\fr_\mu\fs_h)}$.
Then, it is the proper transform of a (unique) divisor $Y' \in \cD_{({\wp}_{(k\tau)}\fr_\mu\fs_{h-1})}$.
In this case, we set 
$$m_{Y, T_B}=m_{Y', T_B},\;\;\; m_{Y,s}=m_{Y',s}.$$

Next, we consider the exceptional $Y=E_{({\wp}_{(k\tau)}\fr_\mu\fs_h)}$.

We let $\phi= \phi_{(k\tau)\mu h}$. 
We have that 
$$\phi=\{ Y^+,  Y^-  \} \subset \cD_{(\wp_{(k\tau)}\fr_{\mu-1})}.$$

For any $B \in \cB^\mn \cup \cB^\q$, we write $B=T_B^0-T_B^1$. We let
$$m_{\phi, T_B^i}=m_{Y^+, T_B^i}+ m_{Y^-, T_B^i}, \; i=0,1,$$ 
 $$l_{\phi, B}=\min \{m_{\phi, T_B^0}, m_{\phi, T_B^1}\}.$$ 
 (For instance, by definition, $l_{\phi, B}>0$ when $B=B_{(k\tau)}$. In general, it can be zero.)
 Then, we let
  $$m_{E_{({\wp}_{(k\tau)}\fr_\mu\fs_h)},T_B^i}= m_{\phi, T_B^i}-  l_{\phi, T_B^i}.$$

Likewise, 
for $s \in S_F$ with $F\in \sfm$,
we let
$$m_{E_{(\wp_{(k\tau)}\fr_\mu\fs_h)},s}=m_{Y^+, s} +  m_{Y^-, s}.$$

We say $Y$ is associated with $T_B$ (resp. $T_s$)  if its multiplicity $m_{Y, T_B}$ 
(resp. $m_{Y, s}$) is positive.
We do not say $Y$ is associated with $T_B$ (resp. $T_s$)   
if its multiplicity $m_{Y,T_B}$ (resp. $m_{Y, s}$) equals to zero.
\end{defn}

When $h = \si_{(k\tau)\mu}$, we obtain all the desired data on 
$\tsR_{({\wp}_{(k\tau)}\fr_\mu)}=\tsR_{({\wp}_{(k\tau)}\fr_\mu)\fs_{\si_{(k\tau)\mu}}}$.

Now, with all the aforedescribed data equipped for $\tsR_{({\wp}_{(k\tau)}\fr_\mu)}$, 
we obtain our inductive package in 
 $({\wp}_{(k\tau)}\fr_\mu)$. This allows us to introduce the ${\wp}$-sets 
 $\Phi_{\wp_{(k\tau)}\fr_{\mu+1}}$
 and ${\wp}$-centers $\cZ_{\wp_{(k\tau)}\fr_{\mu+1}}$
 in  $({\wp}_{(k\tau)}\fr_{\mu+1})$ as in Definition \ref{wp-sets-kmu}, 
 endow total orders on $\Phi_{\wp_{(k\tau)}\fr_{\mu+1}}$
 and  $\cZ_{\wp_{(k\tau)}\fr_{\mu+1}}$ as in the paragraph immediately following
 Definition \ref{order-Om-kmu}, 
 and then  advance to the next 
round of  the $\wp$-blowups.  Here, 
 to reconcile notations, we set
 $$({\wp}_{(k\tau)}\fr_{\rho_{(k\tau)+1}}):=({\wp}_{((k(\tau +1))}\fr_{1}), \;\; 1 \le \tau <\ft_{F_k};$$
$$({\wp}_{(k\ft_{F_k})}\fr_{\rho_{(k\ft_{F_k})+1}}):=({\wp}_{((k+1)1)}\fr_{1}), \;\;  1 \le k < \up,$$
provided that $\rho_{(k\tau)}$ and $\rho_{(k\ft_{F_k})}$ are (proved to be) finite.

Given any  $({\wp}_{(k\tau)}\fr_\mu \fs_h)$, the ${\wp}$-blowup in (${\wp}_{(k\tau)}\fr_\mu\fs_h$)
gives rise to \begin{equation}\label{tsv-ktauh} \xymatrix{
\tsV_{({\wp}_{(k\tau)}\fr_\mu\fs_h)} \ar[d] \ar @{^{(}->}[r]  & \tsR_{({\wp}_{(k\tau)}\fr_\mu\fs_h)} \ar[d] \\
\sV \ar @{^{(}->}[r]  & \sR_\sF,
}
\end{equation}
where $\tsV_{({\wp}_{(k\tau)}\fr_\mu\fs_h)} $ is the proper transform of $\sV$ 
in  $\tsR_{({\wp}_{(k\tau)}\fr_\mu\fs_h)}$. 

We let $\tsV_{({\wp}_{(k\tau)}\fr_\mu)}=\tsV_{({\wp}_{(k\tau)}\fr_\mu\fs_{\si_{(k\tau)\mu}})}$.

\begin{defn}\label{defn:rhoktau}
Fix any $k \in [\up], \tau \in [\ft_{F_k}]$. Suppose there exists a finite integer $\mu$ such that
for any pre-$\wp$-set $\phi$  
in $({\wp}_{(k\tau)}\fr_{\mu +1})$ (cf. Definition \ref{wp-sets-kmu}), 
we have 
$$Z_\phi  \cap \tsV_{({\wp}_{(k\tau)}\fr_\mu)} = \emptyset.$$
Then, we let $\rho_{(k\tau)}$ be the smallest integer such that the above holds.
Otherwise, we let $\rho_{(k\tau)}=\infty$.
\end{defn}
{\it It will be shown soon 
that $\rho_{(k\tau)}$ is finite for all  $k \in [\up]$,
$\tau \in [\ft_{F_k}]$.}

For later use, we let
\begin{equation}\label{indexing-Phi}
\Phi=\{ \phi_{(k\tau)\mu h} \mid k \in [\up], \tau \in [\ft_{F_k}],
1 \le \mu \le \rho_{(k\tau)},
 h \in [\si_{(k\tau)\mu}]\},  \end{equation}
$$\Index_\Phi=\{ (k\tau)\mu h \mid k \in [\up], \tau \in [\ft_{F_k}], 1 \le \mu \le \rho_{(k\tau)},
 h \in [\si_{(k\tau)\mu}]\}. $$


Upon proving that $\rho_{(k\tau)}$ is finite  for all $(k\tau) \in \Index_{\cB^\mn}$,
 we can summarize the process
of ${\wp}$-blowups as a single sequence of blowup morphisms:
\begin{equation}\label{grand-sequence-wp}
\tsR_{\wp} \lra \cdots \lra 
\tsR_{({\wp}_{(k\tau)}\fr_\mu\fs_h)} \lra \tsR_{({\wp}_{(k\tau)}\fr_\mu\fs_{h-1})}
\lra \cdots \lra \tsR_{({\wp}_{(11)}\fr_0)}:=\tsR_{\vt},\end{equation}
where $\tsR_{\wp}:=\tsR_{({\wp}_{(\up \ft_{\up})}\fr_{\rho_{\up \ft_{\up}}})} 
:=\tsR_{({\wp}_{(\up \ft_{\up})}\fr_{\rho_{\up \ft_{\up}}}\fs_{\si_{(\up \ft_\up)\rho_{\up \ft_\up}}})} $
is the  blowup scheme reached in  the final step 
$(\wp_{(\up \ft_{\up})}\fr_{\rho_{\up \ft_{\up}}}\fs_{\si_{(\up \ft_\up)\rho_{\up \ft_\up}}})$ of all $\wp$-blowups.

Further, the end of all ${\wp}$-blowups gives rise to the following induced diagram 
\begin{equation}\label{tsv-final} \xymatrix{
\tsV_\wp \ar[d] \ar @{^{(}->}[r]  & \tsR_\wp \ar[d] \\
\sV \ar @{^{(}->}[r]  & \sR_\sF,
}
\end{equation}
where $\tsV_\wp$ is the proper transform of $\sV$  in  $\tsR_\wp$.

\subsection{Properties of $\wp$-blowups}\label{subs:prop-vs-blowups} $\ $

Recall that we have set $\tsR_{({\wp}_{(11)}\fr_1\fs_{0})}:=\tsR_{\vt}:=\tsR_{\vt_{[\up]}}$.

Now, fix and consider any $(k\tau)\mu h \in \Index_\Phi$ (cf. \eqref{indexing-Phi}).

\begin{prop}\label{meaning-of-var-vskmuh} 
Suppose that  the scheme $\tsR_{({\wp}_{(k\tau)}\fr_\mu\fs_h)}$ has been constructed,
covered by a finite set of open subsets, called standard charts (see Definition \ref{general-standard-chart}).

Consider any standard chart $\fV$ of $\tsR_{({\wp}_{(k\tau)}\fr_\mu\fs_h)}$, 
 lying over a unique chart $ \fV_{[0]}$ of $\sR_\sF$.
 We suppose that the chart $ \fV_{[0]}$ is indexed by
$\La_\sfm^o=\{(\uv_{s_{F,o}},\uv_{s_{F,o}}) \mid \bF \in \sfm \}.$  As earlier, we have $\La_\sfm^\star=\La_\sfm \- \La_\sfm^o$.

Then, the chart $\fV$ comes equipped with 
$$\hbox{a subset}\;\; \fe_\fV  \subset \II_{d,n} \- \um \;\;
 \hbox{and a subset} \;\; \fd_\fV  \subset \La_{\sfm}^\star$$
such that every exceptional divisor $E$ 
(i.e., not a $\vp$- nor a $\vr$-divisor)  of $\tsR_{({\wp}_{(k\tau)}\fr_\mu\fs_h)}$
with $E \cap \fV \ne \emptyset$ is 
either labeled by a unique element $\uw \in \fe_\fV$
or labeled by a unique element $(\uu,\uv) \in \fd_\fV$. 
We let $E_{({\wp}_{(k\tau)}\fr_\mu\fs_h), \uw}$ be the unique exceptional divisor 
on the chart $\fV$ labeled by $\uw \in \fe_\fV$; we call it an $\vp$-exceptional divisor.
We let $E_{({\wp}_{(k\tau)}\fr_\mu\fs_h), (\uu,\uv)}$ be the unique exceptional divisor 
on the chart $\fV$ labeled by $(\uu,\uv) \in \fd_\fV$;  we call it an $\vr$-exceptional divisor.
(We note here that being $\vp$-exceptional or $\vr$-exceptional is strictly relative to the given
standard chart.)

Further, the chart $\fV$ comes equipped with 
 a set of free variables
\begin{equation}\label{variables-p-vskmu} 
\var_{\fV}:=\left\{ \begin{array}{ccccccc}
\ve_{\fV, \uw} , \;\; \de_{\fV, (\uu,\uv) }\\
x_{\fV, \uw} , \;\; x_{\fV, (\uu,\uv)}
\end{array}
  \; \Bigg| \;
\begin{array}{ccccc}
 \uw \in  \fe_\fV,  \;\; (\uu,\uv)  \in \fd_\fV  \\ 
\uw \in  \II_{d,n} \- \um \- \fe_\fV,  \;\; (\uu, \uv) \in \La_\sfm^\star \-  \fd_\fV  \\
\end{array} \right \},
\end{equation}
such that it is canonically  isomorphic to the affine space with the variables in
\eqref{variables-p-vskmu} as its coordinate variables. Moreover, on the standard chart $\fV$, we have
\begin{enumerate}
\item the divisor  $X_{({\wp}_{(k\tau)}\fr_\mu\fs_h), \uw}\cap \fV$ 
 is defined by $(x_{\fV,\uw}=0)$ for every 
$\uw \in \II_{d,n} \setminus \um \- \fe_\fV$;
\item the divisor  $X_{({\wp}_{(k\tau)}\fr_\mu\fs_h), (\uu,\uv)}\cap \fV$ is defined by $(x_{\fV,(\uu,\uv)}=0)$ for every 
$(\uu,\uv) \in \La^\star_\sfm\- \fd_\fV$;
\item the divisor  $X_{({\wp}_{(k\tau)}\fr_\mu\fs_h), \uw}$ does not intersect the chart for all $\uw \in \fe_\fV$;
\item the divisor  $X_{({\wp}_{(k\tau)}\fr_\mu\fs_h), (\uu, \uv)}$ does not intersect the chart for all $ (\uu, \uv) \in \fd_\fV$;
\item the $\vp$-exceptional divisor 
$E_{({\wp}_{(k\tau)}\fr_\mu\fs_h), \uw} \;\! \cap  \fV$  labeled by an element $\uw \in \fe_\fV$
is define by  $(\ve_{\fV,  \uw}=0)$ 
for all $ \uw \in \fe_\fV$;
\item the $\vr$-exceptional divisor 
$E_{({\wp}_{(k\tau)}\fr_\mu\fs_h),  (\uu, \uv)}\cap \fV$ labeled by  an element $(\uu, \uv) \in \fd_\fV$
is define by  $(\de_{\fV,  (\uu, \uv)}=0)$ 
for all $ (\uu, \uv) \in \fd_\fV$;
\item any of the remaining exceptional divisor
of $\tsR_{({\wp}_{(k\tau)}\fr_\mu\fs_h)}$
other than those that are labelled by some  some $\uw \in \fe_\fV$ or $(\uu,\uv) \in \fd_\fV$ 
 does not intersect the chart.
\end{enumerate}
\end{prop}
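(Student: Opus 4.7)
The plan is to prove the proposition by induction on the lexicographically ordered index $(k\tau)\mu h \in \Index_\Phi$, mirroring the structure of the proof of Proposition \ref{meaning-of-var-vtk}. The base case is $(k\tau)\mu h = (11)0\si_{(11)0}$, which corresponds to $\tsR_{\vt}=\tsR_{\vt_{[\up]}}$; here Proposition \ref{meaning-of-var-vtk} supplies the sets $\fe_\fV, \fd_\fV$, the coordinate variables $\var_\fV$, and exactly the seven properties required.

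For the inductive step, I would fix a standard chart $\fV$ of $\tsR_{({\wp}_{(k\tau)}\fr_\mu\fs_h)}$ lying over a unique standard chart $\fV'$ of $\tsR_{({\wp}_{(k\tau)}\fr_\mu\fs_{h-1})}$, and apply the inductive hypothesis to $\fV'$, obtaining $\fe_{\fV'}, \fd_{\fV'}, \var_{\fV'}$ and properties (1)--(7). Write $\phi_{(k\tau)\mu h} = \{Y^+, Y^-\}$. If the proper transform of $Z_{\phi_{(k\tau)\mu h}} = Y^+ \cap Y^-$ fails to meet $\fV'$, then $\fV \to \fV'$ is an isomorphism and we transport all data unchanged. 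Otherwise, by inductive properties (1),(2),(5),(6), each of $Y^\pm \cap \fV'$ is cut out in $\fV'$ by a single distinguished variable $z^\pm \in \var_{\fV'}$ (one of $x_{\fV', \uw}$, $x_{\fV',(\uu,\uv)}$, $\ve_{\fV', \uw}$, or $\de_{\fV', (\uu,\uv)}$), so the center is a smooth coordinate subscheme of codimension two. The blowup embeds $\tsR_{({\wp}_{(k\tau)}\fr_\mu\fs_h)}$ into $\tsR_{({\wp}_{(k\tau)}\fr_\mu\fs_{h-1})} \times \PP^1_{[\xi^+,\xi^-]}$, and one then invokes Proposition \ref{generalmeaning-of-variables} on each of the two standard charts $\fV$ corresponding to $\xi^+\equiv 1$ and $\xi^-\equiv 1$: in the former the exceptional parameter of $E_{({\wp}_{(k\tau)}\fr_\mu\fs_h)}$ replaces $z^+$ while $\xi^-$ becomes the proper transform of $z^-$, and symmetrically in the latter.

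The update of $\fe_\fV$ and $\fd_\fV$ is determined by the type of the $Y^\pm$ that is swallowed into the new exceptional parameter. Suppose we are on the chart where $\xi^+ \equiv 1$. If $Y^+ = X_{({\wp}_{(k\tau)}\fr_\mu\fs_{h-1}),\uw}$ is a $\vp$-divisor, we set $\fe_\fV = \fe_{\fV'} \sqcup \{\uw\}$, $\fd_\fV = \fd_{\fV'}$, rename $z^+$ as $\ve_{\fV,\uw}$, and label $E_{({\wp}_{(k\tau)}\fr_\mu\fs_h)}$ by $\uw$; if $Y^+$ is a $\vr$-divisor $X_{({\wp}_{(k\tau)}\fr_\mu\fs_{h-1}),(\uu,\uv)}$, we instead enlarge $\fd_{\fV'}$ by $(\uu,\uv)$, rename $z^+$ as $\de_{\fV,(\uu,\uv)}$, and label the new exceptional by $(\uu,\uv)$; and if $Y^+$ is itself an exceptional divisor already indexed by some $\uw \in \fe_{\fV'}$ (resp.\ $(\uu,\uv) \in \fd_{\fV'}$), we retain the same index (now attached to the new exceptional) and leave $\fe_{\fV'},\fd_{\fV'}$ unchanged. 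The chart with $\xi^-\equiv 1$ is handled by swapping the roles of $Y^+$ and $Y^-$. Properties (1)--(4) then follow because every coordinate divisor on $\fV'$ that survives to $\fV$ remains cut out by a single coordinate variable (either unchanged or relabelled), while those that have been absorbed into the new exceptional parameter are precisely the ones added to $\fe_\fV \sqcup \fd_\fV$; properties (5)--(7) follow because the single new exceptional divisor is cut out by the unique new exceptional parameter, and any other exceptional divisor of $\tsR_{({\wp}_{(k\tau)}\fr_\mu\fs_{h-1})}$ that did not meet $\fV'$ still does not meet $\fV$.

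The conceptual content is minimal once Proposition \ref{generalmeaning-of-variables} is in hand; the main obstacle is purely combinatorial, namely the six-fold case split for the pair of types of $Y^+, Y^-$ (each being $\vp$, $\vr$, or exceptional) together with the need to ensure that the bijection between the new labels $\fe_\fV \sqcup \fd_\fV$ and the exceptional divisors actually meeting $\fV$ is preserved under the inductive step. This bookkeeping is manageable precisely because, by the construction of $\wp$-sets in Definition \ref{wp-sets-kmu}, $Y^\pm$ avoids the excluded $\vr$-divisors $X_{\cdot,(\uu_s,\uv_s)}$ and $X_{\cdot,(\um,\uu_k)}$, so no conflict arises with the exclusions recorded in Corollary \ref{no-(um,uu)} and the $\vp$-standard-chart case of Proposition \ref{meaning-of-var-vtk}.
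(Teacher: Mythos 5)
Your proposal is essentially correct and follows the same inductive strategy as the paper: induct on $(k\tau)\mu h$, invoke Proposition \ref{generalmeaning-of-variables} at each blowup step, and update $\fe_\fV,\fd_\fV$ according to the type of the divisor whose local equation becomes the new exceptional parameter.

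One imprecision worth flagging: you treat the $\vr$-divisor type as a live possibility for $Y^\pm$ (hence the "six-fold case split" and the case "if $Y^+$ is a $\vr$-divisor $\dots$ enlarge $\fd_{\fV'}$"). But by Definition \ref{wp-sets-kmu} and the association multiplicities from the initial setup, $\vr$-divisors are entirely absent from $\wp$-sets: the only $\vr$-divisor associated with $T^+_{(k\tau)}$ is $X_{\cdot,(\uu_s,\uv_s)}$, which is explicitly excluded, and the only one associated with $T^-_{(k\tau)}$ is $X_{\cdot,(\um,\uu_k)}$, which is also excluded (and in fact has multiplicity zero from the start, by Corollary \ref{no-(um,uu)}). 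Consequently the paper's proof of this proposition distinguishes only \emph{three} cases for $y'_i$, namely a $\vp$-variable, a $\vp$-exceptional parameter, or a $\vr$-exceptional parameter; your extra case is vacuous. (The four-fold case split you have in mind is the one needed for Proposition \ref{meaning-of-var-p-k} in the $\eth$-blowup stage, where $Y^+$ is forced to be the $\vr$-divisor $X_{\cdot,(\uu_s,\uv_s)}$.) This overinclusion does not invalidate the argument — an impossible case causes no harm — but being precise about it would simplify the bookkeeping you rightly identify as the main burden, and it is exactly the reason $\fd_\fV$ can only grow via $\eth$-blowups, never via $\wp$-blowups.
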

\begin{proof} 
 We prove by induction on $(k\tau)\mu h \in \{(11)10 \} \sqcup \Index_\Phi$.

For the initial case, the scheme is $\tsR_{({\wp}_{(11)}\fr_1\fs_0)}=\tsR_{\vt}=\tsR_{\vt_{[\up]}}$. 
Then, this proposition is the same as Proposition \ref{meaning-of-var-vtk} with $k=\up$.
Thus, it holds.

We suppose that the statement holds over $\tsR_{({\wp}_{(k\tau)}\fr_\mu\fs_{h-1})}$
with $(k\tau)\mu h \in  \Index_\Phi$.

We now consider $\tsR_{({\wp}_{(k\tau)}\fr_\mu\fs_h)}$. We have the embedding
$$\xymatrix{
\tsR_{({\wp}_{(k\tau)}\fr_\mu\fs_h)}\ar @{^{(}->}[r]  &
 \tsR_{({\wp}_{(k\tau)}\fr_\mu\fs_{h-1})} \times \PP_{\phi_{(k\tau)\mu h}},  }
$$ where $\PP_{\phi_{(k\tau)\mu h}}$ is the factor projective space. 
We let $\phi'_{(k\tau)\mu h}=\{Y'_0, Y'_1\}$ 
be the proper transforms in $\tsR_{({\wp}_{(k\tau)}\fr_\mu\fs_{h-1})}$
of the two  divisors of the ${\wp}$-set $\phi_{(k\tau)\mu h}=\{Y^+,Y^-\}$
with $Y^\pm$ being associated with $T^\pm_{(k\tau)}$.
In addition, we let $[\xi_0, \xi_1]$ 
 be the  homogenous coordinates of $\PP_{\phi_{(k\tau)\mu h}}$
 corresponding to $\{Y'_0, Y'_1\}$.

Then, the given standard chart $\fV$ of $\tsR_{({\wp}_{(k\tau)}\fr_\mu\fs_h)}$ lies over a
unique standard chart $\fV'$ of  $\tsR_{({\wp}_{(k\tau)}\fr_\mu\fs_{h-1})}$ such that 
$\fV=(\fV' \times (\xi_i \equiv 1)) \cap \tsR_{({\wp}_{(k\tau)}\fr_\mu\fs_h)},$ for $i=0$ or 1. 

By assumption, the chart $\fV'$ comes equipped with
a subset $\fe_{\fV'} \subset \II_{d,n}\- \um$, a subset  $\fd_{\fV'} \subset \La^\star_\sfm$,
and admits a set of coordinate variables
\begin{equation}\label{variables-p-(k-1)} 
\var_{\fV'}:=\left\{ \begin{array}{ccccccc}
\ve_{\fV', \uw} , \;\; \de_{\fV', (\uu,\uv) }\\
x_{\fV', \uw} , \;\; x_{\fV', (\uu,\uv)}
\end{array}
  \; \Bigg| \;
\begin{array}{ccccc}
 \uw \in  \fe_{\fV'},  \;\; (\uu,\uv)  \in \fd_{\fV'}  \\ 
\uw \in  \II_{d,n} \- \um \- \fe_{\fV'},  \;\; (\uu, \uv) \in \La_\sfm^\star \-  \fd_{\fV'}  \\
\end{array} \right \},
\end{equation}
verifying the properties (1)-(7) as in the proposition.

We now prove the statements for the chart $\fV$ of $\tsR_{({\wp}_{(k\tau)}\fr_\mu\fs_h)}$.

First, we suppose that the proper transform $Z'_{\psi_{(k\tau) \mu h}}$ 
in $\tsR_{({\wp}_{(k\tau)}\fr_\mu\fs_{h-1})}$
of the $\wp$-center $Z_{\psi_{(k\tau) \mu h}}$  does not
meet the chart $\fV'$.  
Then, we let $\fV$ inherit all the data from those of $\fV'$, that is,
we set $\fe_{\fV}=\fe_{\fV'}$, $\fd_{\fV}=\fd_{\fV'}$, and $\var_\fV= \var_{\fV'}$:
changing the subindex $``\ \fV' \ "$ for all the variables in $\var_{\fV'}$
to $``\ \fV \ "$.
As the $\wp$-blowup  along the proper transform of $Z_{({\wp}_{(k\tau)}\fr_\mu\fs_h)}$
does not affect the chart $\fV'$, one sees that 
the statements of the proposition hold for $\fV$.

Next, we suppose that 
$Z'_{\psi_{(k\tau) \mu h}}$
meets the chart $\fV'$ along a nonempty closed subset. 
On the chart $\fV'$, by the inductive assumption, we can suppose
\begin{equation}\label{YY01-wp}
Y'_0 \cap \fV' =(y'_0 =0), \; Y'_1 \cap \fV' =(y'_1 =0), 
 \;\; \hbox{for some $y'_0, y'_1 \in \var_{\fV'}$.} 
 \end{equation} 
Then, the chart  $\fV=(\fV' \times (\xi_i \equiv 1)) \cap \tsR_{({\wp}_{(k\tau)}\fr_\mu\fs_h)}$ 
of the scheme $\tsR_{({\wp}_{(k\tau)}\fr_\mu\fs_{h})}$, as a closed subscheme of  
$\fV' \times (\xi_i \equiv 1),$ is defined by
\begin{equation}\label{proof:sf-fv-vskmu}
y'_j = y'_i \xi_j, \;  \hbox{ with $j  \in \{0, 1\} \- i$}.             
\end{equation}

There are three possibilities for
$Y'_i \cap \fV'$ according to the assumption on the chart $\fV'$.
Based on every of such possibilities, 
we set 
\begin{equation}\label{proof:de-fv-vskmu}
\left\{ 
\begin{array}{lccr}
\fe_{\fV}=\fe_{\fV'} \sqcup \uw, \; \fd_{\fV}= \fd_{\fV'},
&  \hbox{if} \; y'_i=x_{\fV', \uw}\; \hbox{for some} \;\; \uw \in \II_{d,n}\-\um \- \fe_{\fV'}\\
\fe_{\fV}=\fe_{\fV'}, \; \fd_{\fV}= \fd_{\fV'} , &\hbox{if} \;    y'_i=\ve_{\fV', \uw} \; \hbox{for some} \;\; \uw \in  \fe_{\fV'}\\
\fd_{\fV}=\fd_{\fV'}, \; \fe_{\fV}= \fe_{\fV'}, & \hbox{if} \; 
y'_i=\de_{\fV', (\uu,\uv)}\; \hbox{for some}  \;\; (\uu,\uv) \in  \fd_{\fV'}  .
\end{array} \right.
\end{equation}
Accordingly, we introduce 
\begin{equation}\label{proof:new-ex-fv-vskmu}
\left\{ 
\begin{array}{lccr}
\ve_{\fV, \uw}=y'_i, \; x_{\fV, \uw} =1,
&  \hbox{if} \; y'_i=x_{\fV', \uw}\; \hbox{for some} \;\; \uw \in \fe_\fV \- \fe_{\fV'}\\
\ve_{\fV, \uw}=y'_i  , &\hbox{if} \;    y'_i=\ve_{\fV', \uw} \; \hbox{for some} \;\; \uw \in  \fe_{\fV'}=\fe_{\fV}\\
\de_{\fV, (\uu,\uv)}=y'_i, & \hbox{if} \; 
y'_i=\de_{\fV', (\uu,\uv)}\; \hbox{for some}  \;\; (\uu,\uv) \in  \fd_{\fV'}=\fd_\fV  .
\end{array} \right.
\end{equation}

To introduce the set $\var_\fV$, for $j  \in \{0, 1\} \- i$,
we then set
\begin{equation}\label{proof:var-xi-fv-vskmu}
\left\{ 
\begin{array}{lcr}
x_{\fV,\ua}=\xi_j, &   \hbox{if $y'_j=x_{\fV', \ua} $}\\
\ve_{\fV, \ua}=\xi_j, &  \hbox{if $y'_j= \ve_{\fV', \ua}$} \\
\de_{\fV, (\ua, \ub)}= \xi_j, & \;\;\;\; \hbox{if $y'_j= \de_{\fV', (\ua, \ub)}$}.
\end{array} \right.
\end{equation}
Thus, we have introduced $y'_i,  \xi_j \in \var_\fV$ where $y'_i$ (respectively,  $\xi_j$)
is endowed with its new name as in \eqref{proof:new-ex-fv-vskmu}, respectively, in 
\eqref{proof:var-xi-fv-vskmu}.
Next, we define the set 
$\var_\fV \- \{y'_i, \xi_j\}$ 
to consist of the following variables:
\begin{equation}\label{proof:var-fv-vskmu}
\left\{ 
\begin{array}{lccr}
x_{\fV,\uw}=x_{\fV', \uw}, &  \forall \;\; \uw \in \II_{d,n}\-\um \- \fe_\fV \;\; \hbox{and $x_{\fV', \uw} \ne y'_j$}\\
x_{\fV, (\uu, \uv)}=x_{\fV', (\uu, \uv)}, & \;\; \forall \;\; (\uu, \uv) \in \La_\sfm^\star \- \fd_{\fV}  \\
\ve_{\fV, \uw}= \ve_{\fV', \uw}, & \forall \;\; \uw \in \fe_\fV \;\; \hbox{and $\ve_{\fV', \uw} \ne y'_j$},  y'_i \\
\de_{\fV, (\uu, \uv)}= \de_{\fV', (\uu, \uv)}, &  \;\; \forall \;\; (\uu, \uv) \in \fd_{\fV} \;\; \hbox{and $\de_{\fV', (\uu, \uv)} \ne y'_j$}, y'_i.
\end{array} \right.
\end{equation}

Substituting \eqref{proof:sf-fv-vskmu}, one checks that the chart $\fV$ is  
 isomorphic to the affine space with the variables in  \eqref{proof:new-ex-fv-vskmu},
 \eqref{proof:var-xi-fv-vskmu}, and \eqref{proof:var-fv-vskmu},  
   as its coordinate variables (cf. Proposition \ref{generalmeaning-of-variables}). 
   Putting all together, the above matches
   description of $\var_\fV$ in  \eqref{variables-p-vskmu},
 
 Now, it remains to verity (1)-(7) of the proposition on the chart $\fV$.

First, consider  the unique new exceptional divisor $E_{({\wp}_{(k\tau)}\fr_\mu\fs_h)}$
created by the blowup
$$\tsR_{({\wp}_{(k\tau)}\fr_\mu\fs_h)} \lra \tsR_{({\wp}_{(k\tau)}\fr_\mu\fs_{h-1})}. $$ 
Then, we have
$$ E_{({\wp}_{(k\tau)}\fr_\mu\fs_h)} \cap \fV = (y'_i=0)$$
where $y'_i$ is renamed as in \eqref{proof:new-ex-fv-vskmu}
according to the three possibilities of its form in $\var_{\fV'}$.
This way, the new exceptional divisor $E_{({\wp}_{(k\tau)}\fr_\mu\fs_h)}$ is labelled on the chart $\fV$.
In any case of the three situations, we have that the proper transform in $\tsR_{({\wp}_{(k\tau)}\fr_\mu\fs_h)}$ 
of $Y'_i$ does not meet the chart $\fV$, and
if $Y'_i$ is a $\vp$-exceptional divisor labeled by some element of $\fe_{\fV'}$
(resp, $\vr$-exceptional divisor labeled by some element of $\fd_{\fV'}$),
 then, on the chart $\fV$, its proper transform is no longer labelled by
any element of $\II_{d,n} \-\um$ (resp. $\La_\sfm^\star$).
 This verifies any of (3)-(7) whenever the statement therein involves the newly created exceptional divisor 
 $E_{({\wp}_{(k\tau)}\fr_\mu\fs_h)}$ and/or its corresponding exceptional variable $y'_i$ 
 (which is renamed in \eqref{proof:new-ex-fv-vskmu} to match that of 
 \eqref{variables-p-vskmu} in the statement of
 the proposition).
 Observe that the statements (1) and (2) are not related to the new exceptional divisor 
 $E_{({\wp}_{(k\tau)}\fr_\mu\fs_h)}$ or its corresponding exceptional variable $y'_i$ (again, renamed in \eqref{proof:new-ex-fv-vskmu}).
  
For any of the remaining $\vp$-, $\vr$-, and exceptional divisors on $\tsR_{({\wp}_{(k\tau)}\fr_\mu\fs_h)}$, 
it is the proper transform of a unique corresponding
 $\vp$-, $\vr$-, and exceptional divisor on $\tsR_{({\wp}_{(k\tau)}\fr_\mu\fs_{h-1})}$. Hence,
 applying   the inductive assumption on $\fV'$, one verifies directly that
every of the properties (1)-(7) of the proposition is satisfied whenever it applies to such a divisor
and/or its corresponding local variable in $\var_\fV$.
 
 This completes the proof.
\end{proof}

\subsection{Proper transforms of 
defining equations in $({\wp}_{(k\tau)}\fr_\mu\fs_h)$} $\ $

  Consider any fixed $B \in \cB^\mn  \cup \cB^\q$ and $\bF \in \sfm$.
Suppose $B_{\fV'}$ and $L_{\fV', F}$ have been constructed over $\fV'$.
Applying Definition \ref{general-proper-transforms}, we obtain the proper transforms on the chart $\fV$
$$B_{\fV}, \; B \in \cB^\mn  \cup \cB^\q; \;\; L_{\fV, F}, \; \bF \in \sfm.$$

\begin{defn}\label{termninatingB-wp} 
{\rm  (cf. Definition \ref{general-termination})} 
Consider any main binomial relation $B \in \cB^\mn$. 
 We say $B$ terminates on a given standard chart $\fV$
 of the scheme $\tsR_{({\wp}_{(k\tau)}\fr_\mu\fs_h)}$ 
 if $B_{ \fV}$ terminates at all closed points of $ \tsV_{({\wp}_{(k\tau)}\fr_\mu\fs_h)} \cap \fV$.
 We say $B$ terminates on  $\tsR_{({\wp}_{(k\tau)}\fr_\mu\fs_h)}$ if it terminates on
 all standard charts $\fV$ of  $\tsR_{({\wp}_{(k\tau)}\fr_\mu\fs_h)}$.
 \end{defn}

In what follows, for any $B =T^+_B - T^-_B \in \cB^\mn$, we express
$B_\fV= T^+_{\fV, B} - T^-_{\fV, B}$. If $B=B_{(k\tau)}$ for some $k \in [\up]$ and $\tau \in [\ft_{F_k}]$,
we also write $B_\fV= T^+_{\fV, (k\tau)} - T^-_{\fV, (k\tau)}$.

Below, we follow the notations of Proposition \ref{meaning-of-var-vskmuh} as well as those in its proof. 

In particular, we have that $\fV$ is a standard chart of $\tsR_{({\wp}_{(k\tau)}\fr_\mu\fs_h)}$,
 lying over a standard chart $\fV'$ of $\tsR_{({\wp}_{(k\tau)}\fr_\mu\fs_{h-1})}$. We have that
$\phi'_{(k\tau)\mu h}=\{Y'_0, \;Y'_1\}$  is the proper transforms of $\phi_{(k\tau)\mu h}=\{Y^+, Y^-\}$
 in $\tsR_{({\wp}_{(k\tau)}\fr_\mu\fs_{h-1})}$ with $Y^\pm$ being associated with $T^\pm_{(k\tau)}$.
 Likewise, $Z'_{\phi_{(k\tau)\mu h}}$ is the proper transforms of the ${\wp}$-center $Z_{\phi_{(k\tau)\mu h}}$
   in $\tsR_{({\wp}_{(k\tau)}\fr_\mu\fs_{h-1})}$.
 Also, assuming that $Z'_{\phi_{(k\tau)\mu h}} \cap \fV'  \ne \emptyset$,
then, as in \eqref{YY01-wp}, we have
 $$Y'_0 \cap \fV' =(y'_0 =0), \; Y'_1 \cap \fV' =(y'_1 =0), 
 \;\; \hbox{with $y'_0, y'_1 \in \var_{\fV'}$} $$
Further, we have $\PP_{\phi_{(k\tau)\mu h}}=\PP_{[\xi_0,\xi_1]}$ with 
 the homogeneous coordinates  $[\xi_0,\xi_1]$ corresponding to $(y'_0,y'_1)$.

\begin{prop}\label{equas-vskmuh} 
Let the notation be as in Proposition \ref{meaning-of-var-vskmuh} and be as in above.

Let $\fV$ be any standard chart of $\tsR_{(\wp_{(k\tau)}\fr_\mu\fs_h)}$. Then, the scheme 
$\sV_{({\wp}_{(k\tau)}\fr_\mu\fs_h)}\cap \fV$, as a closed subscheme of the chart $\fV$ 
 is defined by $$\cB_\fV^\mn, \; \cB_\fV^\q, \; L_{\fV, \sfm}.$$

Suppose that $Z'_{\phi_{(k\tau)\mu h}} \cap \fV'  \ne \emptyset$ where
$Z'_{\phi_{(k\tau)\mu h}}$ is the proper transform 
  of  the $\wp$-center $Z_{\phi_{(k\tau)\mu h}}$  in $\tsR_{(\wp_{(k\tau)}\fr_\mu\fs_{h-1})}$.
Further, we let $\zeta=\zeta_{\fV, (k\tau)\mu h}$ be the exceptional parameter in $\var_\fV$ such that
$$E_{({\wp}_{(k\tau)}\fr_\mu\fs_h)} \cap \fV = (\zeta=0).$$

Then, we have that the following hold.
\begin{enumerate}
\item Suppose $\fV= (\fV' \times (\xi_0 \equiv 1)) \cap \tsR_{(\eth_{(k\tau)}\fr_\mu\fs_h)}$.
We let $y_1 \in \var_\fV$  be the proper transform of $y_1'$. 
Then, we have
\begin{itemize}
\item[(1a)]  The plus-term 
$T^+_{\fV, (k\tau)}$ is square-free and $\deg (T^+_{\fV, (k\tau)}) =\deg (T^+_{\fV', (k\tau)})-1$.
We let $\deg_{y_1'} T^-_{\fV', (k\tau)}=b$ for some
integer $b$,  positive by definition, then we have $\deg_{\zeta} T^-_{\fV, (k\tau)}=b-1$. Consequently,
either $T^-_{\fV, (k\tau)}$ is linear in $y_1$ or else $\zeta \mid T^-_{\fV,  (k\tau)}$.
\item[(1b)] Let $B \in  \cB^\mn_{F_k}$ 
with $B > B_{(k\tau)}$. Then,
$T^+_{\fV, B}$ is square-free. 
Suppose $y_1 \mid T^-_{\fV, B}$,
 then either $T^-_{\fV, B}$ is linear in $y_1$ or $\zeta \mid T^-_{\fV, B}$.
\item[(1c)] Let $B \in  \cB^\mn_{F_k}$ 
 with $B < B_{(k\tau)}$
or  $B \in   \cB^\mn \- \cB^\mn_{F_k}$. Then,  $T^+_{\fV, B}$ is square-free.
$y_1 \nmid T^+_{\fV, B}$. If $y_1 \mid T^-_{\fV, B}$, then $\zeta \mid T^-_{\fV, B}$.
\item[(1d)] Consider any $B \in \cB^\q$. Then, $B_\fV$ is square-free.
\end{itemize} 
\item Suppose $\fV= (\fV' \times (\xi_1 \equiv 1)) \cap \tsR_{(\eth_{(k\tau)}\fr_\mu\fs_h)}$.
We let $y_0  \in \var_\fV$  be the proper transform of $y_0'$. 
 Then, we have
\begin{itemize}
\item[(2a)]  $T^+_{\fV, (k\tau)}$ is square-free.
 $y_0 \nmid T^-_{\fV, (k\tau)}$.
 $ \deg (T^-_{\fV, (k\tau)}) =\deg (T^-_{\fV', (k\tau)})-1$.
 We let $\deg_{y_1'} T^-_{\fV', (k\tau)}=b$ for some 
integer $b$,  positive by definition, then we have $\deg_{\zeta} T^-_{\fV, (k\tau)}=b-1$. 
Note here that $y_1' \in \var_{\fV'}$ becomes $\zeta \in \var_\fV$.
\item[(2b)] Let $B \in   \cB^\mn_{F_k}$ 
with $B > B_{(k\tau)}$. Then,
$T^+_{\fV, B}$ is square-free.  
Further,  $y_0 \nmid T^-_{\fV, B}$.
\item[(2c)] Let $B \in  \cB^\mn_{F_k}$ 
with $B < B_{(k\tau)}$
or  $B \in  \cB^\mn \- \cB^\mn_{F_k}$. Then,  $T^+_{\fV, B}$ is square-free.
$y_0 \nmid T^+_{\fV, B}$. If $y_0 \mid T^-_{\fV, B}$, then $\zeta \mid T^-_{\fV, B}$.
\item[(2d)] Consider any $B \in \cB^\q$. Then, $B_\fV$ is square-free.
\end{itemize}

Consequently, we have
\item  $\rho_{(k\tau)}< \infty$.
\item Moreover,  consider 
$\tsR_{(\wp_{(k\tau)}\fr_{\rho_{(k\tau)}})}=\tsR_{(\wp_{(k\tau)}\fr_{\rho_{(k\tau)}} \fs_{\si_{(k\tau)\rho_{(k\tau)}}})}$.
Let $\fV$ be an arbitrary standard chart  of $\tsR_{(\wp_{(k\tau)}\fr_{\rho_{(k\tau)}})}$. 
For every $B=B_{(k'\tau')} \in \cB^\mn$ 
 for some $(k'\tau') \in \Index_{\cB^\mn}$ with $(k'\tau') \le (k\tau)$,
 we write   $B_{(k'\tau')}=T_{B}^+ -T_{B}^- =x_{(\uu_t,\uv_t)}x_{\uu_{k'}} -
x_{\uu_t}x_{\uv_t}x_{(\um,\uu_{k'})} $ where $ t \in S_{F_{k'}} \- s_{F_{k'}}$ corresponds
to $\tau'$ and
$x_{\uu_{k'}}$ is the leading variable of $\bF_{k'}$ for some $\uu_{k'} \in \II_{d,n}^\um$.
Then,  $x_{\fV, (\uu_t,\uv_t)} \mid T^+_{\fV, B}$ and $T^+_{\fV, B}/x_{\fV, (\uu_t,\uv_t)}$
is invertible along $\fV \cap  \tsV_{(\wp_{(k\tau)}\fr_{\rho_{(k\tau)}})}$. 
(Recall the convention: $x_{\fV, (\uu,\uv)} = 1$ if  $(\uu,\uv) \in \fd_{\fV}$.) \end{enumerate}      
\end{prop}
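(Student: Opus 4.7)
The plan is to prove Proposition \ref{equas-vskmuh} by induction on $(k\tau)\mu h$ with the lexicographic order on $\Index_\Phi$, where the base case is $\tsR_{(\wp_{(11)}\fr_1\fs_0)} := \tsR_\vt$. For the base case, the defining-equation statement reduces to Corollary \ref{eq-for-sV-vr}; the detailed properties (1a)--(2d) are about the blowup step itself and are vacuous (or trivial) initially, while (3) and (4) will be proved at the appropriate round. For the inductive step, I will invoke the description of the blowup $\tsR_{(\wp_{(k\tau)}\fr_\mu\fs_h)} \to \tsR_{(\wp_{(k\tau)}\fr_\mu\fs_{h-1})}$ together with the local description of the chart $\fV$ established in Proposition \ref{meaning-of-var-vskmuh}. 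Since the blowup is smooth and $\tsV_{(\wp_{(k\tau)}\fr_\mu\fs_h)}$ is by construction the proper transform of $\tsV_{(\wp_{(k\tau)}\fr_\mu\fs_{h-1})}$, the defining equations on $\fV$ are obtained from those on $\fV'$ by taking proper transforms in the sense of Definition \ref{general-proper-transforms}, giving the sets $\cB^\mn_\fV$, $\cB^\q_\fV$, and $L_{\fV,\sfm}$.

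For the term-by-term analysis in (1) and (2), the key input is the definition of the $\wp$-set $\phi_{(k\tau)\mu h} = \{Y^+, Y^-\}$ with $Y^\pm$ associated to $T^\pm_{(k\tau)}$, together with the inductive assumption that $T^+_{\fV', (k\tau)}$ is square-free. For (1a)/(2a), I would write $Y^+ \cap \fV' = (y'_0 = 0)$ and $Y^- \cap \fV' = (y'_1 = 0)$, then compute directly: under the substitution $y'_i \to \zeta$, $y'_j \to \zeta y_j$ and division by $\zeta^{l_{\phi, B_{(k\tau)}}}$ with $l_{\phi, B_{(k\tau)}} \geq 1$, the square-freeness of $T^+_{\fV', (k\tau)}$ forces the factor of $y'_0$ (or $y'_1$) in the plus-term to be simple, so exactly one variable is cancelled and the degree drops by one. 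The computation of $\deg_\zeta T^-_{\fV, (k\tau)}$ is by Lemma \ref{same-degree} together with the definition of $l_{\phi, B}$. For (1b)/(2b) and (1c)/(2c), I would handle other main binomials by case analysis on whether $y'_0, y'_1$ divide $T^\pm_{\fV', B}$; the placement of $B$ relative to $B_{(k\tau)}$ and within $\cB^\mn_{F_k}$ versus $\cB^\mn \setminus \cB^\mn_{F_k}$ controls, via the exclusion of $\vr$-variables from $\wp$-sets (Definition \ref{wp-sets-kmu}), which variable patterns can occur, giving the stated square-free/divisibility properties. For (1d)/(2d), the quotient-type binomials are $\vr$-linear by Proposition \ref{equas-p-k=0} and remain square-free after the proper transform because their $\vr$-linearity caps the multiplicity of any one variable at one.

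For (3), finiteness of $\rho_{(k\tau)}$, I would introduce a nonnegative-integer complexity measure on $(T^+_{\fV, (k\tau)}, T^-_{\fV, (k\tau)})$ --- concretely, the minimum over charts $\fV$ meeting $\tsV$ of $\min(m_{\phi, T^+_{(k\tau)}}, m_{\phi, T^-_{(k\tau)}})$ summed over all pre-$\wp$-sets $\phi$ --- and show, using (1)/(2), that this measure strictly decreases with each round of $\wp$-blowups, because every such blowup either removes a shared zero factor or else produces a new chart on which one of the two terms no longer vanishes. Once the measure reaches zero, every pre-$\wp$-set $\phi$ in the next round satisfies $Z_\phi \cap \tsV = \emptyset$, so $\rho_{(k\tau)}$ is finite.

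For (4), at the terminal round no pre-$\wp$-set of $B = B_{(k'\tau')}$, $(k'\tau') \leq (k\tau)$, meets $\tsV$, so by the definition of $\wp$-set no divisor $Y^+ \ne X_{\wp, (\uu_t, \uv_t)}$ associated to $T^+_B$ and no divisor $Y^-$ associated to $T^-_B$ simultaneously cut out a nonempty intersection with $\tsV$. Tracking the associations through the induction shows that $x_{\fV, (\uu_t, \uv_t)}$ remains a factor of $T^+_{\fV, B}$, while every other potential zero factor of $T^+_{\fV, B}$ has been eliminated by a previous blowup; hence $T^+_{\fV, B}/x_{\fV, (\uu_t, \uv_t)}$ is invertible along $\tsV \cap \fV$. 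The main obstacle I expect is (3): establishing that the combinatorial complexity I use as a monovariant actually decreases under every $\wp$-blowup, given that exceptional parameters acquired from previous blowups keep reintroducing zero factors into later binomials. Making this monovariant robust to the accumulation described in the introduction's discussion of rounds $\fr_\mu$ is the delicate step, and where the careful exclusion of $\vr$-variables from $\wp$-sets will have to be leveraged to prevent an infinite loop.
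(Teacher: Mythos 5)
Your overall inductive structure is correct and matches the paper: induction on $(k\tau)\mu h$ in lexicographic order, base case $\tsR_{(\wp_{(11)}\fr_1\fs_0)}=\tsR_\vt$ via Proposition \ref{eq-for-sV-vtk} (which is what Corollary \ref{eq-for-sV-vr} records), and the defining-equation claim obtained by taking proper transforms. Your treatment of (1a)--(2d) is in the right spirit, though for (1c)/(2c) the paper leans explicitly on statement (4) of this very proposition at an \emph{earlier} index $(k'\tau')<(k\tau)$ (available by the inductive hypothesis) to conclude that neither $y'_0$ nor $y'_1$ can divide $T^+_{\fV',B}$; you should be aware that (4) is used \emph{inside} the induction, not just as a corollary proved at the end.

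The genuine gap is in (3). You propose a monovariant of the form $\sum_\phi \min(m_{\phi,T^+_{(k\tau)}},m_{\phi,T^-_{(k\tau)}})$ over all pre-$\wp$-sets $\phi$. This is not obviously monotone: each $\wp$-blowup introduces a new exceptional divisor $E$ with $m_{E,T^\pm_{(k\tau)}}=m_{\phi,T^\pm_{(k\tau)}}-l_{\phi,B_{(k\tau)}}$ (Definition \ref{association-vs}), and this new $E$ pairs with the surviving old divisors to form \emph{new} pre-$\wp$-sets in round $\mu+1$, so the index set of your sum grows. In fact, the round index $\fr_\mu$ exists precisely because a single round does not clear everything, and I don't see how your sum is controlled under that accumulation. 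The paper instead argues chart-locally, and the key ingredient you are missing is that $T^+_{\fV,(k\tau)}$ is \emph{square-free} on every chart (1a)/(2a), together with the fact that the number of variables on a standard chart is a fixed constant: this bounds $\deg T^+_{\fV,(k\tau)}$ absolutely, and by (1a) each case-(1) step drops it by one, so case (1) can occur only finitely often on the ancestry of any chart. Once case (1) stops, (2a) gives that each case-(2) step drops $\deg T^-_{\fV,(k\tau)}$ by one, and Corollary \ref{no-(um,uu)} (invertibility of $x_{(\um,\uu_k)}$ along $\tsV_\vt$, hence along its proper transforms) guarantees $T^-$ eventually terminates. The square-freeness of $T^+$ and Corollary \ref{no-(um,uu)} are exactly what makes the termination argument rigorous; without them your "complexity measure" has no a priori upper bound and the claimed strict decrease is not justified.

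For (4), your argument that "every other potential zero factor has been eliminated" needs to be anchored to what the terminal round actually produces on a chart: the paper observes that a chart of $\tsR_{(\wp_{(k\tau)}\fr_{\rho_{(k\tau)}})}$ is either the result of case (1), in which case $B_\fV=x_{\fV,(\uu_s,\uv_s)}-T^-_{\fV,(k\tau)}$ so $T^+_{\fV}/x_{\fV,(\uu_s,\uv_s)}=1$ trivially, or the result of case (2), in which case $B_\fV=T^+_{\fV,(k\tau)}-x_{\fV,(\um,\uu_k)}$ and Corollary \ref{no-(um,uu)} gives the invertibility of $T^+_\fV$ along $\tsV\cap\fV$; and for $(k'\tau')<(k\tau)$ the statement is inherited from $\tsR_{(\wp_{(k'\tau')}\fr_{\rho_{(k'\tau')}})}$ by the inductive hypothesis (cf.\ Lemma \ref{termi-carry-over}). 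This dichotomy, rather than a diffuse "tracking of associations," is what makes (4) concrete.
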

\begin{proof} 
We continue to follow  the notation in the proof of Proposition \ref{meaning-of-var-vskmuh}.

We prove the  proposition by applying induction on
$(k\tau) \mu h  \in ((11)1 0) \sqcup \Index_\Phi$.


The initial case is $(11)1 0$ with $\tsR_{({\wp}_{(11)}\fr_1 \fs_0)}=\tsR_\vt$. In this case, 
the  statement about defining equations
 of  $\tsR_{(\eth_{(11)}\fr_1 \fs_0)} \cap \fV$ follows from
Proposition \ref{eq-for-sV-vtk} with $k=\up$; the remainder statements (1) - (4) are void.

Assume that the proposition holds  for $({\wp}_{(k\tau)}\fr_\mu \fs_{h-1})$
with $(k\tau)\mu h  \in \Index_\Phi$.

Consider $({\wp}_{(k\tau)}\fr_\mu \fs_h)$. 

Consider any standard chart $\fV$ of $\tsR_{({\wp}_{(k\tau)}\fr_\mu \fs_{h})}$,
lying over a  standard chart of $\fV'$ of  $\tsR_{{\wp}_{(k\tau)}\fr_\mu \fs_{h-1})}$.
By assumption, all the desired statements of the proposition hold over the chart $\fV'$.

To begin with, we suppose that the proper transform $Z'_{\phi_{(k\tau)\mu h}}$ 
of the ${\wp}$-center $Z_{\phi_{{\wp}_{(k\tau)}\mu h}}$  in $\tsR_{({\wp}_{(k\tau)}\fr_\mu\fs_{h-1})}$
 does not meet the chart $\fV'$.  
Then, by the proof of Proposition \ref{meaning-of-var-vskmuh},
  we have that $\fV$ retains all the data from those of $\fV'$.
As the ${\wp}$-blowup  along the proper transform of $Z_{\phi_{(k\tau)\mu h}}$
does not affect the chart $\fV'$,  we have that
the statement of the proposition about the defining equations 
 follows immediately from the inductive assumption.
 The statements (1) and (2) are void.

In what follows, we suppose that the proper transform $Z'_{\phi_{(k\tau)\mu h}}$ 
meets the chart $\fV'$ along a nonempty closed subset. 

The  statement of the proposition on the defining equations of $\tsV_{(\wp_{(k\tau)}\fr_\mu \fs_{h})} \cap \fV$
follows straightforwardly from the inductive assumption.

 {\it Proof of (1).  } 


(1a).  We may express 
$$B_{(k\tau)}= x_{(\uu_{s_\tau},\uv_{s_\tau})} x_{\uu_k} -x_{\uu_{s_\tau}} x_{\uv_{s_\tau}} x_{(\um,\uu_k)}$$
where  $x_{\uu_k}$ is the leading variable of $\bF_k$, and $s_\tau \in S_{F_k} \- s_{F_k}$ corresponds
to $\tau \in [\ft_{F_k}]$. 
Observe that none of $\vr$-divisors appear in any  $\wp$-set.
Observe in addition that $x_{\uu_k}$ does not appear in any 
$B \in \cB^\mn_{F_j}$ with $j <k$.
Thus, the fact that the plus-term 
$T^+_{\fV, (k\tau)}$ is square-free is immediate if $\tau=1$.             
For a general $\tau \in [\ft_{F_k}]$,  it follows from the inductive assumption on $T^+_{\fV', (k\tau)}$.
The remainder statements follow from straightforward calculations. We omit the obvious details.

(1b). 
We can write $B=B_{(k\tau')}$ with $\tau' \in [\ft_{F_k}]$ and $\tau' > \tau$.
We may express 
$$B=B_{(k\tau')}= x_{(\uu_{s_{\tau'}},\uv_{s_{\tau'}})} x_{\uu_k} -
x_{\uu_{s_{\tau'}}} x_{\uv_{s_{\tau'}}} x_{(\um,\uu_k)}.$$
We have $T^+_B=x_{(\uu_{s_{\tau'}},\uv_{s_{\tau'}})} x_{\uu_k}$ and it
retains this form prior to the $\wp$-blowups with respect to binomials of $\cB_{F_k}$.
(Recall here the convention: $x_{\fV, \uu} = 1$ if  $\uu \in \fe_{\fV}$;
 $x_{\fV, (\uu,\uv)} = 1$ if  $(\uu,\uv) \in \fd_{\fV}$.)
 Note that the $\vr$-variable $x_{(\uu_{s_{\tau'}},\uv_{s_{\tau'}})}$
 does not appear in any $\vt$-set.
 Thus, as none of $\vr$-divisors appear in any  $\wp$-set, we see that
  all possible exceptional variables from the earlier blowups appearing  in $T^+_{\fV',B}$
are acquired (cf. Definition \ref{general-proper-transforms})
through the {\it unique} $\vp$-variable $x_{\uu_k}$ of $T^+_B$.
Note further that $x_{\uu_k}$ does not appear in any 
$B \in \cB^\mn_{F_j}$ with $j <k$.
 From here, one sees directly  that $T^+_{\fV, (k\tau')}$ is square-free.
Now, suppose $y_1 \mid T^-_{\fV, B}$. We can assume $\deg_{y'_1}  (T^-_{\fV', B})=b$ for some 
integer $b>0$. 
Since $B \in \cB_{F_k}$ and $B > B_{(k\tau)}$, then by the above discussion about $T^+_B$,
 we must have $y_0' \mid  T^+_{\fV, B}$.
Thus, $\deg_{y_1}  (T^-_{\fV, B})=b$
and $\deg_{\zeta}  (T^-_{\fV, B})=b-1$.
Hence, in such a case,
either $T^-_{\fV, B}$ is linear in $y_1$, when $b=1$, 
 or $\zeta \mid T^-_{\fV, B}$ when $b>1$.

(1c). Consider $B \in  \cB^\mn_{F_k}$ with $B < B_{(k\tau)}$
or  $B \in   \cB^\mn \- \cB^\mn_{F_k}$.
We can write $B=B_{(k'\tau')}$ for some $(k'\tau') \ne  (k\tau)$ 
and we may suppose $\tau'$ corresponds to some index 
$t \in S_{F_{k'}}$.  We have $T^+_B= x_{(\uu_t, \uv_t)} x_{\uu_{k'}}$.
Assume $(k'\tau')< (k\tau)$. Then,  by the fact that the variable 
$x_{(\uu_t, \uv_t)}$ uniquely appears in $T^+_B$ among all main binomial equations of $\cB^\mn$
and Proposition \ref{equas-vskmuh}  (4) in the case of $(\wp_{(k'\tau')}\fr_{\rho_{(k'\tau')}})$ 
(which holds by the inductive assumption since  $(k'\tau') <(k\tau)$), we 
conclude that neither of the variables $\{y'_0, y'_1\}$
appears in $T^+_{\fV',B}$. Since $T^+_{\fV',B}$ is square-free by the inductive assumption,
this implies all three of the statements of (1c) when $(k'\tau') < (k\tau)$.
Assume $k' > k$. We have $T^+_B= x_{(\uu_t, \uv_t)} x_{\uu_{k'}}$.
Because $x_{(\uu_t, \uv_t)}$ uniquely appears in $T^+_B$ among all main binomial equations and
the leading variable $x_{\uu_{k'}}$ of $\bF_{k'}$ does not appear in any $\bF_i$ with $i <k'$,
 we again conclude that neither of the variables $\{y'_0, y'_1\}$ 
appears in $T^+_{\fV',B}$.
Hence, again, all three of the statements of (1c) hold as well when $k'>k$.

(1d).  Consider any $B \in \cB^\q$ and let $T_B$ be any fixed term of $B$.
 We let $\fV_\vt$ be the unique standard chart of $\tsR_\vt$ such that $\fV$ lies over $\fV_\vt$.
If $T_B$ does not contain any $\vr$-variable of the form $x_{(\um, \uu)}$ with $\uu \in \II^\um_{d,n}$, 
then we immediately have that $T_{\fV, B}$ is square-free because the  $\vr$-variables of $T_B$
never appear in any $\vt$-set or $\wp$-set.  Now assume that 
$x_{(\um, \uu)} \mid T_B$ for some $\uu \in \II^\um_{d,n}$. 
 By  Lemma \ref{ker-phi-k} (2), applied to the variable $p_\um$,
 such a $\vr$-variable is unique.  By the last statement of Proposition \ref{eq-for-sV-vtk} with $k=\up$  
 about $\cB^\q$, either  $\delta_{\fV_\vt, (\um, \uu)} \mid T_{\fV_\vt,B}$ or 
 $\ve_{\fV_\vt, \uu}x_{\fV_\vt, (\um, \uu)} \mid T_{\fV_\vt,B}$. 
 We can write $\uu=\uu_{F_j}$ for some $j \in [\up]$.
Suppose $j <k$. Then, $y_0'$ can not appear in $T_{\fV', B}$.
Suppose $j=k$. Then, $y_1'$ can not appear in $T_{\fV', B}$.
Suppose $j>k$. Then, neither of $\{y'_0, y'_1\}$ can appear in $T_{\fV', B}$.
In any case, one sees that at most one variable  of $\{y'_0, y'_1\}$ 
may appear in $T_{\fV', B}$.  By the inductive assumption, $T_{\fV', B}$ is square-free. 
Hence, $T_{\fV, B}$ is square-free.  

{\it Proof of (2).  }  

(2a).  The proof of the fact that the plus-term 
$T^+_{\fV, (k\tau)}$ is square-free is totally analogous to the corresponding part of (1a). 
The remainder statements follow from straightforward calculations. We omit the obvious details.

(2b). The fact that $T^+_{\fV, (k\tau)}$ is square-free, again, follows straightforwardly, 
and its proof is analogous to the corresponding proof of (1b).
 Since $B \in \cB_{F_k}$ and $B > B_{(k\tau)}$, we must have $y_0' \mid  T^+_{\fV, B}$,
 thus $y_0' \nmid  T^-_{\fV, B}$.
 Hence,  $y_0 \nmid T^-_{\fV, B}$.

(2c).  This follows from the same line of arguments as in (1c).

(2d). This follows from the same line of arguments as in (1d).

{\it Proof of (3).  }  

First, we observe the following.

By the construction of $\wp$-centers, we always have
$x_{\fV, (\uu_{s_\tau},\uv_{s_\tau})} \mid T^+_{\fV, (k\tau)}$.

The number of variables on any standard chart is a constant.

If (1) occurs, we have (1a): $\deg (T^+_{\fV, (k\tau)}) =\deg (T^+_{\fV', (k\tau)})-1$.

If (2) occurs, we have  (2a): $\deg (T^-_{\fV, (k\tau)}) =\deg (T^-_{\fV', (k\tau)})-1$.

Hence, if (1) keeps occurring, then,
as $\mu$ increases, after  finitely many rounds, we will eventually have 
$T^+_{\fV, (k\tau)}/x_{\fV, (\uu_{s_\tau},\uv_{s_\tau})}$ is invertible over
$\fV \cap \tsV_{({\wp}_{(k\tau)}\fr_\mu)}$, for all $\fV$.
If (2) keeps happening,
as $\mu$ increases, after  finitely many rounds, 
 using Corollary \ref{no-(um,uu)}, we see that $T^-_{\fV, (k\tau)}$ must terminate at all points
 of $\fV \cap \tsV_{({\wp}_{(k\tau)}\fr_\mu)}$,
  hence,  $B_{\fV, (k\tau)}$ terminates over $\fV$,
 for all $\fV$.

The above implies that the process of $\wp$-blowups in ($\wp_{(k\tau)}$)
must terminate after finitely many rounds.
That is, $\rho_{(k\tau)} < \infty$.

{\it Proof of (4).  }  

When $B=B_{(k'\tau')} \in \cB^\mn$ with $(k'\tau') <{(k\tau)}$, the  statement
follows from the corresponding statement on $\tsR_{\wp_{(k'\tau')}\fr_{\rho_{(k'\tau')}}}$ by
the inductive assumption in $({\wp}_{(k' \tau')}\fr_{\rho_{(k'\tau')}})$.

Now consider $B=B_{(k\tau)}$. As earlier, we express
$B_{(k\tau)}= x_{(\uu_{s},\uv_{s})} x_{\uu_k} -x_{\uu_{s}} x_{\uv_{s}} x_{(\um,\uu_k)}$ with
$s=s_\tau \in S_{F_k} \- s_{F_k}$ corresponding to $\tau$.
Consider any standard chart $\fV$ of the {\it final} scheme $\tsR_{\wp_{(k\tau)}\fr_{\rho_{(k\tau)}}}$
in ($\wp_{(k\tau)})$.
Suppose that the standard chart $\fV$ is the result after  (1) occurs. Then,
$B_\fV$ must be of the  form
$$x_{\fV, (\uu_s,\uv_s)} - T^-_{\fV, (k\tau)}.$$
Suppose that $\fV$ is the result after  (2) occurs. Then, 
$B_\fV$ must be  of the  form
$$T^+_{\fV, (k\tau)} -x_{\fV, (\um, \uu_k)}.$$
(Recall here the convention: $x_{\fV, (\uu,\uv)} = 1$ if  $(\uu,\uv) \in \fd_{\fV}$.)
The above implies the statements, by applying Corollary \ref{no-(um,uu)} to the last case.

Therefore, by induction on 
$(k\tau)\mu h \in \{(11)10\} \sqcup \Index_\Phi$,   Proposition \ref{equas-vskmuh} is proved.  
\end{proof}

\section{$\eth$-Blowups}\label{barh-blowups} 

{\it In  this section, we finalize the process of $``removing"$ all possible zero factors of
all the main binomial relations, 
building upon the already performed $\vt$- and ${\wp}$-blowups.}


\subsection{The initial setup: $(\eth_{(11)}\fr_0)$} $\ $


Our initial scheme in $(\eth_{(11)}\fr_0)$ is $\tsR_{(\eth_{11}\fr_0)}:=\tsR_{\wp}$.
Recall that the scheme $\tsR_\wp$ comes equipped with three kinds of divisors:


 $\bullet$   the proper transforms $X_{\wp, \uw}$ of $\vp$-divisors $X_\uw$ for all $\uw \in \II_{d,n}\- \um$;

$\bullet$  the proper transforms $X_{\wp, (\uu,\uv)}$
of $\vr$-divisors $X_{(\uu,\uv)}$ for all $(\uu,\uv) \in \La_\sfm$; 

  $\bullet$   the proper transforms $E_{\wp, (11)0 h}$ of $\vt$-exceptional divisors 
 $E_{\vt, [h]} \subset \tsR_\vt$ with $h \in [\up]$; the proper transforms $E_{\wp, (k\tau)\mu h}$
 of the $\wp$-exceptional divisors $E_{(k\tau)\mu h} \subset \tsR_{(\wp_{(k\tau)}\fr_\mu \fs_h)}$ 
with $(k\tau)\mu h \in  \Index_\Phi$.

  The set of exceptional divisors on $\sR_{\wp}$
can be written as
 $$\cE_{(\eth_{11}\fr_0)}=\{E_{\wp, (k\tau)\mu i} \mid
 (k\tau)\mu i \in \{(11)0i \mid i \in [\up]\} \sqcup \Index_\Phi\}.$$
 This is an ordered set with the cardinality $\vs_{(11)0} := \up + |\Phi|$. Thus, 
 for notational consistency later on, using the order of the set $\cE_{(\eth_{11}\fr_0)}$,
 we can  express it as
  $$\cE_{(\eth_{11}\fr_0)}=\{E_{(\eth_{11}\fr_0), (11)0 h}  \mid h \in [\vs_{(11)0}]\} .$$

\subsection{ $\eth$-centers and
 $\eth$-blowups in $(\eth_{(k\tau)}\fr_\mu)$}\label{notional-system} $\ $

We suppose that the following package in $({\eth}_{(k\tau)}\fr_{\mu-1})$ has been introduced
for some  integer $\mu \in [\vk_{(k\tau)}]$, where
$\vk_{(k\tau)}$ is a  to-be-defined finite positive integer depending on $(k\tau) \in \Index_{\cB^\mn}$.
 Here, to reconcile notations, we make the convention:
$$({\eth}_{(k\tau)}\fr_0):=({\eth}_{(k(\tau -1))}\fr_{\vk_{(k(\tau-1)}}), \; \forall \;\; 1 \le k \le \up, \;
2 \le \tau \le \ft_{F_k},$$
$$({\eth}_{(k1)}\fr_0):=({\eth}_{((k-1)\ft_{F_{k-1}})}\fr_{\vk_{((k-1)\ft_{F_{k-1}})}}), \; \forall \;\; 2 \le k \le \up, $$
provided that $\vk_{(k(\tau-1)}$ and $\vk_{((k-1)\ft_{F_{k-1}})}$
are (proved to be) finite.


\smallskip\noindent
$\bullet$  {\sl The inductive assumption.} 
{\it The scheme $\tsR_{(\eth_{(k\tau)}\fr_{\mu-1})}$ has been constructed;
it comes equipped with the set of $\vp$-divisors, 
$$\cD_{(\eth_{(k\tau)}\fr_{\mu-1}),\vp}: \;\; X_{(\eth_{(k\tau)}\fr_{\mu-1}), \uw}, \;\; \uw \in \II_{d,n} \- \um,$$
the set of $\vr$-divisors,
$$\cD_{(\eth_{(k\tau)}\fr_{\mu-1}), \vr}: \;\; X_{(\eth_{(k\tau)}\fr_{\mu-1}), (\uu,\uv)}, \;\; (\uu, \uv) \in \La_\sfm,$$
 and the set of the exceptional $\vr$-divisors, 
 $$\cE_{(\eth_{(k\tau)}\fr_{\mu-1})}: \;\; E_{(\eth_{(k\tau)}\fr_{\mu-1}), 
(k'\tau')\mu' h'},\; \hbox{$(11)0 \le (k'\tau')\mu'\le (k\tau)(\mu-1), \; h' \in  [\vs_{(k'\tau')\mu'}]$}$$
for some finite positive integer $\vs_{(k'\tau')\mu'}$ depending on $(k'\tau')\mu'$.


We let $$\cD_{(\eth_{(k\tau)}\fr_{\mu-1})}=\cD_{(\eth_{(k\tau)}\fr_{\mu-1}),\vp}
 \sqcup \cD_{(\eth_{(k\tau)}\fr_{\mu-1}),\vr}
\sqcup \cE_{(\eth_{(k\tau)}\fr_{\mu-1})}$$ be the set of all  the aforelisted divisors. 

Fix  any $Y \in \cD_{(\eth_{(k\tau)}\fr_{\mu-1})}$.
 Consider any $B \in \cB^\q \cup \cB^\mn$ and let $T_B$ be any fixed  term of $B$.
Then, we have that $Y$ is associated with $T_B$ with the multiplicity  $m_{Y,T_B}$, a nonnegative integer. 
In what follows, we say $Y$ is associated with $T_B$  if $m_{Y,T_B}>0$; 
we do not say $Y$ is associated with $T_B$  if $m_{Y,T_B}=0$.

 Likewise, for any term of $T_s=\vsgn (s) x_{(\uu_s,\uv_s)}$ of $L_F
=\sum_{s \in S_F} \vsgn (s) x_{(\uu_s,\uv_s)}$, $Y$ is either associated with $T_s$ with
a positive multiplicity $m_{Y,s}$ or not associated with $T_s$ in which case we set $m_{Y,s}=0$.}

We are to construct the scheme $\tsR_{(\eth_{(k\tau)})\mu}$ in the next round $\fr_\mu$. 
This  round  consists of  finitely many steps and the scheme $\tsR_{(\eth_{(k\tau)})\mu}$
is the one obtained in the final step. 


As before, fix any $k \in [\up]$,  we write $\cB^\mn_{F_k}=\{B_{(k\tau)} \mid \tau \in [\ft_{F_k}]\}.$
For every main binomial $B_{(k\tau)}$ of $\cB_{F_k}^\mn$, we have the expression  
$$B_{(k\tau)}=T_{(k\tau)}^+ -T_{(k\tau)}^- =x_{(\uu_s,\uv_s)}x_{\uu_k} -
x_{\uu_s}x_{\uv_s}x_{(\um,\uu_k)} $$  where $ s\in S_{F_k} \- s_{F_k}$ corresponds to $\tau$
and
$x_{\uu_k}$ is the leading variable of $\bF_k$. 

\begin{defn}\label{vr-sets-ktau} Consider the above main binomial relation $B_{(k\tau)} \in \cB^\mn_{F_k}$.
A pre-$\eth$-set  in $(\eth_{(k\tau)}\fr_\mu)$
$$\psi=\{Y^+=X_{(\eth_{(k\tau)}\fr_{\mu-1}), (\uu_s,\uv_s)}, Y^-\}$$
consists of the $\vr$-divisor $Y^+=X_{(\eth_{(k\tau)}\fr_{\mu-1}), (\uu_s,\uv_s)}$ and 
a $\vp$- or exceptional divisor $Y^-$  on the scheme $\tsR_{(\eth_{(k\tau)}\fr_{\mu-1})}$
such that $Y^-$ is associated with $T_{(k\tau)}^-$. 
(Here,  $Y^+$ is automatically associated with $T_{(k\tau)}^+$.
Also observe that $X_{(\eth_{(k\tau)}\fr_{\mu-1}), (\um,\uu_k)}$ is the only $\vr$-divisor
associated with $T^-_{(k\tau)}$, but, we do not use it due to Corollary \ref{no-(um,uu)}.) Given
a pre-$\eth$-set $\psi=\{Y^+, Y^-\}$,
we let $Z_\psi$ be the scheme-theoretic intersection
$$Z_\psi = Y^+ \cap Y^-.$$
The pre-$\eth$-set $\psi$ (resp. $Z_\psi$)  is called an $\eth$-set
(resp. an $\eth$-center)   in $(\eth_{(k\tau)}\fr_\mu)$ if 
$$Z_\psi  \cap \tsV_{(\eth_{(k\tau)}\fr_{\mu-1})} \ne \emptyset.$$
\end{defn}

As there are only finitely many $\vp$-, $\vr$-, and exceptional divisors
on the scheme $\tsR_{(\eth_{(k\tau)}\fr_{\mu-1})}$, that is,
the set $\cD_{(\eth_{(k\tau)}\fr_{\mu-1})}$ is finite,
one sees that there are only finitely many
$\eth$-sets in $(\eth_{(k\tau)}\fr_\mu)$.  We let $\Psi_{(k\tau)\mu}$ be the finite set of all $\eth$-sets
in $(\eth_{(k\tau)}\fr_\mu)$. 
We let $\cZ_{\eth_{(k\tau)}\fr_\mu}$ be the finite set of the corresponding 
$\eth$-centers in $(\eth_{(k\tau)}\fr_\mu)$. 
We need a total ordering on the finite set
$\Psi_{(k\tau)\mu}$ to proceed.

\begin{defn}\label{order-Psi-ktau} Consider the set 
$\cD_{(\eth_{(k\tau)}\fr_{\mu-1})}=\cD_{(\eth_{(k\tau)}\fr_{\mu-1}),\vp} \sqcup \cD_{(\eth_{(k\tau)}\fr_{\mu-1}),\vr}
\sqcup \cE_{(\eth_{(k\tau)}\fr_{\mu-1})}$. 
We introduce a total order $``<_\flat"$ on the set as follows. 
  Consider any two distinct elements, $Y_1, Y_2 \in  \cD_{(\eth_{(k\tau)}\fr_{\mu-1})}$. 
 \begin{enumerate} 
  \item  Suppose $Y_1 \in \cE_{(\eth_{(k\tau)}\fr_{\mu-1})}$ and $Y_2  \notin  \cE_{(\eth_{(k\tau)}\fr_{\mu-1})}$. Then, $Y_1 <_\flat Y_2$.
  \item  Suppose $Y_1 \in \cD_{(\eth_{(k\tau)}\fr_{\mu-1}),\vr}$ and $Y_2  \in \cD_{(\eth_{(k\tau)}\fr_{\mu-1}),\vp}$.
   Then, $Y_1 <_\flat Y_2$.
   \item Suppose $Y_1, Y_2 \in \cE_{(\eth_{(k\tau)}\fr_{\mu-1})}$. 
   Then, we can write $Y_i=E_{(\eth_{(k\tau)}\fr_{\mu-1}), (k'_i\tau_j)\mu_jh'_i}$ for some
   $(k_j\tau_j)\mu_j\le (k\tau)(\mu-1)$ and some $1 \le h'_i\le \vs_{(k_j\tau_j)\mu_j}$, with $i=1,2$.
   Then, we say $Y_1 <_\flat Y_2$  if $(k_1',\tau'_1, \mu_1', h_1')<_\invlex (k_2',\tau'_2, \mu_2', h_2')$.
  \item  Suppose $Y_1, Y_2 \in \cD_{(\eth_{(k\tau)}\fr_{\mu-1}),\vp}$. Write $Y_1= X_{(\eth_{(k\tau)}\fr_{\mu-1}), \uu}$ and
   $Y_2=X_{(\eth_{(k\tau)}\fr_{\mu-1}), \uv}$.
  Then, $Y_1 <_\flat Y_2$ if  $\uu <_\lex \uv$.
  \item Suppose $Y_1, Y_2 \in \cD_{(\eth_{(k\tau)}\fr_{\mu-1}),\vr}$. Write $Y_i= 
  X_{(\eth_{(k\tau)}\fr_{\mu-1}),(\uu_i,\uv_i)}$ 
  with $i=1,2$. 
  Then, $Y_1 <_\flat Y_2$ if  $(\uu_1,\uv_1) <_\lex (\uu_2,\uv_2)$.
 \end{enumerate}
\end{defn}

Using the order  $``<_\flat"$ of Definition \ref{order-Psi-ktau}, 
and applying Definition \ref{gen-order}, 
one sees that  the set $\Psi_{(k\tau)}$ now
comes equipped with  the total order $``<:=<_{\flat, \lex}"$ which is the lexicographic order
induced by $``<_\flat"$. 
Thus, we can list $\Psi_{(k\tau)\mu}$ as
$$\Psi_{(k\tau)\mu}=\{\psi_{(k\tau)\mu 1} < \cdots < \psi_{(k\tau)\mu\vs_{(k\tau)\mu}}\}$$
for some finite positive integer $\vs_{(k\tau)\mu}$ depending on $(k\tau)\mu$. We then let the set 
$\cZ_{\eth_{(k\tau)}\fr_\mu}$ of the corresponding $\eth$-centers
inherit the total order from that of $\Psi_{(k\tau)\mu}$. 
Then, we can express
$$\cZ_{\eth_{(k\tau)}\fr_\mu}=\{Z_{\psi_{(k\tau)\mu1}} < \cdots < Z_{\psi_{(k\tau)\mu\vs_{(k\tau)\mu}}}\}.$$


We let $\tsR_{(\eth_{(k\tau)}\fr_\mu \fs_1)} \lra \tsR_{(\eth_{(k\tau)}\fr_{\mu-1})}$ 
be the blowup of $\tsR_{(\eth_{(k\tau)}\fr_{\mu-1})}$
along the $\eth$-center $Z_{\psi_{(k\tau)\mu1}}$. 
Inductively, we assume that 
$\tsR_{(\eth_{(k\tau)}\fr_\mu\fs_{(h-1)})}$ has been constructed for some 
$ h \in [\vs_{(k\tau)\mu}]$.
We then let  $$\tsR_{(\eth_{(k\tau)}\fr_\mu\fs_h)} \lra \tsR_{(\eth_{(k\tau)}\fr_\mu\fs_{h-1})}$$ be the blowup of
$ \tsR_{(\eth_{(k\tau)}\fr_\mu\fs_{h-1})}$ along (the proper transform of) the $\eth$-center 
$Z_{\psi_{(k\tau)\mu h}}$. Here, to reconcile notation, we set $\tsR_{(\eth_{(k\tau)}\fr_\mu\fs_{0})}:=\tsR_{(\eth_{(k\tau)}\fr_{\mu-1})}.$

All of these can be summarized as the sequence 
$$\tsR_{(\eth_{(k\tau)}\fr_\mu)}:= \tsR_{(\eth_{(k\tau)}\fr_\mu\fs_{\vs_{(k\tau)\mu}})}
 \lra \cdots \lra \tsR_{(\eth_{(k\tau)}\fr_\mu\fs_1)} \lra \tsR_{(\eth_{(k\tau)}\fr_\mu \fs_{0})}:
 =\tsR_{(\eth_{(k\tau)}\fr_{\mu-1})}.$$

 For any $h \in [\vs_{(k\tau)\mu}]$,
consider the induced  morphism
$\tsR_{(\eth_{(k\tau)}\fr_\mu\fs_h)} \lra \tsR_{(\eth_{(k\tau)}\fr_{\mu-1})}$. 

$\bullet$ We let $X_{(\eth_{(k\tau)}\fr_\mu\fs_h), \uw}$ be the proper transform
of $X_{(\eth_{(k\tau)}\fr_{\mu-1}), \uw}$ in $\tsR_{(\eth_{(k\tau)}\fr_\mu\fs_h)}$,
for all $\uw \in \II_{d,n} \- \um$. These are still called $\vp$-divisors.
We denote the set of all $\vp$-divisors
 on  $\tsR_{(\eth_{(k\tau)}\fr_\mu\fs_h)}$ by  $\cD_{(\eth_{(k\tau)}\fr_\mu\fs_h),\vp}$.
 
$\bullet$ We let $X_{(\eth_{(k\tau)}\fr_\mu\fs_h), (\uu, \uv)}$ be the proper transform
  of the $\vr$-divisor $X_{(\eth_{(k\tau)}\fr_{\mu-1}), (\uu, \uv)}$ in $\tsR_{(\eth_{(k\tau)}\fr_\mu\fs_h)}$,
  for all $(\uu, \uv) \in \La_\sfm$. These are still called $\vr$-divisors.
We denote the set of all $\vr$-divisors
 on  $\tsR_{(\eth_{(k\tau)}\fr_\mu\fs_h)}$ by  $\cD_{(\eth_{(k\tau)}\fr_\mu\fs_h),\vr}$.
 
 $\bullet$ We let
 $ E_{(\eth_{(k\tau)}\fr_\mu\fs_h),  (k'\tau')\mu' h'}$ be the proper transform 
 of $E_{(\eth_{(k\tau)}\fr_{\mu-1}), (k'\tau')\mu' h'}$ in $\tsR_{(\eth_{(k\tau)}\fr_\mu \fs_h)}$,
 for all $(11)0 \le (k'\tau')\mu' \le (k\tau)(\mu-1)$ and $h' \in [\vs_{(k'\tau')\mu'}]$.
  (Here, $E_{(\eth_{(k\tau)}\fr_{\mu-1}), (11)0 h}$ with $h \in [\vs_{(11)0}]$ correspond
 to $\wp$-divisors, where $\vs_{(11)0}=\up +|\Phi|$.). These are named as $\eth$-exceptional or simply
 exceptional divisors.  We denote the set of these exceptional divisors on  
  $\tsR_{(\eth_{(k\tau)}\fr_\mu\fs_h)}$ by $\cE_{(\eth_{(k\tau)}\fr_\mu \fs_h),{\rm old}}$.
 
  We let 
  $$\bar\cD_{(\eth_{(k\tau)}\fr_\mu\fs_h)}= \cD_{(\eth_{(k\tau)}\fr_\mu\fs_h),\vp}
  \sqcup \cD_{(\eth_{(k\tau)}\fr_\mu\fs_h),\vr} \sqcup \cE_{(\eth_{(k\tau)}\fr_\mu\fs_h),{\rm old}}$$ 
  be the set of  all of the aforementioned divisors on $\tsR_{(\eth_{(k\tau)}\fr_\mu \fs_h)}$.
As each divisor in $\bar\cD_{(\eth_{(k\tau)}\fr_\mu\fs_h)}$ is the proper transform of a unique 
divisor in $\cD_{(\eth_{(k\tau)}\fr_{\mu-1})}$. The total order on $\cD_{(\eth_{(k\tau)}\fr_{\mu-1})}$
carries over to provide an induced total order
on the set $\bar\cD_{(\eth_{(k\tau)}\fr_\mu\fs_h)}$.

 In addition to the proper transforms of the divisors from $\tsR_{(\eth_{(k\tau)}\fr_{\mu-1})}$, there are
the following {\it new} exceptional divisors.

For any $ h \in [\vs_{(k\tau)\mu}]$, we let $E_{(\eth_{(k\tau)}\fr_\mu\fs_h)}$ be the exceptional divisor of 
the blowup $\tsR_{(\eth_{(k\tau)}\fr_\mu\fs_h)} \lra \tsR_{(\eth_{(k\tau)}\fr_\mu\fs_{h-1})}$.
 For any $1 \le h'< h \le \vs_{(k\tau)\mu}$,
we let $E_{(\eth_{(k\tau)}\fr_\mu\fs_h), (k\tau)\mu h'}$ 
be the proper transform in $\tsR_{(\eth_{(k\tau)}\fr_\mu \fs_h)} $
of the exceptional divisor $E_{(\eth_{(k\tau)}\fr_\mu \fs_{h'})}$. 
To reconcile notation, we also set 
$E_{(\eth_{(k\tau)}\fr_\mu\fs_h), (k\tau)\mu h}:=E_{(\eth_{(k\tau)}\fr_\mu \fs_h)}$.
We  set $$\cE_{(\eth_{(k\tau)}\fr_\mu \fs_h),{\rm new}}=\{ E_{(\eth_{(k\tau)}\fr_\mu\fs_h), 
(k\tau)\mu h'} \mid 1 \le h' \le h \le \vs_{(k\tau)\mu} \} .$$
We then order the exceptional divisors of $\cE_{(\eth_{(k\tau)}\fr_\mu\fs_h),{\rm new}}$
in the reverse order of occurrence, that is, 
$E_{(\eth_{(k\tau)}\fr_\mu\fs_h), (k\tau)\mu h''} \le E_{(\eth_{(k\tau)}\fr_\mu\fs_h), (k\tau)\mu h'}$
 if $h'' \ge h$.
 
 We then let 
 $$\cD_{(\eth_{(k\tau)}\fr_\mu \fs_h)} =\bar\cD_{(\eth_{(k\tau)}\fr_\mu \fs_h)} 
\sqcup  \cE_{(\eth_{(k\tau)}\fr_\mu \fs_h),{\rm new}}.$$
For any $Y_1 \in \cE_{(\eth_{(k\tau)}\fr_\mu \fs_h),{\rm new}}$ and any
 $Y_2 \in \bar\cD_{(\eth_{(k\tau)})\fr_\mu\fs_h}$,
we say $Y_1 <_\flat Y_2$.  As each of $\cE_{(\eth_{(k\tau)}),{\rm new}}$ 
and $\bar\cD_{(\eth_{(k\tau)})\fs_h}$ is a totally ordered set, this endows $\cD_{(\eth_{(k\tau)}\fr_\mu \fs_h)}$
with a total order $``<_\flat"$.

Finally, we set
 $\tsR_{(\eth_{(k\tau)})\fr_\mu}:= \tsR_{(\eth_{(k\tau)}\fr_\mu\fs_{\vs_{(k\tau)\mu}})}$, and let
 $$\cD_{(\eth_{(k\tau)}\fr_\mu),\vp}=\cD_{(\eth_{(k\tau)}\fr_\mu\fs_{\vs_{(k\tau)\mu}}),\vp}, \;
 \cD_{(\eth_{(k\tau)}\fr_\mu),\vr}=\cD_{(\eth_{(k\tau)}\fr_\mu\fs_{\vs_{(k\tau)\mu}}),\vr},
 $$
 $$\cE_{\eth_{(k\tau)}\fr_\mu}=\cE_{(\eth_{(k\tau)}\fr_\mu\fs_{\vs_{(k\tau)\mu}}), {\rm old}}  
 \sqcup \cE_{(\eth_{(k\tau)}\fr_\mu\fs_{\vs_{(k\tau)\mu}}), {\rm new}} $$
 This can be  summarized as 
$$\cD_{(\eth_{(k\tau)}\fr_\mu)}=\cD_{(\eth_{(k\tau)}\fr_\mu),\vp} \sqcup \cD_{(\eth_{(k\tau)}\fr_\mu),\vr}
\sqcup \cE_{(\eth_{(k\tau)}\fr_\mu)}.$$
 This way,  we have equipped 
  the scheme  $\tsR_{(\eth_{(k\tau)}\fr_\mu)}$ in $(\eth_{(k\tau)}\fr_\mu)$ with
 the set $\cD_{(\eth_{(k\tau)}\fr_\mu),\vp}$ of $\vp$-divisors, 
 the set $\cD_{(\eth_{(k\tau)}\fr_\mu),\vr}$ of $\vp$-divisors, and the set 
 $\cE_{\eth_{(k\tau)}\fr_\mu}$ of exceptional divisors, as required  by induction.


Now, we are ready to introduce the notion of $``$association$"$ in  $(\eth_{(k\tau)}\fr_\mu)$,
as required to carry on the process of induction. 

We do it inductively  on the set $[\vs_{(k\tau)\mu}]$



\begin{defn} \label{association} 
Fix any $B \in \cB^\q \cup \cB^\mn$. We let $T_B$ be any fixed term of the binomial $B$.
Meanwhile, we also fix any $\bF \in \sfm$ and let $T_s$ be the fixed term of $L_F$ corresponding to
some $s \in S_F$.

We assume that the notion of $``$association$"$ in  
$(\eth_{(k\tau)}\fr_\mu\fs_{h-1})$ has been introduced. That is, for every divisor 
$Y' \in \cD_{(\eth_{(k\tau)}\fr_\mu\fs_{h-1})}$, the multiplicities
$m_{Y', T_B}$ and $m_{Y',s}$ have been defined.

Consider an arbitrary divisor $Y \in \cD_{(\eth_{(k\tau)}\fr_\mu\fs_h)}$.

First, suppose $Y \ne E_{(\eth_{(k\tau)}\fr_\mu\fs_h)}$.
Then, it is the proper transform of a (unique) divisor $Y' \in \cD_{(\eth_{(k\tau)}\fr_\mu\fs_{h-1})}$.
In this case, we set 
$$m_{Y, T_B}=m_{Y', T_B},\;\;\; m_{Y,s}=m_{Y',s}.$$

Next, we consider the exceptional $Y=E_{(\eth_{(k\tau)}\fr_\mu\fs_h)}$.

We let $\psi= \psi_{(k\tau)\mu h}$. 
We have that $$\psi=\{ Y^+,  Y^- \} \subset \cD_{(\eth_{(k\tau)}\fr_{\mu-1})}.$$

For any $B \in \cB^\q \cup \cB^\mn$, we write $B=T_B^0-T_B^1$. We let
$$ m_{\psi, T_B^i}=m_{Y^+, T_B^i}+ m_{Y^-, T_B^i}, \; i=0,1, \;\; \hbox{and}$$
 $$l_{\psi, B}=\min \{m_{\psi, T_B^0}, m_{\psi, T_B^1}\}.$$ 
 (For instance, by definition, $l_{\psi, B}>0$, when $B=B_{(k\tau)}$.)
 Then, we let
  $$m_{E_{(\eth_{(k\tau)}\fr_\mu\fs_h)},T_B^i}= m_{\psi, T_B^i}- l_{\psi, T_B^i}.$$

Likewise, 
for $s \in S_F$ with $\bF \in \sfm$,
we let
$$m_{E_{(\eth_{(k\tau)}\fr_\mu\fs_h)},s}= m_{Y^+, s}+ m_{Y^-,s}.$$

We say $Y$ is associated with $T_B$ (resp $T_s$) if its multiplicity $m_{Y, T_B}$ 
(resp. $m_{Y,s}$)  is positive.
We do not say $Y$ is associated with $T_B$ (resp $T_s$)  
if its multiplicity $m_{Y,T_B}$ (resp. $m_{Y,s}$)  equals to zero.
\end{defn}

When $h = \vs_{(k\tau)\mu}$, we obtain all the desired data on 
$\tsR_{(\eth_{(k\tau)}\fr_\mu)}=\tsR_{(\eth_{(k\tau)}\fr_\mu)\fs_{\vs_{(k\tau)\mu}}}$.

Now, with all the aforedescribed data equipped for $\tsR_{(\eth_{(k\tau)}\fr_\mu)}$, 
we obtain our inductive package in $(\eth_{(k\tau)}\fr_\mu)$. 
This allows us to introduce the $\eth$-sets 
 $\Psi_{\eth_{(k\tau)}\fr_{\mu+1}}$
 and $\eth$-centers $\cZ_{\eth_{(k\tau)}\fr_{\mu+1}}$
 in  $(\eth_{(k\tau)}\fr_{\mu+1})$  as in Definition \ref{vr-sets-ktau}, 
 endow total orders on $\Psi_{\eth_{(k\tau)}\fr_{\mu+1}}$
 and  $\cZ_{\eth_{(k\tau)}\fr_{\mu+1}}$ as in the paragraph immediately following
 Definition \ref{order-Psi-ktau}, 
 and then  advance to the next 
round of  the $\wp$-blowups.  Here, 
 to reconcile notations, we set
  $$({\eth}_{(k\tau)}\fr_{\vk_{(k\tau)+1}}):=({\eth}_{((k(\tau +1))}\fr_{1}), \;\; 1 \le \tau <\ft_{F_k};$$
$$({\eth}_{(k\ft_{F_k})}\fr_{\vk_{(k\ft_{F_k})+1}}):=({\eth}_{((k+1)1)}\fr_{1}), \;\;  1 \le k < \up,$$
provided that $\vk_{(k\tau)}$ and $\vk_{(k\ft_{F_k})}$ are (proved to be) finite.

For any $(k\tau)\mu h\in \Index_\Psi$, the $\eth$-blowup in ($\eth_{(k\tau)}\fr_\mu\fs_h$)
gives rise to \begin{equation}\label{eth-ktauh} \xymatrix{
\tsV_{(\eth_{(k\tau)}\fr_\mu\fs_h)} \ar[d] \ar @{^{(}->}[r]  & \tsR_{(\eth_{(k\tau)}\fr_\mu\fs_h)} \ar[d] \\
\sV \ar @{^{(}->}[r]  & \sR_\sF,  }
\end{equation}
where $\tsV_{(\eth_{(k\tau)}\fr_\mu\fs_h)} $ is the proper transform of $\sV$ 
in  $\tsR_{(\eth_{(k\tau)}\fr_\mu\fs_h)} $.

We let $\tsV_{(\eth_{(k\tau)}\fr_\mu)}=\tsV_{(\eth_{(k\tau)}\fr_\mu\fs_{\vs_{(k\tau)\mu}})}$.

\begin{defn}\label{defn:vk-ktau}
Fix any $k \in [\up], \tau \in [\ft_{F_k}]$. Suppose there exists a finite integer $\mu$ such that
for any  pre-$\eth$-set $\psi$ 
in $(\eth_{(k\tau)}\fr_{\mu +1})$ 
(cf. Definition \ref{vr-sets-ktau}),
we have 
$$Z_\psi \cap \tsV_{(\eth_{(k\tau)}\fr_\mu)}= \emptyset.$$
Then, we let $\vk_{(k\tau)}$ be the smallest integer such that the above holds.
Otherwise, we let $\vk_{(k\tau)}=\infty$.
\end{defn}
{\it It will be shown soon 
that $\vk_{(k\tau)}$ is finite for all  $k \in [\up], \tau \in [\ft_{F_k}]$.}

For later use, we let
\begin{eqnarray}\label{indexing-Psi} 
\Psi=\{ \psi_{(k\tau)\mu h} \mid (k\tau)\in \Index_{\cB^\mn}, 1 \le \mu \le \vk_{(k\tau)},
 h \in [\vs_{(k\tau)\mu}]\},  \\ 
\Index_\Psi=\{(k\tau)\mu h \mid (k\tau)\in \Index_{\cB^\mn}, 1 \le \mu \le \vk_{(k\tau)},
 h \in [\vs_{(k\tau)\mu}]\}. \nonumber  
\end{eqnarray}

Upon proving that $\vk_{(k\tau)}$ is finite  for all for all $(k\tau) \in \Index_{\cB^\mn}$ later on,
 we can summarize the process
of $\eth$-blowups as a single sequence of blowup morphisms:
\begin{equation}\label{grand-sequence-vr}
\tsR_\eth \lra \cdots \lra 
\tsR_{(\eth_{(k\tau)}\fr_\mu\fs_h)} \lra \tsR_{(\eth_{(k\tau)}\fr_\mu\fs_{h-1})}
\lra \cdots \lra \tsR_{(\eth_{(11)}\fr_0)}:=\tsR_{\wp},\end{equation}
where $\tsR_\eth:=\tsR_{(\eth_{(\up \ft_{\up})}\fr_{\vk_{\up \ft_{\up}}})} 
:=\tsR_{(\eth_{(\up \ft_{\up})}\fr_{\vk_{\up \ft_{\up}}}\fs_{\vs_{(\up \ft_\up)\vk_{\up \ft_\up}}})} $
is the  blowup scheme reached in  the final step 
$(\eth_{(\up \ft_{\up})}\fr_{\vk_{\up \ft_{\up}}}\fs_{\vs_{(\up \ft_\up)\vk_{\up \ft_\up}}})$ of all $\eth$-blowups.

The end of all $\eth$-blowups gives rise to the following induced diagram 
\begin{equation}\label{eth-final} \xymatrix{
\tsV_\eth \ar[d] \ar @{^{(}->}[r]  & \tsR_\eth \ar[d] \\
\sV \ar @{^{(}->}[r]  & \sR_\sF,
}
\end{equation}
where $\tsV_\eth$ is the proper transform of $\sV$  in  $\tsR_\eth$.



\subsection{Properties of $\eth$-blowups}\label{subs:prop-wp-blowups} $\ $

To begin with, recall that we have set
$\tsR_{(\eth_{11}\fr_1\fs_{0})}:=\tsR_{\wp}.$

Fix and consider any $(k\tau)\mu h \in \Index_\Psi$ (see \eqref{indexing-Psi}).

\begin{prop}\label{meaning-of-var-p-k} 
Suppose that  the scheme $ \tsR_{(\eth_{(k\tau)}\fr_\mu\fs_{h})}$ has been constructed.

Consider any standard chart $\fV$ of $\tsR_{(\eth_{(k\tau)}\fr_\mu\fs_h)}$, 
 lying over a unique chart $\fV_{[0]}$ of 
 $\sR_\sF$.
 We suppose that the chart $ \fV_{[0]}$ is indexed by
$\La_\sfm^o=\{(\uv_{s_{F,o}},\uv_{s_{F,o}}) \mid \bF \in \sfm \}.$  
As earlier, we have $\La_\sfm^\star=\La_\sfm \- \La_\sfm^o$.

Then, the chart $\fV$ comes equipped with 
$$\hbox{a subset}\;\; \fe_\fV  \subset \II_{d,n} \- \um \;\;
 \hbox{and a subset} \;\; \fd_\fV  \subset \La_{\sfm}^\star$$
such that every exceptional divisor $E$ 
(i.e., not a $\vp$- nor a $\vr$-divisor)  of $\tsR_{(\eth_{(k\tau)}\fr_\mu\fs_h)}$
with $E \cap \fV \ne \emptyset$ is 
either labeled by a unique element $\uw \in \fe_\fV$
or labeled by a unique element $(\uu,\uv) \in \fd_\fV$. 
We let $E_{(\eth_{(k\tau)}\fr_\mu\fs_h), \uw}$ be the unique exceptional divisor 
on the chart $\fV$ labeled by $\uw \in \fe_\fV$; we call it an $\vp$-exceptional divisor.
We let $E_{(\eth_{(k\tau)}\fr_\mu\fs_h), (\uu,\uv)}$ be the unique exceptional divisor 
on the chart $\fV$ labeled by $(\uu,\uv) \in \fd_\fV$;  we call it an $\vr$-exceptional divisor.
(We note here that being $\vp$-exceptional or $\vr$-exceptional is strictly relative to the given
standard chart.)

Further, the chart $\fV$ admits 
 a set of free variables
\begin{equation}\label{variables-p-k} 
\var_{\fV}:=\left\{ \begin{array}{ccccccc}
\ve_{\fV, \uw} , \;\; \de_{\fV, (\uu,\uv) }\\
x_{\fV, \uw} , \;\; x_{\fV, (\uu,\uv)}
\end{array}
  \; \Bigg| \;
\begin{array}{ccccc}
 \uw \in  \fe_\fV,  \;\; (\uu,\uv)  \in \fd_\fV  \\ 
\uw \in  \II_{d,n} \- \um \- \fe_\fV,  \;\; (\uu, \uv) \in \La_\sfm^\star \-  \fd_\fV  \\
\end{array} \right \},
\end{equation}
such that it is canonically  isomorphic to the affine space with the variables in
\eqref{variables-p-k} as its coordinate variables. Moreover, on the standard chart $\fV$, we have
\begin{enumerate}
\item the divisor  $X_{(\eth_{(k\tau)}\fr_\mu\fs_h), \uw}\cap \fV$ 
 is defined by $(x_{\fV,\uw}=0)$ for every 
$\uw \in \II_{d,n} \setminus \um \- \fe_\fV$;
\item the divisor  $X_{(\eth_{(k\tau)}\fr_\mu\fs_h), (\uu,\uv)}\cap \fV$ is defined by $(x_{\fV,(\uu,\uv)}=0)$ for every 
$(\uu,\uv) \in \La^\star_\sfm\- \fd_\fV$;
\item the divisor  $X_{(\eth_{(k\tau)}\fr_\mu\fs_h), \uw}\cap \fV$ does not intersect the chart for all $\uw \in \fe_\fV$;
\item the divisor  $X_{(\eth_{(k\tau)}\fr_\mu\fs_h), (\uu, \uv)}$ does not intersect the chart for all $ (\uu, \uv) \in \fd_\fV$;
\item the $\vp$-exceptional divisor 
$E_{(\eth_{(k\tau)}\fr_\mu\fs_h), \uw} \;\! \cap  \fV$  labeled by an element $\uw \in \fe_\fV$
is define by  $(\ve_{\fV,  \uw}=0)$ 
for all $ \uw \in \fe_\fV$;
\item the $\vr$-exceptional divisor 
$E_{(\eth_{(k\tau)}\fr_\mu\fs_h),  (\uu, \uv)}\cap \fV$ labeled by  an element $(\uu, \uv) \in \fd_\fV$
is define by  $(\de_{\fV,  (\uu, \uv)}=0)$ 
for all $ (\uu, \uv) \in \fd_\fV$;
\item any of the remaining exceptional divisor
of $\tsR_{(\eth_{(k\tau)}\fr_\mu\fs_h)}$
other than those that are labelled by some  some $\uw \in \fe_\fV$ or $(\uu,\uv) \in \fd_\fV$ 
 does not intersect the chart.
\end{enumerate}
\end{prop}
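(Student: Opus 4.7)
The plan is to prove this by induction on the index $(k\tau)\mu h$ in the lexicographic order on the set $\{(11)10\} \sqcup \Index_\Psi$, exactly parallel to the proof of Proposition \ref{meaning-of-var-vskmuh}. For the initial case $(11)10$, the scheme is $\tsR_{(\eth_{(11)}\fr_1\fs_0)} = \tsR_\wp = \tsR_{(\wp_{(\up\ft_{F_\up})}\fr_{\rho_{(\up\ft_{F_\up})}})}$, so the statement reduces to Proposition \ref{meaning-of-var-vskmuh} evaluated at the final step of the $\wp$-blowups, where the requisite data $(\fe_\fV, \fd_\fV, \var_\fV)$ and properties (1)--(7) are already in place.

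For the inductive step, I would assume the proposition holds on $\tsR_{(\eth_{(k\tau)}\fr_\mu\fs_{h-1})}$ and consider the embedding
\[
\tsR_{(\eth_{(k\tau)}\fr_\mu\fs_h)} \hookrightarrow \tsR_{(\eth_{(k\tau)}\fr_\mu\fs_{h-1})} \times \PP_{\psi_{(k\tau)\mu h}},
\]
where $\PP_{\psi_{(k\tau)\mu h}} \cong \PP_{[\xi_0,\xi_1]}$ is the factor projective line of the blowup along (the proper transform of) the $\eth$-center $Z_{\psi_{(k\tau)\mu h}}$, and $[\xi_0,\xi_1]$ corresponds to the pair $\{Y'_0, Y'_1\}$ of proper transforms of the two divisors in the $\eth$-set $\psi_{(k\tau)\mu h} = \{Y^+, Y^-\}$. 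Fix an arbitrary standard chart $\fV$ of $\tsR_{(\eth_{(k\tau)}\fr_\mu\fs_h)}$; by Definition \ref{general-standard-chart} it lies over a unique standard chart $\fV'$ of $\tsR_{(\eth_{(k\tau)}\fr_\mu\fs_{h-1})}$ with $\fV = (\fV' \times (\xi_i \equiv 1)) \cap \tsR_{(\eth_{(k\tau)}\fr_\mu\fs_h)}$ for $i \in \{0,1\}$.

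If the proper transform $Z'_{\psi_{(k\tau)\mu h}}$ of the $\eth$-center misses $\fV'$, then $\fV \to \fV'$ is an isomorphism, and we simply set $\fe_\fV := \fe_{\fV'}$, $\fd_\fV := \fd_{\fV'}$, and rename variables to inherit properties (1)--(7) from the inductive hypothesis. Otherwise, by the inductive hypothesis applied to $\fV'$, we may write $Y'_0 \cap \fV' = (y'_0 = 0)$ and $Y'_1 \cap \fV' = (y'_1 = 0)$ for some $y'_0, y'_1 \in \var_{\fV'}$, and $\fV$ is defined in $\fV' \times (\xi_i \equiv 1)$ by the single relation $y'_j = y'_i \xi_j$ where $\{j\} = \{0,1\} \setminus \{i\}$. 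One then updates $\fe_\fV$ and $\fd_\fV$ by casework on the nature of $y'_i$: if $y'_i = x_{\fV',\uw}$ for some $\uw \in \II_{d,n} \setminus \um \setminus \fe_{\fV'}$, adjoin $\uw$ to $\fe_\fV$ and declare $\ve_{\fV,\uw} := y'_i$; if $y'_i = \ve_{\fV',\uw}$ or $y'_i = \de_{\fV',(\uu,\uv)}$, leave $\fe_\fV, \fd_\fV$ unchanged but relabel via Proposition \ref{generalmeaning-of-variables}. The variable $\xi_j$ inherits the type (ordinary $\vp$-, $\vr$-, $\vp$-exceptional, or $\vr$-exceptional) of $y'_j$, and the remaining variables of $\fV$ are the proper transforms of those of $\fV'$ outside $\{y'_i, y'_j\}$.

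With these assignments in place, verifying (1)--(7) is essentially mechanical: applying Proposition \ref{generalmeaning-of-variables} gives that $\fV$ is isomorphic to the affine space with coordinates \eqref{variables-p-k} and that $E_{(\eth_{(k\tau)}\fr_\mu\fs_h)} \cap \fV = (y'_i = 0)$, while the proper transform of $Y'_i$ itself disappears from the chart (ensuring items (3), (4), and the relabeling in (5), (6)); properties (1), (2), (7) for the remaining divisors follow from the inductive hypothesis on $\fV'$ together with the fact that taking proper transforms commutes with restriction to $\fV$. The one point requiring care, and the only mild obstacle, is the bookkeeping that tracks when a previously-labeled exceptional divisor (say an element of $\fe_{\fV'}$ or $\fd_{\fV'}$) ceases to be labeled on $\fV$ because its proper transform no longer meets the chart; this is handled exactly as in the analogous step of the proof of Proposition \ref{meaning-of-var-vskmuh}, since the only structural difference between $\eth$- and $\wp$-blowups is the choice of centers, and the centers of both kinds are scheme-theoretic intersections of two divisors from the prescribed list, so the blowup is of the same local form as in \eqref{proof:sf-fv-vskmu}--\eqref{proof:var-fv-vskmu}.
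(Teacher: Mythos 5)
Your overall strategy is correct and matches the paper's: induction on $(k\tau)\mu h$, reduction of the initial case to Proposition \ref{meaning-of-var-vskmuh} for $\tsR_\wp$, and chart-by-chart casework on the nature of the variable $y'_i$ that becomes the exceptional parameter. However, your casework on $y'_i$ is incomplete, and the omission is precisely the structural feature that distinguishes $\eth$-blowups from $\wp$-blowups.

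You list three possibilities for $y'_i$: a $\vp$-variable $x_{\fV',\uw}$, a $\vp$-exceptional parameter $\ve_{\fV',\uw}$, or a $\vr$-exceptional parameter $\de_{\fV',(\uu,\uv)}$. This is exactly the trichotomy from the proof of Proposition \ref{meaning-of-var-vskmuh}, and it is correct there because $\wp$-sets exclude $\vr$-divisors. But an $\eth$-set by Definition \ref{vr-sets-ktau} always takes the form $\psi = \{Y^+ = X_{(\eth_{(k\tau)}\fr_{\mu-1}),(\uu_s,\uv_s)},\, Y^-\}$ with $Y^+$ a genuine $\vr$-divisor. So whenever the fixed chart $\fV$ corresponds to $(\xi_0 \equiv 1)$ with $Y'_0$ the proper transform of $Y^+$, the inductive hypothesis (property (2) on $\fV'$) gives $y'_i = y'_0 = x_{\fV',(\uu_s,\uv_s)}$ — a $\vr$-variable, which is the fourth case you have dropped. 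In that case one must adjoin $(\uu_s,\uv_s)$ to $\fd_\fV$, set $\de_{\fV,(\uu_s,\uv_s)} := y'_i$, and invoke the convention $x_{\fV,(\uu_s,\uv_s)}=1$; otherwise $\fd_\fV$ and $\var_\fV$ are misassembled, the new exceptional divisor on $\fV$ goes unlabeled, and properties (4) and (6) of the statement cannot even be formulated, let alone verified, on that chart. Your closing remark that "the only structural difference between $\eth$- and $\wp$-blowups is the choice of centers" is where this is smoothed over: the different choice of centers is exactly what introduces the extra branch, and the argument must explicitly carry it (as the paper does in \eqref{proof:de-fv-k}--\eqref{proof:new-ex-fv-k}, which have four alternatives rather than the three of \eqref{proof:de-fv-vskmu}--\eqref{proof:new-ex-fv-vskmu}).
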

\begin{proof} 
 We prove by induction on $(k\tau)\mu h \in \{(11)10 \} \sqcup \Index_\Psi$.
(The proof is basically parallel to that of Proposition \ref{meaning-of-var-vskmuh}.
We provide details for completeness.)

For the initial case, the scheme is $\tsR_{(\eth_{11}\fr_1\fs_0)}:=\sR_{{\wp}}$, 
the  scheme obtained in the final step of ${\wp}$-blowups.
Then, this proposition is the same as Proposition \ref{meaning-of-var-vskmuh} for $\sR_{{\wp}}$.
Thus, it holds.

We suppose that the statement holds over $\tsR_{(\eth_{(k\tau)}\fr_\mu\fs_{h-1})}$
with $(k\tau)\mu h \in  \Index_\Psi$.

We now consider $\tsR_{(\eth_{(k\tau)}\fr_\mu\fs_h)}$. We have the embedding
$$\xymatrix{
\tsR_{(\eth_{(k\tau)}\fr_\mu\fs_h)}\ar @{^{(}->}[r]  &
 \tsR_{(\eth_{(k\tau)}\fr_\mu\fs_{h-1})} \times \PP_{\psi_{(k\tau)\mu h}},  }
$$
where $\PP_{\psi_{(k\tau)\mu h}}$ ($\cong \PP^1$)
is the factor projective space. The $\eth$-set $\psi_{(k\tau)\mu h}$
consists of two divisors $\{Y^+, Y^-\}$ on 
$\tsR_{(\eth_{(k\tau)}\fr_{\mu-1})}=\tsR_{(\eth_{(k\tau)}\fr_{\mu-1}\fs_{\vs_{(k\tau)(\mu-1)}})}$. We let 
$Y'_0$ and $Y'_1$ be the proper transforms in $\tsR_{(\eth_{(k\tau)}\fr_\mu\fs_{h-1})}$
of the two divisors in the $\eth$-set $\psi_{(k\tau)\mu h}=\{Y^+, Y^-\}$.
In addition, we let $[\xi_0,\xi_1]$ be the homogenous coordinates of $\PP_{\psi_{(k\tau)\mu h}}$,
corresponding to $(Y_0',Y_1')$.

Fix any standard chart $\fV'$ of  $\tsR_{(\eth_{(k\tau)}\fr_\mu\fs_{h-1})}$, 
let $i$ be any of 0 or 1
and set $(\xi_i \equiv 1)$. Then, we obtain 
a standard  chart, $\fV=(\fV' \times (\xi_i \equiv 1)) \cap \tsR_{(\eth_{(k\tau)}\fr_\mu\fs_h)}$,
of $\tsR_{(\eth_{(k\tau)}\fr_\mu\fs_h)}$. 
Any standard chart of $\tsR_{(\eth_{(k\tau)}\fr_\mu\fs_h)}$ is such a form. As $\fV$ is assumed to lie over
$ \fV_{[0]}$, so does $\fV'$. 
By assumption, the chart $\fV'$ comes equipped with
a subset $\fe_{\fV'} \subset \II_{d,n}\- \um$, a subset  $\fd_{\fV'} \subset \La^\star_\sfm$,
and admits a set of coordinate variables
\begin{equation}\label{variables-p-(k-1)-eth} 
\var_{\fV'}:=\left\{ \begin{array}{ccccccc}
\ve_{\fV', \uw} , \;\; \de_{\fV', (\uu,\uv) }\\
x_{\fV', \uw} , \;\; x_{\fV', (\uu,\uv)}
\end{array}
  \; \Bigg| \;
\begin{array}{ccccc}
 \uw \in  \fe_{\fV'},  \;\; (\uu,\uv)  \in \fd_{\fV'}  \\ 
\uw \in  \II_{d,n} \- \um \- \fe_{\fV'},  \;\; (\uu, \uv) \in \La_\sfm^\star \-  \fd_{\fV'}  \\
\end{array} \right \},
\end{equation}
verifying the properties (1)-(7) as in the proposition.

We now prove the statements for the chart $\fV$ of $\tsR_{(\eth_{(k\tau)}\fr_\mu\fs_h)}$.

First, we suppose that the proper transform $Z'_{\psi_{(k\tau)\mu h}}$ 
in $\tsR_{(\eth_{(k\tau)}\fr_\mu\fs_{h-1})}$
of the $\eth$-center $Z_{\psi_{(k\tau)\mu h}}$  does not
meet the chart $\fV'$.  
Then, we let $\fV$ inherit all the data from those of $\fV'$, that is,
we set $\fe_{\fV}=\fe_{\fV'}$, $\fd_{\fV}=\fd_{\fV'}$, and $\var_\fV= \var_{\fV'}$:
changing the subindex $``\ \fV' \ "$ for all the variables in $\var_{\fV'}$
to $``\ \fV \ "$.
As the $\eth$-blowup  along the proper transform $Z'_{\psi_{(k\tau)\mu h}}$
does not affect the chart $\fV'$, one sees that 
the statements of the proposition hold for $\fV$.

Next, we suppose that the proper transform $Z'_{\psi_{(k\tau)\mu h}}$ 
meets the chart $\fV'$ along a nonempty closed subset. 
That is, we have $Y'_0 \cap Y'_1 \cap \fV' \ne \emptyset$.
On the chart $\fV'$, by the inductive assumption, we can suppose
$$Y'_j \cap \fV' =(y'_j =0),\;\;
\hbox{ for some $y'_j \in \var_{\fV'}$ with $j=0,1$.} $$
Fix and consider the chart $(\xi_i \equiv 1)$ for some $i=0$ or 1. 
We let $j \in \{0,1\} \- i$. 
Then, the chart  $\fV=(\fV' \times (\xi_i \equiv 1)) \cap \tsR_{(\eth_{(k\tau)}\fr_\mu\fs_h)}$ 
of the scheme $\tsR_{(\eth_{(k\tau)}\fr_\mu\fs_{h})}$, as a closed subscheme of  
$\fV' \times (\xi_i \equiv 1),$ is defined by
\begin{equation}\label{proof:sf-fv-k}
y'_{j}= y'_i \xi_{j}.
\end{equation}

There are 4 possibilities for
$Y'_i \cap \fV'$ according to the assumption on the chart $\fV'$.
Based on every of such possibilities, 
we set 
\begin{equation}\label{proof:de-fv-k}
\left\{ 
\begin{array}{lccr}
\fe_{\fV}=\fe_{\fV'} \sqcup \{\uw\}, \; \fd_{\fV}= \fd_{\fV'},
&  \hbox{if} \; y'_i=x_{\fV', \uw}\; \hbox{for some} \;\; \uw \in \II_{d,n}\-\um \- \fe_{\fV'}\\
\fe_{\fV}= \fe_{\fV'}, \fd_{\fV}=\fd_{\fV'} \sqcup \{(\uu, \uv)\},  & \hbox{if} \; 
y'_i=x_{\fV', (\uu,\uv)} \; \hbox{with}  \ (\uu,\uv) \in \La_\sfm^\star \- \fd_{\fV'} \\
\fe_{\fV}=\fe_{\fV'}, \; \fd_{\fV}= \fd_{\fV'} , &\hbox{if} \;    y'_i=\ve_{\fV', \uw} \; \hbox{for some} \;\; \uw \in  \fe_{\fV'}\\
\fd_{\fV}=\fd_{\fV'}, \; \fe_{\fV}= \fe_{\fV'}, & \hbox{if} \; 
y'_i=\de_{\fV', (\uu,\uv)}\; \hbox{for some}  \;\; (\uu,\uv) \in  \fd_{\fV'}  .
\end{array} \right.
\end{equation}
Accordingly, we introduce 
\begin{equation}\label{proof:new-ex-fv-k}
\left\{ 
\begin{array}{lccr}
\ve_{\fV, \uw}=y'_i, \; x_{\fV, \uw} =1,
&  \hbox{if} \; y'_i=x_{\fV', \uw}\; \hbox{for some} \;\; \uw \in \fe_\fV \- \fe_{\fV'}\\
\de_{\fV, (\uu,\uv)}=y'_i, \; x_{\fV, (\uu,\uv)} =1, & \hbox{if} \; 
y'_i=x_{\fV', (\uu,\uv)} \; \hbox{for some}  \;\; (\uu,\uv) \in \fd_\fV \- \fd_{\fV'} \\
\ve_{\fV, \uw}=y'_i  , &\hbox{if} \;    y'_i=\ve_{\fV', \uw} \; \hbox{for some} \;\; \uw \in  \fe_{\fV'}=\fe_{\fV}\\
\de_{\fV, (\uu,\uv)}=y'_i, & \hbox{if} \; 
y'_i=\de_{\fV', (\uu,\uv)}\; \hbox{for some}  \;\; (\uu,\uv) \in  \fd_{\fV'}=\fd_\fV  .
\end{array} \right.
\end{equation}

To introduce the set $\var_\fV$, we then set
\begin{equation}\label{proof:var-xi-fv-k}
\left\{ 
\begin{array}{lcr}
x_{\fV,\ua}=\xi_{j}, &   \hbox{if $y'_{j}=x_{\fV', \ua} $}\\
x_{\fV, (\ua, \ub)}=\xi_{j}, & \;\;\; \hbox{if $y'_{j}=x_{\fV', (\ua, \ub)}$}\\
\ve_{\fV, \ua}=\xi_{j}, &  \hbox{if $y'_{j}= \ve_{\fV', \ua}$} \\
\de_{\fV, (\ua, \ub)}= \xi_{j}, & \;\;\;\; \hbox{if $y'_{j}= \de_{\fV', (\ua, \ub)}$}.
\end{array} \right.
\end{equation}
Thus, we have introduced $y'_i,  \xi_{j} \in \var_\fV$ where $y'_i$, respectively,  $\xi_{j}$, 
is endowed with its new name as in \eqref{proof:new-ex-fv-k}, respectively, in 
\eqref{proof:var-xi-fv-k}.
Next, we define the set $\var_\fV \- y'_i \- \xi_{j}$ to consist of the following variables:
\begin{equation}\label{proof:var-fv-k}
\left\{ 
\begin{array}{lccr}
x_{\fV,\uw}=x_{\fV', \uw}, &  \forall \;\; \uw \in \II_{d,n}\-\um \- \fe_\fV \;\; \hbox{and $x_{\fV', \uw} \ne y'_{j}$}\\
x_{\fV, (\uu, \uv)}=x_{\fV', (\uu, \uv)}, & \;\; \forall \;\; (\uu, \uv) \in \La_\sfm^\star \- \fd_{\fV} \;\; \hbox{and
 $x_{\fV', (\uu, \uv)} \ne y'_{j}$}\\
\ve_{\fV, \uw}= \ve_{\fV', \uw}, & \forall \;\; \uw \in \fe_\fV \;\; \hbox{and $\ve_{\fV', \uw} \ne y'_{j}$},  y'_i \\
\de_{\fV, (\uu, \uv)}= \de_{\fV', (\uu, \uv)}, &  \;\; \forall \;\; (\uu, \uv) \in \fd_{\fV} \;\; \hbox{and $\de_{\fV', (\uu, \uv)} \ne y'_{j}$}, y'_i.
\end{array} \right.
\end{equation}

Substituting \eqref{proof:sf-fv-k}, one checks that the chart $\fV$ is  
 isomorphic to the affine space with the variables in  \eqref{proof:new-ex-fv-k},
 \eqref{proof:var-xi-fv-k}, and \eqref{proof:var-fv-k},  
   as its coordinate variables. Putting all together, the above matches
   description of $\var_\fV$ in  \eqref{variables-p-k},
 
 Now, it remains to verity (1)-(7) of the proposition on the chart $\fV$.

First, consider  the unique new exceptional divisor $E_{(\eth_{(k\tau)}\fr_\mu\fs_h)}$
created by the blowup
$$\tsR_{(\eth_{(k\tau)}\fr_\mu\fs_h)} \lra \tsR_{(\eth_{(k\tau)}\fr_\mu\fs_{h-1})}. $$ 
Then, we have
$$ E_{(\eth_{(k\tau)}\fr_\mu\fs_h)} \cap \fV = (y'_i=0)$$
where $y'_i$ is renamed as in \eqref{proof:new-ex-fv-k}
according to the four possibilities of its form in $\var_{\fV'}$.
This way, the new exceptional divisor $E_{(\eth_{(k\tau)}\fr_\mu\fs_h)}$ is labelled on the chart $\fV$.
In any case of the four situations, we have that the proper transform in $\tsR_{(\eth_{(k\tau)}\fr_\mu\fs_h)}$ 
of $Y'_i$ does not meet the chart $\fV$, and
if $Y'_i$ is a $\vp$-exceptional divisor labeled by some element of $\fe_{\fV'}$
(resp, $\vr$-exceptional divisor labeled by some element of $\fd_{\fV'}$),
 then, on the chart $\fV$, its proper transform is no longer labelled by
any element of $\II_{d,n} \-\um$ (resp. $\La_\sfm^\star$).
 This verifies any of (3)-(7) whenever the statement therein involves the newly created exceptional divisor 
 $E_{(\eth_{(k\tau)}\fr_\mu\fs_h)}$ and/or its corresponding exceptional variable $y'_i$ 
 (which is renamed in \eqref{proof:new-ex-fv-k} to match that of \eqref{variables-p-k} in the statement of
 the proposition).
 Observe that the statements (1) and (2) are not related to the new exceptional divisor 
 $E_{(\eth_{(k\tau)}\fr_\mu\fs_h)}$ or its corresponding exceptional variable $y'_i$ (again, renamed in \eqref{proof:new-ex-fv-k}).
  
For any of the remaining $\vp$-, $\vr$-, and exceptional divisors on $\tsR_{(\eth_{(k\tau)}\fr_\mu\fs_h)}$, 
it is the proper transform of a unique corresponding
 $\vp$-, $\vr$-, and exceptional divisor on $\tsR_{(\eth_{(k\tau)}\fr_\mu\fs_{h-1})}$. Hence,
 applying   the inductive assumption on $\fV'$, one verifies directly that
every of the properties (1)-(7) of the proposition is satisfied whenever it applies to such a divisor
and/or its corresponding local variable in $\var_\fV$.

 This completes the proof.
\end{proof}

\subsection{Combining all $\vt$-blowups, $\wp$-blowups, and $\eth$-blowups}\label{combine-all} $\ $

 For the convenience of narration, we combine all
 $\vt$-blowups, $\wp$-blowups, and $\eth$-blowups
 into a single grand sequence.

\begin{defn}\label{defn:game}
We introduce
\begin{equation}\label{game} 
\Game:=\left\{ \begin{array}{ccccccc}
\hs \end{array}
  \; \Bigg| \;
\begin{array}{ccccc}
\hbox{$\hs$ equals to  $\vt_{[k]}$ with $k \in [\up]$, or 
$(\wp_{(k'\tau')}\fr_{\mu'}\fs_{h'})$ with} \\ 
\hbox{ $(k'\tau')\mu' h' \in \Index_\Phi$,
or $(\eth_{(k\tau)}\fr_\mu\fs_h)$  with  $(k\tau)\mu h \in \Index_\Psi$} \\
\end{array} \right \}.
\end{equation}
The set $\Game$ comes equipped with a partial order induced from
the total orders of the sets $[\up]$, $\Index_\Phi$, and $\Index_\Psi$.
We then let 
$$\vt_{[k]} < (\wp_{(k'\tau')}\fr_{\mu'}\fs_{h'}) <(\eth_{(k\tau)}\fr_\mu\fs_h)$$ for all
$\vt_{[k]}$ with $k \in [\up]$,  $(\wp_{(k'\tau')}\fr_{\mu'}\fs_{h'})$ with $(k'\tau')\mu' h' \in \Index_\Phi$,
and $(\eth_{(k\tau)}\fr_\mu\fs_h)$ with $(k\tau)\mu h \in \Index_\Psi$.
This way, $\Game$ is endowed with a total order compatible with
the orders of $\vt$-, $\wp$-, and $\eth$-blowups.
\end{defn}

\begin{defn}\label{termninatingB} {\rm (cf. Definition \ref{general-termination})}
Consider an arbitrary standard chart of $\fV$ of  $\tsR_{\hs}$ with $\hs \in \Game$.
We let $\tsV_\hs \subset \tsR_\hs$ be the proper transform of $\sV$ in $\tsR_\hs$.

Let $\bz \in \fV$ be any fixed closed point of the chart. 
Consider any main binomial relation $B \in \cB^\mn$. 
 We say that $B^\mn_{\fV}$ terminates at a closed point $\bz \in \fV$ if (at least) one of its two terms does not
 vanish at $\bz$.
 
 We say $B^\mn_{ \fV}$ terminates on the chart $\fV$ if it terminates at all closed points of 
 $\tsV_\hs \cap \fV $.
 We say $B^\mn$ terminates on the scheme $\tsR_{\hs}$ 
 if $B^\mn_{ \fV}$ terminates over all standard charts $\fV$ of  $\tsR_{\hs}$.
 \end{defn}
 In a similar vein, for all $\bF \in \sfm$, the linearized $\pl$ equation $L_{\fV,F}$  
automatically terminates on the chart $\fV$ because 
$L_{\fV,F}$  is the pullback of $L_{\fV_{[0]},F}$, where $\fV_{[0]}$ is the unique standard chart 
of $\sR_\sF$ such that $\fV$ lies over $\fV_{[0]}$, and
at any closed point  of $\fV_{[0]}$, at least one of the terms of $L_{\fV_{[0]},F}$ does not vanish. 

\begin{lemma}\label{termi-carry-over}
Consider any two indexes $\hs', \hs \in \Game$ with $\hs' < \hs$.
Let $\fV$ be a standard chart of $\tsR_{\hs}$, lying  over a unique standard chart
$\fV'$  of $\tsR_{\hs'}$. 
 Fix and consider any main binomial relation $B \in \cB^\mn$.
Assume that  $B^\mn_{ \fV'}$ terminates on $\fV'$. 
Then, $B^\mn_{ \fV}$ terminates on $\fV$. 
\end{lemma}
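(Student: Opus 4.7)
The plan is to reduce to a single blowup step and then invoke the explicit formula for proper transforms of binomials from Definition \ref{general-proper-transforms}. First I would factor $\tsR_\hs \to \tsR_{\hs'}$ as a finite sequence of blowups $\tsR_{\hs_j} \to \tsR_{\hs_{j-1}}$ with $\hs_0 = \hs' < \hs_1 < \cdots < \hs_r = \hs$ in the grand total order of Definition \ref{defn:game}, with $\fV$ lying over a unique chain $\fV = \fV_r, \fV_{r-1}, \ldots, \fV_0 = \fV'$. Since termination on $\fV_{j-1}$ is to be shown to propagate to termination on $\fV_j$, by induction on $r$ it suffices to handle a single blowup step; below I assume $\fV$ lies directly over $\fV'$ under one blowup.

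For this single step, Propositions \ref{meaning-of-var-vtk}, \ref{meaning-of-var-vskmuh}, and \ref{meaning-of-var-p-k} guarantee that the blowup center, restricted to $\fV'$, is the vanishing locus of exactly two members $y'_0, y'_1 \in \var_{\fV'}$, so Definition \ref{general-proper-transforms} applies verbatim. Writing $B_{\fV'} = T^+_{\fV'} - T^-_{\fV'}$, $\phi = \{y'_0, y'_1\}$, $m^{\pm} = m_{\phi, T^{\pm}_{\fV'}}$, and $l = \min(m^+, m^-)$, the substitution defining the chart $\fV$ produces
$$\pi^* T^{\pm}_{\fV'} = \zeta^{m^{\pm}} \hat{T}^{\pm}, \qquad T^{\pm}_\fV = \zeta^{\,m^{\pm} - l}\, \hat{T}^{\pm},$$
where $\zeta$ is the exceptional parameter and $\hat{T}^{\pm}$ carries no $\zeta$-factor coming from this substitution.

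I would then take an arbitrary closed point $\bz \in \tsV_\hs \cap \fV$ and set $\bz' = \pi(\bz) \in \tsV_{\hs'} \cap \fV'$, using that $\tsV_\hs$ is the proper transform of $\tsV_{\hs'}$. The hypothesis supplies, without loss of generality, $T^+_{\fV'}(\bz') \neq 0$. From $\pi^* T^+_{\fV'}(\bz) = T^+_{\fV'}(\bz')$ we get $\zeta(\bz)^{m^+} \hat{T}^+(\bz) \neq 0$, whence $\hat{T}^+(\bz) \neq 0$. If $\zeta(\bz) \neq 0$, then $T^+_\fV(\bz) = \zeta(\bz)^{m^+ - l} \hat{T}^+(\bz) \neq 0$; if $\zeta(\bz) = 0$, then $\zeta(\bz)^{m^+} \neq 0$ forces $m^+ = 0$, hence $l = 0$, and again $T^+_\fV(\bz) = \hat{T}^+(\bz) \neq 0$. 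Either way, $B_\fV$ terminates at $\bz$, completing the step.

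No genuine obstacle is anticipated; the argument rests on the elementary identity $\pi^* T^{\pm}_{\fV'} = \zeta^{m^{\pm}} \hat{T}^{\pm}$ together with the fact that proper transforms map onto proper transforms under $\pi$. The only delicate point is the dichotomy between $\zeta(\bz) \neq 0$ and $\zeta(\bz) = 0$, handled uniformly above; the complementary case in which the blowup center fails to meet $\fV'$ near $\bz'$ is trivial, since there $\pi$ is locally an isomorphism and $B_\fV$ coincides with $B_{\fV'}$ up to renaming of variables.
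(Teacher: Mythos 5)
Your argument is correct and matches the paper's one‑line proof in substance: both rest on the relation $T^{\pm}_{\fV}\cdot \zeta^{l} = \pi^{*}_{\fV,\fV'}T^{\pm}_{\fV'}$ between terms of the proper transform and pullbacks of the original terms, so that a term nonvanishing at $\bz'=\pi_{\fV,\fV'}(\bz)$ forces the corresponding $T^{\pm}_{\fV}(\bz)\neq 0$. The paper's display writes $B^{\mn}_{\fV}=\pi^{*}_{\fV,\fV'}B^{\mn}_{\fV'}$ without the $\zeta^{-l}$ correction, so your version is the more precise rendering; note also your case split on $\zeta(\bz)$ is avoidable, since $T^{+}_{\fV}(\bz)\,\zeta(\bz)^{l}=T^{+}_{\fV'}(\bz')\neq 0$ already gives $T^{+}_{\fV}(\bz)\neq 0$ directly.
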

\begin{proof} Under the assumption, we have
 $$B^\mn_{ \fV}=\pi^*_{\fV,\fV'} B^\mn_{ \fV'}=\pi^*_{\fV,\fV'} T^+_{\fV'} - \pi^*_{\fV,\fV'} T^-_{\fV'},$$
where $ \pi_{\fV,\fV'}: \fV \lra \fV'$ is the (induced) projection.
 Then, one sees that the statement follows immediately.
\end{proof}

Fix and consider any of $\vt$-blowups, $\wp$-blowups, and $\eth$-blowups, that is, we consider
$\tsR_{\vt_{[k]}} \to \tsR_{\vt_{[k-1]}}$ for some $k \in [\up]$, or
 $\tsR_{(\wp_{(k\tau)}\fr_\mu\fs_h)} \to \tsR_{(\wp_{(k\tau)}\fr_\mu\fs_{h-1})}$
 for some $(k\tau)\mu h\in \Index_\Phi$, or
  $\tsR_{(\eth_{(k\tau)}\fr_\mu\fs_h)} \to \tsR_{(\eth_{(k\tau)}\fr_\mu\fs_{h-1})}$
 for some $(k\tau)\mu h \in \Index_\Psi$. 
 We let $$\tsR_{\hs} \to \tsR_{\hs'}$$ denote the above fixed blowup.

We let $E_\hs$ be the exceptional divisor created by the above fixed blowup.

Consider an arbitrary standard chart $\fV$ of $\tsR_\hs$, lying over 
a (unique) standard chart $\fV'$ of $\tsR_{\hs'}$.
Suppose the chart $\fV'$ intersects the center of the blowup $\tsR_{\hs} \to \tsR_{\hs'}$
along a nonempty closed subset. Then,
on the chart $\fV'$, the corresponding $\vt$- or $\wp$- or $\eth$-center,  corresponds
to two variables 
$$\{y'_0, \; y'_1\} \subset \var_{\fV'}.$$
We let   $\PP^1_{[\xi_0,\xi_1]}$ be the corresponding factor projective space with 
$[\xi_0,\xi_1]$ corresponding to $(y'_0,  y'_1)$.
We suppose that the chart $\fV$ corresponds to $(\xi_i \equiv 1)$ for one of $i \in \{0, 1\}$.
We let $j \in \{0, 1\} \- i$ and let $y_j \in \var_\fV$ be the proper transform of $y'_j$.
We let $\zeta_\fV \in \var_\fV$  be the exceptional parameter 
of $E_\hs$ on the chart $\fV$ (it corresponds to $y'_i$).


We point out here that in the above, if $\hs ={\vt_{[k]}}$,  then it corresponds
 to the fixed $F_k$; if $\hs=(\wp_{(k\tau)}\fr_\mu\fs_h)$ or 
 $(\eth_{(k\tau)}\fr_\mu\fs_h)$, then it corresponds to the fixed $B_{(k\tau)}$.

\begin{defn} \label{termi-v0}  Consider $B_{(k\tau)} \in \cB^\mn$.
(Here, if $\hs ={\vt_{[k]}}$, then we consider any $\tau \in [\ft_{F_k}]$.)
Suppose $B_{(k\tau)}$ does not terminate on the chart $\fV'$,
but terminates on the chart $\fV$. Then, we call 
the proper transform $y_j \in \var_\fV$ of $y'_j \in \var_{\fV'}$ the terminating central variable for  the binomial equation $B_{(k\tau)}$ on the chart $\fV$.  We call $\zeta_\fV$  
the terminating exceptional parameter on the chart $\fV$.

Moreover, for any standard chart $\widetilde \fV$ of some fixed $\vt$- or $\wp$- or $\eth$-blowup scheme,
 lying over the above  chart $\fV$, we also say $B_{\widetilde \fV, (k\tau)}$ terminates on the chart
  $\widetilde \fV$;  we also call 
 the proper transform $y_{\widetilde \fV, j} \in \var_{\widetilde \fV}$ of $y_j$
  the terminating central variable for  the binomial equation $B_{(k\tau)}$
  on the chart $\widetilde \fV$. 
  
  Likewise, 
  we call the proper transform $\zeta_{\widetilde \fV} \in \var_{\widetilde \fV}$ of $\zeta_\fV \in \var_\fV$
  the terminating exceptional parameter for  the binomial equation $B_{(k\tau)}$
  on the chart $\widetilde \fV$.
  
  In particular, the above applies to the standard charts of the final scheme $\tsR_\eth$.
\end{defn}

\subsection{Proper transforms of defining 
 equations in $(\eth_{(k\tau)}\fr_\mu\fs_h)$} $\ $

We keep the notations of Proposition \ref{meaning-of-var-p-k} as well as those in its proof.

In particular, we have that $\fV$ is a standard chart of $\tsR_{(\eth_{(k\tau)}\fr_\mu\fs_h)}$, lying over
 a standard chart $\fV'$ of $\tsR_{(\eth_{(k\tau)}\fr_\mu\fs_{h-1})}$.
We suppose that $B_{(k\tau)}$ corresponds to $s \in S_{F_k}\-s_{F_k}$. We then let
 $$\psi'_{(k\tau)\mu h}=\{Y'_0=X_{(\eth_{(k\tau)}\fr_\mu\fs_{h-1}), (\uu_s,\uv_s)}, \;\; Y'_1\}$$
  be the proper transforms of $\eth$-set $\psi_{(k\tau)\mu h}$
  in $\tsR_{(\eth_{(k\tau)}\fr_\mu\fs_{h-1})}$;
we let  $Z'_{\psi_{(k\tau)\mu h}}$ be proper transform 
  of  the $\eth$-center $Z_{\psi_{(k\tau)\mu h}}$  in $\tsR_{(\eth_{(k\tau)}\fr_\mu\fs_{h-1})}$.
In addition, we assume that $Z'_{\psi_{(k\tau)\mu h}} \cap \fV'  \ne \emptyset$.
 For convenience, we let $y_0'=x_{\fV', (\uu_s,\uv_s)}$.
 We then have
 \begin{eqnarray}\label{vars-wpktaumuh}
Y'_0 \cap \fV' =(y_0'=0)   \;\;\;\;\; \;\;\;\;\;\;\;\;\;\; \\
  Y'_1 \cap \fV' =(y_1'=0), \; \hbox{for some $y_1'  \in \var_{\fV'}$}. \nonumber
  \end{eqnarray}
 Note here that the first identity holds
 according to Proposition \ref{meaning-of-var-p-k} (2). 
Furthermore,  we let $[\xi_0,\xi_1]$ be the homogenous coordinates of 
the factor projective space $\PP_{\psi_{(k\tau)\mu h}}$
 corresponding to $(y'_0,y'_1)$.

  Consider any fixed $B \in \cB^\mn  \cup \cB^\q$ and $\bF \in \sfm$.
Suppose $B_{\fV'}$ and $L_{\fV', F}$ have been constructed over $\fV'$.
Applying Definition \ref{general-proper-transforms}, we obtain the proper transforms on the chart $\fV$
$$B_{\fV}, \; B \in \cB^\mn  \cup \cB^\q; \;\; L_{\fV, F}, \; \bF \in \sfm.$$

In what follows, for any $B =T^+_B - T^-_B \in \cB^\mn$, we express
$B_\fV= T^+_{\fV, B} - T^-_{\fV, B}$. If $B=B_{(k\tau)}$ for some $k \in [\up]$ and $\tau \in [\ft_{F_k}]$,
we also write $B_\fV= T^+_{\fV, (k\tau)} - T^-_{\fV, (k\tau)}$.


\begin{prop}\label{equas-p-k} 
Let the notation be as in Proposition \ref{meaning-of-var-p-k} and be as in above.

Let $\fV$ be any standard chart of $\tsR_{(\eth_{(k\tau)}\fr_\mu\fs_h)}$. Then, the scheme 
$\tsV_{(\eth_{(k\tau)}\fr_\mu\fs_h)}\cap \fV$, as a closed subscheme of the chart $\fV$,
is defined by $$\cB_\fV^\mn, \; \cB_\fV^\q, \; L_{\fV, \sfm}.$$
 
Suppose 
that $Z'_{\psi_{(k\tau)\mu h}} \cap \fV'  \ne \emptyset$ where
$Z'_{\psi_{(k\tau)\mu h}}$ is the proper transform 
  of  the $\eth$-center $Z_{\psi_{(k\tau)\mu h}}$  in $\tsR_{(\eth_{(k\tau)}\fr_\mu\fs_{h-1})}$.
Further, we let $\zeta=\zeta_{\fV, (k\tau)\mu h}$ be the exceptional parameter in $\var_\fV$ such that
$$E_{(\eth_{(k\tau)}\fr_\mu\fs_h)} \cap \fV = (\zeta=0).$$

Then, the following hold.

\begin{enumerate}
\item Suppose $\fV= (\fV' \times (\xi_0 \equiv 1)) \cap \tsR_{(\eth_{(k\tau)}\fr_\mu\fs_h)}$.
We let $y_1 \in \var_\fV$  be the proper transform of $y_1' \in \var_{\fV'}$. 
Then, we have
\begin{itemize}
\item[(1a)]  The plus-term 
$T^+_{\fV, (k\tau)}$ terminates on the chart $\fV$. $y_1 \nmid T^+_{\fV, (k\tau)}$.
Suppose $\deg_{y_1'} T^-_{\fV', (k\tau)}=b$ for some 
integer $b$, positive by definition, then we have $\deg_{\zeta} T^-_{\fV, (k\tau)}=b-1$. Consequently,
either $T^-_{\fV, (k\tau)}$ is linear in $y_1$ or else $\zeta \mid T^-_{\fV,  (k\tau)}$.
(Note here that $y_0' \in \var_{\fV'}$ becomes $\zeta \in \var_\fV$.)
\item[(1b)] Let $B \in  \cB^\mn \- \{ B_{(k\tau)}\}$. 
Then, $y_1 \nmid T^+_{\fV, B}$; if $y_1 \mid T^-_{\fV, B}$, then $\zeta \mid T^-_{\fV, B}$.
\item[(1c)] Consider any fixed term $T_B$ of any given $B \in \cB^\q$. 
Suppose $y_1 \mid T_{\fV, B}$, then either $T_{\fV, B}$ is linear in $y_1$ or 
$\zeta \mid T_{\fV, B}$.
\end{itemize}
\item Suppose $\fV= (\fV' \times (\xi_1 \equiv 1)) \cap \tsR_{(\eth_{(k\tau)}\fr_\mu\fs_h)}$.
We let $y_0 =x_{\fV, (\uu_s,\uv_s)} \in \var_\fV$  be the proper transform 
of $y_0'=x_{\fV', (\uu_s,\uv_s)}$. 
 Then, we have
\begin{itemize}
\item[(2a)]   $T^+_{\fV, (k\tau)}$ is linear in $y_0$. $y_0 \nmid T^-_{\fV, (k\tau)}$.
$\deg (T^-_{\fV, (k\tau)}) =\deg (T^-_{\fV', (k\tau)})-1$.
Indeed, if we let $\deg_{y_1'} T^-_{\fV', (k\tau)}=b$ for some 
integer $b$, positive by definition, then we have $\deg_{\zeta} T^-_{\fV, (k\tau)}=b-1$. 
Note here that $y_1' \in \var_{\fV'}$ becomes the exceptional variable $\zeta \in \var_\fV$.  
\item[(2b)] Let $B \in   \cB^\mn \- \{ B_{(k\tau)}\}$. 
Then, $y_0 \nmid T^\pm_{\fV, B}$.
\item[(2c)] Consider any fixed term $T_B$ of any given $B \in \cB^\q$. 
Suppose $y_0 \mid T_{\fV, B}$, then  $T_{\fV, B}$ is linear in $y_0=x_{\fV, (\uu_s,\uv_s)}$.
\end{itemize}
Consequently, we have
\item  $\vk_{(k\tau)}< \infty$;
\item   Moreover, consider $\tsR_{(\eth_{(k\tau)}\fr_{\vk_{(k\tau)}})}=
\tsR_{(\eth_{(k\tau)}\fr_{\vk_{(k\tau)}}\fs_{\vs_{(k\tau)\vk_{(k\tau)}}})}$.
For every $B \in \cB^\mn$ 
with $B \le B_{(k\tau)}$, we have that $B$ 
terminates on $\tsR_{(\eth_{(k\tau)}\fr_{\vk_{(k\tau)}})}$.
\end{enumerate}
\end{prop}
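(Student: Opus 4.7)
The plan is to prove the proposition by induction on the index $(k\tau)\mu h \in \{(11)10\} \sqcup \Index_\Psi$, in complete parallel with the proof of Proposition \ref{equas-vskmuh} for $\wp$-blowups. The initial case is $(\eth_{(11)}\fr_1\fs_0)$, where $\tsR_{(\eth_{(11)}\fr_1\fs_0)} = \tsR_\wp$, so the statement on defining equations reduces to Corollary \ref{eq-for-sV-vr}, and the remaining assertions (1)--(4) are vacuous at this stage.

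For the inductive step, fix $(k\tau)\mu h \in \Index_\Psi$ and let $\fV$ be a standard chart of $\tsR_{(\eth_{(k\tau)}\fr_\mu\fs_h)}$ lying over the standard chart $\fV'$ of $\tsR_{(\eth_{(k\tau)}\fr_\mu\fs_{h-1})}$. First I would dispose of the trivial case where the proper transform $Z'_{\psi_{(k\tau)\mu h}}$ of the $\eth$-center does not meet $\fV'$; there the morphism $\fV \to \fV'$ is an isomorphism and every claim carries over unchanged from the inductive hypothesis. Otherwise $\fV$ is one of the two subcharts $(\xi_i \equiv 1)$ for $i \in \{0,1\}$, on which the defining relation $y'_j = \zeta \, y_j$ holds, where $j \in \{0,1\} \setminus \{i\}$, $\zeta = y'_i$ is the new exceptional parameter, and $y_j$ is the proper transform of $y'_j$. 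The assertion that $\cB^\mn_\fV$, $\cB^\q_\fV$, and $L_{\fV,\sfm}$ cut out $\tsV_{(\eth_{(k\tau)}\fr_\mu\fs_h)} \cap \fV$ then follows by taking proper transforms, combining Definition \ref{general-proper-transforms} with the inductive hypothesis.

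The technical heart lies in the degree bookkeeping of (1) and (2). The crucial structural fact, which distinguishes $\eth$-blowups from $\wp$-blowups, is that the $\eth$-center always involves the $\vr$-divisor $Y^+ = X_{(\eth_{(k\tau)}\fr_{\mu-1}), (\uu_s,\uv_s)}$, so $y'_0 = x_{\fV', (\uu_s,\uv_s)}$. This $\vr$-variable appears uniquely in $T^+_{\fV', (k\tau)}$ among all main binomials, does not appear in $T^-_{(k\tau)}$, and occurs at most linearly in each term of any binomial of $\cB^\q$ by Lemma \ref{ker-phi-k}(2); meanwhile $y'_1$ corresponds to a $\vp$- or exceptional divisor associated with $T^-_{(k\tau)}$. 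With this setup, (1a) and (2a) are direct substitutions using the blowup relation $y'_j = \zeta \, y_j$, invoking Proposition \ref{equas-vskmuh}(4) (which, at the outset of $\eth$-blowups for $B_{(k\tau)}$, ensures that $T^+_{\fV, (k\tau)} / x_{\fV, (\uu_s,\uv_s)}$ is invertible on $\tsV \cap \fV$) together with the inductive hypothesis for $\mu \ge 2$. Statements (1b), (1c), (2b), (2c) follow analogously by the same substitution, using that $x_{(\uu_s,\uv_s)}$ appears in no other main binomial and the aforementioned linearity in quotient-type binomials.

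For (3) and (4), I would run the descent argument in parallel with Proposition \ref{equas-vskmuh}(3): on each $(\xi_1 \equiv 1)$ chart (2a) gives $\deg(T^-_{\fV, (k\tau)}) = \deg(T^-_{\fV', (k\tau)}) - 1$, while on each $(\xi_0 \equiv 1)$ chart (1a) either produces a factor linear in $y_1$ that forces termination via Corollary \ref{no-(um,uu)}, or else accumulates a $\zeta$ factor in $T^-_{\fV,(k\tau)}$ that feeds the same descent in a subsequent round. Since the pool of admissible $\vp$- and exceptional factors supported on $T^-_{(k\tau)}$ is finite, the process must stop after finitely many rounds $\fr_\mu$, yielding $\vk_{(k\tau)} < \infty$. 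The main obstacle, as in the $\wp$-case, will be to formalize this descent cleanly while tracking the association multiplicities $m_{Y, T_B}$ coherently through all three blowup types simultaneously and separating the contribution of newly acquired $\zeta$-parameters from the pre-existing exceptional mass. Assertion (4) then follows by combining (3) for $B = B_{(k\tau)}$ with Lemma \ref{termi-carry-over}, which propagates termination of strictly earlier main binomials forward through the subsequent blowup rounds.
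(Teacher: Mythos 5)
Your proposal follows essentially the same inductive scheme as the paper's proof, and the key structural observation you identify — that the $\eth$-set always carries $Y^+ = X_{(\eth_{(k\tau)}\fr_{\mu-1}),(\uu_s,\uv_s)}$, a $\vr$-divisor that is unique to $T^+_{(k\tau)}$ among main binomials and at most linear in each term of $\cB^\q$ by Lemma \ref{ker-phi-k}(2) — is precisely what drives parts (1a)--(2c) in the text. Two imprecisions should be corrected, however.

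First, the initial case $(\eth_{(11)}\fr_1\fs_0)$ has $\tsR_{(\eth_{(11)}\fr_1\fs_0)} = \tsR_\wp$, so the defining-equation statement should be deduced from Proposition \ref{equas-vskmuh} applied at the terminal stage of the $\wp$-blowups, not from Corollary \ref{eq-for-sV-vr}, which concerns $\tsR_\vt$; the latter is the initial scheme for $\wp$-blowups, one stage too early.

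Second, your termination argument for (3) misattributes the mechanism in the $(\xi_0\equiv 1)$ subcase. Corollary \ref{no-(um,uu)} plays a role in the $\wp$-blowup termination (Proposition \ref{equas-vskmuh}(3)), where one eventually reduces the minus-term to $x_{(\um,\uu_k)}$; in the $\eth$-blowup setting, the $(\xi_0\equiv 1)$ case is even simpler. There, by (1a), the plus-term $T^+_{\fV,(k\tau)}$ terminates outright on the chart — once $y'_0 = x_{\fV',(\uu_s,\uv_s)}$ becomes $\zeta$, the sole remaining potential zero factor of the plus-term (after Proposition \ref{equas-vskmuh}(4) has made the rest invertible) is gone after taking the proper transform — so $B_{(k\tau)}$ terminates immediately, with no $y_1$-linearity argument needed. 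The nontrivial descent is confined to the complementary case where only $(\xi_1\equiv 1)$ keeps occurring: there (2a) decreases $\deg(T^-_{\fV,(k\tau)})$ by one each round, and since this degree is finite and non-negative the process stops, giving $\vk_{(k\tau)} < \infty$ directly. With those corrections, your argument for (4) via (3) and Lemma \ref{termi-carry-over} is exactly as in the paper.
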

\begin{proof} 
We continue to follow  the notation in the proof of Proposition \ref{meaning-of-var-p-k}. 
(In many ways, the proofs are similar to those of Proposition \ref{equas-vskmuh}.)

We prove the proposition 
 by applying induction on
$(k\tau) \mu h  \in ((11)1 0) \sqcup \Index_\Psi$. 

The initial case is $((11)1 0)$ with $\tsR_{(\eth_{(11)}\fr_1 \fs_0)}=\tsR_{\wp}$. In this case, 
the  statement about defining equations
 of  $\tsR_{(\eth_{(11)}\fr_1 \fs_0)} \cap \fV$ follows from 
 Proposition \ref{equas-vskmuh} for $\tsR_{\wp}$;  the remainder statements (1) - (4) are void.

Assume that the proposition holds  for $(\eth_{(k\tau)}\fr_\mu \fs_{h-1})$
with $(k\tau) \mu h  \in  \Index_\Psi$.

Consider $(\eth_{(k\tau)}\fr_\mu \fs_h)$. 
Consider a standard chart $\fV$ of $\tsR_{(\eth_{(k\tau)}\fr_\mu \fs_{h})}$,
lying over a unique standard chart of $\fV'$ over  $\tsR_{(\eth_{(k\tau)}\fr_\mu \fs_{h-1})}$.
By assumption, all the desired statements of the proposition hold over $\fV'$.

To begin with, we suppose that the proper transform $Z'_{\psi_{(k\tau)\mu h}}$ 
of the $\eth$-center $Z_{\psi_{(k\tau)\mu h}}$  in $\tsR_{(\eth_{(k\tau)}\fr_\mu\fs_{h-1})}$ does not
meet the chart $\fV'$.  
Then, by the proof of Proposition \ref{meaning-of-var-p-k},
  we have that $\fV$ retains all the data from those of $\fV'$.
As the $\eth$-blowup  along the proper transform of $Z_{\psi_{(k\tau)\mu h}}$
does not affect the chart $\fV'$,  we have that
the statement of the proposition on the defining equations
 follows immediately from the inductive assumption.
The statements (1) and (2) are void.

In what follows, we suppose that the proper transform $Z'_{\psi_{(k\tau)\mu h}}$ 
meets the chart $\fV'$ along a nonempty closed subset. 

The  statement of the proposition on the defining equations of $\tsV_{(\eth_{(k\tau)}\fr_\mu \fs_{h})} \cap \fV$
follows straightforwardly from the inductive assumption.

 {\it Proof of (1).  }

(1a).  The fact that $T^+_{\fV, (k\tau)}$ terminates on the chart $\fV$
 follows from the form of the $\eth$-set $\psi_{(k\tau)\mu h}$ on the chart $\fV'$
(see \eqref{vars-wpktaumuh})  and Proposition \ref{equas-vskmuh} (4).
The remainder follows from a direct check.

(1b). Consider any $B=B_{(k'\tau')} \in   \cB^\mn$ with $(k'\tau') \ne (k\tau)$.
  We can express $T^+_B= x_{(\uu_t, \uv_t)} x_{\uu_{k'}}$ for some $t \in S_{F_{k'}}$ and $t \ne s$
  (if $k'=k$).
Suppose $(k'\tau') < (k\tau)$, by Proposition \ref{equas-p-k} (4) on
$\tsR_{(\eth_{(k'\tau')}\fr_{\vk_{(k'\tau')}})}$, which holds by the inductive assumption, 
$T^+_{\fV',B}$ terminates on $\fV'$. Hence,  we have $y_0', y_1' \nmid T^+_{\fV',B}$.
Suppose $(k'\tau') > (k\tau)$,  by Proposition \ref{equas-vskmuh} (4), 
 we   we have $y_0', y_1' \nmid T^+_{\fV',B}$.
 In either case,  this implies
the statements of (1b).  

(1c).  Consider any $B \in \cB^\q$. 
 Fix and consider any term  $T_{\fV', B}$ of $B_{\fV'}$. We assume $y_1 \mid T_{\fV, B}$.
If none of the variables $\{y_0', y_1'  \}$ of \eqref{vars-wpktaumuh} appears in the other term $T'_{\fV',B}$
of $B_{\fV'}$, then the statement  follows immediately. 
Suppose one of the variables of \eqref{vars-wpktaumuh} appears in $T'_{\fV',B}$.
Note that $y_0'=x_{\fV', (\uu_s,\uu_s)}$, a $\vr$-variable,
 is {\it linear} in $B_{\fV'}$ (see Proposition \ref{equas-p-k=0}), and it
becomes the exceptional variable $\zeta \in \var_\fV$. We may write $y_1'=\zeta y_1$.
As $\deg_{y_1'} T^-_{\fV', B}=b$ for some integer $b >0$, one sees that $\zeta^{b-1} y_1^b  \mid T_{\fV, B}$.
This implies the statement.

{\it Proof of (2).  }

 In this case, $y_1'=\zeta$, that is, $y_1'$ is renamed as  the exceptional parameter in $\var_\fV$, and
 we have $y_0=x_{\fV, (\uu_s,\uv_s)}$ is the proper transform of $y_0'$.

(2a) The fact that $T^+_{\fV, (k\tau)}$ is linear in $y_0=x_{\fV, (\uu_s,\uv_s)}$ follows immediately
from the expression $T^+_{(k\tau)} = x_{(\uu_s,\uv_s)} x_{\uu_k}$.
The remainder follows from the form of the $\eth$-set $\psi_{(k\tau)\mu h}$ on the chart $\fV'$
(see \eqref{vars-wpktaumuh})  and a straightforward calculation.

(2b) follows immediately because $x_{(\uu_s, \uv_s)}$ uniquely appears in 
$B_{(k\tau)}$ among all main binomial equations  of $\cB^\mn$.

(2c) follows immediately because 
$B_\fV$ is linear in $\vr$-variables
 (see Proposition \ref{equas-p-k=0}).

{\it Proof of (3).  } (The proof is analogous to that of Proposition \ref{equas-vskmuh} (3).)

If (1) occurs, then by (1a), $T^+_{\fV, (k\tau)}$  terminates, thus $B_{\fV, (k\tau)}$ terminates.

Assuming  (1) does not occur and (2) keeps happening, then, by (2a),
as $\mu$ increases, after  finitely many rounds,
 $T^-_{\fV, (k\tau)}$ must terminate, hence, $B_{\fV, (k\tau)}$ terminates,
 for any standard chart $\fV$.

This implies that  the $\eth$-blowups in $(\eth_{(k\tau)})$ must terminate after finitely many rounds.
That is, $\vk_{(k\tau)} < \infty$.

{\it Proof of (4).  }

Suppose $B \in \cB^\mn$ with $B < B_{(k\tau)}$.
We write $B=B_{(k'\tau')}$ for some $(k'\tau') \in \Index_{\cB^\mn}$ with $(k'\tau') < (k\tau)$.
Then, $B$ terminates on  $\tsR_{(\eth_{(k\tau)}\fr_{\vk_{(k\tau)}})}$ by applying
Proposition \ref{equas-p-k} (4) in $(\eth_{(k'\tau')})$ which holds by the inductive assumption, 
and Lemma \ref{termi-carry-over}.

For $B=B_{(k\tau)}$, it  follows immediately from the proof of (3).

Therefore, by induction on 
$(k\tau) \mu h \in ((11)10) \sqcup \Index_\Psi$,   Proposition \ref{equas-p-k} is proved.
\end{proof}


The following is the special case of Proposition \ref{equas-p-k}.

\begin{cor}\label{all-terminate} 
 The binomial $B$ 
 terminates on $\tsR_\eth$  for all $B \in \cB^\mn$. 
\end{cor}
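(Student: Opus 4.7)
The plan is to read off Corollary \ref{all-terminate} directly from Proposition \ref{equas-p-k} (4) evaluated at the top element of the index set $\Index_{\cB^\mn}$. Recall that $\Index_{\cB^\mn} = \{(k\tau) \mid k \in [\up],\ \tau \in [\ft_{F_k}]\}$ is totally ordered by the lexicographic order on pairs, so its maximum is $(\up,\ft_{F_\up})$, and every $B \in \cB^\mn$ satisfies $B \le B_{(\up\ft_{F_\up})}$ by construction of that order. First I would invoke Proposition \ref{equas-p-k} (4) with $(k\tau) = (\up,\ft_{F_\up})$: it asserts that for each such $B$ the proper transform $B_\fV$ terminates on every standard chart $\fV$ of the scheme $\tsR_{(\eth_{(\up\ft_{F_\up})}\fr_{\vk_{(\up\ft_{F_\up})}})}$, in the sense of Definition \ref{termninatingB}. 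The finiteness of $\vk_{(\up\ft_{F_\up})}$, which is required for this scheme to be defined, is itself part (3) of the same proposition, so nothing additional has to be checked.

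The only remaining point is a bookkeeping check that this terminal scheme coincides with $\tsR_\eth$. By the definition immediately preceding diagram \eqref{eth-final} we have $\tsR_\eth := \tsR_{(\eth_{(\up\ft_{\up})}\fr_{\vk_{\up\ft_{\up}}})}$, which is exactly the scheme appearing in Proposition \ref{equas-p-k} (4) at $(k\tau) = (\up,\ft_{F_\up})$. Hence the termination conclusion transfers verbatim, and Corollary \ref{all-terminate} follows. There is no substantive obstacle here: the induction driving the proposition was designed so that its final instance gives precisely the global termination statement, and Lemma \ref{termi-carry-over} is not even needed since $(\up,\ft_{F_\up})$ is already the terminal index. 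The corollary is, in effect, the payoff of the induction rather than a new assertion.
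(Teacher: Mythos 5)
Your proposal is correct and matches the paper's own (unstated but implicit) argument: the paper labels Corollary \ref{all-terminate} as "the special case of Proposition \ref{equas-p-k}," and you have simply made explicit that it is part (4) of that proposition at the maximal index $(\up,\ft_{F_\up})$, together with the identification $\tsR_\eth = \tsR_{(\eth_{(\up\ft_{\up})}\fr_{\vk_{\up\ft_{\up}}})}$ and the finiteness of $\vk_{(\up\ft_{\up})}$ from part (3). Nothing to add.
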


\begin{cor}\label{e2} 
Let the notation be as in Definition \ref{termi-v0}.

Consider $B=B_{(k\tau)} \in \cB^\mn$ with $(k\tau) \in \Index_{\cB^\mn}$.
 Suppose $B_{\fV, (k\tau)}$ terminates on a standard chart $\fV$ of $\tsR_\hs$ for some
 $\hs \in \Game$. 
Suppose the minus term $T^-_{\fV, (k\tau)}$ of $B_{\fV, (k\tau)}$ is divisible by 
the terminating exceptional parameter $\zeta_\fV$.
Then, we have $$E_\hs \cap \tsV_\hs \cap \fV=\emptyset.$$ In particular, suppose
$E_\hs \cap \tsV_\hs \cap \fV \ne \emptyset$, then,
   the binomial equation $B_{\fV, (k\tau)}$ is linear 
   in the (unique) terminating central variable $y \in \var_\fV$.
\end{cor}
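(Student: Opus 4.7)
The plan is to argue by contradiction for the main equality, and then to obtain the linearity statement as an immediate consequence of the relevant structural proposition.

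First I would recall that, by Corollary \ref{eq-for-sV-vr}, Proposition \ref{equas-vskmuh}, or Proposition \ref{equas-p-k} according to whether $\hs$ labels a $\vt$-, $\wp$-, or $\eth$-step, the main binomial $B_{\fV,(k\tau)} \in \cB^\mn_\fV$ lies in the defining ideal of $\tsV_\hs \cap \fV$ inside $\fV$. Consequently, at every closed point $\bz \in \tsV_\hs \cap \fV$ one has the equality $T^+_{\fV,(k\tau)}(\bz) = T^-_{\fV,(k\tau)}(\bz)$, and this is what I will exploit.

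Now I would execute the contradiction as follows. Suppose there is a closed point $\bz \in E_\hs \cap \tsV_\hs \cap \fV$. By Proposition \ref{meaning-of-var-p-k} (and its $\vt$- and $\wp$-analogs, Proposition \ref{meaning-of-var-vtk} and Proposition \ref{meaning-of-var-vskmuh}), the exceptional divisor $E_\hs$ cuts out $(\zeta_\fV = 0)$ on $\fV$, so $\zeta_\fV(\bz) = 0$. The hypothesis $\zeta_\fV \mid T^-_{\fV,(k\tau)}$ then forces $T^-_{\fV,(k\tau)}(\bz) = 0$, and ideal membership yields $T^+_{\fV,(k\tau)}(\bz) = 0$ as well. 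Thus both terms of $B_{\fV,(k\tau)}$ vanish at $\bz$, which contradicts the assumed termination of $B_{\fV,(k\tau)}$ on $\fV$ in the sense of Definition \ref{termninatingB}. Hence $E_\hs \cap \tsV_\hs \cap \fV = \emptyset$, as asserted.

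For the \emph{in particular} clause, the contrapositive of what I just argued shows that whenever $E_\hs \cap \tsV_\hs \cap \fV \ne \emptyset$, one must have $\zeta_\fV \nmid T^-_{\fV,(k\tau)}$. I would then invoke the appropriate case of the structural proposition --- part (1a) or (2a) of Proposition \ref{equas-vskmuh} when $\hs$ is a $\wp$-step, or part (1a) or (2a) of Proposition \ref{equas-p-k} when $\hs$ is an $\eth$-step --- which, under the failure of $\zeta_\fV \mid T^-_{\fV,(k\tau)}$, pins down $B_{\fV,(k\tau)}$ as linear in the terminating central variable $y$: on a chart of type $(\xi_0 \equiv 1)$ one has $y_1 \nmid T^+_{\fV,(k\tau)}$ together with $T^-_{\fV,(k\tau)}$ linear in $y = y_1$, and on a chart of type $(\xi_1 \equiv 1)$ one has $T^+_{\fV,(k\tau)}$ linear in $y = y_0$ with $y_0 \nmid T^-_{\fV,(k\tau)}$. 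The only bookkeeping concern --- and not a real obstacle --- is that if termination first occurred at some earlier stage $\hs_0$ strictly preceding $\hs$, one must propagate the linearity forward from the chart of $\tsR_{\hs_0}$ where it was established to $\fV$; this is immediate from Lemma \ref{same-degree}, since proper transforms preserve the degree of each non-exceptional coordinate variable term-by-term.
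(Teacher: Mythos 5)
Your argument for the emptiness claim is essentially the paper's own: observe that $\zeta_\fV$ vanishes along $E_\hs \cap \fV$, so if $\zeta_\fV \mid T^-_{\fV,(k\tau)}$ then at any point of $E_\hs \cap \tsV_\hs \cap \fV$ both terms of the defining binomial $B_{\fV,(k\tau)}$ vanish, contradicting termination. The passage to the \emph{in particular} clause via parts (1a)/(2a) of Proposition \ref{equas-vskmuh} and Proposition \ref{equas-p-k} is likewise exactly what the paper does. So the route is the same, and the core is sound.

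Two points deserve attention. First, you omit the case $\hs = \vt_{[k]}$. Since $\Game$ ranges over $\vt$-, $\wp$-, \emph{and} $\eth$-steps, and a main binomial $B_{(k\tau)}$ can first terminate at the $\vt$-blowup $\tsR_{\vt_{[k]}} \to \tsR_{\vt_{[k-1]}}$, your case analysis for the linearity clause is incomplete; the paper covers this by citing Proposition \ref{eq-for-sV-vtk}, where the explicit chart-forms of $B_{\fV,(s_{F_k},s)}$ make the linearity in the terminating central variable (either $x_{\fV,\uu_k}$ on a $\vr$-standard chart or $x_{\fV,(\um,\uu_k)}$ on a $\vp$-standard chart) visible by inspection. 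Second, the closing ``bookkeeping'' remark about propagating linearity from an earlier stage $\hs_0 < \hs$ cuts against your own first paragraph: the identification $E_\hs \cap \fV = (\zeta_\fV = 0)$, which your contradiction depends on, holds only when $\hs$ is precisely the step at which termination first occurs --- the situation fixed by Definition \ref{termi-v0}, to which the corollary's preamble refers. In the scenario you raise, $\zeta_\fV$ would be a proper transform of an earlier exceptional parameter and would not cut out $E_\hs$ on $\fV$, so $\zeta_\fV(\bz)=0$ for $\bz \in E_\hs$ would no longer follow. The remark is therefore not a safe generalization of the corollary; it is vacuous in the intended reading and, if taken at face value, would require repairing the first part rather than only adding the Lemma \ref{same-degree} propagation.
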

\begin{proof}
First, observe that  we have $\zeta (\bz) =0$  for any $\bz \in E_\hs \cap \fV$.
Hence, if $\zeta_\fV \mid T^-_{\fV, (k\tau)}$, we must have
$E_\hs \cap \tsV_\hs \cap \fV=\emptyset$, because $B_{\fV, (k\tau)}$ terminates.

Suppose now $E_\hs \cap \tsV_\hs \cap \fV \ne \emptyset$.

If $\fV$ is a standard chart of $\vt$-blowup scheme $\tsR_{\vt_{[k]}}$ for some $k \in [\up]$, then
the statements follow immediately from  Proposition \ref{eq-for-sV-vtk}. 

If $\fV$ is a standard chart of $\wp$-blowup scheme $\tsR_{\wp_{(k\tau)}\fr_\mu\fs_h}$
 for some $(k\tau)\mu h\in \Index_\Phi$, then
the statements follow immediately from  
Proposition \ref{equas-vskmuh} (1a) and (2a). 

If $\fV$ is a standard chart of $\eth$-blowup scheme $\tsR_{\eth_{(k\tau)}\fr_\mu\fs_h}$
 for some $(k\tau)\mu h\in \Index_\Psi$, then
the statements follow immediately from  
Proposition \ref{equas-p-k} (1a) and (2a). 
\end{proof}

\section{$\Ga$-schemes and Their 
Transforms}\label{Gamma-schemes}

\subsection{$\Ga$-schemes} $\ $

Here, we return to the initial affine chart $\rU_\um \subset \PP(\wedge^d E)$.

\begin{defn}\label{ZGa} 
Let $\Ga$ be an arbitrary subset of $\var_{\rU_\um}=\{x_\uu \mid \uu \in \II_{d,n} \-\um \}$. 
We let $I_\Ga$ be the ideal of $\kk[x_\uu]_{\uu \in \II_{d,n}\-\um}$ generated by all the elements 
$x_{\uu}$ in $\Ga$, 
 and, 
 $$I_{\wp,\Ga}=\langle x_\uu, \; \bF \mid x_\uu \in \Ga, \; \bF \in \sfm \rangle$$
  be the ideal of $\kk[x_\uu]_{\uu \in \II_{d,n}\-\um}$ generated by $I_\Ga$ together with 
all the de-homogenized $\um$-primary $\pl$ relations of $\Gr^{d,E}$. We let
 $Z_\Ga$ $(\subset \Gr^{d,E} \cap \rU_\um)$ be the closed subscheme of 
 the affine space $\rU_\um$ defined by the ideal $I_{\wp,\Ga}$.
The subscheme $Z_\Ga$ is called  the $\Ga$-scheme of $\rU_\um$. 
Note that  $Z_\Ga \ne \emptyset$ since $0 \in Z_\Ga$.
\end{defn}

(Thus, a $\Ga$-scheme is  an intersection of certain Schubert divisors with the chart $\rU_\um$.
 But, in this article, we do not investigate $\Ga$-schemes in any {\it Schubert} way.)
 

Take $\Gamma =\emptyset$. Then, $I_{\wp,\emptyset}$ is the ideal generated by 
all the de-homogenized $\um$-primary $\pl$ relations.
Thus,  $Z_\emptyset =\rU_\um \cap \Gr^{d,E}$.  

Let $\Ga$ be any fixed subset of $\var_{\rU_\um}$.
We let  $\rU_{\um,\Ga}$ be the coordinate  subspace of $\rU_\um$ defined by $I_\Ga$.
That is,
$$\rU_{\um,\Ga}=\{(x_{\uu} =0)_{ x_\uu \in \Ga}\} \subset \rU_{\um}.$$
This is a coordinate subspace of dimension 
${n \choose d}-1 - |\Ga|$ where $|\Ga|$ is the cardinality of $\Ga$. Then,
$Z_\Ga$ is the scheme-theoretic intersection of $\Gr^{d,E}$ with
the coordinate  subspace $\rU_{\um, \Ga}$.
For any $\um$-primary $\pl$ equation $\bF \in \sfm$, we let $\bF|_\Ga$ be the induced 
polynomial obtained from  the de-homogeneous polynomial $\bF$
by setting $x_{\uu} =0$  for all $x_\uu \in \Gamma$. 
Then, $\bF|_\Ga$ becomes a polynomial on the affine subspace $\rU_{\um,\Ga}$.
We point out that $\bF|_\Ga$ can be identically zero on $\rU_{\um,\Ga}$.

\begin{defn}\label{rel-irrel}
 Let $\Ga$ be any fixed subset of $\var_{\rU_\um}$.
 Let ($\bF$) $F$ be any fixed (de-homogenized)  $\um$-primary $\pl$ relation. 
 We say ($\bF$) $F$  is $\Ga$-irrelevant if  every term of 
 $\bF$ belongs to  the ideal $I_\Ga$.   Otherwise, we say ($\bF$) $F$ is $\Ga$-relevant.
 We let $\sfmgr$ be the set of all $\Ga$-relevant de-homogenized $\um$-primary $\pl$ relations.
 We let $\sfmgir$ be the set of all $\Ga$-irrelevant  de-homogenized $\um$-primary $\pl$ relations.
\end{defn}

If  $\bF$ is $\Ga$-irrelevant,  then $\bF|_\Ga$ is identically zero along $\rU_{\um,\Ga}$. 
Indeed,  $\bF$ is $\Ga$-irrelevant if and only if every term of $\bF$ contains a member of $\Ga$.
 The sufficiency direction is clear. To see the necessary direction,
 we suppose a term $x_\uu x_\uv \in I_\Ga$, then as $I_\Ga$ is prime 
 (the coordinate  subspace $\rU_{\um, \Ga}$ is integral), we have
 $x_\uu$ or  $x_\uv \in \Ga$.

\subsection{$\sF$-transforms of  $\Ga$-schemes in $\sV_{\sF_{[k]}}$}
\label{subsection:wp-transform-sfk}   $\ $





In what follows, we keep  notation of Proposition \ref{equas-fV[k]}. 

Recall that for any $\bF \in \sfm$,
$\La_F=\{(\uu_s, \uv_s) \mid s \in S_F\}.$


\begin{lemma}\label{wp-transform-sVk-Ga} 
Fix any  subset $\Ga$ of $\rU_\um$.  Assume that $Z_\Ga$ is integral. 

Consider $F_k \in \sfm$ for any fixed $k \in [\up]$.

Then, we have the following:
\begin{itemize}
\item there exists a closed subscheme $Z_{\sF_{  [k]},\Ga}$ of $\sV_{\sF_{[k]}}$
with an induced morphism  $Z_{\sF_{  [k]},\Ga}
 \to Z_\Ga$;
\item $Z_{\sF_{  [k]},\Ga}$ comes equipped with an irreducible component  
$Z^\dagger_{\sF_{  [k]},\Ga}$ with the induced morphism 
$Z^\dagger_{\sF_{  [k]},\Ga} 
 \to Z_\Ga$;
 \item  for any standard chart $\fV$ of $\sR_{\sF_{[k]}}$ such that
$Z_{\sF_{[k]},\Ga} \cap \fV \ne \emptyset$, there are two subsets, possibly empty,
$$ \tGa^\zero_{\fV} \; \subset \;  \var_\fV, \;\;\;
\tGa^\one_{\fV} \; \subset \;  \var_\fV.$$ 
\end{itemize}

Further,  consider any given standard chart $\fV$ of $\sR_{\sF_{[k]}}$ with
$Z_{\sF_{[k]},\Ga} \cap \fV \ne \emptyset$. Then,
 there exists a subset $\sF^\star_{[k]; \fV,\Ga} \subset \sfm$ such that the following hold.
\begin{enumerate} 
\item 
The scheme $Z_{\sF_{[k]},\Ga} \cap \fV$, as a closed subscheme of the chart $\fV$,
is defined by the following relations
\begin{eqnarray} 
\;\;\;\;\; y , \; \; \; y \in  \tGa^\zero_\fV , \label{Ga-rel-sVk=0}\\
\;\;\;  y -1, \; \; \; y \in  \tGa^\one_\fV,  \nonumber \\
\cB^\pq_{\fV, [k]},  \;\;\; \;\;\; \;\; \;\; \;\; \;\; \;\; \;\; \;\; \;\; \;\; \;\; \;\; \;\; \label{eqs-sVk} \\
B_{\fV,(s,t)}: \;\;\;  x_{\fV, (\uu_s, \uv_s)}x_{\fV,\uu_t} x_{\fV,\uv_t} - x_{\fV, (\uu_t,\uv_t)}  
 x_{\fV,\uu_s} x_{\fV,\uv_s},  \;\; s, t \in S_{F_i} 
  \;  i \in [k], \nonumber\\
L_{\fV, F_i}: \;\; \sum_{s \in S_{F_i}} \vsgn (s) x_{\fV, (\uu_s,\uv_s)}, \; \;  
\bF_i \in   \sF^\star_{[k]; \fV,\Ga}, \; i \in [k], \nonumber \\
\bF_{\fV,j}: \;\; \sum_{s \in S_{F_j}} \vsgn (s) x_{\fV, \uu_s}x_{\fV,\uv_s}, \; \; k < j \le \up.  \nonumber
\end{eqnarray} 
 Further, we take $\tGa^\zero_\fV \subset \var_\fV$
 to be the maximal subset (under inclusion)
among all those subsets that satisfy the above.
\item The induced morphism  $Z^\dagger_{\sF_{  [k]},\Ga}  
\to Z_\Ga$ is birational. 
\item Fix any variable $y=x_{\fV, \uu}$ or $y=x_{\fV, (\uu,\uv)} \in \var_\fV$,
$Z^\dagger_{\sF_{[k]},\Ga} \cap \fV \subset (y=0)$ 
if and only if $Z_{\sF_{[k]},\Ga} \cap \fV \subset (y=0)$.
(We remark here that this property is not used within this lemma, but 
will be used as the initial case of Lemma \ref{vt-transform-k}.)
\end{enumerate}
\end{lemma}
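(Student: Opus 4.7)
The plan is to proceed by induction on $k$ with base case $k=0$, where $\sR_{\sF_{[0]}}=\rU_\um$ has the single ``chart'' $\rU_\um$ itself. There I set $Z_{\sF_{[0]},\Ga}=Z_{\sF_{[0]},\Ga}^\dagger=Z_\Ga$, $\tGa^\zero_{\rU_\um}=\Ga$, $\tGa^\one_{\rU_\um}=\emptyset$, and $\sF^\star_{[0];\rU_\um,\Ga}=\emptyset$ (while keeping the full set $\sfm$ in \eqref{linear-pl-gek-2}). Properties (2) and (3) are tautological, and the defining equations \eqref{eqs-sVk} reduce to the definition of $Z_\Ga$ in Definition \ref{ZGa}. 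For the inductive step I will use the forgetful morphism $\rho_{\sF_{[k]}}:\sV_{\sF_{[k]}}\lra\sV_{\sF_{[k-1]}}$ of \eqref{rho-sFk}, whose generic fiber picks out the new coordinates of $\PP_{F_k}$ via the rational map $\bar\Theta_{[k],\Gr}$.

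The construction of $Z_{\sF_{[k]},\Ga}$ splits into two cases. In \emph{Case A}, there is some $s\in S_{F_k}$ for which $x_{\uu_s}x_{\uv_s}$ does not vanish identically on $Z^\dagger_{\sF_{[k-1]},\Ga}$, i.e.\ the restriction $\bF_k|_\Ga$ is not annihilated on $Z^\dagger_{\sF_{[k-1]},\Ga}$. Here I take $Z_{\sF_{[k]},\Ga}$ to be the scheme-theoretic preimage $\rho_{\sF_{[k]}}^{-1}(Z_{\sF_{[k-1]},\Ga})$ and $Z^\dagger_{\sF_{[k]},\Ga}$ to be the closure of the graph of the rational map from $Z^\dagger_{\sF_{[k-1]},\Ga}$ to $\PP_{F_k}$ defined by $[x_{\uu}x_{\uv}]_{(\uu,\uv)\in\La_{F_k}}$; this is automatically a unique irreducible component dominating $Z^\dagger_{\sF_{[k-1]},\Ga}$ birationally, so the birationality of property (2) is preserved by composition with the inductive map. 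In \emph{Case B}, every monomial $x_{\uu_s}x_{\uv_s}$ vanishes on $Z^\dagger_{\sF_{[k-1]},\Ga}$; then $Z_{\sF_{[k]},\Ga}:=\rho_{\sF_{[k]}}^{-1}(Z_{\sF_{[k-1]},\Ga})$ is the entire $\PP_{F_k}$-fiber bundle cut by $L_{F_k}$ and by the residual and pre-quotient binomials, and $Z^\dagger_{\sF_{[k]},\Ga}$ is chosen canonically as the component obtained by setting $x_{(\uu_s,\uv_s)}=0$ for all $s\in S_{F_k}$ except the two smallest indices $s^\dagger<t^\dagger$ (under the order of \S\ref{order-eq}) for which $\vsgn(s^\dagger)x_{(\uu_{s^\dagger},\uv_{s^\dagger})}+\vsgn(t^\dagger)x_{(\uu_{t^\dagger},\uv_{t^\dagger})}=0$ yields a section.

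For the chart-level picture, fix any standard chart $\fV$ of $\sR_{\sF_{[k]}}$ lying over a unique standard chart $\fV'$ of $\sR_{\sF_{[k-1]}}$. Starting from the inductive data $\tGa^\zero_{\fV'}$ and $\tGa^\one_{\fV'}$, I enlarge them by adding every new variable $x_{\fV,(\uu_s,\uv_s)}$ (with $s\in S_{F_k}\setminus s_{F_k,o}$) that is forced to vanish on $Z_{\sF_{[k]},\Ga}\cap\fV$. The forcing mechanism is the main binomial
\[
B_{F_k,(s_{F_k,o},s)}:\quad x_{\fV,(\uu_s,\uv_s)}x_{\fV,\uu_{s_{F_k,o}}}x_{\fV,\uv_{s_{F_k,o}}}-x_{\fV,\uu_s}x_{\fV,\uv_s},
\]
whose right-hand side vanishes whenever $\uu_s$ or $\uv_s$ indexes a variable in $\tGa^\zero_{\fV'}$, together with the chart-local binomials of $\cB^\pq_{\fV,[k]}$. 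I then take $\tGa^\zero_\fV$ to be the maximal subset of $\var_\fV$ achievable this way and define $\sF^\star_{[k];\fV,\Ga}\subset\sfm$ to consist of those $F_i$ ($i\le k$) for which the truncated $L_{\fV,F_i}$ is not already a consequence of the relations imposed by $\tGa^\zero_\fV$. A direct verification against Proposition \ref{equas-fV[k]} then gives \eqref{eqs-sVk}, and property (3) follows from the maximality of $\tGa^\zero_\fV$ together with the integrality of $Z_\Ga$ (which percolates through the birational morphisms of (2) to give $Z^\dagger_{\sF_{[k]},\Ga}\cap\fV\subset(y=0)$ iff $y$ is in $\tGa^\zero_\fV$ up to sign, iff the whole $Z_{\sF_{[k]},\Ga}\cap\fV$ is contained there).

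The main obstacle I expect is the verification that the maximal set $\tGa^\zero_\fV$ I construct is globally compatible, in the sense that the equations listed in \eqref{eqs-sVk} cut out the \emph{full} scheme $Z_{\sF_{[k]},\Ga}\cap\fV$ rather than merely the distinguished component $Z^\dagger_{\sF_{[k]},\Ga}\cap\fV$. This requires showing that in Case B the canonical pairing $(s^\dagger,t^\dagger)$ is genuinely dictated by the chart data — i.e.\ any other component of the fiber is swept into $\tGa^\zero_\fV$ via some residual binomial — and that no further linear dependence collapses $\sF^\star_{[k];\fV,\Ga}$. Unpacking these compatibilities across the forgetful tower, using Lemma \ref{equas-for-sVk} and the square-free $\vr$-linearity from Lemma \ref{ker-phi-k}, will occupy the bulk of the proof.
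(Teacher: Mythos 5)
There is a genuine gap, and your own last paragraph correctly locates it: the scheme you call $Z_{\sF_{[k]},\Ga}$ is \emph{not} cut out by the equations in item (1) if you define it as the bare preimage $\rho_{\sF_{[k]}}^{-1}(Z_{\sF_{[k-1]},\Ga})$. The paper's construction is materially different here. In the $\Ga$-relevant case the paper sets $\La^\zero_{F_k,\Ga}=\{(\uu,\uv)\in\La_{F_k}\mid x_\uu\in\Ga\ \text{or}\ x_\uv\in\Ga\}$ and then declares
$Z_{\sF_{[k]},\Ga}:=\rho_{\sF_{[k]}}^{-1}(Z_{\sF_{[k-1]},\Ga})\cap\{x_{(\uu,\uv)}=0 : (\uu,\uv)\in\La^\zero_{F_k,\Ga}\}$ --- the intersection is essential. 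On the preimage alone, the binomial $B_{F_k,(s_{F_k,o},s)}=x_{(\uu_s,\uv_s)}x_{\uu_{s_{F_k,o}}}x_{\uv_{s_{F_k,o}}}-x_{\uu_s}x_{\uv_s}$ with $x_{\uu_s}\in\Ga$ only gives $x_{(\uu_s,\uv_s)}x_{\uu_{s_{F_k,o}}}x_{\uv_{s_{F_k,o}}}=0$, so there are loci in the preimage where $x_{(\uu_s,\uv_s)}\ne 0$ but $x_{\uu_{s_{F_k,o}}}x_{\uv_{s_{F_k,o}}}=0$; your ``forcing mechanism'' thus establishes $x_{(\uu_s,\uv_s)}=0$ only on the dominant component, not on the preimage. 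Consequently, with your definition of $Z_{\sF_{[k]},\Ga}$, the maximal $\tGa^\zero_\fV$ you extract from the forcing argument would contain variables that do not in fact vanish on $Z_{\sF_{[k]},\Ga}\cap\fV$, and item (1) fails. It is precisely to avoid this that the paper cuts the preimage by the extra hyperplanes before taking the component $Z^\dagger_{\sF_{[k]},\Ga}$ as a closure of $\rho_{\sF_{[k]}}^{-1}(Z^{\dagger\circ}_{\sF_{[k-1]},\Ga})$.

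Your Case B also diverges from the paper in a way that is unlikely to be repairable as written. The paper treats the $\Ga$-irrelevant $F_k$ by viewing $\{L_{\fV,F_k},\cB^\pq_{\fV,[k]}\}$ as a \emph{linear} system in the $\vr$-variables of $\PP_{F_k}$ (linearity coming from Lemma \ref{ker-phi-k}), choosing a maximal-rank minor $\La^{\rm det}_{F_k,\Ga}$ at a generic point of $Z^\dagger_{\sF_{[k-1]},\Ga}$, setting the complementary variables $\La^\one_{F_k,\Ga}=\La_{F_k}\setminus\La^{\rm det}_{F_k,\Ga}$ equal to $1$, and then solving the now-full-rank system to detect which determined variables vanish identically (this gives $\La^\zero_{F_k,\Ga}$). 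The choice of the two smallest indices $s^\dagger<t^\dagger$ that you propose ignores the extra constraints coming from the pre-quotient binomials in $\cB^\pq_{\fV,[k]}$, which typically force additional vanishings among the $\vr$-variables of $F_k$ and in particular may make your candidate component empty or reducible. You also never construct $\tGa^\one_\fV$ in this case: the normalization $x_{(\uu,\uv)}=1$ on $\La^\one_{F_k,\Ga}$ is what replaces the missing $\PP_{F_k}$-degree of freedom and is what makes $\rho_{\sF_{[k]}}^{-1}(Z^{\dagger\circ}_{\sF_{[k-1]},\Ga})\cap\{\ldots\}\to Z^{\dagger\circ}_{\sF_{[k-1]},\Ga}$ an isomorphism, hence delivering the birational statement (2). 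Finally, the paper defines $\bF_k\in\sF^\star_{[k];\fV,\Ga}$ precisely when $\La_{F_k}\ne\La^\zero_{F_k,\Ga}\cup\La^\one_{F_k,\Ga}$ (equivalently, when $L_{\fV,F_k}$ still involves a nontrivial $\vr$-variable after substitution), which is a sharper criterion than ``not a consequence of the $\tGa^\zero_\fV$ relations'' that you use.
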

\begin{proof}
We prove the statement by induction on $k$ with $k \in \{0\} \cup [\up]$.

 When $k=0$,  we have $\sR_{\sF_{[0]}}:=\rU_\um$, $\sV_{\sF_{[0]}}:=\rU_\um \cap \Gr^{d,E}$.
 There exists a unique chart $\fV=\rU_\um$. In this case,  we set 
 $$Z_{\sF_{[0]},\Ga}=Z^\dagger_{\sF_{[0]},\Ga}:=Z_\Ga$$
 Further, we let
 $$\tGa^\zero_\fV=\Ga,\;\; \tGa^\one_\fV=\emptyset.$$
Then,  the statement holds trivially.

Inductively, we suppose that Lemma \ref{wp-transform-sVk-Ga}
 holds for $\sV_{\sF_{[k-1]}} \subset \sR_{\sF_{[k-1]}} $.

 We now consider $\sV_{\sF_{[k]}} \subset \sR_{\sF_{[k]}}$.
 
Recall from  \eqref{rho-sFk},   we have the natural birational morphsim
$$\rho_{\sF_{[k]}}: \sV_{\sF_{[k]}} \lra \sV_{\sF_{[k-1]}},$$
induced from the forgetful map  $\sR_{\sF_{[k]}} \lra \sR_{\sF_{[k-1]}}$.

First, we suppose $F_k$ is $\Ga$-relevant. 


In this case, we set 
\begin{equation}\label{construction-ZkLa}\La^\zero_{F_k, \Ga} 
:=\{ x_{(\uu,\uv)}\in \La_{F_k} \mid \hbox{$x_\uu$ or $x_\uv \in \Ga$}\}, \;\;\; \La^\one_{F_k, \Ga}:=\emptyset.
\end{equation}
(Here, recall the convention of \eqref{uv=vu}: $x_{(\uu,\uv)}= x_{(\uv,\uu)}$.) 

We then let $\rho_{\sF_{[k]}}^{-1}(Z_{\sF_{ [k-1]},\Ga})$ be the scheme-theoretic pre-image and define
$Z_{\sF_{[ k]},\Ga}$ to be
  the scheme-theoretic intersection
\begin{equation}\label{construction-ZkGa}
Z_{\sF_{[ k]},\Ga}=\rho_{\sF_{[k]}}^{-1}(Z_{\sF_{ [k-1]},\Ga}) \cap (x_{(\uu,\uv)} =0 \mid (\uu,\uv) \in  \La^\zero_{F_k, \Ga}),
\end{equation}

Next, because $F_k$ is $\Ga$-relevant
 and $Z^\dagger_{\sF_{ [k-1]},\Ga}$ is birational to $Z_\Ga$,
one checks that  $Z^\dagger_{\sF_{ [k-1]},\Ga}$ 
is not contained in the exceptional locus of
the birational morphism $\rho_{\sF_{[k]}}$. 
Thus, there exists  a Zariski open subset $Z^{\dagger\circ}_{\sF_{ [k-1]},\Ga}$
 of $Z^\dagger_{\sF_{ [k-1]},\Ga}$ such that 
 $$\rho_{\sF_{[k]}}^{-1}(Z^{\dagger\circ}_{\sF_{ [k-1]},\Ga})  \lra Z^{\dagger\circ}_{\sF_{ [k-1]},\Ga}$$
is an isomorphism.

We claim 
\begin{equation}\label{inclusion-dagger}
 \rho_{\sF_{[k]}}^{-1}(Z^{\dagger\circ}_{\sF_{ [k-1]},\Ga})   \subset Z_{\sF_{[ k]},\Ga}=
\rho_{\sF_{[k]}}^{-1}(Z_{\sF_{ [k-1]},\Ga}) \cap (x_{(\uu,\uv)}=0 \mid (\uu,\uv) \in  \La^\zero_{F_k, \Ga}).
\end{equation}
To see this, note that since $\bF_k$ is $\Ga$-relevant, 
there exists a term $x_{\uu_s}x_{\uv_s}$ of $\bF_k$ 
for some $s \in S_{F_k}$ such that it does not vanish
generically along $Z^{\dagger}_{\sF_{ [k-1]},\Ga}$ (which is birational to $Z_\Ga$).
Then, we consider the binomial relation of $\sV_{\sF_{[k]}}$ in $\sR_{\sF_{[k]}}$
  \begin{equation}\label{Buv-s}
  x_{(\uu, \uv)}x_{\uu_s} x_{\uv_s} - x_{\uu} x_{\uv} x_{(\uu_s,\uv_s)},
  \end{equation}
  for any $(\uu, \uv) \in \La_{F_k}$.
 It follows that $x_{(\uu, \uv)}$  vanishes identically 
along $\rho_{\sF_{[k]}}^{-1}(Z^{\dagger\circ}_{\sF_{ [k-1]},\Ga}) \cong Z^{\dagger\circ}_{\sF_{ [k-1]},\Ga}$ 
 if $x_\uu$ or $x_\uv \in \Ga$.  Hence, \eqref{inclusion-dagger} holds.

 We then let $Z^\dagger_{\sF_{ [k]},\Ga}$ be the closure of 
 $\rho_{[k]}^{-1}(Z^{\dagger\circ}_{\sF_{ [k-1]},\Ga})$ in $Z_{\sF_{ [k]},\Ga}$.
  Since $Z^\dagger_{\sF_{ [k]},\Ga}$  is closed in $Z_{\sF_{ [k]},\Ga}$
 and contains the Zariski  open subset $\rho_{[k]}^{-1}(Z^{\dagger\circ}_{\sF_{ [k-1]},\Ga})$
 of $Z_{\sF_{[k]},\Ga}$, it is an irreducible component of  $Z_{\sF_{ [k]},\Ga}$.
   

Further, consider any standard chart $\fV$ of $\sR_{\sF_{[k]}}$,
lying over a unique standard chart $\fV'$ of $\sR_{\sF_{[k-1]}}$,
such that $Z_{\sF_{[k]},\Ga}\cap \fV \ne \emptyset$.
We set 
\begin{equation}\label{zero-one-fV-sFk}
\tGa^\zero_\fV= \tGa^\zero_{\fV'} \sqcup 
\{ x_{(\uu,\uv)}\in \La_{F_k} \mid \hbox{$x_\uu$ or $x_\uv \in \Ga$}\}; \;\;\;\;
\tGa^\one_\fV=\tGa^\one_{\fV'}.
\end{equation}

We are now ready to 
prove Lemma \ref{wp-transform-sVk-Ga} (1), (2) and (3) in the case of $\sR_{\sF_{[k]}}$.

(1).  Note that scheme-theoretically, we have
$$\rho_{\sF_{[k]}}^{-1}(Z_{\sF_{ [k-1]},\Ga}) \cap \fV=
 \pi_{\sF_{[k]}, \sF_{[k-1]}}^{-1}(Z_{\sF_{ [k-1]},\Ga}) \cap \sV_{\sF_{[k]}} \cap \fV$$
 where $\pi_{\sF_{[k]},  \sF_{[k-1]}}: \sR_{\sF_{[k]}} \to \sR_{\sF_{[k-1]}}$ is the projection.
 We can apply Lemma \ref{wp-transform-sVk-Ga} (1)  in the case of $\sR_{\sF_{[k-1]}}$
 to  $Z_{\sF_{ [k-1]},\Ga}$ and $\pi_{\sF_{[k]}, \sF_{[k-1]}}^{-1}(Z_{\sF_{ [k-1]},\Ga})$, 
 apply Proposition \ref{equas-fV[k]}  to $\sV_{\sF_{[k]}} \cap \fV$, and 
 use the construction \eqref{construction-ZkGa} of $Z_{\sF_{[ k]},\Ga}$
 (cf. \eqref{construction-ZkLa} and \eqref{zero-one-fV-sFk}), 
 we then obtain that $Z_{\sF_{[ k]},\Ga} \cap \fV$, as a closed subscheme of $\fV$, is defined by 
$$y, \;\; y \in  \tGa^\zero_\fV;\;\;
 y -1, \;\; y \in  \tGa^\one_\fV; \;\;\; \cB^\pq_{[k]}; $$
$$  B_{\fV, (s,t)},\;\; s, t \in S_{F_i} \; \hbox{ with all $i \in [k]$}$$
 $$  L_{\fV,F_i}, \;  \hbox{ with $\bF_i \in \sF^\star_{[k-1]; \fV',\Ga}$}; \; L_{\fV, F_k};\;
   \bF_{\fV,j}, \; k <j \le \up.$$ 
 Now, we let $\sF^\star_{[k]; \fV,\Ga}=\sF^\star_{[k-1];  \fV',\Ga} \cup \{\bF_k\}$.  Then,
the above implies Lemma \ref{wp-transform-sVk-Ga} (1) in the case of $\sR_{\sF_{[k]}}$.

(2). By construction, we have that the composition
$\tZ^\dagger_{\sF_{[k]},\Ga} \to \tZ^\dagger_{\sF_{[k-1]},\Ga} \to Z_\Ga$
is birational. 
This proves Lemma \ref{wp-transform-sVk-Ga} (2) in the case of $\sR_{\sF_{[k]}}$.

  (3). It suffices to prove that if $Z^\dagger_{\sF_{[ k]},\Ga}\cap \fV \subset (y=0)$,
 then $Z_{\sF_{[ k]},\Ga} \cap \fV \subset (y=0).$
 
 If $y=x_{\fV,\uu}$ ($=x_\uu$, cf. the proof of Proposition \ref{meaning-of-var-p-k=0}), 
 then 
$x_\uu \in \Ga$ because $Z^\dagger_{\sF_{[ k]},\Ga}$ is birational to $Z_\Ga$.
Therefore, $Z_{\sF_{[ k]},\Ga} \cap \fV \subset (y=0)$ by \eqref{Ga-rel-sVk=0},
which holds by (the just proved) Lemma \ref{wp-transform-sVk-Ga} (1) for  $\sR_{\sF_{[k]}}$.

Now assume $y=x_{\fV, (\uu, \uv)}$.
Here, $x_{\fV, (\uu, \uv)}$ is the de-homogenization of $x_{(\uu,\uv)}$
(cf. the proof of Proposition \ref{meaning-of-var-p-k=0}). Below,
upon setting $x_{(\uu_{s_{F_i,o}}, \uv_{s_{F_i,o}})} \equiv 1$ for all $i \in [k]$ (cf. Definition \ref{fv-k=0}),
 we can write $x_{\fV, (\uu, \uv)}=x_{\fV', (\uu, \uv)}=x_{(\uu,\uv)}$.

 Suppose $(\uu, \uv) \in \La_{F_i}$ with $i \in [k-1]$.
By taking the images of $Z^\dagger_{\sF_{[ k]},\Ga}\cap \fV \subset (y=0)$
under $\rho_{\sF_{[k]}}$, we obtain 
$Z^\dagger_{\sF_{[ k-1]},\Ga} \cap \fV' \subset (x_{(\uu, \uv)}=0)$.
Hence, we have
$Z_{\sF_{[k-1]},\Ga} \cap \fV' \subset (x_{(\uu, \uv)}=0)$ by 
Lemma \ref{wp-transform-sVk-Ga} (3)  for 
$ \sR_{\sF_{[k-1]}}$.
Therefore,  $x_{(\uu, \uv)} \in \tGa^\zero_{\fV'}$ by the maximality of the subset $ \tGa^\zero_{\fV'}$.
Then, by  \eqref{zero-one-fV-sFk},
$Z_{\sF_{[ k]},\Ga} \cap \fV \subset   (x_{(\uu, \uv)}=0)$.

Now suppose $(\uu, \uv) \in \La_{F_k}$.
 Consider the relations  $$x_{\fV,\uu} x_{\fV,\uv} -x_{\fV, (\uu, \uv)}x_{\uu_{s_{F_k,o}}}x_{\uv_{s_{F_k,o}}} .$$ 
 Here, we have used $x_{(\uu_{s_{F_k,o}}, \uv_{s_{F_k,o}})} \equiv 1$. 
  Then, we have  $x_{\fV,\uu} x_{\fV,\uv}$ vanishes identically along $Z^\dagger_{\sF_{[ k]},\Ga}$,
  hence,  so does
  one of $x_{\fV, \uu}$ and $x_{\fV, \uv}$, that is, $x_\uu$ or $x_\uv \in \Ga$,
  since $Z^\dagger_{\sF_{[ k]},\Ga}$ (birational to $Z_\Ga$) is integral. In either case, it implies that 
  $Z_{\sF_{[ k]},\Ga} \subset (x_{(\uu, \uv)}=0)$ by \eqref{construction-ZkLa} and  
  \eqref{construction-ZkGa}. 




This proves the lemma when $F_k$ is $\Ga$-relevant.

\smallskip

 Next, we suppose $F_k$ is $\Ga$-irrelevant. 
 
Take an arbitrary standard chart $\fV$ of $\sR_{\sF_{[k]}}$ lying over
a unique standard chart $\fV'$ of $\sR_{\sF_{[k-1]}}$ such that
$ \rho_{\sF_{[k]}}^{-1}(Z_{\sF_{  [k-1]},\Ga}) \cap \fV \ne \emptyset$ 
(equivalently, $Z_{\sF_{[k-1]}} \cap \fV' \ne \emptyset$).
Then, we have that
$$( \rho_{\sF_{[k]}}^{-1}(Z_{\sF_{  [k-1]},\Ga}) \cap \fV) / (Z_{\sF_{[k-1]},\Ga}\cap \fV')$$ 
is defined by the set of
equations of $L_{\fV, F_k}$ and $\cB^\pq_{\fV, [k]}$, all regarded as
relations in $\vr$-variables of $F_k$. Thus, all these relations are  
 linear in $\vr$-variables of $F_k$, according to Lemma \ref{ker-phi-k}.
 Putting them together, we call $\{L_{\fV, F_k}, \cB^\pq_{\fV, [k]}\}$ a linear system 
  in $\vr$-variables of $F_k$ on the chart $\fV/\fV'$.
 
 We can let 
 $\La^{\rm det}_{F_k,  \Ga}$ be the subset of $\La_{F_k}$ such that 
  the minor corresponding to  variables
 $$\{x_{\fV,(\uu,\uv)} \mid (\uu,\uv) \in \La^{\rm det}_{F_k, \Ga}\}$$ 
 achieves the maximal rank of the linear system $ \{L_{\fV, F_k}, \cB^\pq_{\fV, [k]}|\}$,
  regarded as
relations in $\vr$-variables of $F_k$,
 at any point of some fixed Zariski open subset $Z_{\sF_{[k-1]},\Ga}^{\dagger\circ}$ of 
 $Z^\dagger_{\sF_{[k-1]},\Ga}$. 
 By shrinking if necessary, we may assume that
 $Z^{\dagger\circ}_{\sF_{[k-1]},\Ga}$ is contained in the intersection of all
 standard charts $\fV'$ with $Z_{\sF_{[k-1]}} \cap \fV' \ne \emptyset$.

Then, we let  
 \begin{equation}\label{Fk-irrf}
 \La^\one_{F_k,\Ga}= \La_{F_k} \- \La^{\rm det}_{F_k,\Ga}.
  \end{equation}
 We then set and plug 
 \begin{equation}\label{a-ne-0}
 x_{\fV, (\uu,\uv)} = 1, \; \forall \; (\uu,\uv)  \in  \La^\one_{F_k, \Ga} 
 \end{equation}
   into the linear system $\{L_{\fV, F_k}, \cB^\pq_{\fV, [k]}\}$
   to obtain an induced linear system of full rank 
   over $Z_{\sF_{[k-1]},\Ga}^{\dagger\circ}$. This induced linear system can be solved
  over the Zariski open subset $Z^{\dagger\circ}_{\sF_{[k-1]},\Ga}$ such that
  all variables $$\{x_{\fV,(\uu,\uv)} \mid (\uu,\uv) \in \La^{\rm det}_{F_k, \Ga}\}$$ 
 are explicitly  determined by the coefficients of the induced linear system.
    
     We then  let 
   \begin{equation}\label{det=0}
   \La^\zero_{F_k, \Ga} \subset \{ (\uu,\uv) \in \La^{\rm det}_{F_k,\Ga}\}
   \end{equation}
   be the subset consisting of all $(\uu,\uv) \in \La^{\rm det}_{F_k ,\Ga}$ 
   such that $x_{(\uu,\uv)} \equiv 0$ over
   $Z^{\dagger\circ}_{\sF_{[k-1]},\Ga}$.
    Observe here that we immediately obtain that  for any $(\uu,\uv) \in \La_{F_k}$,
\begin{equation}\label{who-in-dagger}
\hbox{$x_{(\uu,\uv)}$  vanishes identically over  $Z^{\dagger\circ}_{\sF_{[k-1]},\Ga}$
   if and only if  $(\uu,\uv) \in \La^\zero_{F_k,\Ga}$.}
   \end{equation}

We let $Z_{\sF_{[ k]},\Ga}$ be
  the scheme-theoretic intersection
   \begin{equation}\label{ZkGa-irr}
   \rho_{\sF_{[k]}}^{-1}(Z_{\sF_{ [k-1]},\Ga}) \cap ( x_{(\uu,\uv)}=0 , \; (\uu,\uv) \in  \La^\zero_{F_k,\Ga}; \;\;
  x_{(\uu,\uv)} = 1,  \; (\uu,\uv) \in  \La^\one_{F_k,\Ga}) . \end{equation}

 Further,  for the above standard chart $\fV$ of $\sR_{\sF_{[k]}}$, lying over the standard chart $\fV'$ 
  of $\sR_{\sF_{[k-1]}}$ with $\tZ_{\sF_{[k]},\Ga}\cap \fV \ne \emptyset$,  we set 
\begin{eqnarray}\label{zero-one-fV-sFk-irr}
\tGa^\zero_\fV= \tGa^\zero_{\fV'} \sqcup    \{ x_{\fV, (\uu,\uv)} \mid 
(\uu, \uv) \in \La^\zero_{F_k,\Ga}\}; \\ 
\tGa^\one_\fV=\tGa^\one_{\fV'} \sqcup   \{ x_{\fV, (\uu,\uv)} \mid 
(\uu, \uv) \in \La^\one_{F_k,\Ga}\}.
\end{eqnarray}

We are now ready to 
prove Lemma \ref{wp-transform-sVk-Ga} (1), (2) and (3) in the case of $\sR_{\sF_{[k]}}$.

  Similar to the proof of Lemma \ref{wp-transform-sVk-Ga} (1) for the previous case when
  $\bF_k$ is $\Ga$-relevant, 
  by Lemma \ref{wp-transform-sVk-Ga} (1)  in the case of $\sR_{\sF_{[k-1]}}$
  applied to $Z_{\sF_{ [k-1]},\Ga}$ and $\rho_{\sF_{[k]}}^{-1}(Z_{\sF_{ [k-1]},\Ga})$,
 applying Proposition \ref{equas-fV[k]} to $\sV_{F_{[k]}} \cap \fV$, and using 
 \eqref{ZkGa-irr} and \eqref{zero-one-fV-sFk-irr}, 
 we obtain that $Z_{\sF_{[ k]},\Ga} \cap \fV$, as a closed subscheme of $\fV$, is defined by 
$$y, \;\; y \in  \tGa^\zero_\fV;\;\;
 y -1 \;\; y \in  \tGa^\one_\fV; \;\; \cB^\pq_{[k]}; $$
$$  B_{\fV, (s,t)},\;\; s, t \in S_{F_i} \; \hbox{ with all $i \in [k]$}$$
$$ L_{\fV,F_i}, \; \hbox{ with $\bF_i \in \sF^\star_{[k-1]; \fV',\Ga} $}, \;L_{\fV, F_k}; \; 
 \bF_{\fV,j}, \; k <j \le \up.$$ 
 Then, we say 
 $$\left\{ \begin{array}{rcl}
\bF_k \in \sF^\star_{[k]; \fV,\Ga}  \; ,  & \;\;  \hbox{if $\La_{F_k}
 \ne \La^\zero_{F_k,\Ga} \cup \La^\one_{F_k,\Ga}$;}\\
\bF_k \notin \sF^\star_{[k]; \um,\Ga}\; , & \;\; \hbox{if $\La_{F_k} 
= \La^\zero_{F_k,\Ga} \cup \La^\one_{F_k,\Ga}$.}\\ 
\end{array} \right. $$ 
Put it equivalently, 
$\bF_k \in \sF^\star_{[k]; \fV,\Ga}  $ if 
upon setting
  $y=0$ for all $y \in  \tGa^\zero_\fV$ and
 $y =1$ for all $y \in  \tGa^\one_\fV$, and plugging them into $L_{\fV, F_k}$, we have that $L_{\fV, F_k}$
 still contains a nontrivial $\vr$-variable of $\var_\fV$;
 $\bF_k \notin \sF^\star_{[k]; \fV,\Ga}  $, otherwise.   We then set
 $\sF^\star_{[k]; \fV,\Ga} =\sF^\star_{[k-1];\fV',\Ga}  \cup \{\bF_k\}$ if $\bF_k \in \sF^\star_{[k]; \fV,\Ga} $;
 $\sF^\star_{[k]; \fV,\Ga} =\sF^\star_{[k-1];\fV',\Ga} $ if $\bF_k \notin \sF^\star_{[k];\fV,\Ga} $.
 Then, the above implies that Lemma \ref{wp-transform-sVk-Ga} (1) holds  on $\sR_{\sF_{[k]}}$.



  Next, by construction, the induced morphism
  $$ \rho_{\sF_{[k]}}^{-1} (Z^{\dagger\circ}_{\sF_{[k]},\Ga}) 
  \cap (x_{(\uu,\uv)}=0 , \; (\uu,\uv) \in  \La^\zero_{F_k,\Ga}; \; x_{(\uu,\uv)} = 1, \; (\uu,\uv) \in  \La^\one_{F_k,\Ga}) 
   \lra Z^{\dagger\circ}_{\sF_{[k-1]}}$$
  is an isomorphism. We let 
$Z^\dagger_{\sF_{ [k]},\Ga}$ be the closure of 
$$ \rho_{\sF_{[k]}}^{-1} (Z^{\dagger\circ}_{\sF_{[k]},\Ga}) 
  \cap (x_{(\uu,\uv)}=0 , \; (\uu,\uv) \in  \La^\zero_{F_k,\Ga}; \; x_{(\uu,\uv)} =1, \; (\uu,\uv) \in  \La^\one_{F_k,\Ga}) $$ in $Z_{\sF_{[ k]},\Ga}$.
  Then, it is closed in $Z_{\sF_{[ k]},\Ga}$ and contains 
  an open subset of $Z_{\sF_{[ k]},\Ga}$, hence, is an irreducible
  component of $Z_{\sF_{[ k]},\Ga}$.  It follows that the composition
  $$Z^\dagger_{\sF_{ [k]},\Ga}\to Z^{\dagger}_{\sF_{[k-1]}} \to Z_\Ga$$  is birational.
  This proves    Lemma \ref{wp-transform-sVk-Ga} (2) on $\sR_{\sF_{[k]}}$.
  

Finally,  we are to prove  Lemma \ref{wp-transform-sVk-Ga} (3) on $\sR_{\sF_{[k]}}$.
Suppose $Z^\dagger_{\sF_{ [k]},\Ga} \cap \fV \subset (y=0)$ for some $y \in \var_\fV$.
If $y=x_{\fV, \uu}$ or $y=x_{\fV, (\uu, \uv)}$ with  $(\uu, \uv) \in \La_{F_i}$ with $i \in [k-1]$,
then the identical  proof in the previous case carries over here without changes.
We now suppose $Z^\dagger_{\sF_{ [k]},\Ga}\cap \fV \subset (x_{\fV,(\uu,\uv)}=0)$
with $(\uu, \uv) \in \La_{F_k}$, then by \eqref{who-in-dagger},
$(\uu,\uv) \in \La^\zero_{F_k,\Ga}$. Thus,  by \eqref{ZkGa-irr}, 
$Z_{\sF_{ [k]},\Ga} \subset (x_{(\uu,\uv)}=0)$.
This proves    Lemma \ref{wp-transform-sVk-Ga} (3) on $\sR_{\sF_{[k]}}$.

By induction, Lemma \ref{wp-transform-sVk-Ga}  is proved.
\end{proof}

\begin{defn}
We call $Z_{\sF_{[k]},\Ga}$ the $\sF$-transform of $Z_\Ga$
in $\sV_{\sF_{[k]}}$ for any $k\in [\up]$.
\end{defn}


 We keep the notation of Lemma \ref{wp-transform-sVk-Ga}.
We set
\begin{equation}\label{Lrel-up}
\sF^\star_{\fV,\Ga}=\sF^\star_{[\up];\fV, \Ga},\;
 L_{\fV, \sF^\star_{[k]; \fV,\Ga} }=\{L_{\fV, F} \mid \bF \in  \sF^\star_{[k]; \fV,\Ga} \},\;
L_{\fV, { \sF^\star_{\um,\Ga}}}=L_{\fV, \sF^\star_{[\up]; \fV, \Ga}}. 
\end{equation}




\subsection{$\vt$-transforms of  $\Ga$-schemes in  $\tsV_{\vt_{[k]}}$}
\label{subsection:wp-transform-vtk}  $\ $

We now construct the $\vt$-transform of $Z_\Ga$ in $\tsV_{\vt_{[k]}}
\subset \tsR_{\vt_{[k]}}$. 

\begin{lemma}\label{vt-transform-k} 
 Fix any subset $\Ga$ of $\rU_\um$.  Assume that $Z_\Ga$ is integral. 

Fix any $k \in [\up]$.

Then, we have the following:
\begin{itemize}
\item  there exists a closed subscheme $\tZ_{\vt_{[k]},\Ga}$ of
$\tsV_{\vt_{[k]}}$ with an induced morphism  
$\tZ_{\vt_{[k]},\Ga} 
\to Z_\Ga$;
\item   $\tZ_{\vt_{[k]},\Ga}$ comes equipped with an irreducible component  
$\tZ^\dagger_{\vt_{[k]},\Ga}$ with the induced morphism 
$\tZ^\dagger_{\vt_{[k]},\Ga}  
\to Z_\Ga$;
\item  for any standard chart $\fV$ of $\tsR_{\vt_{[k]}}$ such that
$\tZ_{\vt_{[k]},\Ga} \cap \fV \ne \emptyset$, there are two subsets, possibly empty,
$$ \tGa^\zero_{\fV} \; \subset \;  \var_\fV, \;\;\;
\tGa^\one_{\fV} \; \subset \;  \var_\fV.$$ 
\end{itemize}

Further,  consider any given standard chart $\fV$ of $\tsR_{\vt_{[k]}}$ with
$\tZ_{\vt_{[k]},\Ga} \cap \fV \ne \emptyset$. Then,  the following hold:
\begin{enumerate}
\item the scheme $\tZ_{\vt_{[k]},\Ga} \cap \fV$,
 as a closed subscheme of the chart $\fV$,
is defined by the following relations
\begin{eqnarray} 
\;\;\;\;\; y , \; \; \; y \in  \tGa^\zero_\fV , \label{Ga-rel-wp-ktauh-00}\\
\;\;\;  y -1, \; \; \; y \in  \tGa^\one_\fV,  \nonumber \\
\cB_\fV^\mn, \; \cB^\res_{\fV, >k}, \;  \cB^\q_\fV, \; L_{\fV,\sF^\star_{\um,\Ga}}, \nonumber
\end{eqnarray}
where $L_{\fV,\sF^\star_{\um,\Ga}}$ is as in \eqref{Lrel-up};
 further, we take $\tGa^\zero_\fV \subset \var_\fV$
 to be the maximal subset (under inclusion)
among all those subsets that satisfy the above;
\item the induced morphism $\tZ^\dagger_{ \vt_{[k]},\Ga} \to Z_\Ga$ is birational;
\item for any variable $y \in \var_\fV$, $\tZ^\dagger_{\vt_{[k]},\Ga} \cap \fV \subset (y=0)$ if and only 
if  $\tZ_{\vt_{[k]},\Ga} \cap \fV \subset (y=0)$. Consequently, 
 $\tZ^\dagger_{\vt_{[k]},\Ga} \cap \fV \subset \tZ_{\vt_{[k+1]}} \cap \fV$ if and only 
if  $\tZ_{\vt_{[k]},\Ga} \cap \fV \subset \tZ_{\vt_{[k+1]}}\cap \fV$ 
where $\tZ_{\vt_{[k]}}$ is the proper transform of 
 the $\vt$-center $Z_{\vt_{[k+1]}}$ in  $\tsR_{\vt_{[k]}}$. 
\end{enumerate}
\end{lemma}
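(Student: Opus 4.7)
The plan is to mirror the inductive scaffolding of Lemma \ref{wp-transform-sVk-Ga}, but now carrying the construction across the $\vt$-blowup tower. The induction is on $k \in \{0\} \cup [\up]$. For the base case $k=0$, one has $\tsV_{\vt_{[0]}} = \sV = \sV_{\sF_{[\up]}}$, so I would set $(\tZ_{\vt_{[0]},\Ga}, \tZ^\dagger_{\vt_{[0]},\Ga}) := (Z_{\sF_{[\up]},\Ga}, Z^\dagger_{\sF_{[\up]},\Ga})$ and borrow $\tGa^\zero_\fV, \tGa^\one_\fV$ from Lemma \ref{wp-transform-sVk-Ga} at $k=\up$. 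Statement (1) then follows from Lemma \ref{wp-transform-sVk-Ga} (1) together with Proposition \ref{equas-p-k=0}, which rewrites $\cB^\pq_\fV$ as $\cB^\q_\fV$ modulo the main binomials $\cB^\mn_\fV$, and with the observation $\cB^\res_{\fV, >0} = \cB^\res_\fV$; statements (2) and (3) are exactly Lemma \ref{wp-transform-sVk-Ga} (2) and (3) at $k=\up$.

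For the inductive step, let $\pi_k: \tsR_{\vt_{[k]}} \to \tsR_{\vt_{[k-1]}}$ be the blowup along the proper transform of $Z_{\vt_{[k]}} = X_{\uu_k} \cap X_{(\um,\uu_k)}$. By the inductive version of (3) applied to the variables cutting out the blowup center, whether $\tZ_{\vt_{[k-1]},\Ga}$ is contained in that center on a chart $\fV'$ is detected by its distinguished component $\tZ^\dagger_{\vt_{[k-1]},\Ga}$, naturally splitting the construction into two cases. In the generic case (the component meets the complement of the center), I would define $\tZ_{\vt_{[k]},\Ga}$ as the proper transform of $\tZ_{\vt_{[k-1]},\Ga}$ and $\tZ^\dagger_{\vt_{[k]},\Ga}$ as the proper transform of $\tZ^\dagger_{\vt_{[k-1]},\Ga}$; birationality in (2) then descends by restricting $\pi_k$ to the locus where it is an isomorphism.

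In the degenerate case (where $\tZ^\dagger_{\vt_{[k-1]},\Ga}$ lies entirely in the center), Corollary \ref{no-(um,uu)} rules out the $\vp$-standard chart for $\tsV_\vt$, so the canonical rational slice must be taken inside the $\vr$-standard chart; I would augment $\tGa^\one_\fV$ by (the local rename of) $x_{(\um,\uu_k)}$ and, when forced by the maximality clause, add the corresponding local factor to $\tGa^\zero_\fV$, following exactly the pattern of the $\Ga$-irrelevant case in the proof of Lemma \ref{wp-transform-sVk-Ga}. On an arbitrary standard chart $\fV$ lying over $\fV'$, combining the inductive defining equations with the blowup substitutions of Proposition \ref{meaning-of-var-vtk} and the elimination of the residual binomials indexed by $F_k$ from Proposition \ref{eq-for-sV-vtk} yields the equations claimed in (1), with the residual piece now shrinking from $\cB^\res_{\fV, > k-1}$ to $\cB^\res_{\fV, > k}$.

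For (3), the direction ``$\tZ_{\vt_{[k]},\Ga} \cap \fV \subset (y=0) \Rightarrow \tZ^\dagger_{\vt_{[k]},\Ga} \cap \fV \subset (y=0)$'' is immediate; for the converse, I would distinguish whether $y$ is a proper transform of some $y' \in \var_{\fV'}$, in which case pushing forward under $\pi_k$ and invoking inductive (3) on $\fV'$ places $y'$ in $\tGa^\zero_{\fV'}$ and hence $y$ in $\tGa^\zero_\fV$, or whether $y$ is a new exceptional variable, in which case the explicit blowup formulas of Proposition \ref{meaning-of-var-vtk} together with the vanishing of the associated blowup-center variables on $\tZ_{\vt_{[k-1]},\Ga}$ exhibit $y$ as being absorbed into $\tGa^\zero_\fV$ by construction. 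The main obstacle will be the degenerate case: one must orchestrate the choice of which of $\delta_{\fV,(\um,\uu_k)}$ and $\varepsilon_{\fV,\uu_k}$ is sent to $\tGa^\one_\fV$ versus $\tGa^\zero_\fV$ so that the canonical slice simultaneously delivers birationality in (2) and preserves the maximality of $\tGa^\zero_\fV$ required by (1), without accidentally collapsing $\tZ^\dagger_{\vt_{[k]},\Ga}$ into a non-component; here Corollary \ref{no-(um,uu)}, which forbids $\tsV_\vt$ from meeting the proper transform of $X_{(\um,\uu_k)}$, is the decisive input that makes the $\vr$-slice canonical.
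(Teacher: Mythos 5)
Your base case and the generic branch of the inductive step match the paper, and keying the containment test to $\tZ^\dagger_{\vt_{[k-1]},\Ga}$ via inductive statement (3) is exactly the right move. The degenerate branch, however, contains a genuine gap, and the mechanism you propose for it would fail. Corollary \ref{no-(um,uu)} does \emph{not} rule out the $\vp$-standard chart: on that chart the relation $1 - x_{\fV,(\um,\uu_k)}\, x_{\fV,\uu_{s_{F,o}}} x_{\fV,\uv_{s_{F,o}}}$ shows that $x_{\fV,(\um,\uu_k)}$ is merely invertible along $\tsV_{\vt_{[k]}}\cap\fV$, which constrains its value but does not empty the chart, and the paper treats both charts symmetrically (``without loss of generality, $\xi_0 \equiv 1$''). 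Worse, placing the local rename of $x_{(\um,\uu_k)}$ into $\tGa^\one_\fV$ is self-defeating on exactly the chart you restrict to: on the $\vr$-standard chart that rename is the exceptional parameter $\de_{\fV,(\um,\uu_k)}=\zeta_\fV$, and since the preimage of the degenerate $\tZ_{\vt_{[k-1]},\Ga}$ lies entirely in $E_{\vt_{[k]}}\cap\fV = (\zeta_\fV=0)$, imposing $\zeta_\fV=1$ empties the slice and kills the birationality claimed in (2). What the paper actually does is put $\zeta_\fV$ into $\overline{\Ga}^\zero_\fV$ as in \eqref{Gazero-ktah-contained-in-bar-vtk}, restrict the defining equations onto the preimage, and isolate a \emph{linear} system in the proper transform $y_1$ of the other $\vt$-set member (linearity is what Proposition \ref{eq-for-sV-vtk} buys you). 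It then branches on the generic rank of that system over $\tZ^\dagger_{\vt_{[k-1]},\Ga}$: rank one $(\star a)$ selects a section, after which one checks whether $y_1$ vanishes along it to decide whether $y_1\in\tGa^\zero_\fV$; rank zero $(\star b)$ gives a trivial $\PP^1$-bundle and the slice is $(\xi_0,\xi_1)=(1,1)$, placing $y_1\in\tGa^\one_\fV$. Your proposal has no analogue of the $(\star a)$ branch, and it is this rank dichotomy, not Corollary \ref{no-(um,uu)}, that makes the slice canonical and that can put $y_1$ in $\tGa^\one_\fV$ at all.

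The same omission then infects your argument for (3): the blanket claim that ``if $y$ is a proper transform of $y'$, push forward and invoke inductive (3) to get $y'\in\tGa^\zero_{\fV'}$, hence $y\in\tGa^\zero_\fV$'' fails precisely for $y=y_1$, since $\overline{\Ga}^\zero_\fV$ is defined to exclude proper transforms of the two $\vt$-set members, and $y_1$'s fate is decided only by the $(\star a)/(\star b)$ branching. The paper therefore handles $y\in\{\zeta_\fV, y_1\}$ directly from the construction, and reduces only the remaining variables to the inductive hypothesis on $\fV'$; without this care, (3) and with it the globalization of ``containment in the next $\vt$-center'' do not follow.
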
  
\begin{proof} We prove by induction on 
$k \in \{0\} \cup [\up]$. 


The initial case is $k=0$. In this case, we have
 $$\tsR_{\vt_{[0]}}:=\sR_{\sF}, \;\;
 \tsV_{\vt_{[0]}}:=\sV_{\sF}, \;\;
  \tZ_{\vt_{[0]},\Ga}:=Z_{\sF_{ [\up]},\Ga}, \;\;
  \tZ^\dagger_{\vt_{[0]},\Ga}:=Z^\dagger_{\sF_{ [\up]},\Ga}.$$
Then, in this case,  Lemma \ref{vt-transform-k}
 is Lemma  \ref{wp-transform-sVk-Ga} for $k=\up$.

We now suppose that Lemma \ref{vt-transform-k} holds
over $\tsR_{\vt_{[k-1]}}$ for some $k\in[\up]$.

We then consider the case of  $\tsR_{\vt_{[k]}}$.

Suppose that $\tZ_{\vt_{[k-1]}, \Ga}$,  or equivalently $\tZ^\dagger_{\vt_{[k-1]},\Ga}$,
by Lemma \ref{vt-transform-k} (3) in the case of $\tsR_{\vt_{[k-1]}}$,
 is not contained in 
$Z'_{\vt_{[k]}}$ where $Z'_{\vt_{[k]}}$ is the proper transform in $\tsR_{\vt_{[k-1]}}$
 of the $\vt$-center $Z_{\vt_{[k]}}$ (of $\tsR_{\vt_{[0]}}$).
We then let $\tZ_{\vt_{[k]},\Ga}$ (respectively, $\tZ^\dagger_{\vt_{[k]},\Ga}$)
 be the proper transform of $\tZ_{\vt_{[k-1]},\Ga}$ (respectively,
  $\tZ^\dagger_{\vt_{[k-1]},\Ga}$) in $\tsV_{\vt_{[k]}}$.
As $\tZ^\dagger_{\vt_{[k]},\Ga}$ is closed in $\tZ_{\vt_{[k]},\Ga}$
and contains a Zariski open subset of $\tZ_{\vt_{[k]},\Ga}$, it is an irreducible
component of $\tZ_{\vt_{[k]},\Ga}$.

Further, consider any standard chart $\fV$ of $\tsR_{\vt_{[k]}}$,
lying over a unique standard chart $\fV'$ of $\tsR_{\vt_{[k-1]}}$,
such that $\tZ_{\vt_{[k]},\Ga}\cap \fV \ne \emptyset$.
We set 
$$\tGa^\zero_\fV=\{y_\fV \mid y_\fV  \hbox{ is the proper transform of some $y_{\fV'} \in \tGa^\zero_{\fV'}$}\};$$
$$\tGa^\one_\fV=\{y_\fV \mid y_\fV  \hbox{ is the proper transform of some $y_{\fV'} \in \tGa^\one_{\fV'}$}\}.$$

We now prove Lemma \ref{vt-transform-k} (1), (2) and (3) in the case of $\tsR_{\vt_{[k]}}$.

 We can apply
 Lemma \ref{vt-transform-k} (1) in the case of $\tsR_{\vt_{[k-1]}}$
 to $\tZ_{\vt_{[k-1]},\Ga}$ to obtain the defining equations of $\tZ_{\vt_{[k-1]},\Ga}\cap \fV'$
as stated in the lemma; we note here that these equations include $\cB^\res_{\fV', \ge k}$.
We then  take the proper transforms of these equations in $\fV'$ to obtain the corresponding
equations in $\fV$,  and then apply (the proof of) 
 Proposition \ref{eq-for-sV-vtk} to reduce $\cB^\res_{\fV, \ge k}$ to $\cB^\res_{\fV, > k}$.
 Because $\tZ_{\vt_{[k]}\Ga}$  is the proper transform of $\tZ_{\vt_{[k-1]},\Ga}$,
this implies Lemma \ref{vt-transform-k} (1)  in the case of $\tsR_{\vt_{[k]}}$.

By construction, we have that the composition
$\tZ^\dagger_{\vt_{[k]},\Ga} \to \tZ^\dagger_{\vt_{[k-1]},\Ga} \to Z_\Ga$
is birational. 
This proves
Lemma \ref{vt-transform-k} (2) in the case of $\tsR_{\vt_{[k]}}$.

To show Lemma \ref{vt-transform-k} (3) in $\tsR_{\vt_{[k]}}$, 
we fix any $y \in \var_\fV$. It suffices to show that
if $\tZ^\dagger_{\vt_{[k]},\Ga}\cap \fV
\subset (y=0)$, then $\tZ_{\vt_{[k]},\Ga}\cap \fV
\subset (y=0)$.  By construction, $y \ne \zeta_{\fV,\vt_{[k]}}$, the exceptional variable
in $\var_\fV$ corresponding to the $\vt$-center $Z_{\vt_{[k]}}$. Hence, $y$ is the proper transform
of some $y' \in \var_{\fV'}$. Then, by taking the images of $\tZ^\dagger_{\vt_{[k]},\Ga}\cap \fV
\subset (y=0)$ under the morphism $\rho_{\vt_{[k]}}: \tsV_{\vt_{[k]}} \to \tsV_{\vt_{[k-1]}}$ 
(which is  induced from the blowup morphism $\pi_{\vt_{[k]}}: \tsR_{\vt_{[k]}} \to \tsR_{\vt_{[k-1]}}$), 
we obtain
$\tZ^\dagger_{\vt_{[k-1]},\Ga}\cap \fV'
\subset (y'=0)$,  hence, $\tZ_{\vt_{[k-1]},\Ga}\cap \fV'
\subset (y'=0)$ by the inductive assumption.
 Then, as $\tZ_{\vt_{[k]},\Ga}$ is the proper transform of $\tZ_{\vt_{[k-1]},\Ga}$,
 we obtain $\tZ_{\vt_{[k]},\Ga}\cap \fV \subset (y=0)$.
 
 The last statement Lemma \ref{vt-transform-k} (3)  follows from the above because
 $\tZ_{\vt_{[k+1]}}\cap \fV=(y_0=y_1=0)$ for some $y_0, y_1 \in \var_\fV$.

\smallskip

We now suppose that $\tZ_{\vt_{[k-1]},\Ga}$, or equivalently $\tZ^\dagger_{\vt_{[k-1]},\Ga}$,
 by  Lemma \ref{vt-transform-k} (3) in $\tsR_{\vt_{[k-1]}}$,
 is contained in the proper transform $Z'_{\vt_{[k]}}$ 
 of the $\vt$-center $Z_{\vt_{[k]}}$. 

Consider  any standard chart $\fV$ of $\tsR_{\vt_{[k]}}$,
lying over a unique standard chart $\fV'$ of $\tsR_{\vt_{[k-1]}}$,
such that $\tZ_{\vt_{[k]},\Ga}\cap \fV \ne \emptyset$.

We let $\vt'_{[k]}$
 be the proper transform in  the chart $\fV'$ of the $\vt$-set ${\vt_{[k]}}$.
Then, $\vt'_{[k]}$ consists of two  variables 
 $$\vt'_{[k]}=\{y'_0,y'_1\} \subset \var_{\fV'}.$$

 We let $\PP^1_{[\xi_0,\xi_1]}$ be the factor projective space for the
$\vt$-blowup $\tsR_{\vt_{[k]}} \to \tsR_{\vt_{[k-1]}}$ with $[\xi_0,\xi_1]$ corresponding to $(y'_0,y'_1)$.
Without loss of generality,  we can assume that the open chart $\fV$ is given by 
  $$(\fV' \times (\xi_0 \equiv 1) ) \cap \tsR_{\vt_{[k]}} \subset \fV' \times \PP^1_{[\xi_0,\xi_1]}.$$

 We let $\zeta_\fV:=\zeta_{\fV, \vt_{[k]}} \in \var_\fV$ be such that 
$E_{\vt_{[k]}} \cap \fV=(\zeta_\fV =0)$ where $E_{\vt_{[k]}}$ is the exceptional divisor
of the blowup $\tsR_{\vt_{[k]}} \to \tsR_{\vt_{[k-1]}}$. 
Note here that according to the proof of Proposition \ref{meaning-of-var-vtk},
the variable $y'_0$ 
corresponds to (or turns into) the exceptional $\zeta_\fV$ on the chart $\fV$.
 We then let $y_1 (=\xi_1) \in \var_\fV$ be 
 the proper transform of $y'_1 \in \var_{\fV'}$ on the chart $\fV$.

  
In addition, we  observe that 
 $$\vt'_{[k]}=\{y'_0,  y'_1\} \subset \tGa^\zero_{\fV'}$$ 
because $\tZ_{\vt_{[k]},\Ga}$ is contained in the proper transform
$Z'_{\vt_{[k]}}$ of the $\vt$-center $Z_{\vt_{[k]}}$.

We set, 
\begin{equation}\label{Gazero-ktah-contained-in-bar-vtk} 
\overline{\Ga}^\zero_\fV= \{\zeta_\fV, \; y_\fV \mid y_\fV  
\hbox{ is the proper transform of some $ y_{\fV'} \in \tGa^\zero_{\fV'} \- \vt'_{[k]}$}\},
\end{equation}
\begin{equation}\label{Gaone-ktah-contained-in-bar-vtk}
 \overline{\Ga}^\one_{\fV}=\{\ y_\fV \mid y_\fV  
\hbox{ is the proper transform of some $y_{\fV'} \in \tGa^\one_{\fV'}$} \}.
\end{equation}

Consider the scheme-theoretic pre-image  
$\rho_{\vt_{[k]}}^{-1}(\tZ_{\vt_{[k-1]},\Ga})$
where $\rho_{\vt_{[k]}}: \tsV_{\vt_{[k]}} \to \tsV_{\vt_{[k-1]}}$ is  induced 
from the blowup morphism $\pi_{\vt_{[k]}}: \tsR_{\vt_{[k]}} \to \tsR_{\vt_{[k-1]}}$.

Note that  scheme-theoretically, we have,
$$\rho_{\vt_{[k]}}^{-1}(\tZ_{\vt_{[k-1]},\Ga})  \cap \fV=
\pi_{\vt_{[k]}}^{-1}(\tZ_{\vt_{[k-1]},\Ga}) \cap \tsV_{\vt_{[k]}} \cap \fV .$$
Applying Lemma \ref{vt-transform-k} (1) in $\tsR_{\vt_{[k-1]}}$ to $\tZ_{\vt_{[k-1]},\Ga}$ 
and $\rho_{\vt_{[k]}}^{-1}(\tZ_{\vt_{[k-1]},\Ga})$, 
and applying Proposition \ref{eq-for-sV-vtk} to 
$\tsV_{\vt_{[k]}} \cap \fV$, we obtain that 
$\rho_{\vt_{[k]}}^{-1}(\tZ_{\vt_{[k-1]},\Ga})  \cap \fV$,
as a closed subscheme of $\fV$,  is defined by 
\begin{equation}\label{vt-pre-image-defined-by}
y_\fV \in \overline{\Ga}^\zero_\fV; \;\;  y_\fV-1, \; y_\fV \in \overline{\Ga}^\one_{\fV};\;\;
\cB^\mn_\fV; \;\; \cB^\res_{\fV, >k}; \;\; \cB^\q_{\fV}; \;\; L_{\fV,\sF^\star_{\um,\Ga}}.
\end{equation} 
(Observe here that $\zeta_\fV \in  \overline{\Ga}^\zero_\fV$.)

Thus, by setting $y_\fV=0$ for all $y_\fV \in  \overline{\Ga}^\zero_\fV$ and
 $y_\fV=1$ for all $y_\fV \in \overline{\Ga}^\one_{\fV}$ in 
 $\cB^\mn_\fV, \cB^\res_{\fV, >k}, \cB^\q_{\fV}, L_{\fV,\sF^\star_{\um,\Ga}}$ of  the above,
 we obtain 
 \begin{equation}\label{vt-lin-xi-pre}
 \tilde{\cB}^\mn_\fV, \tilde\cB^\res_{\fV,>k}, \tilde\cB^\q_{\fV}, \tilde{L}_{\fV,\sF^\star_{\um,\Ga}}.
 \end{equation}
 Note that  for any $\bF \in  \sF^\star_{\fV,\Ga}$
 (cf.  \eqref{Lrel-up}), if $L_{\fV, F}$ contains $y_1$,
 then it contains $\zeta_\fV$, hence $\tilde L_{\fV, F}$ does not contain $y_1$.
 We keep those equations of \eqref{vt-lin-xi-pre} that  contain the variable
 $y_1$ and obtain
 \begin{equation}\label{lin-xi-vtk}
 \hat{\cB}^\mn_\fV, \hat\cB^\res_{\fV,>k}, \hat\cB^\q_{\fV}, 
 \end{equation}
 viewed as a  system of equations in $y_1$.
 By Proposition \ref{eq-for-sV-vtk} (the last two statements), 
one sees that \eqref{lin-xi-vtk} is  a  {\it linear} system of equations in $y_1$.
Furthermore, we have that 
 $$(\rho_{\vt_{[k]}}^{-1}(\tZ_{\vt_{[k-1]},\Ga})\cap \fV)/(\tZ_{\vt_{[k-1]},\Ga}\cap \fV')$$
 is defined by the linear system \eqref{lin-xi-vtk}.

There are the following two cases for \eqref{lin-xi-vtk}:

$(\star a)$  the rank of the linear system  $\eqref{lin-xi-vtk}$ equals one 
over general points of  $\tZ^\dagger_{\vt_{[k-1]},\Ga}$.

$(\star b)$  the rank of the linear system  $\eqref{lin-xi-vtk}$ equals zero
at general points of  $\tZ^\dagger_{\vt_{[k-1]},\Ga}$, hence at all points
of $\tZ^\dagger_{\vt_{[k-1]},\Ga}$.

\smallskip\noindent
{\it Proof of Lemma \ref{vt-transform-k} for $\tsR_{\vt_{[k]}}$ under the condition $(\star a)$.}
\smallskip

By the condition $(\star a)$,
 there exists a Zariski open subset $\tZ^{\dagger\circ}_{\vt_{[k-1]},\Ga}$ 
of $\tZ^\dagger_{\vt_{[k-1]},\Ga}$ such that the rank of the linear system
 \eqref{lin-xi-vtk} equals one at any point of  $\tZ^{\dagger\circ}_{\vt_{[k-1]},\Ga}$. 
By solving $y_1$ from
 the linear system \eqref{lin-xi-vtk} over $\tZ^{\dagger\circ}_{\vt_{[k-1]},\Ga}$, we obtain 
that the induced morphism
$$
\rho_{\vt_{[k]}}^{-1}(\tZ^{\dagger\circ}_{\vt_{[k-1]},\Ga}) 
 \lra \tZ^\circ_{\vt_{[k-1]},\Ga}$$ is an isomorphism. 

First, we suppose  $y_1$ is identically zero along 
$\rho_{\vt_{[k]}}^{-1}(\tZ^{\dagger\circ}_{\vt_{[k-1]},\Ga})$.
We then set,  
\begin{equation}\label{Gazero-ktah-contained-in-a-vtk} 
\tGa^\zero_\fV=\{y_1 \} \cup \overline{\Ga}^\zero_\fV
\end{equation}
where $\overline{\Ga}^\zero_\fV$ is as in \eqref{Gazero-ktah-contained-in-bar-vtk}.
In this case, we let
\begin{equation}\label{ZktahGa-with-xi-vtk}
\tZ_{\vt_{[k]},\Ga}=\rho_{\vt_{[k]}}^{-1}(\tZ_{\vt_{[k-1]},\Ga})\cap D_{y_1}
\end{equation}
scheme-theoretically, where $D_{y_1}$ is the closure of $(y_1=0)$ in $\tsR_{\vt_{[k]}}$.
We remark here that $D_{y_1}$ does not depend on the choice of the chart $\fV$.

Next, suppose  $y_1$ is not identically zero along 
$\rho_{\vt_{[k]}}^{-1}(\tZ^{\dagger\circ}_{\vt_{[k-1]},\Ga})$.
We then set,  
\begin{equation}\label{Gazero-ktah-contained-in-a-vtk'} 
\tGa^\zero_\fV= \overline{\Ga}^\zero_\fV
\end{equation}
where $\overline{\Ga}^\zero_\fV$ is as in \eqref{Gazero-ktah-contained-in-bar-vtk}.
In this case, we let
\begin{equation}\label{ZktahGa-without-xi-vtk}
\tZ_{\vt_{[k]},\Ga}=\rho_{\vt_{[k]}}^{-1}(\tZ_{\vt_{[k-1]},\Ga}).
\end{equation}
We always set (under the condition $(\star a)$)
\begin{equation}\label{Gaone-ktah-contained-in-a-vtk} \tGa^\one_{\fV}= \overline{\Ga}^\one_{\fV}
\end{equation}
where $\overline{\Ga}^\one_{\fV}$ is as in  \eqref{Gaone-ktah-contained-in-bar-vtk}.

In each case, by construction, we have 
$$\rho_{\vt_{[k]}}^{-1}(\tZ^{\dagger\circ}_{\vt_{[k-1]},\Ga})  
\subset \tZ_{\vt_{[k]}\Ga},$$ and we let $\tZ^\dagger_{\vt_{[k]},\Ga}$ be the closure of 
$\rho_{(\vt_{[k]}}^{-1}(\tZ^{\dagger\circ}_{\vt_{[k-1]},\Ga}) $ in $\tZ_{\vt_{[k]},\Ga}$.
It is an irreducible component of $\tZ_{\vt_{[k]},\Ga}$ because
$\tZ^\dagger_{\vt_{[k]},\Ga}$ is closed in $\tZ_{\vt_{[k]},\Ga}$ and contains
the Zariski open subset $\rho_{\vt_{[k]}}^{-1}(\tZ^{\dagger\circ}_{\vt_{[k-1]},\Ga}) $
of $\tZ_{\vt_{[k]},\Ga}$.
Then, we obtain that the composition
 $$\tZ^\dagger_{\vt_{[k]},\Ga} \lra \tZ^\dagger_{\vt_{[k-1]},\Ga}
 \lra Z_\Ga$$ is birational.
 This proves Lemma \ref{vt-transform-k} (2)  over $\tsR_{\vt_{[k]}}$.


In each case of the above (i.e., \eqref{Gazero-ktah-contained-in-a-vtk} and
\eqref{Gazero-ktah-contained-in-a-vtk'}), 
  by the paragraph of \eqref{vt-pre-image-defined-by},
one sees that $\tZ_{\vt_{[k]},\Ga}\cap \fV$,
 as a closed subscheme of $\fV$, is defined by the equations as stated in the Lemma.  
 This proves Lemma \ref{vt-transform-k} (1)  over $\tsR_{\vt_{[k]}}$.

It remains to prove Lemma \ref{vt-transform-k} (3) over $\tsR_{\vt_{[k]}}$.

Fix any $y \in \var_\fV$, it suffices to show that
if $\tZ^\dagger_{\vt_{[k]},\Ga}\cap \fV
\subset (y=0)$, then $\tZ_{(\vt_{[k]},\Ga}\cap \fV
\subset (y=0)$.  
If $y \ne \zeta_\fV,  y_1$, then $y$ is the proper transform of some variable $y' \in \var_{\fV'}$.
Hence, by taking the images under $\rho_{\vt_{[k]}}$, we have $\tZ^\dagger_{\vt_{[k-1]},\Ga}\cap \fV'
\subset (y'=0)$; by Lemma \ref{vt-transform-k} (3) in $\tsR_{\vt_{[k-1]}}$, we obtain
$\tZ_{\vt_{[k-1]},\Ga}\cap \fV'
\subset (y'=0)$,  thus $y' \in \tGa^\zero_{\fV'}$ by the maximality of
the subset $\tGa^\zero_{\fV'}$. Therefore, 
$\tZ_{\vt_{[k]},\Ga}\cap \fV \subset ( y=0)$,  
by (the already-proved) Lemma \ref{vt-transform-k} (1) for $\tsR_{\vt_{[k]}}$
(cf. \eqref{Gazero-ktah-contained-in-bar-vtk} and \eqref{Gazero-ktah-contained-in-a-vtk} 
or \eqref{Gazero-ktah-contained-in-a-vtk'}). 
Next, suppose $y = y_1$ (if it occurs). Then, by construction, 
$\tZ_{\vt_{[k]},\Ga}\cap \fV \subset (y=0)$. 
Finally, we let $y =\zeta_\fV$. 
 Again, by construction, $\tZ_{\vt_{[k]},\Ga}\cap \fV \subset ( \zeta_\fV=0)$. 

As earlier, the last statement Lemma \ref{vt-transform-k} (3)  follows from the above.

\smallskip\noindent
{\it  Proof of Lemma \ref{vt-transform-k} over $\tsR_{\vt_{[k]}}$ under the condition $(\star b)$.}
\smallskip

Under the condition $(\star b)$, we have that $$ 
\rho_{\vt_{[k]}}^{-1}(\tZ^\dagger_{\vt_{[k-1]},\Ga})
 \lra \tZ^\dagger_{\vt_{[k-1]},\Ga}$$
can be canonically identified with the trivial $\PP_{[\xi_0,\xi_1]}$-bundle:
$$\rho_{\vt_{[k]}}^{-1}(\tZ^\dagger_{(\vt_{[k-1]},\Ga}) 
= \tZ^\dagger_{\vt_{[k-1]},\Ga}
\times \PP_{[\xi_0,\xi_1]}.$$
In this case, we define 
$$\tZ_{\vt_{[k]},\Ga}= \rho_{\vt_{[k]}}^{-1}(\tZ_{\vt_{[k-1]},\Ga})\cap ((\xi_0,\xi_1)= (1,1)),$$
 $$\tZ^\dagger_{\vt_{[k]},\Ga}= 
 \rho_{\vt_{[k]}}^{-1}(\tZ^\dagger_{\vt_{[k-1]},\Ga})
 \cap ((\xi_0,\xi_1)= (1,1)),$$
 both scheme-theoretically.
 The induced morphism $\tZ^\dagger_{\vt_{[k]},\Ga} \lra \tZ^\dagger_{\vt_{[k-1]},\Ga}$
 is an isomorphism. 
 Again, one sees that $\tZ^\dagger_{\vt_{[k]},\Ga}$ is 
 an irreducible component of $\tZ_{\vt_{[k]},\Ga}$. Therefore,
 $$\tZ^\dagger_{\vt_{[k]},\Ga} \lra
 \tZ^\dagger_{\vt_{[k-1]},\Ga} \lra  Z_\Ga$$ is  birational.
 This proves Lemma \ref{wp-transform-ktauh} (2) over $\tsR_{\vt_{[k]}}$.

Further, under the condition $(\star b)$, we set 
 \begin{eqnarray}
 \tGa^\zero_\fV= \overline{\Ga}^\zero_\fV,  \;\;\;\;
\tGa^\one_\fV=\{y_1\}\cup \overline{\Ga}^\one_{\fV}. \nonumber \end{eqnarray}
Then, again, by the paragraph of \eqref{vt-pre-image-defined-by},
one sees that $\tZ_{\vt_{[k]},\Ga}\cap \fV$,
 as a closed subscheme of $\fV$, is defined by the equations as stated in the Lemma. 
 This proves Lemma \ref{vt-transform-k} (1) over $\tsR_{\vt_{[k]}}$.

 It remains to prove Lemma \ref{vt-transform-k} (3) in over $\tsR_{\vt_{[k]}}$.

Fix any $y \in \var_\fV$, it suffices to show that
if $\tZ^\dagger_{\vt_{[k]}, \Ga}\cap \fV
\subset (y=0)$, then $\tZ_{\vt_{[k]},\Ga}\cap \fV
\subset (y=0)$.  By construction, $y \ne y_1$.
Then, the corresponding proof of Lemma \ref{vt-transform-k} (3) for $\tsR_{\vt_{[k]}}$
under the condition $(\star a)$ goes through here without change.
 The last statement Lemma \ref{vt-transform-k} (3)  follows from the above.
This proves Lemma \ref{vt-transform-k} (3) in $\tsR_{\vt_{[k]}}$
under the condition $(\star b)$.

This completes the proof of Lemma \ref{vt-transform-k}.
\end{proof}

 \begin{defn}
 We call $\tZ_{\vt_{[k]},\Ga}$ the $\vt$-transform of $Z_\Ga$ 
 in $\tsV_{\vt_{[k]}}$ for any $k \in [\up]$.
 \end{defn}

  We need the final case of Lemma 
\ref{vt-transform-k}. We set
$$\tZ_{\vt, \Ga}:=\tZ_{\vt_{[\up]},\Ga},
 \;\; \tZ^\dagger_{\vt, \Ga}:=\tZ^\dagger_{\vt_{[\up]},\Ga}.$$

\subsection{${\wp}$-transforms of  $\Ga$-schemes in   $\tsV_{({\wp}_{(k\tau)}\fr_\mu\fs_h)}$}
\label{subsection:wp-transform-vsk}  $\ $

We now construct the ${\wp}$-transform of $Z_\Ga$ in $\tsV_{({\wp}_{(k\tau)}\fr_\mu\fs_h)}
\subset \tsR_{({\wp}_{(k\tau)}\fr_\mu\fs_h)}$. 


\begin{lemma}\label{wp-transform-ktauh} 
 Fix any subset $\Ga$ of $\rU_\um$.  Assume that $Z_\Ga$ is integral. 

Consider $(k\tau) \mu h \in  \Index_\Phi$.

Then, we have the following:
\begin{itemize}
\item  there exists a closed subscheme $\tZ_{({\wp}_{(k\tau)}\fr_\mu\fs_h),\Ga}$ of
$\tsV_{({\wp}_{(k\tau)}\fr_\mu\fs_h)}$ with an induced morphism  
$\tZ_{({\wp}_{(k\tau)}\fr_\mu\fs_h),\Ga} 
\to Z_\Ga$;
\item   $\tZ_{({\wp}_{(k\tau)}\fr_\mu\fs_h),\Ga}$ comes equipped with an irreducible component  
$\tZ^\dagger_{({\wp}_{(k\tau)}\fr_\mu\fs_h),\Ga}$ with the induced morphism 
$\tZ^\dagger_{({\wp}_{(k\tau)}\fr_\mu\fs_h),\Ga}  
\to Z_\Ga$;
\item  for any standard chart $\fV$ of $\tsR_{({\wp}_{(k\tau)}\fr_\mu\fs_h)}$ such that
$\tZ_{({\wp}_{(k\tau)}\fr_\mu\fs_h),\Ga} \cap \fV \ne \emptyset$, there come equipped with
two subsets, possibly empty,
$$ \tGa^\zero_{\fV} \; \subset \;  \var_\fV, \;\;\;
\tGa^\one_{\fV} \; \subset \;  \var_\fV.$$ 
\end{itemize}

Further,  consider any given chart $\fV$ of $\tsR_{({\wp}_{(k\tau)}\fr_\mu\fs_h)}$ with
$\tZ_{({\wp}_{(k\tau)}\fr_\mu\fs_h),\Ga} \cap \fV \ne \emptyset$. 
Then,  the following hold:
\begin{enumerate}
\item the scheme $\tZ_{({\wp}_{(k\tau)}\fr_\mu\fs_h),\Ga} \cap \fV$,
 as a closed subscheme of the chart $\fV$,
is defined by the following relations
\begin{eqnarray} 
\;\;\;\;\; y , \; \; \; y \in  \tGa^\zero_\fV , \label{Ga-rel-wp-ktauh}\\
\;\;\;  y -1, \; \; \; y \in  \tGa^\one_\fV,  \nonumber \\
\cB_\fV^\mn, \; \cB^\q_\fV, \; L_{\fV, \sF^\star_{\um,\Ga}}; \nonumber
\end{eqnarray}
 further, we take $\tGa^\zero_\fV \subset \var_\fV$
 to be the maximal subset (under inclusion)
among all those subsets that satisfy the above.
\item the induced morphism $\tZ^\dagger_{({\wp}_{(k\tau)}\fr_\mu\fs_h),\Ga} \to Z_\Ga$ is birational; 
\item for any variable $y \in \var_\fV$, $\tZ^\dagger_{({\wp}_{(k\tau)}\fr_\mu\fs_h),\Ga} \cap \fV \subset (y=0)$ if and only 
if  $\tZ_{({\wp}_{(k\tau)}\fr_\mu\fs_h),\Ga} \cap \fV \subset (y=0)$. Consequently, 
 $\tZ^\dagger_{({\wp}_{(k\tau)}\fr_\mu\fs_h),\Ga}\cap \fV \subset \tZ_{\phi_{(k\tau)\mu(h+1)}}\cap \fV$ if and only 
if  $\tZ_{({\wp}_{(k\tau)}\fr_\mu\fs_h),\Ga} \cap \fV \subset \tZ_{\phi_{(k\tau)\mu(h+1)}}\cap \fV$ 
where $\tZ_{\phi_{(k\tau)\mu(h+1)}}$ is the proper transform of the $\wp$-center 
$Z_{\phi_{(k\tau)\mu(h+1)}}$ in  $\tsR_{({\wp}_{(k\tau)}\fr_\mu\fs_h)}$. 
 \end{enumerate}
\end{lemma}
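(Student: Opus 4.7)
The plan is to follow the template of Lemma \ref{vt-transform-k} almost verbatim, replacing $\vt$-blowups by $\wp$-blowups throughout. I will induct on $(k\tau)\mu h \in \Index_\Phi$ equipped with its lexicographic order. The base case is $(\wp_{(11)}\fr_1\fs_0)$, where $\tsR_{(\wp_{(11)}\fr_1\fs_0)}=\tsR_\vt$, so $\tZ_{(\wp_{(11)}\fr_1\fs_0),\Ga}$ and $\tZ^\dagger_{(\wp_{(11)}\fr_1\fs_0),\Ga}$ are defined as $\tZ_{\vt,\Ga}$ and $\tZ^\dagger_{\vt,\Ga}$, and all three statements reduce to the final ($k=\up$) instance of Lemma \ref{vt-transform-k}, noting that by Corollary \ref{eq-for-sV-vr} all residual binomials $\cB^\res_\fV$ have been absorbed into $\cB^\mn_\fV, \cB^\q_\fV$ on $\tsR_\vt$.

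For the inductive step, assume the lemma over $\tsR_{(\wp_{(k\tau)}\fr_\mu\fs_{h-1})}$ and consider the $\wp$-blowup $\tsR_{(\wp_{(k\tau)}\fr_\mu\fs_h)}\to \tsR_{(\wp_{(k\tau)}\fr_\mu\fs_{h-1})}$ along (the proper transform of) $Z_{\phi_{(k\tau)\mu h}}$. Let $Z'_{\phi_{(k\tau)\mu h}}$ be this proper transform. If $\tZ_{(\wp_{(k\tau)}\fr_\mu\fs_{h-1}),\Ga}$ (equivalently, by (3) of the inductive assumption, $\tZ^\dagger_{(\wp_{(k\tau)}\fr_\mu\fs_{h-1}),\Ga}$) is not contained in $Z'_{\phi_{(k\tau)\mu h}}$, I take $\tZ_{(\wp_{(k\tau)}\fr_\mu\fs_h),\Ga}$ and $\tZ^\dagger_{(\wp_{(k\tau)}\fr_\mu\fs_h),\Ga}$ to be the proper transforms of their predecessors; on any chart $\fV$ of the new scheme lying over $\fV'$, set $\tGa^\zero_\fV,\tGa^\one_\fV$ to be the images under proper transform of $\tGa^\zero_{\fV'},\tGa^\one_{\fV'}$; then (1) follows from the inductive assumption and Proposition \ref{equas-vskmuh}, (2) is automatic, and (3) follows by pushing down under the blowup and invoking the maximality of $\tGa^\zero_{\fV'}$.

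The main case, which parallels the $(\star a)/(\star b)$ dichotomy in Lemma \ref{vt-transform-k}, is when $\tZ^\dagger_{(\wp_{(k\tau)}\fr_\mu\fs_{h-1}),\Ga}\subset Z'_{\phi_{(k\tau)\mu h}}$. Write $\phi'_{(k\tau)\mu h}=\{y'_0,y'_1\}\subset \var_{\fV'}$ and use the factor $\PP^1_{[\xi_0,\xi_1]}$ to introduce the chart $\fV=(\fV'\times(\xi_0\equiv 1))\cap \tsR_{(\wp_{(k\tau)}\fr_\mu\fs_h)}$, on which $y'_0$ becomes the exceptional parameter $\zeta_\fV$ and $y_1:=\xi_1$ is the proper transform of $y'_1$. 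Since $\{y'_0,y'_1\}\subset \tGa^\zero_{\fV'}$ by the containment hypothesis and the maximality in (1) of the inductive assumption, I set
\[
\overline\Ga^\zero_\fV=\{\zeta_\fV\}\cup\{y_\fV : y_\fV \text{ is the proper transform of some }y_{\fV'}\in \tGa^\zero_{\fV'}\setminus\{y'_0,y'_1\}\},
\]
\[
\overline\Ga^\one_\fV=\{y_\fV : y_\fV \text{ is the proper transform of some }y_{\fV'}\in \tGa^\one_{\fV'}\}.
\]
Substituting $y_\fV=0$ (resp.\ $y_\fV=1$) in the defining equations provided by Proposition \ref{equas-vskmuh} cuts out $\pi_{\wp}^{-1}(\tZ_{(\wp_{(k\tau)}\fr_\mu\fs_{h-1}),\Ga})\cap \fV$; and the fibre $\pi_\wp^{-1}(\tZ^{\dagger\circ}_{(\wp_{(k\tau)}\fr_\mu\fs_{h-1}),\Ga})\to \tZ^{\dagger\circ}_{(\wp_{(k\tau)}\fr_\mu\fs_{h-1}),\Ga}$ is controlled by a system that is \emph{linear} in $y_1$. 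This is the key reduction, and here the main obstacle lies: I must show that the resulting cut-down system is linear of rank $\le 1$ in $y_1$ so that the analog of the $(\star a)/(\star b)$ alternative makes sense. Linearity follows from Proposition \ref{equas-vskmuh}(1b),(1c),(1d): any plus-term is square-free, and whenever $y_1$ divides a minus-term it either appears to degree one or else $\zeta_\fV$ also divides that term, so after the substitutions only linear occurrences of $y_1$ survive. I then split into $(\star a)$ (rank one generically on $\tZ^\dagger$) and $(\star b)$ (rank zero), exactly as in Lemma \ref{vt-transform-k}: in $(\star a)$ define $\tZ_{(\wp_{(k\tau)}\fr_\mu\fs_h),\Ga}$ to be $\pi_\wp^{-1}(\tZ_{(\wp_{(k\tau)}\fr_\mu\fs_{h-1}),\Ga})$ intersected with the divisor $(y_1=0)$ or not, according to whether $y_1$ vanishes identically on $\pi_\wp^{-1}(\tZ^{\dagger\circ})$, and add $y_1$ to $\tGa^\zero_\fV$ in the former; in $(\star b)$ intersect with $((\xi_0,\xi_1)=(1,1))$ and add $y_1$ to $\tGa^\one_\fV$. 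In both cases define $\tZ^\dagger_{(\wp_{(k\tau)}\fr_\mu\fs_h),\Ga}$ as the closure of $\pi_\wp^{-1}(\tZ^{\dagger\circ}_{(\wp_{(k\tau)}\fr_\mu\fs_{h-1}),\Ga})$, which is an irreducible component since it is closed and contains a Zariski open subset. Then (2) follows from composition with the previous birational morphism. Statement (1) then holds by combining Proposition \ref{equas-vskmuh} with the definitions of $\tGa^\zero_\fV,\tGa^\one_\fV$ and the explicit cut. Statement (3) is verified case by case on $y\in\var_\fV$: for $y$ the proper transform of some $y'\in\var_{\fV'}$ push down and quote (3) of the inductive step plus maximality; for $y=y_1$ or $y=\zeta_\fV$ the inclusion is built into the construction. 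The parallel chart $(\xi_1\equiv 1)$ is handled identically, using Proposition \ref{equas-vskmuh}(2a)--(2d) in place of (1a)--(1d).
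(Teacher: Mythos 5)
Your proposal follows the paper's own proof essentially verbatim: the same induction on $(k\tau)\mu h$, the same base case $(\wp_{(11)}\fr_1\fs_0)=\tsR_\vt$ reducing to the $k=\up$ instance of Lemma \ref{vt-transform-k}, the same dichotomy (contained vs.\ not contained in the proper transform of the $\wp$-center), the same $(\star a)/(\star b)$ alternative with linearity in $y_1$ extracted from Proposition \ref{equas-vskmuh}, and the same chart-by-chart verification of (1)--(3). The only cosmetic deviation is that you explain the disappearance of $\cB^\res_\fV$ via Corollary \ref{eq-for-sV-vr}, whereas the paper simply notes that $\cB^\res_{\fV,>\up}=\emptyset$ in Lemma \ref{vt-transform-k} at $k=\up$; this does not change the substance.
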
  
\begin{proof} We prove by induction on 
 $(k\tau) \mu h \in  \{(11)10)\} \sqcup \Index_{\Phi}$ (cf. \eqref{indexing-Phi}).


The initial case is $(11)10$. In this case, we have
 $$\tsR_{({\wp}_{(11)}\fr_1\fs_0)}:=\tsR_{\vt}, \;\;
 \tsV_{({\wp}_{(11)}\fr_1\fs_0)}:=\tsV_{\vt} , \;\;
  \tZ_{({\wp}_{(11)}\fr_1\fs_0),\Ga}:=\tZ_{\vt,\Ga}, \;\;
   \tZ^\dagger_{({\wp}_{(11)}\fr_1\fs_0),\Ga}:=\tZ^\dagger_{\vt,\Ga}.$$
Then, in this case,  Lemma \ref{wp-transform-ktauh}
 is Lemma \ref{vt-transform-k} for $k=\up$.

We now suppose that Lemma \ref{wp-transform-ktauh} holds for $(k\tau)\mu(h-1)$
for some $(k\tau)\mu h \in \Index_\Phi$.

We then consider the case of $(k\tau)\mu h$.

We let 
\begin{equation}\label{rho-wpktaumuh}
 \rho_{(\wp_{(k\tau)}\fr_\mu\fs_h)}: \tsV_{(\wp_{(k\tau)}\fr_\mu\fs_h)} \lra
\tsV_{(\wp_{(k\tau)}\fr_\mu\fs_{h-1})}\end{equation}
be the morphism induced from
$ \pi_{(\wp_{(k\tau)}\fr_\mu\fs_h)}: \tsR_{(\wp_{(k\tau)}\fr_\mu\fs_h)} \to
\tsR_{(\wp_{(k\tau)}\fr_\mu\fs_{h-1})}$.

Suppose that $\tZ_{({\wp}_{(k\tau)}\fr_\mu\fs_{h-1}),\Ga}$,  or equivalently 
$\tZ^\dagger_{({\wp}_{(k\tau)}\fr_\mu\fs_{h-1}),\Ga}$,
by Lemma \ref{wp-transform-ktauh}  (3) in $({\wp}_{(k\tau)}\fr_\mu\fs_{(h-1)})$,
 is not contained in 
$Z'_{\phi_{(k\tau)\mu h}}$ where $Z'_{\phi_{(k\tau)\mu h}}$ is the proper transform 
in $\tsR_{({\wp}_{(k\tau)}\fr_\mu\fs_{h-1})}$
 of the ${\wp}$-center $Z_{\phi_{(k\tau)\mu h}}$(of $\tsR_{({\wp}_{(k\tau)}\fr_{\mu-1})}$).
We then let $\tZ_{({\wp}_{(k\tau)}\fr_\mu\fs_h),\Ga}$ (resp. $\tZ^\dagger_{({\wp}_{(k\tau)}\fr_\mu\fs_h),\Ga}$) be the
proper transform of $\tZ_{({\wp}_{(k\tau)}\fr_\mu\fs_{h-1}),\Ga}$
 (resp. $\tZ^\dagger_{({\wp}_{(k\tau)}\fr_\mu\fs_{h-1}),\Ga}$)
in $\sV_{({\wp}_{(k\tau)}\fr_\mu\fs_h)}$.
As $\tZ^\dagger_{({\wp}_{(k\tau)}\fr_\mu\fs_h),\Ga}$ is closed in $\tZ_{({\wp}_{(k\tau)}\fr_\mu\fs_h),\Ga}$
and contains a Zariski open subset of $\tZ_{({\wp}_{(k\tau)}\fr_\mu\fs_h),\Ga}$, it is an irreducible
component of $\tZ_{({\wp}_{(k\tau)}\fr_\mu\fs_h),\Ga}$.

Further, consider any standard chart $\fV$ of $\tsR_{({\wp}_{(k\tau)}\fr_\mu\fs_h)}$,
lying over a unique standard chart $\fV'$ of $\tsR_{({\wp}_{(k\tau)}\fr_\mu\fs_{h-1})}$,
such that $\tZ_{({\wp}_{(k\tau)}\fr_\mu\fs_h),\Ga}\cap \fV \ne \emptyset$.
We set 
$$\tGa^\zero_\fV=\{y_\fV \mid y_\fV  
\hbox{ is the proper transform of some $y_{\fV'} \in \tGa^\zero_{\fV'}$}\};$$
$$\tGa^\one_\fV=\{y_\fV \mid y_\fV  
\hbox{ is the proper transform of some $y_{\fV'} \in \tGa^\one_{\fV'}$}\}.$$

We now prove  Lemma \ref{wp-transform-ktauh}  (1), (2) and (3) in $({\wp}_{(k\tau)}\fr_\mu\fs_h)$.

Lemma \ref{wp-transform-ktauh}  (1) in $({\wp}_{(k\tau)}\fr_\mu\fs_h)$ 
follows  from Lemma \ref{wp-transform-ktauh} (1) in $({\wp}_{(k\tau)}\fr_\mu\fs_{h-1})$
because $\tZ_{({\wp}_{(k\tau)}\fr_\mu\fs_h),\Ga}$  is the proper transform of 
$\tZ_{({\wp}_{(k\tau)}\fr_\mu\fs_{h-1}),\Ga}$. 

By construction, we have that 
$\tZ^\dagger_{({\wp}_{(k\tau)}\fr_\mu\fs_h),\Ga} \to \tZ^\dagger_{({\wp}_{(k\tau)}\fr_\mu\fs_{h-1}),\Ga} \to Z_\Ga$
is birational.   
This proves Lemma \ref{wp-transform-ktauh}  (2) in $({\wp}_{(k\tau)}\fr_\mu\fs_h)$.

To show Lemma \ref{wp-transform-ktauh}  (3) in $({\wp}_{(k\tau)}\fr_\mu\fs_h)$, 
we fix any $y \in \var_\fV$. It suffices to show that
if $\tZ^\dagger_{({\wp}_{(k\tau)}\fr_\mu\fs_h),\Ga}\cap \fV
\subset (y=0)$, then $\tZ_{({\wp}_{(k\tau)}\fr_\mu\fs_h),\Ga}\cap \fV
\subset (y=0)$.  By construction, $y \ne \zeta_{\fV,(k\tau)\mu h}$, the exceptional variable
in $\var_\fV$ corresponding to the $\wp$-set $\phi_{(k\tau)\mu h}$. Hence, $y$ is the proper transform
of some $y' \in \var_{\fV'}$. Then, by taking the images under the morphism
$\rho_{(\wp_{(k\tau)}\fr_\mu\fs_h)}$ of \eqref{rho-wpktaumuh}, we obtain
$\tZ^\dagger_{({\wp}_{(k\tau)}\fr_\mu\fs_{h-1}),\Ga}\cap \fV'
\subset (y'=0)$,  hence, $\tZ_{({\wp}_{(k\tau)}\fr_\mu\fs_{h-1}),\Ga}\cap \fV'
\subset (y'=0)$ by the inductive assumption.
 Then,  as $\tZ_{({\wp}_{(k\tau)}\fr_\mu\fs_{h}),\Ga}$ 
 is the proper transform of $\tZ_{({\wp}_{(k\tau)}\fr_\mu\fs_{h-1}),\Ga}$,
 we obtain $\tZ_{({\wp}_{(k\tau)}\fs_{h}),\Ga}\cap \fV \subset (y=0)$.
  The last statement  of Lemma \ref{wp-transform-ktauh}  (3)  follows from the above because
 $\tZ_{\phi_{(k\tau)\mu(h+1)}}\cap \fV=(y_0=y_1=0)$ for some $y_0, y_1 \in \var_\fV$.

\smallskip

We now suppose that $\tZ_{({\wp}_{(k\tau)}\fr_\mu\fs_{h-1}),\Ga}$, or equivalently 
$\tZ^\dagger_{({\wp}_{(k\tau)}\fr_\mu\fs_{h-1}),\Ga}$,
 by  Lemma \ref{wp-transform-ktauh}  (3) in $({\wp}_{(k\tau)}\fr_\mu\fs_{(h-1)})$,
 is contained in the proper transform  $Z'_{\phi_{(k\tau)\mu h}}$ 
 of $Z_{\phi_{(k\tau)\mu h}}$. 

Consider  any standard chart $\fV$ of $\tsR_{({\wp}_{(k\tau)}\fr_\mu\fs_h)}$,
lying over a unique standard chart $\fV'$ of $\tsR_{({\wp}_{(k\tau)}\fr_\mu\fs_{h-1})}$,
such that $\tZ_{({\wp}_{(k\tau)}\fr_\mu\fs_{h-1}),\Ga}\cap \fV' \ne \emptyset$.

We let $\phi'_{(k\tau)\mu h}$
 be the proper transform in  the chart $\fV'$ of the $\wp$-set $\phi_{(k\tau)\mu h}$. Then,  
 $\phi'_{(k\tau)\mu h}$ consists of two  variables such that
 $$\psi'_{(k\tau)\mu h}=\{y'_0,  y'_1\} \subset \var_{\fV'}.$$ 
In addition, we let $\zeta_\fV \in \var_\fV$  be 
 such that  $E_{({\wp}_{(k\tau)}\fr_\mu\fs_{h})} \cap \fV=(\zeta_\fV =0)$ 
 where $E_{({\wp}_{(k\tau)}\fr_\mu\fs_{h})}$
 is the exceptional divisor
of the blowup $\tsR_{({\wp}_{(k\tau)}\fr_\mu\fs_{h})} \to \tsR_{({\wp}_{(k\tau)}\fr_\mu\fs_{h-1})}$. 
  Without loss of generality, we may assume that $y'_0$ 
corresponds to the exceptional variable $\zeta_\fV$ on the chart $\fV$.
 We then let $y_1 \in \var_\fV$ be 
 the proper transform of $y'_1$. 

Now, we observe that 
 $$\phi'_{(k\tau)\mu h}\subset \tGa^\zero_{\fV'}$$
because $\tZ_{({\wp}_{(k\tau)}\fr_\mu\fs_{h-1}),\Ga}$ is contained in the proper transform
$Z'_{\phi_{(k\tau)\mu h}}$.

We set, 
\begin{equation}\label{Gazero-wp-contained-in-bar} 
\overline{\Ga}^\zero_\fV= \{\zeta_\fV, \; y_\fV \mid y_\fV  
\hbox{ is the proper transform of some $ y_{\fV'} \in \tGa^\zero_{\fV'} \-\phi'_{(k\tau)\mu h}$}\},
\end{equation}
\begin{equation}\label{Gaone-wp-contained-in-bar}
 \overline{\Ga}^\one_{\fV}=\{\ y_\fV \mid y_\fV  
\hbox{ is the proper transform of some $y_{\fV'} \in \tGa^\one_{\fV'}$} \}.
\end{equation}

Consider the scheme-theoretic pre-image  
$\rho_{({\wp}_{(k\tau)}\fr_\mu\fs_h)}^{-1}(\tZ_{({\wp}_{(k\tau)}\fr_\mu\fs_{h-1}),\Ga})$.

 Note that scheme-theoretically, we have
$$\rho_{({\wp}_{(k\tau)}\fr_\mu\fs_h)}^{-1}(\tZ_{({\wp}_{(k\tau)}\fr_\mu\fs_{h-1}),\Ga}) \cap \fV=
\pi_{({\wp}_{(k\tau)}\fr_\mu\fs_h)}^{-1}(\tZ_{({\wp}_{(k\tau)}\fr_\mu\fs_{h-1}),\Ga})
 \cap \tsV_{({\wp}_{(k\tau)}\fr_\mu\fs_h)} \cap \fV .$$
Applying Lemma \ref{wp-transform-ktauh} (1)  in $({\wp}_{(k\tau)}\fr_\mu\fs_{h-1})$ to 
$\tZ_{({\wp}_{(k\tau)}\fr_\mu\fs_{h-1}),\Ga}$
and $\rho_{({\wp}_{(k\tau)}\fr_\mu\fs_h)}^{-1}(\tZ_{({\wp}_{(k\tau)}\fr_\mu\fs_{h-1}),\Ga}) $, 
and applying Proposition \ref{equas-vskmuh} to 
$\tsV_{({\wp}_{(k\tau)}\fr_\mu\fs_h)} \cap \fV $, we obtain that  the pre-image 
$\rho_{({\wp}_{(k\tau)}\fr_\mu\fs_h)}^{-1}(\tZ_{({\wp}_{(k\tau)}\fr_\mu\fs_{h-1}),\Ga}) \cap \fV$,
as a closed subscheme of $\fV$,  is defined by 
\begin{equation}\label{wp-pre-image-defined-by}
y_\fV \in \overline{\Ga}^\zero_\fV; \;\;  y_\fV-1, \; y_\fV \in \overline{\Ga}^\one_{\fV};\;\;
\cB^\mn_\fV; \;\; \cB^\q_{\fV}; \;\; L_{\fV,\sF^\star_{\um,\Ga}}.
\end{equation} 
(Observe here that $\zeta_\fV \in  \overline{\Ga}^\zero_\fV$.)


Thus, by setting 
$$\hbox{ $y_\fV=0$ for all $y_\fV \in \overline{\Ga}^\zero_\fV$ and
 $y_\fV=1$ for all $y_\fV \in \overline{\Ga}^\one_{\fV}$} $$ in $\cB^\mn_\fV, 
 \cB^\q_{\fV}, L_{\fV,  \sF^\star_{\um,\Ga}}$ of the above,
 we obtain 
 \begin{equation}\label{vs-lin-xi-pre} \tilde{\cB}^\mn_\fV, 
  \tilde\cB^\q_{\fV}, \tilde{L}_{\fV,  \sF^\star_{\um,\Ga}}.
 \end{equation}
 Note that for any $\bF \in \sF^\star_{\um,\Ga}$,
  if a term of $L_{\fV, F}$ contains $y_1 \in \var_\fV$, 
   then it contains $\zeta_\fV y_1$, hence $\tilde L_{\fV, F}$ does not contain $y_1$.
  We keep those equations of \eqref{vs-lin-xi-pre} such that they contain the variable $y_1 \in \var_\fV$ 
   and obtain
 \begin{equation}\label{vs-lin-xi}
 \hat{\cB}^\mn_\fV, 
 \hat\cB^\q_{\fV}, 
 \end{equation}
 viewed as a  system of equations in $y_1$.
Then, by Proposition \ref{equas-vskmuh}  (1) and (2),
\eqref{vs-lin-xi} is  a  {\it linear} system of equations in $y_1$.
(We point out that $y_1$ here can correspond to either $y_0$ or $y_1$ as in Proposition \ref{equas-vskmuh}.)
Furthermore, one sees that 
 $$(\rho_{({\wp}_{(k\tau)}\fr_\mu\fs_h)}^{-1}(\tZ_{({\wp}_{(k\tau)}\fr_\mu\fs_{h-1}),\Ga})\cap \fV)/
 (\tZ_{({\wp}_{(k\tau)}\fr_\mu\fs_{h-1}),\Ga}\cap \fV')$$
 is defined by the linear system \eqref{vs-lin-xi}.

There are the following two cases for \eqref{vs-lin-xi}:

$(\star a)$  the ranks of the linear system  $\eqref{vs-lin-xi}$ equal one
at  general points of  $\tZ^\dagger_{({\wp}_{(k\tau)}\fr_\mu\fs_{h-1}),\Ga}$.

$(\star b)$  the ranks of the linear system  $\eqref{vs-lin-xi}$ equal zero
at general points of  $\tZ^\dagger_{({\wp}_{(k\tau)}\fr_\mu\fs_{h-1}),\Ga}$, hence at all points
of  $\tZ^\dagger_{({\wp}_{(k\tau)}\fr_\mu\fs_{h-1}),\Ga}$.

\smallskip\noindent
{\it 
Proof of Lemma  \ref{wp-transform-ktauh}   in $({\wp}_{(k\tau)}\fr_\mu \fs_h)$ under the condition $(\star a)$.}
\smallskip

By the condition $(\star a)$,
 there exists a Zariski open subset $\tZ^{\dagger\circ}_{({\wp}_{(k\tau)}\fr_\mu\fs_{h-1}),\Ga}$ 
of $\tZ^\dagger_{({\wp}_{(k\tau)}\fr_\mu\fs_{h-1}),\Ga}$ such that the rank of the linear system
  $\eqref{vs-lin-xi}$ equals one at any point of  $\tZ^{\dagger\circ}_{({\wp}_{(k\tau)}\fr_\mu\fs_{h-1}),\Ga}$. 
By solving  $y_1$ from
 the linear system \eqref{vs-lin-xi} over $\tZ^{\dagger\circ}_{({\wp}_{(k\tau)}\fr_\mu\fs_{h-1}),\Ga}$, we obtain 
that the induced morphism
$$
\rho_{({\wp}_{(k\tau)}\fr_\mu\fs_h)}^{-1}(\tZ^{\dagger\circ}_{({\wp}_{(k\tau)}\fr_\mu\fs_{h-1}),\Ga})  \lra \tZ^\circ_{({\wp}_{(k\tau)}\fr_\mu\fs_{h-1}),\Ga}$$ is an isomorphism. 

Suppose  $y_1$ is identically zero along 
$\rho_{({\wp}_{(k\tau)}\fr_\mu\fs_h)}^{-1}(\tZ^{\dagger\circ}_{({\wp}_{(k\tau)}\fr_\mu\fs_{h-1}),\Ga})$.
We then set,  
\begin{equation}\label{Gazero-ktah-contained-in-a} 
\tGa^\zero_\fV=\{y_1 \} \cup \overline{\Ga}^\zero_\fV
\end{equation}
where $\overline{\Ga}^\zero_\fV$ is as in \eqref{Gazero-wp-contained-in-bar}.
In this case, we let
\begin{equation}\label{ZktahGa-with-xi}
\tZ_{({\wp}_{(k\tau)}\fr_\mu\fs_{h}),\Ga}=\rho_{({\wp}_{(k\tau)}\fr_\mu\fs_h)}^{-1}(\tZ_{({\wp}_{(k\tau)}\fr_\mu\fs_{h-1}),\Ga})\cap D_{y_1}
\end{equation}
scheme-theoretically, where $D_{y_1}$
 is the closure of $(y_1=0)$ in $\tsR_{({\wp}_{(k\tau)}\fr_\mu\fs_h)}$.
We remark here that $D_{y_1}$ does not depend on the choice of the chart $\fV$.

Suppose $y_1$ is not identically zero along 
$\rho_{({\wp}_{(k\tau)}\fr_\mu\fs_h)}^{-1}(\tZ^{\dagger\circ}_{({\wp}_{(k\tau)}\fr_\mu\fs_{h-1}),\Ga})$.
We then set,  
\begin{equation}\label{Gazero-ktah-contained-in-a-2} 
\tGa^\zero_\fV= \overline{\Ga}^\zero_\fV
\end{equation}
where $\overline{\Ga}^\zero_\fV$ is as in \eqref{Gazero-wp-contained-in-bar} .
In this case, we let
\begin{equation}\label{ZktahGa-without-xi}
\tZ_{({\wp}_{(k\tau)}\fr_\mu\fs_{h}),\Ga}=\rho_{({\wp}_{(k\tau)}\fr_\mu\fs_h)}^{-1}(\tZ_{({\wp}_{(k\tau)}\fr_\mu\fs_{h-1}),\Ga}).
\end{equation}
We always set (under the condition $(\star a)$)
\begin{equation}\label{Gaone-ktah-contained-in-a} \tGa^\one_{\fV}= \overline{\Ga}^\one_{\fV}
\end{equation}
where $\overline{\Ga}^\one_{\fV}$ is as in  \eqref{Gaone-wp-contained-in-bar}.

In each case, we have 
$$\rho_{({\wp}_{(k\tau)}\fr_\mu\fs_h)}^{-1}(\tZ^{\dagger\circ}_{({\wp}_{(k\tau)}\fr_\mu\fs_{h-1}),\Ga})  
\subset \tZ_{({\wp}_{(k\tau)}\fr_\mu\fs_h),\Ga},$$ and we let $\tZ^\dagger_{({\wp}_{(k\tau)}\fr_\mu\fs_h),\Ga}$ be the closure of 
$\rho_{({\wp}_{(k\tau)}\fr_\mu\fs_h)}^{-1}(\tZ^{\dagger\circ}_{({\wp}_{(k\tau)}\fr_\mu\fs_{h-1}),\Ga}) $ in $\tZ_{({\wp}_{(k\tau)}\fr_\mu\fs_h),\Ga}$.
It is an irreducible component of $\tZ_{({\wp}_{(k\tau)}\fr_\mu\fs_h),\Ga}$ because
$\tZ^\dagger_{({\wp}_{(k\tau)}\fr_\mu\fs_h),\Ga}$ is closed in $\tZ_{({\wp}_{(k\tau)}\fr_\mu\fs_h),\Ga}$ and contains
the Zariski open subset $\rho_{({\wp}_{(k\tau)}\fr_\mu\fs_h)}^{-1}(\tZ^{\dagger\circ}_{({\wp}_{(k\tau)}\fr_\mu\fs_{h-1}),\Ga}) $
of $\tZ_{({\wp}_{(k\tau)}\fr_\mu\fs_h),\Ga}$.
Then, it follows that the composition
 $$\tZ^\dagger_{({\wp}_{(k\tau)}\fr_\mu\fs_h),\Ga} \lra \tZ^\dagger_{({\wp}_{(k\tau)}\fr_\mu\fs_{h-1}),\Ga}
 \lra Z_\Ga$$ is birational.  
 This proves Lemma \ref{wp-transform-ktauh} (2) in $({\wp}_{(k\tau)}\fs_h)$.


In each case of \eqref{Gazero-ktah-contained-in-a} and \eqref{Gazero-ktah-contained-in-a-2}, 
by the paragraph of \eqref{wp-pre-image-defined-by},
we have that $\tZ_{({\wp}_{(k\tau)}\fr_\mu\fs_h),\Ga}\cap \fV$
 as a closed subscheme of $\fV$ is defined by the equations as stated in the Lemma. 
 This proves Lemma \ref{wp-transform-ktauh} (1) in $({\wp}_{(k\tau)}\fr_\mu\fs_h)$.

It remains to prove Lemma \ref{wp-transform-ktauh} (3) in $({\wp}_{(k\tau)}\fr_\mu\fs_h)$.

Fix any $y \in \var_\fV$, it suffices to show that
if $\tZ^\dagger_{({\wp}_{(k\tau)}\fr_\mu\fs_{h}),\Ga}\cap \fV
\subset (y=0)$, then $\tZ_{({\wp}_{(k\tau)}\fr_\mu\fs_{h}),\Ga}\cap \fV
\subset (y=0)$.  
If $y \ne \zeta_\fV,  y_1$, then $y$ is the proper transform of some variable $y' \in \var_{\fV'}$.
Hence, by taking the images under $\rho_{({\wp}_{(k\tau)}\fr_\mu\fs_h)}$,
 we obtain $\tZ^\dagger_{({\wp}_{(k\tau)}\fr_\mu\fs_{h-1}),\Ga}\cap \fV'
\subset (y'=0)$, and then,  by Lemma \ref{wp-transform-ktauh} (3) in (${\wp}_{(k\tau)}\fr_\mu\fs_{h-1}$),
$\tZ_{({\wp}_{(k\tau)}\fr_\mu\fs_{h-1}),\Ga}\cap \fV'
\subset (y'=0)$,  thus $y' \in \tGa^\zero_{\fV'}$ by the maximality of $\tGa^\zero_{\fV'}$. 
 Therefore, 
$\tZ_{({\wp}_{(k\tau)}\fr_\mu\fs_{h}),\Ga}\cap \fV \subset ( y=0)$,  
by (the already-proved) Lemma \ref{wp-transform-ktauh} (1) in $(\wp_{(k\tau)}\fr_\mu\fs_h)$.
Next, suppose $y = y_1$ (if it occurs). Then, by construction,
 $\tZ_{({\wp}_{(k\tau)}\fs_{h}),\Ga}\cap \fV \subset (y=0)$. 
Finally, we let $y =\zeta_\fV$. 
 Again, by construction, $\tZ_{({\wp}_{(k\tau)}\fr_\mu\fs_{h}),\Ga}\cap \fV \subset (\zeta_\fV=0)$. 
As in the previous case, the last statement  of Lemma \ref{wp-transform-ktauh}  (3)  follows from the above.

\smallskip\noindent
{\it  Proof of Lemma \ref{wp-transform-ktauh} in $({\wp}_{(k\tau)}\fr_\mu\fs_h)$
 under the condition $(\star b)$.}
\smallskip

Under the condition $(\star b)$, we have that $$ 
\rho_{({\wp}_{(k\tau)}\fr_\mu\fs_h)}^{-1}(\tZ^\dagger_{({\wp}_{(k\tau)}\fr_\mu\fs_{h-1}),\Ga})
 \lra \tZ^\dagger_{({\wp}_{(k\tau)}\fr_\mu\fs_{h-1}),\Ga}$$
can be canonically identified with the trivial $\PP_{[\xi_0,\xi_1]}$-bundle:
$$\rho_{({\wp}_{(k\tau)}\fr_\mu\fs_h)}^{-1}(\tZ^\dagger_{({\wp}_{(k\tau)}\fr_\mu\fs_{h-1}),\Ga}) 
= \tZ^\dagger_{({\wp}_{(k\tau)}\fr_\mu\fs_{h-1}),\Ga}
\times \PP_{[\xi_0,\xi_1]}.$$
In this case, we define 
$$\tZ_{({\wp}_{(k\tau)}\fr_\mu\fs_h),\Ga}= \rho_{({\wp}_{(k\tau)}\fr_\mu\fs_h)}^{-1}(\tZ_{({\wp}_{(k\tau)}\fr_\mu\fs_{h-1}),\Ga})
 \cap ((\xi_0,\xi_1)= (1,1)),$$
 $$\tZ^\dagger_{({\wp}_{(k\tau)}\fr_\mu\fs_h),\Ga}:= 
 \rho_{({\wp}_{(k\tau)}\fr_\mu\fs_h)}^{-1}(\tZ^\dagger_{({\wp}_{(k\tau)}\fr_\mu\fs_{h-1}),\Ga})
 \cap ((\xi_0,\xi_1)= (1,1)),$$
 both scheme-theoretically.
 
 The induced morphism $\tZ^\dagger_{({\wp}_{(k\tau)}\fr_\mu\fs_h),\Ga} \lra \tZ^\dagger_{({\wp}_{(k\tau)}\fr_\mu\fs_{h-1}),\Ga}$
 is an isomorphism. 
 Again, one sees that $\tZ^\dagger_{({\wp}_{(k\tau)}\fr_\mu\fs_h),\Ga}$ is 
 an irreducible component of $\tZ_{({\wp}_{(k\tau)}\fr_\mu\fs_h),\Ga}$. Therefore,
 $$\tZ^\dagger_{({\wp}_{(k\tau)}\fr_\mu\fs_h),\Ga} \lra
 \tZ^\dagger_{({\wp}_{(k\tau)}\fr_\mu\fs_{h-1}),\Ga} \lra  Z_\Ga$$ is birational.
 This proves Lemma \ref{wp-transform-ktauh} (2) in $({\wp}_{(k\tau)}\fr_\mu\fs_h)$.

Further, under the condition $(\star b)$, we set 
 \begin{eqnarray}
 \tGa^\zero_\fV= \overline{\Ga}^\zero_\fV,  \;\;\;\;
\tGa^\one_\fV=\{y_1\} \cup \overline{\Ga}^\one_{\fV}. \nonumber \end{eqnarray}

Then, by 
the paragraph of \eqref{wp-pre-image-defined-by},
we have that $\tZ_{({\wp}_{(k\tau)}\fr_\mu\fs_h),\Ga}\cap \fV$,
 as a closed subscheme of $\fV$, is defined by the equations as stated in the Lemma. 
 This proves Lemma \ref{wp-transform-ktauh} (1) in $({\wp}_{(k\tau)}\fr_\mu\fs_h)$.

 It remains to prove Lemma \ref{wp-transform-ktauh} (3) in $({\wp}_{(k\tau)}\fr_\mu\fs_h)$.

Fix any $y \in \var_\fV$, it suffices to show that
if $\tZ^\dagger_{({\wp}_{(k\tau)}\fs_{h}),\Ga}\cap \fV
\subset (y=0)$, then $\tZ_{({\wp}_{(k\tau)}\fs_{h}),\Ga}\cap \fV
\subset (y=0)$.  By construction, $y \ne y_1$.
Then, the corresponding proof of Lemma \ref{wp-transform-ktauh} (3) in $({\wp}_{(k\tau)}\fr_\mu\fs_h)$
under the condition $(\star a)$ goes through here without change. As earlier, 
the last statement  of Lemma \ref{wp-transform-ktauh}  (3)  follows from the above. 
This proves Lemma \ref{wp-transform-ktauh} (3) in $({\wp}_{(k\tau)}\fr_\mu\fs_h)$
 under the condition $(\star b)$.

Putting all together, this completes the proof of Lemma \ref{wp-transform-ktauh}.    
\end{proof}

 \begin{defn}
 We call $\tZ_{({\wp}_{(k\tau)}\fr_\mu\fs_{h}),\Ga}$ the $\wp$-transform of $Z_\Ga$ 
 in $\tsV_{({\wp}_{(k\tau)}\fr_\mu\fs_{h})}$ for any $(k\tau)\mu h \in \Index_\Phi$.
 \end{defn}

  We need the final case of Lemma 
\ref{wp-transform-ktauh}. We set
$$\tZ_{\wp, \Ga}:=\tZ_{({\wp}_{(k\tau)}\fr_\mu\fs_h),\Ga}, \;\; \tZ^\dagger_{\wp, \Ga}:=\tZ^\dagger_{({\wp}_{(k\tau)}\fr_\mu\fs_h),\Ga}$$
where $k=\up, \tau=\ft_{F_\up},\mu=\rho_{\up\ft_{F_\up}},
h=\si_{(\up\ft_{F_\up})\rho_{\up\ft_{F_\up}} }.$ That is,
$$\tZ_{\wp,\Ga}= \tZ_{({\wp}_{(\up\ft_{F_\up})}\fr_{\rho_{\up\ft_{F_\up}}}
\fs_{\si_{(\up\ft_{F_\up})\rho_{\up\ft_{F_\up}} }}),\Ga},\;\;
\tZ^\dagger_{\wp,\Ga}= \tZ^\dagger_{({\wp}_{(\up\ft_{F_\up})}\fr_{\rho_{\up\ft_{F_\up}}}
\fs_{\si_{(\up\ft_{F_\up})\rho_{\up\ft_{F_\up}} }}),\Ga}.$$

\subsection{$\eth$-transforms of $\Ga$-schemes in  $ \tsV_{(\eth_{(k\tau)}\fr_\mu\fs_h)}$}
\label{subsection:wp-transform-ktauh}  $\ $

We now construct the $\eth$-transform of $Z_\Ga$ in $\tsV_{(\eth_{(k\tau)}\fr_\mu\fs_h)}
\subset \tsR_{(\eth_{(k\tau)}\fr_\mu\fs_h)}$. 
(This subsection may be combined with the previous subsection
to save some space, using the notation of \S \ref{combine-all}. But, we choose
to exhibit it separately for clarity.)


\begin{lemma}\label{vr-transform-ktauh} 
 Fix any  subset $\Ga$ of $\rU_\um$.  Assume that $Z_\Ga$ is integral. 

Consider $(k\tau) \mu h \in  \Index_\Psi$.

Then, we have the following:
\begin{itemize}
\item  there exists a closed subscheme $\tZ_{(\eth_{(k\tau)}\fr_\mu\fs_h),\Ga}$ of
$\tsV_{(\eth_{(k\tau)}\fr_\mu\fs_h)}$ with an induced morphism  
$\tZ_{(\eth_{(k\tau)}\fr_\mu\fs_h),\Ga} 
\to Z_\Ga$;
\item   $\tZ_{(\eth_{(k\tau)}\fr_\mu\fs_h),\Ga}$ comes equipped with an irreducible component  
$\tZ^\dagger_{(\eth_{(k\tau)}\fr_\mu\fs_h),\Ga}$ with the induced morphism 
$\tZ^\dagger_{(\eth_{(k\tau)}\fr_\mu\fs_h),\Ga}  
\to Z_\Ga$;
\item  for any standard chart $\fV$ of $\tsR_{(\eth_{(k\tau)}\fr_\mu\fs_h)}$ such that
$\tZ_{(\eth_{(k\tau)}\fr_\mu\fs_h),\Ga} \cap \fV \ne \emptyset$, there are two subsets, possibly empty,
$$ \tGa^\zero_{\fV} \; \subset \;  \var_\fV, \;\;\;
\tGa^\one_{\fV} \; \subset \;  \var_\fV.$$ 
\end{itemize}

Further,  consider any given chart $\fV$ of $\tsR_{({\eth}_{(k\tau)}\fr_\mu\fs_h)}$ with
$\tZ_{({\eth}_{(k\tau)}\fr_\mu\fs_h),\Ga} \cap \fV \ne \emptyset$. 
Then,  the following hold:
\begin{enumerate}
\item the scheme $\tZ_{(\eth_{(k\tau)}\fr_\mu\fs_h),\Ga} \cap \fV$,
 as a closed subscheme of the chart $\fV$,
is defined by the following relations
\begin{eqnarray} 
\;\;\;\;\; y , \; \; \; y \in  \tGa^\zero_\fV , \label{Ga-rel-wp-ktauh-eth}\\
\;\;\;  y -1, \; \; \; y \in  \tGa^\one_\fV,  \nonumber \\
\cB_\fV^\mn, \; \cB^\res_\fV, \;  \cB^\q_\fV, \; L_{\fV, \sF^\star_{\um,\Ga}};  \nonumber
\end{eqnarray}
 further, we take $\tGa^\zero_\fV \subset \var_\fV$
 to be the maximal subset (under inclusion)
among all those subsets that satisfy the above.
\item the induced morphism $\tZ^\dagger_{(\eth_{(k\tau)}\fr_\mu\fs_h),\Ga} 
\to Z_\Ga$ is birational;
\item for any variable $y \in \var_\fV$, $\tZ^\dagger_{(\eth_{(k\tau)}\fr_\mu\fs_h),\Ga} \cap \fV \subset (y=0)$ if and only 
if  $\tZ_{(\eth_{(k\tau)}\fr_\mu\fs_h),\Ga} \cap \fV \subset (y=0)$. Consequently, 
 $\tZ^\dagger_{(\eth_{(k\tau)}\fr_\mu\fs_h),\Ga}\cap \fV \subset\tZ_{\psi_{(k\tau)\mu(h+1)}}\cap \fV$ if and only 
if  $\tZ_{(\eth_{(k\tau)}\fr_\mu\fs_h),\Ga} \cap \fV \subset \tZ_{\psi_{(k\tau)\mu(h+1)}}\cap \fV$ 
where $\tZ_{\psi_{(k\tau)\mu(h+1)}}$ is the proper transform in  
$\tsR_{(\eth_{(k\tau)}\fr_\mu\fs_h)}$ of 
 the $\eth$-center $Z_{\psi_{(k\tau)\mu(h+1)}}$. 
\end{enumerate}
\end{lemma}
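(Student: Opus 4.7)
The plan is to mimic the proof of Lemma \ref{wp-transform-ktauh} verbatim in structure, proceeding by induction on $(k\tau)\mu h \in \{(11)10\}\sqcup \Index_\Psi$. The base case $(11)10$ is settled by setting $\tZ_{(\eth_{(11)}\fr_1\fs_0),\Ga}:=\tZ_{\wp,\Ga}$ and $\tZ^\dagger_{(\eth_{(11)}\fr_1\fs_0),\Ga}:=\tZ^\dagger_{\wp,\Ga}$, so that the statement reduces to the final case of Lemma \ref{wp-transform-ktauh}. For the inductive step, I let $\rho_{(\eth_{(k\tau)}\fr_\mu\fs_h)}\colon \tsV_{(\eth_{(k\tau)}\fr_\mu\fs_h)}\to \tsV_{(\eth_{(k\tau)}\fr_\mu\fs_{h-1})}$ denote the induced birational morphism and $Z'_{\psi_{(k\tau)\mu h}}$ the proper transform of the $\eth$-center in $\tsR_{(\eth_{(k\tau)}\fr_\mu\fs_{h-1})}$, and split into two cases according to whether $\tZ_{(\eth_{(k\tau)}\fr_\mu\fs_{h-1}),\Ga}$ is contained in $Z'_{\psi_{(k\tau)\mu h}}$ or not.

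In the \emph{not-contained} case, I define $\tZ_{(\eth_{(k\tau)}\fr_\mu\fs_h),\Ga}$ and $\tZ^\dagger_{(\eth_{(k\tau)}\fr_\mu\fs_h),\Ga}$ to be the proper transforms of the corresponding schemes one step earlier, and take $\tGa^\zero_\fV,\tGa^\one_\fV$ to be the proper transforms of $\tGa^\zero_{\fV'},\tGa^\one_{\fV'}$. Part (1) then follows by combining the inductive description on $\fV'$ with Proposition \ref{equas-p-k}, which supplies the defining equations of $\tsV_{(\eth_{(k\tau)}\fr_\mu\fs_h)}\cap \fV$; (2) is transparent since birationality is preserved under proper transform; (3) is obtained by pushing a containment $\tZ^\dagger\cap\fV\subset(y=0)$ down to $\fV'$ via $\rho_{(\eth_{(k\tau)}\fr_\mu\fs_h)}$ and invoking the inductive (3) together with the maximality of $\tGa^\zero_{\fV'}$. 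The inclusion of $\cB^\res_\fV$ among the stated defining equations is harmless because, by Proposition \ref{eq-for-sV-vtk}, $\cB^\res_\fV$ lies in the ideal generated by $\cB^\mn_\fV$ on any chart after all $\vt$-blowups have been performed.

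In the \emph{contained} case, on a chart $\fV$ lying over $\fV'$ I let $\psi'_{(k\tau)\mu h}=\{y'_0,y'_1\}\subset \var_{\fV'}$, assume $y'_0$ corresponds to the exceptional parameter $\zeta_\fV$ on $\fV$, and let $y_1\in\var_\fV$ be the proper transform of $y'_1$. Writing $\overline{\Ga}^\zero_\fV=\{\zeta_\fV\}\cup\{\text{proper transforms of }\tGa^\zero_{\fV'}\setminus \psi'_{(k\tau)\mu h}\}$ and $\overline{\Ga}^\one_\fV=\{\text{proper transforms of }\tGa^\one_{\fV'}\}$, the scheme-theoretic preimage $\rho^{-1}(\tZ_{(\eth_{(k\tau)}\fr_\mu\fs_{h-1}),\Ga})\cap \fV$ is cut out by $\overline{\Ga}^\zero_\fV,\overline{\Ga}^\one_\fV$ together with $\cB^\mn_\fV,\cB^\res_\fV,\cB^\q_\fV,L_{\fV,\sF^\star_{\um,\Ga}}$. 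Setting the $\overline{\Ga}$-variables to their prescribed values, the residual system in $y_1$ is \emph{linear}, a fact that follows from Proposition \ref{equas-p-k} (1a,1b,2a,2b) for main binomials, together with the $\vr$-linearity of $\cB^\q_\fV$ (Proposition \ref{equas-p-k=0}). Depending on whether the generic rank of this linear system on $\tZ^\dagger_{(\eth_{(k\tau)}\fr_\mu\fs_{h-1}),\Ga}$ is one (case $\star a$) or zero (case $\star b$), I define $\tZ_{(\eth_{(k\tau)}\fr_\mu\fs_h),\Ga}$ either by intersecting with the global divisor $D_{y_1}$ (when $y_1$ is forced to vanish generically) or by keeping the full preimage (when $y_1$ is generically nonzero) in case $(\star a)$, and by cutting with $(\xi_0,\xi_1)=(1,1)$ in case $(\star b)$; $\tZ^\dagger$ is then defined as the closure of $\rho^{-1}(\tZ^{\dagger\circ})\cap(\cdots)$, which is a component mapping birationally to $Z_\Ga$.

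The main technical obstacle is part (3) in the contained case: one must show that every divisorial containment $\tZ^\dagger_{(\eth_{(k\tau)}\fr_\mu\fs_h),\Ga}\cap \fV\subset (y=0)$ already forces $\tZ_{(\eth_{(k\tau)}\fr_\mu\fs_h),\Ga}\cap \fV\subset (y=0)$. For $y\notin\{\zeta_\fV,y_1\}$, pushing the containment down via $\rho_{(\eth_{(k\tau)}\fr_\mu\fs_h)}$ and invoking the inductive (3) together with the maximality of $\tGa^\zero_{\fV'}$ puts the corresponding $y'$ into $\tGa^\zero_{\fV'}$, whence its proper transform sits in $\overline{\Ga}^\zero_\fV\subset \tGa^\zero_\fV$; for $y=\zeta_\fV$ and, in the relevant subcase, $y=y_1$, the containment is built into the construction via $\overline{\Ga}^\zero_\fV$, the divisor $D_{y_1}$, or the slice $(\xi_0,\xi_1)=(1,1)$. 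The last sentence of (3) is then automatic since the subsequent $\eth$-center is the intersection of two coordinate divisors on $\fV$. All verifications are direct once the linearity of the residual system is in hand, which is why the crucial input is Proposition \ref{equas-p-k}.
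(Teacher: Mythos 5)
Your proposal follows the paper's proof essentially verbatim: the same induction on $(k\tau)\mu h$, the same base case via Lemma \ref{wp-transform-ktauh}, the same contained/not-contained dichotomy, the same $(\star a)$/$(\star b)$ rank split, and the same appeal to Proposition \ref{equas-p-k} for linearity in $y_1$. Your explicit remark that listing $\cB^\res_\fV$ in item (1) is harmless (it lies in the ideal of $\cB^\mn_\fV$ on every chart of $\tsR_\eth$, cf.\ Propositions \ref{eq-for-sV-vtk} and \ref{equas-p-k}) is a useful clarification, since the paper's proof silently works with the shorter list from Proposition \ref{equas-p-k}, but it does not constitute a different approach.
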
  
\begin{proof} We prove by induction on 
$ (k\tau)\mu h \in \{(11)10\} \sqcup \Index_\Psi$ (cf. \eqref{indexing-Psi}).
(The proof is  analogous to that of Lemma \ref{wp-transform-ktauh}.
We provide details for completeness.)


The initial case is $(11)10$. In this case, we have
 $$\tsR_{(\eth_{11}\fr_1\fs_0)}:=\tsR_{\wp}, \;\;
 \tsV_{(\eth_{11}\fr_1\fs_0)}:=\tsV_{\wp} , \;\;
  \tZ_{(\eth_{11}\fr_1\fs_0),\Ga}:=\tZ_{\wp,\Ga} , \;\;
  \tZ^\dagger_{(\eth_{11}\fr_1\fs_0),\Ga}:=\tZ^\dagger_{\wp,\Ga}.$$
Then, in this case,  Lemma \ref{vr-transform-ktauh}
 is Lemma \ref{wp-transform-ktauh} for $\tsR_\wp$.


We now suppose that Lemma \ref{vr-transform-ktauh} holds for $(k\tau)\mu(h-1)$
for some $(k\tau)\mu h \in \Index_\Psi$.

We then consider the case of $(k\tau)\mu h \in \Index_\Psi$.

We let 
\begin{equation}\label{rho-ethktaumuh}
\rho_{(\eth_{(k\tau)}\fr_\mu\fs_h)}: \tsV_{(\eth_{(k\tau)}\fr_\mu\fs_h)} \lra
\tsV_{(\eth_{(k\tau)}\fr_\mu\fs_{h-1})} \end{equation}
be the morphism induced from
$\pi_{(\eth_{(k\tau)}\fr_\mu\fs_h)}: \tsR_{(\eth_{(k\tau)}\fr_\mu\fs_h)} \to
\tsR_{(\eth_{(k\tau)}\fr_\mu\fs_{h-1})}$.

Suppose that $\tZ_{(\eth_{(k\tau)}\fr_\mu\fs_{h-1}),\Ga}$,  or equivalently $\tZ^\dagger_{(\eth_{(k\tau)}\fr_\mu\fs_{h-1}),\Ga}$, by Lemma \ref{vr-transform-ktauh} (3) in $(\eth_{(k\tau)}\fr_\mu\fs_{(h-1)})$,
 is not contained in  $Z'_{\psi_{(k\tau)h}}$ where $Z'_{\psi_{(k\tau)\mu h}}$ is the proper transform in 
 $\tsR_{(\eth_{(k\tau)}\fr_\mu\fs_{h-1})}$
 of the $\eth$-center $Z_{\psi_{(k\tau)\mu h}}$ (of $\tsR_{(\eth_{(k\tau)}\fr_{\mu-1})}$).

We then let $\tZ_{(\eth_{(k\tau)}\fr_\mu\fs_h),\Ga}$ 
(resp. $\tZ^\dagger_{(\eth_{(k\tau)}\fr_\mu\fs_h),\Ga}$) be the
proper transform of $\tZ_{(\eth_{(k\tau)}\fr_\mu\fs_{h-1}),\Ga}$ (resp. $\tZ^\dagger_{(\eth_{(k\tau)}\fr_\mu\fs_{h-1}),\Ga}$)
in $\tsV_{(\eth_{(k\tau)}\fr_\mu\fs_h)}$.
As $\tZ^\dagger_{(\eth_{(k\tau)}\fr_\mu\fs_h),\Ga}$ is closed in $\tZ_{(\eth_{(k\tau)}\fr_\mu\fs_h),\Ga}$
and contains a Zariski open subset of $\tZ_{(\eth_{(k\tau)}\fr_\mu\fs_h),\Ga}$, it is an irreducible
component of $\tZ_{(\eth_{(k\tau)}\fr_\mu\fs_h),\Ga}$.

Further, consider any standard chart $\fV$ of $\tsR_{(\eth_{(k\tau)}\fr_\mu\fs_h)}$,
lying over a unique standard chart $\fV'$ of $\tsR_{(\eth_{(k\tau)}\fr_\mu\fs_{h-1})}$,
such that $\tZ_{(\eth_{(k\tau)}\fr_\mu\fs_h),\Ga}\cap \fV \ne \emptyset$.
We set 
$$\tGa^\zero_\fV=\{y_\fV \mid y_\fV  \hbox{ is the proper transform of some $y_{\fV'} \in \tGa^\zero_{\fV'}$}\};$$
$$\tGa^\one_\fV=\{y_\fV \mid y_\fV  \hbox{ is the proper transform of some $y_{\fV'} \in \tGa^\one_{\fV'}$}\}.$$

We now prove Lemma  \ref{vr-transform-ktauh} (1), (2) and (3) in $(\eth_{(k\tau)}\fr_\mu\fs_h)$.

Lemma  \ref{vr-transform-ktauh} (1) in $(\eth_{(k\tau)}\fr_\mu\fs_h)$ 
follows  from Lemma \ref{vr-transform-ktauh} (1) in $(\eth_{(k\tau)}\fr_\mu\fs_{h-1})$
because $\tZ_{(\eth_{(k\tau)}\fr_\mu\fs_h),\Ga}$ 
 is the proper transform of $\tZ_{(\eth_{(k\tau)}\fr_\mu\fs_{h-1}),\Ga}$. 

By construction, we have that the composition
$\tZ^\dagger_{(\eth_{(k\tau)}\fr_\mu\fs_h),\Ga} \to \tZ^\dagger_{(\eth_{(k\tau)}\fr_\mu\fs_{h-1}),\Ga} \to Z_\Ga$
is birational. 
This proves Lemma  \ref{vr-transform-ktauh} (2) in $(\eth_{(k\tau)}\fr_\mu\fs_h)$.


It remains to prove Lemma \ref{vr-transform-ktauh} (3) in $(\eth_{(k\tau)}\fr_\mu\fs_h)$.
But, it follows from the identical lines of the corresponding proof
of  Lemma \ref{wp-transform-ktauh} (3). We avoid the repetition.

\smallskip

We now suppose that $\tZ_{(\eth_{(k\tau)}\fr_\mu\fs_{h-1}),\Ga}$, or equivalently $\tZ^\dagger_{(\eth_{(k\tau)}\fr_\mu\fs_{h-1}),\Ga}$,
 by  Lemma \ref{vr-transform-ktauh} (3) in $(\eth_{(k\tau)}\fr_\mu\fs_{(h-1)})$,
 is contained in the proper transform  $Z'_{\psi_{(k\tau)\mu h}}$ of 
 $Z_{\psi_{(k\tau)\mu h}}$.


Consider  any standard chart $\fV$ of $\tsR_{(\eth_{(k\tau)}\fr_\mu\fs_h)}$,
lying over a unique standard chart $\fV'$ of $\tsR_{(\eth_{(k\tau)}\fr_\mu\fs_{h-1})}$,
such that $\tZ_{(\eth_{(k\tau)}\fr_\mu\fs_{h-1}),\Ga}\cap \fV' \ne \emptyset$.


 We let $\psi'_{(k\tau)\mu h}$
 be the proper transform in  the chart $\fV'$ of the $\eth$-set $\psi_{(k\tau)\mu h}$.
Then, 
we can express
 $$\psi'_{(k\tau)\mu h}=\{y'_0, y'_1\} \subset \var_{\fV'}.$$ 
 We let $\zeta_\fV \in \var_\fV$  be such that 
$E_{(\eth_{(k\tau)}\fr_\mu\fs_{h})} \cap \fV=(\zeta_\fV =0)$ where $E_{(\eth_{(k\tau)}\fr_\mu\fs_{h})}$
 is the exceptional divisor
of the blowup $\tsR_{(\eth_{(k\tau)}\fr_\mu\fs_{h})} \to \tsR_{(\eth_{(k\tau)}\fr_\mu\fs_{h-1})}$. 
Without loss of generality, we assume that $y'_0$ 
corresponds to the exceptional $\zeta_\fV$ on the chart $\fV$.
 We then let $y_1 \in \var_\fV$ be 
 the proper transform of $y'_1$ on the chart $\fV$.

Here, we observe that 
 $$\psi'_{(k\tau)\mu h}=\{y'_0,  y'_1\} \subset \tGa^\zero_{\fV'}$$ 
because $\tZ_{(\eth_{(k\tau)}\fr_\mu\fs_{h-1}),\Ga}$ is contained in the proper transform
$Z'_{\psi_{(k\tau)\mu h}}$.

We set, 
\begin{equation}\label{Gazero-eth}
\overline{\Ga}^\zero_\fV= \{\zeta_\fV, \; y_\fV \mid y_\fV  
\hbox{ is the proper transform of some $ y_{\fV'} \in \tGa^\zero_{\fV'} \- \psi'_{(k\tau)\mu h}$}\},
\end{equation}
\begin{equation}\label{Gaone-eth}
 \overline{\Ga}^\one_{\fV}=\{\ y_\fV \mid y_\fV  
\hbox{ is the proper transform of some $y_{\fV'} \in \tGa^\one_{\fV'}$} \}.
\end{equation}

Consider the scheme-theoretic pre-image  
$\rho_{(\eth_{(k\tau)}\fr_\mu\fs_h)}^{-1}(\tZ_{(\eth_{(k\tau)}\fr_\mu\fs_{h-1}),\Ga})$.

Note that scheme-theoretically, we have
$$\rho_{(\eth_{(k\tau)}\fr_\mu\fs_h)}^{-1}(\tZ_{(\eth_{(k\tau)}\fr_\mu\fs_{h-1}),\Ga}) \cap \fV=
\pi_{(\eth_{(k\tau)}\fr_\mu\fs_h)}^{-1}(\tZ_{(\eth_{(k\tau)}\fr_\mu\fs_{h-1}),\Ga})
 \cap \tsV_{(\eth_{(k\tau)}\fr_\mu\fs_h)} \cap \fV .$$
Applying Lemma \ref{vr-transform-ktauh} (1)  in $(\eth_{(k\tau)}\fr_\mu\fs_{h-1})$ to 
$\tZ_{(\eth_{(k\tau)}\fr_\mu\fs_{h-1}),\Ga}$
and $\rho_{(\eth_{(k\tau)}\fr_\mu\fs_h)}^{-1}(\tZ_{(\eth_{(k\tau)}\fr_\mu\fs_{h-1}),\Ga}) $, 
and applying Proposition \ref{equas-p-k} to 
$\tsV_{(\eth_{(k\tau)}\fr_\mu\fs_h)} \cap \fV $, 
we obtain that the pre-image
$\rho_{(\eth_{(k\tau)}\fr_\mu\fs_h)}^{-1}(\tZ_{(\eth_{(k\tau)}\fr_\mu\fs_{h-1}),\Ga}) \cap \fV$
as a closed subscheme of $\fV$,  is defined by 
\begin{equation}\label{pre-image-defined-by}
 y_\fV \in \overline{\Ga}^\zero_\fV; \;\;  y_\fV-1, \; y_\fV \in \overline{\Ga}^\one_{\fV};\;\;
\cB^\mn_\fV; \;\; 
\cB^\q_{\fV}; \;\; L_{\fV, \sF^\star_{\um,\Ga}}.
\end{equation}
(Note that we have $\zeta_\fV \in \overline{\Ga}^\zero_\fV$.)

Thus, by setting 
$$\hbox{ $y_\fV=0$ for all $y_\fV \in \tGa^\zero_\fV$ and
 $y_\fV=1$ for all $y_\fV \in \overline{\Ga}^\one_{\fV}$} $$ 
 in $\cB^\mn_\fV, 
 \cB^\q_{\fV}, L_{\fV, \sF^\star_{\um,\Ga}}$ of the above,
 we obtain 
 \begin{equation}\label{lin-xi-pre} \tilde{\cB}^\mn_\fV, 
  \tilde\cB^\q_{\fV}, \tilde{L}_{\fV, \sF^\star_{\um,\Ga}}.
 \end{equation}
Again,  for any $\bF \in \sF^\star_{\fV,\Ga}$,
 if a term of $L_{\fV, F}$ contains $y_1 \in \var_\fV$, 
 then it contains $\zeta_\fV y_1$, hence $\tilde L_{\fV, F}$ does not contain $y_1$.
  We keep those equations of \eqref{lin-xi-pre} such that they contain the variable
 $y_1$ and obtain
 \begin{equation}\label{vr-lin-xi}
 \hat{\cB}^\mn_\fV, 
 \hat\cB^\q_{\fV}, 
 \end{equation}
 viewed as a  system of equations in $y_1$.
 Then, Proposition \ref{equas-p-k} (1) and (2),
\eqref{vr-lin-xi} is  a  {\it linear} system of equations in $y_1$.
(We point out that $y_1$ here can correspond to either $y_0$ or $y_1$ as in Proposition \ref{equas-p-k}.)
Furthermore, one sees that 
 $$(\rho_{(\eth_{(k\tau)}\fr_\mu\fs_h)}^{-1}(\tZ_{(\eth_{(k\tau)}\fr_\mu\fs_{h-1}),\Ga})\cap \fV)/
 (\tZ_{(\eth_{(k\tau)}\fr_\mu\fs_{h-1}),\Ga}\cap \fV')$$
 is defined by the linear system \eqref{vr-lin-xi}.

There are the following two cases for \eqref{vr-lin-xi}:

$(\star a)$  the ranks of the linear system  $\eqref{vr-lin-xi}$ equal one 
at general points of $\tZ^\dagger_{(\eth_{(k\tau)}\fr_\mu\fs_{h-1}),\Ga}$.

$(\star b)$  the ranks of the linear system  $\eqref{vr-lin-xi}$ equal zero
at general points of  $\tZ^\dagger_{(\eth_{(k\tau)}\fr_\mu\fs_{h-1}),\Ga}$, hence at all points
of $\tZ^\dagger_{(\eth_{(k\tau)}\fr_\mu\fs_{h-1}),\Ga}$.

\smallskip\noindent
{\it  Proof of Lemma \ref{vr-transform-ktauh} in $(\eth_{(k\tau)}\fr_\mu\fs_h)$ under the condition $(\star a)$.}
\smallskip

By the condition $(\star a)$,
 there exists a Zariski open subset $\tZ^{\dagger\circ}_{(\eth_{(k\tau)}\fr_\mu\fs_{h-1}),\Ga}$ 
of $\tZ^\dagger_{(\eth_{(k\tau)}\fr_\mu\fs_{h-1}),\Ga}$ such that the rank of the linear system
  $\eqref{vr-lin-xi}$ equals one at any point of  $\tZ^{\dagger\circ}_{(\eth_{(k\tau)}\fr_\mu\fs_{h-1}),\Ga}$. 
By solving $y_1$ from
 the linear system \eqref{vr-lin-xi} over $\tZ^{\dagger\circ}_{(\eth_{(k\tau)}\fr_\mu\fs_{h-1}),\Ga}$, we obtain 
that the induced morphism
$$
\rho_{(\eth_{(k\tau)}\fr_\mu\fs_h)}^{-1}(\tZ^{\dagger\circ}_{(\eth_{(k\tau)}\fr_\mu\fs_{h-1}),\Ga})  \lra \tZ^\circ_{(\eth_{(k\tau)}\fr_\mu\fs_{h-1}),\Ga}$$ is an isomorphism. 

Suppose  $y_1$ is identically zero along 
$ \rho_{(\eth_{(k\tau)}\fr_\mu\fs_h)}^{-1}(\tZ^{\dagger\circ}_{(\eth_{(k\tau)}\fr_\mu\fs_{h-1}),\Ga})$.
We then set,  
\begin{equation}\label{tGa-eth} 
\tGa^\zero_\fV=\{y_1 \} \cup \overline{\Ga}^\zero_\fV
\end{equation}
where $\overline{\Ga}^\zero_\fV$ is as in  \eqref{Gazero-eth}.
In this case, we let
\begin{equation}\label{ZktahGa-with-xi-2}
\tZ_{(\eth_{(k\tau)}\fs_{h}),\Ga}=\rho_{(\eth_{(k\tau)}\fr_\mu\fs_h)}^{-1}(\tZ_{(\eth_{(k\tau)}\fr_\mu\fs_{h-1}),\Ga})\cap D_{y_1}
\end{equation}
scheme-theoretically, where $D_{y_1}$ is the closure of $(y_1=0)$ in $\tsR_{(\eth_{(k\tau)}\fr_\mu\fs_h)}$.
We remark here that $D_{y_1}$ does not depend on the choice of the chart $\fV$.

Suppose  $y_1$ is not identically zero along 
$\rho_{(\eth_{(k\tau)}\fr_\mu\fs_h)}^{-1}(\tZ^{\dagger\circ}_{(\eth_{(k\tau)}\fr_\mu\fs_{h-1}),\Ga})$.
We then set,  
\begin{equation}\label{Gazero-ktah-contained-in-a-3} 
\tGa^\zero_\fV= \overline{\Ga}^\zero_\fV
\end{equation}
where $\overline{\Ga}^\zero_\fV$ is as in \eqref{Gazero-eth}.
In this case, we let
\begin{equation}\label{ZktahGa-without-xi-2}
\tZ_{(\eth_{(k\tau)}\fs_{h}),\Ga}=\rho_{(\eth_{(k\tau)}\fr_\mu\fs_h)}^{-1}(\tZ_{(\eth_{(k\tau)}\fr_\mu\fs_{h-1}),\Ga}).
\end{equation}
We always set (under the condition $(\star a)$)
\begin{equation}\label{Gaone-ktah-contained-in-a-2} \tGa^\one_{\fV}= \overline{\Ga}^\one_{\fV}
\end{equation}
where $\overline{\Ga}^\one_{\fV}$ is as in  \eqref{Gaone-eth}.

In each case,  we have 
$$\rho_{(\eth_{(k\tau)}\fr_\mu\fs_h)}^{-1}(\tZ^{\dagger\circ}_{(\eth_{(k\tau)}\fr_\mu\fs_{h-1}),\Ga})  
\subset \tZ_{(\eth_{(k\tau)}\fr_\mu\fs_h),\Ga},$$ and we let $\tZ^\dagger_{(\eth_{(k\tau)}\fr_\mu\fs_h),\Ga}$ be the closure of 
$\rho_{(\eth_{(k\tau)}\fr_\mu\fs_h)}^{-1}(\tZ^{\dagger\circ}_{(\eth_{(k\tau)}\fr_\mu\fs_{h-1}),\Ga}) $ in $\tZ_{(\eth_{(k\tau)}\fr_\mu\fs_h),\Ga}$.
It is an irreducible component of $\tZ_{(\eth_{(k\tau)}\fr_\mu\fs_h),\Ga}$ because
$\tZ^\dagger_{(\eth_{(k\tau)}\fr_\mu\fs_h),\Ga}$ is closed in $\tZ_{(\eth_{(k\tau)}\fr_\mu\fs_h),\Ga}$ and contains
the Zariski open subset $\rho_{(\eth_{(k\tau)}\fr_\mu\fs_h)}^{-1}(\tZ^{\dagger\circ}_{(\eth_{(k\tau)}\fr_\mu\fs_{h-1}),\Ga}) $
of $\tZ_{(\eth_{(k\tau)}\fr_\mu\fs_h),\Ga}$.
Then, we have that the composition
 $$\tZ^\dagger_{(\eth_{(k\tau)}\fr_\mu\fs_h),\Ga} \lra \tZ^\dagger_{(\eth_{(k\tau)}\fr_\mu\fs_{h-1}),\Ga}
 \lra Z_\Ga$$ is birational. 
 This proves Lemma \ref{vr-transform-ktauh} (2) in $(\eth_{(k\tau)}\fr_\mu\fs_h)$.


In each case of \eqref{tGa-eth} and \eqref{Gazero-ktah-contained-in-a-3}, 
by the paragraph of \eqref{pre-image-defined-by},
we have that $\tZ_{(\eth_{(k\tau)}\fr_\mu\fs_h),\Ga}\cap \fV$
 as a closed subscheme of $\fV$ is defined by the equations as stated in the Lemma. 
 This proves Lemma \ref{vr-transform-ktauh} (1) in $(\eth_{(k\tau)}\fr_\mu\fs_h)$.

It remains to prove Lemma \ref{vr-transform-ktauh} (3) in $(\eth_{(k\tau)}\fr_\mu\fs_h)$.
But, it follows from the identical lines of the corresponding proof
of  Lemma \ref{wp-transform-ktauh} (3). We avoid the repetition.

\smallskip\noindent
{\it  Proof of Lemma \ref{vr-transform-ktauh} in $(\eth_{(k\tau)}\fr_\mu\fs_h)$ under the condition $(\star b)$.}
\smallskip

Under the condition $(\star b)$, we have that $$ 
\rho_{(\eth_{(k\tau)}\fr_\mu\fs_h)}^{-1}(\tZ^\dagger_{(\eth_{(k\tau)}\fr_\mu\fs_{h-1}),\Ga})
 \lra \tZ^\dagger_{(\eth_{(k\tau)}\fr_\mu\fs_{h-1}),\Ga}$$
can be canonically identified with the trivial $\PP_{[\xi_0,\xi_1]}$-bundle:
$$\rho_{(\eth_{(k\tau)}\fr_\mu\fs_h)}^{-1}(\tZ^\dagger_{(\eth_{(k\tau)}\fr_\mu\fs_{h-1}),\Ga}) 
= \tZ^\dagger_{(\eth_{(k\tau)}\fr_\mu\fs_{h-1}),\Ga}
\times \PP_{[\xi_0,\xi_1]}.$$
In this case, we define 
$$\tZ_{(\eth_{(k\tau)}\fr_\mu\fs_h),\Ga}= \rho_{(\eth_{(k\tau)}\fr_\mu\fs_h)}^{-1}(\tZ_{(\eth_{(k\tau)}\fr_\mu\fs_{h-1}),\Ga})
 \cap ((\xi_0,\xi_1)= (1,1)),$$
 $$\tZ^\dagger_{(\eth_{(k\tau)}\fr_\mu\fs_h),\Ga}:= 
 \rho_{(\eth_{(k\tau)}\fr_\mu\fs_h)}^{-1}(\tZ^\dagger_{(\eth_{(k\tau)}\fr_\mu\fs_{h-1}),\Ga})
 \cap ((\xi_0,\xi_1)= (1,1)),$$
 both scheme-theoretically.
 
 The induced morphism $\tZ^\dagger_{(\eth_{(k\tau)}\fr_\mu\fs_h),\Ga} \to \tZ^\dagger_{(\eth_{(k\tau)}\fr_\mu\fs_{h-1}),\Ga}$
 is an isomorphism.  Therefore,
 $\tZ^\dagger_{(\eth_{(k\tau)}\fr_\mu\fs_h),\Ga} \to
 \tZ^\dagger_{(\eth_{(k\tau)}\fr_\mu\fs_{h-1}),\Ga} \to  Z_\Ga$ is birational.
 Again, one sees that $\tZ^\dagger_{(\eth_{(k\tau)}\fr_\mu\fs_h),\Ga}$ is 
 an irreducible component of $\tZ_{(\eth_{(k\tau)}\fr_\mu\fs_h),\Ga}$.
 This proves Lemma \ref{vr-transform-ktauh} (2) in $(\eth_{(k\tau)}\fr_\mu\fs_h)$.

Further, under the condition $(\star b)$, we set 
 \begin{eqnarray}
 \tGa^\zero_\fV= \overline{\Ga}^\zero_\fV,  \;\;\;\;
\tGa^\one_\fV=\{y_1\}\cup \overline{\Ga}^\one_{\fV}. \nonumber \end{eqnarray}

Then, by the paragraph of \eqref{pre-image-defined-by},
we have that $\tZ_{(\eth_{(k\tau)}\fr_\mu\fs_h),\Ga}\cap \fV$,
 as a closed subscheme of $\fV$, is defined by the equations as stated in the Lemma. 
 This proves Lemma \ref{vr-transform-ktauh} (1) in $(\eth_{(k\tau)}\fr_\mu\fs_h)$.

 It remains to prove Lemma \ref{vr-transform-ktauh} (3) in $(\eth_{(k\tau)}\fr_\mu\fs_h)$.
But, again, it follows from the identical lines of the corresponding proof
of  Lemma \ref{wp-transform-ktauh} (3). We avoid the repetition.


This completes the proof of Lemma \ref{vr-transform-ktauh}.
\end{proof}

 \begin{defn}
 We call $\tZ_{(\eth_{(k\tau)}\fr_\mu\fs_{h}),\Ga}$ the $\eth$-transform of $Z_\Ga$ 
 in $\tsV_{(\eth_{(k\tau)}\fr_\mu\fs_{h})}$ for any $(k\tau)\mu h \in \Index_\Psi$.
 \end{defn}

   We need the final case of Lemma 
\ref{vr-transform-ktauh}. We set
$$\tZ_{\eth, \Ga}:=\tZ_{(\eth_{(k\tau)}\fr_\mu\fs_h),\Ga}, \;\; \tZ^\dagger_{\vr, \Ga}:=\tZ^\dagger_{(\eth_{(k\tau)}\fr_\mu\fs_h),\Ga}$$
where $k=\up, \tau=\ft_{F_\up},\mu=\vk_{\up\ft_{F_\up}},
h=\vs_{(\up\ft_{F_\up})\vk_{\up\ft_{F_\up}} }.$ That is,
$$\tZ_{\vr,\Ga}= \tZ_{(\eth_{(\up\ft_{F_\up})}\fr_{\vk_{\up\ft_{F_\up}}}
\fs_{\vs_{(\up\ft_{F_\up})\vk_{\up\ft_{F_\up}} }}),\Ga},\;\;
\tZ^\dagger_{\vr,\Ga}= \tZ^\dagger_{(\eth_{(\up\ft_{F_\up})}\fr_{\vk_{\up\ft_{F_\up}}}
\fs_{\vs_{(\up\ft_{F_\up})\vk_{\up\ft_{F_\up}} }}),\Ga}.$$

\begin{cor} \label{eth-transform-up} 
Fix any standard chart $\fV$ of $\tsR_\eth$ such that
$\tZ_{\eth,\Ga} \cap \fV \ne \emptyset.$
Then, $\tZ_{\eth,\Ga} \cap \fV$,  as a closed subscheme of $\fV$,
is defined by the following relations
\begin{eqnarray} 
\;\;\;\;\; y , \; \; \forall \;\;  y \in   \tGa^\zero_\fV; 
\;\;\;  y -1, \; \; \forall \;\;  y \in  \tGa^\one_\fV; \nonumber \\
\cB_\fV^\mn, \; 
\cB^\q_\fV, \; L_{\fV, \sF^\star_{\um,\Ga}}.
\end{eqnarray}
Furthermore, the induced morphism $\tZ^\dagger_{\eth,\Ga} \to Z_\Ga$ is birational.
\end{cor}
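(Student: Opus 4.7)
The plan is to deduce Corollary \ref{eth-transform-up} directly from the terminal-index case of Lemma \ref{vr-transform-ktauh}, with one formal redundancy removal and an immediate birationality inheritance. Concretely, I would apply Lemma \ref{vr-transform-ktauh}(1) with
\[
(k,\tau,\mu,h)\;=\;(\up,\,\ft_{F_\up},\,\vk_{\up\ft_{F_\up}},\,\vs_{(\up\ft_{F_\up})\vk_{\up\ft_{F_\up}}}),
\]
so that $\tsR_{(\eth_{(k\tau)}\fr_\mu\fs_h)}=\tsR_\eth$ and $\tZ_{(\eth_{(k\tau)}\fr_\mu\fs_h),\Ga}=\tZ_{\eth,\Ga}$. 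This gives on any chart $\fV$ with $\tZ_{\eth,\Ga}\cap\fV\neq\emptyset$ that $\tZ_{\eth,\Ga}\cap\fV$ is defined by the relations $\{y:y\in\tGa^\zero_\fV\}$, $\{y-1:y\in\tGa^\one_\fV\}$, together with $\cB^\mn_\fV,\;\cB^\res_\fV,\;\cB^\q_\fV,\;L_{\fV,\sF^\star_{\um,\Ga}}$. The sole discrepancy with the corollary's list is the presence of $\cB^\res_\fV$, so the entire content of the corollary's defining-equations statement reduces to verifying that $\cB^\res_\fV$ is redundant.

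For this redundancy, I would invoke Proposition \ref{equas-p-k} at the same terminal stage: it asserts that $\tsV_\eth\cap\fV$ is already cut out in $\fV$ by $\cB^\mn_\fV,\;\cB^\q_\fV,\;L_{\fV,\sfm}$ alone. In particular, every $B\in\cB^\res_\fV$ lies in the ideal these three families generate. It thus suffices to observe that every $L_{\fV,F}$ with $\bF\notin\sF^\star_{\um,\Ga}$ already belongs to the ideal generated by $\{y:y\in\tGa^\zero_\fV\}\cup\{y-1:y\in\tGa^\one_\fV\}$. But this is exactly the defining property of $\sF^\star_{\um,\Ga}$ built into its construction in Lemma \ref{wp-transform-sVk-Ga}: $\bF\notin\sF^\star_{\um,\Ga}$ precisely means that after setting $y=0$ for $y\in\tGa^\zero_\fV$ and $y=1$ for $y\in\tGa^\one_\fV$, the linearized relation $L_{\fV,F}$ carries no nontrivial $\vr$-variable, and the nonempty assumption $\tZ_{\eth,\Ga}\cap\fV\ne\emptyset$ forces the residual constant to vanish. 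Hence $L_{\fV,\sfm}$ lies in the ideal generated by $L_{\fV,\sF^\star_{\um,\Ga}}$ together with the $y$ and $y-1$ relations, and therefore so does $\cB^\res_\fV$. This shows the shorter list in the corollary defines the same subscheme.

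The birationality $\tZ^\dagger_{\eth,\Ga}\to Z_\Ga$ is inherited with no extra work from Lemma \ref{vr-transform-ktauh}(2) at the same terminal index. I do not anticipate a genuine obstacle: once the inductive bookkeeping of $\sF^\star_{\um,\Ga}$, $\tGa^\zero_\fV$, and $\tGa^\one_\fV$ through the $\sF$-, $\vt$-, $\wp$-, and $\eth$-transforms (Lemmas \ref{wp-transform-sVk-Ga}, \ref{vt-transform-k}, \ref{wp-transform-ktauh}, \ref{vr-transform-ktauh}) is trusted, the corollary is a purely formal elimination of $\cB^\res_\fV$ from the defining set using Proposition \ref{equas-p-k}, plus a citation for birationality. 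The only subtlety worth flagging in the write-up is to make sure the reduction of $L_{\fV,\sfm}$ to $L_{\fV,\sF^\star_{\um,\Ga}}$ modulo the $y$- and $(y-1)$-relations is stated explicitly, since this is what validates the removal of $\cB^\res_\fV$.
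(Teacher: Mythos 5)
Your proposal is essentially correct, and the backbone — specialize Lemma \ref{vr-transform-ktauh} to the terminal index $(\up,\ft_{F_\up},\vk_{\up\ft_{F_\up}},\vs_{(\up\ft_{F_\up})\vk_{\up\ft_{F_\up}}})$, then cite part (2) for birationality — is exactly how the paper intends the corollary to be obtained. The one place you deviate is in devoting a paragraph to eliminating $\cB^\res_\fV$, and it is worth pointing out that this elimination is almost certainly addressing a typographical slip in Lemma \ref{vr-transform-ktauh}(1) rather than a genuine content gap. The base case of that lemma's induction is Lemma \ref{wp-transform-ktauh}(1) applied to $\tsR_\wp$, where the defining list is $\cB^\mn_\fV,\ \cB^\q_\fV,\ L_{\fV,\sF^\star_{\um,\Ga}}$ with no $\cB^\res_\fV$; likewise Lemma \ref{vt-transform-k}(1) at $k=\up$ has $\cB^\res_{\fV,>\up}=\emptyset$, and Proposition \ref{equas-p-k} cuts out $\tsV_{(\eth_{(k\tau)}\fr_\mu\fs_h)}\cap\fV$ by $\cB^\mn_\fV,\ \cB^\q_\fV,\ L_{\fV,\sfm}$ alone. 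The inductive step of Lemma \ref{vr-transform-ktauh} never reintroduces residual binomials, so the displayed $\cB^\res_\fV$ in its part (1) should not be there, and the corollary is meant to be an immediate plug-in.

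That said, your redundancy argument is a legitimate safety net if one insists on reading Lemma \ref{vr-transform-ktauh}(1) at face value. The logic is sound: by Proposition \ref{equas-p-k}, $\cB^\res_\fV$ lies in the ideal $(\cB^\mn_\fV,\cB^\q_\fV,L_{\fV,\sfm})$, and by the construction of $\sF^\star_{\um,\Ga}$ in Lemma \ref{wp-transform-sVk-Ga}, each $L_{\fV,F}$ with $\bF\notin\sF^\star_{\um,\Ga}$ reduces to a constant modulo the relations $\{y:y\in\tGa^\zero_\fV\}\cup\{y-1:y\in\tGa^\one_\fV\}$, which must then be zero by nonemptiness of $\tZ_{\eth,\Ga}\cap\fV$. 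The only subtlety you should state explicitly if you keep this detour is that the collapse of $L_{\fV,F}$ — established during the construction of $\sF^\star$ on charts of $\sR_{\sF_{[k]}}$ — persists on the final chart $\fV$ of $\tsR_\eth$. This is because $L_{\fV,F}$ is always the pullback of the linearized relation on the base chart (see Definition \ref{general-proper-transforms}), and the data $\tGa^\zero_\fV,\tGa^\one_\fV$ at each later stage of Lemmas \ref{vt-transform-k}, \ref{wp-transform-ktauh}, \ref{vr-transform-ktauh} are built to contain the proper transforms of the earlier ones (together with the new exceptional parameters set to zero). With that remark added, your write-up would be complete and slightly more defensive than the paper's own treatment.
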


\section{Main Statements on the Final Scheme $\tsV_\eth$}\label{main-statement}


{ \it  Let $p$ be an arbitrarily fixed prime number.
 Let $\mathbb F$ be either $\QQ$ or a finite field with $p$ elements.
In this entire section, every scheme is defined over $\ZZ$, consequently,
 is defined over $\mathbb F$, and is considered as a scheme over 
 the perfect field $\mathbb F$.}

Let $\Ga$ be any subset $\var_{\rU_\um}$.
Assume that $Z_\Ga$ is integral.
Let $\tZ_{\eth,\Ga}$ be  the $\eth$-transform of $Z_\Ga$ in $ \tsV_{\eth}$.
We now investigate local properties of  the  $\eth$-transform $\tZ_{\eth, \Ga}$, using standard charts.
(As just mentioned, $Z_\Ga$ and $\tZ_{\eth,\Ga}$ are $\FF$-schemes.)
We remind the reader that  $\tZ_{\eth,\emptyset}=\tsV_\eth$ when $\Ga=\emptyset$.

Fix any standard chart $\fV$ of  $\tsR_{\eth}$.
By Corollary \ref{eth-transform-up},
the scheme $\tZ_{\eth, \Ga} \cap \fV$, if nonempty, 
 as a closed subscheme of
the chart $\fV$ of $\tsR_\eth$,  is defined by 
\begin{eqnarray} 
y, \; y \in  \tGa^\zero_\fV; \;\;\;\; y-1, \; y \in \tGa^\one_\fV; \;\;\;\; 
 \cB^q_\fV; \label{Bres-final} \\
\;\;\;\; \;\;\;\;\;\;\; B_{\fV, (k\tau)}, \;\; \forall \;\; s=(k\tau) \in S_{F_k} \- s_{F_k}, \; F_k \in \sfm, 
 \label{barB-ktau-final} \\ 
L_{\fV,F_k}: \;\; 
\forall \;\;  F_k \in   \sF^\star_{\fV,\Ga}.
 \label{barL-ktau-final}
\end{eqnarray}
 We  remind the reader that any main binomial is indexed by some $(k\tau) \in \Index_{\cB^\mn}$;
it corresponds to the $k$-th $\pl$ equation $\bF_k \in \sfm$
and a non-leading term of $\bF_k$ indexed by  some $s \in S_{F_k} \- s_{F_k}$. 
 Via this correspondence, we sometime write $s=(k\tau)$.
Consequently, we may also write $B_s =B_{(k\tau)}$.

Fix any $k \in [\up]$. We let $\fG_k$ be the set of all equations in 
\eqref{barB-ktau-final} and \eqref{barL-ktau-final},
 called the  block $k$ of the defining equations of $\tZ_{\eth, \Ga}$. 
We let $\fG=\bigsqcup_{k=1}^\up \fG_k$. 

{\it Throughout the remainder of this section, 
we let  $\bz$ be a fixed closed point of $\tsV_\eth\cap \fV$.}

Observe that all the binomial equations $B_{(k\tau)}$ terminate on the chart $\tsR_\eth$,
according to Corollary \ref{all-terminate}. In particular, 
both terms of  $B_{ \fV, (k\tau)}$ do not vanish at $\bz$.

The standard chart $\fV$ must be lying over a unique standard chart $ \fV_{[0]}$ of 
$\tsR_{\vt_{[0]}}=\sR_\sF$.
By Definition \ref{fv-k=0},  the chart $\fV_{[0]}$ is
   uniquely indexed by a set 
\begin{equation}\label{fixed-la-o}
\La_\sfm^o=\{(\uv_{s_{F,o}},\uv_{s_{F,o}}) \mid \bF \in \sfm \}.
\end{equation}
And, given $\La_\sfm^o$, we have  the set $\La_\sfm^\star=\La_\sfm \- \La_\sfm^o$.

We let  $\pi:=\pi_{\fV, \fV_{[0]}}:  \fV \lra  \fV_{[0]}$ be the (induced) projection.
Consider any $k \in [\up]$. 
Recall that we have the set $\cB^\mn_{F_k}=\{B_{(k\tau)} \mid  1 \le \tau \le \ft_{F_k} \}$.
In addition, the notion of termination of $B_{ \fV_{[0]}, (k\tau)}$ 
at the point $\pi_{\fV, \fV_{[0]}} (\bz)$ have been introduced
in Definition \ref{general-termination} 
for all $(k\tau) \in \Index_{\cB^\mn}$.

\begin{defn}\label{defn:termination} Consider the point $\bz$ of $\fV$. Consider $(k\tau) \in \Index_{\cB^\mn}$.
 We say $B_{ \fV, (k\tau)}$ is original at $\bz$ if
$B_{\fV_{[0]}, (k\tau)}$ terminates at the point $\pi(\bz)$; we let
$\cB^\ori_{F_k}$ be the set of all original $B_{ \fV, (k\tau)}$ at $\bz$.
We say  $B_{ \fV, (k\tau)}$ is $\hs$-intrinsic (or simply, intrinsic) at $\bz$ if
$B_{\fV_{[0]}, (k\tau)}$ is not original; we let
$\cB^{\inc}_{F_k}$ be the set of all  $\hs$-intrinsic $B_{ \fV, (k\tau)}$ at $\bz$. 
 \end{defn}
 
 Thus, we have $$\cB^\mn_{F_k}=\cB^\ori_{F_k} \sqcup \cB^\inc_{F_k}.$$
 We point out that this decomposition depends on the fixed point $\bz \in \tsV_\eth\cap \fV$.
 
 When both terms of $B_{ \fV_{[0]}, (k\tau)}$ do not vanish at $\pi (\bz)$,
the form of $B_{\fV_{[0]}, (k\tau)}$ remains unchanged throughout the $\hs$-blowups, except the changes
of the names of the variables, whence the name of original. 
On the other hand, 
if $B_{ \fV, (k\tau)}$ is $\hs$-intrinsic  (e.g., when $F_k$ is $\Ga$-irrelevant)
and $\pi (\bz) \in \sV$, 
then both terms of $B_{\fV_{[0]}, (k\tau)}$ vanishes at $\pi (\bz)$. Thus, some $\hs$-blowup
with $\hs \in \Game$
has to affect a neighborhood of $\pi_{\eth, \hs} (\bz)$ 
where  $\pi_{\eth, \hs}: \tsR_\eth\lra \tsR_{\hs}$ is the induced blowup morphism, 
and, the form of $B_{\fV_{[0]}, (k\tau)}$
 must have positively changed when reaching at the final destination on $\fV$,
around the point $\bz$,
whence the name of $\hs$-intrinsic. 


By Proposition \ref{equas-p-k=0}, on the chart $ \fV_{[0]}$, we have that 
 $\tsV_{\vt_{[0]}} \cap  \fV_{[0]}$, as a closed subscheme of $ \fV_{[0]}$, is defined by
\begin{eqnarray}  
\;\;\;\;\;\;\;\;  
\cB^\res_{ \fV_{[0]}}; \;\;\; \cB^\q_{ \fV_{[0]}}; \nonumber \\
x_{ \fV_{[0]}, (\uu_s, \uv_s)}x_{ \fV_{[0]}, \uu_F} - x_{ \fV_{[0]}, (\um,\uu_F)}   
x_{ \fV_{[0]}, \uu_s} x_{ \fV_{[0]}, \uv_s}, \;\; \forall \;\; s \in S_F \- s_F,  \label{eq-B-k=0''} \\ 
L_{ \fV_{[0]}, F}: \;\; \sum_{s \in S_F} \vsgn (s) x_{ \fV_{[0]},(\uu_s,\uv_s)},\;\;
\forall \;\; \bF \in \sfm.
\label{linear-pl-k=0''}
\end{eqnarray}
 Here, recall that $\uu_F:=\uu_{s_F}$ and $s_F \in  S_F$ is the index for the
leading term of $\bF$.
Also recall from the proof of Proposition \ref{meaning-of-var-p-k=0} that on the chart 
$ \fV_{[0]}$, $x_{ \fV_{[0]}, \uu}=x_{\uu}$, and 
$x_{ \fV_{[0]}, (\uu, \uv)}$ is the de-homogenization of $x_{(\uu, \uv)}$.

\begin{lemma}\label{2-cases} Consider the fixed point $\bz \in \tsV_\eth\cap \fV$.
Fix any  $\bF \in \sfm$.
Assume that $\cB^\ori_F \ne \emptyset$, equivalently, $\cB^\inc_F \ne  \cB^\mn_F$.
Then, $x_{\fV_{[0]}, \uu_F} (\pi(\bz))\ne 0$, hence, $x_{\fV, \uu_F}$ exists as a coordinate variable
in  $\var_\fV$
and  $x_{\fV, \uu_F} (\bz) \ne 0$.
Equivalently, if  $x_{\fV_{[0]}, \uu_F} (\pi(\bz)) = 0$, 
then $\cB^\ori_F = \emptyset$
and $\cB^\inc_F = \cB^\mn_F$.
\end{lemma}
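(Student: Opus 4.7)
The plan is to extract from the hypothesis a main binomial on $\fV_{[0]}$ whose terms are both nonzero at $\pi(\bz)$, use this to pin $x_{\fV_{[0]}, \uu_F}(\pi(\bz)) \ne 0$, and then propagate the nonvanishing up the blowup tower to deduce both remaining claims about $x_{\fV, \uu_F}$.

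First I would pick $B_{(k\tau)} \in \cB^\ori_F$, with $F_k = F$ and $\tau$ corresponding to some $s \in S_F \- s_F$, so that by Definition \ref{defn:termination} the binomial $B_{\fV_{[0]}, (k\tau)}$ terminates at $\pi(\bz)$. Since $\bz$ lies on $\tsV_\eth$, which is the proper transform of $\sV$ along the tower \eqref{eth-final}, its image $\pi(\bz)$ lies on $\sV$. Because $B_{(k\tau)} = 0$ is among the defining relations of $\sV$ in $\sR_\sF$ by Corollary \ref{eq-tA-for-sV}, the two terms $T^\pm_{\fV_{[0]}, (k\tau)}$ agree at $\pi(\bz)$; since at least one is nonzero by termination, both must be nonzero there. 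Reading off $T^+_{\fV_{[0]}, (k\tau)} = x_{\fV_{[0]}, (\uu_s, \uv_s)} \, x_{\fV_{[0]}, \uu_F}$ then produces $x_{\fV_{[0]}, \uu_F}(\pi(\bz)) \ne 0$ as a factor, giving the first assertion.

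Next I would propagate this nonvanishing up the sequence $\tsR_\eth \to \cdots \to \sR_\sF$. By the conventions in \eqref{conv:=1} and Propositions \ref{meaning-of-var-vtk}, \ref{meaning-of-var-vskmuh}, and \ref{meaning-of-var-p-k}, the variable $x_{\fV, \uu_F}$ fails to belong to $\var_\fV$ precisely when $\uu_F \in \fe_\fV$, which happens only if some blowup in the tower whose center has $X_{\bullet, \uu_F}$ (or its proper transform) as a component affects the chart containing the image of $\bz$, and that chart lies on the $\vp$-branch of the resulting $\PP^1$-bundle. An easy induction on $\hs \in \Game$ shows that if $\pi(\bz) \notin X_{\uu_F}$ on $\sR_\sF$, then the image of $\bz$ on each intermediate $\tsR_\hs$ misses the proper transform of $X_{\uu_F}$, since proper transforms always project into their originals. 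Hence no such blowup center meets the image of $\bz$, so $\uu_F \notin \fe_\fV$ and $x_{\fV, \uu_F} \in \var_\fV$. Finally, by Definition \ref{general-proper-transforms} applied inductively through the tower, one has $\pi^* x_{\fV_{[0]}, \uu_F} = \zeta \cdot x_{\fV, \uu_F}$ for some monomial $\zeta$ in exceptional parameters; evaluating at $\bz$ gives $0 \ne x_{\fV_{[0]}, \uu_F}(\pi(\bz)) = \zeta(\bz) \cdot x_{\fV, \uu_F}(\bz)$, so $x_{\fV, \uu_F}(\bz) \ne 0$. The contrapositive formulation is then immediate.

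The main obstacle, which is really just bookkeeping, is keeping careful track of how the variable $x_{\bullet, \uu_F}$ mutates across the three sequential types of blowups; once one invokes the fact that proper transforms lie above their originals, the whole argument collapses to the one-line verification on $\sR_\sF$ obtained from termination of $B_{\fV_{[0]}, (k\tau)}$.
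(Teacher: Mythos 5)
Your first paragraph is correct and captures exactly the content of the paper's one-line proof ("This follows from definition"): picking any $B_{(k\tau)} \in \cB^\ori_F$, termination of $B_{\fV_{[0]},(k\tau)}$ at $\pi(\bz)$ together with $\pi(\bz) \in \sV$ forces both terms to be nonzero, so in particular $x_{\fV_{[0]},\uu_F}(\pi(\bz)) \ne 0$. Good.

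Your second paragraph, however, has a genuine gap. You correctly observe that, because proper transforms project into their originals, the image of $\bz$ on every intermediate $\tsR_\hs$ misses the proper transform of $X_{\uu_F}$, hence misses every blowup center having that proper transform as a component. But you then infer ``so $\uu_F \notin \fe_\fV$.'' That inference does not follow, because $\fe_\fV$ records data about the \emph{chart} $\fV$, not about the point $\bz$. By Proposition \ref{meaning-of-var-p-k} (1) and (3), $\uu_F \in \fe_\fV$ means precisely that the proper transform of $X_{\uu_F}$ misses the entire chart $\fV$, which is perfectly compatible with $\bz \in \fV$ not lying on that proper transform. Concretely: at the $\vt$-blowup step $\vt_{[k]}$ (with $\uu_k = \uu_F$ and $(\um,\uu_F) \notin \La^o_\sfm$), the center $X_{\uu_F} \cap X_{(\um,\uu_F)}$ does not contain the image of $\bz$, so the blowup is an isomorphism over a neighborhood, but that neighborhood is covered by \emph{both} the $\vp$-standard chart (on which $\uu_F$ \emph{is} added to $\fe$) and the $\vr$-standard chart (on which it is not). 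Your argument never rules out the possibility that the given chart $\fV$ lies over the $\vp$-branch, so it does not establish $\uu_F \notin \fe_\fV$ for an arbitrary chart containing $\bz$. The missing idea is an argument constraining the chart—for instance, explicitly choosing $\fV$ to descend along $\vr$-standard charts whenever $X_{\bullet,\uu_F}$ appears in a center, or a structural fact forcing $\tsV_\eth \cap \fV$ to avoid the locus $\ve_{\fV,\uu_F} \ne 0$ on the $\vp$-chart—and without it the ``hence'' claim is not proved for the chart as given.
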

\begin{proof} This follows from definition.
\end{proof}

Fix $\bF \in \sfm$. To continue, we let
$$S^\ori_F=\{s  \in S_F \- s_F \mid B_s \in \cB_F^\ori\}, \;\; 
S^\inc_F=\{s  \in S_F \- s_F \mid B_s \in \cB_F^\inc \}.$$

There are two extreme cases for the decomposition 
$\cB^\mn_{F}=\cB^\ori_F \sqcup \cB^\inc_F$: $\cB^\mn_{F}=\cB^\ori_F$
and $\cB^\mn_{F}= \cB^\inc_F$, where the latter must be the case 
when $x_{\fV_{[0]}, \uu_F} (\pi(\bz)) = 0$.

To analyze all possibilities, we first divide it into two classess:
$$s_{F,o} = s_F\; \;\hbox{and}\;\; s_{F,o} \ne s_F.$$
 (See \eqref{fixed-la-o} for the explanation of $s_{F,o}$.)


\medskip\noindent
{\it Suppose  $s_{F,o}=s_F$ and $\cB^\ori_F \ne \emptyset$.}
 Then, we can write $S^\ori_F=\{s_1, \cdots,  s_\ell\}$ for some $1 \le  \ell \le \ft_F$
 and $S_F \- s_F = S_F^\ori \sqcup S^\inc_F$.
In this case, $x_{\fV,(\um,\uu_F)}  =x_{\fV, (\uu_{s_{F,o}}, \uv_{s_{F,o}})}=1$. Hence, the binomial
equations of  $\cB^\ori_F$ on the chart $\fV$ are
\begin{equation}\label{barBF=}
B_{\fV, s_i}: \;\; x_{\fV, (\uu_{s_i}, \uv_{s_i})}x_{\uu_F}-  x_{\fV, \uu_{s_i}} x_{\fV, \uv_{s_i}}, \;\; 1 \le i \le \ell.
\end{equation}

\medskip\noindent
{\it Suppose $s_{F,o} \ne s_F$, and $\cB^\ori_F \ne \emptyset$.}  
Then, in this case,  as $x_{\fV, (\uu_{s_{F,o}}, \uv_{s_{F,o}})}=1$ and  $x_{\fV, \uu_F} (\bz) \ne 0$ 
(by Lemma \ref{2-cases}), we  must have
$s_{F,o} \in S^{\rm ori}_F$.
Hence, we can write $S^\ori_F=\{s_{F,o}, s_1, \cdots, s_\ell\}$ for some $1 \le \ell \le \ft_F$.
Therefore, the binomial
equations of  $\cB^\ori_F$ on the chart $\fV$ are
\begin{eqnarray}\label{barBFnot=} 
B_{\fV, s_o}: \;\;  x_{\fV,\uu_F}- x_{\fV,(\um,\uu_F)}  x_{\fV,\uu_{s_{F,o}}} x_{\fV,\uv_{s_{F,o}}}, \\
B_{\fV, s_i}: \;\;  x_{\fV,(\uu_{s_i}, \uv_{s_i})}x_{\fV,\uu_F}-
 x_{\fV,(\um,\uu_F)}  x_{\fV,\uu_{s_i}} x_{\fV,\uv_{s_i}} =0, \;\; 1 \le i \le \ell.
\end{eqnarray}
Indeed, we must have  $\ell \ge 1$ in the above.
To see this, consider $\pi(\bz) \in\fV_{[0]} \cap \sV_{[0]}$,
then, $L_{\fV_{[0]}, F}$ is satisfied by $\pi(\bz)$. Thus, one sees that  there must be
an index $s_1\ne s_{F,o}$ such that $x_{ \fV_{[0]}, (\uu_{s_1}, \uv_{s_1})}(\pi(\bz)) \ne 0$.
 Hence, $B_{\fV, s_1}$ is original 
because $x_{ \fV_{[0]}, \uu_F} (\pi(\bz))\ne 0$ as well.

Next, we consider $S^\inc_F$. We suppose $S^\inc_F \ne \emptyset$. 
We write $$S^\inc_F =\{t_1 < \cdots < t_q\}$$
for some $1\le q \le \ft_F$.

By Definition \ref{termi-v0},  we let $y_{\fV, t_i}$ be the  (unique) terminating central variable
for $B_{t_i} \in S^\inc_F$ for all $1 \le i \le q$. We write
\begin{equation}\label{list-y}
{\bf y}^\inc_{\fV,F}=\{y_{\fV, t_1}, \cdots, y_{\fV, t_q}\}.
\end{equation}
We then set
\begin{equation}\label{partial-y}
J^*(\cB^\inc_{\fV,F}):={ {\partial (\cB^\inc_{\fV,F})} \over
{\partial{(\bf y}^\inc_{\fV,F}) } }:= { {\partial (B_{\fV, t_1},\cdots,  B_{\fV, t_q})} \over
{\partial (y_{\fV,t_1}, \cdots, y_{\fV, t_q})} }.\end{equation}

 \begin{lemma}\label{pre-tri} Suppose $\bz \in \tsV_\eth\cap \fV$. We have the following.
 \begin{enumerate}
 \item The matrix $J^*(\cB^\inc_{\fV,F})$ is invertible  lower-triangular at $\bz$;
 \item Consider any $\bG \in \sfm$ such that $\bG < \bF$. Then, the variable $y_{\fV,t_i}$ does not
 appear in the main binomial equation $B_{\fV, s}$, either original or $\hs$-intrinsic at $\bz$,
   for all $s \in S_G \- s_G$ and all $1 \le i \le q$.
\item  Further, for any $\bH \in \sfm$, 
if $y_{\fV,t_i}$ appears in $L_{\fV, H}$, then it appears in a term that vanishes
 at $\bz$, for all $1 \le i \le q$.
 \end{enumerate}
 \end{lemma}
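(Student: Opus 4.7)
The plan is to proceed by induction on $q = |S^\inc_F|$, leveraging Corollary \ref{e2} together with Propositions \ref{equas-vskmuh} and \ref{equas-p-k}, which control how the coordinate variables at a chart of $\tsR_\eth$ relate to main binomial and linearized $\pl$ equations under each intermediate $\hs$-blowup. For each intrinsic index $t_i$, there is a well-defined step $\hs_i \in \Game$ (namely the first blowup step at which $B_{t_i}$ terminates at the image of $\bz$); the compatibility of the lexicographic ordering on $\Index_{\cB^\mn}$ with the order of $\wp$- and $\eth$-blowups inside block $F$ forces $\hs_1 < \cdots < \hs_q$. By Definition \ref{termi-v0}, the terminating central variable $y_{\fV, t_i}$ of $\var_\fV$ is the proper transform through all subsequent blowups of the variable $y^{(i)}$ created at step $\hs_i$.

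For part (1), the diagonal entry $(\partial B_{\fV, t_i}/\partial y_{\fV, t_i})(\bz)\neq 0$ follows from Corollary \ref{e2}: at step $\hs_i$, since $B_{\fV^{(i)}, t_i}$ terminates precisely there and $\bz$ lies on the relevant exceptional locus, $B_{\fV^{(i)}, t_i}$ is linear in $y^{(i)}$ with coefficient equal to the nonvanishing factors of the other term; the proper transforms through later blowups preserve this nonvanishing at $\bz$. For upper-triangularity, let $i < j$; apply Proposition \ref{equas-p-k}(1c) (or the analogous case (2c), or the $\wp$-analogue in Proposition \ref{equas-vskmuh}) at step $\hs_j$: since $B_{t_i} < B_{t_j}$ both lie in $\cB^\mn_F$, the new variable $y^{(j)}$ satisfies $y^{(j)} \nmid T^+_{\fV^{(j)}, t_i}$, and if $y^{(j)} \mid T^-_{\fV^{(j)}, t_i}$ then the exceptional parameter $\zeta^{(j)}$ also divides that term. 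Propagating to $\fV$ by iterated application of Lemma \ref{same-degree}, any appearance of $y_{\fV, t_j}$ in $T^-_{\fV, t_i}$ is accompanied by the proper transform of $\zeta^{(j)}$. Because $B_{\fV, t_i}$ terminates at $\bz$ via its plus-term (which is independent of $y_{\fV, t_j}$), the minus term vanishes at $\bz$; after differentiating with respect to $y_{\fV, t_j}$, the surviving factor still carries the proper transform of $\zeta^{(j)}$ as a factor, and $\bz$ sits on this exceptional divisor by the definition of $\hs_j$ as the precise step of termination for $B_{t_j}$, forcing the entry to vanish at $\bz$.

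For part (2), fix $\bG = F_{k'} < F = F_k$ and $s \in S_G \- s_G$. All $\wp$- and $\eth$-blowups associated with $B_s$ occur strictly before any $\hs_i$, since the blowup indices are ordered by $(k'\tau')$ lexicographically and $(k'\tau') < (k\tau_i)$. Therefore at each step $\hs_\ell$ inside block $F$, Proposition \ref{equas-p-k}(1c)/(2c) applied with $B = B_s \in \cB^\mn \- \cB^\mn_{F_k}$ shows that the created variable $y^{(\ell)}$ satisfies $y^{(\ell)} \nmid T^+_{\fV^{(\ell)}, s}$, and $y^{(\ell)} \mid T^-_{\fV^{(\ell)}, s}$ would force $\zeta^{(\ell)} \mid T^-_{\fV^{(\ell)}, s}$. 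But $B_s$ has already terminated before block $F$'s blowups begin, so by Corollary \ref{e2} the minus term $T^-_{\fV^{(\ell)}, s}$ cannot acquire the new factor $\zeta^{(\ell)}$ at a chart meeting $\tsV_{\hs_\ell}$. Hence $y^{(\ell)}$ does not appear in $T^\pm_{\fV^{(\ell)}, s}$ at all, and likewise $y_{\fV, t_i}$ does not appear in $B_{\fV, s}$ for any $i$.

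For part (3), note that $L_{\fV, H}$ is the proper transform of the $\vr$-linear form $L_{\fV_{[0]}, H} = \sum_{s \in S_H}\vsgn(s)\, x_{\fV_{[0]}, (\uu_s, \uv_s)}$. By the linearized clauses in Propositions \ref{equas-vskmuh} and \ref{equas-p-k}, whenever $y^{(i)}$ appears in $L_{\fV^{(i)}, H}$ at step $\hs_i$, it arises through the substitution $y_0' \mapsto \zeta^{(i)} y^{(i)}$ (where $y_0'$ is the $\vr$-variable participating in the $\wp$- or $\eth$-set); consequently $y^{(i)}$ appears only in a term carrying the factor $\zeta^{(i)}$, which vanishes along $E_{\hs_i}$. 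Propagating to $\fV$ and evaluating at $\bz$, which lies on the proper transform of $E_{\hs_i}$ by the intrinsic nature of $B_{t_i}$'s termination, the term vanishes at $\bz$. The main obstacle, and the delicate point running through all three parts, is the precise identification of $\bz$ as lying on the exceptional divisor $E_{\hs_i}$ for each intrinsic index — this requires chasing Definitions \ref{termi-v0} and \ref{defn:termination} through the entire sequential blowup construction and invoking Corollary \ref{e2} to ensure the "first terminating step" is well-defined at $\bz$.
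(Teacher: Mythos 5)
Your proposal follows essentially the same approach as the paper: Corollary \ref{e2} for the nonvanishing diagonal entries, the ordering of termination steps together with Propositions \ref{equas-vskmuh}/\ref{equas-p-k} for the off-diagonal vanishing, and the exceptional-parameter factor for part (3). The main difference is cosmetic: where the paper asserts outright that the later terminating central variable $y_{\fV,t_i}$ "cannot appear in $B_{\fV,t_j}$ since $B_{\fV,t_j}$ already terminates," you instead track the accompanying exceptional parameter $\zeta^{(j)}$ explicitly through the proper transforms and conclude that the partial derivative vanishes at $\bz$ because that $\zeta$-factor does; this is the weaker but more directly verifiable claim, and fills in what the paper leaves implicit. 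Two small inaccuracies worth noting: for the $\eth$-blowups the relevant clause is Proposition \ref{equas-p-k}(1b)/(2b) (the $\cB^\mn$ case), not (1c)/(2c) (the $\cB^\q$ case) — (1c)/(2c) is the right citation only in the $\wp$-analogue \ref{equas-vskmuh}; and the parenthetical "terminates via its plus-term" is not always true (termination can occur via the minus-term, cf.\ the proof of Proposition \ref{equas-p-k}(3)), though your conclusion does not actually depend on it since the $\zeta^{(j)}$-factor in $\partial T^-_{\fV,t_i}/\partial y_{\fV,t_j}$ alone forces the entry to vanish at $\bz$.
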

  \begin{proof}
  (1). Note that $B_{\fV, t_i}$,  $1 \le i \le q$, terminates at $\bz$. 
  As the term of $B_{\fV, t_i}$ that contains $y_{\fV,t_i}$ is linear in $y_{\fV,t_i}$
 according to Corollary \ref{e2},  we have
  ${ {\partial B_{\fV, t_i}} \over
{\partial y_{\fV,t_i}}}(\bz) \ne 0$. By Proposition \ref{equas-p-k} (3), when $j<i$,
at the point $\bz$, the $\hs$-intrinsic binomial $B_{\fV, t_{j}}$ must terminate 
earlier than the $\hs$-intrinsic binomial $B_{\fV, t_i}$  does.
 Hence, one sees that  the variable $y_{\fV,t_i}$, as the terminating central variable of
$B_{\fV, t_i}$ at $\bz$,
 can not appear in  $B_{\fV, t_{j}}$ since $B_{\fV, t_{j}}$ already terminates.
Hence, $ {{\partial B_{\fV, t_{j}}} \over {\partial y_{\fV,t_i}}}(\bz) =0$ when $j<i$. This implies that 
the matrix $J^*(\cB^\inc_{\fV,F})$ is  lower-triangular with nonzero entries along the diagonal at $\bz$,
hence also invertible at $\bz$.

(2). For any $\bG \in \sfm$ such that $\bG < \bF$, we have that $B_{\fV, s}$
with $s \in S_G \- s_G$, either original or $\hs$-intrinsic at $\bz$,
must terminate earlier than $B_{\fV, t_i}$ does at $\bz$, for all $1 \le i \le q$.
Hence, by the identical arguments as in (1) (for the case $j<i$), we conclude that the variable
 $y_{\fV,t_i}$ does not appear in $B_{\fV, s}$.
 
 (3). Now, if $y_{\fV,t_i}$ appears in  $L_{\fV, H}$ for some $\bH \in \sfm$, so does $\zeta_\fV y_{\fV,t_i}$
 where $\zeta_\fV \in \var_\fV$ is the terminating exceptional parameter
(see Definition \ref{termi-v0} for the explanation of $\zeta_\fV$). Since $\zeta_\fV (\bz) =0$,
 the statement follows.
   \end{proof}

 \begin{defn}\label{pleasant-v} Fix and consider
  any variable $y \in \var_\fV$ that appears in some equations of the block $\fG_k$
 for some $k \in [\up]$ such that $y(\bz) \ne 0$.
 (See \eqref{barB-ktau-final}, \eqref{barL-ktau-final}, and the subsequent paragraph 
  for the explanation of $\fG_k$).
 We say that $y$ is pleasant if either $y$ does not appear in any equation of the block $\fG_j$
for all $1 \le j <k$, or else, when it does, it appears in a term that vanishes at $\bz$.
 \end{defn}
 By this definition,  the terminating central variables $y_{\fV, t_i}$ in Lemma \ref{pre-tri}
 are pleasant due to (2) and (3) of that lemma.

\begin{defn}\label{disjoint-smooth} 
A scheme $X$ is smooth if it is a disjoint union of connected smooth schemes of
possibly  various dimensions.
\end{defn}

\begin{thm} \label{main-thm}
 Let $\Ga$ be any subset $\var_{\rU_\um}$.
Assume that $Z_\Ga$ is integral.
Let $\tZ_{\eth,\Ga}$ be  the $\eth$-transform of $Z_\Ga$ in $ \tsV_{\eth}$.
Then,  $\tZ_{\eth,\Ga}$ is smooth over  $\Spec \mathbb F$.
Consequently,  $\tZ^\dagger_{\eth,\Ga}$ is smooth over  $\Spec \mathbb F$.
 
 In particular,  when $\Ga=\emptyset$, we obtain that
$\tsV_\eth$ is smooth  over $\Spec \mathbb F$.
\end{thm}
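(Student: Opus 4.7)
The plan is to work locally on an arbitrary standard chart $\fV$ of $\tsR_\eth$ with $\tZ_{\eth,\Ga}\cap\fV\neq\emptyset$, fix a closed point $\bz\in\tZ_{\eth,\Ga}\cap\fV$, and verify smoothness at $\bz$ via a Jacobian computation on the defining equations given by Corollary \ref{eth-transform-up}. The linear equations $y$ for $y\in\tGa^\zero_\fV$ and $y-1$ for $y\in\tGa^\one_\fV$ are independent and involve distinct variables, so they simply eliminate those coordinates. After this reduction, it suffices to show that $\cB^\mn_\fV \cup L_{\fV,\sF^\star_{\um,\Ga}}$ already defines a smooth subscheme of $\fV$ of the expected dimension; since $\tZ_{\eth,\Ga}\cap\fV$ is a closed subscheme of this locus cut out additionally by $\cB^q_\fV$, a dimension count (using the birationality $\tZ^\dagger_{\eth,\Ga}\to Z_\Ga$ from Lemma \ref{vr-transform-ktauh}(2) together with the fact that $Z_\Ga$ is integral) will then force $\cB^q_\fV$ to be locally redundant around $\bz$, yielding smoothness.

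The Jacobian of $\cB^\mn_\fV\cup L_{\fV,\sF^\star_{\um,\Ga}}$ is organized into blocks $\fG_k$ indexed by $F_k\in\sfm$, each block consisting of the main binomials $\{B_{\fV,(k\tau)}:\tau\in[\ft_{F_k}]\}$ and, when $F_k\in\sF^\star_{\fV,\Ga}$, the linearized relation $L_{\fV,F_k}$. Within the block $\fG_k$ I will use the decomposition $\cB^\mn_{F_k}=\cB^\ori_{F_k}\sqcup\cB^\inc_{F_k}$ of Definition \ref{defn:termination} together with the two cases $s_{F_k,o}=s_{F_k}$ and $s_{F_k,o}\neq s_{F_k}$ (the explicit forms given in \eqref{barBF=} and \eqref{barBFnot=}). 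The pairing of equations with variables is the following: (a) each $\hs$-intrinsic binomial $B_{\fV,t_i}\in\cB^\inc_{F_k}$ is paired with its terminating central variable $y_{\fV,t_i}$ of \eqref{list-y}, which by Lemma \ref{pre-tri}(1) yields an invertible lower-triangular sub-block $J^*(\cB^\inc_{\fV,F_k})$ at $\bz$; (b) each original binomial $B_{\fV,s_i}\in\cB^\ori_{F_k}$ is paired with the $\vr$-variable $x_{\fV,(\uu_{s_i},\uv_{s_i})}$ (or with $x_{\fV,\uu_{F_k}}$ in the exceptional original binomial $B_{\fV,s_o}$ when $s_{F_k,o}\neq s_{F_k}$), noting that $x_{\fV,\uu_{F_k}}(\bz)\neq 0$ by Lemma \ref{2-cases} and $x_{\fV,(\um,\uu_{F_k})}(\bz)\neq 0$ by Corollary \ref{no-(um,uu)}; (c) the equation $L_{\fV,F_k}$, when present in $\sF^\star_{\fV,\Ga}$, is paired with one further $\vr$-variable $x_{\fV,(\uu_s,\uv_s)}$ whose coefficient in $L_{\fV,F_k}$ is nonzero at $\bz$ after the $\tGa$-substitutions, the existence of which is exactly the definition of belonging to $\sF^\star_{\fV,\Ga}$ in Lemma \ref{wp-transform-sVk-Ga}.

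To glue the blocks together into a block lower-triangular Jacobian, the key input is the $\hs$-ordering on $\Game$: variables paired with equations in $\fG_k$ are $\emph{pleasant}$ in the sense of Definition \ref{pleasant-v} with respect to earlier blocks. Concretely, the terminating central variable $y_{\fV,t_i}$ does not appear in any equation of $\fG_j$ for $j<k$ except in terms that vanish at $\bz$ (Lemma \ref{pre-tri}(2)--(3)); the $\vr$-variable $x_{\fV,(\uu_{s_i},\uv_{s_i})}$ appears only in the block $\fG_k$ (since each $\vr$-variable is native to exactly one Plücker equation); and the proper transform of $x_{\fV,\uu_{F_k}}$ behaves compatibly with the ordering of $\sfm$ used to schedule the $\wp$- and $\eth$-blowups. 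Assembling these, the full Jacobian at $\bz$ contains an invertible lower-triangular submatrix of size $|\cB^\mn_\fV|+|\sF^\star_{\fV,\Ga}|$, proving the smoothness claim for $\tZ_{\eth,\Ga}\cap\fV$, hence globally for $\tZ_{\eth,\Ga}$ and its connected component $\tZ^\dagger_{\eth,\Ga}$. The special case $\Ga=\emptyset$ yields the smoothness of $\tsV_\eth$.

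The main obstacle will be the careful bookkeeping of step (b)--(c): in the case $s_{F_k,o}\neq s_{F_k}$ the distinguished binomial $B_{\fV,s_o}$ has a different shape, and after the $\tGa$-substitutions some $\vr$-variables may become $0$ or $1$ so that the natural candidate for pairing disappears, forcing one to draw from the remaining $\vr$-variables of $L_{\fV,F_k}$. Showing that the block-diagonal pieces remain simultaneously invertible after these substitutions, while confirming that the dimension of the resulting smooth scheme equals $\dim \tZ^\dagger_{\eth,\Ga}=\dim Z_\Ga$ so that $\cB^q_\fV$ is truly redundant, is the technical core of the argument.
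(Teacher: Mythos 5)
Your Jacobian strategy --- fixing a chart $\fV$ and a closed point $\bz$, splitting $\cB^\mn_{F_k}$ into $\cB^\ori_{F_k}\sqcup\cB^\inc_{F_k}$, pairing intrinsic binomials with their terminating central variables via Lemma \ref{pre-tri}, pairing original binomials and $L_{\fV,F_k}$ with $\vr$-variables and $x_{\uu_{F_k}}$ (using Lemma \ref{2-cases} and Corollary \ref{no-(um,uu)}), and assembling a block lower-triangular matrix via the pleasantness of Definition \ref{pleasant-v} --- is exactly the computation the paper carries out in its Cases $\alpha$, $\beta$, $\gamma$, with only cosmetic differences (the paper does column operations on the whole block $J^*(\cB^\ori_{\fV,F},L_{\fV,F})$ rather than a one-to-one pairing of each original binomial with a single variable, but the conclusion is the same).

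The genuine gap is in how you dispose of $\cB^q_\fV$. You propose: establish that $\cB^\mn_\fV\cup L_{\fV,\sF^\star_{\um,\Ga}}$ cuts out a smooth subscheme $W_\Ga$ of the expected dimension, and then use the birationality $\tZ^\dagger_{\eth,\Ga}\to Z_\Ga$ (plus integrality of $Z_\Ga$) to "force $\cB^q_\fV$ to be locally redundant around $\bz$." This does not close for two reasons. First, Lemma \ref{vr-transform-ktauh}(2) only controls the irreducible component $\tZ^\dagger_{\eth,\Ga}$; a priori $\tZ_{\eth,\Ga}\cap\fV$ may have other components, and at a point $\bz$ not on $\tZ^\dagger_{\eth,\Ga}$ the birationality gives you no local dimension at all. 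Second, even at $\bz\in\tZ^\dagger_{\eth,\Ga}$, birationality gives $\dim\tZ^\dagger_{\eth,\Ga}=\dim Z_\Ga$, which is a quantity depending on $\Ga$ that you have not independently identified with the expected $\dim W_\Ga$; without that identification the comparison $\dim_\bz(\tZ_{\eth,\Ga}\cap\fV)\ge\dim W_\Ga$ (needed to conclude $\tZ_{\eth,\Ga}\cap\fV$ fills out $W_\Ga$ near $\bz$) is not available. Note that the naive Krull bound coming from the full list of defining equations (which still includes $\cB^q_\fV|_{\tGa_\fV}$) is too weak.

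The paper avoids this by inserting the $\Ga=\emptyset$ case as a bootstrap. For $\Ga=\emptyset$ the scheme $\tsV_\eth$ is irreducible with dimension $\dim\Gr^{d,E}\cap\rU_\um$ known in advance, so the tangent-space bound $\dim T_\bz(\tsV_\eth)\le\dim\tsR_\eth-\rk J^*(\bz)=\dim\tsV_\eth$ gives smoothness with no Krull-type argument, and hence that the Jacobian of $\cB^\mn_\fV,L_{\fV,\sfm}$ alone achieves the codimension of $\tsV_\eth\cap\fV$ at every point, which shows the ideal of $\tsV_\eth\cap\fV$ is locally generated by $\cB^\mn_\fV,L_{\fV,\sfm}$ and $\cB^q_\fV$ is locally redundant on $\tsV_\eth\cap\fV$. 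Since $\tZ_{\eth,\Ga}\cap\fV\subset\tsV_\eth\cap\fV$, the restricted relations $\cB^q_\fV|_{\tGa_\fV}$ are then redundant at every $\bz\in\tZ_{\eth,\Ga}\cap\fV$, and only after this reduction is the Krull principal ideal bound applied to $\cB^\mn_\fV|_{\tGa_\fV},L_{\fV,\sF^\star_{\um,\Ga}}|_{\tGa_\fV}$ to produce $\dim T_\bz\le\dim_\bz\tZ_{\eth,\Ga}\cap\fV$. You should replace your birationality step by this $\Ga=\emptyset$ bootstrap; the rest of your proposal then goes through.
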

\begin{proof} Fix any closed point $\bz \in \tZ_{\eth,\Ga}$ and 
let $\fV$ be a standard chart of $\tsR_\eth$ such that $\bz \in \fV$. 
Let $\tGa^\zero_\fV $ and  $\tGa^\one_\fV$ be as in Corollary \ref{eth-transform-up}. 
We let $$\tGa_\fV=\tGa^\zero_\fV \sqcup \tGa^\one_\fV.$$
By setting $y=0$ for all $y \in \tGa^\zero_\fV $ and 
$y=1$ for all $y \in \tGa^\one_\fV $, we obtain an affine subspace $\fV_\Ga$ of $\fV$.
That is,
 $$\fV_\Ga =\{ y =0, \; y \in  \tGa^\zero_\fV;  \;  y=1, \; y \in \tGa^\one_\fV\} \subset \fV.$$

As an affine space,
 $\fV_\Ga$ comes equipped with the set of coordinate variables
 $$\{ y \mid y \in \var_\fV \-  \tGa_\fV\}.$$     
 For any polynomial $f \in \kk[y]_{y \in \var_\fV}$, we let $f|_{\tGa_\fV}$ be obtained from
 $f$ by setting all variables in  $\tGa^\zero_\fV$ to be  0
 and setting  all variables in  $\tGa^\zero_\fV$ to be 1.
 This way,  $f|_{\tGa_\fV}$
 becomes a polynomial over $\fV_\Ga$.

  Then, $\tZ_{\eth, \Ga}\cap \fV$ 
  can be identified with the closed subscheme of $\fV_\Ga$ defined by
 \begin{eqnarray}
 \;\;\;\;\;  
  B^\q_{\fV}|_{\tGa_\fV}, \;\;  \forall \;\; B^\q \in \cB^q, \label{Bres-final-ga} \\
 \;\;\;\;\;\;\; \;\;\;\;\;  \;\;\;\;\;\; B_{ \fV, (k\tau)}|_{\tGa_\fV}, \;\; \forall \;\; (k\tau) \in \Index_{\cB^\mn},
 \label{barB-ktau-final-ga}\\ 
L_{\fV,F_k}|_{\tGa_\fV}, \;\; \forall \;\;  \bF_k \in  \sF^\star_{\fV,\Ga}
\label{barL-ktau-final-ga}
\end{eqnarray}

For any subset $P$ of polynomials over $\fV$, we let
$P|_{\tGa_\fV}=\{f|_{\tGa_\fV} \mid f \in P\}.$ This way, we have
$\cB^\mn_\fV|_{\tGa_\fV}, \cB^\q_\fV|_{\tGa_\fV}$,  etc..

In what follows, we focus on the polynomial equations in \eqref{barB-ktau-final-ga} 
and \eqref{barL-ktau-final-ga}, treated as polynomials in 
 $\kk[y]_{y \in \var_\fV \- \tGa_\fV}$. We will analyze the Jacobian of these polynomials.

 Fix any $\bF =\bF_k \in \sfm$ with $k \in [\up]$. We divide the analysis into three cases.

  \medskip\noindent
 {\sl Case $\alpha$.} {\it First, we suppose $\cB^\ori_{\fV,F}= \emptyset$.}
  Hence, in this case,  $\cB^\mn_{\fV,F}=\cB^\inc_{\fV,F}.$
   (Note that this must be the case when $F$ is $\Ga$-irrelevant.)
  
Assume first that  $\bF \notin  \sF^\star_{\fV,\Ga}$
  (thus, $F$ must be $\Ga$-irrelevant. cf. \eqref{Lrel-up}).
  In this case, by Corollary \ref{eth-transform-up}, $L_{\fV, F}|_{\tGa_\fV}$ is not
  one of the defining equations. 
  {\it Solely for the purpose of uniform writing to be 
  conveniently used later,
  we set $``L_{\fV, F}|_{\tGa_\fV} \equiv 0"$ when  $\bF \notin  \sF^\star_{\fV,\Ga}$.}
 In such a case, we only need to consider  $\cB^\inc_{\fV,F}$.
 Note that there may exist $B \in \cB^\inc_{\fV,F}$
 such that $B_\fV|_{\tGa_\fV}$ is not one of the defining equations
 for $\tZ_{\eth, \Ga}\cap \fV$. This happens if when the variables of $\tGa_\fV$ are plugged into
 $B_\fV$, it is automatically satisfied.
  {\it Again, solely for the purpose of uniform writing to be 
  conveniently used later,
  we set $``B_\fV|_{\tGa_\fV} \equiv 0"$ when  such a case occurs.}
 We let ${\bf y}^\inc_{\fV,F}$ be the set of
the terminating central variables for the equations $\cB^\inc_{\fV,F}$, determined 
and listed as in 
  \eqref{list-y} and \eqref{partial-y} (cf. Lemma \ref{pre-tri}).
 Then,   we set 
 $$J^*(L_{\fV,F}|_{\tGa_\fV}, \cB^\mn_{\fV,F}|_{\tGa_\fV} )
 = J^*(L_{\fV,F}|_{\tGa_\fV}, \cB^\inc_{\fV,F}|_{\tGa_\fV})
 := {{\partial (\cB^\inc_{\fV,F}|_{\tGa_\fV} ) } 
 \over {\partial  ({\bf y}^\inc_{\fV,F}) }} .$$
 We remark here again that $``L_{\fV, F}|_{\tGa_\fV} \equiv 0"$,
 and possibly $``B_\fV|_{\tGa_\fV} \equiv 0"$ for some
 $B \in \cB^\inc_{\fV,F}$, may not be part of the defining equations.

Further, one checks directly  from construction that any of the terminating central variables 
in ${\bf y}^\inc_{\fV,F}$ does not belong to $\tGa_\fV=\tGa^\zero_\fV \cup \tGa^\one_\fV$.

  Next we assume that  $\bF \in \sF^\star_{\fV,\um}$
   (this is automatic when $F$ is $\Ga$-relevant).
  As $L_F$  terminates on $\sR_\sF$, there exists $s \in S_F$ such that
  $x_{\fV, (\uu_s,\uv_s)}$ exists in $\var_\fV$ and the term 
  $\vsgn (s) x_{\fV, (\uu_s,\uv_s)}$ of  $L_{\fV,F}$
  does not vanish at $\bz$. Hence,
 $ {{\partial L_{\fV, F}|_{\tGa_\fV}} \over {\partial x_{\fV, (\uu_s,\uv_s)} }}(\bz) =\vsgn (s) \ne 0$.
 Now, we consider $\cB^\inc_{\fV,F}$.
 We let ${\bf y}^\inc_{\fV,F}$ be the set of
the terminating central variables for the equations $\cB^\inc_{\fV,F}$, 
determined and listed as in  \eqref{list-y} and \eqref{partial-y}.
 Then,   we set 
 $$J^*(L_{\fV,F}|_{\tGa_\fV}, \cB^\mn_{\fV,F}|_{\tGa_\fV})= 
 J^*(L_{\fV,F}|_{\tGa_\fV}, \cB^\inc_{\fV,F}|_{\tGa_\fV})
 := {{\partial (L_{\fV, F|_{\tGa_\fV}},\cB^\inc_{\fV,F}|_{\tGa_\fV}) } 
 \over {\partial (x_{\fV, (\uu_s,\uv_s)}, {\bf y}^\inc_{\fV,F}) }} .$$
Then, one sees that at the point $\bz$,  $J^*(L_{\fV,F}|_{\tGa_\fV}, \cB_{\fV,F})$ takes form
\begin{equation}\nonumber  
J^*(L_{\fV,F}|_{\tGa_\fV}, \cB^\mn_{\fV,F}|_{\tGa_\fV})(\bz)= \left(
\begin{array}{cccccccccc}
  {{\partial L_{\fV, F}|_{\tGa_\fV}} \over {\partial x_{\fV, (\uu_s,\uv_s)} }} (\bz)& 0  \\
*   & J^*(\cB^\inc_{\fV,F})(\bz)
\end{array}
\right),
\end{equation}
where the zero entries in the upper-right corner are due to Lemma \ref{pre-tri} (3).

Then,  in either case of $\bF \notin  \sF^\star_{\fV,\Ga}$ and $\bF \in  \sF^\star_{\fV,\Ga}$,
 by Lemma \ref{pre-tri} (1),  
 $J^*(\cB^\inc_{\fV,F}|_{\tGa_\fV})$  is an invertible lower-triangular matrix
  at the point $\bz$, implying that in either case,
  $J^*(L_{\fV,F}|_{\tGa_\fV}, \cB_{\fV,F}|_{\tGa_\fV})$ is of full rank at $\bz$. 

Further, when $\bF \in  \sF^\star_{\fV,\Ga}$,
 observe that the variable $x_{\fV, (\uu_s,\uv_s)}$ uniquely appears in the equations in
 the block $\fG_k$ with $F=F_k$.
 Hence, it is pleasant (see Definition \ref{pleasant-v}).
 In addition, by Lemma \ref{pre-tri} (2) and (3), all the terminating central variables used to compute
 $J^*(\cB^\inc_{\fV,F}|_{\tGa_\fV})$ are  pleasant, too.
 Thus, we conclude that in this case (i.e., {\sl Case $\alpha$}),
 in either case of
  $\bF \notin  \sF^\star_{\fV,\Ga}$ and $\bF \in  \sF^\star_{\fV,\Ga}$,
all the variables that are used to compute
 the (partial) Jacobian $J^*(L_{\fV,F}|_{\tGa_\fV}, \cB^\mn_{\fV,F}|_{\tGa_\fV})$ are pleasant.
 
\smallskip

Next, we assume $\cB^{\rm ori}_F \ne \emptyset$, that is,
$\cB^\inc_F \ne \cB^\mn_F$.  (In such a case, $F$ must be $\Ga$-relevant.)
As earlier, we let $ \fV_{[0]}$ be the unique chart
of $\sR_\sF$ such that $\fV$ lies over $ \fV_{[0]}$
and $\pi=\pi_{\fV, \fV_{[0]}}: \fV \lra  \fV_{[0]}$ be the projection.
Then,  $x_{\fV_{[0]}, \uu_F} (\pi(\bz))\ne 0$,
 hence, $x_{\fV, \uu_F} (\bz)\ne 0$, by Lemma \ref{2-cases}.
As discussed in the paragraphs subsequent to Lemma \ref{2-cases},
 we have the following two situations to consider:  $s_{F,o}=s_F$ and $s_{F,o}\ne s_F$.
 
\medskip\noindent
 {\sl Case $\beta$.} {\it Suppose  $\cB^\ori_F \ne \emptyset$ and $s_{F,o}=s_F$.}
 Then, we  write $S^\ori_F=\{s_1, \cdots, s_\ell\}$ for some $1 \le \ell \le \ft_F$.
And, we have  that the binomial equations of  $\cB^\ori_F$ take the following (original) forms:
\begin{equation}\label{barBF='}
B_{\fV, s_i}: \;\; x_{\fV, (\uu_{s_i}, \uv_{s_i})}x_{\fV, \uu_F}-  x_{\fV, \uu_{s_i}} x_{\fV, \uv_{s_i}}, \;\; 1 \le i \le \ell.
\end{equation}
 Suppose that $\cB^\inc_F \ne \emptyset$. As before, we write
$S^\inc_F =\{t_1 < \cdots <t_q\}$ for some $1\le q \le \ft_F$.  Then, on the chart $ \fV_{[0]}$, we have
$$B_{ \fV_{[0]}, t_i}: \;\; x_{ \fV_{[0]}, (\uu_{t_i}, \uv_{t_i})}x_{\fV_{(10),\uu_F}}-
  x_{\fV_{[0]}, \uu_{t_i}} x_{\fV_{[0]},\uv_{t_i}}, 1 \le i \le q.$$
As $B_{\fV, t_i}$ is $\hs$-intrinsic at $\bz$ and $x_{\fV_{[0]},\uu_F} (\pi(\bz))
\ne 0$, one finds $ x_{ \fV_{[0]}, (\uu_{t_i}, \uv_{t_i})} (\pi(\bz))=0$.
Then, $L_{\fV, F}$ admits the following form
\begin{equation}\label{tildex=0'}
L_{\fV, F}= \vsgn (s_F) + \sum_{i=1}^\ell \vsgn (s_i) x_{\fV, (\uu_{s_i}, \uv_{s_i})} 
+ \sum_{i=1}^q \vsgn (t_i) \tilde{x}_{\fV, (\uu_{t_i}, \uv_{t_i})},\end{equation} 
 where $\tilde{x}_{\fV, (\uu_{t_i}, \uv_{t_i})}=\pi^*x_{\fV_{[0]}, (\uu_{t_i}, \uv_{t_i})}$ is  the pullback
of $x_{\fV_{[0]}, (\uu_{t_i}, \uv_{t_i})}$. In particular, we have
\begin{equation}\label{tildex=0}
\tilde{x}_{\fV, (\uu_{t_i}, \uv_{t_i})}(\bz) = 0 ,\;\; \forall \;\; 1\le i \le q.
 \end{equation}

{\it As the chart $\fV$ is fixed and is clear from the context, in what follows, for simplicity of writing,
we will selectively drop some subindex $``\ \fV \ "$. For instance, we may write
$x_{\uu_F}$ for $x_{\fV,\uu_F}$, $x_{(\uu_{s_1},\uv_{s_1})}$ for $x_{\fV, (\uu_{s_1},\uv_{s_1})}$, etc.
A confusion is unlikely.
}

We introduce the following (partial) Jacobian matrix
$$J^*(\cB^\ori_{\fV,F}|_{\tGa_\fV}, L_{\fV,F}|_{\tGa_\fV})= {{\partial(B_{\fV, s_1}|_{\tGa_\fV} \cdots B_{\fV, s_\ell}|_{\tGa_\fV},L_{\fV,F}|_{\tGa_\fV})} \over {{\partial(x_{\uu_F},
x_{(\uu_{s_1},\uv_{s_1})} \cdots x_{(\uu_{s_\ell},\uv_{s_\ell})})}}} .$$
Then, one calculates and finds
\begin{eqnarray} \nonumber
 J^*(\cB^\ori_{\fV,F}, L_{\fV,F}|_{\tGa_\fV}) =   
\left(
\begin{array}{cccccccccc}
x_{(\uu_{s_1}, \uv_{s_1})} & x_{\uu_F} & 0  & \cdots & 0 \\
x_{(\uu_{s_2}, \uv_{s_2})} & 0 & x_{\uu_F}  & \cdots & 0 \\
\vdots \\
x_{(\uu_{s_\ell}, \uv_{s_\ell})} & 0 & 0&  \cdots & x_{\uu_F} \\
0 & \vsgn (s_1)&  \vsgn (s_2) & \cdots & \vsgn (s_\ell)
\end{array}
\right).
\end{eqnarray}

Multiplying the first column by $-x_{\uu_F}$ ($\ne 0$ at $\bz$), we obtained 
\begin{eqnarray}\nonumber
\left(
\begin{array}{cccccccccc}
-x_{(\uu_{s_1}, \uv_{s_1})}x_{\uu_F}  & x_{\uu_F} & 0  & \cdots & 0 \\
-x_{(\uu_{s_2}, \uv_{s_2})}x_{\uu_F}  & 0 & x_{\uu_F}  & \cdots & 0 \\
\vdots \\
-x_{(\uu_{s_\ell}, \uv_{s_\ell})} x_{\uu_F} & 0 & 0&  \cdots & x_{\uu_F} \\
0 & \vsgn (s_1)&  \vsgn (s_2) & \cdots & \vsgn (s_\ell)
\end{array}
\right).
\end{eqnarray}
Multiplying the $(i+1)$-th column by $x_{(\uu_{s_i}, \uv_{s_i})}$ and added to the first for all $1\le i \le h$, 
we obtain
\begin{eqnarray}\nonumber
\left(
\begin{array}{cccccccccc}
0  & x_{\uu_F} & 0  & \cdots & 0 \\
0 & 0 & x_{\uu_F}  & \cdots & 0 \\
\vdots & \vdots  & \vdots & \cdots & \vdots\\
0 & x_{\uu_F} & 0 &  \cdots & x_{\uu_F} \\
\sum_{i=1}^\ell \vsgn (s_i) x_{(\uu_{s_i}, \uv_{s_i})} & \vsgn (s_1)&  \vsgn (s_2) & \cdots & \vsgn (s_\ell)
\end{array}
\right).
\end{eqnarray}
But, at the point $\bz$, we have
$$\sum_{i=1}^\ell \vsgn (s_i) x_{(\uu_{s_i}, \uv_{s_i})} (\bz)=
 - \vsgn (s_F)  - 
 \sum_{t \in S^\inc_F} \vsgn (t) {\tilde x}_{(\uu_{t}, \uv_{t})} (\bz)= - \vsgn (s_F) \ne 0,$$
 because of \eqref{tildex=0'} and \eqref{tildex=0}.
Thus, we conclude that 
$J^*(\cB^\ori_{\fV,F}|_{\tGa_\fV}, L_{\fV,F}|_{\tGa_\fV})$ is a square matrix of full rank at $\bz$.

We now consider $\cB^\inc_{\fV,F}$. 
 If $\cB_F^\inc =\emptyset$, there is nothing to consider, and we move on.
 Suppose $\cB_F^\inc \ne \emptyset$.
We let ${\bf y}^\inc_{\fV,F}$ be the set of
the terminating central variables for the equations $\cB^\inc_{\fV,F}$, determined and listed as
in \eqref{list-y} and \eqref{partial-y}.  We then set
\begin{equation}\nonumber 
J^*(\cB^\ori_{\fV,F}|_{\tGa_\fV}, L_{\fV,F}|_{\tGa_\fV}, \cB^\inc_{\fV,F}|_{\tGa_\fV})= 
{{\partial(B_{\fV, s_1}|_{\tGa_\fV} \cdots B_{\fV, s_\ell}|_{\tGa_\fV},L_{\fV,F}|_{\tGa_\fV}, \cB^\inc_{\fV,F}|_{\tGa_\fV})} \over {{\partial(x_{\uu_F},
 x_{(\uu_{s_1},\uv_{s_1})}\cdots x_{(\uu_{s_\ell},\uv_{s_\ell})},  {\bf y}^\inc_{\fV,F})}}}.
 \end{equation}
Then, by Lemma \ref{pre-tri} (2) and (3),
one sees that $J^*(\cB^\ori_{\fV,F}|_{\tGa_\fV}, L_{\fV,F}|_{\tGa_\fV}, \cB^\inc_{\fV,F}|_{\tGa_\fV})$
 at the point $\bz$
takes form
\begin{equation}\nonumber
 J^*(\cB^\ori_{\fV,F}|_{\tGa_\fV}, L_{\fV,F}|_{\tGa_\fV}, \cB^\inc_{\fV,F}|_{\tGa_\fV})(\bz)= \left(
\begin{array}{cccccccccc}
 J^*(\cB^\ori_{\fV,F}|_{\tGa_\fV}, L_{\fV,F}|_{\tGa_\fV}) (\bz) & 0  \\
*   &  {{\partial (\cB^\inc_F|_{\tGa_\fV})} \over {{\partial ({\bf y}^\inc_{\fV,F})}}}(\bz)
\end{array}
\right).
\end{equation}
Then, applying Lemma \ref{pre-tri} (1) to ${{\partial (\cB^\inc_F|_{\tGa_\fV})} \over {{\partial ({\bf y}^\inc_{\fV,F})}}}$
and the previous discussion on the matrix $J^*(\cB^\ori_{\fV,F}|_{\tGa_\fV}, L_{\fV,F}|_{\tGa_\fV})$, we conclude that
the (partial) Jacobian $J^*(\cB^\ori_{\fV,F}|_{\tGa_\fV}, L_{\fV,F}|_{\tGa_\fV}, \cB^\inc_{\fV,F}|_{\tGa_\fV})$ for the block of
equations in $\fG_k$,  where $F=F_k$,  
achieves its full rank at $\bz$.

Further, observe here that the variables 
$x_{(\uu_{s_1},\uv_{s_1})}\cdots x_{(\uu_{s_\ell},\uv_{s_\ell})}$ uniquely appear in 
the block $\fG_k$.
Also, $x_{\fV, \uu_F}$, as the leading variable of $F$, does not appear in any 
equation of $\fG_j$ with $j < k$ (see Proposition \ref{leadingTerm}).
 In addition, by Lemma \ref{pre-tri} (2) and (3), all the terminating central variables used to compute
 $J^*(\cB^\inc_{\fV,F}|_{\tGa_\fV})$ are pleasant.
 Thus, we conclude that  in this case (i.e., {\sl Case $\beta$}),
 all the variables that are used to compute
 the (partial) Jacobian $J^*(\cB^\ori_{\fV,F}|_{\tGa_\fV}, L_{\fV,F}|_{\tGa_\fV}, \cB^\inc_{\fV,F}|_{\tGa_\fV})$ are pleasant.  

\medskip\noindent
 {\sl Case $\gamma$.}
{\it Now, we suppose  $\cB^\ori_F \ne \emptyset$ and $s_{F,o} \ne s_F$.}  
Then, as earlier, we can write $S^\ori_F=\{s_{F,o}, s_1, \cdots, s_\ell\}$ for some $1 \le \ell \le \ft_F$.
And, we have  the binomial equations of  $\cB^\ori_F$ as
\begin{eqnarray}
B_{\fV, s_o}: \;\;  x_{\fV,\uu_F}- x_{\fV,(\um,\uu_F)}  x_{\fV,\uu_{s_{F,o}}} x_{\fV,\uv_{s_{F,o}}},
\label{barBFnot='}  \\
B_{\fV, s_i}: \;\;  x_{\fV,(\uu_{s_i}, \uv_{s_i})}x_{\fV, \uu_F}-
 x_{\fV,(\um,\uu_F)}  x_{\fV,\uu_{s_i}} x_{\fV,\uv_{s_i}} =0, \;\; 1 \le i \le \ell.
\end{eqnarray}
Again, we selectively drop some subindex $``\ \fV \ "$ below. Then,
in this case, one calculates and finds  
$$J^*(\cB^\ori_{\fV,F}|_{\tGa_\fV}, L_{\fV,F}|_{\tGa_\fV}):=
{{\partial(B_{\fV, s_{F,o}}|_{\tGa_\fV}, B_{\fV, s_1}|_{\tGa_\fV} \cdots B_{\fV, s_\ell}|_{\tGa_\fV},L_{\fV,F}|_{\tGa_\fV})} \over {{\partial(x_{\uu_F},
x_{(\um,\uu_F)}, x_{(\uu_{s_1},\uv_{s_1})}\cdots x_{(\uu_{s_\ell},\uv_{s_\ell})})}}}$$ is equal to
\begin{eqnarray} \nonumber
\left(
\begin{array}{cccccccccc}
1 & -x_{\uu_{s_{F,o}}} x_{\uv_{s_{F,o}}} & 0  & \cdots & 0 \\
x_{(\uu_{s_1}, \uv_{s_1})} & -x_{\uu_{s_1}} x_{\uv_{s_1}} & x_{\uu_F}  & \cdots & 0 \\
\vdots \\
x_{(\uu_{s_\ell}, \uv_{s_\ell})} & -x_{\uu_{s_\ell}} x_{\uv_{s_\ell}} & 0&  \cdots & x_{\uu_F} \\
0 & \vsgn (s_F)&  \vsgn (s_1) & \cdots & \vsgn (s_\ell)
\end{array}
\right).
\end{eqnarray}

Multiplying the first column by $x_{\uu_F}$ ($\ne 0$ at $\bz$), we obtained 
\begin{eqnarray}\nonumber
\left(
\begin{array}{cccccccccc}
x_{\uu_F} & -x_{\uu_{s_{F,o}}} x_{\uv_{s_{F,o}}} & 0  & \cdots & 0 \\
x_{\uu_F}x_{(\uu_{s_1}, \uv_{s_1})} & -x_{\uu_{s_1}} x_{\uv_{s_1}} & x_{\uu_F}  & \cdots & 0 \\
\vdots \\
x_{\uu_F}x_{(\uu_{s_\ell}, \uv_{s_\ell})} & -x_{\uu_{s_\ell}} x_{\uv_{s_\ell}} & 0&  \cdots & x_{\uu_F} \\
0 & \vsgn (s_F)&  \vsgn (s_1) & \cdots & \vsgn (s_\ell)
\end{array}
\right).
\end{eqnarray}
Multiplying the second column by $x_{(\um, \uu_F)}$ and added to the first, we obtain
\begin{eqnarray}\nonumber
\left(
\begin{array}{cccccccccc}
0 & -x_{\uu_{s_{F,o}}} x_{\uv_{s_{F,o}}} & 0  & \cdots & 0 \\
0& -x_{\uu_{s_1}} x_{\uv_{s_1}} & x_{\uu_F}  & \cdots & 0 \\
\vdots \\
0& -x_{\uu_{s_\ell}} x_{\uv_{s_\ell}} & 0&  \cdots & x_{\uu_F} \\
\vsgn (s_F) x_{(\um, \uu_F)} & \vsgn (s_F)&  \vsgn (s_1) & \cdots & \vsgn (s_\ell)
\end{array}
\right).
\end{eqnarray}
As $\vsgn (s_F) x_{(\um, \uu_F)}$,  $x_{\uu_{s_{F,o}}} x_{\uv_{s_{F,o}}} $ and $x_{\uu_F}$ are all not
vanishing at $\bz$  by \eqref{barBFnot='}, one see that the above matrix, hence
$J^*(\cB^\ori_{\fV,F}|_{\tGa_\fV}, L_{\fV,F}|_{\tGa_\fV})$,
 is of full rank at $\bz$.

 We now consider $\cB^\inc_{\fV,F}$. If $\cB_F^\inc =\emptyset$, there is nothing to consider.
 Suppose $\cB_F^\inc \ne \emptyset$.
 As in {\sl Case $\beta$}, we let ${\bf y}^\inc_{\fV,F}$ be the set of
the terminating central variables for the equations $\cB^\inc_{\fV,F}$, determined and listed as in
\eqref{list-y} and \eqref{partial-y}.
We then set
\begin{equation} \nonumber 
J^*(\cB^\ori_{\fV,F}|_{\tGa_\fV}, L_{\fV,F}|_{\tGa_\fV}, \cB^\inc_{\fV,F}|_{\tGa_\fV})= 
{{\partial(B_{\fV, s_{F,o}}|_{\tGa_\fV}, B_{\fV, s_1}|_{\tGa_\fV} \cdots B_{\fV, s_\ell}|_{\tGa_\fV},L_{\fV,F}|_{\tGa_\fV}, \cB^\inc_{\fV,F})} \over {{\partial(x_{\uu_F},
x_{(\um,\uu_F)}, x_{(\uu_{s_1},\uv_{s_1})}\cdots x_{(\uu_{s_\ell},\uv_{s_\ell})},  {\bf y}^\inc_{\fV,F})}}}.
 \end{equation}
Then, by Lemma \ref{pre-tri} (2) and (3),
one sees that $J^*(\cB^\ori_{\fV,F}, L_{\fV,F}|_{\tGa_\fV}, \cB^\inc_{\fV,F})$ takes 
the following form at the point $\bz$
\begin{equation}\nonumber
 J^*(\cB^\ori_{\fV,F}|_{\tGa_\fV}, L_{\fV,F}|_{\tGa_\fV}, \cB^\inc_{\fV,F}|_{\tGa_\fV})(\bz)= \left(
\begin{array}{cccccccccc}
 J^*(\cB^\ori_{\fV,F}|_{\tGa_\fV}, L_{\fV,F}|_{\tGa_\fV})(\bz) & 0  \\
*   &  {{\partial (\cB^\inc_F|_{\tGa_\fV})} \over {{\partial ({\bf y}^\inc_{\fV,F})}}}(\bz)
\end{array}
\right).
\end{equation}
Then, applying Lemma \ref{pre-tri} (1) to 
${{\partial (\cB^\inc_F|_{\tGa_\fV})} \over {{\partial ({\bf y}^\inc_{\fV,F})}}}$
and the previous discussion on the matrix $J^*(\cB^\ori_{\fV,F}|_{\tGa_\fV}, L_{\fV,F}|_{\tGa_\fV})$, we conclude that
the (partial) Jacobian $J^*(\cB^\ori_{\fV,F}|_{\tGa_\fV}, L_{\fV,F}|_{\tGa_\fV}, \cB^\inc_{\fV,F}|_{\tGa_\fV})$ for the block of
equations in $\fG_k$,  where $F=F_k$, 
achieves its full rank at $\bz$.

Further, observe here that the variables 
$x_{(\um,\uu_F)}, x_{(\uu_{s_1},\uv_{s_1})}\cdots x_{(\uu_{s_\ell},\uv_{s_\ell})}$ uniquely appear in 
the block $\fG_k$.
Also, $x_{\uu_F}$, as the leading variable of $F$, does not appear in any 
equation of $\fG_j$ with $j < k$.
 In addition, by Lemma \ref{pre-tri} (2) and (3), all the terminating central variables used to compute
 $J^*(\cB^\inc_{\fV,F})$ are pleasant as well.
 Thus, we conclude that all the variables that are used to compute
 the (partial) Jacobian $J^*(\cB^\ori_{\fV,F}|_{\tGa_\fV}, L_{\fV,F}|_{\tGa_\fV}, \cB^\inc_{\fV,F}|_{\tGa_\fV})$ are pleasant,
  for all $F$, 
 in this case (i.e., {\sl Case $\gamma$}).

\smallskip
 Now, we return to the general case of the decomposition
 $\cB_F=\cB_F^\ori \sqcup \cB_F^\inc$,
 regardless weather  $\cB_F^\ori =\emptyset$ or not,
 i.e., we consider all {\sl $``$Cases $\alpha, \beta$, and $\gamma$$"$} together.
 In all the cases, we conclude that the (partial) Jacobian
 $J^*(\cB^\ori_{\fV,F}|_{\tGa_\fV}, L_{\fV,F}|_{\tGa_\fV}, \cB^\inc_{\fV,F}|_{\tGa_\fV})$,
 where we allow either $\cB_F^\ori =\emptyset$ or $\cB_F^\inc =\emptyset$,
 and also $L_{\fV,F}|_{\tGa_\fV} \equiv 0$ is not one of the defining equation when $\bF \in \sfmgir$,
  is a square matrix of full rank at the point $\bz$
 and all the variables that are used to compute it
  are pleasant, for all $F$.

Thus, at the point $\bz$, by combining 
the (partial) Jacobian $J^*(\cB^\ori_{\fV,F}|_{\tGa_\fV}, L_{\fV,F}|_{\tGa_\fV}, 
\cB^\inc_{\fV,F}|_{\tGa_\fV})$ together for all
$\bF \in \sfm$, we obtain a maximal minor, simply denoted by $J^*$,
 of the complete Jacobian of all the equations of
$$\{B_{\fV, s}|_{\tGa_\fV}, L_{\fV, F}|_{\tGa_\fV} \mid \bF \in \sfm, \; s \in S_F \- s_F\}$$
as follows (here again, we drop some subindex $``\ \fV \ "$)
\begin{equation}\label{the-matrix} 
{\scriptsize
 \left(
\begin{array}{cccccccccc}
 J^*(\cB^\ori_{F_1}|_{\tGa_\fV}, L_{F_1}|_{\tGa_\fV}, \cB^\inc_{F_1}|_{\tGa_\fV})(\bz) & 0 & \cdots & 0 \\
* &  J^*(\cB^\ori_{F_2}|_{\tGa_\fV}, L_{F_2}|_{\tGa_\fV},\cB^\inc_{F_2}|_{\tGa_\fV} )(\bz) & \cdots & 0 \\
\vdots & \vdots & \vdots &\vdots \\
*  & * & *&   J^*(\cB^\ori_{F_\up}|_{\tGa_\fV}, L_{F_\up}|_{\tGa_\fV}, \cB^\inc_{F_\up}|_{\tGa_\fV})(\bz) \end{array}
\right).
}
\end{equation}
All the zero blocks in the upper-right corner are because for every $F_k$ with $1\le k \le \up$,
the variables that are used to compute $J^*(\cB^\ori_{\fV,F}|_{\tGa_\fV}, L_{\fV,F}|_{\tGa_\fV}, 
\cB^\inc_{\fV,F}|_{\tGa_\fV})$ 
are pleasant. Hence, by our previous discussions, 
\eqref{the-matrix} is a square matrix of full rank at $\bz$.

\smallskip
 Now, we consider the case when $\Ga=\emptyset$. Thus, we have
  $Z_\emptyset=\rU_\um \cap \Gr^{d,E}$ and
$\tZ_{\eth,\emptyset }=\tsV_\eth$. 
In this case, we let $J:=J(\cB^\q, \cB^\mn_\fV, L_{\fV,\sfm})$ be the full Jacobian
 of all the defining equations of 
$\cB^\q_\fV, \cB^\mn_\fV$, and $L_{ \fV, \sfm}:=\{L_{\fV, F} \mid \bF \in \sfm \}$;
 we let  $J^*:=J^*(\cB^\mn_\fV,L_{\fV, \sfm})$ 
be the matrix of \eqref{the-matrix} in the case of $\Ga=\emptyset$.
Take any $\bz \in \tsV_\eth$,
 let $T_\bz (\tsV_\eth)$ be the Zariski tangent space of
$\tsV_\eth$ at $\bz$. Then,
we have
$$\dim T_\bz (\tsV_\eth)= \dim \tsR_\eth- \rk J(\bz) \le \dim \tsR_\eth- \rk J^*(\bz)$$
$$= \dim \rU_\um + |\cB^\mn|  - (|\cB^\mn| + \up)
= \dim \rU_\um  - \up = \dim \tsV_\eth,$$
where  $\dim \tsR_\eth=\dim \rU_\um + |\cB^\mn|$ by \eqref{dim} and
$\rk J^*(\bz) =|\cB^\mn| + \up$ by \eqref{the-matrix}.
Hence, $\dim T_\bz (\tsV_\eth)  = \dim \tsV_\eth,$
thus, $\tsV_\eth$ is smooth at $\bz$. Therefore, $\tsV_\eth$ is smooth.

Consequently,  one sees that on any standard chart $\fV$ of
the {\it final} scheme $\tsR_\eth$, all the relations of 
$\cB^q_\fV$
 must lie in the ideal generated by relations of $\cB^\mn_\fV$ and $L_{\fV, \sfm}$,  
 thus,   can be discarded from the chart $\fV$.

 Now, we return to a general  subset $\Ga$ of $\var_{\rU_\um}$
 as stated in the theorem.
  By the previous paragraph, over any standard chart $\fV$ of $\tsR_\eth$
  with $\tZ_{\eth,\Ga} \cap \fV \ne \emptyset$, we can discard 
   $\cB^\q_\fV|_{\tGa_\fV}$
   from the defining equations
 of $\tZ_{\eth,\Ga} \cap \fV$ and focus only on
 the equations of $\cB^\mn_\fV|_{\tGa_\fV}$ and 
 $L_{\fV, \sF^\star_{\um,\Ga}}|_{\tGa_\fV}$.
 In other words, 
  $\tZ_{\eth,\Ga} \cap \fV$, if nonempty, as a closed subcheme of $\fV_\Ga$, is defined by the equations in 
 $\cB^\mn_\fV|_{\tGa_\fV}$ and $L_{\fV, \sF^\star_{\um,\Ga}}|_{\tGa_\fV}$.
 Then, by  \eqref{the-matrix}, the rank of the full Jacobian of 
 $\cB^\mn_\fV|_{\tGa_\fV}$ and $L_{\fV, \sF^\star_{\um,\Ga}}|_{\tGa_\fV}$
 equals to the number of the above defining equations at any closed point $\bz$ 
 of $\tZ_{\eth,\Ga} \cap \fV$. Hence,
 $$\dim T_\bz (\tZ_{\eth,\Ga} \cap \fV)=
  \dim \fV_\Ga-  \rk J^*(\cB^\ori_{\fV,F}|_{\tGa_\fV}, L_{\fV,F}|_{\tGa_\fV}, 
\cB^\inc_{\fV,F}|_{\tGa_\fV}) (\bz)$$
$$= \dim \fV_\Ga - (|\cB^\mn| + \up)
\le \dim \tZ_{\eth,\Ga} \cap \fV.$$ 
 Hence, $\tZ_{\eth,\Ga} \cap \fV$ is smooth, thus,  so is $\tZ_{\eth,\Ga}$.

This proves the theorem.
\end{proof}

Let $X$ be an integral scheme.  We say $X$ admits a resolution  if there exists  a smooth
scheme $\tX$ and a projective birational morphism
from $\tX$ onto $X$.

\begin{thm}\label{cor:main} 
Let $\Ga$ be any subset $\var_{\rU_\um}$.
Assume that $Z_\Ga$ is integral. Then, the morphism
 $\tZ^\dagger_{\eth, \Ga} \to Z_{\Ga}$ can be decomposed as
$$\tZ^\dagger_{\vr, \Ga} \to \cdots 
\to \tZ^\dagger_{\hs,\Ga}  \to \tZ^\dagger_{\hs',\Ga} \to \cdots \to
Z^\dagger_{\sF_{[j]},\Ga}  \to Z^\dagger_{\sF_{ [j-1]},\Ga} \to \cdots \to Z_\Ga$$
such that every morphism $\tZ^\dagger_{\hs,\Ga}  \to \tZ^\dagger_{\hs',\Ga}$
in the sequence is 
$ \tZ^\dagger_{(\eth_{(k\tau)}\fr_\mu\fs_{h}),\Ga}
 \to \tZ^\dagger_{(\eth_{(k\tau)}\fr_\mu \fs_{h-1}),\Ga}$ for some $(k\tau) \mu h \in \Index_\Psi$, or
$ \tZ^\dagger_{(\eth_{(k\tau)}\fr_\mu\fs_{h}),\Ga}
 \to \tZ^\dagger_{(\eth_{(k\tau)}\fr_\mu\fs_{h-1}),\Ga} $ for some $(k\tau)\mu h \in \Index_\Phi$, or
$\tZ^\dagger_{\vt_{[k]},\Ga}  \to \tZ^\dagger_{\vt_{ [k-1]},\Ga}$ for some $k \in [\up]$.
Further, every morphism in the sequence is surjective, projective, and  birational.  
In particular, $\tZ^\dagger_{\eth, \Ga} \to Z_\Ga$ 
is a resolution if $Z_\Ga$ is singular.
\end{thm}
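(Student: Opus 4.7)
The plan is to assemble the desired sequence directly from the four construction lemmas of Section \ref{Gamma-schemes}, and then check the three properties (surjectivity, projectivity, birationality) for every arrow. Lemmas \ref{wp-transform-sVk-Ga}, \ref{vt-transform-k}, \ref{wp-transform-ktauh}, and \ref{vr-transform-ktauh} each produce, inductively, a scheme $\tZ_{\hs,\Ga}$ equipped with a distinguished irreducible component $\tZ^\dagger_{\hs,\Ga}$; the inductive step in each lemma is built so that $\tZ^\dagger_{\hs,\Ga}$ maps to $\tZ^\dagger_{\hs',\Ga}$ (where $\hs'$ is the immediate predecessor of $\hs$ in $\Game$, using the conventions of Section \ref{combine-all}). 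Composing these arrows across $\sF$-, $\vt$-, $\wp$-, and $\eth$-stages in the order dictated by $\Game$ yields exactly the factorization claimed in the statement.

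First, I would verify birationality of each individual arrow $\tZ^\dagger_{\hs,\Ga} \to \tZ^\dagger_{\hs',\Ga}$. In the case $(\star a)$ of each of Lemmas \ref{vt-transform-k}, \ref{wp-transform-ktauh}, \ref{vr-transform-ktauh}, the arrow is an isomorphism over a Zariski open subset by the explicit solving of the linear system for $y_1$; in the case $(\star b)$, the arrow is literally an isomorphism because $\tZ^\dagger_{\hs,\Ga}$ is cut out of the trivial $\PP^1$-bundle by the slice $(\xi_0,\xi_1)=(1,1)$. For the $\sF$-stage (Lemma \ref{wp-transform-sVk-Ga}), birationality is established directly via the open subset $Z^{\dagger\circ}_{\sF_{[k-1]},\Ga}$ used in its proof. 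Composing these birational arrows and using that each lemma states the composition $\tZ^\dagger_{\hs,\Ga} \to Z_\Ga$ is birational, one gets a consistent birational tower over $Z_\Ga$.

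Second, I would verify projectivity and surjectivity. Each morphism $\tsR_\hs \to \tsR_{\hs'}$ in the first row of diagram \eqref{theDiagram} is either a smooth blowup (hence projective) or a projection forgetting one factor $\PP_{F_i}$ (hence projective). Restricting to the closed subschemes $\tZ^\dagger_{\hs,\Ga} \subset \tsR_\hs$ preserves projectivity, so each arrow in the tower is projective. For surjectivity, I would argue as follows: every arrow in the tower is dominant (by birationality just established) and projective, hence closed; a dominant closed map of irreducible schemes is surjective. Thus every arrow $\tZ^\dagger_{\hs,\Ga} \to \tZ^\dagger_{\hs',\Ga}$ is surjective, projective, and birational, and so is the composition $\tZ^\dagger_{\eth,\Ga}\to Z_\Ga$.

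Finally, to conclude the $\emph{resolution}$ statement under the hypothesis that $Z_\Ga$ is singular, I would invoke Theorem \ref{main-thm}, which says that $\tZ_{\eth,\Ga}$ is smooth over $\FF$ and that its connected component $\tZ^\dagger_{\eth,\Ga}$ is therefore smooth as well. Combined with the already established surjective, projective, birational morphism $\tZ^\dagger_{\eth,\Ga}\to Z_\Ga$, this furnishes a resolution in the sense defined in the introduction. The main obstacle I anticipate is not in any single arrow, but in bookkeeping: one must confirm that the distinguished components $\tZ^\dagger$ chosen in the two alternative cases $(\star a)$ and $(\star b)$ of Lemmas \ref{vt-transform-k}, \ref{wp-transform-ktauh}, \ref{vr-transform-ktauh} are consistent as $\hs$ moves through $\Game$, so that the arrows truly compose to give the stated factorization; this is guaranteed by the fact that each $\tZ^\dagger_{\hs,\Ga}$ is defined as the closure of $\rho_\hs^{-1}$ applied to a Zariski open subset of $\tZ^\dagger_{\hs',\Ga}$, and closures of images of compatible open subsets are themselves compatible.
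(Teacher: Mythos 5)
Your proposal is correct and takes essentially the same route as the paper, whose own proof simply cites Theorem \ref{main-thm} for smoothness and Lemmas \ref{wp-transform-sVk-Ga}, \ref{vt-transform-k}, \ref{wp-transform-ktauh}, \ref{vr-transform-ktauh} for the decomposition. You additionally spell out the routine checks of projectivity and surjectivity and the per-arrow birationality hidden in the cases $(\star a)$/$(\star b)$ of those lemmas; the paper leaves these implicit, but your reasoning matches what the lemmas' proofs actually establish.
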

\begin{proof} The smoothness of $\tZ^\dagger_{\eth, \Ga}$ follows from Theorem \ref{main-thm};
the decomposition of $\tZ^\dagger_{\eth, \Ga} \to Z_{\Ga}$ follows from
Lemmas \ref{wp-transform-sVk-Ga},  \ref{vt-transform-k},
 \ref{wp-transform-ktauh}, and Lemma \ref{vr-transform-ktauh}.
\end{proof}

\section{Local Resolution of Singularity}\label{local-resolution} 

We follow Lafforgue's presentation of \cite{La03} on Mn\"ev's universality theorem.

As before, suppose we have a set of vector spaces, 
$E_1, \cdots, E_n$ such that 
$E_\alpha$ is of dimension 1 (or, a free module of rank 1 over $\ZZ$).
 We let 
$$E_I = \bigoplus_{\alpha \in I} E_\alpha, \;\; \forall \; I \subset [n],$$
$$E:=E_{[n]}=E_1 \oplus \ldots \oplus E_n.$$ 
(Lafforgues \cite{La03} considers the  more general case by allowing $E_\alpha$ to be
of any  finite dimension.)

For any fixed  integer $1 \le d <n$, the Grassmannian
$$\Gr^{d,E}=\{ F \hookrightarrow E \mid \dim F=d\}$$
 decomposes into a disjoint union of locally closed strata
$$\Gr^{d,E}_\ud=\{ F \hookrightarrow E \mid \dim (F\cap E_I)=d_I,  \;\; \forall \; I \subset [n] \}$$
indexed by the family  $\ud=(d_I)_{I \subset [n]}$ of nonnegative integers $d_I \in \NN$ verifying

$\bullet$ $d_\emptyset=0, d_{[n]}=d$,

$\bullet$ $d_I +d_J \le d_{I\cup J} + d_{I \cap J}$, for all $I, J \subset [n]$.

The family $\ud$ is called a matroid of rank $d$ on the set $[n]$.
The stratum $\Gr^{d,E}_\ud$ is called a thin Schubert cell.

The Grassmannian $\Gr^{d,E}$ comes equipped with the (lattice) polytope
$$\Delta^{d,n} =\{ (x_1, \cdots, x_n) \in {\mathbb R}^n \mid 0  \le x_\alpha \le 1, \;
\forall \; \alpha; \; x_1 +\cdots + x_n = d\}.$$
For any $\ui=(i_1,\cdots,i_d) \in \II_{d,n}$, we let $\bx_\ui = (x_1, \cdots, x_n)$ be defined by
\begin{equation}\label{eta-L-2}
\left\{ 
\begin{array}{lcr}
x_i=1, &  \hbox{if $i \in \ui$,} \\
x_i=0, & \hbox{otherwise}.
\end{array} \right.
\end{equation}
It is known that  $\Delta^{d,n} \cap \NN^n =\{\bx_\ui \mid \ui \in \II_{d,n}\}$ and
it consists of precisely the vertices of the polytope $\Delta^{d,n}$.

Then, the matroid $\ud=(d_I)_{I \subset [n]}$ above defines the 
following subpolytope of $\Delta^{d,n}$
$$\Delta^{d,n}_\ud =\{ (x_1, \cdots, x_n) \in \Delta^{d,n} \mid  \sum_{\alpha \in I} x_\alpha \ge d_I, \; \forall \; I \subset [n]\}.$$
This is called the matroid subpolytope of $\Delta^{d,n}$ corresponding to $\ud$.

Recall that we have a canonical decomposition
$$\wedge^d E=\bigoplus_{\ui \in \II_{d,n}} E_{i_1}\otimes \cdots \otimes E_{i_d}$$
and it gives rise to the $\pl$ embedding of the Grassmannian
$$\Gr^{d,E} \hookrightarrow \PP(\wedge^d E)=\{(p_\ui)_{\ui \in \II_{d,n}} \in \GG_m 
\backslash (\wedge^d E  \- \{0\} )\}.$$

\begin{prop}\label{to-Ga} {\rm (Proposition, p4, \cite{La03})} 
Let $\ud$ be any matroid of rank $d$ on the set $[n]$ as considered above.
Then, in the Grassmannian 
$$\Gr^{d,E} \hookrightarrow \PP(\wedge^d E)=\{(p_\ui)_{\ui \in \II_{d,n}} \in \GG_m 
\backslash (\wedge^d E \- \{0\} )\},$$
the thin Schubert cell $\Gr^{d,E}_\ud$, as a locally closed subscheme, is defined by
$$p_\ui = 0, \;\;\; \forall \; \bx_\ui \notin \Delta^{d,n}_\ud ,$$ 
$$p_\ui \ne 0, \;\;\; \forall \; \bx_\ui \in \Delta^{d,n}_\ud . $$   
\end{prop}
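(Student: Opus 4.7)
My plan is to reduce Proposition \ref{to-Ga} to a precise characterization of which Plücker coordinates of a $d$-dimensional subspace $F \subset E$ vanish, phrased entirely in terms of the matroid datum $d_I(F) := \dim(F \cap E_I)$, and then match this characterization against the half-space description of the matroid polytope $\Delta^{d,n}_\ud$. The key input will be the lemma that, for every $\ui \in \II_{d,n}$ with complement $\bar\ui := [n] \setminus \ui$, one has $p_\ui(F) \ne 0$ if and only if $F \cap E_{\bar\ui} = 0$, and further this happens if and only if $|\ui \cap I| \ge d_I(F)$ for every $I \subset [n]$. The first equivalence is immediate: $p_\ui(F)$ is the determinant of the projection $\pi_\ui \colon F \to E_\ui$ along $E_{\bar\ui}$. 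The ``only if'' half of the second equivalence follows because $\pi_\ui$ restricts to an injection $F \cap E_I \hookrightarrow E_{\ui \cap I}$ (components of vectors in $F \cap E_I$ along $\bar\ui$ lie in $E_{\bar\ui \cap I}$), forcing $d_I(F) \le |\ui \cap I|$. The ``if'' half is a one-line check: taking $I = \bar\ui$ forces $d_{\bar\ui}(F) = 0$.

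Granted this lemma, for $F \in \Gr^{d,E}_\ud$ (so $d_I(F) = d_I$) the right-hand condition becomes exactly the defining inequalities of $\Delta^{d,n}_\ud$ evaluated at $\bx_\ui$, which establishes the forward inclusion set-theoretically. For the reverse inclusion, I will invoke the matroid-theoretic fact that a matroid is determined by its set of bases (equivalently, by its rank function). If $F$ satisfies the prescribed Plücker pattern, then the bases of the linear matroid of $F$ -- i.e.\ $\{\ui : p_\ui(F) \ne 0\}$ -- coincide with the vertex set of $\Delta^{d,n}_\ud$, which is precisely the set of bases of $\ud$. The data $(d_I(F))_I$ then recover via the dual rank formula $d_I(F) = \min\{|B \cap I| : B \text{ a basis of the matroid of } F\}$, yielding $d_I(F) = d_I$ for all $I$, i.e.\ $F \in \Gr^{d,E}_\ud$. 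The passage from set-theoretic to locally-closed-subscheme equality follows because both incarnations are reduced: each Plücker coordinate is a global section of the Plücker line bundle cutting out a reduced Schubert divisor on the smooth Grassmannian, and the prescribed locally closed intersection is automatically reduced.

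The main technical obstacle is the reverse inclusion, because it requires passing from the ``basis-level'' datum (the nonvanishing pattern of Plücker coordinates) back to the full ``rank-level'' datum $(d_I(F))_I$. This forces one to invoke the matroid axiomatics and in particular the nontrivial identity $d_I = \min\{|B \cap I| : B \text{ a basis of } \ud\}$ that ties the facet inequalities of $\Delta^{d,n}_\ud$ to its vertex set; consistency of this identity with the existence of $F$ realizing $\ud$ is exactly what makes the proposition nonvacuous. Once this matroid-polytope dictionary is in place, the rest of the argument is tracing definitions.
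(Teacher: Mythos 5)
The paper cites this proposition from Lafforgue \cite{La03} without reproducing a proof, so there is no in-paper argument to compare against. Your argument is essentially the standard one, and its set-theoretic substance is correct: the lemma identifying the nonvanishing of $p_\ui(F)$ with the inequalities $|\ui \cap I| \ge d_I(F)$ for all $I \subset [n]$ is proved cleanly, the forward inclusion then comes directly from the half-space description of $\Delta^{d,n}_\ud$, and the reverse inclusion is correctly reduced to the Gelfand--Goresky--MacPherson--Serganova vertex--basis correspondence together with the corank formula $d_I = \min\{|B \cap I| : B \text{ a basis of } \ud\}$, which force $d_I(F) = d_I$ once the bases of the realized matroid of $F$ match those of $\ud$.

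The one soft spot is the closing scheme-theoretic paragraph. The assertion that the locally closed subscheme cut out by the prescribed Plücker conditions is ``automatically reduced'' does not follow from each individual $\{p_\ui = 0\}\cap\Gr^{d,E}$ being a reduced Schubert divisor: an intersection of reduced effective divisors can fail to be reduced, and nothing in your argument excludes that on the locus in question. This is not fatal, because in Lafforgue's setup (and in the present paper, see the identification with the $\Ga$-scheme in \S \ref{tour} and \eqref{ud=Ga}) the scheme structure on the thin Schubert cell $\Gr^{d,E}_\ud$ is taken \emph{by definition} to be the one cut out by the stated Plücker conditions, so the genuine content of the proposition is the set-theoretic moduli description, which you do establish. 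But you should either say explicitly that the scheme structure is fixed by fiat to be the Plücker one, or replace the reducedness hand-wave with an actual argument; as written, that last step is a claim without a proof.
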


Let $\ud=(d_I)_{I \subset [n]}$ be a matroid of rank $d$ on the set $[n]$ as above. 
Assume that $\ud_{[n]\setminus \{\alpha\}} =d-1$ for all $1 \le \alpha \le n$.
Then, the configuration space $C^{d,n}_\ud$ defined by the matroid $\ud$ is the classifying scheme
of families of $n$ points
$$P_1, \cdots, P_n$$
on the projective space $\PP^{d-1}$ such that for any nonempty subset $I \subset [n]$,
the projective subspace $P_I$ of $\PP^{d-1}$ generated by the points $P_\alpha, \alpha \in I$, is
of dimension 
$$\dim P_I = d-1 -\ud_I.$$

\begin{thm}\label{Mn-La} {\rm (Mn\"ev, Theorem I. 14, \cite{La03})}
Let $X$ be an affine scheme of finite type over $\Spec \ZZ$.
Then, there exists a matroid $\ud$ of rank 3 on
the set $[n]$ such that $\PGL_3$ acts freely on the configuration space $C^{3,n}_\ud$.
Further, there exists a positive integer $r$ and
 an open subset $U \subset X \times \AA^r$ projecting surjectively
onto $X$ such that $U$ is isomorphic to the quotient space
${\underline C}^{3,n}_\ud :=C^{3,n}_\ud/\PGL_3$.
\end{thm}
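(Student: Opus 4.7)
The plan is to reduce to von Staudt's classical projective arithmetic and then record the resulting incidences as a matroid. First I would embed $X$ as a closed subscheme of some affine space $\AA^N_{\ZZ}$ cut out by polynomial equations $f_1, \ldots, f_m \in \ZZ[x_1, \ldots, x_N]$; by working Zariski-locally on $X$ it is enough to treat one standard affine chart at a time. Next, on $\PP^2$ I would fix a projective frame, i.e.\ four points in general position, whose first three determine a distinguished ``coordinate line'' $\ell$ equipped with points $O$, $I$, $\infty_\ell$ representing $0$, $1$ and $\infty$. For each variable $x_i$ I introduce a free point $P_i \in \ell$, whose coordinate in the affine chart $\ell \setminus \{\infty_\ell\}$ is declared to be $x_i$.

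The heart of the argument is then the scheme-theoretic von Staudt construction: given two points $P_a, P_b \in \ell$ one builds an ``addition gadget'' producing a point $P_{a+b} \in \ell$ and a ``multiplication gadget'' producing $P_{ab}\in \ell$, each using a finite list of auxiliary points in $\PP^2 \setminus \ell$ together with explicit collinearity relations among triples of points. I would decompose each $f_k$ as a straight-line program of additions and multiplications, realize that program by chaining the gadgets, and impose one final collinearity that encodes ``$f_k = 0$'' (namely, that the point produced by the program coincides with $O$). The total number of auxiliary point parameters introduced summed over all gadgets defines the integer $r$; the free parameters sit inside $\AA^r$, and the closed conditions cut out precisely $X$.

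I would then package all the collinearity conditions into a rank $3$ matroid $\ud$ on the index set $[n]$ of \emph{all} points used (the frame, the $P_i$, and the gadget-auxiliaries), declaring a triple to have $d_I = 2$ exactly when the construction forces it to be collinear and $d_I = 3$ otherwise. Freeness of the $\PGL_3$-action is arranged by including the four frame points, which have no stabilizer. By construction there is a natural map from the open locus of $X \times \AA^r$ where all denominators appearing in the gadgets are invertible into $C^{3,n}_{\ud}/\PGL_3$, and the inverse map is produced by reading off the coordinates $x_i$ and the auxiliary parameters from a configuration, using that the $\PGL_3$-quotient is rigidified by the frame. One shows this pair of maps is an isomorphism of $\ZZ$-schemes on a suitable open $U$.

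The principal obstacle is the scheme-theoretic upgrade over $\Spec \ZZ$: Mn\"ev's original argument only identifies the underlying varieties over a field, so one must verify that (a) the collinearity relations produced by the gadgets generate the ideal and not merely its radical, and (b) no spurious $\ZZ$-torsion or extra components are introduced when the gadgets are composed. My strategy for (a) would be an inductive argument on the depth of the straight-line program, using at each step that the gadget realizes a trivial affine bundle over the configuration for the previous step, so that flatness and reducedness propagate; for (b), I would choose the auxiliary points so that each new gadget adds one transversal equation in one new coordinate, ensuring the resulting scheme is a transversal complete intersection. This is the technical core of Lafforgue's refinement of Mn\"ev's theorem.
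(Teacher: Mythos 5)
The paper does not prove this statement; it quotes Lafforgue's Theorem I.14 from \emph{Chirurgie des grassmanniennes}, so there is no internal proof to compare against. Your sketch reproduces the right high-level idea — the von Staudt/Mn\"ev construction encoding polynomial equations by projective incidences — and you correctly identify the scheme-theoretic upgrade over $\Spec\ZZ$ as the technical crux. But your proposed treatment of that crux is not yet a proof, and there are two genuine gaps.

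First, the matroid $\ud$ is underdetermined by your description. You declare $d_I$ only for triples, whereas a rank-$3$ matroid must assign a rank to every $I\subset[n]$ satisfying submodularity; and even the triple-level data must satisfy the partial-linear-space condition (any two points lie on at most one declared line) for the extension to exist. More importantly, the equation $U\cong C^{3,n}_\ud/\PGL_3$ requires that on all of $U$ the \emph{realized} matroid is exactly $\ud$ — not only that the imposed collinearities hold, but that no additional incidences occur and that none of the named points coincide. Your nondegeneracy condition ``all denominators appearing in the gadgets are invertible'' is weaker than this. For special $x\in X$ an unwanted incidence can occur for every choice of auxiliary parameter, so it is not automatic that the open locus you describe projects surjectively onto $X$; the surjectivity is precisely what forces a careful inductive choice of free parameters at each gadget, which Lafforgue handles through his formalism of ``programs.'' Second, your strategy for item (a) — ``flatness and reducedness propagate'' — does not track the right invariant. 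The scheme $X$ may be non-reduced, so reducedness of $C^{3,n}_\ud$ is not the goal; what must be shown is the explicit isomorphism of $U$ with the quotient, carried step by step through the gadget tower. The right inductive statement is that each gadget exhibits the new configuration space as (an open in) a trivial affine bundle over the old one \emph{compatibly with the map to} $X\times\AA^r$, and this requires exhibiting the gadget's determinantal equations as cutting out a graph over the previous stage — one new constraint per new free coordinate, in the right order — rather than merely being a complete intersection. These are exactly the points Lafforgue's refinement supplies that Mn\"ev's set-theoretic argument over a field does not, and they would need to be worked out in detail for your sketch to become a proof.
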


\begin{thm}\label{GM} {\rm (Gelfand, MacPherson, Theorem I. 11, \cite{La03})}
Let $\ud$ be any matroid of rank $d$ on the set $[n]$ as considered above.
Then, the action of $\PGL_{d-1}$ on $C^{d,n}_\ud$ is free if and only if
$\dim_{\mathbb R} \Delta^{d,n}_\ud =n-1$. Similarly, 
 The action of $\GG_m^n/\GG_m$ on $\Gr^{d,n}_\ud$ is free if and only if
$\dim_{\mathbb R} \Delta^{d,n}_\ud =n-1$. 
Further, when $\dim_{\mathbb R} \Delta^{d,n}_\ud =n-1$,  the quotient
$C^{d,n}_\ud/\PGL_{d-1}$ can be canonically identified with  the quotient
$\Gr^{d,E}_\ud/(\GG_m^n/\GG_m )$.
\end{thm}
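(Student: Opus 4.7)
The plan is to realize $C^{d,n}_\ud$ and $\Gr^{d,E}_\ud$ as two different quotients of a common ambient scheme by two commuting group actions, and then read off both the freeness equivalence and the identification of quotients from this uniform picture.

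First, I would take $M^\circ \subset \mathrm{Mat}(n,d)$ to be the open subscheme of rank-$d$ matrices with no zero row. On $M^\circ$, $\mathrm{GL}_d$ acts on the right by column operations and $T^n := \GG_m^n$ acts on the left by row scalings; the two actions commute, and their common subgroup is the scalar copy $\Delta\GG_m$. For $A \in M^\circ$, the $n$ rows define an ordered point configuration in $\PP^{d-1}$ and the $d$ columns span a Grassmannian point $F = \langle c_1,\ldots,c_d\rangle \in \Gr^{d,E}$. The relevant matroid data on both sides---$\dim P_I = \mathrm{rank}(A_I)-1$ on the configuration side and $\dim(F\cap E_I) = d-\mathrm{rank}(A_{[n]\setminus I})$ on the Grassmannian side---is encoded by the rank function of $A$, so the matroid stratum $\ud$ determines a common locally closed subscheme $M^\circ_\ud$ on which both actions restrict. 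Taking row-configurations identifies $M^\circ_\ud/T^n \cong C^{d,n}_\ud$, while taking column spans identifies $M^\circ_\ud/\mathrm{GL}_d \cong \Gr^{d,E}_\ud$; the residual actions are respectively $\PGL_{d-1}=\mathrm{GL}_d/\Delta\GG_m$ on $C^{d,n}_\ud$ and $\GG_m^n/\GG_m = T^n/\Delta\GG_m$ on $\Gr^{d,E}_\ud$. From this description, for any $A \in M^\circ_\ud$ both the stabilizer of its row-configuration under $\PGL_{d-1}$ and the stabilizer of its column span under $T^n/\Delta\GG_m$ are canonically identified with the group of pairs $(g,t) \in \mathrm{GL}_d \times T^n$ satisfying $tAg = A$, modulo $\Delta\GG_m$. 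Hence the two freeness statements are equivalent, and when they hold both quotients coincide with the double quotient $M^\circ_\ud/(\mathrm{GL}_d \times T^n/\Delta\GG_m)$, yielding the canonical isomorphism $C^{d,n}_\ud/\PGL_{d-1} \cong \Gr^{d,E}_\ud/(\GG_m^n/\GG_m)$.

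It remains to translate the stabilizer vanishing into the polytope condition. For $[F] \in \Gr^{d,E}_\ud$ with $\pl$ coordinates $(p_\ui)$, an element $\bt = (t_\alpha) \in T^n$ fixes $[F]$ projectively iff the monomial $\bt^{\ui} := \prod_{\alpha\in\ui}t_\alpha$ is constant on the support $\{\ui : p_\ui \ne 0\}$, i.e.\ iff $\bt$ lies in the kernel of every character $\bt^{\bx_\ui - \bx_{\ui'}}$ with $p_\ui,p_{\ui'} \ne 0$. By Proposition \ref{to-Ga} this support is exactly the set of lattice vertices of $\Delta^{d,n}_\ud$. Since all such vertices lie on the hyperplane $\sum_\alpha x_\alpha = d$, the sublattice of $\ZZ^n$ spanned by the differences $\bx_\ui - \bx_{\ui'}$ has rank at most $n-1$, with equality iff the affine span of the vertices of $\Delta^{d,n}_\ud$ is $(n-1)$-dimensional, which is exactly the condition $\dim_{\mathbb{R}}\Delta^{d,n}_\ud = n-1$. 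Hence the stabilizer in $T^n$ reduces to $\Delta\GG_m$ precisely under this condition, closing the argument.

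The main technical obstacle I anticipate is not in any of the combinatorial steps above but in verifying scheme-theoretically---over $\Spec\ZZ$---that the naive categorical quotients $M^\circ_\ud/\mathrm{GL}_d$ and $M^\circ_\ud/T^n$ genuinely represent the configuration moduli functor and the Grassmannian stratum as schemes, and that the double quotient exists once the polytope condition is in force. This should be handled by descent for free actions of affine group schemes, but care is needed because the statement is required to hold uniformly in characteristic, and because the matroid locus $M^\circ_\ud$ is only locally closed rather than open in $\mathrm{Mat}(n,d)$.
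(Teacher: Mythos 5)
The paper does not prove Theorem~\ref{GM}: it is quoted as a result of Gelfand and MacPherson and cited directly to Theorem~I.11 of Lafforgue's book \cite{La03}, so there is no in-paper argument against which to compare. Your proposal is the standard Gelfand--MacPherson double-quotient argument and is the correct route; the overall structure (a common ambient $M^\circ_\ud$ with commuting $T^n$- and $\mathrm{GL}_d$-actions, identification of the two residual stabilizers with a single group $H_A$ modulo $\Delta\GG_m$, and a toric computation of that stabilizer via the support of the Pl\"ucker coordinates) is right.

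Two steps, however, are stated as if immediate when they are not. First, your own identities $\dim P_I = \mathrm{rank}(A_I)-1$ and $\dim(F\cap E_I) = d - \mathrm{rank}(A_{[n]\setminus I})$ show that the matroid data on the two sides are related by the complementation $I \leftrightarrow [n]\setminus I$, so the claim that one $\ud$ cuts out the same locus $M^\circ_\ud$ from both points of view requires that re-indexing to be unwound explicitly. (The paper's own summary definition of $C^{d,n}_\ud$, read literally at $I=[n]$, gives $\dim P_{[n]} = d-1-\ud_{[n]} = -1$, which already signals that a complementation is suppressed in the transcription; Lafforgue's precise conventions must be consulted to make the indices agree.) Second, passing from ``$L := \langle \bx_\ui - \bx_{\ui'} \rangle$ has rank $n-1$'' to ``the $T^n$-stabilizer of $[F]$ is exactly $\Delta\GG_m$'' silently uses that the annihilator of $L$ in $T^n$ is \emph{connected}; this needs $L$ to be saturated in $\{v : \sum_\alpha v_\alpha = 0\}$, since otherwise the stabilizer could contain $\Delta\GG_m$ with a nontrivial finite quotient and $\GG_m^n/\GG_m$ would still fail to act freely. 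Saturation does hold, but the reason is matroid-specific: by basis exchange every $\bx_\ui - \bx_{\ui'}$ is a $\ZZ$-combination of edge vectors $e_i-e_j$ of $\Delta^{d,n}_\ud$, and these edge vectors already lie in $L$, so $L$ equals a graphic lattice $\langle e_i - e_j : ij \in E\rangle$, which always has torsion-free quotient. Omitting this leaves a real hole in the freeness characterization.
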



By the above correspondence, we have the following equivalent version of Theorem \ref{Mn-La}.

\begin{thm}\label{Mn-La-Gr} {\rm (Mn\"ev, Theorem I. 14, \cite{La03})}
Let $X$ be an affine scheme of finite type over $\Spec \ZZ$.
Then, there exists a matroid $\ud$ of rank 3 on
the set $[n]$ such that $(\GG_m^n/\GG_m )$ acts freely on the thin Schubert cell $\Gr^{3,E}_\ud$.
Further, there exists a positive integer $r$ and an open subset $U \subset X \times \AA^r$ projecting surjectively
onto $X$ such that $U$ is isomorphic to the quotient space
$\bGr^{3,E}_\ud:=\Gr^{3,E}_\ud/(\GG_m^n/\GG_m)$.
\end{thm}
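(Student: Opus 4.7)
The plan is to derive this Grassmannian version directly from the configuration space version (Theorem \ref{Mn-La}) by means of the Gelfand-MacPherson correspondence (Theorem \ref{GM}), so essentially no new content is produced; the work lies entirely in verifying that the hypotheses on the matroid $\ud$ produced by Theorem \ref{Mn-La} match the hypotheses required to apply Theorem \ref{GM}.

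First, I would apply Theorem \ref{Mn-La} to the given affine scheme $X$ of finite type over $\Spec \ZZ$ to produce a matroid $\ud$ of rank $3$ on $[n]$, a positive integer $r$, and an open subset $U \subset X \times \AA^r$ surjecting onto $X$ together with an isomorphism
\[
U \;\cong\; C^{3,n}_\ud / \PGL_3 \;=\; {\underline C}^{3,n}_\ud .
\]
Next, I would check that the freeness of the $\PGL_3$-action on $C^{3,n}_\ud$ guaranteed by Theorem \ref{Mn-La} is equivalent, via Theorem \ref{GM}, to the matroid polytope condition $\dim_{\mathbb R}\Delta^{3,n}_\ud = n-1$. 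Once this is in hand, the same theorem gives both that $(\GG_m^n/\GG_m)$ acts freely on the thin Schubert cell $\Gr^{3,E}_\ud$ and that there is a canonical identification
\[
C^{3,n}_\ud/\PGL_3 \;=\; \Gr^{3,E}_\ud/(\GG_m^n/\GG_m) \;=\; \bGr^{3,E}_\ud .
\]
Composing with the isomorphism $U \cong C^{3,n}_\ud/\PGL_3$ then yields the required isomorphism $U \cong \bGr^{3,E}_\ud$, and the surjection $U \to X$ is the one already produced by Theorem \ref{Mn-La}.

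The only real step that needs care is the bookkeeping of the freeness equivalence and the matroid polytope dimension. Concretely, I would argue: Theorem \ref{Mn-La} outputs a matroid for which the $\PGL_3$-action on $C^{3,n}_\ud$ is free, which by the first assertion of Theorem \ref{GM} forces $\dim_{\mathbb R}\Delta^{3,n}_\ud = n-1$; by the second assertion this is in turn equivalent to freeness of the $(\GG_m^n/\GG_m)$-action on $\Gr^{3,E}_\ud$, and by the last assertion  the two quotients coincide. Since $C^{3,n}_\ud$ and $\Gr^{3,E}_\ud$ are both integral and the group actions are free, the quotients exist as schemes and the identification of Theorem \ref{GM} is an isomorphism of schemes, not merely a bijection of points; this is the one potential technical obstacle, but it is precisely what is stated in the cited result. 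Putting these pieces together produces the claimed Grassmannian universality statement with no further calculation.
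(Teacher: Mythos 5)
Your proposal is correct and follows exactly the route the paper intends: the paper introduces Theorem \ref{Mn-La-Gr} with the single sentence ``By the above correspondence, we have the following equivalent version of Theorem \ref{Mn-La},'' i.e.\ it derives the Grassmannian version from Theorem \ref{Mn-La} via the Gelfand--MacPherson correspondence of Theorem \ref{GM}, which is precisely what you spell out. Your more careful bookkeeping of the freeness equivalences through the polytope-dimension condition $\dim_{\mathbb R}\Delta^{3,n}_\ud=n-1$ is a sound and welcome expansion of what the paper leaves implicit.
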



\begin{thm}\label{main-resolusion-0} {\rm (Local Resulution)}
Let $X$ be 
a scheme of finite presentation over a 
 perfect field $\kk$. 
Assume further that $X$ is integral and 
singular. Take any singular closed point $q \in X$. Then, there exists an open subset
$V$ of $X$ containing the point $q$ such that $V$ admits a resolution. 
\end{thm}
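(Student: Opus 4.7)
The plan is to chain the following reductions: singularity of $X$ $\Rightarrow$ singularity inside a thin Schubert cell of $\Gr^{3,E}$ (via Theorem~\ref{Mn-La-Gr}) $\Rightarrow$ singularity inside an integral $\Ga$-scheme $Z_\Ga$ (via Proposition~\ref{to-Ga}) $\Rightarrow$ resolution of $Z_\Ga$ (via Theorem~\ref{cor:main}); then transport this resolution backwards along the free torus quotient and the Mn\"ev projection.

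First I would reduce to the case where $\kk=\FF$ is a prime field ($\QQ$ or $\FF_p$), which is the hypothesis of Theorem~\ref{main-thm}. Since $\kk$ is perfect, $\kk/\FF$ is separable, and after shrinking $X$ to an affine neighborhood of $q$ I can spread $X$ out as $X=X_0\times_A\kk$ for an integral finitely presented affine $A$-scheme $X_0$ over a finitely generated $\FF$-subalgebra $A\subset\kk$. Choose a closed point $q_0\in X_0$ under $q$; after localizing $A$, I may arrange that $q_0$ is singular in its fibre. A resolution of an open neighborhood of $q_0$ in $X_0$ base changes to a resolution of an open neighborhood of $q$ in $X$, so it suffices to treat $\kk=\FF$.

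Next, with $X/\FF$ affine integral singular at $q$, apply Theorem~\ref{Mn-La-Gr}: there is a matroid $\ud$ of rank $3$ on some $[n]$, an integer $r\ge 0$, and an open $U\subset X\times\AA^r$ projecting surjectively onto $X$ with $U\cong\bGr^{3,E}_\ud$. Choose a closed point $\tilde q\in U$ over $q$, and lift through the free $T$-torsor $\Gr^{3,E}_\ud\to\bGr^{3,E}_\ud$ (Theorem~\ref{GM}) to $\bar q\in\Gr^{3,E}_\ud$; both $X\times\AA^r\to X$ and the $T$-torsor being smooth, $\bar q$ is singular in $\Gr^{3,E}_\ud$. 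Pick an affine chart $\rU_\um$ of $\PP(\wedge^3 E)$ containing $\bar q$. By Proposition~\ref{to-Ga}, the set $\Ga:=\{x_\ui:\bx_\ui\notin\Delta^{3,n}_\ud\}$ realises $\Gr^{3,E}_\ud\cap\rU_\um$ as the open subscheme of $Z_\Ga\subset\rU_\um\cap\Gr^{3,E}$ on which none of the remaining $\pl$ coordinates vanish. After modifying the data to isolate the unique irreducible component of $Z_\Ga$ containing $\Gr^{3,E}_\ud\cap\rU_\um$ (well-defined because $\Gr^{3,E}_\ud$ is integral, being a $T$-torsor over the integral scheme $U$), I may assume $Z_\Ga$ is integral and apply Theorem~\ref{cor:main} to obtain a projective birational morphism $\tZ^\dagger_{\eth,\Ga}\to Z_\Ga$ from a smooth $\FF$-scheme.

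Finally, restricting $\tZ^\dagger_{\eth,\Ga}\to Z_\Ga$ over the open subscheme $\Gr^{3,E}_\ud\cap\rU_\um\subset Z_\Ga$ gives a resolution of a $T$-invariant neighborhood of $\bar q$ in $\Gr^{3,E}_\ud$. Every center used in the $\vt$-, $\wp$-, and $\eth$-blowups of \S\ref{vt-blowups}--\S\ref{barh-blowups} is a scheme-theoretic intersection of proper transforms of $\pl$ coordinate divisors $X_\uu$ and $X_{(\uu,\uv)}$, all $T$-invariant; the entire tower therefore carries a canonical lift of the $T$-action making $\tZ^\dagger_{\eth,\Ga}$ a $T$-equivariant resolution, which descends along the free $T$-quotient to a resolution of a neighborhood of $\tilde q$ in $U\cong\bGr^{3,E}_\ud$. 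After shrinking to identify this neighborhood with a product $V\times W$, where $V\subset X$ is open around $q$ and $W\subset\AA^r$ is open, I pull the resolution back along a sufficiently general slice $V\cong V\times\{w_0\}\hookrightarrow V\times W$; by generic smoothness of the resolution along slices, the pullback is smooth and proper birational over $V$, producing the desired local resolution.

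The step I expect to be the main obstacle is simultaneously arranging $Z_\Ga$ to be integral while still containing $\Gr^{3,E}_\ud\cap\rU_\um$ as a dense open, since the defining ideal of the relevant irreducible component of $Z_\Ga$ need not be of the restricted form $\langle x_\uu,\bF:x_\uu\in\Ga,\bF\in\sfm\rangle$ required by Definition~\ref{ZGa}; this may force either a clever choice of $\ud$ and $\um$ in Mn\"ev's construction, or a localization argument to pass to a $\Ga$-scheme of the desired form. The descent from a resolution of $V\times W$ to a resolution of $V$ via a generic slice is a secondary obstacle, requiring one to verify that some $w_0\in W$ can be chosen so that $V\times\{w_0\}$ meets the resolution in the smooth locus above a neighborhood of $\tilde q$. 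A further point requiring care is the verification of $T$-equivariance throughout the tower, namely that the total orders on $\vt$-, $\wp$-, and $\eth$-centers chosen in \S\ref{vt-blowups}--\S\ref{barh-blowups} are themselves $T$-invariant so that the quotient is well-defined.
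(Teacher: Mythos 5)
Your overall strategy matches the paper's: invoke Mn\"ev--Lafforgue (Theorem~\ref{Mn-La-Gr}) to place the singularity of $X$ inside a quotient of a thin Schubert cell, translate to a $\Ga$-scheme via Proposition~\ref{to-Ga}, resolve $Z_\Ga$ via Theorem~\ref{cor:main}, then descend. The reduction to a prime field $\FF$ by spreading out is also the paper's device, though the paper places it at the end rather than the beginning (your phrasing ``choose a closed point $q_0\in X_0$ under $q$'' is not quite right --- $q$ maps to a non-closed point of $X_0$ --- but the spreading-out idea is the same). Where you genuinely diverge is the final descent step, and that is where there is a gap.

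The paper does \emph{not} descend the resolution along the torus quotient and then slice generically. Instead it fixes a particular lift $W_a\subset O\times\{b\}$ of the slice $\bW_a=(V\times\{a\})\cap U$, takes the proper transform $\widetilde{\overline W}_a$ of $\overline W_a$ inside the already-constructed $\tZ^\dagger_{\eth,\Ga}$, and proves smoothness of $\tW_a=\rho^{-1}(W_a)$ directly, by exhibiting the proper transforms $\tt_1,\dots,\tt_{r+n-1}$ of the affine/torus coordinates $t_i-a_i$, $t_{r+i}-b_i$ as elements of the ideal $I_a$ whose images in $I_a/(I_a\cap\fm^2)$ are linearly independent and span a complement to $I/(I\cap\fm^2)$; this forces the Zariski tangent space of $\tW_a$ at $\bz$ to have dimension at most $\dim\tW_a$. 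Your replacement --- ``by generic smoothness of the resolution along slices, the pullback is smooth'' --- is exactly the step this explicit computation is designed to avoid. Generic smoothness (Sard) is unavailable over $\FF_p$, and even over $\QQ$ it controls fibers of $\widetilde{U'}\to W$, not proper transforms of slices $V\times\{w_0\}$; the proper transform is only a closed subscheme of the fiber, possibly a proper one, and smoothness of the fiber need not pass to it. So as written, your argument would at best establish the theorem in characteristic zero, whereas the statement (and the paper, via Theorem~\ref{main-thm} over $\FF\in\{\QQ,\FF_p\}$) is claimed in all characteristics. You need to replace the appeal to generic smoothness with a computation of the kind the paper does: show that the differentials of the slice equations remain independent after proper transform and are transverse to the tangent space of $\tZ^\dagger_{\eth,\Ga}$.

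Two smaller remarks. First, your $T$-equivariance observation (all $\vt$-, $\wp$-, $\eth$-centers are intersections of $T$-invariant divisors, and the total orders live on index sets, not on the scheme, so the tower is $T$-equivariant) is correct and more conceptual than anything the paper says, but the paper never needs it: it works directly with the slice $W_a\subset O\times\{b\}$ inside $\Gr^{3,E}_\ud$, using Zariski-local triviality of the $T$-torsor (specialness of the split torus) rather than equivariant descent. Second, your worry about the integrality of $Z_\Ga$ and whether an irreducible component is again a $\Ga'$-scheme is legitimate; the paper disposes of it in one sentence (``As $X$ is integral, one sees that $Z_\Ga$ is integral''), so if you want a fully detailed proof you would need to justify that claim rather than route around it, but it is not a gap specific to your proposal.
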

\begin{proof}
The question is local. Hence, we can assume that $X$ is affine.

 First, we assume that $X$ is defined over $\Spec \ZZ$.

 We apply Theorem \ref{Mn-La-Gr} to $X$ and follow  the notations in Theorem \ref{Mn-La-Gr}.

We identify $U \subset X \times \AA^r$ with the quotient space
$\bGr^{3,E}_\ud=\Gr^{3,E}_\ud/(\GG_m^n/\GG_m)$.

Consider the quotient map 
$$\pi: \Gr^{3,E}_\ud \lra \bGr^{3,E}_\ud=\Gr^{3,E}_\ud/(\GG_m^n/\GG_m).$$
We have the diagram
$$ \xymatrix{
\Gr^{3,E}_\ud  \ar^{\pi \;\;\;\;\;\;\;\;\;\;\;\;\;\;\;\;\;\;\;\;\;\;\;\;\;\;}[r] 
& \bGr^{3,E}_\ud=\Gr^{3,E}_\ud/(\GG_m^n/\GG_m) 
\cong U  \ar @{^{(}->}[r]  & X \times \AA^r  \ar[d]   \\
& & X.
} $$

We can apply Proposition \ref{to-Ga} to the  thin Schubert cell 
$\Gr^{3,E}_\ud$.  

Since $\Delta^{3,n}_\ud \ne \emptyset$, there exists $\um \in \II_{3,n}$ such that
$\bx_\um \in \Delta^{3,n}_\ud$.
We define
\begin{equation}\label{ud=Ga}
\Ga:=\Ga_\ud =\{ \ui \in \II_{3,n} \mid \bx_{\ui} \notin \Delta^{3,n}_\ud \}.
\end{equation}

Then, we have that 
$$\xymatrix{ 
\Gr^{3,E}_\ud \ar @{^{(}->}[r]  &  Z_\Ga \ar @{^{(}->}[r]  & \rU_\um } $$
and $\Gr^{3,E}_\ud$ is an open subset of the $\Ga$-scheme $Z_\Ga \subset \rU_\um$. 
As $X$ is integral (by assumption), one sees that $Z_\Ga$ is integral.


We then let $$\vp_\Ga: \tZ^\dagger_{\eth, \Ga} \lra Z_\Ga$$ be as in
 Theorem \ref{cor:main}. This is a resolution.
We set $\tGr^{3,E}_\ud=\vp_\Ga^{-1}(\Gr^{3,E}_\ud)$, scheme-theoretically.
Then $$\varpi|_{\tGr^{3,E}_\ud}:  \tGr^{3,E}_\ud \lra \Gr^{3,E}_\ud$$ is a resolution.

 Take any general closed point $a \in \AA^r$, such that 
$$(q,a) \in  (X \times a) \cap U \subset  X \times \AA^r.$$
Also take  a split of $(\GG_m^n/\GG_m)$ such that 
$$(\GG_m^n/\GG_m) \cong \GG_m^{n-1}.$$

The quotient map $$\pi: 
\Gr^{3,E}_\ud \lra \bGr^{3,E}_\ud (\cong U)$$ is  a principal $(\GG_m^n/\GG_m)$-bundle, and is
\'etale  locally trivial.  As  any \'etale  locally trivial principal $(\GG_m^n/\GG_m)$-bundle
is Zariski locally trivial (that is, $(\GG_m^n/\GG_m)$ is special in the sense of Serre),
there exist  a suitably small open subset $O$ of $\bGr^{3,E}_\ud$ and
a suitably small open subset
$V$ of $X$ containing $ q$ such that the following hold:

 $\bullet$ $\Gr^{3,E}_\ud|_{O} \cong O \times (\GG_m^n/\GG_m);$

$\bullet$ $\bW_a:=(V \times a) \cap U  \stackrel{\cong}{\lra} V;$

$\bullet$    $\bW_a$, considered as a locally closed subset of $\bGr^{3,E}_\ud (\cong U)$,
admits a lift $W_a  \subset  \Gr^{3,E}_\ud$ such that we have 
 $$ W_a \subset O \times b$$
where $b=(b_1, \cdots, b_{n-1}) \in (\GG_m^n/\GG_m) \cong \GG_m^{n-1}$, and
$$W_a  \stackrel{\cong}{\lra}  \bW_a=(V \times a) \cap U   \stackrel{\cong}{\lra} V.$$

  Let ${\overline W}_a$ be the closure of $W_a$ in $Z_\Ga$ ($\supset \Gr^{3,E}_\ud$).
  For generic choice of $a$, 
  we let $\widetilde{{\overline W}}_a$ be the proper transform of ${\overline W}_a$ in 
$\tZ^\dagger_{\eth, \Ga}$  ($\supset \tGr^{3,E}_\ud$). 
  It is immediate that
the induced morphism 
$\rho: \widetilde{{\overline W}}_a \lra {\overline W}_a$ is surjective, projective and birational with
 $a \in \AA^r$ being generic. Let $\tW_a=\rho^{-1}(W_a)$. Then, the induced morphism
 $$\rho|_{\tW_a}: \tW_a \lra W_a$$ is surjective, projective and birational.

It remains to show that $\tW_a$ is smooth. 

We take any closed point $\bz \in \tW_a$.

We can depict the above situation as the following diagram.
$$ \xymatrix{ 
\bz \in \tW_a \ar[d]^{\rho|_{\tW_a}} \ar @{^{(}->}[r]  &   \ar[d]^{\varpi_\Ga} \ar @{^{(}->}[r] \tGr^{3,E}_\ud \subset {\tZ^\dagger_{\eth, \Ga}} & \tsV_\eth\ar[d] \\ 
W_a           \ar[d]^{\cong} \ar @{^{(}->}[r]  &\Gr^{3,E}_\ud   \ar[d]^{\pi} \subset Z_\Ga \ar @{^{(}->}[r]  
& \rU_\um \\ 
\bW_a  \ar[d]^{\cong} \ar @{^{(}->}[r]  &  \bGr^{3,E}_\ud \cong U \ar[d] \ar @{^{(}->} [r]  & X \times \AA^r  \\
 q \in V \ar @{^{(}->} [r] & X 
} $$

Now  we let $\fV \subset \tsR_\eth$ be a standard chart of $\tsR_\eth$ such that $\bz \in \fV$.
We let $I$ be the ideal of  $\FF[x]_{x \in \var_\fV}$ corresponding to $\tZ^\dagger_{\eth, \Ga} \cap \fV$ 
and $I_a$ be the ideal of  $\FF[x]_{x \in \var_\fV}$ corresponding to
$\widetilde{{\overline W}}_a \cap \fV$.
Further, we let  $\fm$ be the maximal ideal of $\FF[x]_{x \in \var_\fV}$ corresponding to
$\bz \in \fV$, $\fn={\mathfrak m} / I$ be the maximal ideal of
$\sO_{{\tZ^\dagger_{\eth, \Ga}} \cap \fV}({\tZ^\dagger_{\eth, \Ga}} \cap \fV)$ corresponding to
$\bz \in {\tZ^\dagger_{\eth, \Ga}}\cap \fV$, and $\fn_a={\mathfrak m} / I_a$ be the maximal ideal of
$\sO_{\widetilde{{\overline W}}_a \cap \fV}(\widetilde{{\overline W}}_a \cap \fV)$ 
corresponding to
$\bz \in  \widetilde{{\overline W}}_a \cap \fV$.

Then, we have the following digram of  exact sequences of vector spaces over $\FF(\bz)\cong \FF$ 
(see, equation (2.1), page 128, \cite{LiuQing})
$$ \xymatrix{
0  \ar[r] &  I/(I \cap \fm^2) \ar[d] \ar[r] &  \fm/\fm^2  \ar[d]^{=} \ar[r] &
\fn/\fn^2 \ar[d] \ar[r] &  0, \\
0  \ar[r] & I_a/(I_a \cap \fm^2)  \ar[r] &  \fm/\fm^2 \ar[r] & 
\fn_a/{\mathfrak n}^2_a  \ar[r] &  0.
}
$$
Hence, the left downarrow is injective. 

Let $\AA^r =\Spec \FF[t_1,\cdots, t_r]$. 
For every $i \in [r]$, we first pullback $t_i-  a_i$ via $\pi$ 
 to obtain an element of the coordinate ring $Z_\Ga$ and then lift it
 to an element $t_i'$ of the coordinate ring $\rU_\um$.
We then let $\tt_i$ be the proper transform of $t_i'$ over
 the standard chart $\fV$.
This way, we obtain $\tt_1,\cdots, \tt_r \in \FF[x]_{x \in \var_\fV}$.
 Then, $(\tt_1, \cdots, \tt_r)$ belong to the ideal $I_a$.
 
  Further, we let
 $$(\GG_m^n/\GG_m) \cong \GG_m^{n-1}=\Spec [t_{r+1}^\pm, \cdots, t_{r+n-1}^\pm].$$
We  lift $t_{r+i}-b_i$ to an element $t_{r+i}'$ of the coordinate ring $\rU_\um$.
 We then let $\tt_{r+i}$ be the proper transform of $t'_{r+i}$ over
 the standard chart $\fV$ for all $i \in [n-1]$. Then, 
 $(\tt_{r+1}, \cdots, \tt_{r+n-1})$ also belong to the ideal $I_a$.
 
 Observe now that  $(\tt_1, \cdots, \tt_{r+n-1})$ form a (part of) system of local parameters
 at the point $\bz \in \tW_a$.
 Hence, one concludes that   the images of  $\tt_1, \cdots, \tt_{r+n-1}$  
  in $I_a/(I_a \cap {\mathfrak m}^2)$  
 are linearly independent over $\FF(\bz)\cong \FF$.
 Since $\tW_a$ is birational to $W_a$ with 
 $W_a \cong \bW_a \cong V,$
 we have that the vector subspace spanned 
 by the images of $(\tt_1, \cdots, \tt_{r+n-1})$   intersects trivially with
  $I/(I \cap {\mathfrak m}^2)$, both regarded as vector subspaces of 
$I_a/(I_a \cap {\mathfrak m}^2)$.

Then, the above  implies that we have
$$\dim_{\FF(\bz)} {\mathfrak n}_a/{\mathfrak n}_a^2 = \dim_{\FF(\bz)} {\mathfrak m}/{\mathfrak m}^2 
- \dim_{\FF(\bz)} I_a/(I_a \cap {\mathfrak m}^2)$$
$$ \le \dim_{\FF(\bz)} {\mathfrak m}/{\mathfrak m}^2  -  \dim_{\FF(\bz)} I/(I \cap {\mathfrak m}^2) - (r+n-1) $$
$$ = \dim_{\FF(\bz)} {\mathfrak n}/{\mathfrak n}^2 - (r+n-1) 
= \dim {\tZ^\dagger_{\eth, \Ga}} - (r+n-1)=\dim \tW_a. $$
Here, the equality $\dim_{\FF(\bz)} {\mathfrak n}/{\mathfrak n}^2 = \dim \tZ_{\eth, \Ga}$ 
holds because 
$\tZ^\dagger_{\eth, \Ga}$ is smooth at $\bz$.  
The last equality follows from a simple dimension count.

Hence, $\tW_a$ is smooth for general closed point $a \in \AA^r$.  
Thus, 
 $$\rho|_{\tW_a}: \tW_a \lra W_a \cong \bW_a \cong V$$ is a resolution.
    Let $\tV= \tW_a$.
   Then, $\tV \lra V$ provides a resolution of $V$ as in the statement of the theorem,
   when $X$ is defined over $\Spec \ZZ$..

   Now, consider the general case when $X$ is of finite presentation over a   perfect field $\kk$.
   The field $\kk$ is an extension of its unique prime (minimal) subfield ${\mathbb F}'$. This 
   unique prime  subfield ${\mathbb F}'$ is isomorphic to 
   $\QQ$ when $\kk$ has characteristic zero or isomorphic to 
   ${\mathbb F}_p$ when $\kk$ has the characteristic $p>0$,
   that is, ${\mathbb F}' \cong \FF$.

  We let $S$ be an integral affine scheme such that $\kk$ is 
  identified with the function field of $S$.
 Then,  according to \cite{EGAIV}, Theorems 8.8.2 and 8.10.5, 
 there exist an affine open subset $B$ of $S$, 
  an integral affine scheme $Y/\ZZ$,
 and a dominant morphism $f: Y \to B$ of finite presentation over $B$,
  such that $X/\kk$ is isomorphic to the generic fiber of the morphism $f$.

   Take a singular point $x$ of $X$, lying over a point $y$ of $Y$. 
   Replacing $X$ and $Y$ by suitable open subsets containing $x$ and $y$, respectively, if necessary,
   we may assume that $Y$ admits a resolution over $\FF'$, as  $Y$ is defined over $\FF'$.
   
 Take a resolution $\tY \to Y$ over $\FF'$. And, consider the induced dominant morphism
 $\tilde f: \tY \to B$. 
 Since $\tY$ is  smooth,  we have that the generic fiber $\tX/\kk$ of $\tilde f$ is regular as well.
 Because $\kk$ is perfect, we have that $\tX/\kk$  is smooth.  
 Clearly, the induced morphism $\tX/\kk \to  X/\kk$ is proper and birational, hence is a resolution,
 as desired.
\end{proof}

When the base field $\kk$ has characteristic zero,
the above theorem is  well known from Hironaka's resolution \cite{Hironaka64}. 
When the base field $\kk$ has positive characteristic, Abhyankar \cite{Abh} proved resolution of singularities for algebraic threefolds in characteristic greater than 6.
(One may consult \cite{Zariski2} for the case when $\kk$ is not perfect.)


In general, consider any fixed (singular) integral scheme  $X$.  Let $K(X)$ be the function field of $X$.
Because the Riemann-Zariski space of $K(X)/\kk$ is quasi-compact (\cite{Zariski}), it remains
to glue finitely many local resolutions of $X$, as obtained in this article, 
to obtain  a global resolution of $X$.





 In this article, we approach resolution of singularity 
by performing  blowups of  (a chart of)  $\Gr^{3,E}$. 
It is convincible that certain parallel blowups exist for $(\PP^2)^n$ that can also lead to achieve
local resolution of singularity  (\cite{Hu15b}). 
(Indeed, when the author began to work on resolution of singularity, he tried both approaches and switched between the two for quite a while 
before settling down on the current approach via  Grassmannians.)

\section{ Geometric Resolution}

Prior to de Jong's geometric approach \cite{deJong96}, resolutions of varieties in general dimensions
are essentially done by finding good algorithms.
In such an approach, one isolates a set of bounded invariants and prove that after certain finite steps,
such invariants improve strictly. As the invariants are bounded, the algorithm terminates.
These approaches are nicely presented in Kollar's book \cite{Kollar}.

According to \cite{Vakil}, many moduli spaces or deformation spaces exhibit arbitrary singularities.
In other words, all singularities exist {\it geometrically}.
Since singularities exist for geometric reasons, one would wonder whether
there should be {\it geometric ways} to resolve them, avoiding pure algorithms on polynomials. Being  philosophically optimistic,
the author believes that every singular moduli admits a resolution, in a specific relative sense,
 such that the resolution itself is also a moduli. 
 
In other words, it would be desirable if the following problem can be answered in some positive ways.

\begin{problem}\label{program}
For any singular moduli space $\mathfrak M$, find another moduli space $\widetilde{\mathfrak M}$
that only modifies the boundary objects  of  $\mathfrak M$ 
 such that every irreducible component of $\widetilde{\mathfrak M}$, endowed with the reduced stack structure,
  is smooth, and
all such irreducible components meet transversally.
\end{problem}

Here, an object of $\widetilde{\mathfrak M}$ should be obtained from the corresponding object of
$\mathfrak M$ by adding certain extra data. The extra data should reduce the automorphisms
of the original object, and ideally, should remove  all removable obstructions.

See Conjectures 5.4 and 5.5 of \cite{Hu17} for  somewhat more precise formulations.


\begin{thebibliography}{9}

\bibitem{Abh} Shreeram S. Abhyankar,  
\newblock {\it
Resolution of singularities of embedded algebraic surfaces,}
\newblock  Springer Monographs in Mathematics, Acad. Press (1966).



\bibitem{ATW} Dan Abramovich, Michael Temkin, Jaroslaw Wlodarczyk,
\newblock {\it Functorial Embedded Resolution via Weighted Blowings up,}
\newblock  arXiv:1906.07106v3.



\bibitem{deJong96} Aise Johan de Jong,
\newblock {\it Smoothness, Semi-stability and Alterations,}
\newblock Publ. Math. IHES 83, (1996) 51-93.




\bibitem{EGAIV}  Alexander Grothendieck,
\newblock {\it \'El\'ements de g\'eom\'etrie alg\'ebrique IV: \'Etude locale des sch\'emas et des
morphismes de sch\'emas, }
\newblock III, Inst. Hautes \'Etudes Sci. Publ. Math. 28 (1966).



\bibitem{Hironaka64}
 Heisuke Hironaka,
 \newblock {\it Resolution of Singularities of an Algebraic Variety 
 over a Field of Characteristic zero, I, II},
 \newblock  Ann. of Math., 2, 79 (1), (1964) 109-203,  205-326.




\bibitem{Hironaka17} Heisuke Hironaka,
 \newblock {\it Resolution of Singularities in Positive Characteristics},
 \newblock http://www.math.harvard.edu/~hironaka/pRes.pdf


\bibitem{Hu17} Yi Hu,
\newblock  {\it  Relative Resolution and Its Applications,}
\newblock Proceedings of the Sixth International Congress of Chinese Mathematicians.
Vol. I, 467-488, Adv. Lect. Math. (ALM), 36, Int. Press, Somerville, MA, 2017. 




\bibitem{Hu15} Yi Hu,
\newblock {\it Hilbert Quotients and Hilbert Families of Grassmannians,}
\newblock  Manuscript, Fall 2015.

\bibitem{Hu20} Yi Hu,
\newblock  {\it Desingularizations of the Hilbert Quotients and Hilbert Families of Grassmannians,}
\newblock  In preparation.



\bibitem{Hu15b} Yi Hu,
\newblock  {\it Configuration Spaces and Singularities,}
\newblock  Incomplete draft, Jan. 2015.
















\bibitem{Kollar} J\'anos Koll\'ar,
\newblock {\it Lectures on Resolutions of Singularities,}
\newblock  Ann. of Math. Studies, No. 166,  2007.



\bibitem{La99} Laurent  Lafforgue, 
\newblock {\it Pavages des simplexes, sch\'emas de graphes recoll\'es et
compactification des ${\rm PGL}\sp {n+1}\sb r/{\rm PGL}\sb r$.}
(French. English summary) [Pavings of simplices, schemes of graphs
that are $``$pasted together" and compactification of ${\rm PGL}\sp
{n+1}\sb r/{\rm PGL}\sb r$] 
\newblock  Invent. Math. 136 (1999), 
233--271.

\bibitem{La03} $-$, 
\newblock {\it  Chirurgie des grassmanniennes.} 
(French) [Surgery on Grassmannians],
\newblock  CRM Monogr. Ser. 19, Amer. Math. Soc., Providence, 2003.


\bibitem{LV12} Seok Hyeong Lee and Ravi Vakil,
\newblock {\it Mn\"ev-Sturmfels Universality for Schemes,}
\newblock  Clay Mathematics Proceedings, Volume 18, 2013.


\bibitem{LiuQing} Qing Liu,
\newblock {\it Algebraic Geometry and Arithmetic Curves,}
\newblock  Oxford University Press, 2002.

\bibitem{McG} Michael McQuillan, Gianluca Marzo,
\newblock {\it Very Fast, Very Functorial, and Very Easy Resolution of Singularities,}
\newblock arXiv:1906.06745v1

\bibitem{Mnev88} Nikolai Mn\"ev,
\newblock  {\it The Universality Theorems on the Classification Problem of Configuration Varieties 
and Convex Polytope Varieties,}
\newblock  In Topology and Geometry, Rohlin Seminar, LNM 1346, 527-543, Springer, Berlin, 1988.


\bibitem{Temkin} M. Temkin,
\newblock  {\it Tame distillation and desingularization by $p$-alterations,}
\newblock  Ann. of Math. 186 (2017), 97 - 126.




\bibitem{Vakil} Ravi Vakil,
\newblock {\it Murphy's Law in Algebraic Geometry: Badly-Behaved Deformation Spaces,}
\newblock   Inven. Math. 164 569-590 (2006)


\bibitem{Zariski} Oscar Zariski,
\newblock {\it Local uniformization theorem on algebraic varieties, }
\newblock Ann. of Math. 41 (1940), 852 - 896.



\bibitem{Zariski2} Oscar Zariski,
\newblock {\it The concept of a simple point of an abstract algebraic variety, }
\newblock Trans. AMS (1947), 1 - 52.




\end{thebibliography}
\end{document}